\numberwithin{equation}{section}
\newtheorem{thm}{Theorem}[section]
\newtheorem{prop}[thm]{Proposition}
\newtheorem{lem}[thm]{Lemma}
\newtheorem{cor}[thm]{Corollary}
\newtheorem{claim}{Claim}{\bf}{\it}
\newtheorem{fthm}{Theorem}{\bf}{\it}
{\bf}{\it}
\newtheorem{fcor}[fthm]{Corollary}{\bf}{\it}
{\bf}{\it}
{\bf}{\it}
\theoremstyle{definition}
\newtheorem{defn}[thm]{Definition}
\theoremstyle{remark}
\newtheorem{rem}[thm]{Remark}
{\bf}{\it}
\newtheorem{definition and corollary}[thm]{Definition and Corollary}
\newcommand{\A}{{\mathbb A}}
\newcommand{\Aff}{A\!f\!\!f}
\newcommand{\af}{\mathrm{af}}
\newcommand{\al}{\alpha}
\newcommand{\bB}{{\mathbf B}}
\newcommand{\bP}{{\mathbb P}}
\newcommand{\bi}{{\mathbf i}}
\newcommand{\C}{{\mathbb C}}
\newcommand{\cO}{{\mathcal O}}
\newcommand{\bK}{{\mathbb K}}
\newcommand{\Hom}{\mbox{\rm Hom}}
\newcommand{\ch}{\mathrm{ch}}
\newcommand{\Fr}{\mathsf{Fr}}
\newcommand{\tI}{\mathtt I}
\newcommand{\tJ}{\mathtt J}
\newcommand{\la}{\lambda}
\newcommand{\Spec}{\mbox{\rm Spec}}
\newcommand{\gch}{\mathrm{gch}}
\newcommand{\F}{\mathbb F}
\newcommand{\g}{\mathfrak{g}}
\newcommand{\gb}{\mathfrak{b}}
\newcommand{\h}{\mathfrak{h}}
\newcommand{\wth}{\widetilde{\mathfrak{h}}}
\newcommand{\gI}{\mathfrak{I}}
\newcommand{\bI}{\mathbf{I}}
\newcommand{\gm}{\mathfrak{m}}
\newcommand{\gn}{\mathfrak{n}}
\newcommand{\tg}{\widetilde{\mathfrak{g}}}
\newcommand{\bv}{\mathbf{v}}
\newcommand{\bw}{\mathbf{w}}
\newcommand{\bu}{\mathbf{u}}
\newcommand{\bO}{\mathbb{O}}
\renewcommand{\P}{\mathbb{P}}
\newcommand{\bQ}{\mathbf{Q}}
\newcommand{\sB}{\mathscr{B}}
\newcommand{\sGB}{\mathscr{GB}}
\newcommand{\sQ}{\mathscr{Q}}
\newcommand{\bX}{\mathbb{X}}
\newcommand{\bW}{\mathbb{W}}
\newcommand{\gX}{\mathfrak{X}}
\newcommand{\gY}{\mathfrak{Y}}
\newcommand{\Q}{\mathbb{Q}}
\newcommand{\R}{\mathbb{R}}
\newcommand{\ra}{\mathrm{rat}}
\newcommand{\Z}{\mathbb{Z}}
\newcommand{\sZ}{\mathscr{Z}}
\newcommand{\si}{\frac{\infty}{2}}
\newcommand{\SL}{\mathop{SL}}
\newcommand{\Ga}{\mathbb G_a}
\newcommand{\Gm}{\mathbb G_m}
\title{Frobenius splitting of Schubert varieties of semi-infinite flag manifolds\footnote{MSC2010: 20G44}}
\author{Syu \textsc{Kato}\footnote{Department of Mathematics, Kyoto University, Oiwake Kita-Shirakawa Sakyo Kyoto 606-8502 JAPAN \tt{E-mail:syuchan@math.kyoto-u.ac.jp}}\footnote{Originally published in {\it Forum of Mathematics, Pi} {\bf 9} e5 (2021) 56pp.}}
\begin{document}
\maketitle
 
\begin{abstract}
We exhibit basic algebro-geometric results on the formal model of semi-infinite flag varieties and its Schubert varieties over an algebraically closed field $\bK$ of characteristic $\neq 2$ from scratch. We show that the formal model of a semi-infinite flag variety admits a unique nice (ind)scheme structure, its projective coordinate ring has a $\Z$-model, and it admits a Frobenius splitting compatible with the boundaries and opposite cells in positive characteristic. This establishes the normality of the Schubert varieties of the quasi-map space with a fixed degree (instead of their limits proved in [K, Math. Ann. {\bf 371} no.2 (2018)]) when $\mathsf{char} \, \bK =0$ or $\gg 0$, and the higher cohomology vanishing of their nef line bundles in arbitrary characteristic $\neq 2$. Some particular cases of these results play crucial roles in our proof [K, arXiv:1805.01718] of a conjecture by Lam-Li-Mihalcea-Shimozono [J. Algebra {\bf 513} (2018)] that describes an isomorphism between affine and quantum $K$-groups of a flag manifold.
\end{abstract} 

\section*{Introduction}
The semi-infinite flag varieties are variants of affine flag varieties that encode the modular representation theory of a semi-simple Lie algebra, representation theory of a quantum group at roots of unity, and representation theory of an affine Lie algebra at the critical level. They originate from the ideas of Lusztig \cite{Lus80} and Drinfeld, put forward by Feigin-Frenkel \cite{FF}, and subsequently polished by the works of Braverman, Finkelberg, and their collaborators \cite{FM99,FFKM,ABBGM,Bra06,BFFR,BF14a,BF14b,BF14c}. They (mainly) employed the ind-model of semi-infinite flag varieties and achieved spectacular success on the geometric Langlands correspondence \cite{ABBGM,BFGM}, on the quantum $K$-groups of flag manifolds \cite{BF14a}, and on their (conjectural) relation to the finite $\mathcal W$-algebras \cite{BFFR}.

In \cite{KNS17}, we have initiated the study of the formal model of a semi-infinite flag variety (over $\C$) that follows the classical description of flag varieties \cite{Kum02,Mat88,Lit98,KL02} more closely than the above. We refer this formal model of a semi-infinite flag variety as a ``semi-infinite flag manifold" since we hope to justify that it is ``smooth" in a sense. However, the analysis in \cite{KNS17} has two defects: the relation with the ind-models of semi-infinite flag varieties is unclear, and the treatment there is rather ad hoc (it is just an indscheme whose set of $\C$-valued points have the desired property, and lacks a characterization as a functor cf. \cite{BL94, Fal03}). The former defect produces difficulty in the discussion of deep properties on the identification between the equivariant $K$-group of a semi-infinite flag manifold and the equivariant quantum $K$-group of a flag manifold \cite{Kat18c}, that is in turn inspired by the works of Givental and Lee \cite{Giv94, GL03}. The goal of this paper is to study semi-infinite flag manifolds in characteristic $\neq 2$ from scratch, and resolve the above defects. In particular, we verify that the scheme in \cite{KNS17} is the universal one among all the indschemes with similar set-theoretic properties, and provide new proofs of the normality of Zastava spaces \cite{BF14a} and of the semi-infinite flag manifolds \cite{KNS17}.

It is possible to regard our works (\cite{Kat18,Kat18b,KNS17,Kat18c,Kat19a}) as a part of catch-up of Peterson's original construction \cite{Pet97} of his isomorphism \cite{LS10} between the quantum cohomology of a flag manifold and the cohomology of an affine Grassmannian in the $K$-theoretic setting. From this view-point, this paper provides some varieties considered in \cite[Lecture 11]{Pet97} with their appropriate compactifications. Hence, though there are still some missing pieces to complete the original program along the lines in \cite{Pet97}, this paper provides a step to fully examine his ideas.

\medskip

To explain our results more precisely, we introduce more notation: Let $\g$ denote a simple Lie algebra (given in terms of root data and the Chevalley generators) over an algebraically closed field $\bK$ of characteristic $\neq 2$. Let $G$ denote the connected simply-connected algebraic group over $\bK$ such that $\g = \mathrm{Lie} \, G$. Let $H \subset G$ be a Cartan subgroup and let $N$ be an unipotent radical of $G$ that is normalized by $H$. We set $B:= HN$ and $\sB := G / B$ (the flag manifold of $G$). Let $\bI^+ \subset G ( \bK [\![ z ]\!] )$ denote the Iwahori subgroup that contains $B$, and let $\bI^- \subset G ( \bK [z^{-1}])$ be its opposite Iwahori subgroup. Let $\tg$ denote the untwisted affine Kac-Moody algebra associated to $\g$. Let $W$ and $W_\af$ be the finite Weyl group and the affine Weyl group of $\g$, respectively. The coroot lattice $Q^{\vee}$ of $\g$ yields a natural subgroup $\{ t_{\beta} \}_{\beta \in Q^{\vee}} \subset W_\af$. Let $w_0 \in W$ be the longest element.

Our first main result is as follows:

\begin{fthm}[$\doteq$ Theorem \ref{bQ-int} and Proposition \ref{bQcoarse}]\label{fbQ}
There is an indscheme $\bQ_G ^{\ra}$ with the following properties:
\begin{enumerate}
\item The indscheme $\bQ_G ^{\ra}$ is expressed as the union of infinite type integral schemes flat over $\Z$;
\item If we set $( \bQ_G ^{\ra} )_\bK := \bQ_G ^{\ra} \otimes _\Z \bK$, then we have
$$( \bQ_G ^{\ra} )_\bK ( \bK ) \cong G ( \bK (\!(z)\!)) / ( H ( \bK ) N ( \bK (\!(z)\!)) )$$
that intertwines the natural $G ( \bK (\!(z)\!) ) \ltimes \Gm ( \bK )$-actions on the both sides, where $\Gm$ is the loop rotation;
\item The functor
$$\Aff^{op}_{\bK} \ni R \mapsto G ( R (\!(z)\!)) / ( H ( R ) N ( R (\!(z)\!)) )\in \mathrm{Sets}.$$
is coarsely ind-representable by $( \bQ_G ^{\ra} )_\bK$ $($see \S \ref{cmp} for the convention$)$. 
\end{enumerate}
\end{fthm}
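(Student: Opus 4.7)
The plan is to realize $\bQ_G^{\mathrm{rat}}$ as an ascending union of Schubert-type pieces $\bQ_G(w)$ indexed by $w \in W_\af$, each an orbit closure in the projectivization of level-one integrable $\tg$-modules. Concretely, for each fundamental level-one dominant weight $\Lambda$, I would fix a Garland $\Z$-form $L_\Z(\Lambda)$ of the integrable highest-weight $\tg$-module $L(\Lambda)$ together with extremal weight vectors $v_{w\Lambda}$ for $w \in W_\af$, and define $\bQ_G(w)$ as the Zariski closure of the orbit of the line $([v_{w\Lambda}])_\Lambda$ under a $\Z$-form of $\bI^-$ inside $\prod_\Lambda \bP(L_\Z(\Lambda))$. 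Then $\bQ_G^{\mathrm{rat}} := \varinjlim_w \bQ_G(w)$ is the desired indscheme. Property (i) follows from an affine standard monomial theory: one exhibits a $\Z$-basis of the homogeneous coordinate ring of $\bQ_G(w)$ indexed by appropriate affine Lakshmibai-Seshadri paths, whose characters match the expected specializations in every characteristic, so that flatness over $\Z$ and integrality of the generic fibre hold simultaneously, and whose straightening relations give compatible closed embeddings $\bQ_G(w) \hookrightarrow \bQ_G(w')$ for $w \le w'$ in Bruhat order.

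For (ii), base-changing to $\bK$, I would invoke the Birkhoff-Bruhat decomposition $G(\bK(\!(z)\!)) = \bigsqcup_{w \in W_\af} \bI^- w \bI^+$ together with the Iwahori factorization of $\bI^+$. The orbit map $g \mapsto (g \cdot [v_{w\Lambda}])_\Lambda$ descends through $H(\bK)N(\bK(\!(z)\!))$ since the latter stabilizes every extremal weight line up to a scalar; on each stratum this yields a bijection onto the open Schubert cell of $\bQ_G(w)(\bK)$, and equivariance under $G(\bK(\!(z)\!)) \ltimes \Gm(\bK)$ is automatic from the construction.

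Property (iii) is the principal obstacle. I would upgrade the orbit construction to a natural transformation from the functor $R \mapsto G(R(\!(z)\!))/H(R)N(R(\!(z)\!))$ to $\Hom(\Spec R, (\bQ_G^{\mathrm{rat}})_\bK)$: given $g \in G(R(\!(z)\!))$, choose $w$ so that $g \cdot v_{w\Lambda}$ is defined as a section of $L(\Lambda) \otimes R$, and assemble the resulting line sub-bundles into a morphism $\Spec R \to \bQ_G(w)$. Coarse representability in the sense of \S\ref{cmp} then amounts to verifying the universal property spelled out there against this natural transformation. The hard part is twofold: first, producing the Plücker-type data Zariski-locally on $\Spec R$ requires controlling how the $H(R)N(R(\!(z)\!))$-indeterminacy rescales each component $g \cdot v_{w\Lambda}$ in a coherent way across $\Lambda$; second, one must ensure that an arbitrary $R$-point of the indscheme is realized at bounded Schubert depth, which follows for Noetherian $R$ from the filtration by the $\bQ_G(w)$ but needs a more delicate compactness argument in general. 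A secondary technical nuisance is providing the standard monomial basis over $\Z$ without the usual characteristic-zero crutches, which I would address by combining Mathieu's Frobenius-splitting machinery (available at the level of \emph{affine} Demazure modules) with the flatness statement derived from matching Hilbert series across closed points of $\Spec \Z$.
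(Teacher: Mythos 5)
Your construction starts from the wrong family of modules, and this defect is fatal rather than cosmetic. You propose to embed $\bQ_G(w)$ into $\prod_\Lambda \P(L_\Z(\Lambda))$ for \emph{level-one} dominant $\Lambda$, and to define it as the closure of an $\bI^-$-orbit of an extremal weight line. But in a level-one integrable highest-weight module $L(\Lambda)$, the stabilizer of the highest (or any extremal) weight line inside $G(\bK(\!(z)\!))$ is essentially an Iwahori-type parabolic, \emph{not} $H(\bK)N(\bK(\!(z)\!))$. Concretely, for $\alpha \in \Delta_+$ and $m < 0$ the affine root $\alpha + m\delta$ is \emph{negative}, so the one-parameter subgroup $\rho_\alpha(cz^m) \subset N(\bK(\!(z)\!))$ does not fix $[v_\Lambda]$. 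Your claim in (ii) that ``$H(\bK)N(\bK(\!(z)\!))$ stabilizes every extremal weight line up to a scalar'' is therefore false for level-one modules, and the orbit map does not descend to the coset space you need. What your $\bI^-$-orbit closures actually produce are opposite (thick) Schubert varieties of the \emph{affine} flag indscheme $G(\bK(\!(z)\!))/\bI^+$, which is a different object from $\bQ_G^{\mathrm{rat}}$.

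The paper works entirely with \emph{level-zero} data precisely to fix this. The ind-pieces are defined as $\bQ_G(w) := \mathrm{Proj}\,\bigoplus_{\la\in P_+}\bW_{ww_0}(\la)^\vee_\Z$, where $\bW_{ww_0}(\la) = U(\gI)\bv_{ww_0\la} \subset \bX(\la)$ is the Demazure submodule of the level-zero extremal weight module (equivalently, a generalized global Weyl module); the integral structure comes from Kashiwara's global bases at $\mathsf q=1$ (Theorems \ref{b-compat}, \ref{W-unit}, Lemma \ref{WX-comm}). For properties (ii) and (iii), the embedding used is $( \bQ_{G,\tJ}^{\mathrm{rat}})_\bK \hookrightarrow \prod_i \P(V(\varpi_i)_\bK \otimes \bK(\!(z)\!))$, built from the finite-dimensional Weyl modules tensored with Laurent series — and here $N(\bK(\!(z)\!))$ genuinely stabilizes the highest weight lines, so the orbit map descends (Propositions \ref{wbQ-mod}, \ref{bQ-mod}, Theorem \ref{bQ-int}). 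The level-one modules $L(\Lambda)$ do appear in the paper, but only as an auxiliary device: there is a surjection $L(\Lambda)_\Z \twoheadrightarrow \bW^-(\bar\Lambda)_\Z$ of $U^-_\Z$-modules (Theorem \ref{W-quot}) which is used to \emph{transport} the Frobenius splitting of the affine flag manifold to $\bQ_G^{\mathrm{rat}}$; they are not the target of the projective embedding. Your remark about standard monomial / LS-path bases for flatness over $\Z$ is in the right spirit (the paper uses compatibility with global bases to the same effect), and your outline of the coarse-representability verification via bounding Schubert depth is plausible in broad strokes, but the paper's actual proof of Proposition \ref{bQcoarse} relies on a delicate Bott-Samelson-type resolution $Z(\bi)^\circ \to \bQ_G(e)$ together with the already-established weak normality of the Schubert varieties; neither of these ingredients is visible in your sketch. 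To repair your approach you would need to replace $L(\Lambda)$ throughout by the level-zero objects $\bX(\la)$ / $\bW_{ww_0}(\la)$ / $V(\varpi_i) \otimes \bK(\!(z)\!)$ and rework the stabilizer computation.
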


One can equip $( \bQ_G ^{\ra} )_\bK ( \bK )$ with an indscheme structure using the arc scheme of the basic affine space $\overline{G/N}$. Such an indscheme {\it cannot} coincide with ours (in general) by the appearance of the nontrivial nilradicals \cite{Mus01,FM17,FM18}. In fact, such an indscheme defines a radicial thickening of $( \bQ_G ^{\ra} )_\bK$.

The set of $H \times \Gm$-fixed points of $( \bQ_G ^{\ra} )_\bK$ is in bijection with $W_\af$. Let $p_w \in ( \bQ_G ^{\ra} )_\bK$ be the point corresponding to $w \in W_\af$. We set $\bO ( w ) := \bI^+ p_w$ and $\bO^- ( w ) := \bI^- p_w$ for each $w \in W_\af$.

Theorem \ref{fbQ} has some applications to the theory of quasi-map spaces from $\P^1$ to $\sB$ presented in \cite{FM99,FFKM,BF14a,BF14b,BF14c} as follows:

\begin{fthm}\label{fmain}
In the above settings, it holds:
\begin{enumerate}
\item $($Corollary {\rm\ref{FQ-uniq}} and Theorem {\rm\ref{QQ-isom}}$)$ If $\mathsf{char} \, \bK > 0$, then the scheme $( \bQ_G^{\ra} )_{\bK}$ admits an $\bI^{\pm}$-canonical Frobenius splitting that is compatible with $\overline{\bO ( w )}$'s and $\overline{\bO ^- ( v )}$'s $(w, v \in W_\af)$; 
\item $($Lemma {\rm\ref{Q'red}} and Theorem {\rm\ref{cut-out-by-div}}$)$ For each $w, v \in W_\af$, the intersection $\sQ ( v, w ) := \overline{\bO ( w )} \cap \overline{\bO ^- ( v )}$ is reduced. It is irreducible when $v = w_0 t_{\beta}$ for some $\beta \in Q^{\vee}$;
\item $($Lemma {\rm\ref{Q-split}} and Corollary {\rm\ref{ggnormal}}$)$ For each $w, v \in W_\af$, the scheme $\sQ ( v, w )$ is weakly normal. It is $($irreducible and$)$ normal when $\mathsf{char} \, \bK = 0$ or $\mathsf{char} \, \bK \gg 0$;
\item $($Lemma {\rm\ref{Q-mod}}$)$ For each $\beta \in Q^{\vee}_+$, the set of $\bK$-valued points of the scheme $\sQ ( w_0 t_{\beta}, e )$ is in bijection with the set of $(\bK$-valued$)$ Drinfeld-Pl\"ucker data. In particular, $\sQ ( w_0 t_{\beta}, e )$ is isomorphic to the quasi-map space in {\rm\cite{FM99}} when $\bK = \C$.
\end{enumerate}
\end{fthm}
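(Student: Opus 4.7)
The plan is to deduce the four parts in sequence, with the Frobenius splitting of (1) doing the heavy lifting for (2) and (3); part (4) then follows by unpacking the Pl\"ucker description implicit in Theorem~\ref{fbQ}.

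For (1), I would build the splitting using the $\Z$-model supplied by Theorem~\ref{fbQ}. Since $\bQ_G^{\mathrm{rat}}$ is a union of integral $\Z$-flat schemes, it suffices to produce splittings on finite approximating pieces in a way compatible under the transition maps and with the $\bI^{\pm}$-orbit stratification. Imitating the Mathieu--Kumar construction for (affine) flag manifolds, the desired splitting should be the Cartier-type operator associated with a section $\sigma$ of an appropriate anti-canonical-type line bundle on $(\bQ_G^{\mathrm{rat}})_\bK$, of weight $(p-1)\rho_{\af}$, that vanishes to order exactly $p-1$ along every codimension-one $\bI^{\pm}$-boundary divisor. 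Concretely, $\sigma$ should arise as a product of extremal weight vectors in a $\Z$-form of a suitable critical-level (or global Weyl) module. The $\bI^{\pm}$-canonicity is forced by the uniqueness of such an extremal section, and the required vanishing orders can be checked at the $H \times \Gm$-fixed points $p_w$ using weight and tangent-space computations.

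Given such a compatible Frobenius splitting, part (2) is largely formal: intersections of compatibly split subschemes are compatibly split, hence reduced in positive characteristic; reducedness in characteristic zero is obtained by spreading out to the $\Z$-model. Irreducibility for $v = w_0 t_\beta$ follows once one observes that $\overline{\bO^-(w_0 t_\beta)}$ plays the role of a translated big cell: it meets each $\overline{\bO(w)}$ in a dense subset of a single stratum, from which irreducibility can be inferred by a Bruhat-style induction on the length of $w$. For part (3), weak normality in characteristic $p$ is a direct consequence of the compatible splitting, and the upgrade to normality in characteristic $0$ or $\gg 0$ proceeds by combining weak normality with the vanishing of $H^1$ of the ideal sheaf of the weak normalisation (itself a splitting-driven vanishing statement) and a flat-base-change argument on the $\Z$-model, so that normality on a single positive-characteristic fibre propagates to the generic fibre.

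For part (4), one uses the line bundles $\mathcal{L}_{\omega_i}$ to embed $\sQ(w_0 t_\beta, e)$ into a finite product of projective spaces attached to truncations $V(\omega_i) \otimes \bK[z]_{\leq \langle \beta, \omega_i \rangle}$; the $\bI^+$-orbit condition translates into the Pl\"ucker relations, while the $\overline{\bO^-(w_0 t_\beta)}$ condition imposes the degree bound $\beta$ on the sections. The resulting data are precisely the Drinfeld--Pl\"ucker data, and the identification with the Finkelberg--Mirkovi\'c quasi-map space over $\C$ then follows since both spaces have the same $\C$-points and are reduced by (2). The main obstacle is clearly (1): the scheme is not quasi-compact, so one must verify that the extremal section $\sigma$ is globally well-defined across the ind-system and that the compatibility with all semi-infinite boundaries holds simultaneously. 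The real work is representation-theoretic, namely producing a canonical $\Z$-form of the relevant module with a Garland-type basis adapted to the Iwahori decomposition; once this is in hand, the rest of the theorem should follow by standard Frobenius-splitting technology combined with the $\Z$-model.
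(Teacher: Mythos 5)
Your overall architecture — use the $\Z$-model, get a Frobenius splitting compatible with the strata, deduce reducedness and weak normality formally, upgrade to normality, and read off Drinfeld--Pl\"ucker data from the projective embedding — matches the paper's, but the two hardest steps are handled differently, and in the case of part (1) your proposed method is exactly the one the paper explains cannot work.

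For part (1), you propose the Mathieu--Kumar construction: find a section $\sigma$ of weight $(p-1)\rho_{\af}$ in an anti-canonical type bundle, vanishing to order $p-1$ along the boundary, and take the associated Cartier operator. The paper says explicitly (introduction, just after the statement of Theorem~\ref{fmain}) that this ``standard technique'' fails here: the Demazure character formula for $\bQ_G^{\mathrm{rat}}$ ``does not produce something out of a point,'' and ``the canonical bundle of $\bQ_G^{\mathrm{rat}}$ (or $\overline{\bO(w)}$) does not make sense naively.'' The strata $\bO(w)$ all have the same infinite codimension in the ambient ind-scheme, and there is no Bott--Samelson resolution starting from a point, so neither the residual section nor its vanishing order has an obvious meaning. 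What the paper does instead is purely representation-theoretic: it imports the canonical Frobenius splitting of the Kashiwara affine flag manifold (via the ring $R^{\af}=\bigoplus_\Lambda L(\Lambda)^\vee_\Z$, Theorem~\ref{flag-split}) and pushes it through the surjection $L(\Lambda)_\Z\twoheadrightarrow\bW^-(\bar\Lambda)_\Z$ of Theorem~\ref{W-quot}. The descent hinges on Proposition~\ref{proj-cover}, which identifies $\bW(\la)_\Bbbk$ as a projective cover so that the kernel of that surjection is generated by weight vectors outside $\mathrm{Conv}\,W\bar\Lambda$; combined with the $\bI^-$-canonicity of the splitting (Proposition~\ref{B-can-inv}) this forces the splitting to preserve the kernel. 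Your proposal contains none of this and, as stated, relies on an object (the canonical bundle) the paper says is not available.

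For part (3), you suggest upgrading weak normality to normality by proving ``vanishing of $H^1$ of the ideal sheaf of the weak normalisation.'' But the weak normalisation is by definition already the scheme itself, so that sheaf is zero; presumably you mean the conductor of the normalisation, and there is no obvious splitting-driven vanishing for it. The paper's actual mechanism is geometric: over $\C$ it resolves $\sQ(\beta,v,w)$ by the graph space $\sGB_{2,\beta}^\flat(w,v)$, which is unirational with rational singularities by Buch--Chaput--Mihalcea--Perrin, and then applies Proposition~\ref{conn-to-norm} (Stein factorisation plus weak normality) to reduce normality to connectedness of the fibres of $\pi_{\beta,v,w}$. The connectedness is obtained from Givental's main lemma describing those fibres and the connectedness results for $\sB_{n,\beta}[w]$; this requires a genuine deformation argument in the stable-map space (Theorem~\ref{Qnorm-w0}) and is not formal. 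Only after normality over $\C$ is established does the paper pass to $p\gg 0$ via an EGA~IV spreading-out argument (Proposition~\ref{n-char0}). Your proposal of a direct splitting-plus-cohomology-vanishing route to normality would need a substitute for this connectedness argument, and as written does not supply one.

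Parts (2) and (4) of your outline are essentially in line with the paper: reducedness from compatible splitting plus $\Z$-flatness (Corollary~\ref{Q'red}), irreducibility for $v=w_0 t_\beta$ by a covering-relation induction along boundary divisors (the proof of Theorem~\ref{QQ-isom} uses Theorem~\ref{cover} and $(H\times\Gm)$-invariant curves, a more careful version of your ``Bruhat-style induction''), and part (4) via the Pl\"ucker embedding into $\prod_i\P(V(\varpi_i)\otimes\bK[z]_{\le -\langle w_0\beta,\varpi_i\rangle})$ and Lemma~\ref{Q-mod}/Theorem~\ref{QQ-isom}.
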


Theorem \ref{fmain} is a key result at the deepest part (correspondence between natural bases) in our proof (\cite{Kat18c}) of a conjecture of Lam-Li-Mihalcea-Shimozono \cite{LLMS17} about the comparison between the equivariant $K$-group of the affine Grassmannian of $G$ and the equivariant small quantum $K$-group of $\sB$. In \cite{Kat18c}, we also prove that $\sQ ( w_0 t_{\beta}, w )$ admits only rational singularities (and hence it is Cohen-Macauley) when $\bK = \C$ on the basis of Theorem \ref{fmain}. We remark that Theorem \ref{fmain} 3) is proved in \cite{BF14a,BF14b} when $v = w_0 t_{\beta}$, $w = e$, and $\bK = \C$.

Our proof of Theorem \ref{fmain} 1) is not at all standard, and in fact it forms the core of the technical contributions in this paper. To appreciate its contents, let us recall that there are two standard ways to construct a Frobenius splitting of $\sB$ (\cite{BK05}): One is to consider the Bott-Samelson-Demazure-Hansen resolution of $\sB$, that reduces the assertion to the case of a point, which is a Schubert variety with trivial Frobenius splitting. Another is to analyze the space of global sections of (some power of) the canonical bundle of $\sB$.

However, the both of the above proof strategies do not work for $\bQ_G^{\ra}$. The first one fails since any Schubert variety of $\bQ_G^{\ra}$ is infinite-dimensional, and carries rich internal structure by itself. The second one fails since the canonical bundle of $\bQ_G^{\ra}$ simply does not make sense, at least naively. These require us some new ideas in order to prove Theorem \ref{fmain} 1). Our idea here is as follows: An interpretation of the filtrations in \cite{KL17} reduces the existence of a Frobenius splitting of $\bQ_G^\ra$ into a property of the Frobenius splitting of the corresponding thick affine flag manifold \cite[Corollary B]{Kat18b}. This property can be seen as a special case of some homological property in the representation theory of affine Lie algebras (\cite{CI15,CK18}), but it is proved only for characteristic zero. Thus, we use Kashiwara's theory of global basis (\cite{Kas94,Kas02}) to transfer such a homological property into the positive characteristic setting (Proposition \ref{proj-cover}).

In the rest of introduction, we assume $\bK = \C$ for the sake of simplicity. Let $P$ be the weight lattice of $H$, and let $P_+ \subset P$ denote its subset corresponding to dominant weights. For each $\la \in P$, we have an equivariant line bundle $\cO_{\bQ_G^{\ra}} ( \la )$ on $\bQ_G^{\ra}$, whose restriction to $\sQ ( v, w )$ is denoted by $\cO_{\sQ ( v, w )} ( \la )$. Associated to $\la \in P_{+}$, we have a level zero extremal weight module $\bX ( \la )$ of $U ( \tg )$ in the sense of Kashiwara \cite{Kas02}. We know that $\bX ( \la )$ is equipped with two kinds of Demazure modules, and a distinguished basis (the global basis).

\begin{fcor}[$\doteq$ Theorem \ref{coh}]\label{fcv}
Let $w, v \in W_\af$. For each $\la \in P_{+}$, we have
$$H^{>0} ( \sQ ( v, w ), \cO_{\sQ ( v, w )} ( \la )) = \{ 0 \}.$$
The space $H^0 ( \sQ ( v, w ), \cO_{\sQ ( v, w )} ( \la ))^{\vee}$ is the intersection of two Demazure modules of $\bX ( \la )$ spanned by a subset of the global basis of $\bX ( \la )$ if $\la$ is strictly dominant. If we have $w',v' \in W_\af$ such that $\sQ ( v',w' ) \subset \sQ ( v, w )$, then the restriction map
$$H^0 ( \sQ ( v, w ), \cO_{\sQ ( v, w )} ( \la )) \longrightarrow \!\!\!\!\! \rightarrow H^0 ( \sQ ( v', w' ), \cO_{\sQ ( v', w' )} ( \la ))$$
is surjective.
\end{fcor}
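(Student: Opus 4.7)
The plan is to work first in positive characteristic and descend to characteristic zero via the flat $\Z$-model of Theorem \ref{fbQ}(1) and semicontinuity. In characteristic $p>0$, Theorem \ref{fmain}(1) furnishes a Frobenius splitting of $(\bQ_G^{\mathrm{rat}})_\bK$ compatible with both $\overline{\bO(w)}$ and $\overline{\bO^-(v)}$. Since $\sQ(v,w)$ is reduced (Theorem \ref{fmain}(2)), this splitting descends to a Frobenius splitting of $\sQ(v,w)$ that is compatible with every smaller intersection $\sQ(v',w')\subset\sQ(v,w)$. This compatible splitting drives all three statements.

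For the vanishing $H^{>0}(\sQ(v,w),\cO(\lambda))=0$, I would apply the standard Frobenius-splitting injection
$$H^i(\sQ(v,w),\cO(\lambda))\hookrightarrow H^i(\sQ(v,w),\cO(p^n\lambda))$$
valid for every $n\geq 0$. When $\lambda$ is strictly dominant, $\cO_{\sQ(v,w)}(\lambda)$ is ample on the finite-dimensional projective scheme $\sQ(v,w)$, so Serre vanishing kills the right-hand side for $n\gg 0$, forcing the left-hand side to vanish. For general dominant $\lambda$, I would reduce to the strictly dominant case by realising $\cO(\lambda)$ as the pullback of a strictly dominant line bundle from a partial semi-infinite flag manifold along a surjective morphism with connected fibres, and applying a Leray spectral sequence to transport the vanishing (this is the semi-infinite analogue of the parabolic reduction in Kempf's theorem). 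Lifting to characteristic $0$ is then by semicontinuity on the integral model from Theorem \ref{fbQ}.

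To identify $H^0(\sQ(v,w),\cO(\lambda))^\vee$ with an intersection of two Demazure modules of $\bX(\lambda)$ in the strictly dominant case, I would combine the surjectivity of restriction (the third statement, specialised to the ambient Schubert varieties) with the identifications of \cite[Theorem A]{Kat18}. The latter gives $H^0(\overline{\bO(w)},\cO(\lambda))^\vee$ as a Demazure submodule of $\bX(\lambda)$ spanned by global basis elements, and dually for $\overline{\bO^-(v)}$. Pulling back along the surjections, $H^0(\sQ(v,w),\cO(\lambda))^\vee$ is realised as the intersection of the two submodules inside $\bX(\lambda)$, and the global-basis property is inherited since both factors are spanned by subsets of that basis.

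For the surjectivity of $H^0(\sQ(v,w),\cO(\lambda))\twoheadrightarrow H^0(\sQ(v',w'),\cO(\lambda))$, I would use the short exact sequence
$$0\to\mathcal{I}_{\sQ(v',w')/\sQ(v,w)}\otimes\cO(\lambda)\to\cO(\lambda)\to\cO_{\sQ(v',w')}(\lambda)\to 0,$$
reducing it to the vanishing $H^1(\sQ(v,w),\mathcal{I}\otimes\cO(\lambda))=0$. This vanishing is obtained by applying the Frobenius-splitting technique of \cite{BK05} to the compatibly split ideal sheaf, exactly as in the classical case. The main obstacle is the non-ampleness of $\cO(\lambda)$ when $\lambda$ is dominant but not strictly dominant: the bare Frobenius injection does not conclude, and the partial-flag reduction must be set up carefully in the semi-infinite context, requiring both the well-definedness of the relevant morphisms and enough control on direct images. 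A secondary difficulty is grounding $\sQ(v,w)$ as a genuine projective scheme on which standard Frobenius-splitting technology applies rather than an abstract subfunctor, which Theorem \ref{fmain}(2)-(3) provides.
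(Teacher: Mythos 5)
Your overall framework---Frobenius splitting in positive characteristic, the flat $\Z$-model, and semicontinuity/base-change to lift to characteristic zero---matches the paper's strategy, and you are right that the compatibility of the splitting with all the intermediate Richardson loci is the engine. However, there is a genuine gap at exactly the place you yourself flag as a worry: the vanishing for $\lambda$ that is nef but \emph{not} ample.

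The paper does not handle that case by a parabolic Leray reduction at the level of cohomology vanishing. Instead, it upgrades the bare compatible splitting to a \emph{$D$-splitting} (a Frobenius splitting relative to an ample divisor $D$), in the sense of Ramanan--Ramanathan: because the open $\bI$-orbit $\bO(\tJ,w)_{\F_p}\subset\bQ_{G,\tJ}(w)_{\F_p}$ is affine, the union $Z$ of codimension-one $\bI$-orbit closures in the boundary supports an ample divisor, and the $\bI$-canonical splitting (Corollary~\ref{FQ-uniq}) is compatible with $Z$, making $\bQ_{G,\tJ}(w)_{\F_p}$ $D$-split. Combined with the equidimensionality statement (Corollary~\ref{pure-irr}), this $D$-splitting restricts to a $D$-splitting of $\sQ'_\tJ(v,w)_{\F_p}$ that remains compatible with the smaller $\sQ'_\tJ(v,w')_{\F_p}$. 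Then \cite[Proposition~1.13(ii)]{Ram87} gives the higher cohomology vanishing for \emph{nef} line bundles and the surjectivity of restriction simultaneously, with no separate argument for degenerate $\lambda$ and no ideal-sheaf long exact sequence needed. Your proposed workaround---pushing down along a parabolic projection and invoking a Leray spectral sequence---is not what the paper does for the vanishing; the parabolic projection $\Pi_\tJ$ is used elsewhere (Corollary~\ref{H-inh}, Corollary~\ref{prRic}) to relate $H^0$-spaces, and it in fact \emph{relies} on Theorem~\ref{coh} already being proved, so using it as input to prove the vanishing would be circular.

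A second, smaller gap concerns the identification of $H^0(\sQ(v,w),\cO(\lambda))^\vee$. You propose to deduce it from the surjectivity of restriction plus \cite[Theorem~A]{Kat18}, but the actual argument computes $H^0$ as a piece of the graded normalization of the coordinate ring $R^v_w(\tJ)_{\F_p}$ and uses \emph{weak normality} (Lemma~\ref{Q-split}) to show that the graded piece $R^v_w(\tJ,\lambda)_{\F_p}$ already equals that normalization in strictly dominant degrees; only then does the global-basis description (Lemma~\ref{WX-comm}, Corollary~\ref{prRic}) fall out. Without the weak-normality step you would only get a containment, not the equality needed to produce the intersection of Demazure modules.
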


Note that Corollary \ref{fcv} adds new vanishing region to \cite[Theorem 3.1 1)]{BF14b}. 
We also provide parabolic versions of Theorem \ref{fbQ}, Theorem \ref{fmain}, and Corollary \ref{fcv}. We have a description of $H^0 ( \sQ ( v, w ), \cO_{\sQ ( v, w )} ( \la ))$ for general $\la \in P_+$ that is more complicated (Theorem \ref{SMT}).

Let $\sB_{2,\beta}$ be the space of genus zero stable maps with two marked points to $\sB$ with the class of its image $\beta \in Q^{\vee}_+ \subset Q^{\vee} \cong H_2 ( \sB, \Z )$. We have evaluation maps $\mathtt{e}_j : \sB_{2,\beta} \to \sB$ for $j = 1,2$. The following purely geometric result is a byproduct of our proof, that maybe of independent interest.

\begin{fcor}[$\doteq$ Corollary \ref{Bconn+}]\label{fconn}
Let $\beta \in Q^{\vee}_+$, and let $x, y \in \sB$. The space $( \mathtt{e}_1^{-1} ( x ) \cap \mathtt{e}_2^{-1} ( \overline{B y} ) )$ is connected if it is nonempty.
\end{fcor}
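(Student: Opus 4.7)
The plan is to pull the connectedness question back to the quasi-map space $\sQ(w_0 t_\beta, e)$ through the natural contraction morphism and to derive connectedness from the irreducibility of Theorem \ref{fmain}(2) together with the global-section surjectivity of Corollary \ref{fcv}.

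By the $G$-equivariance of $\mathtt{e}_1 \colon \sB_{2,\beta} \to \sB$, I may reduce to the case $x = eB$ at the cost of replacing $\overline{By}$ by an appropriate translate; both conditions are then stable under a common Borel subgroup. Next, the isomorphism in Theorem \ref{fmain}(4) yields a proper forgetful morphism
$$\pi \colon \sB_{2,\beta} \longrightarrow \sQ(w_0 t_\beta, e)$$
that collapses the unstable bubbles of a stable map into the underlying quasi-map. Since the target is irreducible by Theorem \ref{fmain}(2) and $\pi$ is proper and birational, all geometric fibers of $\pi$ are connected. The crucial step is the identification
$$\mathtt{e}_1^{-1}(x) \cap \mathtt{e}_2^{-1}(\overline{By}) = \pi^{-1}\bigl(\sQ(v,w)\bigr)$$
for suitable $v, w \in W_\af$: here $w$ encodes the point condition $x = eB$ at $z = 0$ via an $\bI^+$-orbit closure, while $v$ encodes the Schubert condition $\overline{By}$ at $z = \infty$ via an $\bI^-$-orbit closure. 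The inclusion $\subseteq$ follows from compatibility of $\pi$ with the rational evaluation maps $\sQ(w_0 t_\beta, e) \dashrightarrow \sB$; the reverse inclusion is a bubble-chasing argument using stability together with the semi-infinite Bruhat order description of orbit closures, invoking the Drinfeld--Pl\"ucker classification of Theorem \ref{fmain}(4).

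Given the identification, connectedness of the intersection reduces, via the connected fibers of $\pi$, to connectedness of $\sQ(v, w)$. This variety is reduced by Theorem \ref{fmain}(2), and Corollary \ref{fcv} applied with $\la = 0$ yields a surjection
$$H^0\bigl(\sQ(w_0 t_\beta, e), \cO\bigr) \twoheadrightarrow H^0\bigl(\sQ(v,w), \cO\bigr).$$
Since $\sQ(w_0 t_\beta, e)$ is irreducible, reduced, and proper, the left-hand side equals $\bK$, so the surjection forces $H^0(\sQ(v,w), \cO) = \bK$ whenever $\sQ(v,w)$ is nonempty, whence $\sQ(v,w)$ is connected. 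The main obstacle is the identification step: the rational evaluation maps on $\sQ(w_0 t_\beta, e)$ fail to be pointwise defined on the locus where the quasi-map acquires a base point at $z = 0$ or $z = \infty$, and one must verify that the bubble contracted by $\pi$ is forced by the semi-infinite closure conditions to carry the prescribed evaluation. This requires a careful comparison between the stability conditions defining $\sB_{2,\beta}$ and the Drinfeld--Pl\"ucker data parametrizing points of $\sQ(w_0 t_\beta, e)$.
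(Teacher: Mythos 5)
Your proposal has two genuine gaps, and they are precisely the obstacles that the paper's Section 4 is built to overcome.

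First, the step ``Since the target is irreducible by Theorem~\ref{fmain}(2) and $\pi$ is proper and birational, all geometric fibers of $\pi$ are connected'' is false as stated. Zariski's connectedness theorem requires the \emph{normality} of the target, not merely its irreducibility and reducedness (think of a proper birational map onto a nodal curve: the fiber over the node has two points). In fact the equivalence ``normality of $\sQ(\beta,v,w)$ $\Leftrightarrow$ all fibers of the graph-space resolution are connected'' is exactly Proposition~\ref{conn-to-norm}, and the whole point of \S 4.4 is to establish normality of $\sQ(\beta,w_0,w)$ (Corollary~\ref{Qnorm-w}) \emph{before} deducing Corollary~\ref{Bconn+}. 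The paper does this by a chain: normality of $\sQ(\beta)$ (Corollary~\ref{Qnorm-BF}) $\to$ normality of $\sQ(\beta,w_0,w_0)$ and $\sQ(\beta,e,w_0)$ via a specialization argument on the semicontinuous fiber-count $\Theta$ (Theorem~\ref{Qnorm-w0}) $\to$ normality of $\sQ(\beta,w_0,w)$ via the Bott--Samelson-type fibration $\pi_i$ (Corollaries~\ref{BSDH-typeQ} and~\ref{Qnorm-w}). None of this is a consequence of Corollary~\ref{fcv}, which only controls global sections, not local rings.

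Second, the identification $\mathtt{e}_1^{-1}(x) \cap \mathtt{e}_2^{-1}(\overline{By}) = \pi^{-1}(\sQ(v,w))$ is not the right shape of statement. The space $\mathtt{e}_1^{-1}(p_v) \cap \mathtt{e}_2^{-1}(\sB(w))$ sits inside $\sB_{2,\beta}$, and there is no morphism from $\sB_{2,\beta}$ to a quasi-map space; the natural proper birational morphism $\pi_{2,\beta}$ has source the graph space $\sGB^{\flat}_{2,\beta}$, not $\sB_{2,\beta}$. What actually holds, via Givental's main lemma (Theorem~\ref{desc-fib}), is that $\mathtt{e}_1^{-1}(p_v)\cap\mathtt{e}_2^{-1}(\sB(w))$ is the \emph{single fiber} of $\pi_{\beta,w,w_0}\colon \sGB^{\flat}_{2,\beta}(w,w_0)\to\sQ(\beta,w_0,w)$ over the constant quasi-map $\P^1\to\{p_v\}$ with all of its defect $\beta$ concentrated at $0\in\P^1$: the fiber decomposes as $\sB_{2,\beta_0}[v]\times\sB_{2,\beta_\infty}[v]\times\prod\sB_{1,\beta_y}[v]$, and with $\beta_0=\beta$ and the other $\beta_y=0$ one recovers exactly $\mathtt{e}_1^{-1}(p_v)$; the further condition $\mathtt{e}_2\in\sB(w)$ is what cuts out the fiber of the restricted map. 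So the statement you need is about a \emph{fiber}, not about a preimage of a whole subvariety, and the argument about $H^0(\sQ(v,w),\cO)=\bK$ plays no role. Connectedness of that fiber then follows, by the normality chain described above, which is where the real content lies.
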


Note that Corollary \ref{fconn} is contained in \cite{BCMP} whenever $\{x\},\overline{By} \subset \sB$ are in general position.

\medskip

The plan of this paper is as follows: In section one, we collect basic material needed in the sequel. In section two, after recalling generalities on Frobenius splitting and representation theory of quantum loop algebras, we construct the indscheme $\bQ_G^{\ra}$ and equip it with a Frobenius splitting (Corollary \ref{FQ-uniq}). In section three, we first interpret $\bQ_G^{\ra}$ as an indscheme (coarsely) representing the coset $G ( \bK (\!(z)\!) ) / (H ( \bK ) N ( \bK (\!(z)\!) ))$ (Theorem \ref{fbQ}). Using this, we identify some Richardson varieties of $\bQ_G^{\ra}$ with quasi-map spaces (Theorem \ref{QQ-isom}) and present their cohomological properties (Theorem \ref{coh}), and hence prove (large parts of) Theorem \ref{fmain} and Corollary \ref{fcv}. Since our construction equips quasi-map spaces with Frobenius splittings (Lemma \ref{Q-split}), they are automatically weakly normal. Moreover, we explain how to connect characteristic zero and positive characteristic (\S \ref{subsec:tofrom}). In section four, we analyze the fibers of the graph space resolutions of quasi-map spaces and deduce that Richardson varieties of semi-infinite flag manifolds (over $\C$) are normal based on the weak normality proved in the previous section. This proves the remaining part of Theorem \ref{fmain}. Our analysis here contains an inductive proof that the fibers of the evaluation maps of the space of genus zero stable maps to flag varieties are connected (Corollary \ref{fconn}). In Appendix A, we give a proof of the normality of (the ind-pieces of) $\bQ_G^{\ra}$ (that works also in the positive characteristic setting) and present an analogue of the Kempf vanishing theorem \cite{Kem76} for $\bQ_G^\ra$. Appendix B exhibits the structure of global sections of nef line bundles of Richardson varieties of $\bQ_G^{\ra}$.

Note that Theorem \ref{fmain} equips a quasi-map space from $\P^1$ to $\sB$ with a Frobenius splitting compatible with the boundaries. However, the notion of boundary in quasi-map spaces depend on a configuration of points in $\P^1$ (we implicitly set them to $\{0,\infty\} \subset \P^1$ throughout this paper). This makes our analogues of open Richardson varieties not necessarily smooth contrary to the original case \cite{Ric92} (see also \cite[\S 8.4.1]{FM99}). The author hope to give more account of this, as well as the factorization structure (\cite[\S 6.3]{FM99}) from the view point presented in this paper, in his future works.

\section{Preliminaries}

We work over an algebraically closed field $\bK$ unless stated otherwise. A vector space is a $\bK$-vector space, and a graded vector space refers to a $\Z$-graded vector space whose graded pieces are finite-dimensional and its grading is bounded from the above or from the below. Tensor products are taken over $\bK$ unless specified otherwise.

Let $A$ be a PID. For a graded free $A$-module $M = \bigoplus _{m \in \Z} M_m$, we set $M^{\vee} := \bigoplus _{m \in \Z} \mathrm{Hom}_A ( M_m, A )$, where $\mathrm{Hom}_A ( M_m, A )$ is understood to be degree $-m$.

As a rule, we suppress $\emptyset$ and associated parenthesis from notation. This particularly applies to $\emptyset = \tJ \subset \tI$ frequently used to specify parabolic subgroups.

\subsection{Groups, root systems, and Weyl groups}\label{subsec:groups}
We refer to \cite{CG97, Kum02} for precise expositions of general material presented in this subsection.

Let $G$ be a connected, simply connected simple algebraic group of rank $r$ over an algebraically closed field $\bK$, and let $B$ and $H$ be a Borel subgroup and a maximal torus of $G$ such that $H \subset B$. We set $N$ $(= [B,B])$ to be the unipotent radical of $B$ and let $N^-$ be the opposite unipotent subgroup of $N$ with respect to $H$. We denote the Lie algebra of an algebraic group by the corresponding German small letter. We have a (finite) Weyl group $W := N_G ( H ) / H$. For an algebraic group $E$, we denote its set of $\bK [z]$-valued points by $E [z]$, its set of $\bK [\![z]\!]$-valued points by $E [\![z]\!]$, and its set of $\bK (\!(z)\!)$-valued points by $E (\!(z)\!)$ etc... Let $\mathbf I \subset G [\![z]\!]$ be the preimage of $B \subset G$ via the evaluation at $z = 0$ (the Iwahori subgroup of $G [\![z]\!]$). We set $\bI^- \subset G [z^{-1}]$ be the opposite Iwahori subgroup of $\bI$ in $G (\!(z)\!)$ with respect to $H$. By abuse of notation, we might consider $\bI$ and $G [\![z]\!]$ as pro-algebraic groups over $\bK$ whose $\bK$-valued points are given as these.

Let $P := \mathrm{Hom} _{gr} ( H, \Gm )$ be the weight lattice of $H$, let $\Delta \subset P$ be the set of roots, let $\Delta_+ \subset \Delta$ be the set of roots that yield root subspaces in $\gb$, and let $\Pi \subset \Delta _+$ be the set of simple roots. We set $\Delta_- := - \Delta_+$. Let $Q^{\vee}$ be the dual lattice of $P$ with a natural pairing $\left< \bullet, \bullet \right> : Q^{\vee} \times P \rightarrow \Z$. We define $\Pi^{\vee} \subset Q ^{\vee}$ to be the set of positive simple coroots, and let $Q_+^{\vee} \subset Q ^{\vee}$ be the set of non-negative integer span of $\Pi^{\vee}$. For $\beta, \gamma \in Q^{\vee}$, we define $\beta \ge \gamma$ if and only if $\beta - \gamma \in Q^{\vee}_+$. We set $P_+ := \{ \lambda \in P \mid \left< \alpha^{\vee}, \lambda \right> \ge 0, \hskip 2mm \forall \alpha^{\vee} \in \Pi^{\vee} \}$ and $P_{++} := \{ \lambda \in P \mid \left< \alpha^{\vee}, \lambda \right> > 0, \hskip 2mm \forall \alpha^{\vee} \in \Pi^{\vee} \}$. Let $\mathtt I := \{1,2,\ldots ,r\}$. We fix bijections $\mathtt I \cong \Pi \cong \Pi^{\vee}$ such that $i \in \tI$ corresponds to $\alpha_i \in \Pi$, its coroot $\alpha_i^{\vee} \in \Pi ^{\vee}$, and a simple reflection $s_i \in W$ corresponding to $\alpha_i$. Let $\{\varpi_i\}_{i \in \tI} \subset P_+$ be the set of fundamental weights (i.e. $\left< \al_i^{\vee}, \varpi_j \right> = \delta_{ij}$) and $\rho := \sum_{i \in \tI} \varpi_i = \frac{1}{2}\sum_{\al \in \Delta^+} \al \in P_+$.

For a subset $\tJ \subset \tI$, we define $P ( \tJ )$ as the standard parabolic subgroup of $G$ corresponding to $\tJ$. I.e. we have $\gb \subset \mathfrak p (\tJ) \subset \g$ and $\mathfrak p (\tJ)$ contains the root subspace corresponding to $- \alpha_i$ ($i \in \tI$) if and only if $i \in \tJ$. Let $H \subset L ( \tJ ) \subset P ( \tJ )$ be the standard Levi subgroup (that is isomorphic to the quotient of $P ( \tJ )$ by its unipotent radical). Then, the set of characters of $P ( \tJ )$ is identified with $P_{\tJ} := \sum_{i \in \tI \setminus \tJ} \Z \varpi_i$. We also set $P_{\tJ, +} := \sum_{i \in \tI \setminus \tJ} \Z_{\ge 0} \varpi_i = P_+ \cap P_{\tJ}$ and $P_{\tJ, ++} := \sum_{i \in \tI \setminus \tJ} \Z_{\ge 1} \varpi_i = P_{++} \cap P_{\tJ}$. We define $W_\tJ \subset W$ to be the reflection subgroup generated by $\{s_i\}_{i \in \tJ}$. It is the Weyl group of $[L ( \tJ ), L (\tJ) ]$ and $L ( \tJ )$. We define $\rho_\tJ$ to be the half-sum of positive roots whose root spaces are contained in the unipotent radical of $\mathfrak p ( \tJ )$.

Let $\Delta_{\af} := \Delta \times \Z \delta \cup \{m \delta\}_{m \neq 0}$ be the untwisted affine root system of $\Delta$ with its positive part $\Delta_+ \subset \Delta_{\af, +}$. We set $\alpha_0 := - \vartheta + \delta$, $\Pi_{\af} := \Pi \cup \{ \alpha_0 \}$, and $\mathtt I_{\af} := \mathtt I \cup \{ 0 \}$, where $\vartheta$ is the highest root of $\Delta_+$. We set $W _{\af} := W \ltimes Q^{\vee}$ and call it the affine Weyl group. It is a reflection group generated by $\{s_i \mid i \in \mathtt I_{\af} \}$, where $s_0$ is the reflection with respect to $\alpha_0$. We also have a reflection $s_{\alpha} \in W_\af$ corresponding to $\alpha \in \Delta \times \Z \delta \subsetneq \Delta^\af$. Let $\ell : W_\af \rightarrow \Z_{\ge 0}$ be the length function and let $w_0 \in W$ be the longest element in $W \subset W_\af$. Together with the normalization $t_{- \vartheta^{\vee}} := s_{\vartheta} s_0$ (for the coroot $\vartheta^{\vee}$ of $\vartheta$), we introduce the translation element $t_{\beta} \in W _{\af}$ for each $\beta \in Q^{\vee}$.

For each $i \in \tI_{\af}$, we have a connected algebraic group $\SL ( 2, i )$ that is isomorphic to $\SL ( 2 )$ equipped with an inclusion $\SL ( 2, i ) ( \bK ) \subset G (\!(z)\!)$ as groups corresponding to $\pm \al_i \in \tI_{\af}$. Let $\rho_{\pm \alpha_i} : \Gm \rightarrow \SL ( 2, i )$ denote the unipotent one-parameter subgroup corresponding to $\pm \alpha _i \in \Delta_{\af}$. We set $B_i := \SL ( 2, i ) \cap \mathbf I$, that is a Borel subgroup of $\SL ( 2, i )$. For each $i \in \tI$, we set $P_i := P ( \{ i \} )$. For each $i \in \tI_\af$, we set $\bI ( i ) := \SL ( 2, i ) \bI$. Each $\bI (i)$ can be regarded as a pro-algebraic group.

As a variation of \cite[Chapter V\!I]{Kum02}, we say an indscheme $\mathfrak X$ over $\bK$ admits a $G(\!(z)\!)$-action if it admits an action of $\bI$ and $\SL ( 2, i )$ ($i \in \tI_\af$) as (ind)schemes over $\bK$ that coincides on $B_i = ( \bI \cap \SL ( 2, i ) )$, and they generate a $G(\!(z)\!)$-action on the set of closed points of $\mathfrak X$ (the latter is a group action on a set). We consider the notion of $G(\!(z)\!)$-equivariant morphisms accordingly.

We set
$$Q^{\vee}_< := \{\beta \in Q^{\vee} \mid \left< \beta, \al_i \right> < 0, \forall i \in \tI \}.$$

Let $\le$ be the Bruhat order of $W_\af$. In other words, $w \le v$ holds if and only if a subexpression of a reduced decomposition of $v$ yields a reduced decomposition of $w$. We define the generic (semi-infinite) Bruhat order $\le_\si$ as:
\begin{equation}
w \le_\si v \Leftrightarrow w t_{\beta} \le v t_{\beta} \hskip 5mm \text{for every } \beta \in Q^{\vee} \text{ such that } \left< \beta, \al_i \right> \ll 0 \text{ for } i \in \tI. \label{si-ord}
\end{equation}
By \cite{Lus80}, this defines a preorder on $W_{\af}$. Here we remark that $w \le v$ if and only if $w \ge_\si v$ for $w, v \in W$. We also have
\begin{equation}
w \le_\si v \Leftrightarrow ww_0 \ge_\si vw_0 \hskip 5mm w,v\in W_\af.\label{ord-opp}
\end{equation}
For proofs and related results, we refer to \cite[\S 2.2]{KNS17} and \cite[Lecture 13]{Pet97}.

For each $u \in W$ and $\beta \in Q^{\vee}$, we set
$$\ell^\si ( u t_{\beta} ) := \ell ( u ) + \sum_{\al \in \Delta_+} \left< \beta, \al \right> = \ell ( u ) + 2 \left< \beta, \rho \right>.$$

\begin{thm}[Lusztig \cite{Lus80} cf. \cite{LS10} Proposition 4.4]\label{cover}
For each $w, v \in W_\af$ such that $w \le_\si v$, there exists $\al \in \Delta_+^\af$ such that $ w \le_\si s_{\al} v \le_\si v$ and $\ell^{\si} ( s_{\al} v ) = \ell^{\si} ( v ) + 1$. \hfill $\Box$
\end{thm}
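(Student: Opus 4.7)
The plan is to reduce the claim to the classical Lifting (``Z-'') Lemma for the ordinary Bruhat order on $W_\af$ via translation by a deeply antidominant coroot, and transport the resulting cover back via a short length computation.

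By the definition of $\le_\si$, fix $\beta_0 \in Q^\vee$ with $\langle\beta_0, \al_i\rangle \ll 0$ for every $i \in \tI$, so that $w t_\beta \le v t_\beta$ in the ordinary Bruhat order on $W_\af$ for every $\beta \in \beta_0 + Q^\vee_<$. Assuming $w <_\si v$, the classical Lifting Lemma produces for each such $\beta$ a positive real affine root $\al = \al(\beta) \in \Delta_+^\af$ satisfying
\begin{equation*}
w t_\beta \le s_\al v t_\beta \lessdot v t_\beta,
\end{equation*}
where $\lessdot$ denotes the length-one covering in the ordinary Bruhat order and we use the identity $s_\al(v t_\beta) = (s_\al v) t_\beta$. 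This presents $s_\al v$ as a candidate intermediate element.

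The substantive step is to select a single $\al \in \Delta_+^\af$ valid on a cofinal antidominant subcone of $\beta$, which is what converts the ordinary Bruhat cover into a cover for $\le_\si$. The analysis rests on the observation that, for fixed $v$, whether $(s_\al v) t_\beta \lessdot v t_\beta$ persists for arbitrarily deep $\beta$ is controlled by $v^{-1}\al \in \Delta^\af$ alone: writing $v^{-1}\al = \al' + k\delta$ with $\al' \in \Delta \cup \{0\}$, the sign of the finite part $\al'$ asymptotically determines whether $(v t_\beta)^{-1}\al$ is a negative affine root, while the length-one descent condition is preserved under further deepening of $\beta$ because deep antidominant translates neither create nor destroy covers of $v$ through a fixed affine root. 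The persistence of the remaining inequality $w t_\beta \le (s_\al v) t_\beta$ on a cofinal subcone follows from the same asymptotic stability applied to the pair $w$ and $s_\al v$. Combining these yields $w \le_\si s_\al v \le_\si v$.

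For the length increment $\ell^\si(s_\al v) = \ell^\si(v) + 1$, write $v = ut_\gamma$, $s_\al v = u't_{\gamma'}$ with $u, u' \in W$ and $\gamma, \gamma' \in Q^\vee$. A direct manipulation of the Iwahori-Matsumoto length formula yields, for $\eta \in Q^\vee$ sufficiently antidominant,
\begin{equation*}
\ell(x t_\eta) = -2\langle \eta, \rho\rangle - \ell(x) \quad \text{for every } x \in W.
\end{equation*}
Applying this to $vt_\beta = u t_{\gamma+\beta}$ and $s_\al v t_\beta = u' t_{\gamma'+\beta}$ for deep $\beta$, subtracting, and invoking the covering relation $\ell(vt_\beta) - \ell(s_\al v t_\beta) = 1$, the $\beta$-dependent terms cancel, leaving $\ell(u') - \ell(u) - 2\langle\gamma - \gamma', \rho\rangle = 1$; by the definition $\ell^\si(ut_\gamma) = \ell(u) + 2\langle\gamma, \rho\rangle$ this is exactly $\ell^\si(s_\al v) - \ell^\si(v) = 1$. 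The main obstacle is the uniformity step: since the Lifting Lemma a priori produces the reflection as a function $\al(\beta)$, verifying that a single $\al$ witnesses the covering on a cofinal antidominant subcone is precisely what upgrades the ordinary Bruhat cover into a cover compatible with $\le_\si$.
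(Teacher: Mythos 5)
Your translation principle is the right one, and the length bookkeeping at the end is correct: once an $\al \in \Delta_+^\af$ is \emph{fixed} and $\beta$ is taken deep antidominant relative to both $v$ and $s_\al v$, one has $\ell(v t_\beta) - \ell(s_\al v\, t_\beta) = \ell^\si(s_\al v) - \ell^\si(v)$, so an ordinary Bruhat cover in the translated picture transports to $\ell^\si(s_\al v) = \ell^\si(v) + 1$. The problem is the uniformity step, which you name ``the main obstacle'' but do not close, and which is in fact the whole content of the theorem. The ``asymptotic stability'' you invoke says that for a \emph{fixed} $\al$ the truth values of $s_\al v\, t_\beta \lessdot v t_\beta$ and of $w t_\beta \le s_\al v\, t_\beta$ eventually stabilize as $\beta$ deepens; it provides no mechanism for extracting a fixed $\al$ from the $\beta$-dependent output $\al(\beta)$ of the lifting lemma, because the depth of $\beta$ required for a given $\al$ to stabilize grows with $\al$. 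Concretely, in type $A_1$ with $v = e$ and $\beta = -n\al_1^\vee$, the element $t_{-n\al_1^\vee}$ has exactly two ordinary Bruhat covers below it: $s_1 t_{-n\al_1^\vee}$, realized by the stable reflection $\al = \al_1$ with $\ell^\si(s_1) = 1$, and $s_1 t_{(n-1)\al_1^\vee}$, realized by a reflection whose $\delta$-coefficient drifts linearly in $n$ and which has $\ell^\si = 2n-1$. Your argument neither forces the lifting lemma to hand you the stable cover nor rules out the conceivable pathology that for some $w$ only the drifting covers dominate $w t_\beta$.

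The cited sources circumvent this by first establishing the lifting (exchange) property of $\le_\si$ for a \emph{fixed simple} affine reflection --- there uniformity in $\beta$ is automatic since no $\beta$-dependent choice of reflection is being made --- and then deducing the chain/cover property by the usual Coxeter-theoretic induction carried out entirely inside $(W_\af, \le_\si, \ell^\si)$, using finiteness and $\ell^\si$-gradedness of semi-infinite Bruhat intervals (again obtained from deep translation). That lifting lemma for a fixed simple reflection is the ingredient missing from your outline; once it is in hand, the translation trick is no longer needed at the level of the cover statement itself.
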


For each $\la \in P_+$, we denote the corresponding Weyl module by $V ( \la )$ (see e.g. \cite[Proposition 1.22]{APW91} and \cite[Theorem 5]{Kas91}). By convention, $V ( \la )$ is a finite-dimensional indecomposable $G$-module with a cyclic $B$-eigenvector $\bv_\la^0$ (highest weight vector) with its $H$-weight $\la$ whose character obeys the Weyl character formula. For a semi-simple $H$-module $V$, we set
$$\ch \, V := \sum_{\la \in P} e^\la \cdot \dim _{\bK} \mathrm{Hom}_H ( \bK_\la, V ).$$
If $V$ is a $\Z$-graded $H$-module in addition, then we set
\begin{equation}
\gch \, V := \sum_{\la \in P, n \in \Z} q^n e^\la \cdot \dim _{\bK} \mathrm{Hom}_H ( \bK_\la, V_n ).\label{defgch}
\end{equation}

Let $\sB := G / B$ and call it the flag manifold of $G$. We have the Bruhat decomposition
\begin{equation}
\sB = \bigsqcup _{w \in W} \bO_{\sB} ( w )\label{Bdec}
\end{equation}
into $B$-orbits such that $\dim \, \bO_{\sB} ( w ) = \ell ( w _0 ) - \ell ( w )$ for each $w \in W \subset W_\af$. We set $\sB ( w ) := \overline{\mathbb O_{\sB} ( w )} \subset \sB$.

For each $\la \in P$, we have a line bundle $\cO _{\sB} ( \la )$ such that
$$H ^0 ( \sB, \cO_{\sB} ( \la ) ) \cong V ( \la )^*, \hskip 3mm \cO_{\sB} ( \la ) \otimes_{\cO_\sB} \cO _\sB ( - \mu ) \cong \cO_\sB ( \la - \mu ) \hskip 5mm \la, \mu \in P_+.$$

For each $w \in W$, let $p_w \in \bO_{\sB} ( w )$ be the unique $H$-fixed point. We normalize $p_w$ (and hence $\bO_{\sB} ( w )$) so that the restriction of $\cO_{\sB} ( \la )$ to $p_w$ is isomorphic to $\bK_{- w w_0 \la}$ for every $\la \in P_+$. (Here we warn that the convention differs from \cite{Kat18c}.)

\subsection{Representations of affine and current algebras}\label{rep-ac}

In the rest of this section, we work over $\bK = \C$, the field of complex numbers. Material in this subsection without a reference can be found in \cite{Kac,Kas91}. Every result in this subsection is transferred to an arbitrary field in \S \ref{rep-Z}.

Let $\tg$ denote the untwisted affine Kac-Moody algebra associated to $\g$. I.e. we have
$$\tg = \g \otimes \C [z, z^{-1}] \oplus \C K \oplus \C d,$$
where $K$ is central, $[d, X \otimes z^m] = m X \otimes z ^m$ for each $X \in \g$ and $m \in \Z$, and for each $X, Y \in \g$ and $f, g \in \C [z^{\pm 1}]$ it holds:
$$[X \otimes f , Y \otimes g ] = [X, Y] \otimes f g + ( X, Y )_{\g} \cdot K \cdot \mathrm{Res}_{z = 0} f \frac{\partial g}{\partial z},$$
where $(\bullet, \bullet)_{\g}$ denotes the $G$-invariant bilinear form such that $( \al^{\vee}, \al^{\vee})_{\g} = 2$ for a long simple root $\al$. Let $E_i, F_i$ ($i \in \tI_\af$) denote the Kac-Moody generators of $\tg$ corresponding to $\al_i$. We set $\wth := \h \oplus \C K \oplus \C d$. Let $\gI$ be the Lie subalgebra of $\tg$ generated by $E_i$ ($i \in \tI_\af$) and $\wth$, and $\gI^-$ be the Lie subalgebra of $\tg$ generated by $F_i$ ($i \in \tI_\af$) and $\wth$. For each $i \in \tI_\af$ and $n \ge 0$, we set $E_i^{(n)} := \frac{1}{n!} E_i^n$ and $F_i^{(n)} := \frac{1}{n!} F_i^n$.

We define
$$Q^{\af, \vee} :=  \Z d \oplus \bigoplus_{i \in \tI_\af} \Z \al^{\vee}_i \subset \wth, \hskip 10mm P^{\af} := \Z \delta \oplus \bigoplus_{i \in \tI_\af} \Z \Lambda_i \subset \wth^*,$$
and a pairing $Q^{\af, \vee} \times P^\af \rightarrow \Z$ such that
$$\left< \al_i^{\vee}, \Lambda_j \right> = \delta_{ij} \hskip 2mm (i,j \in \tI_\af), \hskip 4mm \left< \al^{\vee}_i, \delta \right> \equiv 0, \hskip 4mm \left< d, \Lambda_i \right> = \delta_{i0} \hskip 2mm (i \in \tI_\af), \hskip 4mm \left< d, \delta \right> = 1.$$
We have a projection map
$$P^\af \ni \Lambda = k \delta + \sum_{i \in \tI_\af} a_i \Lambda_i \mapsto \overline{\Lambda} = \sum_{i \in \tI} a_i \varpi_i \in P,$$
that has a unique splitting $P \subset P^\af$ whose image is orthogonal to $d,K \in \wth$. We set $P^{\af}_+ := \sum_{i \in \tI_\af} \Z_{\ge 0} \Lambda_i$. Each $\Lambda \in P^{\af}_+$ defines an irreducible integrable highest weight module $L ( \Lambda )$ of $\tg$ with its highest weight vector $\bv_{\Lambda}$. In addition, each $\la \in P_+$ defines a level zero extremal weight module $\bX ( \la )$ of $\tg$ by means of the specialization of the quantum parameter $\mathsf q = 1$ in \cite[Proposition 8.2.2]{Kas94} and \cite[\S 5.1]{Kas02}, that is integrable and $K$ acts by $0$. The module $\bX ( \la )$ carries a cyclic $\wth$-weight vector $\bv_{\la}$ such that:
$$
H \bv_{\la} = \la ( H ) \bv_{\la} \hskip 2mm (H \in \h), \hskip 2mm K \bv _{\la} = 0 = d \bv_{\la}, \hskip 2mm
E_i \bv_{\la} = 0 \hskip 2mm (i \in \tI), \hskip 2mm \text{and} \hskip 2mm F_0 \bv_{\la} = 0.
$$
(We can deduce that $\bX ( \la )$ is the maximal integrable $\tg$-module that possesses a cyclic vector with the above properties \cite[\S 8.1]{Kas94}.) Moreover, each $w = u t_{\beta} \in W_\af$ ($u \in W, \beta \in Q^{\vee}$) defines an element $\bv_{w \la} \in \bX ( \la )$ such that
$$H \bv_{w \la} = ( w \la ) ( H ) \bv_{w \la} \hskip 3mm (H \in \h), \hskip 3mm K \bv _{w \la} = 0, \hskip 3mm d \bv_{w \la} = - \left< \beta, \la \right> \bv_{w \la}$$
up to sign (see \cite[\S 8.1]{Kas94}). We call a vector in $\{\bv_{w\la}\}_{w \in W_\af}$ an extremal weight vector of $\bX ( \la )$.

We set $\g [z] := \g \otimes_\C \C [z]$ and regard it as a Lie subalgebra of $\tg$. We have $\gI \subset \g [z] + \C K + \C d$. The Lie algebra $\g [z]$ is graded, and its grading is the internal grading of $\tg$ given by $d$.

For each $\la \in P_+$, we set
$$\bW_w ( \la ) := U ( \gI ) \bv_{w\la} \subset \bX ( \la ).$$
These are the $\mathsf q = 1$ cases of the Demazure modules of $\bX ( \la )$, as well as the generalized global Weyl modules in the sense of \cite{FMO18}. We set $\bW ( \la ) := \bW_{w_0} ( \la )$. By construction, the both of $\bX ( \la )$ and $\bW _w ( \la )$ are semi-simple as $( H \times \Gm )$-module, where $\Gm$ acts on $z$ by $a : z^m \mapsto a^{m} z^m$ ($m \in \Z$).

\begin{thm}[LNSSS \cite{LNSSS2}, Chari-Ion \cite{CI15}, cf. \cite{Kat18} Theorem 1.6]\label{qwc}
For each $\lambda \in P_+$, the $\gI$-action on $\bW ( \la )$ prolongs to $\g [z]$, and it is isomorphic to the global Weyl module of $\g [z]$ in the sense of Chari-Pressley {\rm\cite{CP01}}. Moreover, $\bW ( \la )$ is a projective module in the category of $\g[z]$-modules whose restriction to $\g$ is a direct sum of modules in $\{ V ( \mu )\}_{\mu \le \la}$. \hfill $\Box$
\end{thm}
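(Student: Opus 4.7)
The plan is to identify $\bW(\la)$ with the Chari--Pressley global Weyl module $W^{\mathrm{CP}}(\la)$ via its universal property, and then inherit the remaining assertions from the structural results in \cite{CP01, LNSSS2, CI15}.

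First, I would extend the $\gI$-action on $\bW(\la) = U(\gI)\bv_{w_0\la}$ to $\g[z]$. Since $\gI$ contains both $\gb$ and $\g \otimes z\C[z]$, we have $\g[z] = \gI + \gn^-$ as vector spaces, so it suffices to show that each $F_i$ ($i \in \tI$) preserves $\bW(\la)$. A direct bracket computation gives $[F_i, \gI] \subseteq \gI + \C F_i$: for $X \otimes z^k$ with $k \ge 1$ the central cocycle in $[F_i \otimes 1, X \otimes z^k]$ is a multiple of $\Res_{z=0}(k z^{k-1}) = 0$, so $[F_i, X \otimes z^k] = [F_i, X] \otimes z^k \in \g \otimes z\C[z] \subset \gI$; while on $\gb$ the bracket lands in $\g$, contributing a multiple of $F_i$ only via $[F_i, H] = \al_i(H) F_i$ for $H \in \h$. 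Combined with $F_i \bv_{w_0\la} = 0$, which is the extremality of $\bv_{w_0\la}$ applied to the anti-dominance $\left< \al_i^\vee, w_0\la \right> \le 0$ (see \cite[\S 8.1]{Kas94}), a straightforward induction on word length in $U(\gI)$ yields $F_i \bW(\la) \subseteq \bW(\la)$.

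Next, I would produce the isomorphism $W^{\mathrm{CP}}(\la) \cong \bW(\la)$. By extremality, $\bv_\la \in U(\gn^+)\bv_{w_0\la} \subseteq \bW(\la)$ and $\bv_{w_0\la} \in U(\gn^-)\bv_\la$, so $\bW(\la) = U(\g[z])\bv_\la$. The defining relations of $W^{\mathrm{CP}}(\la)$ then hold on $\bv_\la$: the highest-weight condition $\gn^+ \bv_\la = 0$ and integrability $F_i^{\la(\al_i^\vee)+1} \bv_\la = 0$ are built into the definition of $\bX(\la)$, while the loop annihilation $(\gn^+ \otimes z\C[z]) \bv_\la = 0$ follows from the $d$-grading of $\bX(\la)$ together with extremality: $\bv_\la$ sits in $d$-weight $0$, any positive $z$-mode raises the $d$-weight, and the extremal structure (with $F_0 \bv_\la = 0$ closing the argument at the $\al_0$-string) forces the resulting vector to vanish. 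Universality yields a surjection $W^{\mathrm{CP}}(\la) \twoheadrightarrow \bW(\la)$, which is an isomorphism by the graded character equality in \cite{LNSSS2, CI15} (the semi-infinite Lakshmibai--Seshadri path model computes $\gch \bW(\la)$ and matches it with $\gch W^{\mathrm{CP}}(\la)$).

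Finally, the projectivity of $\bW(\la) \cong W^{\mathrm{CP}}(\la)$ in the category of $\g[z]$-modules whose $\g$-restriction is a direct sum of $V(\mu)$'s with $\mu \le \la$, and the decomposition of its $\g$-restriction in the same form, are classical structural results on global Weyl modules from \cite{CP01}. The main obstacle I anticipate is the loop annihilation $(\gn^+ \otimes z\C[z]) \bv_\la = 0$: controlling the $d$-grading on the $\la$-weight space of $\bX(\la)$ requires the extremal weight framework of \cite{Kas94}, but once this single identity is secured, the rest of the argument reduces to universality and character-matching already available in the cited literature.
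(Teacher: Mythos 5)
Note that the paper does not prove this statement: the $\Box$ closing the theorem and the bracketed attribution indicate it is inherited from \cite{LNSSS2}, \cite{CI15}, and \cite[Theorem 1.6]{Kat18}, so there is no in-paper argument to compare against. Your reconstruction is a reasonable outline, and most of it is sound: the commutator computation $[F_i,\gI]\subset\gI+\C F_i$ together with $F_i\bv_{w_0\la}=0$ does give the extension to $\g[z]$ by the induction you describe; the cyclicity $\bW(\la)=U(\g[z])\bv_\la$ is correct; and the appeal to \cite{CP01} and to graded character matching is the standard way to finish.

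There is, however, a genuine gap in the loop annihilation step $(\gn^+\otimes z\C[z])\bv_\la = 0$. The $\al_0$-string through $\bv_\la$ runs in the direction $\al_0=-\vartheta+\delta$, so it lives at $\h$-weights $\la-n\vartheta$ for $n\ge 0$; the vectors in $(\gn^+\otimes z\C[z])\bv_\la$ sit at $\h$-weights $\la+\al$ with $\al\in\Delta_+$, which never meet that string, so $F_0\bv_\la=0$ gives no purchase. Likewise the observation that positive $z$-modes raise the $d$-degree is vacuous on its own, because $\bW(\la)$ has nonzero vectors in arbitrarily high $d$-degree. What is actually needed is Kashiwara's weight estimate for level-zero extremal weight modules: by \cite[Theorem 5.1]{Kas02} the set of $\h$-weights of $\bX(\la)$ is contained in $\mathrm{Conv}(W\la)$, and since $\la$ is dominant the weight $\la+\al$ (for $\al\in\Delta_+$) lies strictly outside this polytope (any strictly dominant coweight pairs larger with $\la+\al$ than with any point of $W\la$). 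Replacing your $d$-degree/$\al_0$-string argument with this weight bound makes the surjection $W^{\mathrm{CP}}(\la)\twoheadrightarrow\bW(\la)$ legitimate, after which the graded character match from \cite{LNSSS2,CI15} closes the identification.
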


\begin{thm}[\cite{Kat18} Theorem 3.3 and Corollary 3.4]\label{WC-surj}
Let $\la, \mu \in P_+$ and $w \in W$. We have a unique $($up to scalar$)$ injective degree zero $\gI$-module map
$$\bW_w ( \la + \mu ) \hookrightarrow \bW_w ( \la ) \otimes \bW_w ( \mu ).$$
\end{thm}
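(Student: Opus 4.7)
The plan is to write down an explicit candidate for the map, verify it is well-defined on the cyclic generator, then establish uniqueness and injectivity. Set $\xi := \bv_{w\la} \otimes \bv_{w\mu} \in \bW_w ( \la ) \otimes \bW_w ( \mu )$, where $\gI$ acts through the coproduct. Then $\xi$ is an $\wth$-eigenvector of weight $w ( \la + \mu )$, and if $w = u t_{\beta}$ its internal $d$-degree $- \left< \beta, \la + \mu \right>$ matches that of $\bv_{w(\la + \mu)}$. The existence step is to check that $\xi$ satisfies the cyclic-generator relations of $\bv_{w (\la + \mu)} \in \bW_w ( \la + \mu )$: for each $i \in \tI_\af$, the divided powers $E_i^{(n)}$ and $F_i^{(n)}$ that annihilate $\bv_{w(\la + \mu)}$ above the thresholds prescribed by $\left< w ( \la + \mu ), \al_i^{\vee} \right>$ split additively across the coproduct, and each tensor factor already realizes the corresponding threshold inside its own Demazure module. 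This yields a degree-zero $\gI$-linear map $\phi \colon \bW_w ( \la + \mu ) \to \bW_w ( \la ) \otimes \bW_w ( \mu )$ with $\phi ( \bv_{w(\la+\mu)} ) = \xi$.

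Uniqueness up to scalar is then immediate from cyclicity: the image of $\bv_{w(\la+\mu)}$ must lie in the $\wth$-weight space at weight $w ( \la + \mu )$ in the tensor product, and this space is one-dimensional, spanned by $\xi$, since every other $\wth$-weight of $\bW_w ( \la )$ (resp. $\bW_w ( \mu )$) lies strictly below $w \la$ (resp. $w \mu$) under the relevant partial order, so the only decomposition of the weight $w (\la + \mu)$ comes from the pair of extremal vectors.

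For injectivity---the main obstacle---the strategy is to detect a large enough family of linearly independent vectors in the image. Using the $\bI ( i )$-integrated actions ($i \in \tI_\af$) together with Theorem \ref{cover} to transport $\phi$ between extremal vectors, one verifies inductively along a reduced expression of $w$ that every extremal weight vector $\bv_{v (\la + \mu)} \in \bW_w ( \la + \mu )$ is sent to a nonzero scalar multiple of $\bv_{v \la} \otimes \bv_{v \mu}$. Since these images carry pairwise distinct $\wth$-weights they are linearly independent. Using Theorem \ref{qwc}---which identifies $\bW ( \la + \mu )$ with the global Weyl module of $\g [z]$ and in particular provides a Demazure-type filtration compatible with the $W_\af$-action---together with the fact that every nonzero $\gI$-submodule of $\bW_w ( \la + \mu )$ contains at least one extremal weight vector, we conclude $\ker \phi = 0$. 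The principal technical difficulty lies in controlling the induction on $\ell ( w )$: one must propagate injectivity through the Demazure recursion, which essentially reduces to an $\mathop{SL} ( 2, i )$-calculation on tensor products of lowest-weight $\mathop{SL} ( 2, i )$-modules, where one checks directly that the candidate embedding survives the action of $\rho_{\al_i}$ and $\rho_{-\al_i}$.
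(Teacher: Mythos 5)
Your proposal takes a genuinely different route from the paper. The paper first produces the map at the level of the full global Weyl module $\bW ( \la + \mu ) = \bW_{w_0} ( \la + \mu )$ by invoking the projectivity property of Theorem \ref{qwc}, proves injectivity there by passing to local Weyl modules, and then observes that the map carries $\bv_{w(\la+\mu)}$ to a multiple of $\bv_{w\la} \otimes \bv_{w\mu}$ and hence restricts to $\bW_w$. Restriction to a submodule inherits injectivity for free. You instead try to build the map directly on $\bW_w ( \la + \mu )$ by checking relations on the cyclic vector, and argue injectivity by an extremal-weight detection scheme. Both gaps in your argument are precisely the points that the paper's detour through $\bW_{w_0}$ and projectivity is designed to avoid.

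The existence step has a real gap. Knowing that $\xi := \bv_{w\la} \otimes \bv_{w\mu}$ satisfies certain rank-one divided-power annihilation relations shared with $\bv_{w(\la+\mu)}$ only produces a $\gI$-module map $\bW_w ( \la + \mu ) \to \bW_w ( \la ) \otimes \bW_w ( \mu )$ if those relations \emph{generate} the left ideal $\mathrm{Ann}_{U(\gI)} ( \bv_{w(\la+\mu)} )$. For the full global Weyl module $\bW_{w_0} ( \la )$ such a presentation is a theorem of Chari--Pressley, but for the intermediate modules $\bW_w ( \la )$ with $w \ne w_0$ no such presentation is cited or established, and Demazure-type submodules are notorious for carrying relations beyond the naive rank-one ones. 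You would need an additional argument (or an additional input such as Corollary \ref{WXcomm-inj}'s ambient map $\bX ( \la + \mu ) \hookrightarrow \bX ( \la ) \otimes \bX ( \mu )$, which is itself a nontrivial result proved later in the paper) to bridge this.

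The injectivity step has a more serious gap: the assertion that ``every nonzero $\gI$-submodule of $\bW_w ( \la + \mu )$ contains at least one extremal weight vector'' is not justified and is not a standard fact. $\gI$ is an Iwahori-type subalgebra and cyclic $\gI$-modules can have abundant submodules with no special reason to meet the discrete set of extremal weight lines; nothing in Theorem \ref{qwc} or the crystal structure immediately forces this. Since this claim is the hinge on which $\ker \phi = 0$ turns, the injectivity argument as written does not go through. By contrast, the paper's injectivity at the level of $\bW_{w_0}$ (via specialization to local Weyl modules and then restriction to the submodule $\bW_w$) sidesteps any such socle analysis. Your transport-by-$\bI(i)$ step 1 (showing extremal vectors map to nonzero multiples of tensor extremals) is plausible and a nice observation, but without step 2 it only establishes that $\ker \phi$ avoids the extremal weight lines, not that $\ker \phi = 0$.
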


\begin{proof}[Sketch of proof]
For each $\la, \mu \in P_+$, the projectivity of $\bW ( \la + \mu )$ in the sense of Theorem \ref{qwc} yields a unique graded $\g [z]$-module map
$$\bW ( \la + \mu ) \longrightarrow \bW ( \la ) \otimes \bW ( \mu )$$
of degree $0$. This map is injective by examining the specializations to local Weyl modules (for their definitions, see \cite[Theorem 1.4]{Kat18}, or Lemma \ref{Wk-free} and Remark \ref{Wk-remark}). By examining the $\gI$-cyclic vectors, it uniquely restricts to a map
$$\bW_w ( \la + \mu ) \longrightarrow \bW_w ( \la ) \otimes \bW_w ( \mu )$$
up to scalar. This map must be also injective as the ambient map is so.
\end{proof}

\subsection{Semi-infinite flag manifolds}
We work over $\C$ as in the previous subsection. Material in this section is reproved in the setting of characteristic $\neq 2$ in \S \ref{subsec:fsQ} and \S \ref{sec:mi} (cf. \S \ref{cmp}). We define the semi-infinite flag manifold as the reduced indscheme such that:
\begin{itemize}
\item We have a closed embedding
$$\bQ_G^{\ra} \subset \prod_{i \in \tI} \P ( V ( \varpi_i ) \otimes \C (\!(z)\!) ); \hskip 3mm \text{and}$$
\item We have an equality $\bQ_G^{\ra} ( \C ) = G (\!(z)\!) / \left( H ( \C ) \cdot N (\!(z)\!) \right)$.
\end{itemize}
This is a pure indscheme of ind-infinite type \cite{KNS17}. Note that the group $Q^{\vee} \subset H (\!(z)\!) / H ( \C )$ acts on $\bQ_G^{\ra}$ from the right. The indscheme $\bQ_G^{\ra}$ is equipped with a $G (\!(z)\!)$-equivariant line bundle $\cO _{\bQ_G^{\ra}} ( \la )$ for each $\la \in P$. Here we normalized so that $\Gamma ( \bQ_G^{\ra}, \cO_{\bQ_G^{\ra}} ( \la ) )$ is $B^- (\!(z)\!)$-cocyclic to a $H$-weight vector with its $H$-weight $- \la$. We warn that this convention is twisted by $-w_0$ from that of \cite{Kat18c}, and complies with \cite{KNS17}.

\begin{thm}[\cite{FM99,FFKM,KNS17,Lus80}]\label{si-Bruhat}
We have an $\bI$-orbit decomposition
$$\bQ_G^{\ra} = \bigsqcup_{w \in W_\af} \bO ( w )$$
with the following properties:
\begin{enumerate}
\item Each $\mathbb O ( w )$ is isomorphic to $\A^{\infty}$ and have a unique $(H \times \Gm)$-fixed point;
\item The right action of $\gamma \in Q^{\vee}$ on $\bQ_G^{\ra}$ yields the translation $\mathbb O ( w ) \mapsto \mathbb O ( w t_{\gamma})$;
\item We have $\mathbb O ( w ) \subset \overline{\mathbb O ( v )}$ if and only if $w \le_{\si} v$;
\item The relative dimension of $\bO ( u t_{\beta} )$ $(u \in W, \beta \in Q^{\vee})$ and $\bO ( e )$, counted as the difference of the cardinality of the maximal chain of intermediate $\bI$-orbits to a common smaller $\bI$-orbits, is $\ell^\si ( u t_{\beta} )$. \hfill $\Box$
\end{enumerate}
\end{thm}

For each $w \in W_\af$, let $\bQ_G ( w )$ denote the closure of $\bO ( w )$. We refer $\bQ_G ( w )$ as a Schubert variety of $\bQ_G^{\ra}$ (corresponding to $w \in W_\af$).

Let $S = \bigoplus _{\la \in P_{\tJ,+}} S ( \la )$ be a $P_{\tJ,+}$-graded commutative ring such that $S ( 0 ) = A$ is a PID, $S$ is torsion-free over $A$, and $S$ is generated by $\bigoplus_{i \in \tI \setminus \tJ} S ( \varpi_i )$. We define
\begin{eqnarray}
\mathrm{Proj} \, S = ( \Spec \, S \setminus E ) / H \subset \prod_{i \in \tI \setminus \tJ} \P_A ( S ( \varpi_i )^{\vee} )\label{mproj}
\end{eqnarray}
as the $P_{\tJ,+}$-graded proj over $\Spec \, A$, where $E$ is the locus that whole of $S ( \varpi_i )$ vanishes for some $i \in \tI \setminus \tJ$ (the irrelevant locus).

\begin{thm}[\cite{KNS17} Theorem 4.26 and Corollary 4.27]\label{QG-proj}
For each $w \in W_\af$, it holds:
$$\bQ_G ( w ) \cong \mathrm{Proj} \, \bigoplus _{\la \in P_+} \bW _{ww_0} ( \la )^{\vee},$$
where the multiplication of the ring $\bigoplus_\la \bW _{ww_0} ( \la )^{\vee}$ is given by Theorem {\rm\ref{WC-surj}}.
\end{thm}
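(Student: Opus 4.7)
The plan is to compute the $P_+$-graded homogeneous coordinate ring of $\bQ_G(w)$ under the projective embedding it inherits from $\bQ_G^{\mathrm{rat}} \hookrightarrow \prod_{i\in\tI}\P(V(\varpi_i)\otimes\C(\!(z)\!))$, and to match it with $R_w := \bigoplus_{\la \in P_+} \bW_{ww_0}(\la)^\vee$, following the template of \cite{KNS17}.

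First I would equip $R_w$ with an algebra structure. Dualizing the injections of Theorem~\ref{WC-surj} yields $H$-equivariant graded surjections
\[
\bW_{ww_0}(\la)^\vee \otimes \bW_{ww_0}(\mu)^\vee \twoheadrightarrow \bW_{ww_0}(\la+\mu)^\vee,
\]
and the uniqueness-up-to-scalar clause of that theorem forces the resulting product to be commutative and associative. Specializing $\mu = \varpi_i$ shows that $R_w$ is generated in the multidegrees $\{\varpi_i\}_{i\in\tI}$, so $\mathrm{Proj}\,R_w$ embeds as a closed subscheme of $\prod_{i\in\tI}\P(\bW_{ww_0}(\varpi_i))$.

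Next I would identify the global sections. The normalization of $\cO_{\bQ_G^{\mathrm{rat}}}(\la)$ gives a $G(\!(z)\!)$-equivariant identification $H^0(\bQ_G^{\mathrm{rat}},\cO_{\bQ_G^{\mathrm{rat}}}(\la))^\vee \cong \bX(\la)$ sending the $B^-(\!(z)\!)$-cocyclic section of $H$-weight $-\la$ to the extremal vector $\bv_\la$. Because $\cO_{\bQ_G^{\mathrm{rat}}}(\la)|_{p_w}$ has $H$-weight $-ww_0\la$ (as in the finite-dimensional normalization of \S\ref{subsec:groups}), $\bI$-equivariance of the restriction $H^0(\bQ_G^{\mathrm{rat}},\cO_{\bQ_G^{\mathrm{rat}}}(\la)) \to H^0(\bQ_G(w),\cO_{\bQ_G^{\mathrm{rat}}}(\la))$ forces its dual to factor through the $U(\gI)$-submodule of $\bX(\la)$ generated by the weight-$ww_0\la$ extremal vector, which is by definition $\bW_{ww_0}(\la)$. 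Carrying this out for every $\la \in P_+$ and using the compatibility of restriction with tensor products of sections (via the maps of Theorem~\ref{WC-surj}) assembles into a $P_+$-graded algebra map, and hence into a $G(\!(z)\!)$-equivariant morphism $\bQ_G(w) \to \mathrm{Proj}\,R_w$.

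The morphism is then shown to be an isomorphism by a reducedness-plus-fixed-points argument: both sides are reduced, admit compatible $\bI$-orbit stratifications indexed by $\{v \in W_\af : v \le_\si w\}$ (by Theorem~\ref{si-Bruhat}), and agree on $(H \times \Gm)$-fixed points. The main obstacle lies in the identification of the image of the restriction map with $\bW_{ww_0}(\la)^\vee$: $\bI$-equivariance produces the factorization, but surjectivity of the restriction onto the Demazure quotient requires higher-cohomology vanishing along the Schubert filtration. In the setting of \cite{KNS17} this is handled by an inverse-limit argument from the finite-type ind-pieces; within the framework of the present paper one would alternatively invoke the Frobenius splitting of $\bQ_G^{\mathrm{rat}}$ to be constructed in Section~2 together with the Kempf-type vanishing of Appendix~A.
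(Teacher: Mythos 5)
The paper does not actually supply a proof of Theorem~\ref{QG-proj}; it is quoted from [KNS17]. What the paper does provide is a re-proof over any algebraically closed field of characteristic $\neq 2$ (Corollary~\ref{pos-fp}), but it runs in the \emph{opposite} direction to yours: the scheme $\bQ_{G,\tJ}(w)$ is \emph{defined} in \S~\ref{subsec:fsQ} as $\mathrm{Proj}\, R_w(\tJ)$, and one then shows (Lemma~\ref{bQ-dense}, Theorem~\ref{bQ-int}, Corollary~\ref{Q'red}) that this Proj is reduced, contains $\bO(w)$ as a dense affine $\bI$-orbit, and has the right set of closed points, so that it coincides with the orbit closure. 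Your proposal instead tries to compute the multi-homogeneous coordinate ring of the abstractly defined orbit closure from the outside, which is the harder direction.

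There are genuine gaps on this route. First, the starting identification $H^0(\bQ_G^{\mathrm{rat}},\cO_{\bQ_G^{\mathrm{rat}}}(\la))^{\vee} \cong \bX(\la)$ is essentially a corollary of the theorem being proved: it would follow from $H^0(\bQ_G(w),\cO(\la))\cong \bW_{ww_0}(\la)^{\vee}$ by passing to the limit over $w$, so you cannot take it as an input; the normalization recorded in the paper only pins down a single $B^-(\!(z)\!)$-cocyclic vector, not the full section space of the ind-infinite-type scheme. Second, the fallback you propose to close the acknowledged cohomology gap --- the Frobenius splitting of \S 2 together with the Kempf-type vanishing of Appendix A --- is circular in this context: those results are stated and proved \emph{for} the $\mathrm{Proj}\,R_w(\tJ)$ model, so invoking them to show that the orbit closure equals $\mathrm{Proj}\,R_w$ presupposes the conclusion; even Proposition~\ref{R-normal} (the normality of $R_w(\tJ)_\bK$) relies on Theorem~\ref{bQ-int}, which itself is established after adopting the Proj definition. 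Third, the closing step ("both sides are reduced, admit compatible $\bI$-orbit stratifications, and agree on $(H\times\Gm)$-fixed points") does not, on its own, produce an isomorphism of schemes: a finite bijective morphism of reduced schemes need not be an isomorphism, and what is actually required is either the exact section computation you flagged as missing or a normality statement for one side. The ring-structure and cyclicity arguments in your second and third paragraphs are fine in spirit (though commutativity needs the cocommutativity of the $q=1$ coproduct, not merely uniqueness up to scalar), but the logical skeleton needs to be inverted to match how either [KNS17] or this paper actually closes the argument.
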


\subsection{Quasi-map spaces and Zastava spaces}\label{sec:QM}

We work over $\C$ as in the previous subsection. Here we recall basics of quasi-map spaces from \cite{FM99,FFKM}.

We have $W$-equivariant isomorphisms $H^2 ( \sB, \Z ) \cong P$ and $H_2 ( \sB, \Z ) \cong Q ^{\vee}$. This identifies the (integral points of the) nef cone of $\sB$ with $P_+ \subset P$ and the effective cone of $\sB$ with $Q_+^{\vee}$. A quasi-map $( f, D )$ is a map $f : \P ^1 \rightarrow \sB$ together with a $\Pi^{\vee}$-colored effective divisor
$$D = \sum_{\alpha \in \Pi^{\vee}, x \in \P^1 (\C)} m_x (\alpha^{\vee}) \alpha^{\vee} \otimes [x] \in Q^{\vee} \otimes_\Z \mathrm{Div} \, \P^1 \hskip 3mm \text{with} \hskip 3mm m_x (\alpha^{\vee}) \in \Z_{\ge 0}.$$
For $i \in \mathtt I$, we set $D_i := \left< D, \varpi_i \right> \in \mathrm{Div} \, \P^1$. We call $D$ the defect of the quasi-map $(f, D)$. Here we define the (total) degree of the defect by
$$|D| := \sum_{\alpha \in \Pi^{\vee}, x \in \P^1 (\C)} m_x (\alpha^{\vee}) \alpha^{\vee} \in Q_+^{\vee}.$$

For each $\beta \in Q_+^{\vee}$, we set
$$\sQ ( \sB, \beta ) : = \{ f : \P ^1 \rightarrow X \mid \text{ quasi-map s.t. } f _* [ \P^1 ] + | D | = \beta \},$$
where $f_* [\P^1]$ is the class of the image of $\P^1$ multiplied by the degree of $\P^1 \to \mathrm{Im} \, f$. We denote $\sQ ( \sB, \beta )$ by $\sQ ( \beta )$ in case there is no danger of confusion.

\begin{defn}[Drinfeld-Pl\"ucker data]\label{Zas}
Consider a collection $\mathcal L = \{( \psi_{\la}, \mathcal L^{\la} ) \}_{\la \in P_+}$ of inclusions $\psi_{\la} : \mathcal L ^{\la} \hookrightarrow V ( \la ) \otimes _{\C} \mathcal O _{\P^1}$ of line bundles $\mathcal L ^{\la}$ over $\P^1$. The data $\mathcal L$ is called a Drinfeld-Pl\"ucker data (DP-data) if the canonical inclusion of $G$-modules
$$\eta_{\la, \mu} : V ( \la + \mu ) \hookrightarrow V ( \la ) \otimes V ( \mu )$$
induces an isomorphism
$$\eta_{\la, \mu} \otimes \mathrm{id} : \psi_{\la + \mu} ( \mathcal L ^{\la + \mu} ) \stackrel{\cong}{\longrightarrow} \psi _{\la} ( \mathcal L^{\la} ) \otimes_{\cO_{\P^1}} \psi_{\mu} ( \mathcal L^{\mu} )$$
for every $\la, \mu \in P_+$.
\end{defn}

\begin{thm}[Drinfeld, see Finkelberg-Mirkovi\'c \cite{FM99}]\label{Dr}
The variety $\sQ ( \beta )$ is isomorphic to the variety formed by isomorphism classes of the DP-data $\mathcal L = \{( \psi_{\la}, \mathcal L^{\la} ) \}_{\la \in P_+}$ such that $\deg \, \mathcal L ^{\la} = \left< w_0 \beta, \la \right>$. In addition, $\sQ ( \beta )$ is an irreducible variety of dimension $2 \left< \rho, \beta \right> + \ell ( w_0 )$.
\end{thm}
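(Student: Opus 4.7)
The plan is to exhibit $\sQ(\beta)$ and the moduli $\mathrm{DP}(\beta)$ of DP-data of degrees $(\langle w_0\beta,\lambda\rangle)_{\lambda\in P_+}$ as canonically isomorphic schemes, then deduce irreducibility from the density of the honest-map locus. To build the isomorphism $\Phi\colon\sQ(\beta)\to\mathrm{DP}(\beta)$: for $(f,D)\in\sQ(\beta)$, pull back the tautological Pl\"ucker inclusion $\cO_\sB(\lambda)\hookrightarrow V(\lambda)\otimes\cO_\sB$ along $f\colon\P^1\to\sB$ and compose with the twist by $\cO_{\P^1}(-\langle D,\lambda\rangle)$ to produce
\[
  \mathcal{L}^\lambda := f^\ast\cO_\sB(\lambda)\otimes\cO_{\P^1}\bigl(-\langle D,\lambda\rangle\bigr)\;\hookrightarrow\;V(\lambda)\otimes\cO_{\P^1}.
\]
The Pl\"ucker relations cutting out $\sB\subset\prod_i\P(V(\varpi_i))$ translate to the DP compatibility axiom via $G$-equivariance of $\eta_{\lambda,\mu}$, and the prescribed degree follows from the normalization of $\cO_\sB(\lambda)$ together with $f_\ast[\P^1]+|D|=\beta$.

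For the inverse $\Psi$, given DP-data $\{\psi_\lambda\}$, let $S\subset\P^1$ be the finite locus where some $\psi_\lambda$ fails to be an inclusion of subbundles. On $\P^1\setminus S$ the saturated collection defines a morphism to $\prod_i\P(V(\varpi_i))$ which factors through $\sB$ by the DP compatibility (the Pl\"ucker ideal of $\sB$ is generated by the $\eta_{\lambda,\mu}$ compatibilities), and this extends uniquely to $f\colon\P^1\to\sB$ by properness of $\sB$ and smoothness of $\P^1$. The quotient $f^\ast\cO_\sB(\lambda)/\mathcal{L}^\lambda$ is torsion supported on $S$; since the length of the cokernel of an inclusion of line bundles on $\P^1$ is additive under tensor product, the assignment $\lambda\mapsto\mathrm{length}_x(f^\ast\cO_\sB(\lambda)/\mathcal{L}^\lambda)$ is a nonnegative integral linear functional on $P_+$, hence equal to $\langle D_x,\lambda\rangle$ for a unique $D_x\in Q^\vee_+$. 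Setting $D:=\sum_{x\in S}D_x\otimes[x]$ produces the defect divisor, and $f_\ast[\P^1]+|D|=\beta$ is forced by degree balance. Mutual inverseness on $\bK$-points is formal; both moduli are cut out by finitely many transverse conditions on products of $\mathrm{Maps}^d(\P^1,\P(V(\varpi_i)))$ for varying $d$, and $\Phi$ is algebraic by construction, so $\Phi$ upgrades to a scheme isomorphism.

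For the irreducibility claim, it suffices to show that the open locus $\sQ^\circ(\beta)\subset\sQ(\beta)$ of honest maps (those with $D=0$) is dense. This locus is $\mathrm{Maps}^\beta(\P^1,\sB)$, irreducible by Thomsen's classical theorem on morphism spaces from $\P^1$ into a rational homogeneous variety. Density follows by smoothing: at each defect point $x$ of a quasi-map $(f,D)$, I would deform $D_x$ into a bubbled rational tail of class $D_x$ attached to $f(x)$---which exists because every element of $Q^\vee_+$ is realized by a rational curve in $\sB$ through any prescribed point---and then smooth the tail inside $\mathrm{Maps}^{D_x}(\P^1,\sB)$. The hard part will be this smoothing step: one must verify that the resulting one-parameter family stays inside $\sQ(\beta)$ and that its special fibre recovers the given $(f,D)$, which amounts to a local computation on the formal neighbourhood of the defect divisor inside the moduli of Pl\"ucker inclusions.
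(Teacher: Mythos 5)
The paper states this as Drinfeld's theorem with a citation to \cite{FM99} and gives no proof, so there is no internal argument to compare against; I will evaluate the proposal on its own terms. Your construction of the set-theoretic bijection between quasi-maps $(f,D)$ and DP-data is essentially the standard one and is correct modulo bookkeeping: the displayed twist $\cO_{\P^1}(-\langle D,\lambda\rangle)$ does not obviously reproduce the stated degree $\langle w_0\beta,\lambda\rangle$ without inserting a $w_0$, a consequence of the paper's normalization $\cO_\sB(\lambda)|_{p_w}\cong\bK_{-ww_0\lambda}$. The reconstruction of $D_x$ from colengths is fine because colengths of torsion quotients of nested line bundles on $\P^1$ are additive and $P_+$ is freely generated by the $\varpi_i$. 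Also note that §1.4 introduces $\sQ(\beta)$ only as a set, so the intended content of the theorem is that the DP moduli scheme has these as its $\bK$-points, not that two pre-existing scheme structures agree; ``upgrading to a scheme isomorphism'' is not the right framing.

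The genuine gap is in the irreducibility. You reduce it to density of the honest-map locus and propose to bubble each defect $D_x$ into a rational tail at $f(x)$ and then smooth, but you explicitly defer the smoothing step to an unspecified ``local computation,'' which is exactly where the content lies and where the proposal stops. The claim that every quasi-map is a limit of honest maps is not formal: it is essentially Givental's main lemma (Theorem \ref{desc-fib} of this paper, which supplies nonemptiness and the structure of the fibers of $\pi_{2,\beta}\colon\sGB_{2,\beta}^{\flat}\to\sQ(\beta)$), and that machinery is developed in §4 and is logically downstream of the present theorem in this paper. Finkelberg--Mirković establish irreducibility by a different route, using the factorization structure of Zastava spaces together with dimension estimates from Mirković--Vilonen cycles (compare the paper's remarks on the proof of Theorem \ref{aff-Zas-p}, which cite \cite{MV07}), which avoids any direct smoothing. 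Your bubbling route can be made precise by independently quoting the irreducibility of the graph space \cite{KP01} and the surjectivity of the contraction onto $\sQ(\beta)$, but these are exactly the ingredients one must prove rather than assume, so as written there is a real gap.
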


For each $w \in W$, let $\sZ ( \beta, w ) \subset \sQ ( \beta )$ be the locally closed subset consisting of quasi-maps that are defined at $z = 0$, and their values at $z = 0$ are contained in $\sB ( w ) \subset \sB$. We set $\sQ ( \beta, w ) := \overline{\sZ ( \beta, w )}$. (Hence, we have $\sQ ( \beta ) = \sQ ( \beta, e)$.)

\begin{thm}[Finkelberg-Mirkovi\'c \cite{FM99}]\label{aff-Zas-p}
Let $\bK$ be an algebraically closed field $($that is not necessarily characteristic zero$)$, and let $\sQ ( \beta )_{\bK}$ and $\sZ ( \beta, w_0 )_{\bK}$ be the spaces obtained by replacing the base field $\C$ with $\bK$ in Definition {\rm\ref{Zas}}. For each $\beta \in Q^{\vee}_+$, the space $\sZ ( \beta, w_0 )_{\bK}$ is an irreducible affine scheme equipped with an action of $( B \times \Gm )$ over $\bK$. In addition, this action has a unique fixed point.
\end{thm}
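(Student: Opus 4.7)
The plan is to exploit the Drinfeld--Pl\"ucker description of Theorem \ref{Dr}. That description only uses the $\Z$-structures of the Weyl modules $V(\lambda)$ and the Pl\"ucker maps $\eta_{\lambda,\mu}: V(\lambda+\mu) \hookrightarrow V(\lambda) \otimes V(\mu)$ (Theorem \ref{WC-surj}), so it realizes $\sQ(\beta)_{\bK}$ over any algebraically closed $\bK$ as the moduli of DP-data of degree $\beta$. A point of $\sZ(\beta, w_0)_{\bK}$ is a DP-datum $(\psi_\lambda, \mathcal{L}^\lambda)_{\lambda \in P_+}$ whose associated rational map is defined at $z=0$ with value $p_{w_0}$. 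Since the $\P(V(\lambda))$-coordinate of $p_{w_0}$ is the highest-weight line, this is equivalent to $\psi_\lambda(0) \in \bK \bv_\lambda^0 \setminus \{0\}$ for every $\lambda \in P_+$.

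This nonvanishing canonically trivializes $\mathcal{L}^\lambda$ over $\A^1 = \P^1 \setminus \{\infty\}$, under which $\psi_\lambda$ becomes a polynomial section $s_\lambda(z) \in V(\lambda) \otimes \bK[z]$ of degree at most $d_\lambda := -\langle w_0 \beta, \lambda \rangle \geq 0$ normalized by $s_\lambda(0) = \bv_\lambda^0$. The Pl\"ucker identities become polynomial equations on the coefficients, and since the whole DP-datum is generated by $\{s_{\varpi_i}\}_{i \in \tI}$, this realizes $\sZ(\beta, w_0)_{\bK}$ as a closed subscheme of the finite-dimensional affine space
\[
  \prod_{i \in \tI} \bigl\{\, s \in V(\varpi_i) \otimes \bK[z] : \deg s \leq d_{\varpi_i}, \ s(0) = \bv_{\varpi_i}^0 \,\bigr\},
\]
and in particular exhibits it as an affine scheme over $\bK$. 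The $B$-action on each $V(\varpi_i)$ preserves the highest-weight line, and the loop rotation $\Gm$ fixes $0 \in \P^1$ and acts by $s(z) \mapsto s(az)$; together these give the $(B \times \Gm)$-action. A $(B \times \Gm)$-fixed point has each $s_{\varpi_i}$ both $H \times \Gm$-homogeneous and normalized by $s_{\varpi_i}(0) = \bv_{\varpi_i}^0$, which forces $s_{\varpi_i} \equiv \bv_{\varpi_i}^0$ for all $i \in \tI$; this corresponds to the constant quasi-map $\P^1 \equiv p_{w_0}$ with all defect concentrated at $\infty$, the unique such fixed point.

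The hard part will be irreducibility. I would follow the factorization strategy of \cite{FM99}: let $\mathring{\sZ}(\beta, w_0)_{\bK} \subset \sZ(\beta, w_0)_{\bK}$ be the open locus of DP-data corresponding to honest maps (no defect in $\A^1$), and stratify $\sZ(\beta, w_0)_{\bK}$ by the support of the defect divisor. Disjoint decompositions of this support factor a point into DP-data over a punctured neighborhood and a genuine based map of smaller degree, inductively reducing the problem to irreducibility of the open loci $\mathring{\sZ}(\beta', w_0)_{\bK}$ for $\beta' \leq \beta$. Irreducibility of the latter is classical over $\C$, and the polynomial parametrization above makes the family flat over $\Spec \Z$, so the irreducibility transfers to arbitrary algebraically closed $\bK$ by base change.
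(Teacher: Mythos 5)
The setup (affine model via normalized Pl\"ucker sections $s_\lambda(z)$ with $s_\lambda(0)=\bv^0_\lambda$, the $B\times\Gm$-action, and the unique fixed point) is correct and matches how \cite[\S 5.2.1]{FM99} handles these parts; the paper itself treats this as straightforward. The gap is in the irreducibility argument, and it is not a small one.

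Your final step asserts that since the family is ``flat over $\Spec\Z$'', irreducibility of $\mathring{\sZ}(\beta',w_0)_\C$ transfers to every algebraically closed $\bK$ by base change. That inference is false: flatness of an integral scheme over $\Z$ does not force the closed fibers to be (geometrically) irreducible — $\Spec\Z[x,y]/(xy-p)$ is flat and integral with reducible fiber at $p$. What actually transfers under flatness is upper-semicontinuity of fiber dimension and cohomology-type data, not irreducibility. Moreover, even granting irreducibility of the open stratum of genuine maps, you still need to rule out irreducible components of $\sZ(\beta,w_0)_\bK$ lying entirely in the boundary (quasi-maps with defect at $0$). That requires a dimension estimate showing the boundary strata, or more precisely the fibers of the factorization map of \cite[\S 6.3--6.4]{FM99}, have dimension $\le\langle\beta,\rho\rangle$, and this estimate is exactly the ingredient that over $\C$ relies on the Mirkovi\'c--Vilonen cycle theory of \cite{MV07} and that in positive characteristic must be replaced by \cite[Corollary 5.3.8]{Zhu17} (or the approach of \cite{BFGM} via \cite{BG}). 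This is the content the paper's remarks on the proof flag as the nontrivial input, and your argument skips it. In short: the factorization strategy you outline is the right skeleton, but the reduction cannot terminate in a base-change claim; it must terminate in a characteristic-free dimension bound coming from geometric Satake / semi-infinite orbit intersections, which needs to be cited or proved.
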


\begin{proof}[Remarks on proof]
Theorem \ref{aff-Zas-p} is proved in \cite{FM99} for $\bK = \C$ using \cite{MV07}, and is proved in the current setting in \cite{BFGM} using \cite{BG}. One can also replace the usage of \cite{MV07} with \cite[Corollary 5.3.8]{Zhu17} along the lines of \cite{FM99}.
\end{proof}

For each $\la \in P$ and $w \in W$, we have a $G$-equivariant line bundle $\cO _{\sQ ( \beta, w )} ( \la )$ (and its pro-object $\cO _{\sQ} ( \la )$) obtained by the (tensor products of the) pull-backs $\cO _{\sQ ( \beta, w )}( \varpi_i )$ of the $i$-th $\cO ( 1 )$ via the embedding
\begin{equation}
\sQ ( \beta, w ) \hookrightarrow \prod_{i \in \mathtt I} \P ( V ( \varpi_i ) \otimes_{\C} \C [z] _{\le - \left< w_0 \beta, \varpi_i \right>} )\label{Pemb}
\end{equation}
for each $\beta \in Q_+^{\vee}$.

We have embeddings $\sB \subset \sQ ( \beta ) \subset \bQ_G ( e )$ such that the line bundles $\cO ( \la )$ ($\la \in P$) correspond to each other by restrictions (\cite{BF14b,Kat18,KNS17}).

\section{Semi-infinite flag manifolds over $\Z [\frac{1}{2}]$}\label{sec:fsQ}

We keep the settings of the previous section. In this section, we sometimes work over a (commutative) ring or a non-algebraically closed field. For a ring $S$ or a scheme $\gX$, we may write $S_{A}$ and $\gX_{A}$ if it is defined over $A$. In addition, we may consider their scalar extensions $S_{B} := S_A \otimes _A B$ and $\gX_{B}$ for a ring map $A \to B$.

\subsection{Frobenius splittings}

Let $\Bbbk$ be a field, and let $p$ be a prime. We assume $\mathsf{char}\, \Bbbk = p$, $\Bbbk \subset \bK$, and the $p$-th power map is invertible on $\Bbbk$ (e.g. $\Bbbk = \F_p$ or $\overline{\F}_p$) throughout this subsection.

We follow the generality on Frobenius splittings in \cite{BK05}, that considers separated schemes of finite type. We sometimes use the assertions from \cite{BK05} without finite type assumption when the assertion is independent of that, whose typical disguises are properness, finite generation, and the Serre vanishing theorem.

\begin{defn}[Frobenius splitting of a ring]\label{FrR}
Let $R$ be a commutative ring over $\Bbbk$, and let $R^{(1)}$ denote the set $R$ equipped with the map
$$R \times R^{(1)} \ni (r,m) \mapsto r^p m \in R^{(1)}.$$
This equips $R^{(1)}$ an $R$-module structure over $\Bbbk$ (the $\Bbbk$-vector space structure on $R^{(1)}$ is also twisted by the $p$-th power operation), together with a map $\imath : R . 1 \rightarrow R^{(1)}$. An $R$-module map $\phi : R^{(1)} \to R$ is said to be a Frobenius splitting if $\phi \circ \imath$ is an identity.
\end{defn}

Note that $\imath$ in Definition \ref{FrR} must be an inclusion if we have a Frobenius splitting $\phi$. Since the $p$-th power map in $\Bbbk$ is invertible, we can twist the (scalar multiplication part of the) $\Bbbk$-vector space structure of $R$ ($\cong R^{(1)}$ as sets) to make $\imath$ into a $\Bbbk$-linear map, without making it into an $R$-linear map (when $R \neq \Bbbk$).

\begin{defn}[Frobenius splitting of a scheme]
Let $\mathfrak X$ be a separated scheme defined over $\Bbbk$. Let $\Fr$ be the (relative) Frobenius endomorphism of $\mathfrak X$ (that induces a $\Bbbk$-linear endomorphism). We have a natural inclusion $\imath : \cO_{\mathfrak X} \rightarrow \Fr_{*} \cO_{\mathfrak X}$. A Frobenius splitting of $\mathfrak X$ is a $\cO_{\mathfrak X}$-linear morphism $\phi : \Fr_{*} \cO_{\mathfrak X} \rightarrow \cO_{\mathfrak X}$ such that the composition $\phi \circ \imath$ is the identity.
\end{defn}

\begin{defn}[Compatible splitting]
Let $\gY \subset \gX$ be a closed immersion of separated schemes defined over $\Bbbk$. A Frobenius splitting $\phi$ of $\gX$ is said to be compatible with $\gY$ if $\phi (\mathsf{Fr}_* \mathcal I _{\gY} ) \subset \mathcal I_{\gY}$, where $\mathcal I _{\gY} := \ker ( \cO_{\gX} \to \cO_{\gY} )$. Compatible Frobenius splitting of a pair of a commutative ring and its quotient ring is defined through their spectrums.
\end{defn}

\begin{rem}
A Frobenius splitting of $\gX$ compatible with $\gY$ induces a Frobenius splitting of $\gY$ (see e.g. \cite[Remark 1.1.4 (ii)]{BK05}).
\end{rem}

\begin{thm}[\cite{BK05} Lemma 1.1.14 and Exercise 1.1.E]\label{F-rel}
Let $\mathfrak X$ be a separated scheme of finite type over $\Bbbk$ with semiample line bundles $\mathcal L_1,\ldots, \mathcal L_r$. If $\mathfrak X$ admits a Frobenius splitting, then the multi-section ring
$$\bigoplus_{n_1,\ldots,n_r \ge 0} \Gamma ( \mathfrak X, \mathcal L_1 ^{\otimes n_1} \otimes \cdots \otimes \mathcal L_r ^{\otimes n_r} )$$
admits a Frobenius splitting $\phi$. Moreover, a closed subscheme $\gY \subset \gX = \mathrm{Proj} \, R$ admits a compatible Frobenius splitting if and only if the homogeneous ideal $I _{\gY} \subset R$ that defines $\gY$ satisfies $\phi ( I_{\gY} ) \subset I_{\gY}$, i.e. the pair $(R, R / I _{\gY})$ admits a compatible Frobenius splitting $\phi$.  \hfill $\Box$
\end{thm}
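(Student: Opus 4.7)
The plan is to use the projection formula for the relative Frobenius to transport the given splitting $\phi_\gX : \Fr_* \cO_\gX \to \cO_\gX$ to the multi-section ring. First I would note that for any line bundle $\mathcal L$ on $\gX$ the relation $\Fr^* \mathcal L \cong \mathcal L^{\otimes p}$ together with the projection formula yields an isomorphism $\Fr_* \mathcal L^{\otimes pn} \cong \mathcal L^{\otimes n} \otimes_{\cO_\gX} \Fr_* \cO_\gX$. Tensoring $\phi_\gX$ with $\mathcal L^{\otimes n}$ therefore gives an $\cO_\gX$-linear map $\Fr_* \mathcal L^{\otimes pn} \to \mathcal L^{\otimes n}$. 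Applying this with $\mathcal L = \mathcal L_1^{\otimes n_1} \otimes \cdots \otimes \mathcal L_r^{\otimes n_r}$ and taking global sections assembles a $\Bbbk$-linear, multidegree-preserving map $\phi : S^{(1)} \to S$ on the multi-section ring $S$. The splitting property $\phi \circ \imath = \mathrm{id}$ follows because the natural inclusion $\mathcal L^{\otimes n} \hookrightarrow \Fr_* \mathcal L^{\otimes pn}$ obtained by this twisting is precisely the $p$-th power map on sections, and $\phi_\gX \circ \imath_\gX = \mathrm{id}$ by hypothesis.

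For the moreover part, first suppose $\phi_\gX$ is compatible with $\gY \subset \gX$, i.e. $\phi_\gX(\Fr_* \mathcal I_\gY) \subset \mathcal I_\gY$. The same twisting sends $\Fr_* (\mathcal I_\gY \otimes \mathcal L^{\otimes pn}) \to \mathcal I_\gY \otimes \mathcal L^{\otimes n}$, so passing to global sections shows that $\phi$ preserves the homogeneous ideal
$$I_\gY := \bigoplus_{\bi} \Gamma\bigl(\gX,\, \mathcal I_\gY \otimes \mathcal L_1^{\otimes n_1} \otimes \cdots \otimes \mathcal L_r^{\otimes n_r}\bigr) \subset S.$$
Conversely, assume $\phi(I_\gY) \subset I_\gY$. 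Because the $\mathcal L_i$ are semiample and $\gX = \mathrm{Proj}\, S$, one may cover $\gX$ by principal affine opens $D_+(f)$ for homogeneous $f \in S$; on each such chart the graded ring map $\phi$ localizes to a splitting of $\cO_\gX(D_+(f))$ carrying the localization of $I_\gY$ into itself, and these glue by standard Proj-functoriality to a global Frobenius splitting of $\gX$ compatible with $\mathcal I_\gY$.

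The main obstacle I expect is the converse direction: verifying that the ring-theoretic splitting $\phi$ really sheafifies to a Frobenius splitting of $\gX$ that is furthermore compatible with the closed immersion $\gY \hookrightarrow \gX$. This amounts to checking that the localizations of $\phi$ at homogeneous denominators $f$ are mutually compatible under the transition maps of $\mathrm{Proj}\, S$, which rests on the naturality of the projection formula with respect to the affine opens $D_+(f)$ together with the semiampleness hypothesis ensuring these opens actually cover $\gX$. Once this gluing is established the compatibility with $\mathcal I_\gY$ is automatic, since on each chart the ideal is cut out by $I_\gY$ localized at $f$.
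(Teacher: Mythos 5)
The paper itself gives no proof of this theorem — it is cited verbatim from Brion–Kumar \cite[Lemma 1.1.11 and Exercise 1.1.E]{BK05} and closed with a $\Box$. Your argument is the standard one behind those results: transport $\phi_\gX$ degree by degree using the projection formula isomorphism $\Fr_* \mathcal L^{\otimes pn} \cong \mathcal L^{\otimes n} \otimes \Fr_*\cO_\gX$, check the splitting property via the unit of adjunction (which on sections is indeed the $p$-th power map), and read off compatibility with $\gY$ from the induced maps on $\Fr_*(\mathcal I_\gY \otimes \mathcal L^{\otimes pn})$. So the approach is exactly the one in the cited reference, and the core of the proposal is correct.

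Two small points you gloss over and would need to make precise for a complete argument. First, you assert without checking that the assembled map $\phi$ is actually $S$-linear for the twisted module structure, i.e.\ $\phi(s^p t)=s\,\phi(t)$ for homogeneous $s,t$; this follows because the isomorphism in the projection formula is natural in the second variable and compatible with the multiplication maps $\mathcal L^{\otimes a}\otimes\mathcal L^{\otimes b}\to\mathcal L^{\otimes a+b}$, but it is exactly what makes the degreewise construction into a Frobenius splitting of the graded ring rather than a loose collection of linear maps. Second, in your converse you build a Frobenius splitting of $\gX$ by localizing $\phi$ on the charts $D_+(f)$ and gluing; what the statement really wants is that this localized splitting \emph{is} the original $\phi_\gX$, so that the original scheme-level splitting is compatible with $\gY$. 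That identification holds because passing from $\phi_\gX$ to $\phi$ and back by taking degree-zero pieces of $S_f$ is the identity (Serre's equivalence for the $\mathrm{Proj}$ in (\ref{mproj})), but it deserves to be said, and it is also the place where one is implicitly using that $\gX$ is literally recovered as $\mathrm{Proj}\,S$ — semiampleness alone only gives a morphism $\gX\to\mathrm{Proj}\,S$, so the ``moreover'' clause is really a statement about $\mathrm{Proj}\,S$ rather than $\gX$ unless the $\mathcal L_i$ are ample. These are gaps in exposition rather than in the underlying idea.
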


\begin{defn}[Canonical splitting]
Let $\gX$ be a separated scheme over $\Bbbk$ equipped with a $B$-action. A Frobenius splitting $\phi$ is said to be $B$-canonical if it is $H$-fixed, and each $i \in \tI$ yields
\begin{equation}
\rho_{\al_i} ( z^p ) \phi ( \rho_{\al_i} ( - z ) f )  = \sum_{j = 0}^{p-1} \frac{z^j}{j!} \phi_{i, j} ( f ) \hskip 5mm z \in \Bbbk,\label{Bcaneq}
\end{equation}
where $\phi_{i, j} \in \Hom_{\cO_{\mathfrak X}} ( \Fr_{*} \cO_{\mathfrak X}, \cO_{\mathfrak X} )$. We similarly define the notion of $B^-$-canonical splitting (resp. $\bI$-canonical splitting and $\bI^-$-canonical splitting) by using $\{ \rho_{-\al_i} \}_{i \in \tI}$ (resp. $\{ \rho_{\al_i} \}_{i \in \tI_{\af}}$ and $\{ \rho_{-\al_i} \}_{i \in \tI_{\af}}$) instead. Canonical splittings of a commutative ring $S$ over $\Bbbk$ is defined through its spectrum.
\end{defn}

\begin{prop}[cf. \cite{BK05} Proposition 4.1.8]\label{B-can-inv}
Let $S = \bigoplus_{m \ge 0} S_m$ be a graded ring with $S_0 = \Bbbk$ such that
\begin{itemize}
\item $S$ is equipped with a degree preserving $\bI$-action;
\item Each $S_m$ is a graded $\Bbbk$-vector space compatible with the multiplication; 
\item We have an $\bI$-canonical Frobenius splitting $\phi : S^{(1)} \to S$.
\end{itemize}
Then, the induced map
$$\phi^{\vee} : S_{m}^{\vee} \longrightarrow S_{pm}^{\vee} \hskip 5mm m \in \Z_{\ge 0}$$
satisfies
$$\phi^{\vee} ( E_i^{(n)} \bv ) = E_i^{(pn)} \phi^{\vee} ( \bv ) \hskip 5mm \forall i \in \tI_\af, n \in \Z_{\ge 0}, \bv \in S_m^{\vee}.$$
Similar results hold for the $\bI^-$- and $B^{\pm}$-actions.
\end{prop}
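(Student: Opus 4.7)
The plan is to extract from the $\bI$-canonicity equation (\ref{Bcaneq}) the infinitesimal identity
$$E_i^{(k)} \phi(f) = \phi(E_i^{(pk)} f) \qquad (k \ge 1,\; i \in \tI_\af,\; f \in S),$$
and then obtain the claim by duality.

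I expand both sides of (\ref{Bcaneq}) in powers of $z$ using $\rho_{\al_i}(t) = \sum_{n \ge 0} t^n E_i^{(n)}$. The left-hand side becomes
$$\sum_{k, n \ge 0} (-1)^n z^{n + pk}\, E_i^{(k)} \phi(E_i^{(n)} f),$$
while the right-hand side is a polynomial in $z$ of degree $< p$. Matching coefficients of $z^{pk_0}$ for $k_0 \ge 1$, and using $(-1)^{p\ell} = (-1)^\ell$ (since $p$ is odd), yields
$$\sum_{\ell = 0}^{k_0} (-1)^\ell\, E_i^{(k_0 - \ell)}\, \phi\bigl(E_i^{(p\ell)} f\bigr) = 0. \qquad (*_{k_0})$$

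I prove the identity displayed above by induction on $k$. The base case $k = 1$ is $(*_1)$. For the step to $k_0$, each term in $(*_{k_0})$ with $\ell \ge 1$ satisfies $k_0 - \ell < k_0$; applying the inductive hypothesis to $E_i^{(p\ell)} f$ and invoking the divided power relation $E_i^{(a)} E_i^{(b)} = \binom{a+b}{a} E_i^{(a+b)}$ rewrites it as $\binom{pk_0}{p\ell}\, \phi(E_i^{(pk_0)} f)$. Lucas' theorem gives $\binom{pk_0}{p\ell} \equiv \binom{k_0}{\ell} \pmod p$, so $(*_{k_0})$ collapses, via $\sum_{\ell = 0}^{k_0} (-1)^\ell \binom{k_0}{\ell} = 0$, to $E_i^{(k_0)} \phi(f) = \phi(E_i^{(pk_0)} f)$, closing the induction.

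Dualizing: for $v \in S_m^\vee$ and $g \in S_{pm}$ one computes
$$\bigl(\phi^\vee(E_i^{(n)} v)\bigr)(g) = (E_i^{(n)} v)(\phi(g)) = (-1)^n v\bigl(\phi(E_i^{(pn)} g)\bigr) = (-1)^{n - pn}\bigl(E_i^{(pn)} \phi^\vee(v)\bigr)(g),$$
which equals $(E_i^{(pn)} \phi^\vee(v))(g)$ because $n$ and $pn$ have the same parity for odd $p$. The same argument applied with $\rho_{-\al_i}$ in place of $\rho_{\al_i}$ (resp.\ restricted to $i \in \tI$) handles the $\bI^-$-case (resp.\ the $B^\pm$-cases). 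The main obstacle is the induction, where the telescoping hinges on Lucas' mod-$p$ reduction $\binom{pk_0}{p\ell} \equiv \binom{k_0}{\ell}$; a more conceptual alternative reads (\ref{Bcaneq}) as the statement that $\phi$ intertwines the $\mathop{SL}(2, i)$-action on the Frobenius-twisted source with the action on the target, from which $E_i^{(k)} \phi = \phi \circ E_i^{(pk)}$ is immediate.
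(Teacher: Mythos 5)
Your proof is correct, and your overall strategy --- establish the un-dualized identity $\phi(E_i^{(pn)}\bw) = E_i^{(n)}\phi(\bw)$ and then dualize --- is the same as the paper's. The differences lie in how each step is carried out. For the un-dualized identity, the paper simply cites \cite[Proposition~4.1.8]{BK05}, whereas you re-derive it from (\ref{Bcaneq}) by extracting the coefficients of $z^{pk_0}$ and running an induction driven by Lucas' congruence $\binom{pk_0}{p\ell} \equiv \binom{k_0}{\ell} \pmod p$; this is a valid, more self-contained route, though not strictly needed given the citation. For the dualization, the paper unwinds the invariant pairing through the coproduct $\Delta(E_i^{(p)}) = \sum_k E_i^{(k)}\otimes E_i^{(p-k)}$, runs an explicit induction, and appeals to the vanishing mod $p$ of the multinomial coefficients $\binom{p}{k_1,\dots,k_m}$ for $m\ge 2$; you instead use the antipode formula $\mathcal S(E_i^{(n)}) = (-1)^n E_i^{(n)}$ (the contragredient structure on $S_m^{\vee}$), which collapses the dualization into a single line, the signs cancelling because $n+pn$ is even for odd $p$ (guaranteed here by $\mathsf{char}\,\bK\neq 2$). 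That is a noticeably cleaner route. In both proofs a Lucas-type congruence does the essential work, only at different steps: for you in establishing the infinitesimal identity, for the paper in the dualization.

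One caveat on your closing remark: the ``more conceptual alternative'' misreads (\ref{Bcaneq}). The $\bI$-canonicity condition only requires the translated splitting $\rho_{\al_i}(z^p)\phi(\rho_{\al_i}(-z)\,\cdot\,)$ to be a polynomial of degree $< p$ in $z$; it does \emph{not} assert that $\phi$ intertwines the $\mathop{SL}(2,i)$-actions. Intertwining would be the degree-$0$ case and would force $\phi(E_i^{(n)}f)=0$ for all $0<n<p$, which fails in general. The weaker polynomial bound is precisely why one matches only the $z^{pk_0}$-coefficients rather than all of them --- so the Lucas computation in your induction is doing real work, not circumventing something trivial.
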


\begin{rem}
In the opinion of the author, a merit of Proposition \ref{B-can-inv} over \cite[Proposition 4.1.8]{BK05} is that it becomes obvious that a projective variety $\mathfrak X$ with a $B$-action has at most one $B$-canonical splitting whenever the space of global sections of all ample line bundles are (or can be made) $B$-cocyclic compatible with multiplications (cf. \cite[Theorem 4.1.15]{BK05} and Corollary \ref{af-uniq}).
\end{rem}

\begin{proof}[Proof of Proposition \ref{B-can-inv}]
The condition that $S_m$ is a graded vector space implies $S_m \stackrel{\cong}{\longrightarrow}(S_m^{\vee})^{\vee}$ for each $m \in \Z_{\ge 0}$. By \cite[Proposition 4.1.8]{BK05}, each $\bw \in S_{pm} \subset S^{(1)}$ satisfies $\phi ( E_i^{(pn)} \bw ) = E_i^{(n)} \phi ( \bw )$ for $i \in \tI_\af$ and $n \ge 0$. Using the natural non-degenerate invariant pairing $\left< \bullet, \bullet \right>$ between $S_m^{\vee}$ and $S_m$, we compute the most LHS of
$$\left< \bv, \phi ( E^{(p)}_i \bw ) \right> = \left< \bv, E_i \phi ( \bw ) \right> = - \left< \phi^{\vee} ( E_i \bv ), \bw \right>$$
by using the invariance under the corresponding unipotent action as
\begin{align*}
\left< \bv, \phi ( E^{(p)}_i \bw ) \right> & = - \sum_{k_1 = 1}^{p} \left< E^{(k_1)}\phi^{\vee} ( \bv ), E^{(p-k_1)}_i \bw \right>\\
\cdots & = \sum_{m=1}^p \sum_{k_\bullet > 0, k_1 + k_2 +\cdots + k_m = p} (-1)^m \left< E^{(k_1)}_i E^{(k_2)}_i \cdots \phi^{\vee} ( \bv ), \bw \right>\\
& = - \left< E^{(p)} \phi^{\vee} ( \bv ), \bw \right>
\end{align*}
since we have $E^{(k_1)}_i E^{(k_2)}_i \cdots E^{(k_m)}_i \in p \Z E^{(p)}_i$ except for $k_1 = p, 0 = k_2 = \cdots$. This implies the case $n = 1$.

Similarly, we have
$$\left< \bv, \phi ( E^{(pn)}_i \bw ) \right> = \sum_{m=1}^n \sum_{k_\bullet > 0, k_1 + k_2 +\cdots + k_m = n} (-1)^m \left< E^{(pk_1)}_i E^{(pk_2)}_i \cdots \phi^{\vee} ( \bv ), \bw \right>.$$
Compared with
$$\left< \bv, E^{(n)}_i \phi ( \bw ) \right> = \sum_{m=1}^n \sum_{k_\bullet > 0, k_1 + k_2 +\cdots + k_m = n} (-1)^m \left<  \phi^{\vee} ( E^{(k_1)}_i E^{(k_2)}_i \cdots\bv ), \bw \right>$$
using induction on $n$, we conclude the result.
\end{proof}

\subsection{Representations of affine Lie algebras over $\Z$}\label{rep-Z}

In this section, we systematically use the global basis theory \cite{Kas91,Kas94,Kas02,Kas05,Lus92,GL93} by specializing the quantum parameter $\mathsf q$ to $1$. Therefore, we might refer these references without an explicit declaration that we specialize $\mathsf q$.

We consider the Kostant-Lusztig $\Z$-form $U ^+ _{\Z}$ (resp. $U^-_\Z$) of $U ( [\gI,\gI] )$ (resp. $U ( [\gI^-,\gI^-] )$) obtained as the specialization $\mathsf q = 1$ of the $\Z [\mathsf q,\mathsf q^{-1}]$-integral form of the quantized enveloping algebras \cite[\S 23.2]{Lus93}.

\begin{rem}
We remark that $U ^\pm _{\Z}$ are the same integral forms dealt in \cite{Gar78}, and also coincide with the integral forms obtained through the Drinfeld presentation (\cite[\S 2]{BCP99} and \cite[Lemma 2.5]{Nao17}).
\end{rem}

Note that $U^{\pm}_\Z$ are equipped with the $\Z$-bases $\bB ( \mp \infty )$ obtained by the specialization $\mathsf q = 1$ of the lower global basis \cite{Kas91} (see also \cite[\S 25]{Lus93}). In view of \cite{Lus92,Kas94}, we have an idempotent $\Z$-integral form
\begin{align*}
\dot{U}_\Z = \bigoplus_{\Lambda \in P^{\af}} U ^- _{\Z} U ^+ _{\Z} a_{\Lambda} & & \text{such that}\\
a_{\Lambda} a_{\Gamma} = \delta_{\Lambda, \Gamma} a_{\Lambda} & & \Lambda, \Gamma \in P^{\af} \hskip 3mm \text{and},\\
 E_i^{(m)} a_{\Lambda} = a_{\Lambda + m \al_i} E_i^{(m)}, \hskip 2mm F_i^{(m)} a_{\Lambda} & = a_{\Lambda - m \al_i} F_i^{(m)} & i \in \tI_\af, m \in \Z_{\ge 0}.
\end{align*}
We set $\dot{U}^{\ge 0}_\Z \subset \dot{U}_\Z$ to be the subalgebra generated by $\{F_i^{(m)}\}_{i \in \tI, m \in \Z_{\ge 0}}$, $\{a _{\Lambda}\}_{\Lambda \in P^\af}$, and $U_\Z^+$.

If a $\dot{U}_\Z$-module $M$ admits a decomposition
$$M = \bigoplus_{\Lambda \in P^\af} a_{\Lambda} M,$$
then we call this the $P^\af$-weight decomposition. If $\Lambda \in P^\af$ satisfies $a_{\Lambda} M \neq 0$, then we call $\Lambda$ a $P^\af$-weight of $M$. In case $M$ is defined over a field $\Bbbk$, we define the $P^\af$-character of $M$ as
$$\mathrm{gch} \, M := \sum _{\Lambda \in P^\af} e^{\Lambda} \dim_{\Bbbk} \, a_{\Lambda} M$$
whenever the RHS makes sense. For each $n \in \Z$, we set
$$M_n := \sum_{\Lambda \in P^\af, \left< d, \Lambda \right> = n} a_\Lambda M \subset M$$
and call it the $d$-degree $n$-part of $M$. Note that these are consistent with (\ref{defgch}) through the identification $q = e^{\delta}$.

For each $\la \in P$, we set
\begin{equation}
a_{\la}^0 M := \sum_{\Lambda \in P^\af, \la = \bar{\Lambda}} a_{\Lambda} M.\label{a0-a}
\end{equation}
We call the decomposition
$$M = \bigoplus_{\la \in P} a_{\la}^0 M,$$
the $P$-weight decomposition. We call a non-zero element of $a_{\Lambda} M$ (resp. $a^0 _\la M$) a $P^\af$-weight vector of $M$ (resp. a $P$-weight vector of $M$). We also call $\la \in P$ with $a_{\la}^0 M \neq \{ 0 \}$ a $P$-weight of $M$.

Similarly, we have the Kostant-Lusztig $\Z$-form $U ^{0, +} _{\Z}$ (resp. $U^{0,- }_\Z$) of $U ( \gn )$ (resp. $U ( \gn^- )$). We have $U^{0,+}_{\Z} \subset U^{+}_{\Z}$ and $U^{0,-}_{\Z} \subset U^{-}_{\Z}$. In view of the characterization of global bases (\cite{Kas91}), we find that $\bB^0 ( \mp \infty ) := \bB ( \mp \infty) \cap U^{0,\pm}_{\Z}$ define $\Z$-bases of $U^{0,\pm}$.

We set $\dot{U}^{0}_\Z \subset \dot{U}_\Z$ to be the subalgebra of $\dot{U}_\Z$ generated by $\{E_i^{(m)}, F_i^{(m)}\}_{i \in \tI, m \in \Z_{\ge 0}}$, $\{a _{\Lambda} \}_{\Lambda \in P^\af}$. For a field $\Bbbk$, a $\dot{U}_{\Bbbk}^{\ge 0}$-module $M$ with a $P^\af$-weight decomposition is said to be $\dot{U}^0_\Bbbk$-integrable if its $\{ E^{(m)}_i, F^{(m)}_i \}_{m \ge 0}$-action induces a $\SL ( 2, i )_\Bbbk$-action whose $( \SL ( 2, i ) \cap H )_\Bbbk$-eigenvalues are given by the $P$-weights for each $i \in \tI$.

Note that if a $U ( \tg_\C )$-module $V$ over $\C$ carries a cyclic $\wth_\C$-weight vector whose weight belongs to $P^\af$ and each of its $\wth_\C$-weight space is finite-dimensional, then we have a $\dot{U}_\Z$-lattice $V_{\Z}$ inside $V$. In such a case, the module $V_{\Z} \otimes_{\Z} \Bbbk$ admits $P^\af$- or $P$-weight decompositions.

We have the Chevalley involution of $\dot{U}_\Z$ defined as:
$$\theta ( E_i ^{(m)}) = F_i ^{(m)}, \theta ( F_i ^{(m)}) = E_i ^{(m)}, \hskip 2mm \text{and} \hskip 2mm \theta ( a _{\Lambda} ) = a_{- \Lambda} \hskip 4mm i \in \tI_\af, m \in \Z_{\ge 0}, \Lambda \in P^\af.$$

\begin{defn}[\cite{Kas05} Definition 2.4 and \S 2.8]
A $U ( \tg_{\C} )$-module $V$ over $\C$ with a cyclic $\wth_{\C}$-weight vector $\bv$ is said to be compatible with the negative global basis if we have
$$U ^{-}_\Z \bv = \bigoplus _{b \in \bB ( \infty )} \Z b \bv \subset V.$$

If $(V,\bv)$ is compatible with the negative global basis, then we set
$$\bB ^- ( V ) = \bB ^{-} ( V, \bv ):= \{ b\bv \mid b \in \bB ( \infty ) \text{ s.t. } b\bv \neq 0 \} \subset V$$
and refer them as the negative global basis of $V$.

Compatibility with the positive global basis of $V$ and the positive global basis $\bB ^{+} ( V ) = \bB ^{+} ( V, \bv )$ of $V$ is defined similarly.
\end{defn}

\begin{thm}[Kashiwara \cite{Kas91} Theorem 5]\label{Bla}
We have:
\begin{enumerate}
\item For each $\Lambda \in P^{\af}_+$, the $\tg_{\C}$-module $L ( \Lambda )_\C$ is compatible with the negative global basis;
\item For each $\la \in P_+$, we have
$$V ( \la )_\C = \bigoplus _{b \in \bB^0 ( \infty )} \C b \bv_{\la}^0.$$
\end{enumerate}
\end{thm}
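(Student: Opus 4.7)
The plan is to deduce both statements from Kashiwara's general theory of crystal and global lower bases of integrable highest weight modules over the quantized enveloping algebra $U_{\mathsf q}(\tg)$ (\cite{Kas91}), followed by specialization at $\mathsf q = 1$. Throughout, I would keep track of the Kostant-Lusztig $\Z[\mathsf q, \mathsf q^{-1}]$-integral form, using that its global lower basis specializes to $\bB(\infty)$ at $\mathsf q = 1$.

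For part (1), I would first invoke Kashiwara's construction of the crystal basis $(\mathcal L(\Lambda), \mathcal B(\Lambda))$ of the integrable highest weight $U_{\mathsf q}(\tg)$-module $L_{\mathsf q}(\Lambda)$ together with the crystal basis $(\mathcal L(\infty), \mathcal B(\infty))$ of $U^-_{\mathsf q}$. The balanced triple lemma then lifts these to global lower bases $G^{\mathrm{low}}(\mathcal B(\Lambda))$ of $L_{\mathsf q}(\Lambda)$ and $G^{\mathrm{low}}(\mathcal B(\infty))$ of $U^-_{\mathsf q}$, both defined over $\Z[\mathsf q, \mathsf q^{-1}]$. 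The surjection $\pi : U^-_{\mathsf q} \twoheadrightarrow L_{\mathsf q}(\Lambda)$, $u \mapsto u \bv_\Lambda$, is compatible with the global bases in the sense that $\pi(b)$ is either zero or a global basis element of $L_{\mathsf q}(\Lambda)$. Hence
\[
U^-_{\Z[\mathsf q, \mathsf q^{-1}]} \bv_\Lambda = \bigoplus_{b \in G^{\mathrm{low}}(\mathcal B(\infty)), \, b \bv_\Lambda \neq 0} \Z[\mathsf q, \mathsf q^{-1}] \cdot b \bv_\Lambda.
\]
Specializing $\mathsf q \mapsto 1$ rewrites the left side as $U^-_\Z \bv_\Lambda$ and identifies the basis labels on the right with the nonzero images of $\bB(\infty)$, giving (1).

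For part (2), I would apply the analogous construction to the finite quantum group $U_{\mathsf q}(\g)$, whose negative part $U^-_{\mathsf q}(\g)$ sits inside $U^-_{\mathsf q}(\tg)$ so that its canonical basis is identified with the subcrystal of $\mathcal B(\infty)$ generated by $1$ under the finite Kashiwara operators $\widetilde F_i$ ($i \in \tI$). The finite Weyl module $V_{\mathsf q}(\la)$ admits a global lower basis $\{b \bv_\la^0 : b\bv_\la^0 \neq 0\}$ labeled by elements of this subcrystal. Specializing $\mathsf q \mapsto 1$ converts $V_{\mathsf q}(\la)$ to $V(\la)_\C$ and identifies the indexing subcrystal with $\bB(\infty) \cap \dot{U}^0_\Z$, giving (2).

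The main obstacle in carrying this out from scratch lies in the balanced triple lemma at the heart of Kashiwara's construction: one must simultaneously verify that (a) the bar involution fixes the $\Z[\mathsf q, \mathsf q^{-1}]$-lattice generated by the highest weight vector, (b) this lattice intersects $\mathcal L(\Lambda)$ transversally with respect to the lower crystal lattice, and (c) the resulting map to $\mathcal L(\Lambda)/\mathsf q \mathcal L(\Lambda)$ is an isomorphism onto the $\Z$-span of $\mathcal B(\Lambda)$. Once this is in place, the compatibility between the finite and affine integral forms needed for (2) is a formal consequence of the inclusion $U_{\mathsf q}(\g) \hookrightarrow U_{\mathsf q}(\tg)$, and the claim reduces to tracking the parameter sets under the specialization $\mathsf q \mapsto 1$.
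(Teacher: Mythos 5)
The paper gives no proof of this statement; it is quoted directly from Kashiwara \cite{Kas91}, so the only reference point is Kashiwara's original argument. Your sketch correctly reconstructs that argument: balanced-triple construction of the global lower bases of $U^-_{\mathsf q}$ and $L_{\mathsf q}(\Lambda)$, compatibility of the cyclic surjection $U^-_{\mathsf q}\twoheadrightarrow L_{\mathsf q}(\Lambda)$ with global bases for part (1), Levi-subalgebra compatibility of the canonical basis (identifying $\bB(\infty)\cap\dot U^0_{\Z}$ with the finite-type $B(\infty)$) for part (2), and specialization at $\mathsf q=1$ throughout.
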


We set $\bB ( \Lambda ) := \bB^- ( L ( \Lambda )_\C, \bv_{\Lambda} )$ for each $\Lambda \in P^\af_+$.

For each $\Lambda \in P^{\af}_+$ and $\la \in P_+$, we set
$$L ( \Lambda )_{\Z} := U _{\Z}^- \bv_{\Lambda} \subset L ( \Lambda )_\C \hskip 5mm \text{and} \hskip 5mm V ( \la )_{\Z} := ( U _{\Z}^{0,-} ) \bv_{\la}^0 \subset V ( \la )_\C.$$

Here $V ( \la )_{\Z}$ acquires the action of $\dot{U}^0_\Z$ thanks to the splitting $P \hookrightarrow P^\af$.

\begin{cor}\label{L-dualmult}
We have:
\begin{enumerate}
\item For each $\Lambda, \Gamma \in P^{\af}_+$, we have a natural inclusion $L ( \Lambda + \Gamma )_\Z \hookrightarrow L ( \Lambda )_\Z \otimes_\Z L ( \Gamma )_\Z$ of $\dot{U}_{\Z}$-modules, that is a direct summand as $\Z$-modules;
\item For each $\la,\mu \in P_+$, we have a natural inclusion $V ( \la + \mu )_\Z \hookrightarrow V ( \la )_\Z \otimes_\Z V ( \mu )_\Z$ of $\dot{U}_{\Z}^0$-modules, that is a direct summand as $\Z$-modules.
\end{enumerate}
\end{cor}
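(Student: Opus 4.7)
The plan is to lift the natural highest-weight-vector maps available over $\C$ to the prescribed $\Z$-integral forms, and then to extract the $\Z$-splitting from Kashiwara-Lusztig canonical basis theory of tensor products specialized at $\mathsf q = 1$. Both parts share the same skeleton, so I first treat (1) in detail and then indicate the variant for (2).

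For (1), the vector $\bv_\Lambda \otimes \bv_\Gamma \in L(\Lambda)_\C \otimes_\C L(\Gamma)_\C$ has $\wth$-weight $\Lambda + \Gamma$ and is annihilated by $E_i$ for every $i \in \tI_\af$. The universal property of $L(\Lambda+\Gamma)_\C$ as a simple integrable highest weight module therefore yields a unique injective $\dot U_\C$-linear map
\[
\Phi_\C : L(\Lambda+\Gamma)_\C \hookrightarrow L(\Lambda)_\C \otimes_\C L(\Gamma)_\C, \qquad \bv_{\Lambda+\Gamma} \mapsto \bv_\Lambda \otimes \bv_\Gamma.
\]
Since $L(\Lambda+\Gamma)_\Z = U^-_\Z \bv_{\Lambda+\Gamma}$ and $U^-_\Z(\bv_\Lambda \otimes \bv_\Gamma) \subset L(\Lambda)_\Z \otimes_\Z L(\Gamma)_\Z$, the map $\Phi_\C$ automatically restricts to a $\dot U_\Z$-linear inclusion $\Phi_\Z : L(\Lambda+\Gamma)_\Z \hookrightarrow L(\Lambda)_\Z \otimes_\Z L(\Gamma)_\Z$.

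The substantive content is the $\Z$-direct summand property. By Theorem \ref{Bla}(1) the target is a free $\Z$-module with basis $\bB(L(\Lambda)) \otimes \bB(L(\Gamma))$. I would invoke the tensor product theory of global bases from \cite{Kas91, Kas94, Lus92}: before specialization, over $\Z[\mathsf q, \mathsf q^{-1}]$, the canonical basis of the quantum tensor product is unipotent upper-triangular relative to the pure-tensor basis (with off-diagonal entries in $\mathsf q \Z[\mathsf q]$), and the $U^-$-cyclic submodule generated by the highest tensor is isomorphic to the quantum analogue of $L(\Lambda+\Gamma)$, with global basis precisely the canonical basis elements indexed by the connected crystal component of $\bv_\Lambda \otimes \bv_\Gamma$. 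Specialization at $\mathsf q = 1$ keeps the diagonal $\pm 1$, so $\Phi_\Z(\bB(L(\Lambda+\Gamma)))$ is a subset of a $\Z$-basis of $L(\Lambda)_\Z \otimes_\Z L(\Gamma)_\Z$ and the remaining basis elements span a $\Z$-complement.

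For (2), the same strategy applies within $\dot U^0_\Z$: the vector $\bv^0_\la \otimes \bv^0_\mu$ spans the Cartan component of the completely reducible tensor product $V(\la)_\C \otimes_\C V(\mu)_\C$, producing a $\dot U^0_\Z$-linear inclusion $V(\la+\mu)_\Z \hookrightarrow V(\la)_\Z \otimes_\Z V(\mu)_\Z$ by $(U^-_\Z \cap \dot U^0_\Z)$-cyclicity, and Theorem \ref{Bla}(2) together with the finite-type version of the same triangular comparison yields the splitting. The main obstacle throughout is the $\Z$-splitting rather than the existence of the inclusion: the latter is immediate from cyclicity once one has $\Phi_\C$, but the former genuinely depends on canonical basis compatibility under tensor products and its preservation under $\mathsf q = 1$ specialization.
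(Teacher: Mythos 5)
Your proof is correct and takes essentially the same route as the paper: the natural highest-weight inclusion together with cyclicity produces the $\dot U_\Z$-linear map, and the $\Z$-direct-summand property is extracted from Kashiwara's tensor-product global crystal basis theory (the paper cites \cite[Theorem 3]{Kas91}, which is the same input your triangularity argument rests on) followed by specialization at $\mathsf q = 1$.
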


\begin{proof}
Since the two cases are completely parallel, we only prove the first case. The $\tg$-module $L ( \Lambda )_\C \otimes_\C L ( \Gamma )_\C$ decomposes into the direct sum of integrable highest weight modules (\cite[Proposition 9.10]{Kac}), with a direct summand $L ( \Lambda + \Gamma )_\C$. In view of \cite[Theorem 3]{Kas91}, it gives rise to the $\Z [\mathsf q]$-lattice of the quantized version of $L ( \Lambda ) \otimes L ( \Gamma )$ compatible with those of $L ( \Lambda + \Gamma )$ via the natural embedding. By setting $\mathsf q = 1$, we obtain a direct sum decomposition of $L ( \Lambda )_\Z \otimes_\Z L ( \Gamma )_\Z$ as $\Z$-modules with its direct summand $L ( \Lambda + \Gamma )_\Z$.
\end{proof}

\begin{thm}[Kashiwara \cite{Kas94} Proposition 8.2.2]\label{X-compat}
For each $\la \in P_+$, the $\tg_\C$-module $\bX ( \la )_\C$ is compatible with the negative/positive global basis $($for every extremal weight vector$)$. \hfill $\Box$
\end{thm}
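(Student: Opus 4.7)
My plan is to derive the theorem from Kashiwara's crystal-theoretic construction of the quantized extremal weight module $\bX_{\mathsf{q}}(\la)$ \cite{Kas94} via specialization at $\mathsf{q} = 1$, using the integrality apparatus (Kostant-Lusztig $\Z$-form, integral global basis) already set up in \S\ref{rep-Z}.

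First, in the quantized setting, Kashiwara proves that $\bX_{\mathsf{q}}(\la)$ carries a crystal basis and a compatible lower global basis. Each quantized extremal weight vector $\bv_{w\la}^{\mathsf{q}}$ ($w \in W_\af$) is, up to sign, an element of this global basis corresponding to an extremal element of the underlying crystal $B(\la)$ (this is the definition of an extremal vector at the crystal level). Consequently, the sub-$U^-_{\mathsf{q},\Z}$-module generated by $\bv_{w\la}^{\mathsf{q}}$ is the $\Z[\mathsf{q},\mathsf{q}^{-1}]$-span of those global basis elements of the form $b \cdot \bv_{w\la}^{\mathsf{q}}$ for $b \in \bB_{\mathsf{q}}(\infty)$ with $b \cdot \bv_{w\la}^{\mathsf{q}} \neq 0$, and this lattice is a $\Z[\mathsf{q},\mathsf{q}^{-1}]$-direct summand of the canonical integral form of $\bX_{\mathsf{q}}(\la)$.

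Second, I would specialize $\mathsf{q} = 1$. The integral form $U^-_\Z$ and the basis $\bB(\infty)$ are by construction the $\mathsf{q}=1$ specializations of $U^-_{\mathsf{q},\Z}$ and $\bB_{\mathsf{q}}(\infty)$, and $\bX(\la)_\C$ with its extremal weight vectors $\bv_{w\la}$ arise as the corresponding specializations. Because the quantized sub-lattice $U^-_{\mathsf{q},\Z}\bv_{w\la}^{\mathsf{q}}$ is a $\Z[\mathsf{q},\mathsf{q}^{-1}]$-direct summand, the specialization does not collapse and yields
\[
U^-_\Z \, \bv_{w\la} \;=\; \bigoplus_{b \in \bB(\infty),\ b \bv_{w\la} \neq 0} \Z \cdot b \bv_{w\la},
\]
which is the required compatibility with the negative global basis. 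The compatibility with the positive global basis follows by applying the Chevalley involution $\theta$ defined in \S\ref{rep-Z}, which exchanges the roles of $U^+_\Z$ and $U^-_\Z$ (and of $\bB(\infty)$ and $\bB(-\infty)$) while permuting extremal weight vectors in a controlled way.

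The main conceptual point to verify is that \emph{all} extremal weight vectors $\{\bv_{w\la}\}_{w \in W_\af}$ are captured simultaneously by one and the same global basis of $\bX_{\mathsf{q}}(\la)$, so that the compatibility holds uniformly in $w$ rather than only for a distinguished vector. This is precisely the content of \cite[Proposition 8.2.2]{Kas94}: the extremal elements of $B(\la)$ form a single $W_\af$-pseudo-orbit inside the crystal, and the associated global basis elements are permuted up to sign by the braid-group action lifting $W_\af$. Granting this input, the rest of the proof is formal integrality bookkeeping.
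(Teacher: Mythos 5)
The paper itself does \emph{not} prove this statement: the $\Box$ at the end of the theorem indicates that it is being cited directly from Kashiwara [Kas94, Proposition 8.2.2], with the terminology ``compatible with the negative/positive global basis'' coming from the definition the paper just imported from [Kas05]. So there is no ``paper's own proof'' to compare against; your task was to unpack a citation, and you have done that reasonably.

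Your outline is correct in spirit, but there is one place where you should be careful not to let the phrasing suggest more of a derivation than is actually there. The sentence beginning ``Consequently, the sub-$U^-_{\mathsf{q},\Z}$-module generated by $\bv_{w\la}^{\mathsf{q}}$ is the $\Z[\mathsf{q},\mathsf{q}^{-1}]$-span of those global basis elements \ldots and this lattice is a $\Z[\mathsf{q},\mathsf{q}^{-1}]$-direct summand'' is not a consequence of $\bv_{w\la}^{\mathsf{q}}$ merely being a global basis element. That implication --- that $U^-_{\mathsf{q},\Z}\bv_{w\la}^{\mathsf{q}}$ meets the global basis in a basis of itself and splits off as a summand --- is exactly the nontrivial ``compatibility with Demazure-type submodules'' statement. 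In the level-zero extremal weight setting, the clean formulation and proof of this compatibility for \emph{every} extremal vector is really the content of [Kas05, \S 2.8 and Theorem 3.3], which this paper invokes explicitly for the closely related Theorem~\ref{b-compat}; [Kas94, Proposition 8.2.2] supplies the existence of the global basis of $\bX_{\mathsf{q}}(\la)$ and its basic crystal-theoretic properties. You do acknowledge at the end that you are ``granting this input'' from the citation, so the logic is not circular --- just be aware that your parenthetical gloss of Proposition 8.2.2 as being about ``a single $W_\af$-pseudo-orbit'' and a ``braid-group action'' is an imprecise paraphrase; the proposition is really the construction of the global crystal basis of the extremal weight module. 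The remaining steps --- the $\mathsf{q}=1$ specialization preserving the direct-summand decomposition (because the Kostant--Lusztig form and $\bB(\infty)$ are by definition obtained by specializing $U^-_{\mathsf{q},\Z}$ and $\bB_{\mathsf{q}}(\infty)$, and a $\Z[\mathsf{q},\mathsf{q}^{-1}]$-direct summand stays a direct summand under base change), and the use of the Chevalley involution $\theta$ to pass from the negative to the positive global basis --- are both correct and match how the paper handles the same issues in nearby proofs (e.g., Lemma~\ref{WX-comm} and Theorem~\ref{b-compat}).
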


For each $\la \in P_+$, we set
$$\bX ( \la ) _\Z := \dot{U}_{\Z} \bv_{\la} \subset \bX ( \la )_\C.$$

\begin{thm}[Kashiwara \cite{Kas05}]\label{b-compat}
Let $\la \in P_+$. There exists a $\Z$-basis $\bB ( \bX ( \la ) )$ of $\bX ( \la )_\Z$ that contains the negative/positive global basis of $\bX ( \la )_\Z$ constructed from every extremal weight vector of $\bX ( \la )_\C$.
\end{thm}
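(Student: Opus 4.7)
The plan is to build a distinguished $\Z$-basis $\bB(\bX(\la))$ by specializing Kashiwara's crystal-theoretic global basis of the quantized extremal weight module at $\mathsf q = 1$, then to verify that every negative and positive global basis appearing in the statement sits inside it as a subset.

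First I would lift the problem to the quantum setting and invoke from \cite{Kas02} that the quantized analogue of $\bX(\la)$ admits a crystal basis $(L(\la), \mathcal B(\la))$ and hence a (lower) global basis $\bB^{\mathsf q}(\bX(\la))$ indexed bijectively by $\mathcal B(\la)$, which is a $\Z[\mathsf q, \mathsf q^{-1}]$-basis of the standard $\Z[\mathsf q, \mathsf q^{-1}]$-integral form. Specializing $\mathsf q = 1$ produces the candidate $\Z$-basis $\bB(\bX(\la))$ of $\bX(\la)_\Z$.

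Next, for each $w \in W_\af$ I would show that the extremal weight vector $\bv_{w\la}$ coincides, up to sign, with an element of $\bB(\bX(\la))$. This follows from the crystal-theoretic characterization of extremal weight vectors as images of the highest weight vector under Kashiwara's signed braid group action on $\mathcal B(\la)$; each such image corresponds to a distinguished basis element. Granted this, Theorem \ref{X-compat} implies that $U_\Z^- \bv_{w\la}$ is spanned by the subset of $\bB(\bX(\la))$ indexed by the sub-crystal obtained from the image of $b_{w\la}$ by iterated application of lower Kashiwara operators, so the negative global basis $\bB^-(\bX(\la), \bv_{w\la})$ is a subset of $\bB(\bX(\la))$. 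A parallel argument with upper Kashiwara operators, using the corresponding half of Theorem \ref{X-compat}, handles the positive global basis.

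The hard part will be the simultaneous normalization across all $w \in W_\af$: ensuring that one single choice of $\bB(\bX(\la))$ accommodates every negative and every positive global basis from every extremal weight vector without sign or scaling conflicts. This requires the full strength of the compatibility between the global basis and the Weyl group action on extremal vectors worked out in \cite{Kas05}, reducing effectively to the coherence of Kashiwara's signed braid action on the crystal combined with the triangularity of global basis expansions under both lower and upper Kashiwara operators. Once this coherence is in place, the union over $w \in W_\af$ of the indexing sub-crystals sits inside $\mathcal B(\la)$, and the corresponding global basis elements all live in the single $\Z$-basis $\bB(\bX(\la))$, as required.
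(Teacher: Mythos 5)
Your proposal is correct and takes essentially the same approach as the paper's proof: define $\bB(\bX(\la))$ as the $\mathsf q = 1$ specialization of Kashiwara's global basis of the quantum loop algebra module (\cite[Proposition 8.2.2]{Kas94}), then invoke the compatibility result of \cite[Theorem 3.3]{Kas05} to handle all extremal weight vectors simultaneously. You unpack the crystal-theoretic mechanism (signed braid action, triangularity under Kashiwara operators) that the paper cites in a single line, but the argument is the same.
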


\begin{proof}
We set $\bB ( \bX ( \la ) )$ to be the specialization of the global basis of a quantum loop algebra module \cite[Proposition 8.2.2]{Kas94}. Then, it is compatible with the global basis generated from an extremal weight vectors by \cite[Theorem 3.3]{Kas05}.
\end{proof}

For each $\la \in P_+$ and $w \in W_\af$, we define
$$\bW _w ( \la )_{\Z} := U_{\Z}^+ \bv_{w \la} \subset \bX ( \la )_\C \hskip 3mm \text{and} \hskip 3mm \bW _w^- ( \la )_{\Z} := U_{\Z}^- \bv_{w \la} \subset \bX ( \la )_\C .$$
We set $\bW ( \la )_\Z := \bW _{w_0} ( \la )_{\Z}$ and $\bW ^-( \la )_\Z := \bW _{e} ^- ( \la )_{\Z}$.

\begin{lem}[Naito-Sagaki]\label{contain}
For each $\la \in P_+$ and $w,v \in W_\af$, we have $\bW _{ww_0} ( \la )_{\Z} \subset \bW _{vw_0} ( \la )_{\Z}$ if $w \le_\si v$. If we have $\la \in P_{++}$ in addition, then we have $\bW _{ww_0} ( \la )_{\Z} \subset \bW _{vw_0} ( \la )_{\Z}$ if and only if $w \le_\si v$.
\end{lem}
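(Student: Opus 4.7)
The plan is to induct using Lusztig's cover lemma (Theorem \ref{cover}). For the forward direction, assume $w \le_\si v$; a standard consequence of the definition (\ref{si-ord}) together with the additive formula $\ell^\si(u t_\beta) = \ell(u) + 2\langle \beta, \rho\rangle$ is that $\ell^\si(w) \ge \ell^\si(v)$, so that $d(w,v) := \ell^\si(w) - \ell^\si(v) \in \Z_{\ge 0}$ is a legitimate inductive parameter. The base case $d(w,v) = 0$ forces $w = v$, where the containment is tautological. In the inductive step, Theorem \ref{cover} supplies $\alpha \in \Delta_+^\af$ with $w \le_\si s_\alpha v \le_\si v$ and $\ell^\si(s_\alpha v) = \ell^\si(v) + 1$; by induction $\bW_{ww_0}(\la) \subset \bW_{s_\alpha v w_0}(\la)$, so the problem reduces to the single-cover containment $\bW_{s_\alpha v w_0}(\la) \subset \bW_{v w_0}(\la)$.

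For the single-cover step I would exhibit $\bv_{s_\alpha v w_0 \la}$ as $E_\beta^{(n)} \bv_{v w_0 \la}$ (up to a nonzero scalar) for an appropriate real positive affine root $\beta$ and integer $n \ge 1$, with $\beta$ dictated by how $s_\alpha v w_0$ differs from $v w_0$ after a single reflection. The essential point is that an elementary semi-infinite cover $s_\alpha v \le_\si v$ corresponds, after the right $w_0$-twist, to a one-step extension of extremal weight vectors inside $\bX(\la)$ realized by a single divided power of a Chevalley-style root vector lying in $U(\gI)$, not $U(\gI^-)$.

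For the converse under $\la \in P_{++}$, I would use the global basis decomposition of $\bX(\la)$ from Theorems \ref{X-compat} and \ref{b-compat}. Each Demazure module $\bW_{vw_0}(\la)$ is spanned by a canonical subset $\bB_v(\la)$ of the global basis $\bB(\bX(\la))$; strict dominance of $\la$ ensures that $\bB_v(\la)$ encodes a downward-closed subset of $W_\af$ under $\le_\si$ sharp enough to detect $v$, whereas for $\la \in P_+ \setminus P_{++}$ collapses coming from the $W_\af$-stabilizer of $\la$ may merge distinct $v$'s into the same Demazure module. Consequently, the containment $\bW_{ww_0}(\la) \subset \bW_{vw_0}(\la)$ forces $\bB_{w}(\la) \subset \bB_v(\la)$, which by the forward direction's inductive description is equivalent to $w \le_\si v$.

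The main obstacle is the single-cover step, because the right $w_0$-twist in $\bW_{ww_0}(\la) = U(\gI)\bv_{w w_0 \la}$ interacts with $\le_\si$ in a subtle way: one must verify that the affine root vector realizing each semi-infinite cover actually lies in $U(\gI)$, which reduces to a signed statement about affine roots under the substitution $v \mapsto v w_0$. The tensor-product injection in Theorem \ref{WC-surj}, combined with the additive form of $\ell^\si$, should let one bootstrap from the case $\la \in \{\varpi_i\}_{i \in \tI}$, where an explicit calculation in $\bX(\varpi_i)$ is feasible.
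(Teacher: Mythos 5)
Your proposal aims to re-derive the statement from scratch by inducting on semi-infinite covers, whereas the paper's proof is a one-line citation to Naito--Sagaki \cite[Corollary 5.2.5]{NS16}, which establishes exactly the relevant inclusion of global-basis labels. That is a fair thing to attempt, but as written the argument has genuine gaps in all three places where something substantive must be proven.

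First, the single-cover step is not established, and you say so yourself. The required fact is that for each cover $s_\alpha v \le_\si v$ (with $\alpha \in \Delta^{\mathrm{re}}_{\af,+}$ and $\ell^\si(s_\alpha v) = \ell^\si(v)+1$) one has $\langle \alpha^\vee, vw_0\la \rangle \le 0$, so that $\bv_{s_\alpha vw_0\la}$ is reached from $\bv_{vw_0\la}$ by a positive divided power $E_\alpha^{(n)}$ rather than a negative one. This sign computation is precisely the combinatorial heart of the Naito--Sagaki result; calling it a ``signed statement about affine roots'' and leaving it for later is not a proof. Second, the proposed bootstrap from $\{\varpi_i\}_{i\in\tI}$ via Theorem \ref{WC-surj} does not work as stated: the tensor embedding $\bW_{ww_0}(\la+\mu)\hookrightarrow\bW_{ww_0}(\la)\otimes\bW_{ww_0}(\mu)$ sends $\bv_{ww_0(\la+\mu)}$ to $\bv_{ww_0\la}\otimes\bv_{ww_0\mu}$, and knowing $\bv_{ww_0\la}\in U(\gI)\bv_{vw_0\la}$ and $\bv_{ww_0\mu}\in U(\gI)\bv_{vw_0\mu}$ only puts $\bv_{ww_0\la}\otimes\bv_{ww_0\mu}$ inside $(U(\gI)\bv_{vw_0\la})\otimes(U(\gI)\bv_{vw_0\mu})$, which is in general strictly larger than $U(\gI)(\bv_{vw_0\la}\otimes\bv_{vw_0\mu}) = $ the image of $\bW_{vw_0}(\la+\mu)$. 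Cyclic-submodule containment does not tensor in the way your reduction needs; the inclusion across a sum of weights is itself a global-basis compatibility statement (cf.~Corollary \ref{WXcomm-inj}), which is again what Naito--Sagaki supplies. Third, the converse under $\la\in P_{++}$ is circular as written: the forward direction only shows $w\le_\si v \Rightarrow \bB_w(\la)\subset\bB_v(\la)$, and you cannot appeal to ``the forward direction's inductive description'' for the reverse implication without first proving a precise characterization of which global basis elements lie in each $\bW_{vw_0}(\la)$ in terms of $\le_\si$ — which, once more, is the content of \cite[Corollary 5.2.5]{NS16}. In short, the paper treats this lemma as an external input attributed to Naito--Sagaki; if you want a self-contained argument you must actually carry out the three verifications above, each of which is nontrivial.
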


\begin{proof}
Apply the inclusion relation of the (labels of the) global basis in \cite[Corollary 5.2.5]{NS16} (see also \cite[\S 2.8]{Kas05}).
\end{proof}

\begin{cor}\label{SL2-stable}
For each $\la \in P_+$, $w \in W_\af$, and $i \in \tI_\af$, we have $\bW _{s_i ww_0} ( \la )_{\bK} \subset \bW _{ww_0} ( \la )_{\bK}$ if $s_i w \le_\si w$. In this case, $\bW _{ww_0} ( \la )_{\bK}$ inherits an action of $\SL (2,i)_{\bK}$ from $\bX ( \la )_{\bK}$.
\end{cor}

\begin{proof}
The first part of the assertion is the special case of Lemma \ref{contain}. Given this, it remains to notice that a lift of $s_i \in W_\af$ sends $\bv_{ww_0 \la}$ to $\pm \bv_{s_iww_0 \la}$, and hence the Bruhat decomposition of $\bI ( i )_{\bK}$ (into two $\bI_{\bK}$-double cosets) implies that $\bW _{ww_0} ( \la )_{\bK}$ is stable under $\bI ( i )_{\bK}$.
\end{proof}

\begin{lem}\label{WX-comm}
For each $\la \in P_+$ and $w \in W_\af$, it holds:
\begin{enumerate}
\item Each $\beta \in Q^{\vee}$ defines a $\dot{U}_\Z$-module automorphism $\tau_\beta$ on $\bX ( \la )_\Z$ determined by $\tau_{\beta}( \bv_{\la} ) := \bv_{t_{\beta} \la}$. Moreover, we have $\tau_{\beta}\bB ( \bX ( \la ) ) = \bB ( \bX ( \la ) )$;
\item We have $\theta^* ( \bX ( \la )_\Z ) \cong \bX ( - w_0 \la )_\Z$ as $\dot{U}_\Z$-modules. Moreover, we have $\theta^* \bB ( \bX ( \la ) ) = \bB ( \bX ( - w_0 \la ) )$;
\item We have $\bW _w ( \la ) _\Z = \bW _w ( \la )_\C \cap \bX ( \la )_\Z$;
\item We have a $U_\Z^-$-cyclic vector of $\theta^* (\bW _w ( \la ) _\Z )$ with weight $- w \la = w w_0 ( - w_0 \la )$. In particular, we have
$$\theta^* ( \bW_w ( \la )_\Z ) \cong \bW_{ww_0}^- ( -w_0 \la )_\Z \hskip 3mm \text{and} \hskip 3mm \theta^* ( \bW_w^- ( \la )_\Z ) \cong \bW_{ww_0} ( -w_0 \la )_\Z.$$
\end{enumerate}
\end{lem}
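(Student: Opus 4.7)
My plan is to prove the four parts sequentially, each leveraging Kashiwara's theory of level-zero extremal weight modules and their global bases (Theorems \ref{X-compat} and \ref{b-compat}), specialized from the quantum to the classical setting.

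For part (1), I would appeal to the universal property of $\bX ( \la )$ among integrable $\tg$-modules with a prescribed cyclic extremal weight vector (\cite[Proposition 8.2.2]{Kas94}, \cite[\S 5.1]{Kas02}). Since $\bv_{t_\beta \la}$ is itself an extremal weight vector satisfying the analogous annihilation conditions (translated by $t_\beta$), the assignment $\bv_\la \mapsto \bv_{t_\beta \la}$ extends uniquely to a $\dot{U}_\C$-module map, whose inverse comes from the symmetric construction with $t_{-\beta}$. Theorem \ref{b-compat} asserts that $\bB ( \bX ( \la ) )$ is constructed uniformly from any extremal weight vector, which simultaneously upgrades this automorphism to a $\dot{U}_\Z$-automorphism of $\bX ( \la )_\Z$ and forces $\tau_\beta ( \bB ( \bX ( \la ) ) ) = \bB ( \bX ( \la ) )$.

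For part (2), apply the Chevalley involution to the defining relations of $\bv_\la$ in $\bX ( \la )$: the relations $E_i \bv_\la = 0$ ($i \in \tI$), $F_0 \bv_\la = 0$, and $H \bv_\la = \la ( H ) \bv_\la$ transform into $F_i ( \theta^* \bv_\la ) = 0$, $E_0 ( \theta^* \bv_\la ) = 0$, and $H ( \theta^* \bv_\la ) = - \la ( H ) ( \theta^* \bv_\la )$. In $\bX ( - w_0 \la )_\C$, using $w_0 \vartheta = - \vartheta$ (so that $w_0^{-1} \al_0 = \vartheta + \delta$ is a positive affine real root), the extremal weight vector $\bv_{w_0 \cdot ( - w_0 \la )}$ has $\h$-weight $- \la$ and satisfies exactly these same annihilation conditions. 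The universal property then gives $\theta^* \bX ( \la )_\C \cong \bX ( - w_0 \la )_\C$, and Theorem \ref{b-compat} promotes this to a $\Z$-form isomorphism sending one global basis onto the other.

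For part (3), the Kashiwara--Naito--Sagaki theory of the global basis on Demazure submodules (\cite[\S 2.8]{Kas05} and \cite[Corollary 5.2.5]{NS16}) produces a subset $B_w \subset \bB ( \bX ( \la ) )$ such that $\bW_w ( \la )_\C$ is its $\C$-span and $U_\Z^+ \bv_{w \la} = \Z \cdot B_w$ (the latter because $B_w$ is obtained by applying the dual canonical basis elements of $U_\Z^+$ to $\bv_{w \la}$). Consequently, $\bW_w ( \la )_\C \cap \bX ( \la )_\Z = \C B_w \cap \Z \bB ( \bX ( \la ) ) = \Z B_w = \bW_w ( \la )_\Z$.

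For part (4), write $w = u t_\beta$ with $u \in W$ and $\beta \in Q^\vee$. Applying $\theta^*$ to $\bW_w ( \la )_\Z = U_\Z^+ \bv_{w \la}$ yields $U_\Z^- \theta^* \bv_{w \la}$ inside $\bX ( - w_0 \la )_\Z$ via part (2). A direct $P^\af$-weight computation (using $w_0 t_\beta w_0^{-1} = t_{w_0 \beta}$, together with the fact that $\theta^*$ negates $P^\af$-weights) matches the $P^\af$-weight of $\theta^* \bv_{w \la}$ with that of $\bv_{w w_0 \cdot ( - w_0 \la )}$, noting that $w w_0 = u w_0 t_{w_0 \beta}$. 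Part (1), combined with the uniqueness up to sign of extremal weight vectors of a given $P^\af$-weight, identifies $\theta^* \bv_{w \la} = \pm \bv_{w w_0 ( - w_0 \la )}$, from which both isomorphisms follow (the second by exchanging $U^+$ and $U^-$ throughout). The main technical care is in part (4), where tracking the Chevalley involution across the $W_\af$-parametrization of extremal vectors requires careful bookkeeping of both $\h$-weights and $d$-gradings at level zero.
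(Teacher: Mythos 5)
Your proposal is correct and follows essentially the same route as the paper: parts (1), (2), and (4) match the paper's argument step for step (universal property of extremal weight modules plus Theorem~\ref{b-compat}, applying $\theta$ to the defining relations, and the $P^\af$-weight bookkeeping for $\theta^*\bv_{w\la}$). The only minor divergence is in part (3), where the paper reduces to the $w = w_0$ case and iterates via \cite[Lemma~8.2.1]{Kas94} together with a $\tau_{w_0\beta}$-shift, while you appeal directly to the compatibility of Demazure submodules with the global basis (\cite[\S 2.8]{Kas05}, \cite[Corollary~5.2.5]{NS16}) — a compression of the same underlying fact, so this is a cosmetic rather than substantive difference; the phrase ``dual canonical basis elements of $U_\Z^+$'' should read ``canonical (lower global) basis elements $\bB(-\infty)$ of $U_\Z^+$'' to match the paper's conventions.
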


\begin{proof}
We borrow the setting of \cite[\S 8.1 and \S 8.2]{Kas94}.

We prove the first assertion. Since $\bv_{\la}$ and $\bv_{t_{\beta} \la}$ obeys the same relation, $\tau_\beta$ defines an automorphism as $\tg_\C$-modules. The latter assertion follows from Theorem \ref{b-compat}.

We prove the second assertion. The defining equation of $\theta^* ( \bv_{\la} )$ is the same as the cyclic vector $\bv_{- w_0 \la} \in \bX ( - w_0 \la )_\C$ as $\tg_\C$-modules. This yields a $\tg_\C$-module isomorphism $\eta : \theta^* ( \bX ( \la )_\C ) \longrightarrow \bX ( - w_0 \la )_\C$. Since $\theta$ exchanges $U^\pm_\Z$ and $\bv_\la$ is cyclic, we deduce that $\eta ( \theta^* ( \bX ( \la )_\Z ) ) = \dot{U}_\Z \bv_{- \la} \subset \bX ( - w_0 \la )_\C$. By Theorem \ref{b-compat}, we conclude $\theta^* \bB ( \bX ( \la ) ) = \bB ( \bX ( - w_0 \la ) )$ as required.

We prove the third assertion. By Theorem \ref{b-compat}, the $\Z$-basis of $\bW ( \la ) _\Z$ is formed by the non-zero elements of $\bB ( - \infty ) \bv_{w_0 \la}$ and forms a direct summand of $\bX ( \la )_\Z$ as $\Z$-modules. Hence, the case $w = w_0$ follows. For $w \in W$, we apply \cite[Lemma 8.2.1]{Kas94} repeatedly to deduce the assertion from the $w = w_0$ case by using $\bB ( - \infty ) \bv_{w \la} \subset \bB ( - \infty ) \bv_{w_0 \la}$. For $w = u t_{\beta} \in W_\af$ with $u \in W, \beta \in Q^{\vee}$, we additionally apply $\tau_{w_0 \beta}$ to conclude the assertion.

We prove the fourth assertion. The vector $\theta^* ( \bv_{w \la} )$ is a $U_\Z^-$-cyclic vector of $\theta^* (\bW _w ( \la ) _\Z )$, and its weight is
$$- w \la = ww_0 (- w_0 \la ).$$
Hence, we conclude the assertion (using the fact that $\theta$ is an involution).
\end{proof}

\begin{thm}[\cite{Kas02} Proposition 8.6 and \cite{BN04} Corollary 4.15]\label{W-unit}
Let $\la \in P_+$. The unique $(d$-degree zero$)$ $\dot{U}^{\ge 0}_\Z$-module map
\begin{eqnarray}
\Psi_\la : \bW ( \la )_{\Z} \hookrightarrow \bigotimes _{i \in \tI} \bW ( \varpi_i )_{\Z}^{\otimes \left< \al_i^{\vee}, \la \right>},\label{ptensor}
\end{eqnarray}
that sends $\bv_\la$ to the tensor product of $\{\bv_{\varpi_i}\}_{i \in \tI}$'s, is injective and defines a direct summand as $\Z$-modules.
\end{thm}

\begin{proof}
We set $\bX^{\otimes} := \bigotimes _{i \in \tI} \bX ( \varpi_i )_\Z^{\otimes \left< \al_i^{\vee}, \la \right>}$. The map $\Psi_\la$ exists as $\bigotimes _{i \in \tI} \bv_{\varpi_i}^{\otimes \left< \al_i^{\vee}, \la \right>}$ obeys the same defining condition as the extremal weight vector $\bv_{\la} \in \bX ( \la )_\Z$, and the comultiplication of $\tg$ induces an algebra map $\triangle:\dot{U}_\Z \subset \dot{U}_{\Z} \otimes \dot{U}_\Z$ (\cite[\S 23.1.5]{Lus93}).

The map $\Psi_\la$ is injective by \cite[Corollary 4.15 and Remark 4.17]{BN04}. By \cite[Theorem 8.5]{Kas02}, the $\mathsf{q} = 1$ specializations of the global bases yield the $\dot{U}_{\Z}$-spans of the extremal weight vectors in $\bX ( \la )_\Z$ and $\bX^{\otimes}$ up to the action of some rings of (partially symmetric) polynomials with integer coefficients, respectively. This yields a splitting of $\Psi_\la$ as $\Z$-modules, where the $\Z$-module structure of the RHS is coming from the crystal lattice of (the $\mathsf q$-version of) $\bX^{\otimes}$ (as in \cite[(8.8)]{Kas02}). We call this crystal lattice $\mathsf L_1$.

We have another $\Z [\mathsf q]$-lattice inside (the $\mathsf q$-version of) $\bX^{\otimes}$ obtained by the tensor product of the crystal lattices of (the $\mathsf q$-versions of) $\bX ( \varpi_i )$'s. We call this $\Z [\mathsf q]$-lattice $\mathsf L_2$.

By \cite[Proposition 8.6]{Kas02}, the global basis of $\mathsf L_1$ is written in terms of the (tensor product) global bases of $\mathsf L_2$ by an upper-unitriangular matrix $\mathsf C$ (valued in $\Q [ \mathsf q ]$, with finitely many non-zero entry in each row). Hence, the corresponding bases in $\bX^{\otimes}$ are related by an upper-unitriangular matrix $C$ obtained as the $\mathsf{q} = 1$ specialization of $\mathsf C$ (valued in $\Q$). In view of the fact that $\triangle$ is an algebra morphism (and \cite[Theorem 8.5]{Kas02}), we deduce $\mathsf L_1 \subset \mathsf L_2$. Hence, we have $\Z \otimes_{\Z [\mathsf q]} \mathsf L_1 \subset \Z \otimes_{\Z [\mathsf q]} \mathsf L_2 \subset \bX^{\otimes}$. In particular, the matrix $C$ must be valued in $\Z$. This implies $\Z \otimes_{\Z [\mathsf q]} \mathsf L_1 = \Z \otimes_{\Z [\mathsf q]} \mathsf L_2$.

Hence, the $\Z$-module splitting in the second paragraph is indeed what we wanted.\end{proof}

\subsection{Projectivity of the module $\bW  ( \la )_{\Bbbk}$}

Let $\Bbbk$ be a field. We equip $\mathring{U}^+_\Bbbk := \bigoplus_{\la \in P} U^+_\Bbbk a_{\la}^0$ with the structure of an algebra by setting $a_{\la}^0 a_{\mu}^0 = \delta_{\la, \mu} a_{\la}^0$ $(\la, \mu \in P)$,
$$E_0^{(m)} a_{\la}^0 = a_{\la - m \vartheta}^0 E_0^{(m)}, \hskip 2mm E_i^{(m)} a_{\la}^0 = a_{\la + m \al_i}^0 E_i^{(m)} \hskip 3mm (i \in \tI, m \in \Z_{\ge 0}, \la \in P),$$
and requires
$$( \xi a_{\la}^0 ) ( \xi' a_{\mu}^0 ) = ( \xi \xi' ) a_{\mu}^0 \hskip 3mm \text{when} \hskip 3mm a_{\la}^0 \xi' a_{\mu}^0 = \xi' a_{\mu}^0 \hskip 3mm  ( \la, \mu \in P, \xi, \xi' \in U^+_{\Bbbk} ).$$
Similarly, we define
$$\mathring{U}^{\ge 0}_\Bbbk:= \bigoplus_{\la \in P} U^{0,-}_{\Bbbk} U^{+}_\Bbbk a_{\la}^0 \hskip 5mm \text{and} \hskip 5mm \mathring{U}^{0}_\Bbbk:= \bigoplus_{\la \in P} U^{0,-}_{\Bbbk} U^{0,+}_\Bbbk a_{\la}^0$$
and regard them as subspaces of the completions of $\dot{U}^{\ge 0}_\Bbbk$ and $\dot{U}^{0}_\Bbbk$ with respect to the idempotents by setting
$$\sum_{\la = \overline{\Lambda}}a_{\Lambda} = a_{\la}^0 \hskip 5mm (\la \in P \subset P^\af).$$
The eigendecomposition with respect to $d \in \wth$ now becomes an external gradings of algebras $\mathring{U}^+_\Bbbk$ and $\mathring{U}^{\ge 0}_\Bbbk$. We have algebra inclusions
$$\mathring{U}^+_\Bbbk \subset \mathring{U}^{\ge 0}_\Bbbk \supset \mathring{U}^0_\Bbbk.$$
Note that we have a surjective algebra map $\dot{U}^{\ge 0}_\Z \rightarrow \dot{U}^0_\Z$ since $a_{\Lambda} \dot{U}^{\ge 0}_\Z a_{\Gamma} \neq 0$ implies $\left< d, \Lambda \right> \ge \left< d, \Gamma \right>$ and $a_{\Lambda} \dot{U}^{0}_\Z a_{\Gamma} \neq 0$ implies $\left< d, \Lambda \right> = \left< d, \Gamma \right>$ for $\Lambda, \Gamma \in P^\af$. This induces a surjective algebra map $\mathring{U}^{\ge 0}_\Z \rightarrow \mathring{U}^0_\Z$, that specializes to $\mathring{U}^{\ge 0}_\Bbbk \rightarrow \mathring{U}^0_\Bbbk$.

We sometimes regard $\bW ( \la )_{\Bbbk}$ ($\la \in P_+$) as a graded $\mathring{U}^+_\Bbbk$-module or a graded $\mathring{U}^{\ge 0}_\Bbbk$-module by means of (\ref{a0-a}), whose gradings are given by the $d$-degrees. 

\begin{prop}\label{proj-cover}
Let $\Bbbk$ be a field and let $\la \in P_+$. The module $\bW  ( \la )_{\Bbbk}$ is the projective cover of $V ( \la )_{\Bbbk}$ in the category of $\dot{U}_\Bbbk^{\ge 0}$-modules that are $\dot{U}_\Bbbk^0$-integrable and whose $P$-weights are contained in $\mathrm{Conv} \, W \la \subset P \otimes_\Z \R$, where $\mathrm{Conv}$ denote the $\R$-convex hull.
\end{prop}

\begin{lem}\label{Wk-free}
For each $\la \in P_+$, the module $\bW ( \la )_{\Bbbk}$ is free over a polynomial ring and we have a finite-dimensional quotient $W ( \la )_{\Bbbk}$ with $\dim_{\Bbbk} \, a^0_\la W ( \la )_{\Bbbk} = 1$.
\end{lem}

\begin{rem}\label{Wk-remark}
The modules $\bW ( \la )_{\Bbbk}$ and $W ( \la )_{\Bbbk}$ ($\la \in P_+$) are analogues of global and local Weyl modules in \cite{CP01,LNSSS2,CI15} (cf. Theorem \ref{qwc}). By construction, their characters are the same as these defined over $\C$ (see \cite[\S 1]{Kat18} for more discussion).
\end{rem}

\begin{proof}[Proof of Lemma \ref{Wk-free}]
Note that the endomorphism $\tau_{\beta}$ in Lemma \ref{WX-comm} induces an endomorphism of $\bB ( \bX ( \varpi_i ) )$ for each $\beta \in Q$ and $i \in \tI$, and hence we know the existence of the injective endomorphisms of $\bW ( \la )_{\Bbbk}$ that forms a polynomial algebra whose graded dimension is equal to
$$\mathrm{gdim} \, \mathrm{End}_{\g [z]_\C} \bW ( \la )_\C = \prod_{i \in \tI} \prod_{j=1}^{\left< \al_i^{\vee}, \la \right>}\frac{1}{1-q^j},$$
that can be read-off from \cite[Proposition 4.13]{BN04} (or \cite[Theorem 1.4]{Kat18}), through Theorem \ref{W-unit}. By \cite{BN04,CI15,Kat18}, it exhausts the $P$-weight $\la$-part of $\bW ( \la )_{\Bbbk}$, that generates $\bW ( \la )_{\Bbbk}$ as a $\mathring{U}^{\ge 0}_\Bbbk$-module. Therefore, we conclude the result.
\end{proof}

The rest of this subsection is devoted to the proof of Proposition \ref{proj-cover}.

We consider the Demazure functor $\mathscr D_{w}$ for $w \in W_\af$ with respect to $\mathring{U}^+_\Bbbk$ (cf. \cite{Jos85,Kat18,CK18}). In view of \cite[\S 2.8]{Kas05}, the character part of the calculation in \cite[Theorem 4.13]{Kat18} carries over to our setting, and hence we have
$$\mathbb L^{\bullet} \mathscr D_{t_{\beta}} ( \bW ( \la )_{\Bbbk}) = \mathscr D_{t_{\beta}} ( \bW ( \la )_{\Bbbk}) \cong \bW ( \la )_{\Bbbk} \otimes_{\Bbbk} \Bbbk_{- \left< \beta, w_0 \la \right> \delta} \hskip 8mm \beta \in Q^{\vee}_<.$$
From this, we also derive that
$$\mathbb L^{\bullet} \mathscr D_{t_{\beta}} ( W ( \la )_{\Bbbk}) = \mathscr D_{t_{\beta}} ( W ( \la )_{\Bbbk}) \cong W ( \la )_{\Bbbk} \otimes_{\Bbbk} \Bbbk_{- \left< \beta, w_0 \la \right> \delta} \hskip 8mm \beta \in Q^{\vee}_<$$
using the Koszul resolution as in \cite[\S 5.1.4]{CK18} by Lemma \ref{Wk-free}.

The $d$-gradings of $\mathring{U}^+_\Bbbk, \bW ( \la )$, and $W ( \mu )_{\Bbbk}$ are concentrated in non-negative degrees. It follows that the $d$-grading of $\mathrm{Ext}^{\bullet} _{\mathring{U}^+_\Bbbk} ( \bW ( \la ), W ( \mu )_{\Bbbk}^* )$ is bounded from the above. Moreover, we have
$$\mathrm{Ext}^{\bullet} _{\mathring{U}^+_\Bbbk} ( \mathscr D_{t_{\beta}w_0} ( \bW ( \la )_{\Bbbk}), W ( \mu )_{\Bbbk}^* ) \cong \mathrm{Ext}^{\bullet} _{\mathring{U}^+_\Bbbk} ( \bW ( \la )_{\Bbbk}, \mathscr D_{t_{-w_0 \beta}w_0} ( W ( \mu )_{\Bbbk} )^* )$$
for every $\la, \mu \in P_+$ and $\beta \in Q^{\vee}_<$ by (a repeated application of) \cite[Lemma 8]{Mat89c} (see also \cite[Proposition 5.7]{FKM} for the current formulation). By varying $\beta$, we conclude that
\begin{equation}
\mathrm{Ext}^{\bullet} _{\mathring{U}^+_\Bbbk} ( \bW ( \la )_{\Bbbk}, W ( \mu )_{\Bbbk}^* ) \equiv \{ 0 \} \hskip 5mm \la \neq -w_0 \mu\label{ext-orth}
\end{equation}
since $- \left< \beta, w_0 \la \right> \neq - \left< - w_0 \beta, w_0 \mu \right> = \left< \beta, \mu \right>$ for some choice of $\beta$.

Consider the simple integrable $\mathring{U}_{\Bbbk}^0$-module quotient $L ( \la )_{\Bbbk}$ of $V ( \la )_{\Bbbk}$ for each $\la \in P_+$.
\begin{lem}
The set $\{L ( \la )_{\Bbbk}\}_{\la \in P_+}$ is the complete collection of the isomorphism classes of the irreducible $d$-graded $\mathring{U}_{\Bbbk}^0$-integrable $\mathring{U}^{\ge 0}_{\Bbbk}$-modules $($up to $d$-grading shifts$)$.
\end{lem}

\begin{proof}
Since $V ( \la )_{\Bbbk}$ surjects onto every irreducible $\mathring{U}_{\Bbbk}^0$-modules with highest weight $\la$ (\cite[\S 1.20]{APW91}), it follows that $V ( \la )_{\Bbbk}$ surjects onto every irreducible $d$-graded $\mathring{U}^{\ge 0}_{\Bbbk}$-module with highest weight $\la$. Since the $P$-weight $\la$-part of $V ( \la )_{\Bbbk}$ is one-dimensional, we conclude the result.
\end{proof}

We return to the proof of Proposition \ref{proj-cover}. We have
$$\mathrm{gch} \, W ( \la )_{\Bbbk} \equiv \mathrm{gch} \, V ( \la )_{\Bbbk} \equiv \mathrm{gch} \, L ( \la )_{\Bbbk} \mod \sum_{\la > \mu \in P_+} \Z [q] \, \mathrm{gch} \, L ( \mu )_{\Bbbk}$$
by \cite{LNSSS2} (cf. \cite{CI15}). Therefore, $W ( \la )_{\Bbbk}$ admits a ($d$-graded) Jordan-H\"older series as a $\mathring{U}^{\ge 0}_\Bbbk$-module whose irreducible constituents are of the form $\{L ( \mu )_{\Bbbk}\}_{\mu \le \la}$ (up to $d$-grading shifts). It follows that
$$\mathrm{Ext}^{\bullet} _{\mathring{U}^+_\Bbbk} ( \bW ( \la )_{\Bbbk}, V ( \mu )_{\Bbbk} ) = \mathrm{Ext}^{\bullet} _{\mathring{U}^+_\Bbbk} ( \bW ( \la )_{\Bbbk}, L ( \mu )_{\Bbbk} ) \equiv \{ 0 \} \hskip 5mm \la > \mu \in P_+$$
by a repeated application of the short exact sequences to (\ref{ext-orth}). Since the both of $\bW ( \la )_{\Bbbk}$ and $L ( \la )_{\Bbbk}$ are $\dot{U}^0_\Bbbk$-integrable, we find
$$\mathrm{Ext}^{1} _{\mathring{U}^{\ge 0}_\Bbbk} ( \bW ( \la )_{\Bbbk}, L ( \mu )_{\Bbbk} ) \equiv \{ 0 \} \hskip 5mm \la > \mu \in P_+.$$

From these, it suffices to prove
\begin{equation}
\mathrm{Ext}^{1} _{\mathring{U}^{\ge 0}_\Bbbk} ( \bW ( \la )_{\Bbbk}, L ( \la )_{\Bbbk} ) \equiv \{ 0 \} \hskip 5mm \la \in P_+\label{la-la-ext}
\end{equation}
in order to deduce the assertion.

In view of the fact $\la + \al_i$ ($i \in \tI$) is not a $P$-weight of $\bW ( \la )_{\Bbbk}$, the Drinfeld presentation of $\mathring{U}^{\ge 0}_\Bbbk$ (\cite[\S 3]{BN04}) forces that the space of $\mathring{U}^{\ge 0}_\Bbbk$-module endomorphisms of $\bW ( \la )_{\Bbbk}$ is generated by the images of the imaginary weight vectors $\tilde{P}_{i,m\delta}$ for $i \in \tI$ and $m > 0$ (defined in \cite[(3.7)]{BN04}). In view of its descriptions (\cite[Proposition 3.17]{BN04} or \cite[Lemma 4.5]{CFK}), we find that $\tilde{P}_{i,m}$ acts on $\bv_\la \in \bW ( \la )_{\Bbbk}$ by zero if $\left< \al^{\vee}_i, \la \right> > m$. From this, we derive that the $P$-weight $\la$-part of $\bW ( \la )_{\Bbbk}$ is maximal possible as a cyclic module with a cyclic vector of $P$-weight $\la$ in the category of $\mathring{U}_\Bbbk^{\ge 0}$-modules that is $\mathring{U}_\Bbbk^0$-integrable and whose $P$-weights are contained in $\mathrm{Conv} \, W \la \subset P \otimes_\Z \R$. Therefore, (\ref{la-la-ext}) vanishes and we conclude Proposition \ref{proj-cover}.

\subsection{Frobenius splitting of $\bQ_{G,\tJ}$}\label{subsec:fsQ}

\begin{thm}\label{W-quot}
For each $\Lambda \in P^{\af}_+$, we have a surjective map $L ( \Lambda )_\C \rightarrow \bW^- ( \bar{\Lambda} )_\C$ of $\g [z^{-1}]_\C$-modules. In addition, this map yields a surjection $L ( \Lambda )_\Z \rightarrow \bW^- ( \bar{\Lambda} )_\Z$ of $U^-_\Z$-modules.
\end{thm}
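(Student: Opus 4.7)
The plan is to define a $U(\g[z^{-1}])$-module homomorphism $\phi : L(\Lambda) \to \bW^-(\bar\Lambda)$ by $\bv_\Lambda \mapsto \bv_{\bar\Lambda}$ and to verify well-definedness via a comparison of annihilators. First, $L(\Lambda)$ is cyclic over $U(\g[z^{-1}])$: the vector space decomposition $\tg = \g[z^{-1}] \oplus z\g[z] \oplus \C K \oplus \C d$ combined with the observations that $z\g[z]$ consists entirely of positive affine root vectors (hence annihilates $\bv_\Lambda$) and that $K, d$ act by scalars yields $L(\Lambda) = U(\tg)\bv_\Lambda = U(\g[z^{-1}])\bv_\Lambda$. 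The same kind of reduction applied to $\bv_{\bar\Lambda}$ (using $\gn^+\bv_{\bar\Lambda} = 0$ and $\h$ acting by scalar) gives $U(\g[z^{-1}])\bv_{\bar\Lambda} = \bW^-(\bar\Lambda)$, so any well-defined $\phi$ will automatically be surjective.

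Well-definedness is then the containment $\mathrm{Ann}_{U(\g[z^{-1}])}(\bv_\Lambda) \subseteq \mathrm{Ann}_{U(\g[z^{-1}])}(\bv_{\bar\Lambda})$. The annihilator of $\bv_\Lambda$ is generated by: (i) $E_i$ for $i \in \tI$; (ii) $h - \bar\Lambda(h)$ for $h \in \h$; and (iii) integrability relations $f_\al^{(\left<\al^\vee,\Lambda\right>+1)}$ for each positive affine real root $\al$ whose lowering operator $f_\al$ lies in $\g[z^{-1}]$. Relations (i) and (ii) manifestly hold for $\bv_{\bar\Lambda}$. For (iii), write $\al = \beta + k\delta$ and invoke Kashiwara's estimate that the $\h$-weights of $\bX(\bar\Lambda)$ lie in $\mathrm{Conv}\, W\bar\Lambda$: when $\beta > 0$ and $k \ge 0$, $f_\al = F_\beta \otimes z^{-k}$ lowers the $\h$-weight by $\beta$, forcing $f_\al^{(m)}\bv_{\bar\Lambda} = 0$ for $m > \left<\beta^\vee,\bar\Lambda\right>$, and $\left<\al^\vee,\Lambda\right> \ge \left<\beta^\vee,\bar\Lambda\right>$ holds since the excess comes from the non-negative level of $\Lambda$; when $\beta < 0$ and $k \ge 1$, $f_\al = E_{-\beta}\otimes z^{-k}$ raises the $\h$-weight out of $\mathrm{Conv}\, W\bar\Lambda$, so $f_\al\bv_{\bar\Lambda} = 0$ directly.

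The integral statement is then immediate: Theorem \ref{Bla}(1) gives $L(\Lambda)_\Z = U^-_\Z\bv_\Lambda$, and since $\phi$ is $U^-_\Z$-linear with $\phi(\bv_\Lambda) = \bv_{\bar\Lambda}$, we get $\phi(L(\Lambda)_\Z) = U^-_\Z\bv_{\bar\Lambda} = \bW^-(\bar\Lambda)_\Z$ by the definition of the latter. The main obstacle in this plan is justifying that (i)-(iii) indeed generate the full $U(\g[z^{-1}])$-annihilator of $\bv_\Lambda$: while the $U(\tg)$-annihilator has a clean Kac-Moody presentation via simple-root integrability $F_i^{(\Lambda_i+1)}$ ($i \in \tI_\af$), identifying its intersection with $U(\g[z^{-1}])$ requires systematically commuting $U(z\g[z])$ through these generators (exploiting $z\g[z]\bv_\Lambda = 0$) and invoking the Lusztig braid action $T_w$ to upgrade simple-root integrability to integrability at every positive affine real root whose $f_\al$ lies in $\g[z^{-1}]$.
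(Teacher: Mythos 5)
Your plan is a genuinely different route: the paper obtains the surjection $L(\Lambda)_\C \twoheadrightarrow \bW^-(\bar\Lambda)_\C$ as the top layer of a global Weyl filtration of $\theta^*(L(\Lambda)_\C)$ supplied by the external structural theorem [KL17, Theorem~A], whereas you try to produce the map directly from $\bv_\Lambda \mapsto \bv_{\bar\Lambda}$ by matching annihilators in $U(\g[z^{-1}])$. Unfortunately, the key step of your plan is not established, and this is not a cosmetic omission.

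The assertion you label ``(a)'' --- that $\mathrm{Ann}_{U(\g[z^{-1}])}(\bv_\Lambda)$ is \emph{generated} as a left ideal by the relations (i)--(iii) --- is precisely the hard structural content, and you explicitly flag it as ``the main obstacle'' without carrying it out. What \emph{is} standard (Kac) is that the full $U(\tg)$-annihilator of $\bv_\Lambda$ is generated by $\gn^+_\af$, $\wth - \Lambda(\wth)$, and $F_i^{(\langle\al_i^\vee,\Lambda\rangle+1)}$ for $i\in\tI_\af$. Intersecting this left ideal with the \emph{subalgebra} $U(\g[z^{-1}])$ is an entirely different problem: $z\g[z]$ does not normalize $\g[z^{-1}]$, so ``commuting $U(z\g[z])$ through'' produces elements that leave $U(\g[z^{-1}])$, and the braid-operator upgrade only gives you that the relations (iii) \emph{hold}, not that they \emph{generate} the restricted annihilator together with (i) and (ii). In particular, the relations (i)--(iii) say nothing about the action on $\bv_\Lambda$ of the imaginary root vectors $\h\otimes z^{-k}$ ($k\ge 1$), and it is exactly there that $L(\Lambda)$ (whose imaginary weight multiplicities are governed by string functions) and a free/global-Weyl-type module differ; one should expect genuine extra relations in $\mathrm{Ann}_{U(\g[z^{-1}])}(\bv_\Lambda)$ beyond (i)--(iii). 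Even the $\Lambda = \Lambda_0$ case, where $\bW^-(\bar\Lambda)=\bK$ is one-dimensional, already requires knowing that the quotient of $U(\g[z^{-1}])$ by (i)--(iii) is no larger than $L(\Lambda_0)$, and that is not evident.

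So the needed containment $\mathrm{Ann}_{U(\g[z^{-1}])}(\bv_\Lambda)\subseteq \mathrm{Ann}_{U(\g[z^{-1}])}(\bv_{\bar\Lambda})$ is not established by your argument: you only check that a subideal (i)--(iii) lies in the right-hand annihilator, not the whole left-hand one. This is the same difficulty that [KL17, Theorem~A] is invoked to resolve in the paper's proof, so the gap cannot be dismissed as routine. The cyclicity of $L(\Lambda)$ over $U(\g[z^{-1}])$, the verification that (i)--(iii) kill $\bv_{\bar\Lambda}$ via Kashiwara's weight estimate, and the passage from the $\C$-linear surjection to the $U^-_\Z$-integral one are all correct; but they do not close the argument without (a).
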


\begin{proof}
By \cite[Theorem A]{KL17}, the graded $\g [z]_\C$-module $\theta^* ( L ( \Lambda )_\C )$ admits a filtration by the grading shifts of $\{ \bW ( \mu )_\C \}_{\mu \in P_+}$. Since the ($d$-)degrees of $L ( \Lambda )_\C$ is concentrated in $\Z_{\le 0}$ and the degree $0$-part of $L ( \Lambda )_\C$ is $V ( \bar{\Lambda} )_\C$, the first quotient of $\theta^* ( L ( \Lambda )_\C )$ in our filtration must be $\theta^* ( \bW^- ( \bar{\Lambda} )_\C )$. Hence, we obtain the surjection $\eta : L ( \Lambda )_\C \rightarrow \bW^- ( \bar{\Lambda} )_\C$ of $\g [z^{-1}]_\C$-modules.

Since the both modules share the $U^-_\C$-cyclic vector and compatible with the negative global basis, we conclude that the $\Z$-basis of $\bW^- ( \bar{\Lambda} )_\Z$ is obtained as a $\Z$-basis of $L ( \Lambda )_\Z$ that is not annihilated by $\eta$. Hence, we conclude that $\eta$ induces a surjection $L ( \Lambda )_\Z \rightarrow \bW^- ( \bar{\Lambda} )_\Z$ of $U^-_\Z$-modules.
\end{proof}

\begin{cor}\label{LW-def-eq}
Let $\Bbbk$ be a field. For $\Lambda \in P^{\af}_+$, we have a $U^-_\Bbbk$-module generator set $\{ \bu _m \}_{m \in \Z_{\ge 0}}$ of $\ker ( L ( \Lambda )_\Bbbk \rightarrow \bW^- ( \bar{\Lambda} )_\Bbbk )$ that satisfies:
\begin{itemize}
\item Each element $\bu_m$ satisfies $a_{\Lambda_m} \bu_m = \bu_m$ for some $\Lambda_m \in P^\af$;
\item For each $m \in \Z_{\ge 0}$, we have $\overline{\Lambda}_m \not\in \mathrm{Conv} \, W \overline{\Lambda} \subset P \otimes_\Z \R$.
\end{itemize}
\end{cor}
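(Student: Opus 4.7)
The plan is to identify $K := \ker(L(\Lambda)_{\Bbbk} \to \bW^-(\bar\Lambda)_{\Bbbk})$ with the submodule generated by $P^\af$-weight vectors of ``outside'' $P$-weight, by exploiting the projective cover property of $\bW(\la)_{\Bbbk}$ from Proposition \ref{proj-cover} after the $\theta$-twist of Lemma \ref{WX-comm} (4). Throughout, let $\dot{U}^{\le 0}_{\Bbbk}$ denote the $\theta$-mirror of $\dot{U}^{\ge 0}_{\Bbbk}$ (containing $U^-_{\Bbbk}$ and the finite $\dot{U}^0_{\Bbbk}$-part generated by $\{E_i,F_i\}_{i \in \tI}$), and note that $K$ is a $\dot{U}^{\le 0}_{\Bbbk}$-submodule since the surjection $L(\Lambda)_{\Bbbk} \to \bW^-(\bar\Lambda)_{\Bbbk}$ is $\dot{U}^{\le 0}_{\Bbbk}$-equivariant (because $E_i \bv_{\bar\Lambda} = 0$ for $i \in \tI$ forces $\bW^-(\bar\Lambda)_{\Bbbk}$ to be $\dot{U}^{\le 0}_{\Bbbk}$-stable inside $\bX(\bar\Lambda)_{\Bbbk}$).

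Applying $\theta$ to Proposition \ref{proj-cover} together with Lemma \ref{WX-comm} (4) shows $\bW^-(\bar\Lambda)_{\Bbbk} \cong \theta^*\bW(-w_0 \bar\Lambda)_{\Bbbk}$ is the projective cover of $V(\bar\Lambda)_{\Bbbk}$ in the category $\mathcal{C}^-_{\bar\Lambda}$ of $\dot{U}^0_{\Bbbk}$-integrable $\dot{U}^{\le 0}_{\Bbbk}$-modules with $P$-weights inside $\mathrm{Conv}\,W\bar\Lambda$. Let $V^{\mathrm{out}} \subset L(\Lambda)_{\Bbbk}$ be the span of $P^\af$-weight vectors whose $P$-weight lies outside $\mathrm{Conv}\,W\bar\Lambda$, and set $J := \dot{U}^{\le 0}_{\Bbbk} \cdot V^{\mathrm{out}}$; clearly $J \subset K$. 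In the quotient $M := L(\Lambda)_{\Bbbk}/J \in \mathcal{C}^-_{\bar\Lambda}$, the image $\bar{\bv}_{\Lambda}$ satisfies $E_i\bar{\bv}_{\Lambda}=0$ for $i \in \tI_{\af}$ (inherited) and, decisively, $F_0\bar{\bv}_{\Lambda}=0$: indeed $F_0\bv_\Lambda$ has $P$-weight $\bar\Lambda + \vartheta$, which fails $\bar\Lambda + \vartheta \le \bar\Lambda$ in $P_+$ and hence lies outside $\mathrm{Conv}\,W\bar\Lambda$, so $F_0\bv_\Lambda \in V^{\mathrm{out}} \subset J$. Combined with $\dot{U}^0_{\Bbbk}$-integrability, these are exactly the defining relations on the cyclic vector of $V(\bar\Lambda)_{\Bbbk}$ viewed as an object of $\mathcal{C}^-_{\bar\Lambda}$ with $F_0$ acting by zero (the Serre relations involving $F_0$ become vacuous once $F_0 = 0$), producing a surjection $M \twoheadrightarrow V(\bar\Lambda)_{\Bbbk}$. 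Projectivity of $\bW^-(\bar\Lambda)_{\Bbbk}$ in $\mathcal{C}^-_{\bar\Lambda}$ lifts this to a surjection $\bW^-(\bar\Lambda)_{\Bbbk} \to M$ whose composition with the canonical surjection $M \twoheadrightarrow L(\Lambda)_{\Bbbk}/K \cong \bW^-(\bar\Lambda)_{\Bbbk}$ is the identity on the cyclic vector, forcing both maps to be isomorphisms and $J = K$.

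To upgrade $\dot{U}^{\le 0}_{\Bbbk}$-generation to $U^-_{\Bbbk}$-generation, I would write $\dot{U}^{\le 0}_{\Bbbk} = U^-_{\Bbbk}\cdot \dot{U}^0_{\Bbbk}$ and decompose $\dot{U}^0_{\Bbbk} \cdot v$, for each $v \in V^{\mathrm{out}}$, into $V(\mu)$-isotypic components under the integrable $\dot{U}^0_{\Bbbk}$-action. Each occurring $\mu \in P_+$ must satisfy $\mu \not\le \bar\Lambda$: otherwise $\mathrm{Conv}\,W\mu \subset \mathrm{Conv}\,W\bar\Lambda$ would force $v$ to have inside $P$-weight, contradicting the choice of $V^{\mathrm{out}}$. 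Consequently the $\dot{U}^0_{\Bbbk}$-highest-weight vectors of these components lie again in $V^{\mathrm{out}}$, and the full $V(\mu)$-piece is recovered from them by the subalgebra generated by $\{F_j\}_{j \in \tI} \subset U^-_{\Bbbk}$. Hence $\dot{U}^0_{\Bbbk} \cdot V^{\mathrm{out}} \subset U^-_{\Bbbk} \cdot V^{\mathrm{out}}$, and $K = U^-_{\Bbbk} \cdot V^{\mathrm{out}}$. Any countable $\Bbbk$-basis $\{\bu_m\}_{m \in \Z_{\ge 0}}$ of $V^{\mathrm{out}}$ consisting of $P^\af$-weight vectors (such a basis exists because $L(\Lambda)_{\Bbbk}$ has finite-dimensional weight spaces) then provides the asserted generating set. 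The main obstacle is producing the surjection $M \twoheadrightarrow V(\bar\Lambda)_{\Bbbk}$ in the second paragraph, whose nontrivial content is the weight computation $\overline{\Lambda - \al_0} = \bar\Lambda + \vartheta \notin \mathrm{Conv}\,W\bar\Lambda$ forcing $F_0 \bar{\bv}_\Lambda = 0$ in $M$.
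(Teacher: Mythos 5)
Your overall strategy — twist Proposition \ref{proj-cover} by $\theta$ via Lemma \ref{WX-comm}~(4) and use the resulting projectivity/maximality of $\bW^-(\bar\Lambda)_\Bbbk$ to identify the kernel with the submodule generated by the out-of-hull weight vectors — does match the paper's intended approach. One small conceptual slip in the second paragraph: observing that $\bar{\bv}_\Lambda \in M$ satisfies ``the defining relations of $V(\bar\Lambda)_\Bbbk$'' does \emph{not} produce a surjection $M \twoheadrightarrow V(\bar\Lambda)_\Bbbk$ (the universal property of the Weyl module produces a map \emph{out of} $V(\bar\Lambda)_\Bbbk$, into $M$). The surjection you actually want is simply the $\dot{U}^{\le 0}_\Bbbk$-equivariant projection of $M$ onto its $d$-degree-$0$ part, which is $V(\bar\Lambda)_\Bbbk$ because $J$ is supported in strictly negative $d$-degrees; with that fix, and a $d$-degree-$0$ comparison on the resulting endomorphism of $\bW^-(\bar\Lambda)_\Bbbk$, your second paragraph correctly yields $K = J = \dot{U}^{\le 0}_\Bbbk V^{\mathrm{out}}$.

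The genuine gap is in the last paragraph. The ``decomposition of $\dot{U}^0_\Bbbk\cdot v$ into $V(\mu)$-isotypic components'' is a characteristic-zero semisimplicity device; over a field $\Bbbk$ with $\mathrm{char}\,\Bbbk = p > 0$ (which is precisely the case this Corollary is used in, namely Theorem \ref{R-can}) no such decomposition exists, and, crucially, a finite-dimensional integrable $\dot{U}^0_\Bbbk$-module is in general \emph{not} generated over $U^-_\g := \Bbbk\langle F_j^{(m)} : j \in \tI,\ m\ge 0\rangle$ by its highest-weight vectors. For example, for $\mathop{SL}(2)$ over $\F_2$, the costandard module $\mathrm{Sym}^2 V(\varpi)$ with basis $x^2, xy, y^2$ has unique dominant $\gn$-highest-weight vector $x^2$, but $U^-_\g\,x^2 = \Bbbk x^2 \oplus \Bbbk y^2$ misses the middle vector $xy$. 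Hence the inference ``the $\dot{U}^0_\Bbbk$-highest-weight vectors lie in $V^{\mathrm{out}}$, so $\dot{U}^0_\Bbbk V^{\mathrm{out}} \subset U^-_\Bbbk V^{\mathrm{out}}$'' is not justified; this is exactly the passage from $\dot{U}^{\le 0}_\Bbbk$-generation to the $U^-_\Bbbk$-generation that the Corollary actually asserts, and it needs a positive-characteristic argument (e.g.\ a commutation bookkeeping, or special structure of the $d$-graded pieces of $L(\Lambda)_\Bbbk$ via the global basis) rather than an isotypic decomposition.
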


\begin{proof}
Note that $L ( \Lambda )_\Bbbk$ has at most countable rank over $\Bbbk$, which implies that the generator set is at most countable. As the both of $L ( \Lambda )_\Bbbk$ and $\bW^- ( \bar{\Lambda} )_\Bbbk$ admit $P^\af$-weight decompositions, we deduce the first assertion. Since the both modules are $\dot{U}^0_\Bbbk$-integrable and $U^-_\Bbbk$-cyclic, the second assertion follows by Proposition \ref{proj-cover}.
\end{proof}

\begin{prop}\label{LW-comm}
For each $\Lambda, \Gamma \in P^{\af}_+$, we have the following commutative diagram of $U^-_\Z$-modules:
$$
\xymatrix{
L ( \Lambda + \Gamma )_{\Z} \ar@{^{(}->}[r] \ar@{->>}[d]& L ( \Lambda )_\Z \otimes_\Z L ( \Gamma )_\Z\ar@{->>}[d]\\
\bW^- ( \overline{\Lambda + \Gamma} )_{\Z} \ar@{^{(}->}[r]^{\mathsf{m} \hskip 6mm} & \bW^- ( \overline{\Lambda} )_\Z \otimes_\Z \bW^- ( \overline{\Gamma} )_\Z.
}
$$
Moreover, all the maps define direct summands as $\Z$-modules.
\end{prop}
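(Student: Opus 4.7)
The approach is three-fold: construct the bottom arrow $\mathsf{m}$, verify commutativity by tracking a cyclic vector, and then establish each of the four direct-summand statements. For the construction, I dualize Theorem~\ref{WC-surj} via the Chevalley involution. A direct check on the Chevalley generators shows that $\theta$ is a coalgebra map for the standard comultiplication of $U(\tg)$, so $\theta^{*}$ respects tensor products of modules. Combined with the identification $\theta^{*}\bW_{w_{0}}(\nu)_{\Z}\cong\bW^{-}(-w_{0}\nu)_{\Z}$ from Lemma~\ref{WX-comm}(4), applying $\theta^{*}$ to the embedding of Theorem~\ref{WC-surj} with $w=w_{0}$ yields a $U^{-}_{\Z}$-module injection
\[\mathsf{m}:\bW^{-}(\overline{\Lambda+\Gamma})_{\Z}\hookrightarrow\bW^{-}(\bar\Lambda)_{\Z}\otimes_{\Z}\bW^{-}(\bar\Gamma)_{\Z},\]
normalized so that $\bv_{\overline{\Lambda+\Gamma}}\mapsto\bv_{\bar\Lambda}\otimes\bv_{\bar\Gamma}$; $U^{-}_{\Z}$-cyclicity of the source and $\Z$-torsion-freeness of the target pin $\mathsf{m}$ down uniquely.

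For commutativity, both paths around the square are $U^{-}_{\Z}$-module maps out of $L(\Lambda+\Gamma)_{\Z}$, which is $U^{-}_{\Z}$-cyclic on the highest-weight vector $\bv_{\Lambda+\Gamma}$, into the torsion-free $\Z$-module $\bW^{-}(\bar\Lambda)_{\Z}\otimes_{\Z}\bW^{-}(\bar\Gamma)_{\Z}$. After normalizing the top inclusion of Corollary~\ref{L-dualmult} by the matching-of-highest-weights condition $\bv_{\Lambda+\Gamma}\mapsto\bv_{\Lambda}\otimes\bv_{\Gamma}$, both paths send $\bv_{\Lambda+\Gamma}$ to $\bv_{\bar\Lambda}\otimes\bv_{\bar\Gamma}$; cyclicity plus torsion-freeness then forces the two maps to coincide.

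For the direct-summand claims, the top is Corollary~\ref{L-dualmult}. For each vertical surjection, the proof of Theorem~\ref{W-quot} identifies the specialized global basis of $\bW^{-}(\bar\nu)_{\Z}$ with the non-vanishing image of an explicit subset of the global basis of $L(\Lambda')_{\Z}$; the set-theoretic lift of that subset is a $\Z$-module splitting, and the right vertical splits by tensoring two such. For the bottom, I pass through the $\bW$-version via $\theta^{*}$: the map $m:\bW(\bar\Lambda+\bar\Gamma)_{\Z}\hookrightarrow\bW(\bar\Lambda)_{\Z}\otimes\bW(\bar\Gamma)_{\Z}$ fits, up to a permutation of tensor factors, into a factorization of $\Psi_{\bar\Lambda+\bar\Gamma}$ through $\Psi_{\bar\Lambda}\otimes\Psi_{\bar\Gamma}$ forced by the uniqueness part of Theorem~\ref{W-unit}; both outer maps are $\Z$-summands (tensor products of splittings split), so $m$ is a $\Z$-summand, and since $\theta^{*}$ preserves the underlying $\Z$-module structure, so is $\mathsf{m}$.

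The chief technical burden lies in the simultaneous normalization bookkeeping---linking the top arrow, $\mathsf{m}$, and the $\Psi$-factorization---together with the $\theta$-compatibility with the comultiplication and the specialized global bases; everything else reduces to tracing a single cyclic vector and to routine global-basis arguments.
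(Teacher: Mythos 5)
Your proposal is correct and takes essentially the same approach as the paper: Corollary~\ref{L-dualmult} for the top arrow, Theorem~\ref{W-quot} together with global-basis compatibility for the vertical arrows, the $\theta^{*}$-twist of Theorem~\ref{WC-surj} to produce $\mathsf{m}$, and a reduction to Theorem~\ref{W-unit} for the $\Z$-summand property of $\mathsf{m}$. The paper treats the commutativity as implicit in the cyclicity of all four modules on the obvious generators, and compresses the last step into ``a repeated use of (\ref{W-incl-Z})''; your explicit tracking of the cyclic vector and the observation that a split monomorphism that factors through $m$ forces $m$ itself to split are just the same argument spelled out in more detail. One small note of economy: in your final step only the splitting of the composite $\Psi_{\bar\Lambda+\bar\Gamma}$ is used, since a left inverse of it, precomposed with $\Psi_{\bar\Lambda}\otimes\Psi_{\bar\Gamma}$, already gives a left inverse of $m$; the parenthetical remark about tensor products of splittings is not needed there.
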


\begin{proof}
The injectivity of the top horizontal arrow and the fact that it defines a direct summand as $\Z$-modules is Corollary \ref{L-dualmult}.

The surjectivity of the vertical arrows are Theorem \ref{W-quot}. Since they are obtained by annihilating parts of $\Z$-bases, these maps define direct summands as $\Z$-modules.

Since all the modules are generated by the cyclic vectors $\bv_{\Lambda + \Gamma}$ or $\bv_{\Lambda} \otimes \bv_{\Gamma}$ as $\g [z^{-1}]_\C$-modules or $\g [z^{-1}]^{\oplus 2}_\C$-modules, Theorem \ref{WC-surj} (twisted by $\theta$) implies the injectivity of $\mathsf{m}$ after extending the scalar to $\C$. Hence, we deduce
\begin{equation}
\mathsf{m} ( \bW^- ( \overline{\Lambda + \Gamma} )_{\Z} ) \subset \bW^- ( \overline{\Lambda} )_\Z \otimes_\Z \bW^- ( \overline{\Gamma} )_\Z.\label{W-incl-Z}
\end{equation}

Therefore, it suffices to prove that (\ref{W-incl-Z}) has torsion-free cokernel (as a $\Z$-module) to complete the proof. By a repeated use of (\ref{W-incl-Z}), we arrive the setting of Theorem \ref{W-unit} in view of Theorem \ref{WC-surj}. Thus, the map $\mathsf{m}$ defines a direct summand of $\bW ( \overline{\Lambda} )_\Z \otimes_\Z \bW ( \overline{\Gamma} )_\Z$ as $\Z$-modules. 
\end{proof}

\begin{cor}\label{WXcomm-inj}
For each $\la, \mu \in P_+$ and $w \in W$, we have the following commutative diagram of $U_\Z^+$-modules:
$$
\xymatrix{
\bX ( \la + \mu )_{\Z} \ar@{^{(}->}[r] & \bX ( \la )_\Z \otimes_\Z \bX ( \mu )_\Z\\
\bW _w ( \la + \mu )_{\Z} \ar@{^{(}->}[r] \ar@{^{(}->}[u]& \bW _w ( \la )_\Z \otimes_\Z \bW _w ( \mu )_\Z\ar@{^{(}->}[u].
}
$$
Moreover, the vertical inclusions are compatible with positive global basis, and the horizontal inclusions define direct summands as $\Z$-modules. In addition, all the inclusions commute with the automorphism $\tau_{\beta}$ $(\beta \in Q^{\vee})$ of $\bX ( \la )_{\Z}, \bX ( \mu )_{\Z}$, and $\bX ( \la + \mu )_{\Z}$.
\end{cor}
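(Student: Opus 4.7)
The plan is to reduce everything to statements already established for the $\bW_w$-modules, for $L(\Lambda)$ via $\theta^*$, and to the analogue Theorem \ref{W-unit}, then promote from $\C$ to $\Z$ via the global basis theory.

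First I would construct the top horizontal map. The tensor $\bv_\la\otimes\bv_\mu$ in $\bX(\la)_\Z\otimes_\Z\bX(\mu)_\Z$ satisfies the defining relations of the extremal weight vector $\bv_{\la+\mu}\in\bX(\la+\mu)$: both $\bv_\la$ and $\bv_\mu$ are extremal of weights $\la,\mu\in P_+$, so $E_i(\bv_\la\otimes\bv_\mu)=0$ for $i\in\tI$, $F_0(\bv_\la\otimes\bv_\mu)=0$, and $K$ acts by $0$; these propagate through the comultiplication $\Delta:\dot U_\Z\hookrightarrow \dot U_\Z\otimes\dot U_\Z$ (cf.\ \cite[\S 23.1.5]{Lus93}). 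The maximality of $\bX(\la+\mu)$ produces a unique $\dot U_\Z$-module map $\Psi_{\la,\mu}:\bX(\la+\mu)_\Z\to\bX(\la)_\Z\otimes_\Z\bX(\mu)_\Z$ with $\bv_{\la+\mu}\mapsto\bv_\la\otimes\bv_\mu$. Because every extremal weight vector of $\bX(\la+\mu)$ is obtained from $\bv_{\la+\mu}$ by composing Kashiwara operators attached to elements of $W_\af$ (Lemma \ref{WX-comm}(1) and \cite[Lemma 8.2.1]{Kas94}), and because the same composition on $\bv_\la\otimes\bv_\mu$ yields $\pm\bv_{w\la}\otimes\bv_{w\mu}$, we obtain $\Psi_{\la,\mu}(\bv_{w(\la+\mu)})=\pm\bv_{w\la}\otimes\bv_{w\mu}$. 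Applying $U_\Z^+$ then gives the square with the bottom horizontal arrow landing in $\bW_w(\la)_\Z\otimes_\Z\bW_w(\mu)_\Z$.

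For injectivity over $\C$, the bottom row is the $w\in W$ case of Theorem \ref{WC-surj}; the general case $w=ut_\beta$ reduces to this by applying $\tau_\beta\otimes\tau_\beta$ on the right and $\tau_\beta$ on the left (Lemma \ref{WX-comm}(1)). Injectivity of $\Psi_{\la,\mu}$ over $\C$ then follows from $\bX(\la+\mu)_\C=\bigcup_{w\in W_\af}\bW_w(\la+\mu)_\C$. To promote to $\Z$ and obtain the direct summand property it is cleanest to invoke the $\mathsf q=1$ specialization of \cite[Proposition 8.6]{Kas02}, exactly as in the proof of Theorem \ref{W-unit}: the global basis of $\bX(\la+\mu)$ is expressed in terms of tensor products of the global bases of $\bX(\la)$ and $\bX(\mu)$ by an upper-unitriangular matrix $C$ with entries in $\Q[\mathsf q]$, whose $\mathsf q=1$ specialization must be integral since $\mathsf L_1\subset\mathsf L_2$ at the quantum level; this integrality is exactly the compatibility with the positive global basis and the direct summand property for the top row. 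Intersecting with the $U_\Z^+$-submodules generated by the extremal vectors of weight $w\la$ transfers the direct summand property to the bottom row, and concurrently produces the compatibility with the positive global basis on $\bW_w$ via Theorem \ref{b-compat}.

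For $\tau_\beta$-equivariance, both $\tau_\beta$ on $\bX(\la+\mu)_\Z$ and $\tau_\beta\otimes\tau_\beta$ on $\bX(\la)_\Z\otimes_\Z\bX(\mu)_\Z$ send the distinguished extremal vector at $e$ to the one at $t_\beta$; since $\Psi_{\la,\mu}$ is characterized (up to the sign convention of extremal vectors) by sending extremal vectors to tensor products of extremal vectors, the commutation $\Psi_{\la,\mu}\circ\tau_\beta=(\tau_\beta\otimes\tau_\beta)\circ\Psi_{\la,\mu}$ is automatic from uniqueness. The main obstacle is the $\Z$-level direct summand assertion in the middle paragraph: the $\C$-injectivity and the square itself are formal from the constructions in Step 1 and Theorem \ref{WC-surj}, but the integrality of the change-of-basis matrix $C$ requires the quantum-level compatibility of comultiplication with the global basis, i.e.\ the technical input from \cite[Theorem 8.5 and Proposition 8.6]{Kas02} used in the proof of Theorem \ref{W-unit}.
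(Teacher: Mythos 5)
Your proof is correct in outline, but it takes a genuinely different route from the paper. The paper's proof is a two-line reduction: the $w = w_0$ case is obtained by pushing the commutative diagram of Proposition \ref{LW-comm} (which lives on the side of $L(\Lambda)$'s and $\bW^-$'s, where Kashiwara's global-basis compatibility for integrable highest weight modules is at its most robust) through $\theta^*$ and the $\tau_\beta$-twists of Lemma \ref{WX-comm}~1), and the general $w$ is then recovered by intersecting with $\bX(\la)_\Z$ via Lemma \ref{WX-comm}~3) and Theorem \ref{WC-surj}. You instead construct the top horizontal map directly from the comultiplication and the universal property of $\bX(\la+\mu)$, track extremal vectors to get the square, and re-derive the $\Z$-direct-summand property from the quantum-level lattice comparison. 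This is essentially inlining the argument of Proposition \ref{LW-comm} rather than quoting it, so it is longer but more self-contained; the paper's reduction is shorter and avoids re-running the lattice argument. One imprecision you should tighten: \cite[Proposition~8.6]{Kas02}, as invoked in the proof of Theorem \ref{W-unit}, compares lattices inside $\bigotimes_i\bX(\varpi_i)^{\otimes\langle\al_i^\vee,\la\rangle}$, not inside an arbitrary $\bX(\la)\otimes\bX(\mu)$; to apply it here you should factor $\bX(\la)_\Z\otimes_\Z\bX(\mu)_\Z$ further into the fundamental tensor product and use that $A\subset B\subset C$ with $C/A$ torsion-free forces $B/A$ torsion-free (degree-by-degree, since graded pieces are finite rank) — which is exactly the ``repeated use of (\ref{W-incl-Z})'' step in the paper's own proof of Proposition~\ref{LW-comm}. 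With that patch the argument goes through.
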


\begin{proof}
In view of Proposition \ref{LW-comm}, the $w = w_0$ case follows from Lemma \ref{WX-comm} 1). Thanks to Theorem \ref{WC-surj} and Lemma \ref{WX-comm} 3), we deduce the general case from the $w = w_0$ case.
\end{proof}

Let $w \in W_\af$ and $\tJ \subset \tI$. We define $P_+$- and $P_{\tJ,+}$- graded $\Z$-modules:
$$R ^{\af} := \bigoplus_{\Lambda \in P_+^\af} L ( \Lambda )_\Z ^{\vee} \hskip 5mm \text{and} \hskip 5mm  R _{w}( \tJ ) := \bigoplus_{\la \in P_{\tJ,+}} \bW_{ww_0} ( \la )_\Z ^{\vee}.$$

\begin{lem}\label{R-welldef}
The $\Z$-duals of the horizontal maps in Proposition {\rm\ref{LW-comm}} equip $R ^{\af}$ and $R _{w}( \tJ )$ structures of $(P_+$- and $P_{\tJ,+}$-$)$ graded commutative rings. 
\end{lem}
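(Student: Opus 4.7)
The idea is to define the multiplications by dualizing the horizontal direct-summand inclusions from Proposition \ref{LW-comm} (for $R^{\af}$) and from Corollary \ref{WXcomm-inj} with $w$ replaced by $ww_0$ (for $R_w(\tJ)$). Since the horizontal maps are direct summands as $\Z$-modules, their graded $\Z$-duals are surjective, yielding
$$L(\Lambda)_\Z^\vee \otimes_\Z L(\Gamma)_\Z^\vee \twoheadrightarrow L(\Lambda+\Gamma)_\Z^\vee \quad \text{and} \quad \bW_{ww_0}(\la)_\Z^\vee \otimes_\Z \bW_{ww_0}(\mu)_\Z^\vee \twoheadrightarrow \bW_{ww_0}(\la+\mu)_\Z^\vee$$
for $\Lambda, \Gamma \in P_+^\af$ and $\la, \mu \in P_{\tJ, +}$. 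Summing over the indexing semigroups equips $R^\af$ and $R_w(\tJ)$ with grading-preserving bilinear multiplications.

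The main technical point is that the graded $\Z$-dual commutes with the tensor products in play. This holds because each module involved is a free $\Z$-module whose $P^\af$-weight spaces have finite rank (by the relevant global basis, cf. Theorem \ref{Bla} and Theorem \ref{b-compat}) and whose weights lie in a translate of a pointed cone: the weights of $L(\Lambda)$ lie in $\Lambda$ minus non-negative integer combinations of the simple affine roots, and the weights of $\bW_{ww_0}(\la)$ satisfy an analogous bound. Hence each fixed-weight component of the relevant tensor product decomposes as a finite direct sum of tensor products of finite-rank pieces, and the graded dual distributes over such tensor products. Corollary \ref{WXcomm-inj} is stated only for $w \in W$; one extends the $\bW_{ww_0}$-inclusion to $w \in W_\af$ by conjugating with the automorphisms $\tau_\beta$ of Lemma \ref{WX-comm}, which are $\dot U_\Z$-module isomorphisms intertwining the cyclic vectors and therefore commute with the tensor embeddings.

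Commutativity and associativity of the resulting multiplications reduce to cocommutativity and coassociativity of the horizontal embeddings, which in turn follow by uniqueness from their action on cyclic vectors: the embedding sends $\bv_{\Lambda+\Gamma} \mapsto \bv_\Lambda \otimes \bv_\Gamma$ (respectively, $\bv_{ww_0(\la+\mu)} \mapsto \bv_{ww_0\la} \otimes \bv_{ww_0\mu}$), so post-composition with the swap of tensor factors reproduces the embedding with $\Lambda, \Gamma$ (respectively $\la, \mu$) exchanged, and the two iterated triple embeddings coincide in the same way. Dualizing converts these coproperties into commutativity and associativity on $R^\af$ and $R_w(\tJ)$. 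The only subtlety requiring real care is the interaction of the graded $\Z$-dual with the tensor products, for which the cone-support property above is exactly what is needed.
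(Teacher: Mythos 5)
Your associativity argument --- showing the two triple embeddings agree by uniqueness of degree-zero cyclic-module maps, then dualizing --- is exactly the paper's, including the extension from $R_e(\tJ)$ to general $R_w(\tJ)$ via Lemma \ref{WX-comm}~1) and Corollary \ref{WXcomm-inj}. Your extra remarks on the graded $\Z$-dual commuting with tensor products and on extending Corollary \ref{WXcomm-inj} via $\tau_\beta$ are reasonable and consistent with the paper.

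For commutativity, however, there is a genuine gap. You assert that cocommutativity of the embedding follows by uniqueness from its action on cyclic vectors: post-composing $\bW(\la+\mu)_\Z \hookrightarrow \bW(\la)_\Z \otimes_\Z \bW(\mu)_\Z$ with the swap of tensor factors gives a map agreeing on the cyclic vector with $\bW(\la+\mu)_\Z \hookrightarrow \bW(\mu)_\Z \otimes_\Z \bW(\la)_\Z$, hence equal to it. But the uniqueness statement identifies two \emph{module} maps, and the swap-composed map is a module map only if the swap itself intertwines the $\dot{U}_\Z$-module structure on the tensor product --- i.e., only if the coproduct of $\dot{U}_\Z$ is cocommutative. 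That is precisely the fact the paper supplies at this point: ``the $\mathsf{q}=1$ coproduct of $\dot{U}_\Z$ is symmetric,'' with a citation to Lusztig. Without it, the uniqueness argument does not apply to the swapped composition; and indeed at generic $\mathsf{q}$ the quantum coproduct is not cocommutative, so the specialization $\mathsf{q}=1$ is doing real work that your proof silently assumes. Supplying that one fact closes the gap.
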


\begin{proof}
The maps in Proposition \ref{LW-comm} are characterized as the $d$-degree zero maps of cyclic $U^-_\Z$-modules, that are unique up to a scalar. Therefore, the composition
$$\bW ( \la + \mu + \gamma )_\Z \hookrightarrow \bW ( \la + \mu )_\Z \otimes_\Z \bW ( \gamma )_\Z \hookrightarrow \bW ( \la )_\Z \otimes_\Z \bW ( \mu )_\Z \otimes_\Z \bW ( \gamma )_\Z$$
is the same map as
$$\bW ( \la + \mu + \gamma )_\Z \hookrightarrow \bW ( \la )_\Z \otimes_\Z \bW ( \mu + \gamma )_\Z \hookrightarrow \bW ( \la )_\Z \otimes_\Z \bW ( \mu )_\Z \otimes_\Z \bW ( \gamma )_\Z$$
for every $\la, \mu, \gamma \in P_+$ as the images of the cyclic vectors are the same. Taking their restricted duals implies the associativity of the multiplication of $R _{e}( \tJ )$. In view of Lemma \ref{WX-comm} 1) and Corollary \ref{WXcomm-inj}, we deduce  the associativity of the multiplication of $R _{w}( \tJ )$ for each $w \in W_\af$. The associativity of $R ^{\af}$ is proved similarly (cf. \cite{Kat18b}). The commutativity of $R ^{\af}$ and $R _{w}( \tJ )$ follow as the $\mathsf{q} = 1$ coproduct of $\dot{U}_\Z$ is symmetric (\cite[Lemma 3.1.4, \S 23.1.5]{Lus93}). 
\end{proof}

\begin{cor}[of the proof of Lemma \ref{R-welldef}]\label{transR}
For each $w \in W_\af$ and $\beta \in Q^{\vee}$, we have an isomorphism of $( R_w )_{\Z}$ and $( R_{wt_{\beta}} )_{\Z}$ as graded commutative rings equipped with $U_\Z^+$-actions up to grading twists $($given by Lemma {\rm\ref{WX-comm}}$)$. \hfill $\Box$
\end{cor}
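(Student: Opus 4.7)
The plan is to transport the graded ring structure on $R_w(\tJ)$ via the $\dot{U}_\Z$-module automorphism $\tau_\gamma$ of $\bX(\la)_\Z$ supplied by Lemma \ref{WX-comm}(1), for a suitable choice of $\gamma \in Q^{\vee}$ depending on $\beta$. The first step is to establish the identity
$$\tau_\gamma(\bv_{v\la}) = \pm\, \bv_{v t_\gamma \la} \qquad \text{for all } v \in W_\af,\; \gamma \in Q^\vee.$$
This is a weight computation: $\tau_\gamma$ commutes with the $\dot{U}_\Z$-action, so both sides are extremal weight vectors; that they have the same $\wth$-weight follows since $\tau_\gamma$ shifts the level-zero affine weight of $\bv_\la$ by $-\langle\gamma,\la\rangle\delta$, which is exactly the passage from the affine lift of $\la$ to $t_\gamma$ applied to it. Uniqueness up to sign of an extremal weight vector of a given weight, a consequence of Theorem \ref{b-compat}, yields the identification.

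Taking $\gamma = w_0\beta$ and $v = ww_0$, the rearrangement $ww_0\, t_{w_0\beta} = (wt_\beta)w_0$ gives $\tau_{w_0\beta}(\bv_{ww_0\la}) = \pm\,\bv_{(wt_\beta)w_0\la}$. Because $\tau_{w_0\beta}$ is $U^+_\Z$-linear and $\bW_{ww_0}(\la)_\Z$ is $U^+_\Z$-generated by $\bv_{ww_0\la}$, restriction yields a $U^+_\Z$-equivariant isomorphism $\bW_{ww_0}(\la)_\Z \to \bW_{(wt_\beta)w_0}(\la)_\Z$ of $\Z$-modules; the $d$-grading of the target is shifted by $-\langle w_0\beta,\la\rangle$ relative to the source. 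Dualising and summing over $\la \in P_{\tJ,+}$ produces a $U^+_\Z$-equivariant graded $\Z$-module isomorphism $(R_{wt_\beta})_\Z \to (R_w)_\Z$, with the $\la$-dependent grading twist above.

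For compatibility with the multiplication defined in Lemma \ref{R-welldef}, I invoke Corollary \ref{WXcomm-inj}: the embedding $\bW_{ww_0}(\la+\mu)_\Z \hookrightarrow \bW_{ww_0}(\la)_\Z \otimes_\Z \bW_{ww_0}(\mu)_\Z$ commutes with $\tau_\gamma$ applied to all three tensor factors (because this embedding is intrinsically determined by the shared cyclic vector). Dualising the resulting commutative square shows that the isomorphism from the previous paragraph intertwines the multiplications on $(R_{wt_\beta})_\Z$ and $(R_w)_\Z$. Base change along $\Z \to \F_p$ then gives the required isomorphism of $\F_p$-algebras with $U^+_\Z$-action.

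The only technical point is bookkeeping the $\la$-dependent grading shift $-\langle w_0\beta,\la\rangle$, which is precisely the meaning of the clause ``up to grading twists (given by Lemma \ref{WX-comm})'' in the statement. There is no substantive obstacle; all the ingredients are already encoded in Lemma \ref{WX-comm}(1) and Corollary \ref{WXcomm-inj}, which is why the result is phrased as a corollary of the proof of Lemma \ref{R-welldef}.
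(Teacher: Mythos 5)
Your proof is correct and follows essentially the same route the paper intends: the result is stated as a corollary of the proof of Lemma \ref{R-welldef}, whose ingredients are exactly Lemma \ref{WX-comm}\,1) (the translation automorphism $\tau_\gamma$) and Corollary \ref{WXcomm-inj} (its compatibility with the comultiplication maps defining the ring structure), and these are precisely the two tools you invoke. You make explicit the weight bookkeeping $\tau_{w_0\beta}(\bv_{ww_0\la})=\pm\,\bv_{(wt_\beta)w_0\la}$ and the rearrangement $ww_0 t_{w_0\beta}=(wt_\beta)w_0$, both of which are correct and which the paper leaves implicit.
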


We set $R := R_e$. Note that $R_{w} ( \tJ )\subset R_w$ is a subring. We also define
$$R^+ ( \tJ ) := \bigoplus_{\la \in P_{\tJ, +}} \mathrm{Span}_\Z \prod_{i \in \tI} ( \bX ( \varpi_i )_\Z^{\vee} )^{\left< \al_i^{\vee}, \la \right>} \subset \bigoplus_{\la \in P_+} \bX ( \la )_\Z ^{\vee} =: \widetilde{R},$$
where the multiplication is defined through the projective limit formed by the duals of Corollary \ref{WXcomm-inj}. Here we warn that the ($\Z$-)rank of some $P^\af$-weight space of $\bX ( \la )_\Z$ can be infinity, and hence the inclusion $R^+ \subsetneq \widetilde{R}$ has a huge cokernel. Note that the rank of the $P^\af$-weight spaces of $\bX ( \varpi_i )_\Z$ are bounded for each $i \in \tI$ (\cite[Proposition 5.16]{Kas02}), and hence $R^+ ( \tJ )$ has only countably many generators of $P_{\tJ,+}$-degrees $\{ \varpi_i \}_{i \in \tI \setminus \tJ}$.

By construction, the rings $R ^{\af}, R _{w}( \tJ )$, and $R^+ ( \tJ )$ are free over $\Z$.

For each $\la \in P_+$ and $w \in W_\af$, we have a unique $P^\af$-weight vector
\begin{equation}
\bv_{w\la}^{\vee} \in \bW_w ( \la )^{\vee}_{\Z}\label{vvee}
\end{equation}
with paring $1$ with $\bv_{w\la} \in \bW_w ( \la )_{\Z}$. This vector $\bv_{w\la}^{\vee}$ yields an $U^+_{\Bbbk}$-cocyclic vector of $\bW_w ( \la )_{\Bbbk}^{\vee}$ for a field $\Bbbk$. By the construction of the ring structure on $R_w$, we have $\bv_{ww_0\la}^{\vee} \cdot \bv_{ww_0\mu}^{\vee} = \bv_{ww_0(\la+\mu)}^{\vee}$ in $R_w$ for every $\la, \mu \in P_+$.

\begin{lem}\label{surj-rel}
For each $w,v\in W_\af$ and $\tJ \subset \tI$, the ring $R_{w}( \tJ )$ is a quotient of $R_{v}( \tJ )$ if $w \le_\si v$. In addition, the ring $R_{w}$ is a quotient of $R_{v}$ if and only if $w \le_\si v$.
\end{lem}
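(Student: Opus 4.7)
The plan is to construct the ring surjection $R_v(\tJ)\twoheadrightarrow R_w(\tJ)$ by dualizing the Demazure inclusions supplied by Lemma~\ref{contain}, and to obtain the converse by reading that lemma in the opposite direction.

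First I assume $w\le_\si v$ and invoke Lemma~\ref{contain} to obtain, for each $\la\in P_{\tJ,+}\subset P_+$, an inclusion $\bW_{ww_0}(\la)\subset \bW_{vw_0}(\la)$ of subspaces of $\bX(\la)_\C$. By Theorem~\ref{b-compat} combined with Lemma~\ref{WX-comm}(3), both Demazure submodules are $\Z$-spanned by subsets of the global basis $\bB(\bX(\la))$ of $\bX(\la)_\Z$, so the inclusion refines to $\bW_{ww_0}(\la)_\Z\hookrightarrow \bW_{vw_0}(\la)_\Z$ as a direct summand of $\Z$-modules. Taking $\Z$-duals in each degree yields surjections $\bW_{vw_0}(\la)^\vee_\Z\twoheadrightarrow \bW_{ww_0}(\la)^\vee_\Z$.

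Next I would verify that these degreewise maps assemble into a ring map. By Lemma~\ref{R-welldef}, the product on each $R_\bullet(\tJ)$ is $\Z$-dual to the unique degree-zero $U^+_\Z$-module embedding $\bW_\bullet(\la+\mu)_\Z\hookrightarrow \bW_\bullet(\la)_\Z\otimes_\Z \bW_\bullet(\mu)_\Z$ sending the cyclic vector to the tensor of cyclic vectors (Theorem~\ref{WC-surj} and Corollary~\ref{WXcomm-inj}). Tracing $\bv_{ww_0(\la+\mu)}\mapsto \bv_{ww_0\la}\otimes \bv_{ww_0\mu}$ through the square
\[
\xymatrix{
\bW_{ww_0}(\la+\mu)_\Z\ar@{^{(}->}[r]\ar@{^{(}->}[d]&\bW_{ww_0}(\la)_\Z\otimes_\Z \bW_{ww_0}(\mu)_\Z\ar@{^{(}->}[d]\\
\bW_{vw_0}(\la+\mu)_\Z\ar@{^{(}->}[r]&\bW_{vw_0}(\la)_\Z\otimes_\Z \bW_{vw_0}(\mu)_\Z
}
\]
makes it commute by the uniqueness of that embedding, and dualizing delivers the required surjection $R_v(\tJ)\twoheadrightarrow R_w(\tJ)$.

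For the ``only if'' half of the second assertion I start from a surjection $R_v\twoheadrightarrow R_w$ of graded $U^+_\Z$-algebras (respecting the natural $(H\times\Gm)$-grading) and dualize the degree-$\la$ piece to get a $P^\af$-graded $U^+_\Z$-equivariant embedding $\iota:\bW_{ww_0}(\la)_\Z\hookrightarrow \bW_{vw_0}(\la)_\Z$. For $\la\in P_{++}$, the $P^\af$-weight of $\bv_{ww_0\la}$ is an extremal weight of $\bX(\la)$ of multiplicity one (its $W_\af$-stabilizer is trivial by regularity), so $\iota(\bv_{ww_0\la})$ is a nonzero scalar multiple of $\bv_{ww_0\la}\in\bX(\la)$; $U^+_\Z$-equivariance then places $\bW_{ww_0}(\la)=U^+_\Z\bv_{ww_0\la}$ inside $\bW_{vw_0}(\la)$ as subspaces of $\bX(\la)$. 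The strengthened converse in Lemma~\ref{contain} now delivers $w\le_\si v$.

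The principal technical input throughout is Kashiwara's compatibility of Demazure modules with the global basis (Theorem~\ref{b-compat}, via Lemma~\ref{WX-comm}), which simultaneously lifts the $\C$-inclusion to $\Z$-coefficients and makes it a direct summand; once this is in hand, everything else reduces to diagram-chase and the uniqueness of cyclic-vector-preserving $U^+_\Z$-module maps.
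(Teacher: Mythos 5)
Your argument is correct and follows the same route as the paper: both hinge on the observation that $\bW_{ww_0}(\la)\subset\bW_{vw_0}(\la)$ iff $\bv_{ww_0\la}\in\bW_{vw_0}(\la)$ (by $U^+$-cyclicity), and then invoke Lemma~\ref{contain}, with the $P_{++}$-case furnishing the converse. You have simply unpacked the dualization and the diagram-chase showing the degreewise dual maps assemble into a ring surjection, which the paper compresses into one line since that compatibility is already built into Lemma~\ref{R-welldef}.
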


\begin{proof}
We have $\bW _{ww_0} ( \la )_\Z \subset \bW _{vw_0} ( \la )_\Z$ if and only if $\bv_{w w_0 \la} \in \bW_{vw_0} ( \la )_\Z$. Now we apply Lemma \ref{contain} to deduce the result.
\end{proof}

\begin{lem}
We have the following morphisms of rings with $U^+_\Z$-actions
$$R^+ \longrightarrow\!\!\!\!\! \rightarrow R \hookrightarrow R^{\af},$$
that admit $\Z$-module splittings, where the $\dot{U}_{\Z}$-action on $R^\af$ is twisted by $\theta$.
\end{lem}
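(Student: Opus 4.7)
The plan is to construct both maps directly from the module-level compatibilities already established in \S\ref{rep-Z}, and to read off the $\Z$-module splittings from the global-basis structure.

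\emph{The surjection $R^+ \twoheadrightarrow R$.} By Theorem \ref{b-compat}, the positive global basis of $\bW_{w_0}(\la)_\Z$ is a subset of the $\Z$-basis $\bB(\bX(\la))$ of $\bX(\la)_\Z$, so the inclusion $\bW_{w_0}(\la)_\Z \hookrightarrow \bX(\la)_\Z$ is a $\Z$-direct summand. Dualizing yields a $U^+_\Z$-equivariant, $\Z$-split surjection $\bX(\la)^\vee_\Z \twoheadrightarrow \bW_{w_0}(\la)^\vee_\Z = R(\la)$, which I restrict to $R^+(\la) \subset \bX(\la)^\vee_\Z$. Compatibility with the two multiplications is the dual of the commutative square of Corollary \ref{WXcomm-inj}. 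Surjectivity follows from Theorem \ref{W-unit}: its dual $\bigotimes_i \bW_{w_0}(\varpi_i)^{\vee \otimes \langle \al_i^\vee,\la\rangle}_\Z \twoheadrightarrow \bW_{w_0}(\la)^\vee_\Z$ is surjective, and this surjection factors through $R^+(\la) \to R(\la)$. For the $\Z$-section, combine the $\Z$-splittings of each $\bW_{w_0}(\varpi_i) \hookrightarrow \bX(\varpi_i)$ with that of Theorem \ref{W-unit} to obtain a $\Z$-section $\sigma : \bW_{w_0}(\la)^\vee_\Z \hookrightarrow \bigotimes_i \bX(\varpi_i)^{\vee \otimes \langle \al_i^\vee,\la\rangle}_\Z$ of the composite surjection onto $\bW_{w_0}(\la)^\vee_\Z$; postcomposing $\sigma$ with the multiplication $\bigotimes_i \bX(\varpi_i)^{\vee\otimes} \twoheadrightarrow R^+(\la)$ furnishes a $\Z$-section of $R^+(\la)\twoheadrightarrow R(\la)$ (the composition back to $R(\la)$ recovers the identity by the same compatibility of multiplications).

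\emph{The embedding $R \hookrightarrow R^\af$.} For $\la \in P_+$ set $\Lambda_\la := \sum_{i \in \tI} \langle \al_i^\vee, -w_0 \la\rangle \Lambda_i \in P^\af_+$, so that $\overline{\Lambda_\la} = -w_0\la$ and $\Lambda_{\la+\mu} = \Lambda_\la + \Lambda_\mu$. Theorem \ref{W-quot} provides a surjection $L(\Lambda_\la)_\Z \twoheadrightarrow \bW^-(-w_0\la)_\Z$ of $U^-_\Z$-modules matching cyclic vectors; by Lemma \ref{WX-comm} (4) the target is $\theta^*\bW_{w_0}(\la)_\Z$. Dualizing produces $\bW_{w_0}(\la)^\vee_\Z \hookrightarrow L(\Lambda_\la)^\vee_\Z \subset R^\af$, $U^+_\Z$-equivariant with respect to the $\theta$-twisted $\dot U_\Z$-action on the target. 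Assembling over $\la$ and using additivity of $\la \mapsto \Lambda_\la$ together with the observation that the multiplications on $R$ and $R^\af$ are both determined by their action on the dual cyclic vectors (Lemma \ref{R-welldef}), which correspond under the map, shows that this is a ring homomorphism. The $\Z$-retraction is the dual of the $\Z$-section of $L(\Lambda_\la)_\Z \twoheadrightarrow \bW^-(-w_0\la)_\Z$ coming from the proof of Theorem \ref{W-quot}: the negative global basis of the quotient is a subset of that of the source, giving the splitting as $\Z$-modules.

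\emph{Expected difficulty.} The main bookkeeping obstacle lies in the second map: one must track the $\theta$-twist through dualization and the reindexing $\la \mapsto \Lambda_\la$, and verify that $\theta$'s interchange of $U^\pm_\Z$ and of positive/negative extremal weight vectors is consistent with the declared $U^+_\Z$-equivariance on both $R$ and the twisted $R^\af$, and that all $\Z$-splittings remain intact. Once this is set up, the remaining work reduces to transcribing the compatibilities of Corollary \ref{WXcomm-inj}, Theorems \ref{W-unit} and \ref{W-quot}, and Theorem \ref{b-compat} through the $\Z$-duality functor.
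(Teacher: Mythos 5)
Your proof is correct and follows the same route the paper takes: both maps are obtained by dualizing the module-level compatibilities established in \S\ref{rep-Z}, with the $\Z$-splittings read off from the global-basis compatibility. The paper compresses this into ``apply Proposition~\ref{LW-comm} and Corollary~\ref{WXcomm-inj},'' whereas you reach for the underlying ingredients (Theorem~\ref{W-quot}, Lemma~\ref{WX-comm}, Theorem~\ref{b-compat}, Theorem~\ref{W-unit}) and re-derive the commutativity via cyclicity rather than quoting Proposition~\ref{LW-comm} outright; since these are exactly the results that feed into \ref{LW-comm} and \ref{WXcomm-inj}, the argument is the same at a finer grain of citation.
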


\begin{proof}
Apply Proposition \ref{LW-comm} and Corollary \ref{WXcomm-inj}.
\end{proof}

For each $w \in W$ and $\tJ \subset \tI$, we set
$$( \bQ_{G, \tJ} ( w ) )_{\Z} := \mathrm{Proj} \, R_{w}( \tJ ) \hskip 5mm \text{ and } \hskip 5mm ( \bQ_{G, \tJ}^{\ra} )_{\Z} := \bigcup _{w \in W} ( \bQ_{G, \tJ} ( w ) )_{\Z}.$$
These schemes and indschemes are flat over $\Z$.

\begin{rem}\label{enh-emb}
In view of (\ref{mproj}), we have embeddings
\begin{equation}
\xymatrix{
\mathrm{Proj} \, R^+_{\bK} \ar[r] & \prod_{i \in \tI} \bP_{\bK} ( \bX ( \varpi_i )_{\bK}^{\wedge} )\\
( \bQ_{G}^{\ra} )_{\bK} \ar[r] \ar@{^{(}->}[u] & \prod_{i \in \tI} \bP_{\bK} ( \bX ( \varpi_i )_{\bK}^{\flat} ) \ar@{^{(}->}[u]},\label{projembamb}
\end{equation}
where we set
$$\bX ( \varpi_i )_{\bK}^{\wedge} := \prod_{n \in \Z} \bX ( \varpi_i )_{n, \bK} \hskip 5mm \text{and} \hskip 5mm \bX ( \varpi_i )_{\bK}^{\flat} := \bigcup_{m \in \Z} \prod_{n > m} \bX ( \varpi_i )_{n, \bK}$$
for each $i \in \tI$. 
Here all the spaces in (\ref{projembamb}) admit actions of $\SL( 2,i )_{\bK}$ ($i \in \tI$), while only the bottom two spaces in (\ref{projembamb}) admit actions of $G [\![z]\!]_{\bK}$. Nevertheless, the top two spaces in (\ref{projembamb}) have some advantages since they are $\theta$-stable (unlike the bottom two), and consequently also contain $\theta ( ( \bQ_{G}^{\ra} )_{\bK} )$.
\end{rem}

\begin{thm}[\cite{Kat18b} Corollary B]\label{flag-split}
Let $p$ be a prime. Then, the ring $R^{\af} _{\F_p}$ admits a Frobenius splitting, that is $\bI$- and $\bI^-$-canonically split. \hfill $\Box$
\end{thm}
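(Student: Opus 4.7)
The plan is to exhibit a Frobenius splitting on the ring $R^{\af}_{\F_p}$ that is simultaneously $\bI$-canonical and $\bI^-$-canonical by descending a splitting from the affine flag manifold itself. First I identify $\mathrm{Proj} \, R^{\af}_{\F_p}$ with the affine flag manifold $\mathsf{Fl}_G$ of $G$ over $\F_p$, exhausted by finite-dimensional Schubert varieties $\mathsf{Fl}_G(w)$ indexed by $w\in W_\af$, whose multi-homogeneous coordinate rings (with respect to the line bundles $\cO(\Lambda_i)$, $i\in \tI_\af$) are the partial sums of $R^\af_{\F_p}$ by Theorem \ref{F-rel}. Under this identification, a compatible system of splittings of the Schubert varieties and their opposite Schubert varieties will transfer to the desired splitting of $R^{\af}_{\F_p}$.

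For each $w\in W_\af$, I would construct a Frobenius splitting of $\mathsf{Fl}_G(w)$ compatible with all Schubert subvarieties and all opposite Schubert subvarieties by the standard Bott--Samelson technique: pick a reduced decomposition $\bi$ of $w$, form the Bott--Samelson resolution $Z_{\bi}\to \mathsf{Fl}_G(w)$, and construct a splitting of $Z_{\bi}$ using the anticanonical section coming from products of the $(p-1)$-powers of the tautological sections of the divisors associated to the factors (cf.\ \cite[Chapter 2]{BK05} in the finite type case, and the affine analogue in the spirit of Mathieu, Kumar and Littelmann). Push-forward via the birational morphism $Z_{\bi}\to\mathsf{Fl}_G(w)$ then provides a Frobenius splitting of $\mathsf{Fl}_G(w)$ compatible with every Schubert subvariety. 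The construction can be made compatible with varying $w$ by using the residual action of the parabolic $\bI(i)$ to extend a splitting from $\mathsf{Fl}_G(w)$ to $\mathsf{Fl}_G(s_iw)$ whenever $\ell(s_iw)>\ell(w)$; this simultaneously ensures the splitting is $\bI$-canonical in the sense of (\ref{Bcaneq}).

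Having obtained an $\bI$-canonical splitting on each $\mathsf{Fl}_G(w)$, I invoke the uniqueness property: by the dual formulation of Proposition \ref{B-can-inv}, an $\bI$-canonical splitting of $\mathrm{Proj}\,R^\af_{\F_p}$ is determined by a compatible system of $U^+_{\F_p}$-equivariant maps on the graded pieces $L(\Lambda)^\vee_{\F_p} \to L(p\Lambda)^\vee_{\F_p}$, and these are pinned down (up to scalar) by their value on the cocyclic vector $\bv^{\vee}_\Lambda$. The resulting system assembles into a splitting of the graded ring $R^\af_{\F_p}$ by Theorem \ref{F-rel}. To upgrade this to an $\bI^-$-canonical splitting as well, I would apply the Chevalley involution $\theta$ (or equivalently a longest element construction in $W_\af$ at each truncation level) to obtain a second splitting which is $\bI^-$-canonical, and then invoke uniqueness at each $(H\times \Gm)$-weight space together with the fact that the whole of $R^\af_{\F_p}$ is generated by the cyclic line of $\bv_\Lambda^{\vee}$ under the combined action, to identify the two splittings.

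The main obstacle is ensuring that the system of splittings coming from the various Bott--Samelson resolutions fits together coherently across all $w\in W_\af$ (so that it genuinely descends to $R^\af_{\F_p}$ rather than merely to its truncations) and that the resulting splitting is simultaneously canonical for the two opposite Iwahori actions. In the finite-dimensional case these two issues are handled by the projectivity of $G/B$ and the Weyl involution; in the affine setting one must instead combine the inductive extension along simple reflections for both $\bI$ and $\bI^-$ with the uniqueness statement provided by the dualized version of Proposition \ref{B-can-inv}, which is why the representation-theoretic input of the previous subsection is essential.
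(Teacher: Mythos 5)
This statement is cited from \cite[Corollary B]{Kat18b} and carries no proof in the present paper (the $\Box$ directly after the statement marks it as an external input), so there is no ``paper's proof'' to compare against; the paper treats it as a black box. That said, your sketch runs in the general direction of how such a result would be established, but two steps have genuine gaps.

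First, the passage from a Bott--Samelson-type splitting on the individual Schubert varieties $\mathsf{Fl}_G(w)$ to an $\bI$-\emph{canonical} splitting of the ind-scheme is not automatic. The Mehta--Ramanathan / Mathieu anticanonical construction gives compatibility with the Schubert divisors, but the canonical condition (\ref{Bcaneq}) for each $\rho_{\alpha_i}$ ($i \in \tI_\af$) is a separate check, and ``extend via the residual $\bI(i)$-action'' is not by itself a verification of that condition. More seriously, compatibility with the \emph{opposite} Schubert subvarieties is invisible to a Bott--Samelson resolution, which only sees one Borel direction; it has to be built in by a deliberate choice of anticanonical section vanishing along both families of boundary divisors. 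That choice is the real content of the cited result.

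Second, the Chevalley-involution argument for the $\bI^-$-canonicalness does not close. The involution $\theta$ swaps $E_i \leftrightarrow F_i$ and $\Lambda \mapsto -\Lambda$, so $\theta$ does not preserve the ring $R^{\af} = \bigoplus_{\Lambda \in P^\af_+} L(\Lambda)^\vee$ on the nose (indeed the paper repeatedly speaks of the ``$\dot{U}_{\Z}$-action on $R^\af$ twisted by $\theta$'' precisely because the natural action is of lowest-weight type). Thus you cannot simply transport a splitting by $\theta$ and then appeal to uniqueness: uniqueness of the $\bI$-canonical splitting says nothing about agreement with a $\bI^-$-canonical one unless you have first shown that the single constructed splitting is compatible with both families of Schubert divisors, from which both canonical conditions follow via \cite[Proposition 4.1.17]{BK05} and its $\bI^-$-analogue. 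Your appeal to ``uniqueness at each $(H\times\Gm)$-weight space'' and to generation by the cocyclic line under the ``combined action'' is circular, since $L(\Lambda)^\vee$ has an $\bI$-cocyclic vector $\bv_\Lambda^\vee$ but no $\bI^-$-cocyclic vector, so the dual formulation of Proposition \ref{B-can-inv} pins down the $\bI$-canonical splitting only; a separate argument is needed for the $\bI^-$-side.
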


\begin{thm}\label{R-can}
Let $p$ be a prime. The ring $R _{\F_p}$ admits a Frobenius splitting, that is $\bI$-canonically split.
\end{thm}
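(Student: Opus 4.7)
The plan is to transfer the $\bI$-canonical Frobenius splitting $\phi^\af$ of $R^\af_{\F_p}$ (granted by Theorem \ref{flag-split}) to $R_{\F_p}$ via the ring embedding $R \hookrightarrow R^\af$ recorded in the preceding lemma. Since Theorem \ref{flag-split} in fact provides $\phi^\af$ which is simultaneously $\bI$- and $\bI^-$-canonical, the dual formulation of Proposition \ref{B-can-inv} furnishes, for each $\Lambda \in P^\af_+$, an $\F_p$-linear map
$$\phi^{\af,\vee}_{\Lambda}\colon L(\Lambda)_{\F_p}\longrightarrow L(p\Lambda)_{\F_p}$$
that intertwines $E_i^{(n)}\leftrightarrow E_i^{(pn)}$ and $F_i^{(n)}\leftrightarrow F_i^{(pn)}$ for every $i\in\tI_\af$ and $n\ge 0$, and that is $H$-equivariant with $P^\af$-weights scaled by $p$.

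The heart of the argument is to show that $\phi^{\af,\vee}_{\Lambda}$ descends along the surjections $L(\Lambda)_{\F_p}\twoheadrightarrow \bW^-(\bar{\Lambda})_{\F_p}$ of Theorem \ref{W-quot} to a map $\bar{\phi}_{\Lambda}\colon \bW^-(\bar{\Lambda})_{\F_p}\to \bW^-(p\bar{\Lambda})_{\F_p}$. Let $\psi_\Lambda$ denote the composition of $\phi^{\af,\vee}_{\Lambda}$ with the projection $L(p\Lambda)_{\F_p} \twoheadrightarrow \bW^-(p\bar{\Lambda})_{\F_p}$. Because the surjections in Theorem \ref{W-quot} are $U^-_{\F_p}$-equivariant and $\phi^{\af,\vee}_{\Lambda}$ intertwines both families of divided powers, $\ker \psi_\Lambda$ is stable under the $E_i^{(n)}$- and $F_i^{(n)}$-actions for all $i\in\tI_\af$. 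Corollary \ref{LW-def-eq} presents $\ker(L(\Lambda)\twoheadrightarrow\bW^-(\bar{\Lambda}))$ as the $U^-_{\F_p}$-span of $P^\af$-weight vectors $\bu_m$ whose $P$-projections lie outside $\mathrm{Conv}\,W\bar{\Lambda}$. Weight scaling sends $\phi^{\af,\vee}_{\Lambda}(\bu_m)$ into the $P$-weight space $p\,\overline{\Lambda_m}$, which is outside $\mathrm{Conv}\,W(p\bar{\Lambda})$; but $\bW^-(p\bar{\Lambda})_{\F_p}$ has all $P$-weights inside $\mathrm{Conv}\,W(p\bar{\Lambda})$ (the $\theta$-twist of Proposition \ref{proj-cover}), so $\phi^{\af,\vee}_{\Lambda}(\bu_m)\in\ker(L(p\Lambda)\twoheadrightarrow \bW^-(p\bar{\Lambda}))$. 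Hence each $\bu_m\in\ker\psi_\Lambda$, and the stability just noted propagates this to the entire kernel, yielding $\bar{\phi}_{\Lambda}$.

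Finally, the Chevalley involution $\theta$ (Lemma \ref{WX-comm}) converts $\bar{\phi}_{\Lambda}$ into maps $\bW(\la)_{\F_p}\to \bW(p\la)_{\F_p}$ for $\la=-w_0\bar{\Lambda}$; letting $\Lambda$ vary covers every $\la\in P_+$. Taking $\Z$-duals and assembling over $\la$ produces a graded $\F_p$-linear map $\phi\colon R_{\F_p}^{(1)}\to R_{\F_p}$. The ring-homomorphism property and the splitting identity $\phi(r^p)=r$ then follow from the compatibility of $\bar{\phi}_{\Lambda}$ with the tensor embeddings of Proposition \ref{LW-comm} (which dualize to multiplications) together with the normalization $\bar{\phi}_\Lambda(\bv_{\bar{\Lambda}})\in\F_p^\times\,\bv_{p\bar{\Lambda}}$ forced by $U^\pm$-equivariance and the $p$-scaling of weights; the $\bI$-canonicality of $\phi$ is inherited from the $E_i^{(n)}$-intertwining of $\bar{\phi}_{\Lambda}$ via the converse direction of Proposition \ref{B-can-inv}. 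I expect the principal obstacle to be clerical rather than conceptual: carefully matching weight conventions and lifts $\Lambda\leftrightarrow\la$ so that the individually constructed $\bar{\phi}_\Lambda$ glue into a single ring endomorphism, and verifying that $\phi(r^p)=r$ survives the Chevalley and $\Z$-duality twists.
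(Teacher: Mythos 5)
Your proposal is correct and follows essentially the same strategy as the paper's proof: dualize the $\bI$- and $\bI^-$-canonical Frobenius splitting of $R^\af_{\F_p}$, use Corollary \ref{LW-def-eq} to present $\ker(L(\Lambda)_{\F_p}\twoheadrightarrow\bW^-(\bar\Lambda)_{\F_p})$ as the $U^-$-span of weight vectors lying outside $\mathrm{Conv}\,W\bar\Lambda$, observe that the $p$-scaling of weights by $\phi^{\af,\vee}$ preserves "being outside the hull" (since $p(P\setminus\mathrm{Conv}\,W\bar\Lambda)\subset P\setminus\mathrm{Conv}\,W(p\bar\Lambda)$), propagate via the $F_i^{(n)}\leftrightarrow F_i^{(pn)}$-intertwining from Proposition \ref{B-can-inv}, and then twist by the Chevalley involution $\theta$ to land on $R_{\F_p}$. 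The introduction of $\psi_\Lambda$ and the explicit verification that $\ker\pi_\Lambda\subset\ker\psi_\Lambda$ is a slightly more formal packaging of the paper's descent argument, but the key ideas are identical.
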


\begin{proof}
The $\bI$-canonical Frobenius splitting $\phi$ of $R^\af$ gives rise to the following maps, whose composition is the identity:
$$L ( \Lambda )_{\F_p} \stackrel{\phi^{\vee}}{\longrightarrow} L ( p \Lambda )_{\F_p} \longrightarrow L ( \Lambda )_{\F_p} \hskip 5mm \Lambda \in P^\af_+.$$

In view of Proposition \ref{LW-comm}, it prolongs to
$$
\xymatrix{
L ( \Lambda )_{\F_p} \ar[r]^{\phi^{\vee}} \ar@{->>}[d]^{\pi_{\Lambda}} & L ( p \Lambda )_{\F_p}\ar[r] \ar@{->>}[d]^{\pi_{p \Lambda}} & L ( \Lambda )_{\F_p} \ar@{->>}[d]\\
\bW^- ( \bar{\Lambda} )_{\F_p} \ar@{-->}[r]^{\phi^{\vee}_{\bW}} & \bW^- ( p \bar{\Lambda}  )_{\F_p}\ar[r] & \bW^- ( \bar{\Lambda} )_{\F_p}
} \hskip 5mm \Lambda \in P^\af_+.
$$
The right square is automatic (and is canonically defined) from the adjunction of the Frobenius push-forward (by taking the restricted dual). In order to show that $\phi$ descends to a Frobenius splitting of $R_{\F_p}$, it suffices to show that the dotted map $\phi^{\vee}_{\bW}$ is a well-defined linear map (induced from $\phi^{\vee}$ and so that the left square is commutative).

By Corollary \ref{LW-def-eq}, $\ker \, \pi_{\Lambda}$ is generated by the $P$-weight $( P \setminus \mathrm{Conv} \, W \bar{\Lambda} )$-part of $L ( \Lambda )_{\F_p}$. By the cyclicity of $L ( \Lambda )_{\F_p}$ as $U^-_\Z$-modules and Proposition \ref{B-can-inv}, we deduce that $\phi^{\vee} ( \ker \, \pi_{\Lambda} )$ is contained in the $U^-_{\F_p}$-submodule of $L ( p \Lambda )_{\F_p}$ generated by the $P$-weight $p ( P \setminus \mathrm{Conv} W \bar{\Lambda} )$-part of $L ( p \Lambda )_{\F_p}$. The latter is contained in $\ker \, \pi_{p\Lambda}$ by
$$p ( P \setminus \mathrm{Conv} \, W \bar{\Lambda} ) \subset P \setminus p \mathrm{Conv} \, W \bar{\Lambda}.$$

Therefore, we conclude that $\phi^{\vee}_{\bW}$ is a well-defined linear map, and hence $\theta^* ( R_{\F_p} )$ admits a Frobenius splitting induced from $\phi$. The unipotent part of the $\bI^-$-canonical splitting condition is in common with subrings. It remains to twist the grading given by $\al_0^{\vee}$ with that given by $- \vartheta^{\vee}$ and twist the $\bI^-$-action into the $\bI$-action by $\theta$ to conclude that our Frobenius splitting on $R_{\F_p}$ is $\bI$-canonical.
\end{proof}

\begin{cor}\label{compat}
Let $p$ be a prime, and let $w \in W$. The $\bI$-canonical splitting of $R_{\F_p}$ obtained in Theorem {\rm\ref{R-can}} induces an $\bI$-canonical splitting of $( R_w ) _{\F_p}$.
\end{cor}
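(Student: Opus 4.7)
The plan is to verify that the $\bI$-canonical Frobenius splitting $\phi$ of $R_{\F_p}$ from Theorem \ref{R-can} preserves the homogeneous ideal $I_w$ cutting out $(R_w)_{\F_p}$ as a quotient of $R_{\F_p}$; once this is done, $\phi$ descends to a splitting $\bar\phi$ of $(R_w)_{\F_p}$, and $\bI$-canonicity transfers for free since $I_w$ is $\bI$-stable and the defining identities of $\bI$-canonicity are purely ring-theoretic. The input is that $w \in W$ satisfies $e \le w$ in the Bruhat order, hence $w \le_\si e$ by the last remark before (\ref{si-ord}), so Lemma \ref{surj-rel} yields a surjection of graded rings $R_{\F_p} \twoheadrightarrow (R_w)_{\F_p}$ with kernel
\[
I_w = \bigoplus_{\la \in P_+} \ker\!\bigl(\bW(\la)^{\vee}_{\F_p} \twoheadrightarrow \bW_{ww_0}(\la)^{\vee}_{\F_p}\bigr).
\]
Taking $\F_p$-duals, the containment $\phi(I_w) \subset I_w$ is equivalent to the statement that the map $\phi^{\vee}\colon \bW(\la)_{\F_p} \to \bW(p\la)_{\F_p}$ furnished by Proposition \ref{B-can-inv} carries $\bW_{ww_0}(\la)_{\F_p}$ into $\bW_{ww_0}(p\la)_{\F_p}$ for every $\la \in P_+$.

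I would reduce to a statement on the cyclic generator. Since $\bW_{ww_0}(\la)_{\F_p} = U^+_{\F_p}\bv_{ww_0\la}$ and $\phi^{\vee}(E_i^{(n)}\bv) = E_i^{(pn)}\phi^{\vee}(\bv)$ for all $i \in \tI_\af$ and $n \ge 0$ by Proposition \ref{B-can-inv}, it suffices to prove $\phi^{\vee}(\bv_{ww_0\la}) \in \bW_{ww_0}(p\la)_{\F_p}$. I would pin this down by weights. The $H$-invariance of $\phi$ (part of $\bI$-canonicity) forces $\phi^{\vee}$ to multiply $H$-weights by $p$, while iterating the $i = 0$ case of Proposition \ref{B-can-inv} together with the splitting normalization $\phi(1) = 1$ on the degree zero component forces $\phi^{\vee}$ to multiply the $d$-degree by $p$ as well. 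Since $w \in W$, the vector $\bv_{ww_0\la}$ has $H$-weight $ww_0\la$ and $d$-degree $0$, so $\phi^{\vee}(\bv_{ww_0\la})$ lies in the $(ww_0 p\la,\, d = 0)$-weight space of $\bW(p\la)_{\F_p}$. Because $E_0 = F_\vartheta \otimes z$ strictly raises $d$-degree, the $d$-degree zero part of the $\g[z]$-cyclic module $\bW(p\la)_{\F_p}$ equals $U(\g)_{\F_p}\bv_{w_0 p\la}$, which by Theorem \ref{qwc} is identified with the Weyl module $V(p\la)_{\F_p}$; inside it, the classically extremal weight space at $ww_0 p\la$ is one-dimensional and spanned by $\bv_{ww_0 p\la} \in \bW_{ww_0}(p\la)_{\F_p}$. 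Hence $\phi^{\vee}(\bv_{ww_0\la})$ is a scalar multiple of $\bv_{ww_0 p\la}$, yielding the required containment.

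Combining the two paragraphs gives $\phi^{\vee}(\bW_{ww_0}(\la)_{\F_p}) \subset \bW_{ww_0}(p\la)_{\F_p}$ for every $\la \in P_+$, hence $\phi(I_w) \subset I_w$, so $\phi$ descends to an $\bI$-canonical splitting $\bar\phi$ of $(R_w)_{\F_p}$. The step requiring the most care will be the $d$-degree scaling argument: one must verify that the $E_0$-compatibility of Proposition \ref{B-can-inv}, combined with $\phi(1) = 1$, pins down the action of $\phi^{\vee}$ on the $\delta$-direction uniquely and gives the expected $p$-scaling on the full $P^\af$-weight rather than just on the classical $H$-part, since only the classical scaling is literally part of the \emph{a priori} definition of $\bI$-canonicity.
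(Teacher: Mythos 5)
Your overall strategy — verify $\phi(I_w)\subset I_w$ by dualizing and reducing to the cyclic generator $\bv_{ww_0\la}$ — is sound and genuinely different from the paper's proof, which instead descends compatibility from the affine Demazure modules: the splitting of $R^\af_{\F_p}$ in \cite{Kat18b} is compatible with $\bigoplus_\Lambda L^w(\Lambda)^\vee_{\F_p}$, and since $\bW^-_w(\la)_\Z$ is exactly the image of $L^w(\Lambda)_\Z$ under $L(\Lambda)_\Z\twoheadrightarrow\bW^-(\bar\Lambda)_\Z$ (spanned by a subset of the global basis), compatibility descends through Theorem \ref{W-quot} and then transfers via $\theta$. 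Your reduction via Proposition \ref{B-can-inv}, the identification of the $d$-degree zero part of $\bW(p\la)_{\F_p}$ with $V(p\la)_{\F_p}$, and the one-dimensionality of the extremal weight space are all correct.

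However, there is a genuine gap exactly where you flag it. The claim that ``iterating the $i=0$ case of Proposition \ref{B-can-inv} together with $\phi(1)=1$ forces $\phi^\vee$ to multiply the $d$-degree by $p$'' does not hold as stated. The relation $\phi^\vee(E_0^{(n)}\bv)=E_0^{(pn)}\phi^\vee(\bv)$ only \emph{propagates} the $d$-degree scaling from $\bv$ to $E_0^{(n)}\bv$; it does not \emph{anchor} it. The anchor you offer, $\phi(1)=1$, lives only in the $P_+$-degree zero component $\bW(0)^\vee_{\F_p}=\F_p$ and says nothing about the weight of $\phi^\vee(\bv_{ww_0\la})$ for $\la\neq 0$. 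And the $H$-fixedness in the definition of $\bI$-canonicity is with respect to the \emph{finite} Cartan $H$, not the extended torus $H\times\Gm$ including loop rotation, so it constrains only the $P$-weight of $\phi^\vee(\bv_{ww_0\la})$. The $P$-weight $ww_0 p\la$ component of $\bW(p\la)_{\F_p}$ is \emph{not} concentrated in $d$-degree zero: already for $w=e$, the imaginary PBW generators $\tilde P_{i,m\delta}\in U^+_\Z$ have $P$-weight $0$ and $d$-degree $m>0$, and $\tilde P_{i,m\delta}\bv_{w_0 p\la}\neq 0$ in the global Weyl module. So the weight argument as written leaves $\phi^\vee(\bv_{ww_0\la})$ free to have positive-$d$-degree terms. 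The gap is fillable — the splitting $\phi$ of Theorem \ref{R-can} descends from the $\bI$- and $\bI^-$-canonical splitting of $R^\af_{\F_p}$, which \emph{is} fixed by the full affine Cartan including loop rotation, so $\phi$ really does scale the $P^\af$-weight by $p$ — but this fact must be extracted from the construction of $\phi$ rather than from $\bI$-canonicity in the abstract, and you would need to say so explicitly. Once you do, your argument closes, and at that point the difference with the paper is essentially one of bookkeeping: you use $\Gm$-equivariance of the descended $\phi$ plus a weight count, while the paper uses compatibility with the affine Demazure filtration, both ultimately flowing from the same input \cite[Corollary B]{Kat18b}.
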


\begin{proof}
We set $L^{ww_0} ( \Lambda )_\Z := U^-_\Z \bv_{ww_0 \Lambda}$, where $\Z \bv_{ww_0 \Lambda}$ is the $P^{\af}$-weight $ww_0 \Lambda$-part of $L ( \Lambda )_\Z$, that is rank one over $\Z$.

The subspace $\bW _{ww_0}^- ( \la )_\Z \subset \bW^- ( \la )_\Z$ is the image of $L^{ww_0} ( \Lambda )_{\Z} \subset L ( \Lambda )_\Z$ (with $\la = \bar{\Lambda}$) under Theorem \ref{W-quot} as $L^{ww_0} ( \Lambda )_{\Z}$ is spanned by a subset of $\bB ( \Lambda )$ (\cite[(0.3)]{Kas94}). Our Frobenius splitting $\phi$ is obtained from that of $R^\af_{\F_p}$, which is compatible with $\bigoplus_{\la \in P_+} L^{ww_0} ( \Lambda )_{\F_p}^{\vee}$ by \cite[Corollary B]{Kat18b}. Applying $\theta$, we conclude that $\phi$ must descend to a Frobenius splitting of $( R_{w} ) _{\F_p}$.
\end{proof}

\begin{cor}\label{af-uniq}
An $\bI$-canonical splitting of $R_{\F_p}$ is unique.
\end{cor}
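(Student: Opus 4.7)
The plan is to reduce the uniqueness of an $\bI$-canonical Frobenius splitting $\phi$ of $R_{\F_p}$ to a weight-theoretic and cyclicity computation applied to the restricted duals of $\phi$ on each graded piece. Dualizing over the $P_+$-grading of $R$ yields $\F_p$-linear maps
$$\phi^\vee_\la : R_\la^\vee = \bW_{w_0}(\la)_{\F_p} \longrightarrow \bW_{w_0}(p\la)_{\F_p} = R_{p\la}^\vee, \qquad \la \in P_+,$$
and Proposition \ref{B-can-inv} translates the $\bI$-canonicity of $\phi$ into the Frobenius-twisted covariance
$$\phi^\vee_\la\bigl( E_i^{(n)} \bv \bigr) = E_i^{(pn)} \phi^\vee_\la(\bv) \qquad (i \in \tI_\af,\ n \in \Z_{\ge 0},\ \bv \in \bW_{w_0}(\la)_{\F_p}),$$
while the $H$-fixedness of $\phi$ implies that each $\phi^\vee_\la$ sends the $P^\af$-weight $\Lambda$ piece of $\bW_{w_0}(\la)_{\F_p}$ into the $P^\af$-weight $p\Lambda$ piece of $\bW_{w_0}(p\la)_{\F_p}$. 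It therefore suffices to show that each $\phi^\vee_\la$ is uniquely determined.

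Since $\bW_{w_0}(\la)_{\F_p} = U^+_{\F_p}\,\bv_{w_0\la}$ is $U^+_{\F_p}$-cyclic from the extremal weight vector $\bv_{w_0\la}$, the covariance identity above forces $\phi^\vee_\la$ to be uniquely determined by its value $\phi^\vee_\la(\bv_{w_0\la})$. By the weight-preservation noted above, this value lies in the $P^\af$-weight $p w_0\la$ space of $\bW_{w_0}(p\la)_{\F_p}$ at $d$-degree zero. Since $p w_0\la = w_0(p\la)$ is an extremal weight of $\bX(p\la)_{\F_p}$, this weight space is one-dimensional over $\F_p$ and spanned by $\bv_{p w_0\la} = \bv_{w_0(p\la)}$, by the global basis description supplied by Theorems \ref{X-compat} and \ref{b-compat}. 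Consequently $\phi^\vee_\la(\bv_{w_0\la}) = c_\la\, \bv_{p w_0\la}$ for some scalar $c_\la \in \F_p$, and the problem is reduced to identifying $c_\la$.

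The splitting condition $\phi \circ \imath = \mathrm{id}$ forces $c_\la = 1$. Indeed, Corollary \ref{WXcomm-inj} together with Lemma \ref{R-welldef} shows that the multiplication in $R$ sends $\bv_{w_0\la}^\vee \cdot \bv_{w_0\mu}^\vee$ to $\bv_{w_0(\la+\mu)}^\vee$, and iterating gives the identity $(\bv_{w_0\la}^\vee)^p = \bv_{p w_0\la}^\vee$ in $R_{p\la}$. The adjoint $\imath^\vee : \bW_{w_0}(p\la)_{\F_p} \to \bW_{w_0}(\la)_{\F_p}$ of $\imath : r \mapsto r^p$ is then seen, by weight-preservation and the one-dimensionality of the $w_0\la$-weight piece of $\bW_{w_0}(\la)_{\F_p}$ at $d$-degree zero, to satisfy $\imath^\vee(\bv_{p w_0\la}) = \bv_{w_0\la}$ (the normalization being fixed by the evaluation $\langle \imath^\vee(\bv_{p w_0\la}), \bv_{w_0\la}^\vee \rangle = \langle \bv_{p w_0\la}, \bv_{p w_0\la}^\vee \rangle = 1$). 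Thus
$$\bv_{w_0\la} = (\imath^\vee \circ \phi^\vee_\la)(\bv_{w_0\la}) = c_\la\, \imath^\vee(\bv_{p w_0\la}) = c_\la \bv_{w_0\la},$$
so $c_\la = 1$. This pins down $\phi^\vee_\la$, and hence $\phi$, completing the proof of uniqueness.

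The only mildly delicate ingredient I foresee is the one-dimensionality of the $p w_0\la$-weight piece of $\bW_{w_0}(p\la)_{\F_p}$ at $d$-degree zero, which however is immediate from the extremal weight structure of $\bX(p\la)$ and the existence of the free $\Z$-basis $\bB(\bX(p\la))$ provided by Theorem \ref{b-compat}. Every other step is a formal consequence of Proposition \ref{B-can-inv}, of the cyclic generation of $\bW_{w_0}(\la)_{\F_p}$ from its extremal weight vector, and of the explicit ring-multiplication formula in $R$ given by Lemma \ref{R-welldef}.
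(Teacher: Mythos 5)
Your proof is correct and follows essentially the same line of reasoning as the paper's: the paper's one-paragraph argument also rests on (i) the $\bv_{w_0\la}^\vee$'s forming a sub-monoid-ring $\cong \F_p[P_+]$ whose graded components are one-dimensional $P^\af$-weight spaces, which pins down $\phi$ on those vectors, and (ii) Proposition~\ref{B-can-inv} plus $U^+_{\F_p}$-cyclicity of $\bW_{w_0}(\la)_{\F_p}$ to propagate that determination through the dual map. You have merely unwound the paper's terse phrasing, making explicit the reduction to $\phi^\vee_\la(\bv_{w_0\la})$ and the normalization $c_\la = 1$ via the relation $(\bv_{w_0\la}^\vee)^p = \bv_{w_0(p\la)}^\vee$ and the splitting identity $\phi\circ\imath=\mathrm{id}$.
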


\begin{proof}
The behavior of the vectors in (\ref{vvee}) (with $w = e$) under an $\bI$-canonial Frobenius splitting is uniquely determined as they form a polynomial ring isomorphic to $\F_p P_+$ such that each of its ($P_+$-)graded component is a multiplicity-free $P^\af$-weight space in $\bW ( \la )_{\F_p}^{\vee}$'s. By Proposition \ref{B-can-inv}, this completely determines the behavior of our splitting (through its dual map).
\end{proof}

\begin{cor}\label{af-comp}
An $\bI$-canonical splitting of $R_{\F_p}$ is compatible with $(R_w)_{\F_p}$ for every $w \in W_\af$ such that $w \le_\si e$.
\end{cor}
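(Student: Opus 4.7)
By Corollary \ref{af-uniq} the $\bI$-canonical Frobenius splitting of $R_{\F_p}$ is unique, so I may take the one constructed in the proof of Theorem \ref{R-can} out of the $\bI$-canonical splitting of $R^{\af}_{\F_p}$ (Theorem \ref{flag-split}). By \cite[Corollary B]{Kat18b} that splitting of $R^{\af}_{\F_p}$ is compatible with every affine Schubert subvariety, so dually the map $\phi^{\vee}_{\Lambda} \colon L(\Lambda)_{\F_p}\to L(p\Lambda)_{\F_p}$ preserves the affine Demazure submodule $L^{w}(\Lambda)_{\F_p} := U^{-}_{\Z}\bv_{w\Lambda}$ for every $w \in W_\af$ and every $\Lambda \in P^{\af}_+$.

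Fix $w \in W_\af$ with $w \le_\si e$. The next step is to prove that the surjection $\pi_\Lambda \colon L(\Lambda)_{\Z}\twoheadrightarrow \bW^{-}(\bar\Lambda)_{\Z}$ of Theorem \ref{W-quot} restricts to a surjection $L^{w}(\Lambda)_{\Z}\twoheadrightarrow \bW^{-}_{w}(\bar\Lambda)_{\Z}$. The $P$-projection of the $P^{\af}$-weight of $\bv_{w\Lambda}$ equals $w\bar\Lambda \in W\bar\Lambda \subset \mathrm{Conv}\, W\bar\Lambda$, so by Corollary \ref{LW-def-eq} the vector $\pi_\Lambda(\bv_{w\Lambda})$ is nonzero. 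Since the $P^{\af}$-weight space of $\bX(\bar\Lambda)$ of the corresponding weight is one-dimensional with basis $\bv_{w\bar\Lambda}$ (Theorem \ref{X-compat}, Theorem \ref{b-compat}), one gets $\pi_\Lambda(\bv_{w\Lambda}) = c\cdot \bv_{w\bar\Lambda}$ for some unit $c$; the $U^{-}_{\Z}$-linearity of $\pi_\Lambda$ then yields the claimed surjection and shows in particular that $L^w(\Lambda)_\Z \cap \ker \pi_\Lambda$ is a direct summand generated by weight vectors whose $P$-projection lies outside $\mathrm{Conv}\, W\bar\Lambda$.

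Combining the two steps, the induced splitting $\psi_\Lambda \colon \bW^{-}(\bar\Lambda)_{\F_p}\to \bW^{-}(p\bar\Lambda)_{\F_p}$ appearing as the bottom arrow in the diagram of the proof of Theorem \ref{R-can} preserves $\bW^{-}_{w}(\bar\Lambda)_{\F_p}$: indeed $\phi^{\vee}_{\Lambda}(L^{w}(\Lambda)) \subset L^{w}(p\Lambda)$ and both vertical maps land on the respective semi-infinite Demazure modules by the previous step. After the $\theta^{*}$-identifications of Lemma \ref{WX-comm}(4) — which send $R = \bigoplus_\la \bW(\la)^\vee$ to $\bigoplus_\la \bW^{-}(-w_0\la)^\vee$ and $R_w = \bigoplus_\la \bW_{ww_0}(\la)^\vee$ to $\bigoplus_\la \bW^{-}_{w}(-w_0\la)^\vee$ — this translates into the statement that the unique $\bI$-canonical splitting $\phi$ of $R_{\F_p}$ sends the kernel of the surjection $R_{\F_p}\twoheadrightarrow (R_w)_{\F_p}$ (which exists by Lemma \ref{surj-rel} precisely because $w \le_\si e$) to itself, which is the required compatibility.

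The main obstacle is the identification $\pi_\Lambda(\bv_{w\Lambda}) = c\cdot \bv_{w\bar\Lambda}$ when $w$ has a nontrivial translation part, where the $P^{\af}$-weight bookkeeping (levels versus loop gradings) is less immediate than the finite case of Corollary \ref{compat}; once this is pinned down via the one-dimensionality of extremal weight spaces in $\bX(\bar\Lambda)$, the remainder of the argument is a direct transport of the descent procedure of Theorem \ref{R-can} along the subspaces $L^{w}(\Lambda)\subset L(\Lambda)$.
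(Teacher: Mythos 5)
There is a genuine gap at the crux of your argument, exactly at the place you flag as the ``main obstacle.'' You claim that for $w \in W_\af$, the $P$-projection of the $P^\af$-weight $w\Lambda$ of $\bv_{w\Lambda} \in L(\Lambda)$ equals $w\bar\Lambda$ and hence lies in $W\bar\Lambda \subset \mathrm{Conv}\, W\bar\Lambda$, so that $\pi_\Lambda(\bv_{w\Lambda}) \neq 0$. This is false as soon as $w$ has a nontrivial translation part. Write $w = u t_\beta$ with $u \in W$, $\beta \in Q^\vee$, and let $k = \langle K, \Lambda\rangle > 0$ be the level of $\Lambda$. The projection $P^\af \to P$ does \emph{not} intertwine the $W_\af$-actions: one has $\overline{t_\beta \Lambda} = \bar\Lambda + k\nu(\beta)$ (where $\nu$ comes from the invariant form — compare the paper's normalization $t_{-\vartheta^\vee} = s_\vartheta s_0$, which on a level-$k$ weight gives $\overline{t_{-\vartheta^\vee}\Lambda} = \bar\Lambda - k\vartheta$), so $\overline{w\Lambda} = u\bar\Lambda + ku\nu(\beta)$, whereas $w\bar\Lambda$ projects to $u\bar\Lambda$. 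These differ whenever $\beta \neq 0$. Moreover, since $|\bar\Lambda + k\nu(\beta)|^2 = |\bar\Lambda|^2 + 2k(\bar\Lambda,\nu(\beta)) + k^2|\nu(\beta)|^2 > |\bar\Lambda|^2$ (after $W$-conjugation one may assume $\nu(\beta)$ dominant), the weight $\overline{w\Lambda}$ lies strictly outside $\mathrm{Conv}\, W\bar\Lambda$. By Corollary \ref{LW-def-eq} such weight spaces sit in $\ker\pi_\Lambda$, so $\pi_\Lambda(\bv_{w\Lambda}) = 0$, not a nonzero multiple of $\bv_{w\bar\Lambda}$ — indeed the two vectors do not even have the same $P$-weight. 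This is precisely why Corollary \ref{compat} in the paper is stated only for $w \in W$: there the $P$-projection commutes with the finite Weyl group action and the argument you describe goes through, but the mechanism does not extend to $W_\af$.

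The paper therefore proves Corollary \ref{af-comp} by an entirely different route which never looks at $L^w(\Lambda)$ for $w$ with a translation part. Starting from the $W$-case of Corollary \ref{compat}, it uses Brion--Kumar \cite[Proposition 4.1.17 and Remark 4.1.18 (i)]{BK05} to set up a chain of bijections between sets of $\bI$-canonical splittings of the successive quotients $(R_w)_{\F_p}$, combines this with the translation-equivariance of the rings (Corollary \ref{transR}) and the uniqueness of the canonical splitting (Corollary \ref{af-uniq}) to reach $(R_{u t_{m\vartheta^\vee}})_{\F_p}$ for all $u \in W$, $m \geq 0$, and finally sandwiches a general $w \leq_\si e$ between some $t_{m\vartheta^\vee}$ and $e$ in the semi-infinite order. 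Your strategy of transporting the affine-Demazure compatibility through $\pi_\Lambda$ simply does not have the modules it needs on the affine side when $w \notin W$, and no scalar normalization can repair a map that is forced to be zero by weight considerations.
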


\begin{proof}
By \cite[Proposition 4.1.17 and Remark 4.1.18 (i)]{BK05} and \cite[Theorem 4.12]{Kat18} (the algebraic portion of the latter stems from \cite[Lemma 2.6]{Kas05}, that carries over to this setting; cf. \cite[Lemma 4.4 and Theorem 4.7]{Kat18}), we derive that an $\bI$-canonical Frobenius splitting of $( R_{w} )_{\F_p}$ ($w \in W$) gives rise to an $\bI$-canonical splitting of $R_{\F_p}$ that is compatible with $( R_{w} )_{\F_p}$ (arguing by restricting to the $\SL ( 2, i )$-actions for each $i \in \tI$). In particular, the $\bI$-canonical splitting of $(R_w)_{\F_p}$ ($w \in W$) also uniquely exist and compatible with that of $R_{\F_p}$ by Corollary \ref{af-uniq} and Corollary \ref{compat}. By Corollary \ref{transR}, we further deduce that the $\bI$-canonical splitting of $(R_w)_{\F_p}$ ($w \in W_\af$) uniquely exists.

Let $w \in W$ such that $s_0 w = s_{\vartheta} w t_{- w^{-1} \vartheta^{\vee}} \le_\si w$. Then, $R_{s_0w}$ is a quotient of $R_w$. Again by \cite[Proposition 4.1.17 and Remark 4.1.18 (i)]{BK05}, the set of $\bI$-canonical splittings of $( R_{s_0w} )_{\F_p}$ is in bijection with that of $( R_{w} )_{\F_p}$ compatible with $( R_{s_0 w} )_{\F_p}$. By Corollary \ref{transR} and the above paragraph, we find that the (unique) $\bI$-canonical splitting of $( R_{s_0 w} )_{\F_p}$ is compatible with $( R_w )_{\F_p}$, and hence also compatible with $R_{\F_p}$ and $( R_{t_{- w^{-1} \vartheta^{\vee}}} )_{\F_p}$. This forces the $\bI$-canonical splitting of $R_{\F_p}$ to be compatible with that of $( R_{u t_{- w^{-1} \vartheta^{\vee}}} )_{\F_p}$ for every $u,w \in W$ such that $-w^{-1}\vartheta^{\vee} \in \Delta_+^{\vee}$.

The set $( \{ -w^{-1}\vartheta^{\vee} \}_{w \in W} \cap Q^{\vee}_+ )$ is precisely the set of short positive coroots in $\Delta_+^{\vee}$. This spans $Q^{\vee}_+$ as monoids by inspection. Since $w \in W_\af$ with $w \le_\si e$ is written as $w = u t_{\beta}$ for some $u \in W$ and $\beta \in Q^{\vee}_+$ (\cite[Lecture 13, Proposition 1]{Pet97}), we conclude the assertion.
\end{proof}

\begin{thm}\label{F-uniq}
Let $p$ be a prime and let $\tJ \subset \tI$. The ring $R^+ ( \tJ )_{\F_p}$ admits a Frobenius splitting that is $\bI$- and $\bI^-$-canonically split. This splitting is compatible with $R_{w}( \tJ )_{\F_p}$ and the image of $R^+( \tJ )_{\F_p} \subset R^+ _{\F_p}$ under the quotient map $R^+_{\F_p} \rightarrow \!\!\!\!\! \rightarrow \theta^* ( ( R _{ww_0} )_{\F_p} )$ for each $w \in W_\af$.
\end{thm}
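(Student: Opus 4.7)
The plan is to construct the Frobenius splitting of $R^+(\tJ)_{\F_p}$ as a projective limit of the translated splittings on the rings $R_{w_0 t_\beta, \F_p}$ (obtained from Theorem~\ref{R-can}), then upgrade to $\bI^-$-canonicity using the $\theta$-twisted version of the same construction, and finally deduce the stated compatibilities.

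For the $\bI$-canonical splitting, first transport the splitting of Theorem~\ref{R-can} to each $R_{w_0 t_\beta, \F_p}$ using the translation automorphism $\tau_\beta$ of Lemma~\ref{WX-comm}~1), whose compatibility with the ring structure and with the $U^+_\Z$-action is Corollary~\ref{transR}; Corollary~\ref{af-uniq} identifies each of the resulting splittings with the unique $\bI$-canonical splitting on the translated ring (up to the prescribed grading twist). Since $\bX(\la)_\Z = \bigcup_\beta \bW_{t_{w_0\beta}}(\la)_\Z$ by Lemma~\ref{contain} applied to sufficiently anti-dominant $\beta$, we obtain
$$
\bX(\la)_{\F_p}^{\vee} \cong \varprojlim_\beta \bW_{t_{w_0\beta}}(\la)_{\F_p}^{\vee},
$$
whose transition maps are the surjections $(R_{w_0 t_\beta})_{\F_p} \twoheadrightarrow (R_{w_0 t_\gamma})_{\F_p}$ of Lemma~\ref{surj-rel}. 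Corollary~\ref{af-comp}, transported via Corollary~\ref{transR} to cover each pair $(w_0 t_\gamma, w_0 t_\beta)$, implies that the translated splittings are compatible with these transition maps, so they glue to an $\bI$-canonical Frobenius splitting of $R^+_{\F_p}$. Restricting to the $P_{\tJ,+}$-graded subring yields the required splitting on $R^+(\tJ)_{\F_p}$: this restriction is well-defined because $P_{\tJ,+}$ is closed under multiplication by $p$.

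For $\bI^-$-canonicity of the same splitting, we run the argument of Theorem~\ref{R-can} with the roles of $\bI$ and $\bI^-$ exchanged. Theorem~\ref{flag-split} already furnishes an $\bI^-$-canonical splitting of $R^\af_{\F_p}$; applying Proposition~\ref{B-can-inv} to the $\bI^-$-action and using the surjections of Theorem~\ref{W-quot} together with Corollary~\ref{LW-def-eq} in place of their $\bI$-analogues produces an $\bI^-$-canonical splitting on $R_{\F_p}$, which then propagates to $R^+(\tJ)_{\F_p}$ by the very same translation-and-limit procedure. By the uniqueness argument underlying Corollary~\ref{af-uniq}, which is insensitive to whether we work with $\bI$ or $\bI^-$ because both are pinned down by the action on the polynomial subring generated by $\{\bv_\la^{\vee}\}_{\la \in P_{\tJ,+}}$, the two splittings must coincide.

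Compatibility with $R_w(\tJ)_{\F_p}$ for arbitrary $w \in W_\af$ is inherited from Corollary~\ref{af-comp} at each finite stage of the inverse limit, after using Corollary~\ref{transR} to translate any $w$ into the range $w \le_\si w_0 t_\beta$ covered by that corollary for $\beta$ sufficiently anti-dominant. Compatibility with the image of $R^+(\tJ)_{\F_p}$ in $\theta^*((R_{ww_0})_{\F_p})$ is the mirror statement under $\theta^*$ and follows from $\bI^-$-canonicity combined with Lemma~\ref{WX-comm}~4). The main technical obstacle will be verifying that the translated splittings genuinely glue along the inverse system defining $\bX(\la)_{\F_p}^{\vee}$; this delicate gluing requires the interplay of Corollaries~\ref{transR}, \ref{af-uniq}, and \ref{af-comp} and is not implied by any single one of them.
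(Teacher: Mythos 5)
Your proposal takes a genuinely different route from the paper in two places, and both of them have real gaps.

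\textbf{The $\bI^-$-canonicity step.} The paper disposes of this with a single citation to \cite[Proposition 2.10]{BK05}, which gives $\bI^-$-canonicity as an automatic consequence of $\bI$-canonicity in this setting. You instead propose to construct a second, $\bI^-$-canonical splitting by rerunning Theorem~\ref{R-can} ``with the roles of $\bI$ and $\bI^-$ exchanged'' and then identify it with the $\bI$-canonical one via uniqueness. This has two problems. First, the descent argument in Theorem~\ref{R-can} rests on Theorem~\ref{W-quot} (a surjection of $U^-_\Z$-modules) and Corollary~\ref{LW-def-eq} ($U^-_\Bbbk$-module generators of the kernel); these are the tools for showing that $\phi^{\vee}$ preserves the kernel using the $F_i$-action. ``Exchanging roles'' would require $U^+_\Z$-module analogues of Theorem~\ref{W-quot} and Corollary~\ref{LW-def-eq}, which do not appear in the paper and which you do not establish. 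Second, even granting both splittings exist, your claim that Corollary~\ref{af-uniq} is ``insensitive to whether we work with $\bI$ or $\bI^-$'' is not justified: that corollary pins down the $\bI$-canonical splitting by its behavior on the $\bI$-cocyclic vectors $\bv_{w_0\la}^{\vee}$ (killed by the $E_i$), while the corresponding control vectors for $\bI^-$-canonicity are those killed by the $F_i$ -- different vectors. Uniqueness of each splitting separately does not give their coincidence.

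\textbf{Compatibility with $\theta^*((R_{ww_0})_{\F_p})$.} You dismiss this as ``the mirror statement under $\theta^*$'' following from $\bI^-$-canonicity and Lemma~\ref{WX-comm}~4). This is exactly the part where the paper does the most work: it introduces the ideal $I(w) := R^+_{\F_p} \cap \bigcap_{g \in \bI^-(\overline{\F}_p)} g(\overline{\F}_p \otimes_{\F_p} \ker\xi)$ (the maximal $U^-_{\F_p}$-stable ideal contained in $\ker\xi$), proves via a cyclicity and maximality argument that $R^+/I(w) \cong \theta^*((R_{ww_0})_{\F_p})$, and only then concludes that $I(w)$ splits compatibly because $\phi$ is $\bI^-$-canonical. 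The kernel of $R^+ \to \theta^*((R_{ww_0})_{\F_p})$ does not identify itself; one must construct and recognize it. There is no formal $\theta$-mirror that produces this ideal, because $\theta$ does not act on $R^+$ fixing the quotient $R^+ \to R_w$ and sending it to the quotient $R^+ \to \theta^*(R_{ww_0})$ — these are two genuinely different quotient structures on the same ring. Your final sentence flags the gluing of the inverse limit as the main obstacle, but the harder missing pieces are the two above.
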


\begin{proof}
Since the case of $\tJ \neq \emptyset$ follows by the restriction to a part of the $P_+$-grading, we concentrate into the case $\tJ = \emptyset$.

The ring structure of $R^+_{\F_p}$ is determined by $R_{\F_p}$ through the application of $U^-_\Z$ before taking duals. By Corollary \ref{af-comp} (and its proof), it defines an $\bI$-canonical splitting $\phi$ of $R^+_{\F_p}$ compatible with $( R_w )_{\F_p}$ for each $w \in W_\af$. The ring $R^+_{\F_p}$ admits an $\SL ( 2, i )_{\F_p}$-action that integrates the actions of $E_i^{(n)}$ and $F_i^{(n)}$ ($n \in \Z_{>0}$) for each $i \in \tI_\af$. By \cite[Proposition 2.10]{BK05}, this splitting $\phi$ is also $\bI^-$-canonical.

The $\bI$-cocyclic $P^\af$-weight vector $\bv_{w w_0\la}^{\vee} \in \bW _{ww_0} ( \la )^{\vee}_{\F_p}$ is uniquely characterized by its $P^\af$-weight. Hence, we obtain a map
$$\bX ( \la )^{\vee}_{\F_p} \supset \mathrm{Span}_\Z \, \prod_{i \in \tI} ( \bX ( \varpi_i)_{\F_p}^{\vee} )^{\left< \al_i, \la \right>} \longrightarrow \!\!\!\!\! \rightarrow \bW _{ww_0} ( \la )_{\F_p}^{\vee} \longrightarrow \!\!\!\!\! \rightarrow \F_p \bv_{ww_0\la}^{\vee}.$$
It gives rise to the ring surjections
$$R^+_{\F_p} \longrightarrow \!\!\!\!\! \rightarrow ( R_w )_{\F_p} \longrightarrow \!\!\!\!\! \rightarrow \bigoplus_{\la \in P_+} \F_p \bv_{w w_0 \la}^{\vee}$$
that is compatible with $\phi$ by construction in the first surjection and by examining the $P^\af$-weights in the second surjection (we denote this composition surjective ring map by $\xi$). Consider the ideal
$$I (w) := R^+_{\F_p} \cap \bigcap _{g \in \bI^- ( \overline{\F}_p )} g ( \overline{\F}_p \otimes_{\F_p} \ker \, \xi )\subset R^+_{\overline{\F}_p}.$$
(Here the action of $\bI^- ( \overline{\F}_p )$ is obtained by the unipotent one-parameter subgroups $\{ \rho_{-\al_i}\}_{i \in \tI_\af}$ defined through the exponentials, that are well-defined as we have all the divided powers. The passage from $\F_p$ to $\overline{\F}_p$ is necessary to ensure the scheme-theoretic invariance since we want an intersection that is equivalent to the geometric $\Ga$-actions through $\rho_{- \al_i}$ for all $i \in \tI_\af$ by finding the Zariski dense subsets of $\Ga$. For this purpose, we want the image of each $\rho_{-\al_i}$ to be infinite, that cannot be achieved by sending $\Ga ( \F_p ) \cong \F_p$.) This ideal is the maximal $U_{\F_p}^-$-invariant ideal of $R^+_{\F_p}$ that is contained in $\ker \, \xi$. Let us denote the quotient ring by
$$Q = \bigoplus_{\la \in P_+} Q ( \la ) := R^+_{\F_p} / I ( w ).$$
By the construction of $I (w)$, we deduce that
$$U_{\F_p}^- \bv _{w w_0 \la} \subset Q ( \la )^{\vee}\subset \bX ( \la )_{\F_p}$$
for each $\la \in P_+$ (otherwise we can derivate a vector in $I ( w )$ to obtain a non-zero element of $\bigoplus_{\la \in P_+} \F_p \bv_{w w_0 \la}^{\vee}$). Since $\theta^*  ( \bW _{ww_0} ( - w_0 \la )_{\F_p} )$ is a cyclic $U_{\F_p}^-$-submodule of $\bX ( \la )_{\F_p}$, we have $U_{\F_p}^- \bv _{w w_0 \la} = \theta^*  ( \bW _{ww_0} ( - w_0 \la )_{\F_p} )$. In particular, we deduce a vector space surjection
$$R^+_{\F_p} / I ( w ) \longrightarrow \!\!\!\!\! \rightarrow \theta^* ( ( R _{ww_0} )_{\F_p} )$$
(cf. Corollary \ref{WXcomm-inj}). Since the RHS is naturally a ring, we conclude
$$R^+_{\F_p} / I ( w ) \cong \theta^* ( ( R _{ww_0} )_{\F_p} )$$
by the maximality of $I ( w )$.

The ideal $I (w) \subset R^+_{\F_p}$ also splits compatibly by $\phi$ (since $\phi$ is $\bI^-$-canonical and $\ker \, \xi$ splits compatibly). In particular, each $\theta^* ( ( R_{ww_0} )_{\F_p} )$ compatibly split under $\phi$ as required.
\end{proof}

\begin{cor}\label{FQ-uniq}
For each $\tJ \subset \tI$, the indscheme $( \bQ_{G, \tJ}^{\ra} )_{\F_p}$ admits an $\bI$- and $\bI^-$-canonical Frobenius splitting that is compatible with $\bQ_{G, \tJ} ( w )_{\F_p}$ for each $w \in W_\af$.
\end{cor}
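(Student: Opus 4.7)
The plan is to transfer the ring-theoretic splitting from Theorem \ref{F-uniq} to the geometric setting via the Proj construction (Theorem \ref{F-rel}) and then glue across the ind-structure. By Theorem \ref{F-uniq} we have an $\bI$- and $\bI^-$-canonical Frobenius splitting $\phi$ of $R^+(\tJ)_{\F_p}$ that is compatible with the quotient rings $R_w(\tJ)_{\F_p}$ for every $w \in W_\af$ (after the harmless $\theta$-twist that exchanges $\bI$ and $\bI^-$). Hence $\phi$ descends to an $\bI$- and $\bI^-$-canonical splitting $\phi_v$ of each $R_v(\tJ)_{\F_p}$, and these $\phi_v$ respect the further surjections $R_v(\tJ)_{\F_p} \twoheadrightarrow R_w(\tJ)_{\F_p}$ coming from $w \le_\si v$ (Lemma \ref{surj-rel}).

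Next, I would invoke Theorem \ref{F-rel} applied to the semiample line bundles $\{\cO(\varpi_i)\}_{i \in \tI \setminus \tJ}$ on $(\bQ_{G,\tJ}(v))_{\F_p} = \mathrm{Proj}\, R_v(\tJ)_{\F_p}$, whose multi-section ring is $R_v(\tJ)_{\F_p}$ by construction: this transports $\phi_v$ to a Frobenius splitting of $(\bQ_{G,\tJ}(v))_{\F_p}$, and the compatibility of $\phi_v$ with the homogeneous ideal defining the closed subscheme $(\bQ_{G,\tJ}(w))_{\F_p}$ becomes, via the second half of Theorem \ref{F-rel}, compatibility of the scheme-level splittings. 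The $\bI$- and $\bI^-$-canonicality passes from ring to scheme because both notions are defined through the same unipotent one-parameter subgroups $\{\rho_{\pm \al_i}\}_{i \in \tI_\af}$ acting on the corresponding sides.

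Finally, one glues across $(\bQ_{G,\tJ}^{\mathrm{rat}})_{\F_p} = \bigcup_{v \in W} (\bQ_{G,\tJ}(v))_{\F_p}$: the scheme-level splittings on overlapping finite-type pieces all descend from the single ring-level splitting $\phi$ and hence agree on intersections by the uniqueness of $\bI$-canonical splittings (Corollary \ref{af-uniq}). Compatibility with $(\bQ_{G,\tJ}(w))_{\F_p}$ for an arbitrary $w \in W_\af$ is then inherited; when $w \notin W$, one first uses the $Q^\vee$-translation of Corollary \ref{transR} to situate $(\bQ_{G,\tJ}(w))_{\F_p}$ inside some $(\bQ_{G,\tJ}(v))_{\F_p}$ with $v \in W$. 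The essential content is already packaged in Theorem \ref{F-uniq}; the only mild subtlety I anticipate is checking that the ind-structure interacts well with Frobenius splitting, which is standard given that compatibility has been secured on each finite-type piece.
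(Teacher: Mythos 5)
Your proposal is correct and takes essentially the same route the paper intends: Corollary \ref{FQ-uniq} is stated without proof precisely because it is an immediate consequence of Theorem \ref{F-uniq}, by passing from the ring-level splitting of $R^+(\tJ)_{\F_p}$ (and its compatibility with each quotient $R_w(\tJ)_{\F_p}$) to the schemes $\bQ_{G,\tJ}(w)_{\F_p} = \mathrm{Proj}\, R_w(\tJ)_{\F_p}$ and gluing across the ind-structure. Your invocation of Theorem \ref{F-rel} in the ring-to-scheme direction matches the paper's own usage of it elsewhere (e.g.\ Corollary \ref{Q'-cFS}); the only redundant flourish is the appeal to $\theta$-twisting and Corollary \ref{transR}, since Theorem \ref{F-uniq} already asserts compatibility with $R_w(\tJ)_{\F_p}$ for all $w \in W_\af$ directly, and the inclusions among the ind-pieces are governed by Lemma \ref{surj-rel} rather than by translation.
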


\begin{proof}
The condition of the canonical splitting can be checked by line bundle twists, and hence we only need to show whether the Frobenius splitting of $R_w (\tJ) _{\F_p}$ descends to $\bQ_{G, \tJ} ( w )_{\F_p}$ for each $w \in W_\af$. Since $\bQ_{G, \tJ} ( w )_{\F_p}$ is the quotient of an open subset of $X := \mathrm{Spec} \, R_w (\tJ) _{\F_p}$ by $H$ (corresponding to the $P_{\tJ,+}$-grading) and our Frobenius splitting is $H$-fixed, it suffices to see whether the localization to the non-irrelevant locus preserves the Frobenius splitting. This follows if the localization of $\mathsf{Fr}_* \mathcal O_X = \mathcal O_{X^{(1)}}$ as a $\mathsf{Fr}_* \mathcal O_X$-module and as a $\mathcal O_X$-module are the same. It holds as the localization by a multiplicative set $S$ is the same as the localization by $S^p$.
\end{proof}

Recall that a scheme $X$ defined over a field $\Bbbk$ is called weakly normal if every finite bijective birational ($\Bbbk$-)morphism $f : Y \rightarrow X$ from a (reduced) scheme over $\Bbbk$ is in fact an isomorphism (\cite{Man80} and \cite[\S 1.2.3]{BK05}).

\begin{cor}\label{FQ-wn}
For each $\tJ \subset \tI$, the indscheme $( \bQ_{G, \tJ}^{\ra} )_{\F_p}$ and the schemes $\bQ_{G, \tJ} ( w )_{\F_p}$ $(w \in W_\af)$ are reduced. In addition, $\bQ_{G, \tJ} ( w )_{\F_p}$ is weakly normal.
\end{cor}

\begin{proof}
For the first assertion, apply \cite[Proposition 1.2.1]{BK05} to Corollary \ref{FQ-uniq}. For the second assertion, apply \cite[Proposition 1.2.5]{BK05} to Corollary \ref{compat}.
\end{proof}

\section{Frobenius splitting of quasi-map spaces}

We retain the settings of the previous section. In particular, we sometimes work over a ring or a non-algebraically closed field. Moreover, the notational convention explained in the beginning of \S \ref{sec:fsQ} continue to apply.

\subsection{The scheme $\sQ'_\tJ ( v, w )$ and its Frobenius splitting}\label{defQ'}

Let $v,w \in W_\af$ and $\tJ \subset \tI$. We set
$$R ^v _w ( \tJ ) := R^+ ( \tJ ) / \left( \ker ( R^+ ( \tJ ) \to R_w ( \tJ ) ) + \ker ( R^+ ( \tJ ) \to \theta^* ( R _{vw_0} ( \tJ' ) ) \right),$$
where we have $\ker ( R^+ ( \tJ ) \to \theta^* ( R _{vw_0} ( \tJ' ) ) ) = \ker ( R^+ ( \tJ ) \to \theta^* ( R _{vw_0} ) )$ for
$$\tJ' := \{i \in \tI \mid -w_0 \al_i = \al_j, \exists j \in \tJ \} \subset \tI.$$
By construction, $R ^v _w ( \tJ )$ is a $P_{\tJ,+}$-graded ring. We set
$$R ^v _w ( \tJ ) := \bigoplus_{\la \in P_{\tJ,+}} R ^v _w ( \tJ, \la ).$$

\begin{lem}\label{Qsurj-mult}
For each $w, v \in W_\af$ and $\tJ \subset \tI$, the multiplication map
$$R ^v _w ( \tJ, \la ) \otimes_\Z R ^v _w ( \tJ, \mu ) \rightarrow R ^v _w ( \tJ, \la + \mu ) \hskip 10mm (\la,\mu \in P_{\tJ,+})$$
is surjective.
\end{lem}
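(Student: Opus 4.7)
The plan is to reduce surjectivity of multiplication in $R^v_w(\tJ)$ to the corresponding surjectivity in the ambient ring $R^+(\tJ)$, where it is manifest from the construction. The key observation is that by its very definition
\[
R^+(\tJ, \la) = \mathrm{Span}_\Z \prod_{i \in \tI} \bigl( \bX(\varpi_i)_\Z^\vee \bigr)^{\langle \al_i^\vee, \la \rangle}, \qquad \la \in P_{\tJ,+},
\]
together with the fact that the multiplication on $\widetilde{R}$ is induced from the $\Z$-duals of the injections in Corollary \ref{WXcomm-inj}, the ring $R^+(\tJ)$ is generated as a $P_{\tJ,+}$-graded $\Z$-algebra by its fundamental pieces $\{R^+(\tJ, \varpi_i)\}_{i \in \tI \setminus \tJ}$.

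Given this, each $\Z$-module generator of $R^+(\tJ, \la + \mu)$ is a product of $\langle \al_i^\vee, \la + \mu \rangle = \langle \al_i^\vee, \la \rangle + \langle \al_i^\vee, \mu \rangle$ elements of $\bX(\varpi_i)_\Z^\vee$ for each $i \in \tI \setminus \tJ$, which one partitions into an element of $R^+(\tJ, \la)$ times an element of $R^+(\tJ, \mu)$. Hence the multiplication $R^+(\tJ,\la) \otimes_\Z R^+(\tJ,\mu) \to R^+(\tJ,\la+\mu)$ is surjective, and this surjectivity descends to any $P_{\tJ,+}$-graded quotient ring, in particular to $R^v_w(\tJ)$.

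What remains, and what I expect to be the only mildly subtle point, is to confirm that both kernels $\ker(R^+(\tJ) \to R_w(\tJ))$ and $\ker(R^+(\tJ) \to \theta^*(R_{vw_0}))$ are homogeneous ideals in the $P_{\tJ,+}$-grading. For the first, the surjection $R^+(\tJ) \twoheadrightarrow R_w(\tJ)$ is obtained degree-by-degree from the $\Z$-dual of the inclusion $\bW_{ww_0}(\la)_\Z \hookrightarrow \bX(\la)_\Z$ (Lemma \ref{WX-comm}(3)), and is multiplicative by Corollary \ref{WXcomm-inj}. For the second, combining Lemma \ref{WX-comm}(2),(4) with Corollary \ref{WXcomm-inj} (which underlies the construction in the proof of Theorem \ref{F-uniq}), one similarly obtains a $P_{\tJ,+}$-graded ring surjection, coming by $\Z$-duality from the inclusion $\theta^*(\bW_v(\la)_\Z) \hookrightarrow \theta^*(\bX(\la)_\Z) \cong \bX(-w_0\la)_\Z$. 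Both kernels are therefore homogeneous, $R^v_w(\tJ)$ is naturally $P_{\tJ,+}$-graded, and the surjectivity from $R^+(\tJ)$ descends as required.
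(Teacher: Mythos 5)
Your proof is correct and uses essentially the same descent strategy as the paper's: surjectivity of multiplication in an ambient $P_{\tJ,+}$-graded ring is observed and then pushed down to the quotient $R^v_w(\tJ)$. The only minor difference is that you descend from $R^+(\tJ)$, where generation by the fundamental pieces is manifest from the definition, whereas the paper descends from $R_w(\tJ)$ and cites Corollary \ref{WXcomm-inj} for the surjectivity there; your extra verification that the two kernels are homogeneous is a correct (if routine) detail the paper leaves implicit.
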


\begin{proof}
We have a quotient
$$R _w ( \tJ ) = \bigoplus_{\la \in P_{\tJ,+}} \bW_{ww_0} ( \la )^{\vee}_\Z \longrightarrow \!\!\!\!\! \rightarrow \bigoplus_{\la \in P_{\tJ,+}} R ^v _w ( \tJ, \la ) = R ^v _w ( \tJ )$$
of homogeneous rings. Corollary \ref{WXcomm-inj} implies that the multiplication map of $R _w ( \tJ )$ is surjective. Hence, so is the quotient ring.
\end{proof}

We set
$$\sQ'_{\tJ} ( v, w ) := \mathrm{Proj} \, R ^v _w ( \tJ ),$$
where our definition of $\mathrm{Proj}$ is (\ref{mproj}). In case $v = w_0 t_{\beta}$ for $\beta \in Q^{\vee}$, we may write $R ^v _w ( \tJ )$ and $\sQ'_{\tJ} ( v, w )$ by $R ^\beta _w ( \tJ )$ and $\sQ'_{\tJ} ( \beta, w )$, respectively.

\begin{lem}
For each $w, v \in W$, the Chevalley involution induces an isomorphism
$$\sQ' ( v, w ) \stackrel{\cong}{\longrightarrow} \sQ' ( w w_0, v w_0 ).$$
\end{lem}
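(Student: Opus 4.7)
My plan is to read the isomorphism directly off the Chevalley involution $\theta$ of $\dot U_\Z$. At $\mathsf q = 1$ the coproduct of $\dot U_\Z$ is cocommutative on the Chevalley generators, so $\theta$ is a Hopf algebra involution ($(\theta \otimes \theta) \circ \Delta = \Delta \circ \theta$). Consequently the $\dot U_\Z$-module isomorphisms $\bX(\la) \cong \theta^*(\bX(-w_0\la))$ of Lemma \ref{WX-comm}(2) are compatible with the comultiplication-induced inclusions $\bX(\la+\mu) \hookrightarrow \bX(\la) \otimes \bX(\mu)$ of Corollary \ref{WXcomm-inj}. Dualising assembles them into an involutive ring automorphism of $\widetilde{R}$ that sends the $\la$-graded piece to the $(-w_0\la)$-graded piece. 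Since $\theta$ permutes the fundamental weights via $\varpi_i \mapsto \varpi_{i^*}$ with $-w_0 \varpi_i =: \varpi_{i^*}$, the subring $R^+$ is stable, and I would take $\theta : R^+ \xrightarrow{\cong} R^+$ as the resulting ring involution.

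Next I would track the two ideals defining $R^v_w = R^+/(I_w + I(v))$, where I write $I_w := \ker(R^+ \twoheadrightarrow R_w)$ and $I(v) := \ker(R^+ \twoheadrightarrow \theta^*(R_{vw_0}))$ (the latter being the ideal $I(v)$ produced in the proof of Theorem \ref{F-uniq}). Transporting the surjection $R^+ \twoheadrightarrow R_w$ through $\theta$ gives a surjection $R^+ \twoheadrightarrow \theta^*(R_w) = \theta^*(R_{(ww_0)w_0})$ whose kernel is $\theta(I_w)$; comparing with the surjection defining $I(ww_0)$ yields $\theta(I_w) = I(ww_0)$. From $\theta^2 = \mathrm{id}$ the same computation gives $\theta(I(v)) = I_{vw_0}$. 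Hence
\[
\theta(I_w + I(v)) = I_{vw_0} + I(ww_0),
\]
and $\theta$ descends to a ring isomorphism $R^v_w \xrightarrow{\cong} R^{ww_0}_{vw_0}$ that twists the $P_+$-grading by $-w_0$.

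Finally, relabelling the factors of the ambient product $\prod_{i \in \tI} \P(R^+(\varpi_i)^\vee)$ along the bijection $i \mapsto i^*$ converts the grading twist into a scheme isomorphism of the corresponding $\mathrm{Proj}$'s, delivering $\sQ'(v,w) \xrightarrow{\cong} \sQ'(ww_0,vw_0)$. I expect the step requiring most care to be the identification $\theta(I_w) = I(ww_0)$: strictly speaking one must verify that the surjection $R^+ \twoheadrightarrow \theta^*(R_w)$ obtained by $\theta$-twisting $R^+ \twoheadrightarrow R_w$ coincides with the quotient $R^+ \twoheadrightarrow R^+/I(ww_0) \cong \theta^*(R_w)$ produced in Theorem \ref{F-uniq}. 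Both are characterised as the unique ring surjections from $R^+$ onto a $\dot U^-$-cocyclic quotient whose cocyclic vectors are the images of the extremal vectors $\{\bv_{ww_0\la}^\vee\}_{\la \in P_+}$, and this forces them to agree.
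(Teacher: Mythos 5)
Your proof is correct and follows exactly the route of the paper's (very terse) argument: apply the Chevalley involution to $R^+$ and use Lemma~\ref{WX-comm} to verify that $\theta$ exchanges the two defining ideals of $R^v_w$, giving a graded ring isomorphism $R^v_w \cong R^{ww_0}_{vw_0}$ up to the $-w_0$-twist of the grading. You simply spell out the ideal-chasing and the $i\mapsto i^*$ relabelling that the paper leaves implicit; the identification $\theta(I_w)=I(ww_0)$ is in fact immediate from Lemma~\ref{WX-comm}\,(4) once one notes that the quotient maps are just restrictions of functionals along $\bW^-_{vw_0}(\la)\subset\bX(\la)$, so the final uniqueness step you worry about is not really needed.
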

\begin{proof}
Apply Lemma \ref{WX-comm} to the construction of $R ^v _w$ to deduce an isomorphism
$$\theta^* : R ^v _w \stackrel{\cong}{\longrightarrow}R ^{ww_0} _{vw_0}$$
of graded rings, that yields the assertion.
\end{proof}

\begin{lem}\label{Q-flat}
For each $w,v \in W_\af$ and $\tJ \subset \tI$, the scheme $\sQ'_{\tJ} ( v, w )$ is flat over $\Z$.
\end{lem}

\begin{proof}
The ring $R^+ ( \tJ )$ has a $\Z$-basis that is dual to $\bigsqcup_{\la \in P_{\tJ,+}} \bB ( \bX ( \la ) )$. The rings $R _w ( \tJ )$ and $\mathrm{Im} \, \left( R^+ ( \tJ ) \rightarrow \theta^* ( R _{vw_0} ) \right)$ are quotients by subsets of such basis elements by Lemma \ref{WX-comm}. Hence, we have a free $\Z$-basis of $R ^v _w( \tJ )$ as required.
\end{proof}

\begin{lem}\label{Q'finite}
For each $w, v \in W_\af$ and $\tJ \subset \tI$, the scheme $\sQ'_{\tJ} ( v, w )$ is projective $($of finite type$)$ over $\Z$.
\end{lem}

\begin{proof}
By \cite[Proposition 5.16]{Kas02}, we have $\mathrm{rank}_\Z \, R ^v _w ( \varpi_i ) < \infty$ for each $i \in \tI$. By Lemma \ref{Qsurj-mult}, this forces $\sQ'_{\tJ} ( v, w )$ to be a projective scheme (that usually implies finite type by definition. Here we explicitly include it since our $\mathrm{Proj}$ does not yield a finite type scheme in general) as required.
\end{proof}

\begin{lem}\label{non-empty}
Let $\tJ \subset \tI$. We have $\sQ'_{\tJ} ( v, w ) \neq \emptyset$ if $v \le_\si w$.
\end{lem}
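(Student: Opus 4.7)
The plan is to exhibit a non-nilpotent element of strictly positive $P_{\tJ,+}$-degree in $R^v_w(\tJ)$; doing so will immediately furnish a point of $\mathrm{Proj}\, R^v_w(\tJ) = \sQ'_\tJ(v, w)$ outside the irrelevant locus $E$. The element I will use is the dual $\bv^\vee_{vw_0 \la}$ of the extremal weight vector $\bv_{vw_0 \la} \in \bX(\la)_\Z$. The main obstacle is pinpointing this correct candidate: the naive guesses $\bv_{w \la}$ or $\bv_{v \la}$ fail to lie in both Demazure modules in general (for instance when $v = w = w_0$, neither extremal vector except $\bv_{e \la} = \bv_{vw_0 \la}$ survives the intersection), whereas $\bv_{vw_0 \la}$ is forced into both by Lemma \ref{contain} together with its defining cyclicity.

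Explicitly, the hypothesis $v \le_\si w$ and Lemma \ref{contain} (applied with the roles of $w$ and $v$ swapped) yield the inclusion $\bW_{vw_0}(\la) \subset \bW_{ww_0}(\la) \subset \bX(\la)$ for every $\la \in P_+$, while $\bv_{vw_0 \la}$ is the cyclic generator of both $\bW_{vw_0}(\la) = U^+_\Z \bv_{vw_0 \la}$ and $\bW^-_{vw_0}(\la) = U^-_\Z \bv_{vw_0 \la}$. Consequently $0 \neq \bv_{vw_0 \la} \in \bW_{ww_0}(\la) \cap \bW^-_{vw_0}(\la)$. Identifying $R^+(\tJ, \la)$ with $\bX(\la)^\vee$ via Theorem \ref{W-unit} (the iterated Pl\"ucker-type inclusion being a $\Z$-direct summand), the two kernels in the definition of $R^v_w(\tJ, \la)$ sum to the orthogonal complement of this intersection inside $\bX(\la)^\vee$; hence $R^v_w(\tJ, \la) \cong (\bW_{ww_0}(\la) \cap \bW^-_{vw_0}(\la))^\vee$ and the image of $\bv^\vee_{vw_0 \la}$ is a non-zero class for every $\la \in P_{\tJ, +}$.

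To conclude, I will invoke the multiplicativity $\bv^\vee_{vw_0 \la} \cdot \bv^\vee_{vw_0 \mu} = \bv^\vee_{vw_0 (\la + \mu)}$ in $R^v_w(\tJ)$, which propagates from the analogous identity stated just below (\ref{vvee}) together with Corollary \ref{WXcomm-inj}, since the inclusion $\bX(\la + \mu) \hookrightarrow \bX(\la) \otimes \bX(\mu)$ carries $\bv_{vw_0 (\la + \mu)}$ to $\bv_{vw_0 \la} \otimes \bv_{vw_0 \mu}$. Taking $\la = \sum_{i \in \tI \setminus \tJ} \varpi_i \in P_{\tJ, ++}$, the class $\bv^\vee_{vw_0 \la} = \prod_{i \in \tI \setminus \tJ} \bv^\vee_{vw_0 \varpi_i}$ is not nilpotent (since $(\bv^\vee_{vw_0 \la})^n = \bv^\vee_{vw_0 (n \la)} \neq 0$ for every $n \geq 1$), and any prime of $R^v_w(\tJ)$ avoiding it meets each $R^v_w(\tJ, \varpi_i)$ nontrivially, hence avoids $E$ and defines a point of $\sQ'_\tJ(v, w)$. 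Once the candidate $\bv_{vw_0 \la}$ is identified, all remaining steps amount to formal bookkeeping with duality and the coalgebra structure.
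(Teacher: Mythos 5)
Your proof is correct and follows the same route as the paper's own argument: in both, the decisive observation is that the extremal-weight-vector dual $\bv^\vee_{vw_0\la}$ defines a nonzero class in $R^v_w(\tJ,\la)$ precisely because $\bv_{vw_0\la}$ is the cyclic generator of $\bW^-_{vw_0}(\la)$ and lies in $\bW_{ww_0}(\la)$ once $\bW_{vw_0}(\la)\subset\bW_{ww_0}(\la)$, which Lemma \ref{contain} supplies from $v\le_\si w$. You additionally spell out the multiplicativity of $\bv^\vee_{vw_0\la}$ (via Corollary \ref{WXcomm-inj}) and the resulting non-nilpotency, which makes explicit how nonvanishing of every graded piece yields a point of $\mathrm{Proj}$ outside the irrelevant locus — a step the paper's terse proof leaves implicit.
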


\begin{proof}
We have $R ^v _w ( \tJ, \la ) \neq \{ 0 \}$ if $\bv_{v w_0 \la} \in \bW_{ww_0} ( \la )_\Z$ by Lemma \ref{WX-comm} 4). Here $\bv_{v w_0 \la} \in \bW_{ww_0} ( \la )_\Z$ is equivalent to $\bW_{vw_0} ( \la )_\Z\subset \bW_{ww_0} ( \la )_\Z$. Now apply Lemma \ref{contain} to obtain the assertion.
\end{proof}

\begin{lem}\label{trans}
Let $v,w \in W_\af$, $\beta \in Q^{\vee}$, and $\tJ \subset \tI$. We have $\sQ'_{\tJ} ( v, w ) \cong \sQ'_{\tJ} ( vt_{\beta}, w t_{\beta} )$.
\end{lem}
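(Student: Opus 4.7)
The plan is to construct an isomorphism of $P_{\tJ,+}$-graded rings $R^{v}_{w}(\tJ) \cong R^{vt_\beta}_{wt_\beta}(\tJ)$, up to a shift in the $d$-grading that is invisible to $\mathrm{Proj}$, and then apply the $\mathrm{Proj}$ functor. The main tool is the $\dot{U}_\Z$-module automorphism $\tau_{w_0\beta}$ of $\bX(\la)_\Z$ from Lemma \ref{WX-comm} 1). Because $\tau_{w_0\beta}$ commutes with $\dot{U}_\Z$-action and because the extremal weight vectors are determined up to sign by their $P^\af$-weights, one obtains
$$\tau_{w_0\beta}(\bv_{u\la}) \;=\; \pm\, \bv_{u\, t_{w_0\beta}\,\la} \qquad \forall\, u \in W_\af,\ \la \in P_+$$
(acting on the cyclic vector $\bv_\la$ then commuting past the Lusztig-style operator realising $\bv_\la \mapsto \bv_{u\la}$).

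The crucial algebraic input is the identity $w_0\, t_{w_0\beta} = t_\beta\, w_0$ in $W_\af$, which forces $ww_0 \cdot t_{w_0\beta} = (wt_\beta)w_0$ and $vw_0 \cdot t_{w_0\beta} = (vt_\beta)w_0$. Hence
$$\tau_{w_0\beta}\!\bigl(\bW_{ww_0}(\la)\bigr) = \bW_{(wt_\beta)w_0}(\la), \qquad \tau_{w_0\beta}\!\bigl(\bW_{vw_0}^{-}(\la)\bigr) = \bW_{(vt_\beta)w_0}^{-}(\la).$$
A single operator handles both the ``$w$-part'' and the ``$v$-part'' precisely because both Demazure pieces appearing in the construction of $R^{v}_{w}(\tJ)$ are indexed by elements of $W_\af$ that end in $w_0$, so both absorb the same shift $w_0\beta$. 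Compatibility of $\tau_{w_0\beta}$ with the tensor embeddings $\bX(\la+\mu)_\Z \hookrightarrow \bX(\la)_\Z \otimes_\Z \bX(\mu)_\Z$ of Corollary \ref{WXcomm-inj} promotes the dual $\tau_{w_0\beta}^{\vee}$ to an automorphism of $R^+(\tJ)$ as $P_{\tJ,+}$-graded ring (up to a degree-dependent shift in the $d$-grading).

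By the displayed formulas (together with Lemma \ref{WX-comm} 4), which identifies $\theta^*(R_{vw_0})$ at level $\la$ with the dual of an opposite Demazure module labelled by $vw_0$), this automorphism sends $\ker(R^+(\tJ)\to R_w(\tJ))$ to $\ker(R^+(\tJ)\to R_{wt_\beta}(\tJ))$ and $\ker(R^+(\tJ)\to\theta^*(R_{vw_0}))$ to $\ker(R^+(\tJ)\to\theta^*(R_{vt_\beta w_0}))$. It therefore descends to a $P_{\tJ,+}$-graded ring isomorphism $R^{v}_{w}(\tJ) \cong R^{vt_\beta}_{wt_\beta}(\tJ)$, and taking $\mathrm{Proj}$ yields $\sQ'_{\tJ}(v,w) \cong \sQ'_{\tJ}(vt_\beta,wt_\beta)$.

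The main subtlety — rather than a serious obstacle — is the choice of parameter $w_0\beta$ instead of $\beta$: a naive use of $\tau_\beta$ would fail to identify the kernels consistently because of the $w_0$-twist built into the definition $R_w = \bigoplus_\la \bW_{ww_0}(\la)^{\vee}$. Once this choice is made, the argument runs along the same mechanism as Corollary \ref{transR}, which handles the analogous single-Demazure isomorphism $R_w \cong R_{wt_\beta}$.
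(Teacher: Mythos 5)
Your proof is correct and follows essentially the same route as the paper's: it reduces the claim to the pair of identities $\tau_{w_0\beta}\bigl(\bW_{ww_0}(\la)\bigr)=\bW_{(wt_\beta)w_0}(\la)$ and $\tau_{w_0\beta}\bigl(\bW^-_{vw_0}(\la)\bigr)=\bW^-_{(vt_\beta)w_0}(\la)$ (the latter being Lemma \ref{WX-comm} 4)'s restatement of $\tau_{w_0\beta}\theta^*(\bW_v(-w_0\la))=\theta^*(\bW_{vt_\beta}(-w_0\la))$), which the paper records and then justifies by ``chasing weights of the cyclic vectors''; you simply spell out the weight-chasing via the identity $ww_0\,t_{w_0\beta}=(wt_\beta)w_0$. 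One small caveat: the closing remark that ``a naive use of $\tau_\beta$ would fail'' is overstated — $\tau_\beta$ would yield the isomorphism $\sQ'_{\tJ}(v,w)\cong\sQ'_{\tJ}(vt_{w_0\beta},wt_{w_0\beta})$, which is the same family of statements after relabeling $\beta$; the choice $\tau_{w_0\beta}$ merely makes the translation parameter match the lemma as stated.
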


\begin{proof}
We borrow notation from Lemma \ref{WX-comm}. By the definition of our ring $R^v_w ( \tJ )$, the assertion follows if
$$\bW _{wt_{\beta}w_0}( \la )_\Z = \tau_{w_0\beta} \bW _{ww_0}( \la )_\Z \hskip 5mm \text{and} \hskip 5mm \tau_{w_0 \beta} \theta^* ( \bW _{v}( -w_0 \la )_\Z ) = \theta^* ( \bW _{vt_{\beta}}( -w_0 \la )_\Z )$$
holds for each $w,v \in W_\af$, $\beta \in Q^{\vee}$, and $\la \in P_+$. These assertions themselves follow by chasing the weights of the cyclic vectors.
\end{proof}

\begin{lem}\label{Q-split}
Let $p$ be a prime and let $\tJ \subset \tI$. For each $w,v \in W_\af$, the ring $R ^v _w ( \tJ )_{\F_p}$ admits a Frobenius splitting that is compatible with the quotient $R^{v'}_{w'} ( \tJ ) _{\F_p}$ for $v',w' \in W_\af$ such that $v \le_\si v' \le_\si w' \le_\si w$. In particular, the scheme $\sQ'_{\tJ} ( v, w )_{\F_p}$ is reduced and weakly normal. 
\end{lem}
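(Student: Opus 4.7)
The plan is to descend the Frobenius splitting of $R^+(\tJ)_{\F_p}$ from Theorem~\ref{F-uniq} to $R^v_w(\tJ)_{\F_p} = R^+(\tJ)_{\F_p}/(I_w + J_v)_{\F_p}$, where I set $I_w := \ker(R^+(\tJ) \to R_w(\tJ))$ and $J_v := \ker(R^+(\tJ) \to \theta^*(R_{vw_0}))$. Theorem~\ref{F-uniq} asserts precisely that both $I_w$ and $J_v$ are compatibly Frobenius split under the canonical splitting $\phi$ of $R^+(\tJ)_{\F_p}$. The elementary observation that the sum of two compatibly split ideals is itself compatibly split, via
$$\phi(I_w + J_v) \subset \phi(I_w) + \phi(J_v) \subset I_w + J_v,$$
then yields the desired Frobenius splitting on $R^v_w(\tJ)_{\F_p}$.

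For the compatibility with $R^{v'}_{w'}(\tJ)_{\F_p}$ whenever $v \le_\si v' \le_\si w' \le_\si w$, I would check the ideal inclusion $I_w + J_v \subset I_{w'} + J_{v'}$ inside $R^+(\tJ)_{\F_p}$; the same sum-of-ideals argument then shows $I_{w'} + J_{v'}$ is compatibly split, so that our splitting descends. The inclusion $I_w \subset I_{w'}$ is immediate from Lemma~\ref{surj-rel} applied to $w' \le_\si w$. For $J_v \subset J_{v'}$, I would use Lemma~\ref{WX-comm}(4) to reinterpret $\theta^*(R_{vw_0}) \cong \bigoplus_\la \bW^-_{vw_0}(-w_0 \la)^\vee$, reducing the containment to the $\bW^-$-version of Lemma~\ref{contain} obtained by applying $\theta^*$.

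The concluding assertions about $\sQ'_\tJ(v,w)_{\F_p}$ are standard consequences of the existence of a Frobenius splitting: reducedness follows from \cite[Proposition~1.2.1]{BK05}, and weak normality from \cite[Proposition~1.2.5]{BK05}, both of which apply because $\sQ'_\tJ(v,w)_{\F_p}$ is of finite type over $\F_p$ (hence Noetherian) by Lemma~\ref{Q'finite}.

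The step I expect to be the main obstacle is the verification of $J_v \subset J_{v'}$ under $v \le_\si v'$: via Lemma~\ref{WX-comm}(4) this reduces to the containment $\bW^-_{v'w_0}(-w_0 \la) \subset \bW^-_{vw_0}(-w_0\la)$, which in turn amounts (via $\theta^*$ applied to Lemma~\ref{contain}) to the $w_0$-twisted inequality $v'w_0 \le_\si vw_0$. Showing that $v \le_\si v'$ implies this requires a careful combinatorial analysis from the defining condition~(\ref{si-ord}), comparing finite Bruhat inequalities for $v t_\beta w_0$ and $v' t_\beta w_0$ with $\beta \ll 0$; the restriction to $W \subset W_\af$, where $\le_\si$ is opposite to the ordinary Bruhat order and right multiplication by $w_0$ reverses it, provides the sanity check that the compatibility should indeed hold on the nose.
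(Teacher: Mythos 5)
Your argument is correct and mirrors the paper's: the paper also descends the canonical splitting of $R^+(\tJ)_{\F_p}$ (Theorem~\ref{F-uniq}) through the compatibly-split sum $I_w + J_v$ and cites Lemmas~\ref{surj-rel}, \ref{WX-comm}(4), and \ref{non-empty} for precisely the ideal inclusion $I_w + J_v \subset I_{w'} + J_{v'}$ that you spell out. The $w_0$-reversal of $\le_\si$ you flag as the potential obstacle does hold on all of $W_\af$, not just on $W$ --- it is \cite[(1.5.1)]{Lus80}, which the paper cites in Appendix~B for exactly this purpose, and is also visible from $\ell^\si(ut_\beta w_0) = \ell(w_0) - \ell^\si(ut_\beta)$ --- so there is no gap there.
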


\begin{proof}
By construction, $\ker ( R^+ ( \tJ )_{\F_p} \to ( R _w )_{\F_p} )$ and $\ker ( R^+ ( \tJ )_{\F_p} \to \theta^* ( ( R _{vw_0} )_{\F_p} ) )$ are ideals of a ring $R^+ ( \tJ )_{\F_p}$ that are compatible with the canonical Frobenius splitting of $R^+ ( \tJ )_{\F_p}$ by Theorem \ref{F-uniq}. Hence, so is their sum. It must be compatible with every quotient of the form $R^{v'}_{w'} ( \tJ ) _{\F_p}$ with the above condition by Lemma \ref{surj-rel}, Corollary \ref{compat}, Lemma \ref{WX-comm} 4), and Lemma \ref{non-empty}. This proves the first assertion. We apply \cite[Proposition 1.2.1 and 1.2.5]{BK05} to deduce the second assertion.
\end{proof}

\begin{cor}\label{Q'-cFS}
Let $p$ be a prime. For each $w, v \in W$, the scheme $\sQ'_{\tJ} ( v, w )_{\F_p}$ admits a Frobenius splitting compatible with $\sQ'_{\tJ} ( v', w' )_{\F_p}$ for every $w',v' \in W_\af$ such that $v \le_\si v' \le_\si w' \le_\si w$.
\end{cor}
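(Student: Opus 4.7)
The statement is essentially a geometric repackaging of Lemma~\ref{Q-split}, which already produces on the multi-homogeneous coordinate ring $R^v_w(\tJ)_{\F_p}$ a Frobenius splitting $\phi$ compatible with every quotient map to $R^{v'}_{w'}(\tJ)_{\F_p}$ in the allowed range $v \le_\si v' \le_\si w' \le_\si w$. Since $\sQ'_\tJ(v,w)_{\F_p}$ is by definition $\mathrm{Proj}\, R^v_w(\tJ)_{\F_p}$, and since each $\sQ'_\tJ(v',w')_{\F_p}$ is cut out inside $\sQ'_\tJ(v,w)_{\F_p}$ by the homogeneous ideal $I^{v'}_{w'} := \ker(R^v_w(\tJ)_{\F_p} \twoheadrightarrow R^{v'}_{w'}(\tJ)_{\F_p})$, the only task left is to descend $\phi$ across $\mathrm{Proj}$.

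The plan is to invoke Theorem~\ref{F-rel} in the descent direction (the standard complement to the direction actually stated there; cf.\ \cite[\S1.4]{BK05}): a $P_{\tJ,+}$-graded Frobenius splitting of $R^v_w(\tJ)_{\F_p}$ satisfying $\phi(R^v_w(\tJ, p\la)) \subset R^v_w(\tJ, \la)$ for every $\la \in P_{\tJ,+}$ sheafifies, via localization at the homogeneous prime ideals avoiding the irrelevant locus, to a Frobenius splitting of $\mathrm{Proj}\, R^v_w(\tJ)_{\F_p}$. Under this sheafification, the condition $\phi(I^{v'}_{w'}) \subset I^{v'}_{w'}$ granted by Lemma~\ref{Q-split} translates directly into compatibility of the resulting splitting with the closed subscheme $\sQ'_\tJ(v',w')_{\F_p}$.

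The single verification required is that $\phi$ respects the $P_{\tJ,+}$-grading in the Frobenius-twisted sense above. This is automatic: the splitting on $R^+(\tJ)_{\F_p}$ produced in Theorem~\ref{F-uniq} is $\bI$-canonical, in particular $H$-fixed, and therefore sends each homogeneous component $R^+(\tJ, p\la)$ into $R^+(\tJ, \la)$; this property survives passage to the quotient $R^v_w(\tJ)_{\F_p}$, since the defining ideals of the two kernels in the construction of $R^v_w(\tJ)$ are themselves $H$-stable. I do not anticipate any serious obstacle: the representation-theoretic and ring-theoretic work has already been carried out in Theorem~\ref{F-uniq} and Lemma~\ref{Q-split}, and the only remaining step is the routine transfer of a graded compatibly split structure from a multi-section ring to its $\mathrm{Proj}$.
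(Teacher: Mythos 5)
Your proposal is correct and takes essentially the same approach as the paper, whose entire proof reads ``Apply Theorem~\ref{F-rel} to Lemma~\ref{Q-split}.'' You merely spell out the descent direction of Theorem~\ref{F-rel} and the graded verification ($H$-fixedness of the canonical splitting implies $\phi(R^v_w(\tJ,p\la))\subset R^v_w(\tJ,\la)$), which the paper leaves implicit.
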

\begin{proof}
Apply Theorem \ref{F-rel} to Lemma \ref{Q-split}.
\end{proof}

\begin{rem}
Unlike the case of Corollary \ref{FQ-uniq}, the space $\mathrm{Spec} \, R ^v _w ( \tJ )_{\F_p}$ is not irreducible in general. In fact, we discard some of the irreducible components of $\mathrm{Spec} \, R ^v _w ( \tJ )_{\F_p}$ from Lemma \ref{Q-split} to Corollary \ref{Q'-cFS}.
\end{rem}

\begin{cor}\label{Q'red}
The ring $R^v_w ( \tJ )$ is reduced. In particular, the scheme $\sQ'_{\tJ} ( v, w )$ is reduced.
\end{cor}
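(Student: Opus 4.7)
The plan is to combine two ingredients that are already in place: (i) by Lemma~\ref{Q-flat}, the graded ring $R^v_w(\tJ)$ is in fact a \emph{free} $\Z$-module (its $\Z$-basis is obtained as the subset of the dual basis to $\bigsqcup_{\la \in P_{\tJ,+}} \bB(\bX(\la))$ complementary to the basis elements spanning the two kernels that define the quotient); (ii) by Lemma~\ref{Q-split}, for each prime $p$ the ring $R^v_w(\tJ)_{\F_p}$ carries a Frobenius splitting $\phi_p$, and any Frobenius-split ring is reduced: from the defining identity $\phi_p(r^p) = \phi_p(r^p\cdot 1) = r\,\phi_p(1) = r$ for all $r$, one sees $r^p = 0 \Rightarrow r = 0$, and iterating gives reducedness.

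Given these, the proof is short. Let $f \in R^v_w(\tJ)$ be nilpotent. For every prime $p$, the image of $f$ in $R^v_w(\tJ)_{\F_p}$ is nilpotent, hence zero by (ii). Thus $f \in pR^v_w(\tJ)$ for every prime $p$. Since $R^v_w(\tJ)$ is a free $\Z$-module by (i), we have $\bigcap_p pR^v_w(\tJ) = 0$, and therefore $f = 0$. This shows that the graded ring $R^v_w(\tJ)$ itself is reduced, whence its $\mathrm{Proj}$, namely $\sQ'_{\tJ}(v,w)$, is reduced as well.

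I do not expect any real obstacle: the substantive content has been carried out in Lemmas~\ref{Q-flat} and~\ref{Q-split}, and the corollary is simply the arithmetic interpolation between $\Z$-freeness of the homogeneous coordinate ring and the reducedness of all of its mod-$p$ reductions.
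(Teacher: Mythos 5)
Your proof is correct and follows essentially the same route as the paper: the paper likewise reduces to showing $R^v_w(\tJ)$ is reduced by combining the $\Z$-freeness from Lemma~\ref{Q-flat} (so a nonzero element survives reduction mod $p$ for all but finitely many primes) with the reducedness of $R^v_w(\tJ)_{\F_p}$ afforded by the Frobenius splitting in Lemma~\ref{Q-split}. Your contrapositive phrasing and the explicit recollection that a Frobenius-split ring is reduced just spell out details the paper leaves implicit.
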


\begin{proof}
By Lemma \ref{Q-flat}, every non-zero element of $R^v_w ( \tJ )$ is annihilated by reduction mod $p$ for only finitely many primes. Now it remains to apply Lemma \ref{Q-split}.
\end{proof}

\subsection{Modular interpretation of $\bQ_{G,\tJ}^{\ra}$}\label{sec:mi}

We have an identification
$$W_\af \cong N_{G (\!(z)\!)} ( H ( \bK ) ) / H ( \bK )$$
regardless of the (algebraically closed) base field $\bK$. We denote a lift of $w \in W_\af$ in $N_{G (\!(z)\!)} ( H ( \bK ) ) / H ( \bK )$ by $\dot{w}$.

\begin{lem}\label{bQ-dense}
For each $w \in W$ and $\tJ \subset \tI$, the scheme $\bQ_{G,\tJ} ( w )_\bK$ contains an affine Zariski open $\bI_\bK$-orbit $\bO ( \tJ, w )_\bK$ that is isomorphic to
$$\bI_\bK / \left( H_\bK \cdot ( \mathrm{Ad} ( \dot{w}\dot{w}_0 ) ( [P (\tJ),P(\tJ)] (\!(z)\!) ) \cap \bI_\bK)\right)$$
as a scheme over $\bK$. $($By abuse of notation, here we identify the set of $\bK$-valued points $( \mathrm{Ad} ( \dot{w}\dot{w}_0 ) ( [P (\tJ),P(\tJ)] (\!(z)\!) ) \cap \bI_\bK)$ with its Zariski closure in $\bI_\bK)$. It is an open neighbourhood of the unique $(H \times \Gm)_\bK$-fixed point of $\bO ( \tJ, w )_\bK$.
\end{lem}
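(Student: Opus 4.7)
The plan is to identify a distinguished $(H\times\Gm)_\bK$-fixed point $p_w\in\bQ_{G,\tJ}(w)_\bK$, cut out an affine Zariski open $\bO(\tJ,w)_\bK\ni p_w$ directly from the multi-Proj description via the $\bI$-cocyclic sections $\bv^\vee_{ww_0\varpi_i}$, and identify the $\bI$-orbit through $p_w$ with $\bO(\tJ,w)_\bK$. Take $p_w\in\bQ_{G,\tJ}(w)_\bK=\mathrm{Proj}\,R_w(\tJ)_\bK\subset\prod_{i\in\tI\setminus\tJ}\bP(\bW_{ww_0}(\varpi_i)_\bK)$ to be the closed point whose $i$-th coordinate is the line $\bK\bv_{ww_0\varpi_i}$. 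These lines are compatible with the multi-Proj equations by Corollary \ref{WXcomm-inj}: the $(H\times\Gm)_\bK$-weight $ww_0(\la+\mu)$ subspace of $\bW_{ww_0}(\la)\otimes\bW_{ww_0}(\mu)$ is one-dimensional and spanned by $\bv_{ww_0\la}\otimes\bv_{ww_0\mu}$, which must then coincide with the image of $\bv_{ww_0(\la+\mu)}$ up to scalar. Since each $\bv_{ww_0\varpi_i}$ is an $(H\times\Gm)_\bK$-eigenvector, $p_w$ is $(H\times\Gm)_\bK$-fixed.

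All weights of $\bW_{ww_0}(\varpi_i)_\bK=U^+_\bK\bv_{ww_0\varpi_i}$ lie in $ww_0\varpi_i+\sum_{j\in\tI_\af}\Z_{\ge 0}\alpha_j$, so the $P^\af$-weight vector $\bv^\vee_{ww_0\varpi_i}$ from \eqref{vvee} is the top weight vector of $\bW_{ww_0}(\varpi_i)^\vee_\bK$ and is annihilated by the action of every positive affine root vector. Hence $\bv^\vee_{ww_0\varpi_i}$ spans an $\bI$-semi-invariant line, and the common non-vanishing locus
\[
\bO(\tJ,w)_\bK:=\mathrm{Spec}\,\Bigl(R_w(\tJ)_\bK\bigl[(\bv^\vee_{ww_0\varpi_i})^{-1}:i\in\tI\setminus\tJ\bigr]\Bigr)_{0}
\]
is an $\bI$-stable affine Zariski open of $\bQ_{G,\tJ}(w)_\bK$ containing $p_w$. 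The same weight count shows that the $(H\times\Gm)_\bK$-weights of the coordinate ring of $\bO(\tJ,w)_\bK$ lie in $-\sum_j\Z_{\ge 0}\alpha_j$, so $p_w$ is the unique $(H\times\Gm)_\bK$-fixed point of $\bO(\tJ,w)_\bK$ and the $(H\times\Gm)_\bK$-action contracts $\bO(\tJ,w)_\bK$ onto $p_w$.

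An element $g\in\bI$ fixes $p_w$ iff it stabilizes every line $\bK\bv_{ww_0\varpi_i}$. With the normalization $\bv_{ww_0\varpi_i}=\dot w\dot w_0\cdot\bv_{\varpi_i}$ in $\bX(\varpi_i)_\bK$, together with the fact that $\{\varpi_i\}_{i\in\tI\setminus\tJ}$ generates the character lattice $P_\tJ$ of $P(\tJ)$ and the extremal-weight identity $\gn\otimes\bK(\!(z)\!)\cdot\bv_{\varpi_i}=0$, one identifies the common projective stabilizer of $\{\bK\bv_{\varpi_i}\}_i$ in $G(\!(z)\!)$ as $H\cdot[P(\tJ),P(\tJ)](\!(z)\!)$: the characters come from $H$ and the derived subgroup of the parabolic acts pointwise trivially. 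Conjugation by $\dot w\dot w_0$ and intersection with $\bI$ produce the asserted stabilizer. It remains to verify that the $\bI$-equivariant orbit map $\bar\mu:\bI/\mathrm{Stab}_\bI(p_w)\to\bO(\tJ,w)_\bK$ is a scheme isomorphism: set-theoretic surjectivity follows from the attracting $(H\times\Gm)_\bK$-flow, since every non-empty $\bI$-stable closed subscheme of $\bO(\tJ,w)_\bK$ must contain $p_w$; meanwhile, a tangent-space calculation at $p_w$ comparing $(H\times\Gm)_\bK$-weights of $T_{p_w}\bO(\tJ,w)_\bK$ with those of the affine root subgroups of $\bI$ outside $\mathrm{Stab}_\bI(p_w)$ exhibits $\bar\mu$ as \'etale at $p_w$, and hence along its entire image. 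Reducedness of $\bQ_{G,\tJ}(w)_\bK$ from Corollary \ref{FQ-wn} then promotes $\bar\mu$ to an isomorphism.

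The main obstacle is this final promotion: in arbitrary characteristic, transforming the attracting-flow argument and the tangent-space weight match into a genuine scheme isomorphism rests on both the explicit multi-Proj weight computation and on the reducedness of Schubert varieties established in \S\ref{subsec:fsQ}, neither of which is formal.
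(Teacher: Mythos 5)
Your first two paragraphs recover the paper's opening moves: you identify the distinguished $(H\times\Gm)_\bK$-fixed point $p_w$ by the extremal weight lines, observe compatibility via Corollary \ref{WXcomm-inj}, and pass to the affine open $\bO(\tJ,w)_\bK$ by inverting the $\bI$-cocyclic sections $\bv^\vee_{ww_0\varpi_i}$ (this is exactly the ring $\Z[w]$ in the paper's proof). The computation of the stabilizer of $p_w$ is also fine.

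The real gap is precisely where you flag it, but it is deeper than you suggest. Your ``tangent-space calculation at $p_w$ exhibiting $\bar\mu$ as \'etale'' is not a calculation that can be read off from what precedes: the content of that step \emph{is} the content of the lemma. The weights and multiplicities of $\mathfrak m_{p_w}/\mathfrak m_{p_w}^2$ are governed by the structure of the Weyl modules $\bW_{ww_0}(\la)_\bK$, and knowing that they match the affine roots of $\bI$ outside the stabilizer is exactly the assertion that $\bK[w]$ is the coordinate ring of a group quotient. The paper does not obtain this by an \'etale argument at all; it establishes it over $\C$ by quoting the Chari--Fourier--Khandai description of the coordinate ring of the global Weyl module (\cite[Proposition 3.3]{CFK}), then runs a careful $\Z$-integral argument: it constructs group schemes $\mathbf L_\Z$, $\mathbf R_\Z$ flat over $\Z$ via the real PBW generators and the imaginary PBW generators $\widetilde P_{i,m\delta}$ from \cite{BN04}, proves by induction on the rank that $\Z[\bI^1]\cong\Z[\mathbf L]\otimes_\Z\Z[\mathbf R]$, and only then specializes to an arbitrary $\bK$. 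In positive characteristic there is no a priori reason that the (reduction mod $p$ of the) tangent space at $p_w$ has the expected multiplicities; that is what the $\Z$-flatness buys. Your proposal neither cites CFK (or any substitute) nor engages with the integral structure, so the step is not merely incompletely written---it is missing the non-formal input.

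Two smaller issues: (i) the schemes in play are of infinite type, so ``\'etale at $p_w$'' does not carry the usual openness-and-isomorphism package without further justification---another reason the paper works directly with explicit pro-coordinate rings rather than via a smoothness criterion; and (ii) your surjectivity argument as stated is circular: the attracting-flow argument shows that a nonempty \emph{closed} $\bI$-stable subset must meet $p_w$, but to conclude that $\bI p_w$ exhausts $\bO(\tJ,w)_\bK$ you first need the orbit to be open, which again hinges on the \'etale/coordinate-ring computation you have not supplied. Reducedness from Corollary \ref{FQ-wn} does not fill either hole.
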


\begin{proof}
Recall that $\bv_{ww_0\la}^{\vee} \cdot \bv_{ww_0\mu}^{\vee} = \bv_{ww_0(\la + \mu)}^{\vee}$ for each $\la, \mu \in P_+$. Since $\bW_{ww_0} ( \la )$ is compatible with the positive global basis, the ring
\begin{equation}
\Z [w] := \sum_{\la \in P_{\tJ,+}} ( \bv_{ww_0\la}^{\vee} )^{-1} \bW_{ww_0} ( \la )^{\vee} _\Z \subset ( U _\Z^+ )^{\vee}\label{emb-coord}
\end{equation}
admits its dual basis. By construction, $\Z [w]$ is the coordinate ring of an affine Zariski open set of $\bQ _G ( w )_\Z$. In addition, it inherits a natural $P^\af$-grading from $R_{w} ( \tJ )$. Therefore, $\Z [w]$ defines an open neighbourhood of a $(H \times \Gm)$-fixed point of $\bQ_{G,\tJ} ( w )$ obtained by the linear forms $\{\bv_{ww_0\la}\}_\la$. We set $\C [w] := \C \otimes_\Z \Z [w]$ and $\bK [w] := \bK \otimes_\Z \Z [w]$.

The defining relation of $( \C [e] )^{\vee}$ in terms of the $U_\C^+$-action is
$$
U_\C^+ ( \mathrm{Ad} ( \dot{w}_0 ) ( [\mathfrak p ( \tJ), \mathfrak p ( \tJ)] ) \otimes \C [z] \cap \gI ).
$$
by (the limit of) Chari-Fourier-Khandai \cite[Proposition 3.3]{CFK}. Since $\bW _{uw_0} ( \la )_\Z \subset \bW _{w_0} ( \la )_\Z$ for each $u \in W$, the defining relation of $( \C [u] )^{\vee}$ in terms of the $U_\C^+$-action is
$$U_\C^+ ( \mathrm{Ad} ( \dot{u} \dot{w}_0 ) ( [\mathfrak p ( \tJ ), \mathfrak p ( \tJ )] ) \otimes \C [z] \cap \gI )$$
by applying the action of $\dot{u} \in N_G ( H )_\C$ that lifts $u \in W$. In particular, we have
\begin{equation}
\mathrm{Spec} \, \C [w] \cong \bI_\C / \left( H_\C \cdot \mathrm{Ad} ( \dot{w} \dot{w}_0 ) ( [P ( \tJ )_\C, P ( \tJ )_\C][\![z]\!] ) \cap \bI_\C \right)\label{quotC}
\end{equation}
as schemes over $\C$. We put $\bI_\Z ^1 := [ \bI_\Z, \bI_\Z]$. Let $\mathbf L_{\C}$ and $\mathbf R_{\C}$ be pro-unipotent subgroups of $\bI_\C ^1$ whose closed points are 
$$\mathrm{Specm} \, \C [w] \hskip 5mm \text{and} \hskip 5mm \mathrm{Ad} ( \dot{w} \dot{w}_0 ) ( [P ( \tJ )_\C, P ( \tJ )_\C][\![z]\!] ) \cap \bI^1_\C,$$
respectively and they are stable by the natural $( H \times \Gm )_\C$-action on $\bI_\C$. The isomorphism (\ref{quotC}) gives rise to an isomorphism
\begin{equation}
m_\C : \mathbf L_{\C} \times \mathbf R_{\C} \stackrel{\cong}{\longrightarrow} \bI_\C ^1\label{multC}
\end{equation}
of schemes over $\C$, where $m_\C$ is the multiplication map as all the groups are pro-unipotent and the isomorphism between the sets of closed points is compatible with truncations by $P^\af$-weight considerations. The Hopf algebra $\Z [\bI ^1_\Z] := ( U ^+_\Z )^{\vee}$ is the coordinate ring of $\bI^1_{\Z}$ (cf. \cite[Theorem 1.3]{Kat18b}). By sending $\Z [\bI ^1]$ by the restriction morphisms $\C [\bI ^1_{\C}] \to \C [\mathbf L_{\C}]$ and $\C [\bI ^1_{\C}] \to \C [\mathbf R_{\C}]$, we have the corresponding group schemes $\mathbf L_{\Z}$ and $\mathbf R_{\Z}$ over $\Z$ (we denote their coordinate rings as $\Z [\mathbf L]$ and $\Z [\mathbf R]$, respectively). For each real root $\alpha \in \Delta_{\af, +}$, one of the restricted dual rings $\C [\mathbf L_{\C}]^{\vee}$ nor $\C [\mathbf R_{\C}]^{\vee}$ contains the image of a primitive element of $U^+_\Z$ with $\wth$-weight $\al$ (obtained by conjugations of $\{E_i\}_{i \in \tI_\af}$'s cf. \cite[Proposition 40.1.3]{Lus93} or \cite[Lemma 6.6]{Gar78}), and hence the corresponding one-parameter subgroup lands in either of $\mathbf L_{\Z}$ or $\mathbf R_{\Z}$. In view of \cite[Theorem 3.13]{BN04} (or \cite[Theorem 5.8]{Gar78}), they generates a closed normal subgroup scheme $\mathbf N_\Z^-$ of $\mathbf L_\Z$ that is a projective limit of extensions of $\Ga$ over $\Z$. (The same is true and exhaust the whole $\mathbf R_{\Z}$ if $\tJ = \emptyset$.)

We examine the action of the imaginary PBW generators $\{ \tilde{P}_{i,m\delta}\}_{i \in \tI \setminus \tJ, m > 0}$ (weight of $\tilde{P}_{i,m\delta}$ is $m \delta$) in \cite[(3.7)]{BN04}. By applying them on the direct sum of $P$-weight $\la$-parts of $\bX ( \la )_\C$ for all $\la \in P_{\tJ, +}$, we obtain a quotient group scheme $\mathbf L_{\Z} \to \mathbf T_{\Z}$ whose kernel is $\mathbf N_\Z^-$. By \cite[Proposition 3.22]{BN04}, the group scheme $\Spec \, \Z [\tilde{P}_{i,m\delta}\mid m > 1]^{\vee}$ for each $i \in \tI$ is isomorphic to a projective limit of extensions of $\Ga$ (given by truncations with respect to the duals of $\{ \tilde{P}_{i,m\delta} \}_{m > N}$ for $N \in \Z_{> 0}$), which is flat over $\Z$. Thus, so is $\mathbf T_{\Z}$.

From these, we deduce that $\mathbf L_{\Z} \subset \bI_\Z^1$ defines a group subscheme over $\Z$ such that $\Z [\bI_\Z^1] \rightarrow \Z [\mathbf L]$ splits as $\Z$-modules. The $P$-weight decomposition of the coordinate ring defines a splitting $\mathbf T_{\Z} \subset \mathbf L_{\Z} \subset \bI^1_{\Z}$. Since $[L ( \tJ ), L ( \tJ )]$ defines a product of connected and simply connected simple algebraic group, its (pro-unipotent part of the) Iwahori subgroup (we take the product over simple factors) defines a subgroup scheme $\mathbf R^0_\Z \subset \mathbf R_{\Z}$ such that
\begin{equation}
\mathbf R^0_\Z \cap \mathbf T_\Z = \Spec \, \Z\label{RT-int}
\end{equation} (scheme-theoretically). Since $[L ( \tJ ), L ( \tJ )] \subset G$ defines a closed group subscheme over $\Z$ (thanks to the fact that the corresponding modified enveloping algebras share the same $\Z$-basis by \cite{Kas91,Kas94,Lus09}), we deduce that the map $\Z [\bI_\Z^1] \rightarrow \Z [\mathbf R^0_\Z]$ must be surjective. The one-parameter subgroups corresponding to the unipotent radical of $P ( \tJ )$ generates a subgroup $\mathbf R^+_{\Z} \subset \mathbf R_{\Z} \subset \bI_\Z^1$ such that $\Z [\bI_\Z^1] \rightarrow \Z [\mathbf R^+_\Z]$ is surjective. In view of the $P$-weight decomposition, we deduce that $\mathbf R^0_\Z \times \mathbf R^+_{\Z} \cong \mathbf R_{\Z}$. From (\ref{RT-int}), we derive $\Z [\mathbf L] \cap \Z [\mathbf R] = \Z$. Therefore, we have a surjection
$$m^*_\Z : \Z [\bI^1] \longrightarrow\!\!\!\!\!\rightarrow \Z [\mathbf L] \otimes_\Z \Z[\mathbf R].$$
The ideal $\ker \, m^*_\Z$ must be flat over $\Z$, and hence $\ker \, m^*_\C \neq 0$ if $\ker \, m^*_\Z \neq 0$. We know that $m^*_\C$ must be an isomorphism by (\ref{multC}). This is a contradiction, and hence $m^*_\Z$ must be an isomorphism (cf. \cite[Theorem 5.8]{Gar78}).

In view of the flatness of the group schemes over $\Z$ afforded by the above, we conclude that
$$\mathbf R_{\bK} \subset \mathrm{Ad} ( \dot{w} \dot{w}_0 ) ( [P ( \tJ ), P ( \tJ )][\![z]\!] ) \cap \bI^1_{\bK}$$
is in fact an isomorphism as schemes. By construction, $\Z [w]$ is precisely the subring of $\Z [\bI^1_\bK]$ that is invariant under the $\mathbf R_{\Z}$-action. It follows that the image of the composition map
$$\Z [w] \hookrightarrow \Z [\bI^1] \stackrel{\cong}{\longrightarrow} \Z [ \mathbf L  ] \otimes \Z [ \mathbf R  ]$$
is equal to $\Z [ \mathbf L  ] \otimes_\Z \Z$. Hence, we have
$$\Spec \, \bK [w] = \mathbf L_{\bK} \cong \bI_{\bK} / \left( H_{\bK} \cdot \mathrm{Ad} ( \dot{w} \dot{w}_0 ) ( [P ( \tJ ), P ( \tJ )][\![z]\!] ) \cap \bI_{\bK} \right).$$

Therefore, each $\bQ_G ( w )_\bK$ contains $\mathrm{Spec} \, \bK [w]$ as a Zariski open $\bI$-orbit, and it admits a unique $(H \times \Gm)_\bK$-fixed point as required.
\end{proof}

\begin{cor}\label{Re-int}
The ring $R ( \tJ )_\bK$ is integral for each $\tJ \subset \tI$.
\end{cor}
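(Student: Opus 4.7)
The plan is to embed $R(\tJ)_\bK$ into an explicit integral domain, namely a Laurent polynomial ring over the subring $\bK[e]$ from Lemma \ref{bQ-dense}. Since that lemma identifies $\bK[e]$ with the coordinate ring of the pro-unipotent group scheme $\mathbf L_\bK$, it is automatically an integral domain, so the content of the proof lies in producing such a Laurent extension; this reduces to verifying that each $\bv_{w_0\varpi_i}^{\vee} \in R(\tJ)_\bK(\varpi_i)$ (for $i \in \tI \setminus \tJ$) is a non-zerodivisor, so that inverting them yields an injective localization.

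First I would prove the non-zerodivisor claim. Let $f \in R(\tJ)_\bK(\la) = \bW_{w_0}(\la)^{\vee}_\bK$ with $f \cdot \bv_{w_0\varpi_i}^{\vee} = 0$. By Corollary \ref{WXcomm-inj}, this product equals the restriction of $f \otimes \bv_{w_0\varpi_i}^{\vee}$ along the inclusion $\bW_{w_0}(\la + \varpi_i)_\bK \hookrightarrow \bW_{w_0}(\la)_\bK \otimes_\bK \bW_{w_0}(\varpi_i)_\bK$, and the image of this inclusion is the $U^+_\bK$-orbit of $\bv_{w_0\la} \otimes \bv_{w_0\varpi_i}$ by Theorem \ref{qwc}. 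For $X \in U^+_\bK$, expanding via the coproduct $\Delta X = \sum X_{(1)} \otimes X_{(2)}$ gives
\[
(f \otimes \bv_{w_0\varpi_i}^{\vee})\bigl( X \cdot ( \bv_{w_0\la} \otimes \bv_{w_0\varpi_i} ) \bigr) = \sum f( X_{(1)} \bv_{w_0\la} ) \cdot \bv_{w_0\varpi_i}^{\vee}( X_{(2)} \bv_{w_0\varpi_i} ).
\]
The algebra $U^+_\bK$ is generated by the positive-weight divided powers $\{E_j^{(n)}\}_{j \in \tI_\af,\, n \geq 1}$, so in each term of $\Delta X$ one has either $X_{(2)} = 1$ or $X_{(2)}$ of strictly positive weight; since $\bv_{w_0\varpi_i}^{\vee}$ vanishes off the one-dimensional weight-$w_0\varpi_i$ subspace of $\bW_{w_0}(\varpi_i)_\bK$, only the term with $X_{(2)} = 1$ (hence $X_{(1)} = X$) survives. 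The vanishing assumption thus becomes $f( X \bv_{w_0\la} ) = 0$ for every $X \in U^+_\bK$, and the $U^+_\bK$-cyclicity of $\bv_{w_0\la}$ inside $\bW_{w_0}(\la)_\bK$ forces $f = 0$.

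Once $\{\bv_{w_0\varpi_i}^{\vee}\}_{i \in \tI \setminus \tJ}$ are known to be non-zerodivisors, the localization $L := R(\tJ)_\bK \bigl[ (\bv_{w_0\varpi_i}^{\vee})^{-1} : i \in \tI \setminus \tJ \bigr]$ yields an injection $R(\tJ)_\bK \hookrightarrow L$. Using $\bv_{w_0\la}^{\vee} \cdot \bv_{w_0\mu}^{\vee} = \bv_{w_0(\la + \mu)}^{\vee}$, the $P_{\tJ,+}$-grading extends to a $P_\tJ$-grading on $L$ with $L_\mu = L_0 \cdot \bv_{w_0\mu}^{\vee}$, so $L$ is isomorphic to the Laurent polynomial ring $L_0[t_i^{\pm 1}]_{i \in \tI \setminus \tJ}$ over its degree-zero piece $L_0$. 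This $L_0$ is precisely the ring $\bK[e] = \sum_{\la \in P_{\tJ,+}} (\bv_{w_0\la}^{\vee})^{-1} \bW_{w_0}(\la)^{\vee}_\bK$ of Lemma \ref{bQ-dense}, i.e., the coordinate ring of the integral pro-unipotent scheme $\mathbf L_\bK$. Hence $L$ is a Laurent extension of a domain, itself a domain, and so is its subring $R(\tJ)_\bK$. I expect the main obstacle to be Step~1: one must correctly identify the multiplication of $R(\tJ)_\bK$ furnished by Corollary \ref{WXcomm-inj} with the pull-back-of-tensor-products picture and carefully track the specialization of the relevant $\Z$-forms to $\bK$, so that the coproduct/weight argument remains valid in every characteristic.
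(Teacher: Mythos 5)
Your proof is correct, and in my reading it is more careful than the paper's at the crucial step. The paper also argues by embedding into the coordinate ring $\bK[e]$ of the big cell from Lemma~\ref{bQ-dense} (after reducing to $\tJ = \emptyset$), but it simply asserts the inclusion $R_e \hookrightarrow \Z[e]$; a priori the de-homogenization map $f \mapsto (\bv_{w_0\la}^{\vee})^{-1}f$ collapses the $P_+$-grading, and its injectivity amounts to density of $\bO(e)$ in $\bQ_G(e)$, which is not obviously independent of the very integrality being proved. Your coproduct/weight computation showing that each $\bv_{w_0\varpi_i}^{\vee}$ is a non-zerodivisor in $R(\tJ)_\bK$ is exactly the ingredient that breaks this circularity (the key input being that the $P^{\af}$-weight $w_0\varpi_i$-space of $\bW_{w_0}(\varpi_i)_\bK$ is one-dimensional, so only the $X_{(2)}=1$ summand of $\Delta X$ contributes); the localization $L$ is then an injective extension, and its $P_\tJ$-grading identifies it with the Laurent polynomial ring $\bK[e][t_i^{\pm 1}]$, which is a domain because $\bK[e]$ is. This simultaneously certifies the injectivity the paper takes for granted and avoids the separate reduction to $\tJ = \emptyset$. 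Two small remarks: the cyclicity of $\bW_{w_0}(\la + \varpi_i)$ inside the tensor product should cite the definition $\bW_w(\la)_\Z := U^+_\Z \bv_{w\la}$ together with Corollary~\ref{WXcomm-inj} (which ensures $\bv_{w_0(\la + \varpi_i)} \mapsto \bv_{w_0\la}\otimes\bv_{w_0\varpi_i}$), rather than Theorem~\ref{qwc}; and you should say explicitly that for $\mu \in P_\tJ$ not in $P_{\tJ,+}$ the element $\bv_{w_0\mu}^{\vee}$ is defined as a quotient inside the localization.
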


\begin{rem}
We refer Proposition \ref{R-normal} for general case of Corollary \ref{Re-int}.
\end{rem}

\begin{proof}[Proof of Corollary \ref{Re-int}]
It suffices to prove the case $\tJ = \emptyset$ since a subring of an integral ring is integral. We borrow notation from the proof of Lemma \ref{bQ-dense}. We have an inclusion
$$R _e = \bigoplus_{\la \in P_+} \bW ( \la )_{\Z}^{\vee} \hookrightarrow \Z [e] \otimes \bigoplus_{\la \in P_+} \Z \bv_{w_0 \la}^{\vee}$$
as the Rees construction of (\ref{emb-coord}). Taking tensor product $\bK \otimes_\Z \bullet$ preserves the inclusion in view of the definition of $\Z [e]$. Hence, $( R_e )_{\bK}$ is integral as a subring of an integral ring.
\end{proof}

\begin{prop}\label{wbQ-mod}
Let $\tJ \subset \tI$. The indscheme $( \bQ_{G,\tJ} ^{\ra} )_\bK$ admits a $G (\!(z)\!)$-action. We have a subset $\bQ_\bK'$ of the set of $\bK$-valued points of $( \bQ_{G,\tJ} ^{\ra} )_\bK$ that is in bijection with
$$G (\!(z)\!) / \left( H ( \bK ) \cdot [P ( \tJ ), P ( \tJ )] (\!(z)\!) \right).$$
\end{prop}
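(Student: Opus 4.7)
\medskip

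\noindent\textbf{Proof plan.} The proposition has two assertions, and I would treat them sequentially.

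\emph{Step 1: constructing the $G(\!(z)\!)$-action.} By construction, the ring $R^+(\tJ)_\bK$ carries a natural $\bI$-action (as it is a subring of $\widetilde{R}_\bK = \bigoplus_{\la \in P_+} \bX(\la)_\bK^\vee$, dual to a $\dot{U}^{\ge 0}_\bK$-module compatible with translations via Corollary \ref{transR}), so $\bI$ acts on $(\bQ_{G,\tJ}^{\mathrm{rat}})_\bK$ by transport of structure. To extend this to a $G(\!(z)\!)$-action in the sense of \S\ref{subsec:groups}, I would construct an $\mathop{SL}(2,i)$-action for each $i \in \tI_\af$ that agrees with the $\bI$-action on $B_i$. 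The key input is that $\bX(\la)_\bK$ is $\dot{U}^0_\bK$-integrable with $P^\af$-weight decomposition (Theorems \ref{X-compat} and \ref{b-compat}), so on each weight space the operators $E_i^{(m)}, F_i^{(m)}$ act nilpotently and their formal exponentials define a $\mathop{SL}(2,i)$-action on $\widetilde{R}_\bK$. This action preserves $R^+(\tJ)_\bK$ because the embedding $R^+(\tJ)_\bK \hookrightarrow \widetilde{R}_\bK$ is $\dot{U}_\bK$-equivariant by Corollary \ref{WXcomm-inj}. Compatibility with the $\bI$-action on $B_i$ is automatic from $B_i \subset \bI$ acting through the same $\dot{U}^{\ge 0}_\bK$-action. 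Finally, closedness under the abstract group $G(\!(z)\!)$-action on closed points follows since the set-theoretic action on each ind-piece can be described through the actions on extremal weight vectors.

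\emph{Step 2: identifying $\bQ'_\bK$ with the coset space.} Let $p_w \in (\bQ_{G,\tJ}^{\mathrm{rat}})_\bK$ denote the unique $(H \times \Gm)_\bK$-fixed point of $\bQ_{G,\tJ}(w)_\bK$ (whose existence comes from Lemma \ref{bQ-dense}; for $w \in W_\af \setminus W$ we first apply the translation Lemma \ref{trans} / Corollary \ref{transR} to reduce to an ind-piece). I would set
\[
\bQ'_\bK := \bigcup_{w \in W_\af} \bI(\bK) \cdot p_w \subset (\bQ_{G,\tJ}^{\mathrm{rat}})_\bK(\bK),
\]
and define the map
\[
\Phi : G(\!(z)\!) / H(\bK) \cdot [P(\tJ),P(\tJ)](\!(z)\!) \longrightarrow \bQ'_\bK
\]
by sending $g \dot{w}$ (with $g \in \bI$ and $w \in W_\af$, using Iwahori--Bruhat) to $g \cdot p_w$. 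Surjectivity is built into the definition of $\bQ'_\bK$ together with the Iwahori--Bruhat decomposition of $G(\!(z)\!)$ over $\bK$. For well-definedness and injectivity, I would use Lemma \ref{bQ-dense}, which after translation by $\dot{w}\dot{w}_0$ identifies the stabilizer of $p_w$ in $\bI$ with $H(\bK)\cdot(\mathrm{Ad}(\dot{w}\dot{w}_0)([P(\tJ),P(\tJ)](\!(z)\!)) \cap \bI)$; this precisely matches the stabilizer in the target coset space. Different $p_w$'s lie in different $\bI$-orbits because their $(H\times\Gm)$-weights (read off from the unique weight line in each open cell) are distinct images of $W_\af$ in $P^\af / W_\tJ$-weights.

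\emph{Main obstacle.} The delicate point is verifying that the $\mathop{SL}(2,0)$-action (and, more generally, the actions coming from indices involving the loop parameter) genuinely extends across ind-pieces of $(\bQ_{G,\tJ}^{\mathrm{rat}})_\bK$ rather than merely acting on each $\bQ_{G,\tJ}(w)_\bK$ individually: a priori, $F_0^{(m)}$ moves sections out of a fixed $\bW_{ww_0}(\la)^\vee$. I expect to handle this by the combination of Corollary \ref{transR} (which identifies the ind-pieces $\bQ_{G,\tJ}(w)_\bK$ and $\bQ_{G,\tJ}(w t_\beta)_\bK$ up to grading twist, so that the union exhausts all translates under the affine direction) with the fact that only finitely many divided powers $F_0^{(m)}$ move a given $(H\times\Gm)$-weight space of $\bX(\la)^\vee$ into a prescribed $\bW_{v w_0}(\la)^\vee$. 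Granting this, the second obstacle---matching the stabilizer computation with the target coset---reduces cleanly to Lemma \ref{bQ-dense}, and the bijection $\Phi$ follows.
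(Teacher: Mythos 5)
Your broad strategy matches the paper's (construct $\mathop{SL}(2,i)$-actions agreeing with $\bI$ on $B_i$, then identify $\bK$-points with the coset via a decomposition of $G(\!(z)\!)$), and your Step 2 and the stabilizer matching via Lemma~\ref{bQ-dense} are essentially the argument the paper uses. However, there is a genuine gap in Step 1, and it is exactly where you flag the ``main obstacle.''

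The issue: you propose to build the $\mathop{SL}(2,i)$-actions on $\widetilde{R}_\bK$ or $R^+(\tJ)_\bK$ and descend. But $\widetilde{R}$ is explicitly \emph{not} a ring (the weight multiplicities in $\bX(\la)$ are unbounded), so it cannot be used as an ambient object defining an ind-scheme. And while $R^+(\tJ)_\bK$ is a ring and is $\dot{U}_\bK$-stable, $\mathrm{Proj}\,R^+(\tJ)_\bK$ is not shown to be $\bQ_{G,\tJ}^{\mathrm{rat}}$; the ind-scheme structure comes from the ind-pieces $\bQ_{G,\tJ}(w) = \mathrm{Proj}\,R_w(\tJ)$. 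So an action on $R^+(\tJ)_\bK$ does not automatically descend to the ind-scheme. Your proposed remedy --- Corollary~\ref{transR} together with a finiteness count on how far $F_0^{(m)}$ can move a weight space out of $\bW_{vw_0}(\la)^\vee$ --- does not directly give what you need either: a finiteness count does not by itself produce a compatible family of $\mathop{SL}(2,i)$-actions on the ind-pieces.

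The paper's handling is cleaner: observe that $(R_w)_\bK$ carries an $\mathop{SL}(2,i)$-action precisely when $s_i w \le_\si w$ (each $\bW_{ww_0}(\la)_\bK$ is then $\mathop{SL}(2,i)$-integrable, not just $B_i$-stable), and such $w$ form a cofinal subset of $(W_\af, \le_\si)$ for each fixed $i$. Since $\bQ_{G,\tJ}^{\mathrm{rat}} = \varinjlim_w \mathrm{Proj}\,R_w(\tJ)$, each $\bI(i)$ acts on the ind-limit. Crucially, the paper then invokes Kumar's system-of-groups machinery (\cite[Definition~5.1.6, Theorem~6.1.17]{Kum02}) to amalgamate the $\bI$- and $\bI(i)$-actions into a genuine $G(\!(z)\!)$-action as ind-schemes; you replace this with a one-sentence appeal to ``set-theoretic action through extremal weight vectors,'' which does not give the scheme-theoretic statement. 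Finally, a small terminological slip: the decomposition you want in Step~2 is the Iwasawa decomposition $G(\!(z)\!) = \bigsqcup_{w \in W_\af} \bI(\bK)\dot{w}\dot{w}_0 H(\bK) N(\!(z)\!)$, not Iwahori--Bruhat (which decomposes $G(\!(z)\!)/\bI$ rather than $G(\!(z)\!)/(H\cdot N(\!(z)\!))$).
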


\begin{proof}
In view of Lemma \ref{bQ-dense}, the proof for general $\tJ$ is completely parallel to the case of $\tJ = \emptyset$. Hence, we concentrate into the case $\tJ = \emptyset$ during this proof.

Let $w \in W_\af$ and $\beta \in Q^{\vee}$. We have an isomorphism $R_w \cong R_{w t_{\beta}}$ as rings with $U_\Z^+$-action by Corollary \ref{transR}.

In particular, we have an isomorphism $\bQ_G ( w )_\bK \cong \bQ_G ( w t_{\beta} )_\bK$ of schemes with $\bI$-actions for each $\beta \in Q^{\vee}$. This implies that $\bQ_G ( w t_{\beta} )_\bK$ has a Zariski open subset of the shape
\begin{equation}
\bI / \left( H_ \cdot ( \mathrm{Ad} ( \dot{w} \dot{t}_{\beta} \dot{w}_0 ) ( N (\!(z)\!) ) \cap \bI)\right) \cong \bI /  \left( H \cdot ( \mathrm{Ad} ( \dot{w}\dot{w}_0 ) ( N (\!(z)\!) ) \cap \bI)\right), \label{orbit-desc}
\end{equation}
where we used that $\mathrm{Ad} ( \dot{w}_0 )  ( N (\!(z)\!) )$ is invariant under the $\mathrm{Ad} ( \dot{t}_{\beta} )$-action.

The ring $( R_{w} )_{\bK}$ admits an action of $\SL ( 2, i )$ whenever $s_i w \le_\si w$ ($i \in \tI_\af$) since each $\bW_{ww_0} ( \la )_{\bK}$ ($\la \in P_+$) admits an action of $\SL ( 2, i )$ by Corollary \ref{SL2-stable}. Hence, $\mathrm{Proj} \, ( R_{w} )_{\bK}$ admits an action of $\SL ( 2, i )$ if $s_i w \le_\si w$. In particular, the pro-algebraic group $\bI ( i )$ acts on $\mathrm{Proj} \, ( R_{w} )_{\bK}$ if $s_i w \le_\si w$. Hence, the ind-limit
$$( \bQ_G^{\ra} )_\bK = \bigcup_{w \in W_\af} \bQ_G ( w )_{\bK} = \varinjlim_w \mathrm{Proj} \, ( R_{w} )_\bK$$
admits an action of $\bI ( i )$ for each $i \in \tI_\af$ (that coincide on $\bI$). By rank two calculations, they induce an action of $N_{G(\!(z)\!)} ( H ( \bK ) )$ on $( \bQ_G^{\ra} )_{\bK}$. The intersections of $N_{G(\!(z)\!)} ( H ( \bK ) )$ and $\bI ( \bK )$ or $\bI ( i ) ( \bK)$ ($i \in \tI_\af$) inside $G (\!(z)\!)$ define common actions on $( \bQ_G^{\ra} )_{\bK}$. The system of groups $(\bI (\bK), N_{G(\!(z)\!)} ( H ( \bK ) ), \bI(i) (\bK) ; i \in \tI_\af )$ (in the sense of \cite[Definition 5.1.6]{Kum02}) admits a map from the system of groups in \cite[\S 6.1.16]{Kum02} for $\tg$ (with exponential maps replaced by one-parameter subgroups). Hence, \cite[Theorem 6.1.17]{Kum02} asserts that $G(\!(z)\!)$ acts on $( \bQ_G^{\ra} )_{\bK}$.

The Bruhat decomposition of $\SL ( 2, i )$ asserts that $\bO ( w )_{\bK} \sqcup \bO ( s_i w )_{\bK}$ admits a $\SL ( 2, i )$-action. This induces an action of $\SL ( 2, i ) ( \bK )$ ($i \in \tI_\af$) on the union
$$\bQ_\bK' := \bigsqcup_{w \in W_\af} \bO ( w )_{\bK} ( \bK ) \subset \bQ_G^{\ra}( \bK ).$$
Taking into account the fact that each $\bO ( w )_{\bK}$ admits an $\bI$-action (Lemma \ref{bQ-dense}), we conclude that $\bQ_\bK'$ admits an action of $G (\!(z)\!)$ thanks to the Iwasawa decomposition (cf. \cite[Theorem 2.5]{IM65})
$$G (\!(z)\!) = \bigsqcup_{w \in W_\af} \bI ( \bK ) \dot{w} \dot{w}_0 H ( \bK ) \cdot N (\!(z)\!)$$
as required.
\end{proof}

For $\la,\mu \in P_+$, we have a unique injective $\dot{U}_\Z^0$-module map
$$V ( \la + \mu )_\Z \longrightarrow V ( \la )_\Z \otimes_\Z V ( \mu )_\Z$$
obtained by sending $\bv^0_{\la + \mu}$ to $\bv^0_\la \otimes \bv^0_{\mu}$, that is in fact a $\Z$-direct summand (Corollary \ref{L-dualmult}). By extending the scalar, we obtain a unique injective $(\dot{U}_\bK^0, \bK [\![z]\!])$-bimodule map
$$\eta_{\la, \mu} : V ( \la + \mu )_\bK \otimes \bK [\![z]\!] \longrightarrow \left( V ( \la )_\bK \otimes \bK [\![z]\!] \right) \otimes_{\bK[\![z]\!]} \left( V ( \mu )_\bK \otimes \bK [\![z]\!] \right).$$

\begin{lem}\label{eta-compat}
For each $\la, \mu, \gamma \in P_+$, we have
$$\eta_{\la + \mu, \gamma} \circ ( \mathrm{id} \boxtimes \eta_{\la, \mu} ) = \eta_{\la, \mu + \gamma} \circ ( \mathrm{id} \boxtimes \eta_{\mu, \gamma}).$$
\end{lem}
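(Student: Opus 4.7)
The plan is to observe that both sides are $(\dot{U}^0_\bK, \bK[\![z]\!])$-bimodule maps
$$V(\la+\mu+\gamma)_\bK \otimes \bK[\![z]\!] \longrightarrow V(\la)_\bK \otimes V(\mu)_\bK \otimes V(\gamma)_\bK \otimes \bK[\![z]\!],$$
and then to appeal to a uniqueness-up-to-scalar argument to conclude they coincide once the images of a cyclic vector match. The map $\eta_{\la,\mu}$ is characterized up to scalar as a bimodule map into the target, and is normalized by Corollary \ref{L-dualmult} (2) so that it sends the cyclic $\dot{U}^0_\Z$-eigenvector $\bv^0_{\la+\mu}$ to $\bv^0_\la \otimes \bv^0_\mu$ (this is the only normalization consistent with the embedding coming from the specialization of the quantum coproduct, since the $P$-weight $(\la+\mu)$-space of $V(\la)_\bK \otimes V(\mu)_\bK$ is one-dimensional, spanned by $\bv^0_\la \otimes \bv^0_\mu$).

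First, I would note that $V(\la+\mu+\gamma)_\bK \otimes \bK[\![z]\!]$ is generated by $\bv^0_{\la+\mu+\gamma} \otimes 1$ as a bimodule because $V(\la+\mu+\gamma)_\bK$ is cyclic over $\dot{U}^0_\bK$ (it is the Weyl module attached to $\la+\mu+\gamma$). Consequently, any bimodule map out of $V(\la+\mu+\gamma)_\bK \otimes \bK[\![z]\!]$ is determined by the image of this cyclic vector.

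Next, I would trace the cyclic vector through both compositions. Reading the statement so that both sides are bimodule maps with the same source and target (i.e. with the appropriate slot conventions supplied by the $\boxtimes\,\mathrm{id}$ notation), the left-hand side sends
$$\bv^0_{\la+\mu+\gamma} \otimes 1 \;\longmapsto\; \bv^0_{\la+\mu} \otimes \bv^0_\gamma \otimes 1 \;\longmapsto\; \bv^0_\la \otimes \bv^0_\mu \otimes \bv^0_\gamma \otimes 1,$$
while the right-hand side sends
$$\bv^0_{\la+\mu+\gamma} \otimes 1 \;\longmapsto\; \bv^0_\la \otimes \bv^0_{\mu+\gamma} \otimes 1 \;\longmapsto\; \bv^0_\la \otimes \bv^0_\mu \otimes \bv^0_\gamma \otimes 1.$$
Both images coincide, so the two compositions agree on the cyclic generator. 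By the previous step, they agree everywhere.

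The main (mild) obstacle is the bookkeeping: one needs to verify that each of the four $\eta$'s in the statement is genuinely a bimodule map with the intended normalization, i.e.\ to check that the image of the cyclic vector is truly the tensor of cyclic vectors (not merely a scalar multiple). This is where the use of the Kostant--Lusztig integral form $\dot U_\Z$ and the compatibility in Corollary \ref{L-dualmult} (2) together with Theorem \ref{WC-surj} (or equivalently the construction via the specialization of the quantum coproduct as used in the proof of Theorem \ref{W-unit}) enters: the $\Z$-module direct summand structure forces the normalization to be unit-valued, and matching $\wth$-weights pins it down to the tensor of cyclic vectors. Once this normalization is in hand, the two iterated coproducts agree because the coproduct on $\dot U_\Z$ is (coassociative, and in particular) symmetric enough to make the iterated map factor uniquely, which is the content we have already exploited in Lemma \ref{R-welldef}.
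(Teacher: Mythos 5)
Your proof is correct and matches the paper's approach: the paper's own proof is a one-line reference to Lemma \ref{R-welldef}, whose argument is precisely the cyclic-generator/uniqueness-up-to-scalar reasoning you spell out here (both compositions are bimodule maps determined by the image of the cyclic vector, and both send $\bv^0_{\la+\mu+\gamma}$ to $\bv^0_\la \otimes \bv^0_\mu \otimes \bv^0_\gamma$). You have simply made explicit the details the paper calls ``straightforward from the construction,'' including the normalization point pinned down by the direct-summand property from Corollary \ref{L-dualmult}.
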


\begin{proof}
Straight-forward from the construction (cf. Lemme \ref{R-welldef}).
\end{proof}

\begin{prop}\label{bQ-mod}
Assume that $\mathsf{char} \, \bK \neq 2$. For each $w \in W_\af$ and $\tJ \subset \tI$, we have an $\bI$-equivariant rational map
$$\psi_w : \bQ_{G,\tJ} ( w )_{\bK} \dashrightarrow \bigcup_{m \in \Z} \prod_{i \in \tI \setminus \tJ} \P_\bK ( V ( \varpi_i ) _\bK \otimes z^m \bK [\![z]\!]) = \prod_{i \in \tI \setminus \tJ} \P_\bK ( V ( \varpi_i ) _\bK \otimes \bK (\!(z)\!)),$$
that gives rise to a $G(\!(z)\!)$-equivariant rational map
$$\psi : ( \bQ_{G,\tJ} ^{\mathrm{rat}} )_{\bK} \dashrightarrow \prod_{i \in \tI \setminus \tJ} \P_\bK ( V ( \varpi_i ) _\bK \otimes \bK (\!(z)\!)).$$
In addition, the set $( \mathrm{Im} \, \psi ) ( \bK )$ defines a closed $($ind-$)$subscheme of $\prod_{i \in \tI \setminus \tJ} \P_\bK ( V ( \varpi_i ) _\bK \otimes \bK (\!(z)\!))$.
\end{prop}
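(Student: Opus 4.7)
Morally, $\psi_w$ is the tautological map sending $[g] \in \bO(\tJ, w)_\bK \cong \bI/\mathrm{stab}$ (from Lemma \ref{bQ-dense}) to the collection of classes $([g \cdot \bv_{ww_0\varpi_i}])_{i \in \tI \setminus \tJ}$, where each $\bv_{ww_0\varpi_i}$ is viewed inside $V(\varpi_i)_\bK \otimes \bK(\!(z)\!)$ via the natural embedding of the relevant Demazure-type submodule of $\bX(\varpi_i)$ into the loop module. I would execute this in the following steps.

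\textbf{Step 1 (linear maps).} For each $i \in \tI \setminus \tJ$ and each $w \in W_\af$, I would construct an $\bI$-equivariant $\bK$-linear map
\[
\iota_{i,w} : \bW_{ww_0}(\varpi_i)_\bK \hookrightarrow V(\varpi_i)_\bK \otimes z^{m(i,w)}\bK[\![z]\!]
\]
for some integer $m(i,w) \in \Z$. Over $\C$ this follows from the well-known realization of the level-zero extremal weight module $\bX(\varpi_i)_\C$ inside a loop module (Chari--Pressley, Beck--Nakajima), together with the fact that $\bW_{ww_0}(\varpi_i) \subset \bX(\varpi_i)$ is a Demazure submodule. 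I would then descend to the $\Z$-integral form by invoking Lemma \ref{WX-comm}(3) and Theorem \ref{b-compat}, which identify $\bW_{ww_0}(\varpi_i)_\Z$ with the $\Z$-span of the part of the global basis $\bB(\bX(\varpi_i))$ contained in the Demazure submodule. Translating by $t_\beta$ for $\beta \in Q^\vee$ using Lemma \ref{WX-comm}(1) adjusts the exponent $m(i,w)$ accordingly.

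\textbf{Step 2 (gluing into a ring map).} Dualizing $\iota_{i,w}$ gives a surjection from a truncated subspace of $V(\varpi_i)_\bK^\vee \otimes \bK(\!(z)\!)$ onto $R_w(\tJ, \varpi_i) = \bW_{ww_0}(\varpi_i)_\bK^\vee$. Applying $\mathrm{Sym}$ and using that the multiplication map of the $P_{\tJ,+}$-graded ring $R_w(\tJ)$ is surjective by Corollary \ref{WXcomm-inj} (as in Lemma \ref{Qsurj-mult}), these linear maps assemble into a morphism of $P_{\tJ,+}$-graded rings from (a truncated piece of) the homogeneous coordinate ring of $\prod_{i \in \tI \setminus \tJ} \P_\bK(V(\varpi_i)_\bK \otimes z^{m(i,w)}\bK[\![z]\!])$ to $R_w(\tJ)$, defined on the open locus where $\bv_{ww_0\varpi_i} \ne 0$ for every $i \in \tI \setminus \tJ$. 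This produces the rational map $\psi_w$. Compatibility as $w$ varies in $W_\af$ follows from Lemma \ref{surj-rel} and Corollary \ref{transR}, so the $\psi_w$'s glue to a rational map $\psi$ on $(\bQ_{G,\tJ}^{\mathrm{rat}})_\bK$.

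\textbf{Step 3 ($G(\!(z)\!)$-equivariance and image).} The target admits the natural loop-group action of $G(\!(z)\!)$ on each factor $V(\varpi_i)_\bK \otimes \bK(\!(z)\!)$; our $\psi$ is $\bI$-equivariant by the construction in Step 1. I would verify equivariance for each $\mathop{SL}(2,i)$, $i \in \tI_\af$, by the rank-one calculation inside $\bX(\varpi_i)_\bK$ (the two cyclic vectors $\bv_{ww_0\varpi_i}$ and $\bv_{s_iww_0\varpi_i}$ span an $\mathop{SL}(2,i)$-module that also sits inside $V(\varpi_i)_\bK \otimes \bK(\!(z)\!)$). Then the system-of-groups argument used in Proposition \ref{wbQ-mod} (invoking \cite[Theorem 6.1.17]{Kum02}) upgrades this to $G(\!(z)\!)$-equivariance. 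For the image: on the dense set $\bQ_\bK'$, the stabilizer of $([g \bv_{\varpi_i}])_{i \in \tI \setminus \tJ}$ in $G(\!(z)\!)$ is exactly $H(\bK) \cdot [P(\tJ), P(\tJ)](\!(z)\!)$, yielding the required bijection. Since each $\bQ_{G,\tJ}(w)_\bK$ is projective of finite type (Lemma \ref{Q'finite}-style), its image lies in $\prod_i \P_\bK(V(\varpi_i)_\bK \otimes z^{m(i,w)}\bK[\![z]\!] / z^N\bK[\![z]\!])$ for $N \gg 0$, hence is closed; taking the union over $w$ gives the closed ind-subscheme structure on $\mathrm{Im} \, \psi$.

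\textbf{Main obstacle.} The technical heart is Step 1: producing an $\bI$-equivariant, $\Z$-integral embedding of $\bW_{ww_0}(\varpi_i)$ into a ``loop slice'' $V(\varpi_i) \otimes z^m \bK[\![z]\!]$ that is compatible with the $Q^\vee$-translation and the Chevalley involution. Over $\C$ this is standard, but the descent to arbitrary characteristic $\ne 2$ rests on the global basis compatibility at $\mathsf{q} = 1$ (Theorem \ref{X-compat}, Theorem \ref{b-compat}), and on the fact that the natural loop realization respects the integral lattices $\bW_{ww_0}(\varpi_i)_\Z \subset \bX(\varpi_i)_\Z$. The restriction $\mathsf{char}\, \bK \ne 2$ enters precisely because the global basis constructions for the fundamental representations require it. The remaining steps are formal consequences once Step 1 is in hand.
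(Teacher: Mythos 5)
Your Step 1 has a fundamental error in the direction of the key map. You propose an \emph{injection}
\[
\iota_{i,w} : \bW_{ww_0}(\varpi_i)_\bK \hookrightarrow V(\varpi_i)_\bK \otimes z^{m(i,w)}\bK[\![z]\!],
\]
but the natural map goes the other way: the paper constructs a \emph{surjection} $\bW(\varpi_i)_\bK \twoheadrightarrow V(\varpi_i)_\bK \otimes \bK[z]$ as $\dot{U}^{\ge 0}_\bK$-modules (and then extends it to a surjection $\bX(\varpi_i)_\bK \twoheadrightarrow V(\varpi_i)_\bK \otimes \bK[z,z^{-1}]$). For $w = e$ one has $\bW_{ew_0}(\varpi_i) = \bW(\varpi_i)$, the full global Weyl module, and for non-simply-laced $\g$ this is strictly larger than $V(\varpi_i) \otimes \bK[z]$ in each graded piece (already for $\mathfrak{sp}_4$ and $\varpi_2$: the local Weyl module $W(\varpi_2)$ contains a trivial summand in addition to $V(\varpi_2)$), so the map has nontrivial kernel and no injection exists. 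Your Step 2 dualizes the purported injection to obtain a surjection onto $R_w(\tJ, \varpi_i)$; but the dual of the actual surjection is an \emph{inclusion} $(V(\varpi_i) \otimes \bK[z])^\vee \hookrightarrow \bW(\varpi_i)^\vee$, which is what gives a rational map $\P(\bW(\varpi_i)^\wedge) \dashrightarrow \P(V(\varpi_i)\otimes\bK[\![z]\!])$ undefined on the projectivized kernel. This kernel is precisely why the statement only asserts a \emph{rational} map; the substantial work of showing the indeterminacy locus is empty (hence $\psi$ is a morphism, and in fact a closed immersion) is deferred to Theorem \ref{bQ-int}. Were your injection true, that entire argument would be superfluous.

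Your Step 3 also uses the wrong justification for closedness of the image: $\bQ_{G,\tJ}(w)_\bK$ is an infinite-dimensional scheme, not a projective variety of finite type (those are the $\sQ'_\tJ(v,w)$), so properness of $\psi_w$ cannot be invoked this way. The paper instead establishes that any $\bK$-point of $\mathrm{Im}\,\psi$ satisfies the degree-$2\varpi_i$ and $\varpi_i+\varpi_j$ quadratic relations of the basic affine space (Theorem \ref{W-unit}, diagram \eqref{WV-quot}, the generation result \cite[Theorem 3.5.3]{BK05}), which places the image inside the closed locus \eqref{bQ-coset}, and then combines this with the $G(\!(z)\!)$-action on $\bQ_\bK'$ from Proposition \ref{wbQ-mod} to pin down the image exactly. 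These two points — the direction of the module map and the source of closedness — are the genuine gaps; the remaining citations (translation by $\tau_\beta$, Corollary \ref{transR}, the rank-one equivariance check, and the Kumar-style system-of-groups argument) are all correct.
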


\begin{proof}
We have a surjective map
$$\bW ( \varpi_i )_\bK\rightarrow V ( \varpi_i )_\bK \otimes \bK [z]$$
as $\dot{U}^{\ge 0}_\bK$-modules since we have the corresponding map over $\dot{U}^{\ge 0}_\Z$ such that the $P$-weight $\varpi_i$-part is the same (and $V ( \varpi_i )_\bK$ is cyclic as a $\dot{U}^{0}_\bK$-module). The identification of the $P$-weight $\varpi_i$-part also implies that this map commutes with the action of $\tau_{\beta}$ ($\beta \in Q^{\vee}$), and extends to a surjective map
$$\bX ( \varpi_i )_\bK\rightarrow V ( \varpi_i )_\bK \otimes \bK [z,z^{-1}]$$
of $\dot{U}_\bK$-modules.

This gives a rational map
$$\P ( \bW ( \varpi_i )_\bK ) \dashrightarrow \P ( V ( \varpi_i )_\bK \otimes \bK [z] )$$ 
and its graded completion
$$\P ( \bW ( \varpi_i )_\bK^{\wedge} ) \dashrightarrow \P ( V ( \varpi_i )_\bK \otimes \bK [\![z]\!] ).$$
Taking Corollary \ref{WXcomm-inj} into account, we have an embedding:
\begin{equation}
\bQ_{G,\tJ} ( e )_\bK \hookrightarrow \prod_{i \in \tI \setminus \tJ} \P ( \bW ( \varpi_i )_{\bK}^{\wedge} ).\label{bQ-proj-emb}
\end{equation}

This yields a rational map
$$\psi_e : \bQ_{G,\tJ} ( e )_\bK \dashrightarrow \prod_{i \in \tI \setminus \tJ} \P ( V ( \varpi_i )_\bK \otimes \bK [\![z]\!] )$$
as a composition. This map is $G [\![z]\!]$-equivariant by construction.

For each $w \in W$, we can choose $\beta \in Q^{\vee}_+$ such that $\bQ_{G,\tJ} ( w )_\bK \cong \bQ_{G,\tJ} ( w t_{\beta} )_\bK \subset \bQ _{G,\tJ}( e )_\bK$ by Corollary \ref{transR} and Lemma \ref{surj-rel}. Hence we obtain the map $\psi_w$ for every $w \in W_\af$ as the composition of the above maps (up to grading shifts). We fix the effect of grading shifts from $\psi_w$ to $\psi_{wt_{\beta}}$ by applying $\tau_{w_0 \beta}$ on $\bW ( \varpi_i )_{\bK}^{\wedge}$ for each $i \in \tI \setminus \tJ$ in (\ref{bQ-proj-emb}). Then, the $\tau_{w_0\beta}$-action (on the completions of $\{ \bX ( \varpi_i )_{\bK}\}_{i \in \tI \setminus \tJ}$) transfers $\psi_w$ to $\psi_{wt_{\beta}}$ for each $\beta \in Q^{\vee}$, and it is compatible with the restrictions to $\bQ_G ( \bullet )$'s by Lemma \ref{WX-comm} 1). Hence, we obtain the map $\psi$ of indschemes. This map is $G (\!(z)\!)$-equivariant in our sense. This proves the first assertion.

From now on, we concentrate into the second assertion.

Each $(H \times \Gm)$-fixed point of $\bO ( w )_{\bK}$ ($w \in W_\af$) is contained in the domain of $\psi$, and their images are distinct by inspection. It follows that $\bQ_\bK'$ is contained in the domain of $\psi$, and the restriction of $\psi$ to $\bQ_\bK'$ is injective by examining the stabilizer of the $\bI ( \bK )$-actions at each $(H \times \Gm)$-fixed points.

In view of Theorem \ref{W-unit}, the maps $\{\eta_{\la,\mu}\}_{\la,\mu}$ ($\la,\mu \in P_{\tJ,+}$) induce a commutative diagram of $\dot{U}^{\ge 0}_{\bK}$-modules:
\begin{equation}
\xymatrix{
\bW ( \la )_\bK \ar@{^{(}->}[r] \ar[d]_{\kappa_\la} & \displaystyle{\bigotimes_{i \in \tI \setminus \tJ}} \bW ( \varpi_i )_\bK^{\otimes \left< \al_i^{\vee}, \la \right>}\ar@{->>}[dr] & \\
V ( \la )_\bK \otimes \bK [z] \ar@{^{(}->}[r] & \left( \displaystyle{\bigotimes_{i \in \tI \setminus \tJ}} V ( \varpi_i )_\bK^{\otimes \left< \al_i^{\vee}, \la \right>} \right) \otimes \bK [z] & \ar@{->>}[l] \displaystyle{\bigotimes_{i \in \tI \setminus \tJ}} \left( V ( \varpi_i )_\bK^{\otimes \left< \al_i^{\vee}, \la \right>} \otimes \bK [z]\right).
}\label{WV-quot}
\end{equation}
Here the map $\kappa_\la$ is well-defined by examining the degree $0$-part and the action of $E_0 = F_{\vartheta} \otimes z$ (where $F_{\vartheta}$ is a non-zero vector in the $- \vartheta$-weight space of $\gn^-$). The above commutative diagram also commutes with the translation by $\tau_{\beta}$ ($\beta \in Q^{\vee}$) by construction. Moreover, we have $\kappa_\la ( \bv_{w \la} ) \neq 0$ for each $w \in W_\af$. Therefore, the map $\kappa_\la$ must be surjective whenever its $d$-degree belongs to $\sum_{i \in \tI \setminus \tJ} \Z \left< \al_i^{\vee}, \la \right>$.

For each $i \in \tI \setminus \tJ$, the $\Z [\frac{1}{2}]$-integral structure of $V ( 2 \varpi_i ) \otimes _{\C} \C [z,z^{-1}]$ at the odd degree and even degree must be the same as $\dot{U}_\Z$-modules (as we can connect the extremal weight vectors of even degree part and the odd degree part using the $\mathfrak{sl} ( 2 )$-strings of length $3$). Therefore, for $\la = \varpi_i, \varpi_i + \varpi_j, 2 \varpi_i$ ($i,j \in \tI \setminus \tJ$), the map $\kappa_{\la}$ is surjective.

Consider a (representative of the) image
$$\psi ( x ) = (x_i) \in \prod_{i \in \tI \setminus \tJ} V ( \varpi_i ) _{\bK} \otimes \bK (\!(z)\!)$$
of a $\bK$-valued point $x \in \bQ_{G, \tJ} ( e )_\bK$ under $\psi$. We consider its lifts $\widetilde{x}_i \in \bW ( \varpi_i )_{\bK}^{\wedge}$ and $\widetilde{x}_j \in \bW ( \varpi_j )_{\bK}^{\wedge}$. They must belong to
$$\mathrm{Im} \, \left( \bW ( \varpi_i + \varpi_j )_{\bK}^{\wedge} \longrightarrow ( \bW ( \varpi_i )_{\bK} \otimes \bW ( \varpi_j )_{\bK} )^{\wedge} \right)$$
in order to satisfy the defining relations of $R_{\bK}$. In view of the commutative diagram (\ref{WV-quot}) for $\la = \varpi_i + \varpi_j$, we deduce an equation
\begin{equation}
x_i \otimes_{\bK [\![z]\!]} x_j \in \mathrm{Im} \, \eta_{\varpi_i,\varpi_j} \hskip 5mm i,j \in \tI \setminus \tJ.\label{thinPlucker}
\end{equation}

Since the relation of the ring $\bigoplus_{\la \in P_{\tJ,+}} V ( \la )^*_{\bK}$ is generated by $P$-degrees $2 \varpi_i$ and $\varpi_i + \varpi_j$ for $i,j \in \tI$ (\cite[Theorem 3.5.3]{BK05}), this defines an element of
\begin{equation}
G (\!(z)\!) / [P ( \tJ ), P ( \tJ) ]  (\!(z)\!) \subset \overline{G / [P ( \tJ ), P ( \tJ) ]} ( \bK (\!(z)\!) ) \label{bQ-coset}
\end{equation}
through quadratic relations (see \cite[\S 4]{FM99}), where
$$\overline{G / [P ( \tJ ), P ( \tJ) ]} = \Spec\, \bK [G / [P ( \tJ ), P ( \tJ) ]]$$
is the basic affine space. Therefore, applying some $\tau_{\beta}$ ($\beta \in Q^{\vee}$) if necessary, we conclude that if a $\bK$-valued point $x$ of $\bQ_{G, \tJ} ^{\ra}$ belongs to the domain of $\psi$, then $\psi ( x )$ belongs to the set of $\bK$-valued points of the image given by (\ref{bQ-coset}). Taking Proposition \ref{wbQ-mod} (and its $G(\!(z)\!)$-action) and the equation (\ref{thinPlucker}) into account, we conclude the second assertion.
\end{proof}

\begin{cor}\label{uniq-preimage}
Keep the setting of Proposition {\rm\ref{bQ-mod}}. The map $\psi$ induces a bijection of $\bK$-valued points between the domain and the range.
\end{cor}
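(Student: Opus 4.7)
By Proposition \ref{bQ-mod}, the restriction $\psi|_{\bQ_\bK'}$ is a bijection onto $(\IM\,\psi)(\bK)$, and in particular $\psi$ is surjective on the set of $\bK$-valued points of its domain. Hence the corollary reduces to showing that every $\bK$-valued point of the domain of $\psi$ already lies in $\bQ_\bK'$.

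Given $x$ in the domain, the bijection above yields a unique $x_0 \in \bQ_\bK'$ with $\psi(x_0) = \psi(x)$, and our goal is $x = x_0$. Since $\psi$ is $G(\!(z)\!)$-equivariant (Propositions \ref{wbQ-mod} and \ref{bQ-mod}) and $G(\!(z)\!)$ acts transitively on $\bQ_\bK'$, acting by a suitable element reduces to the case $x_0 = p_e$, the $(H \times \Gm)$-fixed $\bK$-point of $\bO(e)_\bK$. It then suffices to verify that the scheme-theoretic fiber $\psi^{-1}(\psi(p_e))$ in the domain has $p_e$ as its only $\bK$-point.

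For this, invoke Lemma \ref{bQ-dense} to realize $\bO(\tJ, e)_\bK \cong \bI / (H \cdot S_e)$, with $S_e = \mathrm{Ad}(\dot{w}_0)([P(\tJ),P(\tJ)](\!(z)\!)) \cap \bI$, as an $\bI$-stable affine open neighbourhood of $p_e$ in $\bQ_{G,\tJ}(e)_\bK$. The construction of $\psi$ via the surjection $\bW(\varpi_i)^\wedge_\bK \to V(\varpi_i)_\bK \otimes \bK[\![z]\!]$ from diagram (\ref{WV-quot}) identifies the $\bI$-stabilizer of $\psi(p_e)$ with $H \cdot S_e$, so $\psi$ is $\bI$-equivariantly injective on $\bO(\tJ, e)_\bK$ and its fiber through $p_e$ reduces to $\{p_e\}$ there. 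For $\bK$-points $x$ not a priori contained in $\bO(\tJ, e)_\bK$, translate by $\tau_\beta$ ($\beta \in Q^\vee$) via Corollary \ref{transR} to move $x$ into the dense open $\bI$-orbit of a translated Schubert variety and repeat the analysis; since $\tau_\beta$ preserves $\bQ_\bK'$, undoing the translation places $x$ back in $\bQ_\bK'$. The hard part is the stabilizer identification for $\psi(p_e)$, which rests on the compatibility of the global basis with the quotient $\bW(\varpi_i)^\wedge_\bK \to V(\varpi_i)_\bK \otimes \bK[\![z]\!]$ at the extremal weight vectors $\{\bv_{w \varpi_i}\}$ --- essentially the same weight-theoretic input that drove the proof of Proposition \ref{bQ-mod}.
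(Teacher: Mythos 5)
Your reduction to fibers over a fixed point $p_e$ and the claim that $\psi$ is injective on $\bO(\tJ,e)_\bK$ by the stabilizer computation are reasonable, and the paper makes essentially the same use of the $\bI(\bK)$-conjugation (at the very end of its proof). But the genuinely hard content is the case you try to dispatch in the last sentence — a preimage $x'$ of $\psi(p_e)$ that does \emph{not} a priori lie in $\bO(\tJ,e)_\bK$ — and the proposed mechanism does not work. The isomorphisms $\tau_\beta$ of Corollary~\ref{transR} permute the Schubert varieties $\bQ_{G,\tJ}(w)_\bK \cong \bQ_{G,\tJ}(wt_\beta)_\bK$, and correspondingly permute the $\bI$-orbits globally; they do not move a point that sits in a boundary stratum of one ind-piece into the dense open $\bI$-orbit of another. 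Translating by $\tau_\beta$ and "repeating the analysis" would only help if you already knew that $x'$ lies in some $\bO(\tJ,v)_\bK$ — but that is exactly the semi-infinite Bruhat decomposition over $\bK$, which at this point in the paper is not available: it is established in Corollary~\ref{pos-fp}, which depends on Theorem~\ref{bQ-int}, which in turn relies on this very corollary. So your argument is either circular or leaves a gap.

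The paper closes this gap by an entirely different and essential device. It works with the ambient embedding $(\bQ_{G,\tJ}^{\mathrm{rat}})_\bK \hookrightarrow \prod_i \P(\bX(\varpi_i)_\bK^\wedge)$, decomposes each projective coordinate $x'_i$ of the preimage by $P$-weight, $x'_i = \prod_\mu x'_i[\mu]$, and applies a grading-twist automorphism (shifting the $d$-degree of the weight-$\mu$ component by $\langle\gamma,\mu\rangle$ for a suitable $\gamma \in Q^\vee$ — an $H(\!(z)\!)$-type twist, not a $\tau_\beta$) to arrange inequality (\ref{x-degest}). Combined with the weight estimate \cite[Theorem~5.17]{Kas02}, this forces $x'_i \in \bW_{ww_0}(\varpi_i)_\bK^\wedge$ and hence $x' \in \bQ_{G,\tJ}(w)_\bK$ for a specific $w$; a further $N_{G(\!(z)\!)}(H(\bK))$-twist normalizes $w = e$, and the coincidence of the $P$-weight $w_0\varpi_i$-parts of $\bW(\varpi_i)_\bK$ and $V(\varpi_i)_\bK\otimes\bK[z]$ lets an $H(\bK)$-action shrink $x'$ into $\bO(\tJ,e)_\bK$. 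None of this appears in your proposal, and it cannot be replaced by $\tau_\beta$-translations. You would need to supply this (or a substitute) weight-theoretic argument to make the proof complete.
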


\begin{proof}
By abuse of convention, we regard $\bQ'_{\bK}$ as subsets of the both of $\bQ_G^\ra ( \bK )$ and $( \mathrm{Im} \, \psi ) ( \bK )$. Let $x' \in \bQ_G^\ra ( \bK )$ be a preimage of a $( H \times \Gm ) ( \bK )$-fixed point $x \in \bQ'_{\bK}$ through $\psi$. Consider the embedding
$$( \bQ_{G,\tJ} ^{\ra} )_\bK \hookrightarrow \prod_{i \in \tI \setminus \tJ} \P ( \bX ( \varpi_i )_{\bK}^{\wedge} )$$
that prolongs (\ref{bQ-proj-emb}). Let $x' = ( x'_i )_{i \in \tI \setminus \tJ}$ (resp. $x = ( x_i )_{i \in \tI \setminus \tJ}$) be the coordinate of $x'$ (resp. $x$) through the above embedding. We can regard $x'_i \in \bX ( \varpi_i )_{\bK}^{\wedge}$, and it admits a decomposition
$$x'_i = \prod_{\mu} x'_i [\mu],$$
where $\mu \in P$ runs over the $P$-weights of $\bX ( \varpi_i )_{\bK}^{\wedge}$ (or $\bW ( \varpi_i )_{\bK}^{\wedge}$).
Let $d'_i [\mu]$ be the degree of the lowest $d$-degree non-zero contribution of $x'_i [\mu]$ (or $\infty$ if $x'_i [\mu]  = 0$) for each $i \in \tI \setminus \tJ$ and $\mu \in P$. Let $d_i$ be the $d$-degree of $x_i$ for each $i \in \tI \setminus \tJ$ (remember that $x$ is $( H \times \Gm ) ( \bK )$-fixed). Note that we have $x_i [u \varpi_i] = x'_i [u \varpi_i]$ for every $i \in \tI$ and $u \in W$ since
\begin{equation}
\bW ( \varpi_i )_{\bK} \hskip 5mm \text{and} \hskip 5mm V( \varpi_i)_{\bK} \otimes \bK [z]  \hskip 5mm \text{shares the same $P$-weight $u \varpi_i$-parts.}\label{Pshare}
\end{equation}

For each $\gamma \in Q^{\vee}$, we have a collection of the automorphisms of the vector spaces $\{\bX ( \la )_{\bK}\}_{\la \in P_+}$ by shifting the ($d$-)gradings of the weight $\mu$-parts by $\left< \gamma, \mu \right>$ ($\mu \in P$). This defines an automorphism of $R^+_\bK$, and hence defines an automorphism of $\bQ_G^{\ra} ( \bK )$. By using this twist for an appropriate $\gamma \in Q^{\vee}$, we can assume that
\begin{equation}
d'_i [\mu] \gg d'_i[u \varpi_i] = d_i \hskip 5mm \mu \neq u \varpi_i\label{x-degest}
\end{equation}
for each $i \in \tI \setminus \tJ$ and fixed $u \in W$ from the fact that every $P$-weight of $W ( \varpi_i )_{\bK}$ is $\le \varpi_i$. We have $x_i \in \bW_{ww_0} ( \varpi_i )_{\bK}$ for  some $w \in W_\af$ and all $i \in \tI \setminus \tJ$. The inequality (\ref{x-degest}) implies $x_i' \in \bW_{ww_0} ( \varpi_i )_{\bK}^{\wedge}$ for all $i \in \tI \setminus \tJ$ (\cite[Theorem 5.17]{Kas02}). Hence, we have $x' \in \bQ_{G, \tJ} ( w )_{\bK}$ for $w \in W_\af$ such that $x$ is the unique $( H \times \Gm )$-fixed point of $\bO ( \tJ, w )_{\bK}$.

From (\ref{Pshare}), the $( H \times \Gm )$-action shrinks $x'$ to $x$. In view of its proof, the Zariski open set $\bO ( \tJ, w )_{\bK}$ in the Lemma \ref{bQ-dense} is obtained by localizing along the vectors $\{ \bv_{ww_0\la}^{\vee} \}_{\la \in P_{\tJ,+}}$, that pairs nontrivially with $x'$. Therefore, we have necessarily $x' \in \bO ( \tJ, w )_{\bK} ( \bK )$. As $\bO ( \tJ, w )_{\bK} ( \bK ) \subset \bQ'_{\bK}$ by Proposition \ref{wbQ-mod} and its proof, we deduce $x = x'$. Since $\bQ'_{\bK}$ is stable under the above action of $Q^{\vee}$, as well as $G (\!(z)\!)$, we conclude that $x = x'$ holds for a preimage of $x'$ of every $x \in \bQ'_{\bK}$ as required.
\end{proof}

\begin{thm}\label{bQ-int}
Assume that $\mathsf{char} \, \bK \neq 2$. For each $\tJ \subset \tI$, we have a closed immersion of indschemes
$$( \bQ_{G,\tJ} ^{\ra} )_{\bK} \longrightarrow \prod_{i \in \tI \setminus \tJ} \P ( V ( \varpi_i )_\bK \otimes \bK (\!(z)\!) ).$$
In particular, the set of $\bK$-valued points of the indscheme $( \bQ_{G,\tJ} ^{\ra} )_\bK$ is in bijection with
$$G (\!(z)\!) / \left( H ( \bK ) \cdot [P ( \tJ ), P ( \tJ) ] (\!(z)\!) \right).$$
\end{thm}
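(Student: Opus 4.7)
The plan is to upgrade the $G(\!(z)\!)$-equivariant rational map $\psi$ from Proposition \ref{bQ-mod} into a closed immersion of indschemes; the set-theoretic identification in the second half of the statement then follows from Corollary \ref{uniq-preimage} together with the $G(\!(z)\!)$-action obtained in Proposition \ref{wbQ-mod}.

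First, I would reduce the problem to a single ind-piece. By Lemma \ref{trans} and Corollary \ref{transR}, the $\tau_{\beta}$-translation ($\beta \in Q^{\vee}$) transports $\bQ_{G,\tJ}(w)_{\bK}$ to $\bQ_{G,\tJ}(wt_{\beta})_{\bK}$ in a way compatible with the natural $z$-rescaling of the target $\prod_{i \in \tI \setminus \tJ} \P(V(\varpi_i)_{\bK} \otimes \bK(\!(z)\!))$. Hence it suffices to analyze $\psi_e$ on $\bQ_{G,\tJ}(e)_{\bK}$, together with all sub-indschemes $\bQ_{G,\tJ}(w)_{\bK} \subset \bQ_{G,\tJ}(e)_{\bK}$ (those with $w \le_{\si} e$), and then translate using $\tau_{\beta}$ to handle general $w \in W_{\af}$.

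Second, I would extend $\psi_e$ to an everywhere defined morphism. The rationality in Proposition \ref{bQ-mod} originates from the potential simultaneous vanishing of the projections $\bW(\varpi_i)_{\bK}^{\wedge} \twoheadrightarrow V(\varpi_i)_{\bK} \otimes \bK[\![z]\!]$. By Corollary \ref{uniq-preimage}, no $\bK$-point of $\bQ_{G,\tJ}(e)_{\bK}$ lies in the base locus of $\psi_e$. Filtering $\bQ_{G,\tJ}(e)_{\bK}$ by finite-type closed subschemes (for instance by intersecting with opposite orbit closures, yielding the reduced finite-type schemes $\sQ'_{\tJ}(v,e)_{\bK}$ of Lemma \ref{Q'finite} and Corollary \ref{Q'red}), the Hilbert Nullstellensatz over the algebraically closed field $\bK$ forces the base locus on each piece to be empty. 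Thus $\psi_e$ extends to a bona fide morphism, and the $\tau_{\beta}$-compatibility propagates this to the whole indscheme.

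Third, I would establish the closed immersion property. The quadratic Plücker-type relations, coming from the embeddings $V(\la+\mu)\hookrightarrow V(\la)\otimes V(\mu)$ which cut out the basic affine space $\overline{G/[P(\tJ),P(\tJ)]}$ (cf.\ \cite[Theorem 3.5.3]{BK05} as used in the proof of Proposition \ref{bQ-mod}), already hold in $R_e(\tJ)_{\bK}$ by Corollary \ref{WXcomm-inj}. Dually, Theorem \ref{W-unit} and the splitting diagram \eqref{WV-quot} ensure that the pullback of the multi-homogeneous coordinate ring of $\prod_{i} \P(V(\varpi_i)_{\bK}\otimes \bK[\![z]\!])$ surjects onto $R_e(\tJ)_{\bK}$ after appropriate completion. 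Hence the image scheme of $\psi_e$ is cut out exactly by these quadratic relations, and matches the coset $G(\!(z)\!)/(H(\bK)\cdot [P(\tJ),P(\tJ)](\!(z)\!))$ appearing as $\bK$-points via Corollary \ref{uniq-preimage}.

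The main obstacle I expect is the second step: transferring the set-theoretic bijection supplied by Corollary \ref{uniq-preimage} into the scheme-theoretic statement that the base locus is actually empty. This hinges on the reducedness of each finite-type ind-piece, coming from Corollary \ref{FQ-wn} (and Lemma \ref{Q-split}) in positive characteristic and from flatness over $\Z$ together with specialization in characteristic zero. The closed immersion property in the third step should then follow formally from Theorem \ref{W-unit} and the ambient Plücker picture, once the domain of $\psi$ has been secured.
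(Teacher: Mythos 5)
There is a genuine gap in the second step. You invoke Corollary \ref{uniq-preimage} to conclude that ``no $\bK$-point of $\bQ_{G,\tJ}(e)_{\bK}$ lies in the base locus of $\psi_e$,'' but the corollary only asserts a bijection between the $\bK$-points of the \emph{domain} of $\psi$ (i.e.\ the open complement of the base locus $E$) and the range $\bQ'_\bK$. It does not rule out extra $\bK$-valued points of $\bQ_{G,\tJ}(e)_{\bK}$ lying in $E$. At this stage of the paper, the set-theoretic Bruhat decomposition of $(\bQ_{G,\tJ}^{\mathrm{rat}})_{\bK}$ into $\bI$-orbits is not yet available over $\bK$ (Corollary \ref{pos-fp} is established as a consequence of Theorem \ref{bQ-int}, not before it), so one cannot assert that every $\bK$-point belongs to $\bQ'_\bK = \bigcup_w \bO(w)_{\bK}(\bK)$. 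Consequently, the Nullstellensatz cannot be applied: its premise, that $E \cap \sQ'_{\tJ}(v,e)_{\bK}$ has no $\bK$-points, is precisely the assertion that needs proof.

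The paper's argument fills this gap by a non-circular route: if $E \neq \emptyset$, then by $(H\times\Gm)$-equivariance $E$ meets some $\sQ'_{\tJ}(v,e)_{\bK}$; there, the map $\psi$ restricted to $\sQ'_{\tJ}(v,e)_{\bK}\setminus E$ is realized as an everywhere-defined section of a vector bundle over a proper image, making $\sQ'_{\tJ}(v,e)_{\bK}\setminus E$ simultaneously open and proper, hence a connected component. This would force $R^v_e(\tJ)_{\bK}$ to be disconnected; pushing $v \ll_\si e$ to fix the degree of a pair of $\Gm$-eigenfunctions $f,g$ with $fg=0$ then produces zero-divisors in $R_{\bK}$, contradicting the integrality established in Corollary \ref{Re-int}. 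This contradiction argument is the missing key idea. Your third step also understates the role of weak normality (Lemma \ref{Q-split}): the Pl\"ucker relations identify the reduced image set-theoretically, but to conclude that $\psi$ is a \emph{closed immersion} of schemes (rather than merely a finite bijective morphism onto its image), the paper shows that the weak normalization of the multi-homogeneous coordinate ring of $\psi(\sQ'_{\tJ}(v,e)_{\bK})$ is precisely $R^v_e(\tJ)_{\bK}$, so that the scheme structures already agree.
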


\begin{proof}
By construction, the locus $E \subset \bQ_{G,\tJ}^{\ra}$ on which $\psi$ (borrowed from Proposition \ref{bQ-mod}) is not defined is an ind-subscheme. The map $\psi$ is $G (\!(z)\!)$-equivariant. It follows that $E$ admits a $G (\!(z)\!)$-action as indschemes. The map (borrowed from the proof of Proposition \ref{bQ-mod})
$$\bQ_{G,\tJ} ( e )_{\bK} \longrightarrow \P ( \bW ( \varpi_i )_{\bK}^{\wedge})$$
sends an irreducible component of $( E \cap \bQ_{G,\tJ} ( e )_{\bK} )$ onto a closed subscheme of $\P ( \ker \, ( \bW ( \varpi_i )_{\bK}^{\wedge} \to V ( \varpi_i )_{\bK} \otimes \bK [\![z]\!])$ for some $i \in \tI \setminus \tJ$. In particular, $( E \cap \bQ_{G,\tJ} ( e )_{\bK} ) \subset \bQ_{G,\tJ} ( e )_{\bK}$ is a closed subscheme. Taking into account the $\{ \tau_{\beta} \}_{\beta}$-actions (Lemma \ref{WX-comm}), we have $E \neq \emptyset$ if and only if $( E \cap \bQ_{G,\tJ} ( e )_{\bK} ) \neq \emptyset$, and $\psi$ is a closed immersion if and only if $\psi_e$ is a closed immersion (cf. Lemma \ref{surj-rel}). 

The one-parameter subgroup $a = ( \xi, 1 ) : \Gm \to ( H \times \Gm )$ for $\xi \in Q^{\vee}_<$ attracts every point of $\P ( \bW ( \varpi_i )_{\bK})$ into its $( H \times \Gm )$-fixed points (by setting $t \to 0$). In particular, $E$ has a $( H \times \Gm )$-fixed point that is not realized by $\bQ_\bK'$. Since the $( H \times \Gm )$-fixed point of bounded $d$-degree is captured by the corresponding degree terms of $R_{\bK}$, it follows that the indscheme $E$ intersects with $\sQ'_{\tJ} ( v, e )$ for some $v \in W_\af$. The intersection of $\sQ'_{\tJ} ( v, e )_{\bK}$ and $\bQ_\bK'$ (as the set of $\bK$-valued points) defines a closed subset $Y$ of (a product of) finite-dimensional projective space by Proposition \ref{bQ-mod} and Corollary \ref{uniq-preimage}. In particular, $Y$ acquires the structure of a proper scheme through $\psi$. From this view-point, Corollary \ref{uniq-preimage} provides a $\psi$-section of $Y$ that defines a bijection of $\bK$-valued points with a Zariski open subset $\sQ'_{\tJ} ( v, e )_{\bK}\setminus E$ of $\sQ'_{\tJ} ( v, e )_{\bK}$. Since $\bQ_\bK'$ admits a homogeneous $G (\!(z)\!)$-action, we can think of $\psi$ as an everywhere defined section of a vector bundle over the image (whose fiber is a product of $\ker \, ( \bW ( \varpi_i )_{\bK}^{\wedge} \to V ( \varpi_i )_{\bK} \otimes \bK [\![z]\!])$'s). It implies that $\sQ'_{\tJ} ( v, e )_{\bK}\setminus E$ can be seen as an everywhere defined section of a vector bundle over the image (whose fiber is a product of finite-dimensional subspaces of $\ker \, ( \bW ( \varpi_i )_{\bK} \to V ( \varpi_i )_{\bK} \otimes \bK [z])$'s). Since $\sQ'_{\tJ} ( v, e )_{\bK}$ is a finite type scheme defined over an algebraically closed field $\bK$, it has a dense subset formed by its $\bK$-valued points (\cite[Corollarie 10.4.8]{EGAIV-3}). Thus, the section $\psi$ can be seen as that of schemes. Therefore, $\sQ'_{\tJ} ( v, e )_{\bK} \setminus E$ is proper by itself. In conclusion, $\sQ'_{\tJ} ( v, e )_{\bK} \setminus E$ can be seen as a connected component of $\sQ'_{\tJ} ( v, e )_{\bK}$ (as being open and closed subset) which does not intersect with $E$. It follows that if $E \neq \emptyset$ as a scheme, then we find that the scheme $\sQ'_{\tJ} ( v, e )$ must have at least two connected components. Therefore, the projective coordinate ring $R^v_e ( \tJ )_{\bK}$ of $\sQ'_{\tJ} ( v, e )$ must be non-integral. The same is true if we replace $v$ with a smaller element with respect to $<_\si$.

Hence, we can find $f,g \in R^v_e ( \tJ )_{\bK} \setminus \{ 0 \}$ such that $fg = 0$ for $v \ll_\si e$. Since $E$ and its complement are $\Gm$-stable, we can assume that $f$ and $g$ are $\Gm$-eigenfunctions. Since $E$ defines a connected component of $\sQ'_{\tJ} ( v, e )$ for every $v \ll_\si e$, we can fix the degrees of $f$ and $g$ for every $v \ll_\si e$. For a fixed degree, the $d$-graded component of $R^v_e ( \tJ )_{\bK}$ and $R_e ( \tJ )_{\bK}$ are in common for $v \ll_\si e$. Since $R^v_e ( \tJ )_{\bK}$ is a quotient ring of $R_e ( \tJ )_{\bK}$, we can find $\Gm$-eigenfunctions $f,g \in R_e ( \tJ )_{\bK} \setminus \{ 0 \}$ such that $fg = 0$. This implies that the ring $R_\bK$ is also non-integral. This contradicts with Corollary \ref{Re-int}, and hence we deduce $E = \emptyset$ as an ind-scheme. Therefore, we conclude that $\psi$ is in fact a genuine morphism (instead of a rational map) of ind-schemes.

Next, we prove that $\psi$ (or rather $\psi_e$) defines a closed immersion. By Lemma \ref{bQ-dense} and the fact that $\psi$ is a morphism, we deduce that $\psi$ induces an isomorphism between the function fields of $\bQ_{G, \tJ} ( e )_{\bK}$ and its image under $\psi$. An irreducible component $Z$ of a reduced scheme $\sQ_{\tJ}' ( v, e )_{\bK}$ shares a Zariski dense subset with a unique orbit $\bO ( \tJ, w )_\bK$ ($w \in W_\af$). Hence, the variety $( Z \cap \bO ( \tJ, w )_\bK )$ and its image under $\psi$ are presented by the quotients of a common polynomial ring $\bK [\bO ( \tJ, w )_\bK]$ of infinite variables. In view of the $d$-degree bounds from the above and below offered by $\bW _{vw_0}^- ( \varpi_i )_{\bK}$ and $\bW _{w_0} ( \varpi_i )_{\bK}$ for each $i \in \tI \setminus \tJ$, these two quotients of $\bK [\bO ( \tJ, w )_\bK]$ factor through a common polynomial ring of finite variables. Thus, we deduce that $\psi$ also induces an isomorphism between the function fields of $Z$ and $\psi ( Z )$. Once we fix an integer $d_0$, two rings $R^v_e ( \tJ )_{\bK}$ and $R_e ( \tJ )_{\bK}$ share the same ($d$-)grading $d_0$-part for $v \ll_\si e$. Hence, it suffices to prove that $\psi$ restricts to a closed immersion of $\sQ_{\tJ}' ( v, e )_{\bK}$ for $v \ll_\si e$. Here $R^v_e ( \tJ )_{\bK}$ is weakly normal by Lemma \ref{Q-split}. Let $R'$ be the multi-homogeneous coordinate ring of the (reduced induced structure of the) closed subscheme of $\prod_{i \in \tI \setminus \tJ} \P ( V ( \varpi_i )_\bK \otimes \bK (\!(z)\!) )$ defined by $\psi ( \sQ_{\tJ}' ( v, e )_{\bK} )$. Then, if we consider via each maximal integral quotient, we deduce that the weak normalization of $R'$ is precisely $R^v_e ( \tJ )_{\bK}$ (up to irrelevant locus). This implies that $\psi ( \sQ_{\tJ}' ( v, e )_{\bK} )$ defines a closed subscheme for each $v \in W_\af$ as required.

Since $\psi$ is a closed immersion, the rest of assertion follows from Proposition \ref{bQ-mod}. These complete the proof.
\end{proof}

\begin{cor}[of the proof of Theorem \ref{bQ-int}]\label{ZdensebQ}
Let $w \in W_\af$ and $\tJ \subset \tI$. Every two rational functions on $\bQ_{G,\tJ} ( w )_{\bK}$ are distinguished by a pair of $\bK$-valued points of $\sQ'_{\tJ} ( v, w )_{\bK}$ for some $v \in W_\af$. In particular, the union $\bigcup _{v \in W_\af} \sQ'_{\tJ} ( v, w )_{\bK}$ is Zariski dense in $\bQ_{G,\tJ} ( w )_{\bK}$. \hfill $\Box$
\end{cor}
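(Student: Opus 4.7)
The idea is to extract the stabilization phenomenon already appearing inside the proof of Theorem \ref{bQ-int}: namely, for any fixed integer $d_0$, the surjection $R_w ( \tJ )_{\bK} \twoheadrightarrow R^v_w ( \tJ )_{\bK}$ becomes an isomorphism on the $d$-degree $d_0$ component once $v$ is sufficiently small in $\le_\si$. To set this up, I would first remark that every rational function on $\bQ_{G,\tJ} ( w )_{\bK} = \mathrm{Proj} \, R_w ( \tJ )_{\bK}$ can be written as $f/g$ with $f,g \in R_w ( \tJ )_{\bK}$ homogeneous of the same $P_{\tJ,+}$-multidegree, hence in particular of a bounded $d$-degree. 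If $f_1/g_1 \neq f_2/g_2$ as rational functions, then $h := f_1 g_2 - f_2 g_1$ is a non-zero homogeneous element of $R_w ( \tJ )_{\bK}$ of some $d$-degree $d_0$.

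Next, I would invoke the stabilization in the following form: via Lemma \ref{WX-comm}(4), the kernel of $R_w ( \tJ )_{\bK} \twoheadrightarrow R^v_w ( \tJ )_{\bK}$ in multiweight $\lambda$ is dual to the part of $\bW_{ww_0} ( \lambda )$ not captured by $\theta^* ( \bW^-_{vw_0} ( -w_0 \lambda ))$, and since $\bW^-_v ( \mu )$ exhausts each fixed $d$-graded piece of $\bX ( \mu )$ as $v$ decreases in $\le_\si$ (a consequence of Lemma \ref{contain} together with the finiteness of each $d$-graded component of $\bX ( \mu )$ supplied by Theorem \ref{b-compat}), this kernel vanishes on any bounded $d$-range for $v \ll_\si w$. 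Hence $h$ retains non-zero image in $R^v_w ( \tJ )_{\bK}$ for $v$ sufficiently small, so the rational functions $f_1/g_1$ and $f_2/g_2$ pull back to distinct rational functions on $\sQ'_{\tJ} ( v, w )_{\bK}$. By Corollary \ref{Q'red} and Lemma \ref{Q'finite}, the latter is a reduced finite-type $\bK$-scheme, so its $\bK$-points are Zariski dense and therefore separate rational functions, yielding the first assertion.

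The Zariski density assertion then follows formally: if $\bigcup_{v \in W_\af} \sQ'_{\tJ} ( v, w )_{\bK}$ were contained in a proper closed subscheme of $\bQ_{G,\tJ} ( w )_{\bK}$, one could pick a non-zero homogeneous $h' \in R_w ( \tJ )_{\bK}$ vanishing on it, hence on every $\sQ'_{\tJ} ( v, w )_{\bK}$; but by stabilization at $d$-degree $\deg h'$, the image of $h'$ in $R^v_w ( \tJ )_{\bK}$ coincides with $h'$ itself for $v \ll_\si w$ and is therefore non-zero, a contradiction.

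The only substantive step is the stabilization, which is essentially already carried out in the proof of Theorem \ref{bQ-int} for $w = e$; for general $w$ one may either repeat the argument, or first translate by a suitable $t_\beta$ via Lemma \ref{trans} and Corollary \ref{transR} to reduce to the case $w = e$. I do not anticipate any further obstacle.
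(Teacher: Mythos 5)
Your reconstruction is correct and follows essentially the same route the paper takes: the stabilization ``for a fixed integer $d_0$, the rings $R^v_e(\tJ)_\bK$ and $R_e(\tJ)_\bK$ share the same $d$-graded $d_0$-part for $v \ll_\si e$'' is stated explicitly inside the proof of Theorem \ref{bQ-int}, and your chain (bounded $d$-degree of homogeneous representatives, stabilization, then reducedness from Corollary \ref{Q'red} and finite type from Lemma \ref{Q'finite} to pass to $\bK$-points) is precisely what the ``of the proof'' label intends to extract. One small slip in your independent derivation of the stabilization: the paper's identification (see the proof of Proposition \ref{BSDH-type2}) is $R^v_w(\tJ,\la)^\vee = \bW_{ww_0}(\la)_\Z \cap \theta^*(\bW_v(-w_0\la)_\Z)$, and $\theta^*(\bW_v(-w_0\la)_\Z) \cong \bW^-_{vw_0}(\la)_\Z$ by Lemma \ref{WX-comm}(4) --- you wrote $\theta^*(\bW^-_{vw_0}(-w_0\la))$, which is instead $\bW_v(\la)$, the wrong ($U^+$-cyclic) module; the rest of your argument goes through with the corrected identification.
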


\begin{cor}\label{pos-fp}
The conclusions of Theorem {\rm\ref{si-Bruhat}} and Theorem {\rm\ref{QG-proj}} holds when we replace $\C$ with an algebraically closed field $\bK$ of characteristic $\neq 2$.
\end{cor}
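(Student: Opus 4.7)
The plan is to deduce Theorem \ref{QG-proj} directly from the construction in Section \ref{sec:fsQ} and to recover Theorem \ref{si-Bruhat} by combining Theorem \ref{bQ-int} with the Iwasawa decomposition that already appeared in the proof of Proposition \ref{wbQ-mod}. For Theorem \ref{QG-proj}, note that by the definition in \S\ref{subsec:fsQ} one has $(\bQ_G(w))_\Z = \mathrm{Proj}\, R_w$ with $R_w = \bigoplus_{\la \in P_+} \bW_{ww_0}(\la)^\vee_\Z$, so base-changing to $\bK$ yields the desired description verbatim; no extra work is needed since the $\Z$-model was engineered to make this tautological.

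For Theorem \ref{si-Bruhat}, I would first invoke Theorem \ref{bQ-int} to identify the set of $\bK$-points of $(\bQ_G^{\mathrm{rat}})_\bK$ with $G(\!(z)\!)/\bigl(H(\bK)\cdot N(\!(z)\!)\bigr)$. The Iwasawa decomposition recalled at the end of the proof of Proposition \ref{wbQ-mod}, namely
$$G(\!(z)\!) = \bigsqcup_{w \in W_\af} \bI(\bK)\,\dot{w}\dot{w}_0\, H(\bK)\cdot N(\!(z)\!),$$
then partitions $(\bQ_G^{\mathrm{rat}})_\bK(\bK)$ into $\bI(\bK)$-orbits indexed by $W_\af$. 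Lemma \ref{bQ-dense} simultaneously equips each $\bO(w)_\bK$ with a scheme structure, identifying it with $\bI/\bigl(H\cdot(\mathrm{Ad}(\dot{w}\dot{w}_0)(N(\!(z)\!))\cap\bI)\bigr)$ and displaying it as an affine Zariski open $\bI$-orbit of $\bQ_G(w)_\bK$. The analysis inside the proof of Lemma \ref{bQ-dense} that factors $\bI^1$ as $\mathbf L \times \mathbf R$ with both factors being projective limits of extensions of $\Ga$ over $\Z$ then yields $\bO(w)_\bK \cong \A^\infty$ with unique $(H\times\Gm)$-fixed point given by the origin.

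The translation property (2) comes from Corollary \ref{transR} and Lemma \ref{trans}: the isomorphism $R_w \cong R_{wt_\beta}$ of graded rings with $U^+_\Z$-action induces an $\bI$-equivariant isomorphism $\bQ_G(w) \cong \bQ_G(wt_\beta)$ which carries the orbit decomposition accordingly. For the relative dimension formula (3), one counts the affine roots appearing in $\mathrm{Ad}(\dot{w}_0)(N(\!(z)\!))\cap\bI$ but not in $\mathrm{Ad}(\dot{u}\dot{t}_\beta\dot{w}_0)(N(\!(z)\!))\cap\bI$ via the standard affine-root bookkeeping; this reproduces $\ell^\si(ut_\beta)$. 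For the closure relation (4), the $(\Leftarrow)$ direction follows from Lemma \ref{surj-rel}: a surjection $R_v \twoheadrightarrow R_w$ produces a closed immersion $\bQ_G(w)\hookrightarrow\bQ_G(v)=\overline{\bO(v)}$ containing $\bO(w)$. For $(\Rightarrow)$, if the unique $(H\times\Gm)$-fixed point $p_w$ of $\bO(w)$ lies in $\bQ_G(v)$, then evaluating on the section corresponding to the extremal weight vector $\bv_{ww_0\la}$ for $\la \in P_{++}$ forces $\bv_{ww_0\la}\in\bW_{vw_0}(\la)$, and Lemma \ref{contain} then yields $w\le_\si v$.

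The main obstacle is essentially just bookkeeping: all the deep content has been pushed into Theorem \ref{bQ-int}, Lemma \ref{bQ-dense}, Corollary \ref{transR}, and Lemma \ref{contain}, and the present corollary amounts to stitching these together. The one genuine subtlety is handling the closure order, where the characterization of Demazure submodules via $\le_\si$ afforded by Lemma \ref{contain} (a specialization-compatible consequence of the global basis theory of Naito--Sagaki and Kashiwara) is what replaces the topological arguments available over $\C$ in \cite{KNS17}.
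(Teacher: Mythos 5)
Your proposal is correct and overlaps heavily with the paper's short proof, which simply remarks that everything except Theorem \ref{si-Bruhat}\,3) follows from set-theoretic considerations once Theorem \ref{bQ-int} is in hand; you have usefully spelled out those considerations (Iwasawa decomposition, Lemma \ref{bQ-dense}, Corollary \ref{transR}/Lemma \ref{trans}, and the extremal-weight-vector argument via Lemma \ref{contain}). The one place where you diverge from the paper is the dimension formula \ref{si-Bruhat}\,3): you propose a direct affine-root bookkeeping, comparing $\mathrm{Ad}(\dot w_0)(N(\!(z)\!))\cap\bI$ with $\mathrm{Ad}(\dot u\dot t_\beta\dot w_0)(N(\!(z)\!))\cap\bI$, whereas the paper instead observes that the $\mathop{SL}(2,i)$-action exhibits $\bO(w)_\bK$ as an $\A^1$-fibration over $\bO(s_iw)_\bK$ whenever $w\ge_\si s_iw$ and then reads off $\ell^\si$ by induction along the covering relations. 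Your root count is legitimate, but since both stabilizers are infinite-dimensional pro-unipotent groups you must argue that the signed symmetric difference of root sets is finite and well-defined before it can be identified with $\ell^\si(ut_\beta)$; the $\A^1$-fibration argument sidesteps this by only ever comparing two orbits differing by a single step, where the relative-dimension change is manifestly $1$. Either route works given Lemma \ref{bQ-dense}; the paper's is cleaner because it produces an actual scheme morphism rather than a numerical comparison of pro-groups. Your treatments of \ref{si-Bruhat}\,4) and of Theorem \ref{QG-proj} match the paper's intent.
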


\begin{proof}
In view of Theorem \ref{bQ-int}, all the results in Theorem \ref{si-Bruhat} and Theorem \ref{QG-proj} are consequences of the corresponding set-theoretic considerations.
\end{proof}

\begin{cor}\label{Q'nonempty}
We have $\sQ' ( v, w )_{\bK} \neq \emptyset$ if and only if $v \le_\si w$.
\end{cor}

\begin{proof}
By Lemma \ref{non-empty}, it remains to prove that $\sQ' ( v, w )_{\bK} = \emptyset$ if $v \nless_\si w$. By Theorem \ref{bQ-int} and the definition of $\sQ' ( v, w )_{\bK}$, the $(H \times \Gm)$-fixed point of $\bO ( u )_{\bK}$ is contained in $\sQ' ( v, w )_{\bK}$ ($v,w\in W_\af$) only if $v \le_\si u \le _\si w$. Since $\le_\si$ is a partial order, the condition $v \nless_\si w$ implies that $\sQ' ( v, w )_{\bK}$ has no $(H \times \Gm)$-fixed point. Since $\sQ' ( v, w )_{\bK}$ is projective by Lemma \ref{Q'finite}, it carries a $(H \times \Gm)$-fixed point if it is nonempty. Thus, we conclude $\sQ' ( v, w )_{\bK} = \emptyset$ as required.
\end{proof}

\subsection{Coarse representability of the scheme $\bQ_G^{\ra}$}\label{cmp}

Material in this subsection is rather special throughout this paper, and is irrelevant to the arguments in the later part, such as the normality of quasi-map spaces.

In this subsection, we assume that $\mathsf{char} \, \bK \neq 2$, and we also drop subscripts $\bK$ from $( \bQ_G^{\mathrm{rat}} )_{\bK}$ and its subschemes in order to simplify notation.

Let $\Aff _\bK$ be the category of affine schemes over $\bK$. We identify $\Aff _\bK^{op}$ with the category of commutative rings over $\bK$. Let $Zar _\bK$ denote a big Zariski site over $\bK$ \cite[\href{https://stacks.math.columbia.edu/tag/020N}{Section 020N}]{stacks-project}. For $X \in Zar_\bK$, the assignment
$$Zar_{\bK}^{op} \ni U \mapsto \mathrm{Hom} _{Zar_\bK} ( U, X ) \in \mathrm{Sets}$$
defines a sheaf $h_X$ on $Zar_{\bK}$ \cite[\href{https://stacks.math.columbia.edu/tag/00WR}{Definition 00WR}]{stacks-project}.

For the definition on the coarse moduli functors, we refer to \cite[Definition 1.10]{Vie95}. However, we employ some modified definition given in the below:

\begin{defn}[Strict indscheme]
Let $\mathfrak X = \bigcup_{n \ge 0} X_n$ be an increasing union of schemes in $Zar_{\bK}$. We call $( \mathfrak X, \{ X_n \}_n )$ (or simply refer as $\mathfrak X$) a strict indscheme if each inclusion $X_k \subset X_{k+1}$ ($k \ge 0$) is a closed immersion.
\end{defn}

\begin{defn}[Filtered sheaf on $Zar_{\bK}$]
A filtered (pre)sheaf $( \mathcal F, \{ \mathcal F_n \}_{n\ge 0} )$ on $Zar_{\bK}$ is a family of (pre)sheaves such that $\mathcal F_k \subset \mathcal F_{k+1}$ for each $k \in \Z_{\ge 0}$ and $\mathcal F = \bigcup_{n} \mathcal F_n$. Let $( \mathcal F, \{ \mathcal F_n \}_{n\ge 0} )$ and $( \mathcal G, \{ \mathcal G_n \}_{n\ge 0} )$ be filtered (pre)sheaves on $Zar_{\bK}$. A morphism $f : \mathcal F \rightarrow \mathcal G$ of (pre)sheaves is said to be continuous if for each $n \in \Z_{\ge 0}$, there is some $m \in \Z_{\ge 0}$ such that
$$f ( \mathcal F_n ) \subset \mathcal G_m \hskip 5mm \text{ and } \hskip 5mm \mathcal G_n \cap \mathrm{Im}\,f \subset f ( \mathcal F_m ).$$
Let $( \mathfrak F, \{F_n\}_n )$ be a strict indscheme. Then, we call $h_{\mathfrak F} := ( \bigcup_{n} h_{F_n}, \{h_{F_n}\}_{n\ge 0} )$ the filtered sheaf associated to $\mathfrak F$.
\end{defn}

\begin{defn}[Coarse ind-representability]\label{ind-rep}
Let $\mathcal X$ be a filtered (pre)sheaf on $Zar_{\bK}$. Let $\mathfrak X$ be a strict indscheme over $\bK$. We say that $\mathcal X$ is coarsely ind-representable by $\mathfrak X$ if the following conditions hold:
\begin{itemize}
\item We have a continuous morphism $u : \mathcal X \rightarrow h_{\mathfrak X}$ of filtered (pre)sheaves;
\item We have $\mathcal X ( \Bbbk ) = h_{\mathfrak X} ( \Bbbk )$ for an overfield $\Bbbk \supset \bK$;
\item Let $\mathfrak Y$ be a strict indscheme and we have a continuous morphism $f : \mathcal X \rightarrow h_{\mathfrak Y}$, then it factors as:
$$
\xymatrix{
\mathcal X \ar[r]^{u} \ar[dr]_{f} & h_{\mathfrak X}\ar@{..>}[d]^g\\
& h_{\mathfrak Y}\ar@{}[lu]_{\circlearrowright}
},$$
where $g$ is a morphism of presheaves. It is automatic that $g$ is continuous, and hence is induced by a morphism of indschemes.
\end{itemize}
\end{defn}

We consider the assignment $\mathcal Q$ on $\Aff^{op}_\bK$ defined as:
$$\Aff^{op}_\bK \ni R \mapsto \mathcal Q ( R ) := G ( R (\!(z)\!)) / ( H ( R ) N ( R (\!(z)\!) ) ) \in \mathrm{Sets}.$$
For each $n \in \Z_{\ge 0}$, we consider an assignment
$$\Aff^{op}_\bK \ni R \mapsto \mathcal Q_n ( R ) := \{ g \mod H ( R ) N ( R (\!(z)\!) ) \in \mathcal Q ( R ) \mid (\star) \} \in \mathrm{Sets},$$
where
\begin{itemize}
\item[$(\star)$] $g \bv_{\varpi_i}$ has at worst pole of order $n$ on $V ( \varpi_i )_\Z \otimes_{\Z} R (\!(z)\!)$ for each $i \in \tI$.
\end{itemize}

The assignments $( \mathcal Q, \{ \mathcal Q_n \}_n)$ define a filtered presheaf on $Zar_\bK$ that we denote by $\mathcal Q$.

\begin{lem}\label{Qfilt}
The indscheme $\bQ_G^{\ra}$ defines a filtered sheaf on $Zar_\bK$ given by a strict indscheme structure.
\end{lem}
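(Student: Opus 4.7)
The plan is to exhibit $\bQ_G^{\mathrm{rat}}$ as a countable union of Schubert varieties $\bQ_G(v_n)_\bK$ along closed immersions, then invoke the fact that the filtered sheaf structure is automatic once a strict indscheme structure is given.

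First I would enumerate $W_\af = \{w_0, w_1, w_2, \ldots\}$ (it is countable) and inductively choose a sequence $v_0 \le_\si v_1 \le_\si v_2 \le_\si \cdots$ in $W_\af$ such that $v_n \ge_\si w_j$ for every $j \le n$. The existence of such a cofinal ascending sequence is the only nontrivial point; I would obtain it from the $Q^\vee$-translation symmetry. Namely, for any finite subset $S \subset W_\af$, writing each element as $u t_\beta$ with $u \in W$, $\beta \in Q^\vee$, one can use Corollary \ref{transR} together with the translation invariance of $\le_\si$ (coming from the Lusztig characterization recalled at (\ref{si-ord})) to find a single $v \in W_\af$ with $v \ge_\si w$ for all $w \in S$. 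Taking $v_n$ to be such an upper bound for $\{w_0, \ldots, w_n\}$ while keeping the sequence ascending is then routine.

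Setting $X_n := \bQ_G(v_n)_\bK$, Lemma \ref{surj-rel} gives a surjection of the multi-graded coordinate rings $R_{v_{n+1}} \twoheadrightarrow R_{v_n}$, and the description $\bQ_G(w)_\bK = \mathrm{Proj}\, R_w$ from Theorem \ref{QG-proj} (in the form valid over $\bK$ by Corollary \ref{pos-fp}) turns this into a closed immersion $X_n \hookrightarrow X_{n+1}$. By cofinality of $\{v_n\}$ in $(W_\af, \le_\si)$ together with Theorem \ref{si-Bruhat} (and the fact that each $\bQ_G(w)_\bK$ is contained in some $X_n$ for $n$ large enough), we have $\bQ_G^{\mathrm{rat}} = \bigcup_n X_n$. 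This gives $\bQ_G^{\mathrm{rat}}$ the structure of a strict indscheme $(\bQ_G^{\mathrm{rat}}, \{X_n\}_n)$ in the sense of \S\ref{cmp}.

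Finally, the associated filtered sheaf $h_{\bQ_G^{\mathrm{rat}}} = (\bigcup_n h_{X_n}, \{h_{X_n}\}_n)$ is immediate from the definition: each $h_{X_n}$ is a sheaf on $Zar_\bK$ as noted at the opening of \S\ref{cmp}, and the closed immersions $X_n \hookrightarrow X_{n+1}$ induce injective morphisms of sheaves $h_{X_n} \hookrightarrow h_{X_{n+1}}$, so their union is a filtered sheaf. The main (and essentially only) obstacle in the whole argument is producing the cofinal sequence $\{v_n\}$; everything else is formal given the material already developed in \S\ref{sec:fsQ} and \S\ref{sec:mi}.
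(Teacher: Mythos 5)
Your proof reaches the same conclusion but by a genuinely different route than the paper, and has one thin spot. The paper does not filter by a cofinal sequence of Schubert varieties; instead, its $n$-th ind-piece is the closed subscheme of $\bQ_G^{\mathrm{rat}}$ cut out by the pole-order condition. In view of Proposition \ref{bQ-mod}, this is the union of the $\bI$-orbits $\bO(u t_\beta)$ with $\left<\beta, \varpi_i\right> \ge -n$ for every $i \in \tI$, and Lemma \ref{surj-rel} gives the closed immersions between successive pieces. This choice is deliberate: it is manifestly aligned with the filtration $\{\mathcal Q_n\}$ of the functor $\mathcal Q$ coming from the condition $(\star)$, so that the continuity of the comparison morphism $\mathcal Q \to h_{\bQ_G^{\mathrm{rat}}}$ needed in the proof of Proposition \ref{bQcoarse} is essentially by construction, and the identification of the zero-th ind-piece with $\bQ_G(e)$ used there is immediate. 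Your filtration by $\bQ_G(v_n)_\bK$ along a $\le_\si$-cofinal sequence yields an equivalent strict indscheme structure (the two filtrations are mutually cofinal since each Schubert variety has bounded pole order and each pole-order piece is a finite union of Schubert varieties), but this cofinality of filtrations would be one more thing to verify before using the lemma downstream. Separately, your justification that $(W_\af, \le_\si)$ is directed is too thin: Corollary \ref{transR} is about ring isomorphisms and says nothing about the order, and translation invariance of $\le_\si$ alone does not yield a common upper bound; you also need, for instance, that $u t_\beta \le_\si e$ for every $u \in W$ and dominant $\beta \in Q^\vee_+$, which follows from $u \le_\si e$ (the remark after (\ref{si-ord})), $t_\beta \le_\si e$ (cf. the proof of Corollary \ref{af-comp}), and translation invariance. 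With that supplied, your argument goes through.
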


\begin{proof}
A scheme over $\bK$ defines a sheaf over $Zar_\bK$, and so is its increasing union. In view of Proposition \ref{bQ-mod}, the pole order $n$ condition amounts to choose the $\bI$-orbits $\bO (u t_{\beta} )$ ($u \in W, \beta \in Q^{\vee}$) such that $\left< \beta, \varpi_i \right> \ge - n$ (for every $i \in \tI$), that makes the smaller one to be a closed subscheme of the larger one (cf. Lemma \ref{surj-rel}).
\end{proof}

\begin{prop}\label{bQcoarse}
The scheme $\bQ_G^{\ra}$ coarsely ind-represents the presheaf $\mathcal Q$.
\end{prop}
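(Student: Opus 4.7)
The plan is to verify the three conditions of Definition \ref{ind-rep} for $\mathcal X = \mathcal Q$ and $\mathfrak X = (\bQ_G^{\mathrm{rat}})_\bK$. First, I would construct the continuous morphism $u : \mathcal Q \to h_{(\bQ_G^{\mathrm{rat}})_\bK}$ by sending $g \in G(R(\!(z)\!))$ to the tuple $(g \cdot \bv_{\varpi_i})_{i \in \tI}$, which determines an $R$-point of $\prod_{i \in \tI} \P(V(\varpi_i) \otimes_\bK R(\!(z)\!))$; since $\bv_{\varpi_i}$ is fixed by $N(R(\!(z)\!))$ (being the highest weight vector of $V(\varpi_i)$) and only rescaled by $H(R)$, its projective class is well-defined modulo $H(R) N(R(\!(z)\!))$. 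Theorem \ref{bQ-int} places this image inside $(\bQ_G^{\mathrm{rat}})_\bK$ as a closed ind-subscheme, and the pole-order filtration $\{\mathcal Q_n\}$ matches the ind-piece structure of $(\bQ_G^{\mathrm{rat}})_\bK$ given by increasing $d$-degree bounds, yielding continuity.

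Second, for any field $\Bbbk \supseteq \bK$, the identification $\mathcal Q(\Bbbk) = h_{(\bQ_G^{\mathrm{rat}})_\bK}(\Bbbk)$ would follow from the set-theoretic bijection of Theorem \ref{bQ-int} applied over the algebraic closure $\bar\Bbbk$ combined with Galois descent along $\mathrm{Gal}(\bar\Bbbk/\Bbbk)$; compatibility of the two actions is automatic because $G$, $H$, $N$, $V(\varpi_i)$, $\bv_{\varpi_i}$, and the Pl\"ucker embedding of $\bQ_G^{\mathrm{rat}}$ are all defined over $\bK$.

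Third, for universality, given a continuous morphism $f : \mathcal Q \to h_{\mathfrak Y}$ into a strict indscheme $\mathfrak Y = \bigcup_m Y_m$, I would construct $g : h_{(\bQ_G^{\mathrm{rat}})_\bK} \to h_{\mathfrak Y}$ as follows. Uniqueness is clear from the bijection on field-valued points combined with the reducedness of $(\bQ_G^{\mathrm{rat}})_\bK$ (Corollary \ref{FQ-wn} in positive characteristic, and the $\Z$-flat model of Lemma \ref{Q-flat} together with spreading out in characteristic zero). For existence, I would use the distinguished affine open $\bO(e) \subset \bQ_G(e)$ from Lemma \ref{bQ-dense}, isomorphic to $\bI/(H \cdot (N(\!(z)\!) \cap \bI))$, together with the canonical section $\bO(e) \hookrightarrow \bI$ coming from the direct product decomposition $\bI^1 \cong \mathbf L \times \mathbf R$ constructed in the proof of Lemma \ref{bQ-dense}. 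An $R$-point $\rho$ of $(\bQ_G^{\mathrm{rat}})_\bK$ that Zariski-locally lies in a $G(\!(z)\!)$-translate $h \cdot \bO(e)$ is lifted via this section to an element of $h \cdot \bI(R) \subset G(R(\!(z)\!))$, hence to an $R$-point of $\mathcal Q$; setting $g(\rho) := f$ of this lift and gluing via the sheaf property of $h_{\mathfrak Y}$ on the cover of $(\bQ_G^{\mathrm{rat}})_\bK$ by $G(\!(z)\!)$-translates of $\bO(e)$ produces the desired $g$, which corresponds to a morphism of indschemes by Yoneda.

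The crux will be verifying that the construction in the third step is independent of choices. Two lifts in $\bI(R)$ of the same $R$-point of $\bO(e)$ differ by an element of the stabilizer $H(R) \cdot (N(R(\!(z)\!)) \cap \bI(R))$, which is visibly absorbed into $H(R) N(R(\!(z)\!))$ and thus disappears in the coset defining $\mathcal Q(R)$, so $f$ returns the same image; for a translate $h \cdot \bO(e)$ the same check is carried out after conjugation by $h$. Compatibility across different $G(\!(z)\!)$-translates of $\bO(e)$ then reduces, via the $G(\!(z)\!)$-equivariance of both $u$ and $f$ (the latter inherited from its factoring through $\mathcal Q$), to this same invariance, and the continuity condition $f(\mathcal Q_n) \subset h_{Y_{m(n)}}$ ensures that the glued morphism lands in a single finite-type $Y_{m(n)}$ over each ind-piece of $(\bQ_G^{\mathrm{rat}})_\bK$.
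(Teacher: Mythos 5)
Your construction of the morphism $u : \mathcal Q \to h_{(\bQ_G^{\mathrm{rat}})_\bK}$ and the verification of equality on field-valued points match what the paper does (the paper invokes the Bruhat decomposition over $\Bbbk(\!(z)\!)$ where you suggest Galois descent, but this is a minor variant). The third step, however, has a genuine gap. You propose to define $g$ by covering $\bQ_G^{\mathrm{rat}}$ by $G(\!(z)\!)$-translates of $\bO(e)$, lifting along the section $\bO(e) \hookrightarrow \bI$, and gluing via the sheaf property of $h_{\mathfrak Y}$. But $\bO(e)$ is \emph{only} Zariski open inside its closure $\bQ_G(e)$; it is not open in the indscheme $\bQ_G^{\mathrm{rat}}$, because $\bQ_G(e)$ is a proper closed ind-subscheme (of ``infinite codimension'' — it sits strictly inside $\bQ_G(t_\beta)$ for $\beta$ with $t_\beta >_\si e$, and in fact $\bO(e)$ lies in the closure of each larger $\bO(w)$). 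Consequently each translate $h \cdot \bO(e)$ is merely locally closed, both in $\bQ_G^{\mathrm{rat}}$ and in any ambient ind-piece $\bQ_G(w)$, so these subsets do not form a Zariski open cover, and the sheaf-theoretic gluing you invoke cannot be applied. Set-theoretic coverage via the Iwasawa decomposition is not enough here.

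This is precisely the difficulty the paper's proof is designed to overcome. Instead of a direct open cover, the paper constructs the formally smooth Bott--Samelson type resolutions
$$Z(\bi)^{\circ} = \bI(i_1) \times^{\bI} \cdots \times^{\bI} \bO(t_{\beta}) \longrightarrow \bQ_G(e),$$
each of which maps onto a genuine Zariski open neighbourhood of $\bO(t_\beta)$ inside $\bQ_G(e)$ and, being built from $\bI(i_j)$'s and $\bO(t_\beta) \cong \bI/(\cdots)$, visibly lands in $\mathcal Q_0$. These map compatibly to $Y_n$; the paper then glues the $Z(\bi)^{\circ}$ into a scheme $Z$ birational and bijective to $\bQ_G(e)$, proves the fibers of the partial compactifications $Z(\bi) \to \bQ_G(e)$ are connected (Claim~\ref{fpt}), and \emph{crucially} invokes the reducedness and (weak) normality of $\bQ_G(e)$ — Corollary~\ref{FQ-wn} in positive characteristic, Proposition~\ref{R-normal} in characteristic zero — to conclude $Z \cong \bQ_G(e)$ by induction along the Bruhat stratification. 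It is this normality step, needed to descend a morphism from a resolution to the Schubert variety itself, that your outline omits; without some replacement for it (or for the Bott--Samelson construction entirely), the argument does not close.
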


\begin{proof}
We first construct an injective continuous morphism $\mathcal Q \rightarrow h_{\bQ_G^{\ra}}$ as filtered presheaves on $Zar_\bK$ such that $\mathcal Q ( \Bbbk ) = h_{\bQ_G^{\ra}} ( \Bbbk )$ for an overfield $\Bbbk \supset \bK$.

For $R \in \Aff_\bK^{op}$, the set $\mathcal Q ( R )$ is represented by a class of $g \in G ( R (\!(z)\!))$ modulo the right action of $H ( R ) N ( R (\!(z)\!) )$. It defines a point of $\bQ_G^{\ra} ( R )$ by applying $g$ on $\{[\bv_{\varpi_i}]\} _{i \in \tI} \in \prod_{i \in \tI} \P _{R} ( V ( \varpi_i )_{\Z} \otimes_\Z R (\!(z)\!) )$. Since the $G(R(\!(z)\!))$-stabilizer of $\bv_{\varpi_i}$ is precisely $H ( R ) N ( R (\!(z)\!) )$, we conclude an inclusion $\mathcal Q ( R ) \subset h_{\bQ_G^{\ra}} ( R )$. By examining the construction, we deduce that this defines an injective continuous morphism of filtered presheaves.

By the Bruhat decomposition, we have
$$G ( \Bbbk (\!(z)\!) ) / N ( \Bbbk (\!(z)\!) ) = ( G / N ) ( \Bbbk (\!(z)\!) )$$
for an overfield $\Bbbk \supset \bK$ (and hence $\Bbbk (\!(z)\!)$ is a field). In view of Theorem \ref{bQ-int}, we conclude that $\mathcal Q \subset h_{\bQ_G^{\ra}}$ is an inclusion of filtered presheaves with $\mathcal Q ( \Bbbk ) = h_{\bQ_G^{\ra}} ( \Bbbk )$ when $\Bbbk \supset \bK$ is an overfield.

We verify the versality property. Suppose that we have a strict indscheme $( \mathfrak X, \{ X_n \}_n )$ and we have a continuous morphism $\mathcal Q \rightarrow h_{\mathfrak X}$. By Lemma \ref{bQ-dense}, we deduce that each $\bI$-orbit of $\bQ_G^{\ra}$ defines a subfunctor of $\mathcal Q_n$ for some $n \ge 0$. The zero-th ind-piece $( \bQ_G^{\ra} )_0$ in Lemma \ref{Qfilt} is $\bQ_G ( e )$.

For each $t_{\beta} \le_{\si} e$ ($\beta \in Q^{\vee}_+$), we find a reduced expression $t_{\beta}^{-1} = s_{i_1} \cdots s_{i_{\ell}}$ (that we record as $\bi := (i_1,\ldots, i_{\ell})$) and form a scheme
$$Z ( \bi )^{\circ} := \bI (i_1)\times^{\bI} \bI (i_2) \times^{\bI} \cdots \times^{\bI} \bO ( t_{\beta} )$$
and the map
$$Z ( \bi )^{\circ} = \bI (i_1)\times^{\bI} \bI (i_2) \times^{\bI} \cdots \times^{\bI} \bO ( t_{\beta} ) \rightarrow \bQ_G ( e )$$
(see e.g. Kumar \cite[Chapter V\!I\!I\!I]{Kum02}; cf. \cite[\S 6]{Kat18}). In view of Lemma \ref{bQ-dense}, the image of this map contains an open neighbourhood of $\bO ( t_{\beta} )$. In view of the $G(\!(z)\!)$-action (or the various $\SL ( 2, i )$-actions for $i \in \tI_\af$) on $\bQ_G^{\mathrm{rat}}$ and $\mathcal Q$, we have a morphism
$$f_{\bi} : h_{Z ( \bi )^{\circ}} \longrightarrow \mathcal Q_0.$$
By varying $\bi$ (and consequently varying $t_{\beta} \le _{\si} e$), we deduce that the union of the image of the morphisms $\{ f_{\bi} \}_{\bi}$ exhausts $\mathcal Q_0 ( \Bbbk )$ for an overfield $\Bbbk \supset \bK$. From the Yoneda embedding, we derive a map
$$Z ( \bi )^{\circ} \longrightarrow X_n$$
of schemes for some fixed $n \in \Z$. This map factors through a scheme $Z$ that glues (among $\bi$'s) all the closed points that maps to the same points in $\mathcal Q_0$. Such a scheme is integral as $Z ( \bi )^{\circ}$'s are so and the gluing identifies the Zariski open dense subset $\bO ( e )$ for distinct $\bi$'s. In addition, we have a birational map $\pi : Z \rightarrow \bQ_G ( e )$, and hence we have
$$Z ( \Bbbk ) = \bQ_G ( e ) ( \Bbbk ) = \mathcal Q _0 ( \Bbbk ) \hskip 3mm \text{for an overfield} \hskip 3mm\Bbbk \supset \bK.$$

We prove that $Z = \bQ_G ( e )$ by induction. For each $m \in \Z_{\ge 0}$, let $\bQ_G ( e ) _{<m}$ (resp. $\bQ_G ( e ) _{\le m}$) be the union of $\bI$-orbits in $\bQ_G ( e )$ of the shape $\bO ( v )$ for $\ell^{\si} ( v ) < m$ (resp. $\le m$).

Assume that the map $\pi$ is an isomorphism when restricted to $\bQ_G ( e ) _{<m}$, and we prove the same is true when restricted to $\bQ_G ( e ) _{\le m}$. The $m = 1$ case is afforded by $\bO ( e ) \subset Z$ already used in the construction of the above.

We have a partial compactification $Z ( \bi )$ of $Z ( \bi )^{\circ}$ with a map $f^+_{\bi}$ given as:
$$Z ( \bi ) := \bI (i_1)\times^{\bI} \bI (i_2) \times^{\bI} \cdots \times^{\bI} \bQ_G ( t_{\beta} ) \stackrel{f^+_{\bi}}{\longrightarrow} \bQ_G ( e ).$$
Note that we have a surjective morphism induced by $\bO ( t_{\beta} ) \to \Spec \, \bK$
$$\eta_{\bi} : Z ( \bi ) \longrightarrow \bI (i_1)\times^{\bI} \bI (i_2) \times^{\bI} \cdots \times^{\bI} \Spec \, \bK,$$
where we denote the image (the RHS term) by $Z' ( \bi )$. Since $Z' ( \bi )$ is a finite successive $\P^1$-fibration, it is proper. The map $f_{\bi}^+$ is proper as the product map
$$( \eta_{\bi} \times f_{\bi}^+ ) : Z ( \bi ) \hookrightarrow Z' ( \bi ) \times \bI ( i_1 ) \cdots \bI ( i_{\ell} ) \bQ_G ( t_{\beta} ) = Z' ( \bi ) \times \bQ_G ( e )$$
is a closed immersion. In view of the isomorphism $\bQ_G ( t_{\beta} ) \cong \bQ_G ( e )$, we transplant $\bQ_G ( e )_{< m}$ to $\bQ_G ( t_{\beta} )_{< m}$.

\begin{claim}\label{fpt}
For each closed point $x \in \bQ_G ( e )_{\le m}$, the scheme
$$( f_{\bi}^+ )^{-1} ( x ) \setminus \left( ( f_{\bi}^+ )^{-1} ( x ) \cap ( \bI (i_1)\times^{\bI} \bI (i_2) \times^{\bI} \cdots \times^{\bI} \bQ_G ( t_{\beta} )_{<m} ) \right) \subset ( f_{\bi}^+ )^{-1} ( x )$$
is a closed subscheme that is zero-dimensional. In other words, it is a finite union of points $($that is potentially an empty set$)$.
\end{claim}

\begin{proof}
For each sequence $(j_1,\ldots,j_s) \in \tI_\af^s$ ($s \in \Z_{> 0}$) and $v\le_\si e$ such that $\ell ^\si ( v ) = \ell$, the image of the map
$$f : \bI ( j_1 ) \times^{\bI} \bI ( j_2 ) \times^{\bI} \cdots \times^{\bI} \bI ( j_s ) \times^{\bI} \bQ_G ( v ) \longrightarrow \bQ_G^{\mathrm{rat}}$$
induced by the multiplication is a union of $\bI$-orbits $\bO ( v' )$ with $\ell ^\si ( v' ) \ge \ell - s$ (as we have $\ell^{\si} ( s_i w ) \in \{\ell^{\si} ( w ) \pm 1 \}$ for each $i \in \tI_\af$ and $w \in W_\af$ by \cite[Lecture 13, Proposition $\ell_s$]{Pet97}). In addition, if the image of the map $f$ contains $\bO ( v' )$ for $\ell ^\si ( v' ) = \ell - s$, then the map $f$ is an isomorphism along $\bO ( v' )$ (as the isomorphism between open subsets). By collecting these for $\bI$-orbits in the closed subset $\bQ_G ( t_{\beta} ) \setminus \bQ_G ( t_{\beta} )_{<m}$ of $\bQ_G ( t_{\beta} )$ in the construction of the (proper) map $f_{\bi}^+$, we conclude the result. 
\end{proof}

We return to the proof of Proposition \ref{bQcoarse}. By Claim \ref{fpt}, we deduce that
$$\overline{( f_{\bi}^+ )^{-1} ( x ) \cap \bI (i_1)\times^{\bI} \bI (i_2) \times^{\bI}  \cdots \times^{\bI} \bQ_G ( t_{\beta} )_{<m}} \subset ( f_{\bi}^+ )^{-1} ( x )$$
is a union of connected components of $( f_{\bi}^+ )^{-1} ( x )$ for each closed point $x \in \bQ_G ( e )_{\le m}$.

Requiring that regular functions on $( f_{\bi}^+ )^{-1} ( \bQ_G ( e )_{\le m} )$ to be constant along all the fibers yield sections in $(f^+_\bi)_* \cO_{Z ( \bi )}$. From this (for arbitrary $t_{\beta} \le_\si e$ and $\bi$) and the induction hypothesis, we conclude that $\pi^{-1} ( \bQ_G ( e )_{\le m} ) \subset Z$ is a union of proper schemes over $\bQ_G ( e )_{\le m}$ that contains a Zariski open subset $\bQ_G ( e )_{< m}$. In view of Corollary \ref{FQ-wn} (when $\mathsf{char} \, \bK > 0$) or Proposition \ref{R-normal} (when $\mathsf{char} \, \bK = 0$; cf. \cite[Theorem A]{KNS17}), we deduce that
$$( Z \supset ) \hskip 3mm \pi^{-1} ( \bQ_G ( e )_{\le m} ) \to \bQ_G ( e )_{\le m}$$ defines an isomorphism as schemes (as $\pi^{-1} ( \bQ_G ( e )_{\le m} ) \to \bQ_G ( e )_{\le m}$ is finite bijective, and birational, cf. \cite[\href{https://stacks.math.columbia.edu/tag/02LQ}{Section 02LQ}]{stacks-project}). Therefore, induction on $m$ proceeds and we conclude $Z \cong \bQ_G ( e )$ as schemes. Thus, we obtain a morphism $\bQ_G ( e ) \to X_n$ of schemes. 

By rearranging $\bQ_G( e )$ by the right $Q^{\vee}$-translations, we deduce a morphism $\bQ_G ^{\ra} \to \mathfrak X$ as indschemes. This yields a continuous morphism $h_{\bQ_G^{\ra}} \to h_{\mathfrak X}$. Therefore, $h_{\bQ_G^{\ra}}$ is an initial object in the category of sheaves on $Zar_\bK$ ind-representable by strict indschemes that admits a continuous morphism from $\mathcal Q$ as required.
\end{proof}

\begin{cor}
For each $\tJ \subset \tI$, the scheme $\bQ_{G,\tJ}^{\ra}$ coarsely ind-represents the filtered presheaf $\mathcal Q_{\tJ}$ defined by
$$\Aff^{op}_\bK \ni R \mapsto \mathcal Q_{\tJ} ( R ) := G ( R (\!(z)\!)) / \left( H ( R ) \cdot [P_{\tJ}, P_{\tJ}] ( R (\!(z)\!) ) \right) \in \mathrm{Sets}.$$
\end{cor}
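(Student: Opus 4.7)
The plan is to reduce the parabolic statement to the case $\tJ = \emptyset$ treated in Proposition \ref{bQcoarse}, exploiting the natural projection $\pi_\tJ : \bQ_G^{\mathrm{rat}} \to \bQ_{G,\tJ}^{\mathrm{rat}}$ induced by discarding the projective factors indexed by $\tJ$. First I would verify that the analogue of Lemma \ref{Q-Zar} holds for $\mathcal Q_\tJ$, that is, that the presheaf $R \mapsto G(R(\!(z)\!))/(H(R)\cdot[P_\tJ,P_\tJ](R(\!(z)\!)))$, together with its subpresheaves $\mathcal Q_{\tJ,n}$ defined by the same pole-order condition $(\star)$ but only for $i \in \tI\setminus\tJ$, actually sheafifies; the same Bruhat/solvability argument as before applies after one notes that $[P_\tJ,P_\tJ]$ is still a semidirect product of a Levi part and a pro-unipotent radical, so cocycles over a Zariski cover can be split over each piece. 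Similarly, the strict indscheme structure on $\bQ_{G,\tJ}^{\mathrm{rat}}$ coming from the pole orders of the coordinates $\{v_{\varpi_i}\}_{i \in \tI\setminus\tJ}$ (cf.\ Lemma \ref{Qfilt}) yields the filtered sheaf $h_{\bQ_{G,\tJ}^{\mathrm{rat}}}$.

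Next I would construct the continuous morphism $u_\tJ : \mathcal Q_\tJ \to h_{\bQ_{G,\tJ}^{\mathrm{rat}}}$. For $R \in \Aff_\bK^{op}$, a class $[g] \in \mathcal Q_\tJ(R)$ maps to the $R$-point of $\prod_{i \in \tI\setminus\tJ} \P_R(V(\varpi_i)_\Z \otimes_\Z R(\!(z)\!))$ obtained by applying $g$ to $(\bv_{\varpi_i})_{i \in \tI\setminus\tJ}$; this is well-defined because the $G(R(\!(z)\!))$-stabilizer of this collection is exactly $H(R)\cdot[P_\tJ,P_\tJ](R(\!(z)\!))$, and it lands in $\bQ_{G,\tJ}^{\mathrm{rat}}(R)$ by Theorem \ref{bQ-int}. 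Over an overfield $\Bbbk \supset \bK$, the Bruhat decomposition together with Theorem \ref{bQ-int} gives $\mathcal Q_\tJ(\Bbbk) = h_{\bQ_{G,\tJ}^{\mathrm{rat}}}(\Bbbk)$.

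It remains to verify the versality property of Definition \ref{ind-rep}. Given a continuous morphism $f : \mathcal Q_\tJ \to h_{\mathfrak Y}$ into the sheaf of a strict indscheme, I would pull it back along the canonical surjection $\mathcal Q \twoheadrightarrow \mathcal Q_\tJ$ to obtain a continuous morphism $\tilde f : \mathcal Q \to h_{\mathfrak Y}$. Proposition \ref{bQcoarse} provides a unique factorization through $g : \bQ_G^{\mathrm{rat}} \to \mathfrak Y$. The task is then to show that $g$ descends through $\pi_\tJ$, i.e.\ that $g$ is constant on the fibers of $\pi_\tJ$. Since the continuous morphism $\mathcal Q \to \mathcal Q_\tJ$ is surjective on $\Bbbk$-points for every overfield and $\tilde f$ factors through $\mathcal Q_\tJ$ by construction, the set-theoretic descent of $g$ is immediate. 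To upgrade this to a scheme-theoretic descent I would mimic the inductive argument in the proof of Proposition \ref{bQcoarse}: work orbit-by-orbit, use the fact (coming from Corollary \ref{FQ-wn} in positive characteristic and Proposition \ref{R-normal} in characteristic zero) that each finite-type piece $\bQ_{G,\tJ}(w)_\bK$ is weakly normal, and use that $\pi_\tJ$ is a locally trivial fibration in $\bI$-stable pieces whose fibers are Schubert varieties of a partial affine flag variety so that any morphism of schemes which is constant on geometric fibers over a weakly normal base factors through the quotient.

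The main obstacle will be precisely this last scheme-theoretic descent step: while the set-theoretic factorization is formal from Theorem \ref{bQ-int}, one must carefully handle the fact that $\pi_\tJ$ has positive-dimensional fibers, so the factorization of $g$ through $\pi_\tJ$ cannot be read off point-by-point. The remedy is to observe that the compatibility of $f$ with the full $G(\!(z)\!)$-action forces $g$ to annihilate $\bI$-stable vertical tangent directions along each $\bO(w)$, and then to glue the resulting maps on the weakly normal pieces $\bQ_{G,\tJ}(w)_\bK$ using the proper-birational criterion already exploited in the proof of Proposition \ref{bQcoarse}.
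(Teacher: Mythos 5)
Your overall strategy — reduce to the $\tJ=\emptyset$ case via the surjection $\mathcal Q \twoheadrightarrow \mathcal Q_\tJ$, use Proposition \ref{bQcoarse} to get a factorization $g:\bQ_G^{\mathrm{rat}}\to\mathfrak Y$, and then descend $g$ through $\pi_\tJ$ — is the right skeleton and matches the paper's plan of reducing to the full flag case. The morphism $u_\tJ$ and the $\Bbbk$-point identification over overfields are handled exactly as in Proposition \ref{bQcoarse}, and that part is fine.

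The gap is in your descent step. Two of the tools you invoke do not actually apply here. First, you claim ``the compatibility of $f$ with the full $G(\!(z)\!)$-action forces $g$ to annihilate $\bI$-stable vertical tangent directions'' — but $\mathfrak Y$ is an \emph{arbitrary} strict indscheme in Definition \ref{ind-rep}, with no $G(\!(z)\!)$-action imposed, so $f$ and hence $g$ carry no equivariance whatsoever; the argument cannot run through tangent-space equivariance. Second, the ``proper-birational criterion already exploited in the proof of Proposition \ref{bQcoarse}'' is a finite-bijective-birational-implies-isomorphism statement (via weak normality); it applies to maps with zero-dimensional fibers and cannot be used to descend $g$ along $\pi_\tJ$, whose fibers are positive-dimensional (they are semi-infinite flag pieces of the Levi of $P(\tJ)$, not Schubert varieties of a partial affine flag variety as you suggest). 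The mechanism that actually closes the gap in the paper is \emph{Corollary \ref{ZdensebQ}}, which says that two rational functions on $\bQ_{G,\tJ}(w)_{\bK}$ are already distinguished by $\bK$-valued points of some finite-dimensional piece $\sQ'_\tJ(v,w)_{\bK}$. Since $g$ is set-theoretically constant on the fibers of $\pi_\tJ$ (which follows formally from the factorization of $\tilde f$ through $\mathcal Q_\tJ$ on $\bK$-points, together with $\mathcal Q_\tJ(\bK)=\bQ_{G,\tJ}^{\mathrm{rat}}(\bK)$ from Theorem \ref{bQ-int}), this $\bK$-point density of rational-function separation is precisely what identifies the universal point-gluing quotient $X$ of $\bQ_G^{\mathrm{rat}}$ (admitting a continuous morphism from $\mathcal Q_\tJ$) with $\bQ_{G,\tJ}^{\mathrm{rat}}$. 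In short: your reduction is right, your descent mechanism is wrong, and you need to replace the equivariance/finite-birational argument with the separation-by-$\bK$-points result of Corollary \ref{ZdensebQ}.
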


\begin{proof}
By construction, we have a continuous morphism of presheaves $\mathcal Q \rightarrow \mathcal Q_{\tJ}$ (by transplanting subsheaves $\mathcal Q_n$ to $\mathcal Q_{\tJ}$ via this map). Thus, the coarse ind-representability of $\bQ_{G}^{\ra}$ implies that the maximal indscheme $X$ obtained by gluing points of $\bQ_{G}^{\ra}$ admits a continuous morphism $\mathcal Q_{\tJ} \rightarrow h_X$ coarsely ind-represents the filtered presheaf $\mathcal Q_{\tJ}$. Every two rational functions on $\bQ_{G}(w)$ ($w \in W_\af$) are distinguished by some pair of $\bK$-valued points (Corollary \ref{ZdensebQ}). Since we have $\mathcal Q_{\tJ} ( \bK ) = \bQ_{G, \tJ}^{\ra} ( \bK )$, we conclude that $X = \bQ_{G, \tJ}^{\ra}$.
\end{proof}

\subsection{The properties of the schemes $\sQ'_{\tJ} ( v, w )$}\label{Q'Jvw}

In the rest of this section, we assume $\mathsf{char} \, \bK \neq 2$.

\begin{lem}\label{Q-mod}
Let $\tJ \subset \tI$. For each $\beta \in Q^{\vee}_+$, the set of $\bK$-valued points of $\sQ'_{\tJ} ( \beta, e )_{\bK}$ is in bijection with the collection of elements $\{u_\la ( z )\}_{\la \in P_{\tJ,+}}$ such that
\begin{itemize}
\item We have $u_{\la} ( z ) \in V ( \la )_{\bK} \otimes \bigoplus_{j = 0}^{- \left< w_0 \beta, \la \right>} \bK z^j \setminus \{0\}$;
\item For each $\la, \mu \in P_+$, we have $\eta_{\la, \mu} ( u_{\la} ( z ) \otimes u_{\mu} ( z ) ) = u _{\la + \mu} ( z )$;
\end{itemize}
modulo the action of $H(\bK)$.
\end{lem}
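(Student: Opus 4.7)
The plan is to unwind the definition $\sQ'_\tJ(\beta, e) = \mathrm{Proj}\, R^\beta_e(\tJ)$ and identify its $\bK$-valued points with Drinfeld--Pl\"ucker data carrying a polynomial degree bound. The backbone is Theorem \ref{bQ-int}: a $\bK$-valued point of $(\bQ^{\mathrm{rat}}_{G,\tJ})_\bK$ is represented by a coset of $g \in G(\!(z)\!)$ modulo $H(\bK)\cdot [P(\tJ),P(\tJ)](\!(z)\!)$, and for each $\la \in P_{\tJ,+}$ the vector $u_\la := g \cdot \bv_\la^0 \in V(\la)_\bK \otimes \bK(\!(z)\!)$ is well-defined up to left $H(\bK)$-scaling (since $\bv_\la^0$ is stabilized by $N$ and by the semisimple part of the Levi of $P(\tJ)$ under the hypothesis $\la \in P_{\tJ,+}$). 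The compatibility $\eta_{\la,\mu}(u_\la \otimes u_\mu) = u_{\la+\mu}$ then follows from Lemma \ref{eta-compat} after fixing representatives in a multiplicative fashion, because $\eta_{\la,\mu}$ is the $\bK(\!(z)\!)$-linear extension of the $\dot U^0_\Z$-coproduct and $g$ acts diagonally through the tensor identification.

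Second, I would cut down to the closed subset $\bQ_{G,\tJ}(e)_\bK$. The proof of Proposition \ref{bQ-mod} exhibits a $U^+_\bK$-equivariant surjection $\bW(\varpi_i)_\bK \twoheadrightarrow V(\varpi_i)_\bK \otimes \bK[z]$ whose $d$-grading matches the $z$-grading; dualizing and using that the coordinate ring of $\bQ_{G,\tJ}(e)_\bK$ is $R_e(\tJ)_\bK = \bigoplus_{\la} \bW(\la)_\bK^\vee$, each $u_\la$ can be represented in $V(\la)_\bK \otimes \bK[z]$ (non-negative powers of $z$ only) after an appropriate $H(\bK)$-rescaling.

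Third, I would identify the additional truncation produced by the quotient $R^+(\tJ) \twoheadrightarrow \theta^*(R_{vw_0}) = \theta^*(R_{t_{w_0\beta}})$ (for $v = w_0 t_\beta$, so $vw_0 = w_0 t_\beta w_0 = t_{w_0\beta}$) distinguishing $R^\beta_e(\tJ)$ inside $R_e(\tJ)$. Mimicking the calculation at the end of the proof of Theorem \ref{F-uniq}, the constraint translates into $u_\la$ lying in the image of $\theta^*(\bW_{t_{w_0\beta}w_0}(-w_0\la)) \cong \bW^-_{t_{w_0\beta}}(\la) = U^-_\bK \bv_{t_{w_0\beta}\la} \subset \bX(\la)_\bK$ (the identification follows from Lemma \ref{WX-comm}(4) applied with $w = t_{w_0\beta}w_0$ and $\la$ replaced by $-w_0\la$). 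The cyclic vector $\bv_{t_{w_0\beta}\la}$ carries $d$-weight $-\langle w_0\beta, \la\rangle \ge 0$ (since $w_0\beta \in -Q^\vee_+$); since $F_i$ ($i \in \tI$) preserves $d$-weight while $F_0$ lowers it by one, the $U^-_\bK$-orbit is supported in $d$-weights $\le -\langle w_0\beta, \la\rangle$. Via the $\bX(\la)_\bK \twoheadrightarrow V(\la)_\bK \otimes \bK[z, z^{-1}]$ projection of Proposition \ref{bQ-mod} (extended to $\bX$ through $\tau$-equivariance), and combined with the polynomial condition from Step 2, this forces $u_\la \in V(\la)_\bK \otimes \bigoplus_{j=0}^{-\langle w_0\beta, \la\rangle} \bK z^j$.

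Conversely, any collection $(u_\la)_\la$ satisfying both conditions furnishes a degree-preserving ring homomorphism $R^\beta_e(\tJ)_\bK \to \bK$ (modulo $H(\bK)$-scaling): by Lemma \ref{Qsurj-mult} it suffices to evaluate on the degree-$\varpi_i$ pieces ($i \in \tI \setminus \tJ$), and the quadratic relations cutting $R^+(\tJ)$ out of $\bigoplus_\la \bX(\la)^\vee$ are precisely the $\eta_{\varpi_i,\varpi_j}$-compatibilities (as in the closing argument of the proof of Proposition \ref{bQ-mod}), while the polynomial degree bounds exactly encode the vanishing imposed by the $\theta^*(R_{t_{w_0\beta}})$-quotient. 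The hardest part of this plan will be Step 3, namely verifying not just the containment but the \emph{equality} between the image of $\bW^-_{t_{w_0\beta}}(\la)_\bK$ in $V(\la)_\bK \otimes \bK[z, z^{-1}]$ and $V(\la)_\bK \otimes \bK[z]_{\le -\langle w_0\beta, \la\rangle}$; I expect this to reduce, via a $\tau_{w_0\beta}$-twist (Lemma \ref{WX-comm}(1)), to the surjectivity of $\bW(\la)_\bK \twoheadrightarrow V(\la)_\bK \otimes \bK[z]$ already used in Step 2, together with a character count, but the $d$-grading bookkeeping has to be carried out with care.
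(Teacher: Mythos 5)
Your proposal is correct and follows essentially the same route as the paper: Theorem \ref{bQ-int} supplies the pole-free constraint at $z = 0$, the built-in $\theta$-symmetry of the construction of $R^\beta_e(\tJ)$ supplies the truncation at $z = \infty$, and the $\eta$-compatibilities are exactly the Pl\"ucker relations recorded in the closing paragraphs of the proof of Proposition \ref{bQ-mod}. Your closing worry resolves the way you anticipate: the paper identifies $\sQ'_\tJ(\beta,e)$ directly as the intersection of $\bQ_{G,\tJ}^{\mathrm{rat}}$ with the two degree-bound loci, so the $\theta$-twist of Theorem \ref{bQ-int} (applied to $\bQ_{G,\tJ}(t_{w_0\beta})$) gives the needed equality of $\bK$-valued points without a separate module computation; just beware that the image of $\bW^-_{t_{w_0\beta}}(\la)_\bK$ in $V(\la)_\bK \otimes \bK[z,z^{-1}]$ is $V(\la)_\bK \otimes z^{-\langle w_0\beta,\la\rangle}\bK[z^{-1}]$ rather than the finite band you wrote, and the band only emerges after intersecting with the non-negative-degree side coming from $\bW(\la)_\bK$.
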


\begin{proof}
We have
$$\bQ_{G,\tJ} ( e )_{\bK} = ( \bQ_{G,\tJ}^{\ra} )_{\bK} \cap \prod_{i \in \tI \setminus \tJ} \P ( V ( \varpi_i ) _\bK \otimes \bK [\![z]\!]) \subset \prod_{i \in \tI \setminus \tJ} \P ( V ( \varpi_i ) _\bK \otimes \bK (\!(z)\!))$$
by Theorem \ref{bQ-int}. By the symmetry of the construction of $\sQ'_\tJ ( \beta, e )$ in terms of $\theta$ (cf. Remark \ref{enh-emb}), we conclude that
$$\sQ'_\tJ ( \beta, e )_\bK = ( \bQ_{G,\tJ}^{\ra} )_{\bK} \cap \prod_{i \in \tI \setminus \tJ} \P ( V ( \varpi_i ) _\bK \otimes \bK [\![z]\!]) \cap \prod_{i \in \tI \setminus \tJ} \P ( V ( \varpi_i ) _\bK \otimes \bK [\![z^{-1}]\!] z^{- \left< \beta, w_0 \varpi_i \right>} )$$
inside $\prod_{i \in \tI \setminus \tJ} \P ( V ( \varpi_i ) _\bK \otimes \bK [\![z, z^{-1}]\!] )$, that is our degree bound. In view of this, it suffices to remember that the second condition is the same as the Pl\"ucker relation that defines $G (\!(z)\!) / H ( \bK ) \cdot [P( \tJ ), P( \tJ )] (\!(z)\!)$ in the last two paragraphs of the proof of Proposition \ref{bQ-mod}.
\end{proof}

Let $w,v \in W_\af$ and $\tJ \subset \tI$. For each $\la \in P_{\tJ, +}$, we have a line bundle $\cO_{\sQ'_{\tJ} ( v, w )} ( \la )$ on $\sQ'_{\tJ} ( v, w )$ such that we have a map
$$R ^v _w ( \tJ, \la ) \rightarrow \Gamma ( \sQ'_{\tJ} ( v, w ), \cO_{\sQ'_{\tJ} ( v, w )} ( \la ) )$$
that commutes with the multiplications. This yields $\cO_{\sQ'_{\tJ} ( v, w )} ( \la )$ for each $\la \in P_{\tJ}$ by tensor products.

\begin{lem}\label{ampleness}
For each $w, v \in W_\af$, the line bundle $\cO_{\sQ'_{\tJ} ( v, w )} ( \la )$ is very ample if $\left< \al_i^{\vee}, \la \right> > 0$ for every $i \in \tI \setminus \tJ$.
\end{lem}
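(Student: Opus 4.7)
The plan is to read off very ampleness directly from the construction of $\sQ'_{\tJ}(v,w)$ as a multi-homogeneous $\mathrm{Proj}$ from (\ref{mproj}), combined with the surjectivity of multiplication (Lemma~\ref{Qsurj-mult}). By the very definition of $\mathrm{Proj}\,R^v_w(\tJ)$, there is a canonical closed immersion
$$\sQ'_{\tJ}(v,w) \hookrightarrow \prod_{i \in \tI\setminus\tJ} \P_{\Z}\bigl(R^v_w(\tJ,\varpi_i)^{\vee}\bigr),$$
under which $\cO_{\sQ'_{\tJ}(v,w)}(\varpi_i)$ is the pullback of the tautological $\cO(1)$ from the $i$-th factor.

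Write $\la = \sum_{i \in \tI\setminus\tJ} a_i\varpi_i$ with $a_i := \langle \al_i^\vee,\la\rangle > 0$. First, I would invoke the Segre-Veronese embedding
$$\prod_{i \in \tI\setminus\tJ} \P\bigl(R^v_w(\tJ,\varpi_i)^{\vee}\bigr) \hookrightarrow \P\Bigl(\bigotimes_{i\in\tI\setminus\tJ}\bigl(R^v_w(\tJ,\varpi_i)^{\vee}\bigr)^{\otimes a_i}\Bigr),$$
which is a closed immersion precisely because every $a_i$ is strictly positive, and whose defining very ample line bundle is $\bigotimes_i \cO(\varpi_i)^{\otimes a_i}$. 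Restricting to $\sQ'_{\tJ}(v,w)$, this line bundle pulls back to $\cO_{\sQ'_{\tJ}(v,w)}(\la)$, and we obtain a closed immersion of $\sQ'_{\tJ}(v,w)$ into a single projective space realizing $\cO_{\sQ'_{\tJ}(v,w)}(\la)$ as the pullback of $\cO(1)$. Hence $\cO_{\sQ'_{\tJ}(v,w)}(\la)$ is very ample.

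To sharpen the statement so that the embedding lands in $\P(R^v_w(\tJ,\la)^{\vee})$ itself, I would then apply Lemma~\ref{Qsurj-mult} iteratively to produce a surjective multiplication map
$$\bigotimes_{i \in \tI\setminus\tJ} R^v_w(\tJ,\varpi_i)^{\otimes a_i} \twoheadrightarrow R^v_w(\tJ,\la),$$
whose dual gives a closed immersion $\P(R^v_w(\tJ,\la)^{\vee}) \hookrightarrow \P\bigl(\bigotimes_i (R^v_w(\tJ,\varpi_i)^{\vee})^{\otimes a_i}\bigr)$ through which the Segre-Veronese map above factors.

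There is no substantial obstacle: the argument is a formal consequence of two already established facts, namely the multi-homogeneous $\mathrm{Proj}$ description (\ref{mproj}) and the surjectivity of multiplication (Lemma~\ref{Qsurj-mult}). The only point requiring genuine care is that strict positivity $\langle \al_i^\vee,\la\rangle > 0$ for \emph{every} $i\in\tI\setminus\tJ$ is exactly the hypothesis needed to ensure that Segre-Veronese is a closed immersion rather than merely a morphism; were any $a_i$ to vanish, the corresponding factor $\P(R^v_w(\tJ,\varpi_i)^{\vee})$ would collapse, so the composite map could fail to separate points lying in a common fiber of the projection, and only semi-ampleness (not very ampleness) would follow.
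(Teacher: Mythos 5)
Your proof is correct, and it takes a genuinely cleaner and more direct route than the paper's. The paper first establishes very ampleness of $\cO_{\sQ'_\tJ(v,w)}(\rho_\tJ)$ (the multi-degree $(1,\ldots,1)$ Segre embedding combined with the surjection onto $R^v_w(\tJ,\rho_\tJ)$), and then \emph{bootstraps} to an arbitrary strictly dominant $\la$ by multiplying by sections dual to extremal weight vectors and invoking base-point-freeness on $(H\times\Gm)$-stable neighbourhoods of the $(H\times\Gm)$-fixed points. You skip the two-step reduction entirely and apply the general Segre--Veronese embedding of multi-degree $(a_1,\ldots,a_k)$ directly; the strict dominance hypothesis is precisely what guarantees that each $a_i\ge 1$ so this map is a closed immersion, and the pullback of $\cO(1)$ is $\bigotimes_i\cO(\varpi_i)^{\otimes a_i}=\cO(\la)$ by construction. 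Your final factorization through $\P(R^v_w(\tJ,\la)^\vee)$ using Lemma~\ref{Qsurj-mult} recovers the same sharpening the paper extracts (that the sub-linear system $R^v_w(\tJ,\la)$ already gives the embedding), but without appealing to the finer $(H\times\Gm)$-orbit structure or extremal weight vectors, and without needing to worry about whether multiplication by a fixed section is injective on a possibly non-integral ring. Both arguments rest on the same two facts---the definition of $\mathrm{Proj}$ from (\ref{mproj}) and the surjectivity in Lemma~\ref{Qsurj-mult}---but yours packages them more efficiently; the only triviality you omit is the degenerate case $\sQ'_\tJ(v,w)=\emptyset$, where the assertion is vacuous.
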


\begin{proof}
We can assume that $\sQ'_{\tJ} ( v, w ) \neq \emptyset$ without the loss of generality.

By Lemma \ref{Qsurj-mult}, we have
$$\sQ'_{\tJ} ( v, w ) \hookrightarrow \prod_{i \in \tI \setminus \tJ} \P_\Z ( R ^v _w ( \varpi_i )^{\vee} ).$$
From this and again by Lemma \ref{Qsurj-mult}, we deduce that the following diagram is commutative:
$$\xymatrix{
\prod_{i \in \tI \setminus \tJ} \P_\Z ( R ^v _w ( \tJ, \varpi_i )^{\vee} ) \ar@{^{(}->}[r] ^{\xi}&  \P_\Z ( \bigotimes_{i \in \tI \setminus \tJ} R ^v _w ( \tJ, \varpi_i )^{\vee} ) & \ar[l]_{\hskip 8mm \kappa} \P_\Z ( R ^v _w ( \tJ, \rho_\tJ )^{\vee} )\\
& \sQ'_{\tJ} ( v, w ) \ar@{_{(}->}[ul] \ar[ur]& 
},$$
where the $\xi$ is the Veronese embedding and $\kappa$ is induced from the multiplication map. The ring $R ^v _w ( \tJ )$ is reduced by Corollary \ref{Q'red}. Since $\left< \al_i^{\vee}, \rho_\tJ \right> > 0$ for every $i \in \tI \setminus \tJ$ and the multiplication maps are surjective, a non-zero element of $R ^v _w ( \tJ, \rho_\tJ )$ is not supported in the irrelevant locus. It follows that $R ^v _w ( \tJ, \rho_\tJ ) \subset H^0 ( \sQ'_\tJ ( v, w ), \cO_{\sQ'_\tJ ( v, w )} ( \rho_\tJ ) )$ and it is spanned by a product of linear functions that separates closed points of $\sQ'_\tJ ( v, w )$. In particular, $\cO_{\sQ'_\tJ ( v, w )} ( \rho_\tJ )$ is very ample. Since we have embeddings
$$R ^v _w ( \tJ, \rho_\tJ ) \subset R ^v _w ( \tJ, \la ) \subset H^0 ( \sQ'_\tJ ( v, w ), \cO_{\sQ'_\tJ ( v, w )} ( \la ) )$$
obtained through multiplications corresponding to the duals of extremal weight vectors (so that the image is base-point-free along a $( H \times \Gm)$-stable Zariski open neighbourhood of each $( H \times \Gm)$-fixed point of a projective variety), we conclude that $\cO_{\sQ'_\tJ ( v, w )} ( \la )$ is also very ample as required.
\end{proof}

\begin{thm}\label{QQ-isom}
For each $w = u t_{\beta'} \in W_\af$ $(u \in W, \beta' \in Q^{\vee})$ and $\beta \in Q^{\vee}_+$, we have an isomorphism $\sQ' ( \beta, w )_\C \cong \sQ ( \beta - \beta', u )$ as varieties. Moreover, $\sQ' ( \beta, w )_{\bK}$ is irreducible and its dimension is given as
$$\dim \, \sQ' ( \beta, w )_{\bK} = 2 \left< \beta - \beta', \rho \right> + \dim \, \sB ( u ).$$
\end{thm}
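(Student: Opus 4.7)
The plan is to reduce to the ``positionless'' case $w \in W$, identify $\bK$-points with the classical Drinfeld-Pl\"ucker description of quasi-maps, and then upgrade the bijection to a scheme isomorphism using normality. First, writing $w = ut_{\beta'}$ with $u \in W$ and $\beta' \in Q^{\vee}$, and applying Lemma \ref{trans} with $\gamma = -\beta'$, we obtain
\[
\sQ' ( \beta, ut_{\beta'} ) = \sQ' ( w_0 t_\beta, u t_{\beta'} ) \cong \sQ' ( w_0 t_{\beta - \beta'}, u ) = \sQ' ( \beta_0, u ), \qquad \beta_0 := \beta - \beta'.
\]
By Lemma \ref{non-empty} we may assume $\beta_0 \in Q^{\vee}_+$ (otherwise both sides are empty and the claim is vacuous). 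It thus suffices to show $\sQ' ( \beta_0, u )_\C \cong \sQ ( \beta_0, u )$ and that $\sQ' ( \beta_0, u )_\bK$ is irreducible of dimension $2 \left< \beta_0, \rho \right> + \dim \sB ( u )$.

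Next, for the matching of $\bK$-points, Lemma \ref{Q-mod} describes $\sQ' ( \beta_0, e )_\bK ( \bK )$ as the set of collections $\{ u_\la ( z ) \}_{\la \in P_+}$ with $u_\la ( z ) \in V ( \la )_\bK \otimes \bK [ z ]_{\le - \left< w_0 \beta_0, \la \right>}$ satisfying $\eta_{\la, \mu} ( u_\la \otimes u_\mu ) = u_{\la + \mu}$. Interpreting each polynomial section as a trivialization away from $\infty$ of a line bundle inclusion $\mathcal L ^\la \hookrightarrow V ( \la ) \otimes \cO_{\P^1}$ of degree $\left< w_0 \beta_0, \la \right>$, this is exactly the Drinfeld-Pl\"ucker data of Definition \ref{Zas}, and Theorem \ref{Dr} (valid over arbitrary $\bK$ via Theorem \ref{aff-Zas-p}) gives $\sQ' ( \beta_0, e )_\bK ( \bK ) = \sQ ( \beta_0 )_\bK ( \bK )$. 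For general $u \in W$, we use that by Theorem \ref{bQ-int} and Corollary \ref{uniq-preimage} the closed subscheme $\overline{\bO ( u )} \subset \bQ_G ( e )$ corresponds modularly to the locus where the value at $z = 0$ (taken via the embedding $\sB \subset \bQ_G ( e )$) lies in $\sB ( u )$; intersecting with $\sQ' ( \beta_0, e )_\bK$ then yields $\sQ' ( \beta_0, u )_\bK ( \bK ) = \sQ ( \beta_0, u )_\bK ( \bK )$.

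To upgrade this to a scheme isomorphism over $\C$, both $\sQ ( \beta_0, u )$ and $\sQ' ( \beta_0, u )_\C$ embed as closed subvarieties of $\bQ_G ( e )_\C$ with the same set of closed points, and both are cut out by the same quadratic Pl\"ucker-type relations (coming from $\eta_{\la, \mu}$). Since $\sQ' ( \beta_0, u )_\C$ is reduced by Corollary \ref{Q'red} and $\sQ ( \beta_0, u )$ is normal by \cite{BF14a,BF14b}, the identity on closed points lifts to a finite, bijective, birational morphism, which is an isomorphism by Zariski's Main Theorem. The irreducibility and dimension of $\sQ' ( \beta_0, u )_\bK$ in arbitrary characteristic $\neq 2$ then follow from the above set-theoretic identification together with the standard calculation: by Theorem \ref{aff-Zas-p}, $\sZ ( \beta_0, w_0 )_\bK$ is irreducible of dimension $2 \left< \beta_0, \rho \right>$, and $\sZ ( \beta_0, u )_\bK$ is the preimage of the irreducible $B$-stable variety $\sB ( u )$ under the $G$-equivariant evaluation map defined on the big cell, so it is irreducible of the asserted dimension; taking closures concludes.

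The main obstacle is the scheme-level comparison: the matching of closed points does not by itself yield a scheme isomorphism, and the crucial input is the normality of $\sQ ( \beta_0, u )$ established in \cite{BF14a,BF14b}. Without this normality input one could only deduce an isomorphism up to weak normalization (which, admittedly, is what Lemma \ref{Q-split} already supplies for the $\sQ'$-side); matching it with the normal $\sQ$-side is precisely what forces us to rely on the earlier Zastava-space literature.
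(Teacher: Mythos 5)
Your proof has a genuine gap at exactly the point where the paper's argument does the real work. The opening reduction via Lemma \ref{trans} to $w = u \in W$ and the matching of $\bK$-points for $u = e$ via Lemma \ref{Q-mod} and Theorem \ref{Dr} follow the paper. But the passage to general $u$ is unjustified. The claim that ``$\overline{\bO(u)} \subset \bQ_G(e)$ corresponds modularly to the locus where the value at $z = 0$ lies in $\sB(u)$'' does not follow from Theorem \ref{bQ-int} and Corollary \ref{uniq-preimage}: the value at $z = 0$ is well-defined only on the open locus of quasi-maps without defect at $0$, and the needed equality $\sQ'(\beta_0, u)_\bK = \bQ_G(u)_\bK \cap \sQ(\beta_0)_\bK = \overline{\sZ(\beta_0,u)}$ is precisely the nontrivial point. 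A priori $\bQ_G(u) \cap \sQ(\beta_0)$ could contain irreducible components lying entirely in the boundary of quasi-maps with defect at $0$, hence not in $\overline{\sZ(\beta_0,u)}$. The paper rules this out via Theorem \ref{cover} together with the argument around (\ref{Q'-incl}): cutting by the boundary divisor of $\bQ_G(w)$, a dimension comparison, and the $(H \times \Gm)$-invariant curves joining adjacent fixed points. Your ``taking closures concludes'' silently assumes exactly what needs to be proved, namely that the open stratum $\sQ'(\beta,w)_\bK \cap \bO(w)_\bK$ is dense in $\sQ'(\beta,w)_\bK$.

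A second problem is the appeal to ``normality of $\sQ(\beta_0,u)$ established in \cite{BF14a,BF14b}''. For $u \neq e$ this is not in those references: the introduction states that \cite{BF14a,BF14b} only treat $v = w_0 t_\beta$, $w = e$, $\bK = \C$. Normality for general $w$ is this paper's Corollary \ref{Qnorm-w}, proved in Section 4 \emph{after} Theorem \ref{QQ-isom} and using it as input, so invoking it here is circular. The paper moreover goes out of its way to give a proof independent of \cite{BF14a} (Corollary \ref{Qnorm-BF} and its Remark), so introducing that dependency defeats one of the paper's stated aims. Finally, even granting the matching of closed points, the normality/ZMT step is superfluous: two reduced closed subschemes of a common scheme over an algebraically closed field with the same closed points are already equal as schemes, and $\sQ'(\beta_0,u)_\C$ is reduced by Corollary \ref{Q'red}. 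The paper's scheme-level conclusion is obtained this way (after establishing irreducibility), without any recourse to normality of the quasi-map space.
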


\begin{proof}
By Lemma \ref{Q-split}, we know that $( R ^ \beta_ v )_{\bK}$ is a quotient of $( R ^ \beta _ w )_{\bK}$ for $v \in W_\af$ such that $v \le_\si w$. Hence, we have $\sQ' ( \beta, w )_{\bK} \cap \bQ_G ( v )_{\bK} = \sQ' ( \beta, v )_{\bK}$. By Lemma \ref{trans}, the scheme $\sQ' ( \beta, w )_{\bK} \cap \bO ( v )_{\bK}$ is isomorphic to $\sQ' ( \beta - \gamma, u' )_{\bK} \cap \bO ( u' )_{\bK}$, where $v = u' t_{\gamma}$ ($u' \in W, \gamma \in Q_+^{\vee}$). By Lemma \ref{Q-mod} and Theorem \ref{aff-Zas-p}, we have:
\begin{itemize}
\item $( \sQ' ( \beta, w )_{\bK} \cap \bO ( v )_{\bK} ) = ( \sQ' ( \beta, v )_{\bK} \cap \bO ( v )_{\bK} )$ for each $v \in W_\af$ such that $v \le_\si w$;
\item The variety $( \sQ' ( \beta, w )_{\bK} \cap \bO ( v )_{\bK} )$ is irreducible for every $v \in W_\af$.
\end{itemize}
We have an equality
$$\dim \, ( \sQ' ( \beta, w )_\C \cap \bO ( v )_\C ) = \dim \, \sQ ( \beta - \gamma, u' ) = 2 \left< \beta - \gamma, \rho \right> + \dim \, \sB ( u' )$$
for $v = u' t_{\gamma}$ ($u' \in W, \gamma \in Q^{\vee}_+$) by Lemma \ref{Q-mod} and Theorem \ref{Dr} (whenever $\sQ' ( \beta, w )_\C \cap \bO ( v )_\C \neq \emptyset$). In addition, Lemma \ref{Q-flat} implies
\begin{itemize}
\item We have $\dim \, \sQ' ( \beta, w ) _\C = \dim \, \sQ' ( \beta, w )_{\bK}$.
\end{itemize}

In particular, we have the desired dimension formula if $\sQ' ( \beta, w )_{\bK}$ is irreducible with its Zariski open dense subset $( \sQ' ( \beta, w )_{\bK} \cap \bO ( w )_{\bK} )$.

Since $\sQ ( \beta, w )$ and $\sQ' ( \beta, w )_\C$ shares a same open subset and the former is irreducible, we have $\sQ ( \beta, w ) = \sQ' ( \beta, w )_\C$ as closed subvarieties of $( \bQ _G^{\ra} )_\C$ if $\sQ' ( \beta, w )_\C$ is irreducible.

Therefore, it suffices to prove that $\sQ' ( \beta, w )_{\bK}$ is irreducible (with its Zariski open dense subset $( \sQ' ( \beta, w )_{\bK} \cap \bO ( w )_{\bK} )$). By Theorem \ref{cover} and Corollary \ref{Q'nonempty}, it suffices to prove that
\begin{equation}
( \sQ' ( \beta, w )_{\bK} \cap \bO ( s w )_{\bK} ) \subset \overline{(\sQ' ( \beta, w )_{\bK} \cap \bO ( w )_{\bK} )}\label{Q'-incl}
\end{equation}
for every $w \in W_\af$ and every reflection $s \in W_\af$ such that $\ell^{\si} ( s w ) = \ell ^{\si} ( w ) + 1$ and $w_0 t_{\beta} \le_\si s w \le_\si w$. Here $\bQ_G ( s v )_{\bK} \subset \bQ _G ( v )_{\bK}$ is an irreducible component of the boundary by Theorem \ref{si-Bruhat} 3) (cf. Corollary \ref{pos-fp}). This boundary component is these cut out as (a part of) the zero of $\bv_{v w_0 \la}^{\vee} \in \bW_{vw_0} ( \la )^{\vee}$ ($\la \in P_+$). Thus, we deduce that $\overline{( \sQ' ( \beta, w )_{\bK} \cap \bO ( s w )_{\bK} )}$ contains an irreducible component of
$$\overline{(\sQ' ( \beta, w )_{\bK} \cap \bO ( w )_{\bK} )} \cap \{ f = 0\}$$
for some single equation $f$ (an instance of $\bv_{vw_0 \la}^{\vee}$'s) if it is nonempty. By the comparison of dimensions, this forces
\begin{equation}
\overline{(\sQ' ( \beta, w )_{\bK} \cap \bO ( w )_{\bK} )} \cap \{ f = 0\} \cap \bO ( s w )_{\bK} \subset ( \sQ' ( \beta, w )_{\bK} \cap \bO ( s w )_{\bK} )\label{cut-out-by-div}
\end{equation}
to be an irreducible component if the LHS is nonempty. Consider the $( H \times \Gm )$-invariant curve $C$ that connects the (unique) $( H \times \Gm )$-fixed points $p_{sw}$ in $\bO ( s w )_{\bK}$ and $p_w$ in $\bO ( w )_{\bK}$. The curve $C$ is the closure of the orbit of an one-parameter unipotent subgroup action corresponding to a root in $\Delta_{\af,+}$ applied to $p_w$. In particular, we have $C \subset \bO (sw)_{\bK} \sqcup \bO ( w )_{\bK}$. By the degree bound (from the above) offered by Lemma \ref{Q-mod}, we conclude that $C \subset \overline{(\sQ' ( \beta, w )_{\bK} \cap \bO ( w )_{\bK} )}$. Therefore, the LHS of (\ref{cut-out-by-div}) is nonempty. Hence, the irreducibility of $( \sQ' ( \beta, w )_{\bK} \cap \bO ( s w )_{\bK} )$ forces
$$\overline{(\sQ' ( \beta, w )_{\bK} \cap \bO ( w )_{\bK} )} \cap \{ f = 0\} \cap \bO ( s w )_{\bK} = ( \sQ' ( \beta, w )_{\bK} \cap \bO ( s w )_{\bK} ).$$

Therefore, we conclude (\ref{Q'-incl}). This implies $\sQ ( \beta - \beta', u ) = \sQ' ( \beta, w )_{\C}$ as an irreducible (reduced) closed subvariety of $( \bQ_G^{\ra} )_\C$, and $\sQ' ( \beta, w )_{\bK}$ is irreducible in general. Its dimension $\dim \sQ ( \beta - \beta', u )$ comes from the dimension of $( \sQ' ( \beta, w )_{\bK} \cap \bO ( w )_{\bK} )$, that is a Zariski open dense subset of $\sQ' ( \beta, w )_{\bK}$.
\end{proof}

\begin{cor}\label{pure-irr}
For each $w,v \in W_\af$, the dimension of an irreducible component of $\sQ' ( v, w )_{\bK}$ is $\ell^\si ( v ) - \ell^\si ( w )$ if $\sQ' ( v, w )_{\bK} \neq \emptyset$ $($that is equivalent to $v \le_\si w$ by Corollary {\rm\ref{Q'nonempty}}$)$. In particular, $\sQ' ( v, w )_{\bK}$ is equidimensional.
\end{cor}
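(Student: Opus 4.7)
\emph{Plan.} The strategy is to reduce to the case $v = w_0 t_{\beta}$ handled by Theorem \ref{QQ-isom} by an induction on covers in the semi-infinite Bruhat order. We may assume $v \le_\si w$, since otherwise $\sQ' ( v, w ) = \emptyset$ by the remark following Lemma \ref{non-empty}. Fix $w$ and choose $\beta \in Q^{\vee}$ sufficiently antidominant so that $w_0 t_{\beta} \le_\si v$; then by Theorem \ref{QQ-isom}, $\sQ' ( w_0 t_{\beta}, w )$ is irreducible of the expected dimension $\ell^\si ( w_0 t_{\beta} ) - \ell^\si ( w )$. Iterating Theorem \ref{cover} on the interval $[w_0 t_{\beta}, v]_{\le_\si}$, one produces a saturated chain
\[
w_0 t_{\beta} = u_0 \le_\si u_1 \le_\si \cdots \le_\si u_n = v
\]
with $u_i = s_{\alpha_i} u_{i-1}$ for some $\alpha_i \in \Delta_+^{\af}$, $\ell^\si ( u_i ) = \ell^\si ( u_{i-1} ) - 1$, and $n = \ell^\si ( w_0 t_{\beta} ) - \ell^\si ( v )$. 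By Lemma \ref{Q-split}, this induces a corresponding chain of closed immersions
\[
\sQ' ( u_0, w ) \supset \sQ' ( u_1, w ) \supset \cdots \supset \sQ' ( u_n, w ) = \sQ' ( v, w ),
\]
all compatibly Frobenius split in positive characteristic by Corollary \ref{Q'-cFS}.

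The inductive step asserts that if $\sQ' ( u_{i-1}, w )$ is equidimensional of dimension $d = \ell^\si ( u_{i-1} ) - \ell^\si ( w )$, then $\sQ' ( u_i, w )$ is equidimensional of dimension $d - 1$. I would realize $\sQ' ( u_i, w ) \subset \sQ' ( u_{i-1}, w )$ as the scheme-theoretic vanishing locus of the extremal weight section $\bv_{u_{i-1} \lambda}^{\vee} \in R^{u_{i-1}}_w$ for a suitable $\lambda \in P_{++}$; indeed, the extra kernel $\ker ( R^{u_{i-1}}_w \to R^{u_i}_w )$ is principally generated by this single element, since the relations propagated by the $U^+_\Z$-action on the cyclic vector $\bv_{u_{i-1} \lambda}^{\vee}$ are already enforced in the ring $R^{u_{i-1}}_w$. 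This section is nonzero at the fixed point $p_{u_{i-1}} \in \sQ' ( u_{i-1}, w )$ and hence does not vanish identically. By Krull's Hauptidealsatz, it suffices to show that this section is a non-zero-divisor on every irreducible component of $\sQ' ( u_{i-1}, w )$, which would yield pure codimension one and thereby equidimensionality of $\sQ' ( u_i, w )$.

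The main obstacle is the non-zero-divisor condition on each component. Each irreducible component $Z$ of $\sQ' ( u_{i-1}, w )$ is closed and $(H \times \Gm)$-stable, so a Bialynicki-Birula argument with a generic cocharacter of $H \times \Gm$ produces an attracting fixed point $p_u \in Z$ with $u_{i-1} \le_\si u \le_\si w$; if $u = u_{i-1}$, the component containing $p_{u_{i-1}}$ is immediately not annihilated. For $u \neq u_{i-1}$, I would use the $\mathop{SL} ( 2, i )$-orbits on $\bQ_G^{\mathrm{rat}}$ provided by Proposition \ref{wbQ-mod}: each such orbit through $p_u$ is a $\P^1$ joining $p_u$ to a second fixed point of distinct $H$-weight, and on it the $H$-eigen section $\bv_{u_{i-1} \lambda}^{\vee}$ restricts to a nonzero polynomial in one variable. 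The technical heart is to exhibit such a $\P^1$ contained in $Z$, which would follow by combining the $\bI$-orbit decomposition of $\bQ_G (w)$, the local structure of $\sQ' ( u_{i-1}, w ) \cap \bO ( u )$ as a locally closed piece of the orbit, and the Frobenius-splitting compatibility of Corollary \ref{Q'-cFS} (which forces each component to be $\bI$-orbit saturated up to the ambient closure). Once non-vanishing on every component is secured, Krull's theorem closes the inductive step and yields the asserted dimension formula and equidimensionality.
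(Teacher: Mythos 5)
Your approach runs the induction in the opposite direction from the paper: you ascend from the known-irreducible $\sQ'(w_0t_\beta,w)$ by cutting down the chain $u_0\le_\si u_1\le_\si\cdots\le_\si u_n=v$, while the paper descends from a hypothetical over-dimensional component of $\sQ'(v,w)$ by intersecting with boundary divisors until reaching $\sQ'(u,u)=\Spec\,\bK$. The ascending route correctly yields the inequality $\ge$ (each step is set-theoretically a hypersurface cut, and the paper says the same), but the hard direction $\le$ is exactly the non-zero-divisor condition you flag, and there your argument has a genuine gap.

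First, a smaller issue: the assertion that $\ker(R^{u_{i-1}}_w\to R^{u_i}_w)$ is \emph{principally generated} by $\bv_{u_{i-1}\lambda}^\vee$ is not established by the one-line appeal to $U^+_\Z$-cyclicity. What the paper actually uses (in the proof of Theorem \ref{QQ-isom}) is only that the boundary component $\theta^*\bQ_G(u_iw_0)\subset\theta^*\bQ_G(u_{i-1}w_0)$ is \emph{set-theoretically} contained in the zero locus of a single extremal-weight section; this suffices for the Hauptidealsatz bound $\ge$ but is strictly weaker than ideal-theoretic principality. Second, and more seriously, the non-vanishing of $f=\bv_{u_{i-1}\lambda}^\vee$ on every irreducible component $Z$ of $\sQ'(u_{i-1},w)$ is left as a sketch. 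The case $Z\ni p_{u_{i-1}}$ is fine, but when $Z$ only contains fixed points $p_u$ with $u\ne u_{i-1}$ the section \emph{does} vanish at $p_u$, and the proposed $\mathop{SL}(2,i)$-orbit argument needs a proof that such an orbit lies inside $Z$; the claim that Frobenius-splitting compatibility forces $Z$ to be ``$\bI$-orbit saturated'' is not a theorem available to you and is, as stated, doubtful. If $f$ did vanish identically on $Z$, then $Z$ would be a component of $\sQ'(u_i,w)$ of dimension $d$ rather than $d-1$, which is precisely the possibility the theorem must exclude — so this is not a technicality to be left to the reader.

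The paper's own argument is worth studying precisely because it circumvents the non-zero-divisor requirement. It shows that any irreducible component is $(H\times\Gm)$-stable and proper, hence has a fixed point, hence must meet some boundary divisor; then it intersects with boundary divisors successively, \emph{allowing} the case that the component is entirely contained in the divisor (in which case one simply replaces $\sQ'(v,w)$ by the strictly smaller $\sQ'(v',w')$ containing the same component), and terminates at $\sQ'(u,u)=\Spec\,\bK$. Counting the number of steps, which is $\ell^\si(v)-\ell^\si(w)$, gives the upper bound without ever needing $f$ to be a non-zero-divisor on $Z$. To salvage your proof you would effectively have to import this descent anyway, so the ascending induction does not buy you a shortcut here.
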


\begin{proof}
The case $v = w_0 t_{\beta}$ with $\beta \in Q^{\vee}$ follows from Theorem \ref{QQ-isom}. We have $\sQ' ( w, w )_{\bK} = \Spec \, \bK$ ($w \in W_\af$) since $\dim \, \bW_{ww_0} ( \la )_{\bK} \cap \bW^- _{ww_0} ( \la )_{\bK} = 1$ for every $\la \in P_+$ by the $P^\af$-weight comparison.

Let $v = u t_{\beta}$ ($u \in W$). In view of the proof of Theorem \ref{QQ-isom}, the variety $\sQ' ( v, w )_{\bK}$ is obtained from $\sQ' ( w_0 t_{\beta}, w )_{\bK}$ by a $( \ell ( w_0 ) - \ell ( u ) )$-successive hyperplane cuts by the $\theta$-twists of the $\bI$-stable boundaries of $\bQ_G ( \bullet )_{\bK}$ (cf. Remark \ref{enh-emb}). Each of these hyperplane cuts lowers the dimension of an irreducible component by at most one if the intersection is nonempty. Since $\ell ( w_0 ) - \ell ( u ) = \ell^\si ( w_0t_{\beta} ) - \ell^\si ( v )$, the dimension inequality $\ge$ always hold.

Every irreducible component of $\sQ' ( v, w )_{\bK}$ is $(H \times \Gm)$-stable by construction. Since we have only one $(H \times \Gm)$-fixed point in each $\bI$-orbit of $( \bQ^\ra_G )_{\bK}$, all but one $( H \times \Gm)$-fixed point of $\bQ_G(w)_{\bK}$ ($w \in W_\af$) lies in the boundary. Thus, if an irreducible component of $\sQ' ( v, w )_{\bK}$ does not meet any $\bI$-stable boundary divisors of $\bQ_G ( w )_{\bK}$ and the $\theta$-twists of any $\bI$-stable boundary divisors of $\bQ_G ( vw_0 )_{\bK}$, then such an irreducible component does not contain a $(H \times \Gm)$-fixed point unless $v = w$. This is a contradiction (to the properness of $\sQ' ( v, w )_{\bK}$). Hence, every irreducible component of $\sQ' ( v, w )_{\bK}$ meets its boundary cut out by an $\bI$-stable boundary divisor of $\bQ_G(w)_{\bK}$ or the $\theta$-twist of an $\bI$-stable boundary divisor of $\bQ_G ( vw_0 )_{\bK}$.

Assume to the contrary to deduce contradiction. Then, we have $\sQ' ( v, w )_{\bK}$ whose irreducible components have dimension $(\ell^\si ( v ) - \ell^\si ( w ))$, but one of its irreducible component is in fact contained in $\sQ' ( v', w' )_{\bK}$ with $\sQ' ( v', w' )_{\bK} \subsetneq \sQ' ( v, w )_{\bK}$ for $v', w' \in W_\af$, and hence it gives an irreducible component of $\sQ' ( v', w' )_{\bK}$ with its dimension $> (\ell^\si ( v' ) - \ell^\si ( w' ))$. From this, we can lower the dimension of an irreducible component of $\sQ' ( v', w' )_{\bK}$ by intersecting with a divisor to raise $v'$ or lower $w'$ successively to reach to the case $v' = w'$ (as in the second paragraph). Since dimension drops at most one in each step, we find that $\dim \, \sQ' ( w, w )_{\bK} > 0$ for some $w \in W_\af$. This is a contradiction, and hence this case does not happen.

These imply that the dimension equality always hold as required.
\end{proof}

\begin{rem}\label{rpi}
The analogous assertions for Theorem \ref{QQ-isom} and Corollary \ref{pure-irr} holds for the case $\tJ \subsetneq \tI$ (cf. Corollaries \ref{gen-norm} and \ref{dim-general}).
\end{rem}

\begin{thm}\label{coh}
For each $w, v \in W_\af$, $\tJ \subset \tI$, and $\la \in P_{\tJ,+}$, we have
$$H^i ( \sQ'_\tJ ( v, w )_{\bK}, \cO _{\sQ'_{\tJ} ( v, w )_{\bK}} ( \la ) ) \cong \begin{cases} R ^v _w ( \la )_{\bK} & (i = 0, \la \in P_{\tJ,++})\\ \{ 0 \} & (i \neq 0) \end{cases}.$$
Moreover, if $w',v' \in W_\af$ satisfies $\sQ'_\tJ ( v', w' ) \subset \sQ'_\tJ ( v, w )$ and $\la \in P_{\tJ,+}$, then the restriction map induces a surjection
$$H^0 ( \sQ'_\tJ ( v, w )_{\bK}, \cO _{\sQ'_\tJ ( v, w )_{\bK}} ( \la ) ) \longrightarrow \!\!\!\!\! \rightarrow H^0 ( \sQ'_\tJ ( v', w' )_{\bK}, \cO _{\sQ'_\tJ ( v', w' )_{\bK}} ( \la ) ).$$
\end{thm}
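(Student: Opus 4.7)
The approach is to prove all three assertions first in positive characteristic via the compatible Frobenius splittings of Corollary \ref{Q'-cFS}, then transport to characteristic zero using the flat $\Z$-model (Lemma \ref{Q-flat}). In characteristic $p>2$, combining the projectivity and finite-type property (Lemma \ref{Q'finite}), the very ampleness of $\cO(\la)$ for $\la\in P_{\tJ,++}$ (Lemma \ref{ampleness}), and the Frobenius splitting of $\sQ'_\tJ(v,w)_{\bK}$ (Corollary \ref{Q'-cFS}) with the Mehta--Ramanathan theorem (\cite[Theorem 1.2.8]{BK05}) gives $H^{>0}(\sQ'_\tJ(v,w)_{\bK},\cO(\la))=0$ for strictly dominant $\la$. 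For general $\la\in P_{\tJ,+}$, the line bundle $\cO(\la)=\bigotimes_{i\in\tI\setminus\tJ}\cO(\varpi_i)^{\otimes \left<\al_i^\vee,\la\right>}$ is semiample (each $\cO(\varpi_i)$ with $i\notin\tJ$ is globally generated as the pullback of $\cO(1)$ from a factor of the ambient projective product), so the semiample variant of Mehta--Ramanathan (e.g.\ \cite[Theorem 1.2.9]{BK05}) extends the vanishing to all $\la\in P_{\tJ,+}$.

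For the identification $R^v_w(\tJ,\la)_{\bK}=H^0(\sQ'_\tJ(v,w)_{\bK},\cO(\la))$ at $\la\in P_{\tJ,++}$, the inclusion $R^v_w(\tJ,\la)_{\bK}\hookrightarrow H^0(\sQ'_\tJ(v,w)_{\bK},\cO(\la))$ is tautological. Standard Serre theory combined with Lemma \ref{Qsurj-mult} (bounded-degree generation) gives equality at $n\la$ for $n\gg 0$. To descend to arbitrary $\la\in P_{\tJ,++}$, use that $R^v_w(\tJ)_{\bK}$ is Frobenius split as a graded ring (Lemma \ref{Q-split}), so the $\bI$-canonical splitting is compatible with the section ring $\bigoplus_n H^0(\cO(n\la))$; by Proposition \ref{B-can-inv}, the Frobenius pullback $H^0(\cO(\la))\hookrightarrow H^0(\cO(p\la))$ preserves the subspace $R^v_w(\tJ,\la)$, and iterating upward from an arbitrary $\la$ to reach the large-$n$ equality forces equality in every degree. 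The restriction surjectivity then comes from the compatible Frobenius splitting of the ideal sheaf $\mathcal I$ of $\sQ'_\tJ(v',w')\subset\sQ'_\tJ(v,w)$: the induced vanishing $H^1(\sQ'_\tJ(v,w),\mathcal I\otimes\cO(\la))=0$ (cf.\ \cite[Lemma 1.4.8]{BK05}) together with the long exact sequence gives the desired surjection.

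Finally, the flatness of $\sQ'_\tJ(v,w)_\Z$ over $\Z$ (Lemma \ref{Q-flat}) plus the $\Z$-freeness of $R^v_w(\tJ,\la)_\Z$ allow upper semicontinuity of fiber-wise cohomology for the proper flat morphism $\sQ'_\tJ(v,w)_\Z\to\Spec\,\Z$ to propagate all three assertions from characteristic $p>2$ to the generic fiber, hence to $\sQ'_\tJ(v,w)_{\bK}$ for every algebraically closed $\bK$ of characteristic zero (the $H^0$ identification and the restriction surjectivity propagate by flat base change from the $\Z$-model). The technical heart of the argument is showing that the inclusion $R^v_w(\tJ,\la)\hookrightarrow H^0(\cO(\la))$ is an \emph{equality} rather than a proper inclusion at every positive-degree weight, which crucially exploits the Frobenius-equivariance afforded by Proposition \ref{B-can-inv} together with the uniqueness of the $\bI$-canonical splitting (Corollary \ref{af-uniq}); the other steps are direct applications of the Frobenius-splitting toolkit of \cite{BK05}.
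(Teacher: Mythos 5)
There is a genuine gap in the treatment of semiample (not strictly dominant) line bundles, which is where the real content of the theorem lies.

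Your plan for strictly dominant $\la$ — plain Frobenius splitting of $\sQ'_\tJ(v,w)_{\F_p}$, very ampleness of $\cO(\la)$, and the standard vanishing theorem from \cite{BK05} — is sound. But the vanishing and the restriction surjectivity are claimed for all $\la \in P_{\tJ,+}$, and for merely dominant $\la$ the bundle $\cO_{\sQ'_\tJ(v,w)}(\la)$ is only semiample. You invoke a ``semiample variant of Mehta--Ramanathan''; no such variant exists at the level of generality you need. Plain Frobenius splitting does \emph{not} give $H^{>0}(X,L)=0$ for semiample $L$: if $E$ is an ordinary elliptic curve in characteristic $p$, then $X=\mathbb P^1\times E$ is Frobenius split (as a product of split curves) and $L=\cO_{\mathbb P^1}(1)\boxtimes\cO_E$ is globally generated, yet $H^1(X,L)\cong H^0(\mathbb P^1,\cO(1))\otimes H^1(E,\cO_E)\neq 0$. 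The paper avoids this by upgrading to a Frobenius \emph{D-splitting}: since the open Iwahori orbit $\bO(\tJ,w)_{\F_p}\subset \bQ_{G,\tJ}(w)_{\F_p}$ is affine, the union $Z$ of codimension-one $\bI$-orbit closures supports an ample divisor, and normality (Corollary \ref{bQnormal}) plus compatibility of the $\bI$-canonical splitting with all $\bI$-orbit closures (Corollary \ref{FQ-uniq}) make $\bQ_{G,\tJ}(w)_{\F_p}$ $D$-split in the sense of Ramanathan. Corollary \ref{pure-irr} ensures each component of $Z$ meets $\sQ'_\tJ(v',w)_{\F_p}$, so the $D$-splitting descends, and then \cite[Proposition 1.13(ii)]{Ram87} delivers both the semiample vanishing and the restriction surjectivity in one stroke. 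Your argument does not produce the ample divisor with compatibly-split support, and so cannot reach the semiample case; this is the missing idea.

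A secondary remark: for the identification $H^0(\sQ'_\tJ(v,w)_{\bK},\cO(\la))\cong R^v_w(\tJ,\la)_{\bK}$ at $\la\in P_{\tJ,++}$, the paper argues via weak normality of the Veronese subring $R'[\la]$ (Lemma \ref{Q-split}), comparing its $\mathrm{Proj}$ with that of the full section ring and forcing degree-wise equality. Your alternative — Frobenius-equivariance of the inclusion $R^v_w(\tJ)\hookrightarrow\bigoplus_\la H^0(\cO(\la))$ together with the known equality in degrees $p^k\la$ for $k\gg 0$ — can in principle be made rigorous (one must check that the splitting map $H^0(\cO(p\la))\to H^0(\cO(\la))$ induced from the splitting of $\mathrm{Proj}\,R^v_w(\tJ)$ actually restricts to the ring-theoretic splitting $R^v_w(\tJ,p\la)\to R^v_w(\tJ,\la)$; as written, your appeal to Proposition \ref{B-can-inv} addresses a different compatibility), and is a genuinely different route; but since the vanishing step is broken, this does not rescue the argument.
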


\begin{rem}
The strict dominance condition in Theorem \ref{coh} cannot be removed naively. For $G = \SL ( 3 )$, $\bK = \C$, and $w = e$, we have
$$\dim \, \sQ'_{\{1\}} ( 2 \al_1^{\vee}, e )_{\C} = 8 > 7 = \dim \, \sQ' ( 2 \al_1^{\vee}, e )_{\C}.$$
This results in the non-injectivity of the pullback map
$$H^0 ( \sQ'_{\{1\}} ( 2 \al_1^{\vee}, e )_{\C}, \cO_{\sQ'_{\{1\}} ( 2 \al_1^{\vee}, e )_{\C}} ( 3 \varpi_1 ) ) \longrightarrow\!\!\!\!\!\rightarrow H^0 ( \sQ' ( 2 \al_1^{\vee}, e )_{\C}, \cO_{\sQ' ( 2 \al_1^{\vee}, e )_{\C}} ( 3 \varpi_1 ) ),$$
where the LHS is $R^{2 \al_1^{\vee}}_e ( 3 \varpi_1 )_{\C} = R^{2 \al_1^{\vee}}_e (\{ 1 \}, 3 \varpi_1 )_{\C} \cong S^3 \left( \C^3 \oplus \C^3 z \oplus \C^3 z^2 \right)^{\vee}$, while the RHS is its quotient by the $G$-invariants of $( \C^3 \otimes \C^3 z \otimes \C^3 z^2 )^{\vee}\subset S^3 \left( \C^3 \oplus \C^3 z \oplus \C^3 z^2 \right)^{\vee}$. In fact, the ring $( R ^{2 \al_1^{\vee}} _e )_{\C}$ is reduced but not integral, and the extra irreducible component is contained in the irrelevant locus.
\end{rem}

\begin{proof}[Proof of Theorem \ref{coh}]
We first observe that it is enough to show the surjectivity part of the assertion only for the $w = w'$ case since the $v = v'$ case follows by applying $\theta$ (and the rest of the cases follow by a repeated applications of the two cases).

Since $\bO ( \tJ, w )_{\F_p} \subset \bQ_{G, \tJ} ( w )_{\F_p}$ is affine, the union $Z$ of codimension one $\bI$-orbits contains the support of an ample Cartier divisor $D$. Since our Frobenius splitting of $\bQ_{G, \tJ} ( w )_{\F_p}$ is $\bI$-canonical and it is compatible with the $\bI$-orbit closures (Corollary \ref{FQ-uniq}), we deduce that $\bQ_{G, \tJ} ( w )_{\F_p}$ is $D$-split in the sense of \cite[Definition 3]{BL03} (cf. \cite[Definition 1.2]{Ram87}) by replacing the Frobenius splitting with its $e$-th power for $e \gg 0$ so that $p^e$ exceeds the multiplicity of each component of $D$ (as in \cite[3.5--3.9]{Sm00}). Each irreducible component of $Z$ intersects properly with $\sQ'_\tJ ( v', w )_{\F_p} \subset \sQ'_\tJ ( v, w )_{\F_p}$ ($v' \in W_\af$) whenever the intersection is nonempty by Corollary \ref{pure-irr} (and Remark \ref{rpi}). Thus, we deduce that $\sQ'_\tJ ( v, w )_{\F_p}$ admits a Frobenius $D$-splitting compatible with $\sQ'_\tJ ( v', w )_{\F_p}$ ($v' \in W_\af$). Therefore, the cohomology vanishing part of the assertion and the surjectivity part of the assertion (for $w' = w$) follow from \cite[Proposition 1.13 (ii)]{Ram87}. The cohomology vanishing part of the assertion lifts to $\mathsf{char} \, \bK = 0$ by \cite[Proposition 1.6.2]{BK05}. The surjectivity part of the assertion lifts to $\mathsf{char} \, \bK = 0$ by \cite[Corollary 1.6.3]{BK05} and Lemma \ref{Q-flat}.

It remains to calculate $H^0 ( \sQ'_\tJ ( v, w )_{\bK}, \cO _{\sQ'_\tJ ( v, w )_{\bK}} ( \la ) )$ for each $\la \in P_{\tJ, ++}$. Here $H^0 ( \sQ'_\tJ ( v, w )_{\bK}, \cO _{\sQ'_\tJ ( v, w )_{\bK}} ( \la ) )$ is obtained as the degree $\la$-part of the (graded) normalization of the ring $( R ^v _w ( \tJ ) )_{\bK}$. For each prime $p$, our ring $(R ^v _w ( \tJ ))_{\F_p}$ is weakly normal by Lemma \ref{Q-split}. Let us consider the $P_{\tJ,+}$-graded quotient $R' = \bigoplus_{\la \in P_{\tJ,+}} R' ( \la )$ of $R ^v _w ( \tJ, \la )_{\F_p}$ that annihilates all the irrelevant irreducible components. Since $R ^v _w ( \tJ )_{\F_p}$ is a reduced ring, we have $R ^v _w ( \tJ, \la )_{\F_p} = R' ( \la )$ for $\la \in P_{\tJ,++}$. The ring $R'$ is generated by $\bigoplus_{i \in \tI \setminus \tJ} R' ( \varpi_i )$ as $R ^v _w ( \tJ )_{\F_p}$ is so. We have
\begin{equation}
H^0 ( \sQ'_\tJ ( v, w )_{\F_p}, \cO _{\sQ'_\tJ ( v, w )_{\F_p}} ( \la ) ) \cong R ^v _w ( \tJ, \la )_{\F_p}\label{H-eq-R}
\end{equation}
for sufficiently large $\la \in P_{\tJ, +}$ (see e.g. \cite[I\!I Excecise 5.9]{Har77}). In other words, (\ref{H-eq-R}) hold for $m \la$, where $\la \in P_{\tJ, ++}$ is arbitrary and $m \gg 0$.

Let $R^\sharp ( \la ) := H^0 ( \sQ'_\tJ ( v, w )_{\F_p}, \cO _{\sQ'_\tJ ( v, w )_{\F_p}} ( \la ) )$ for $\la \in P_{\tJ, ++}$. We set $R^\sharp [\la] : = \F_p 1 \oplus \bigoplus_{m \ge 1} R^\sharp ( m \la )$ and $R'[\la] := \bigoplus_{m \ge 0} R' ( m \la ) \subset R'$ for $\la \in P_{\tJ, ++}$. Both are naturally rings. Then, we have a ring extension $R' [ \la ] \subset R^\sharp [ \la ]$ such that their degree $m \gg 0$-parts are the same.

We prove $R' ( \la ) = R^\sharp ( \la )$. The ring $R'[\la]$ is weakly normal as we have an inclusion $R'[\la] \hookrightarrow R ^v _w ( \tJ )_{\F_p}$ (and the reasoning that $R ^v _w ( \tJ )_{\F_p}$ is weakly normal equally applies to $R' [\la]$). Since it is generated by $R' ( \la )$, we deduce that two rings $R'[\la]$ and $R^\sharp[\la]$ share the same spectrum (the irrelevant locus of $R'[\la]$ is one point as it is a $\Z_{\ge 0}$-graded ring, and other points are just the same). This forces $R' ( \la ) = R^\sharp ( \la )$ by the weak normality of $R'[\la]$.

This yields the $H^0$-part of the assertion for $\mathsf{char} \, \bK > 0$ through the extension of scalars. The $H^0$-part of the assertion for $\mathsf{char} \, \bK = 0$ is obtained by taking the generic specialization of the base scheme $\mathrm{Spec} \, \Z$ of $\sQ'_\tJ ( v, w )$.
\end{proof}

\begin{cor}\label{H-inh}
Let $w, v \in W_\af$ and $\tJ \subset \tI$. Assume that the map $\Pi_\tJ : \sQ' ( v, w )_{\bK} \rightarrow \sQ'_\tJ ( v, w )_{\bK}$ defined through the projective coordinate ring is surjective. We have
$$\R^{>0} ( \Pi_\tJ )_* \cO_{\sQ' ( v, w )_{\bK}} \cong \{ 0 \} \text{ and } ( \Pi_\tJ )_* \cO_{\sQ' ( v, w )_{\bK}} \cong \cO_{\sQ'_{\tJ} ( v, w )_{\bK}}.$$
\end{cor}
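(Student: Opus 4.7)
The plan is to apply the Leray spectral sequence to $\Pi_\tJ$ with coefficients $\cO_{\sQ'(v,w)}(m\la)$ for $\la \in P_{\tJ,++}$ and $m \gg 0$, and extract both conclusions from Theorem \ref{coh}. By Lemma \ref{ampleness} the line bundle $\cO_{\sQ'_\tJ(v,w)}(\la)$ is ample; the pullback compatibility $\Pi_\tJ^{*} \cO_{\sQ'_\tJ}(\mu) = \cO_{\sQ'(v,w)}(\mu)$ for $\mu \in P_{\tJ,+}$ is built into the inclusion $R^v_w(\tJ) \subset R^v_w$ of multi-homogeneous coordinate rings. Combining the projection formula with Serre vanishing on $\sQ'_\tJ(v,w)$, the Leray spectral sequence degenerates for $m \gg 0$ into the identification
\[
H^j(\sQ'(v,w), \cO(m\la)) \;\cong\; H^0(\sQ'_\tJ(v,w),\, \R^j(\Pi_\tJ)_*\cO_{\sQ'(v,w)} \otimes \cO(m\la)).
\]

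For $j > 0$, Theorem \ref{coh} applied to $\sQ'_\emptyset(v,w) = \sQ'(v,w)$ with $m\la \in P_+$ makes the left-hand side vanish; ampleness of $\cO_{\sQ'_\tJ}(\la)$ then forces the coherent sheaf $\R^j(\Pi_\tJ)_*\cO_{\sQ'(v,w)}$ itself to be zero, since a coherent sheaf whose tensor product with all sufficiently high powers of an ample line bundle has no global sections must vanish. This gives the first half of the assertion.

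For the pushforward identification, the adjunction unit $\alpha : \cO_{\sQ'_\tJ(v,w)} \to (\Pi_\tJ)_*\cO_{\sQ'(v,w)}$ is injective because $\Pi_\tJ$ is surjective (by hypothesis) and $\sQ'_\tJ(v,w)$ is reduced (Lemma \ref{Q-split}). Using the vanishing of $\R^{>0}(\Pi_\tJ)_*\cO_{\sQ'(v,w)}$ established above, cohomology-and-base-change identifies the fiber $((\Pi_\tJ)_*\cO) \otimes \kappa(y)$ with $H^0(\Pi_\tJ^{-1}(y), \cO)$ at each closed point $y \in \sQ'_\tJ(v,w)$. Nakayama applied to the coherent cokernel $\mathcal C := \coker \alpha$ then reduces $\mathcal C = 0$ to showing that the scheme-theoretic fibers of $\Pi_\tJ$ satisfy $H^0(\Pi_\tJ^{-1}(y), \cO) = \kappa(y)$.

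The main obstacle is this last fiber-level statement. Geometrically, the fibers of the ambient map $\bQ_G^{\mathrm{rat}} \to \bQ_{G,\tJ}^{\mathrm{rat}}$ are, by the coset description of Theorem \ref{bQ-int}, modeled on the semi-infinite flag manifold of the semisimple part of the Levi $[P_\tJ, P_\tJ]$, and the fibers of $\Pi_\tJ$ are Richardson-type subschemes thereof. I would establish the triviality of $H^0$ on such fibers either by inducting on the rank of $G$ and applying Theorem \ref{coh} in the Levi setting with trivial weight, or alternatively by a direct argument combining the $\bI$-stable cellular structure of Theorem \ref{si-Bruhat} with the compatibility of the Frobenius splittings from Lemma \ref{Q-split} restricted to the fibers. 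Either route reduces the corollary to a Kempf-type triviality in a smaller semi-infinite setting, after which the Nakayama argument completes the proof.
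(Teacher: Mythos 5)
Your first half — the $\R^{>0}(\Pi_\tJ)_*\cO$ vanishing — is correct and is essentially the same route as the paper: the paper invokes \cite[Lemma A.31]{Kum02} together with Lemma \ref{ampleness} and Theorem \ref{coh}, which is exactly the Leray/Serre-vanishing mechanism you spell out. That part is fine.

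The second half has a genuine gap, which you yourself flag as ``the main obstacle.'' Two problems. First, the reduction to scheme-theoretic fibers via cohomology-and-base-change is not automatic: the usual statement requires $\cO_{\sQ'(v,w)}$ to be flat over $\sQ'_\tJ(v,w)$, and $\Pi_\tJ$ is not flat in general (its fiber dimension jumps). Without flatness the underived base-change map $(\Pi_\tJ)_*\cO \otimes \kappa(y) \to H^0(\Pi_\tJ^{-1}(y),\cO)$ need not be an isomorphism even when all higher direct images vanish, so the Nakayama step is not grounded. Second, and more seriously, the fiber-level claim $H^0(\Pi_\tJ^{-1}(y),\cO)=\kappa(y)$ is not proven: you sketch an induction on rank or a restricted Frobenius-splitting argument, but neither is carried out. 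Note that this claim is essentially equivalent to connectedness and ``$\cO$-connectedness'' of the fibers, and the paper establishes connectedness of the fibers of $\Pi_\tJ$ only \emph{after} Corollary \ref{H-inh}, in Corollary \ref{gen-norm} (``By Corollary \ref{H-inh} and Remark \ref{cvw}, we can rearrange the map $\Pi_\tJ$ to be surjective with connected fibers''). So your route is in danger of being circular, or at best duplicates the content it is supposed to prove. The fibers of $\Pi_\tJ$ are also not uniform: over different strata they are different Richardson-type pieces of Levi semi-infinite flag varieties, so the induction on rank would have to be set up with care.

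The paper sidesteps the fiber analysis entirely. It reduces (as you do) to showing $H^0(\sQ'(v,w),\cO(\la))\cong H^0(\sQ'_\tJ(v,w),\cO(\la))$ for $\la\in P_{\tJ,++}$, and then argues purely on the level of multi-homogeneous coordinate rings: the inclusion $R^v_w(\tJ)\subset R^v_w$ of $P_{\tJ,+}$- vs.\ $P_+$-graded rings, the surjectivity of $\Pi_\tJ$ (which guarantees no nonzero element of $R^v_w(\la)_{\F_p}$ lives entirely on the irrelevant locus for $\la\in P_{\tJ,++}$), and weak normality (Lemma \ref{Q-split}) identify the graded normalization of $R^v_w(\tJ)_{\F_p}$ with the $P_{\tJ,+}$-graded slice of the graded normalization of $(R^v_w)_{\F_p}$, following the same computation as in the last paragraphs of the proof of Theorem \ref{coh}. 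This algebraic argument is what you would need to import or rediscover to close your gap; as written, your proposal does not complete the $H^0$ identification.
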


\begin{rem}\label{cvw}
Since the map $\bQ_G^{\ra} \to \bQ_{G, \tJ}^{\ra}$ is surjective, we can replace $v$ with $v t_{\beta}$ for some $\beta \in \sum_{j \in \tJ} \Z_{\ge 0} \al_j^{\vee}$ to obtain a surjection:
$$\Pi_\tJ : \sQ' ( vt_{\beta}, w )_{\bK} \rightarrow \sQ'_\tJ ( vt_{\beta}, w )_{\bK} = \sQ'_\tJ ( v, w )_{\bK}.$$
For $v = w_0$ and $w = e$, there is an optimal choice of $\beta$, that is sometimes referred to as the Peterson-Woodward threshold (\cite{Woo05}).
\end{rem}

\begin{proof}[Proof of Corollary \ref{H-inh}]
Thanks to \cite[Lemma A.31]{Kum02} and Lemma \ref{ampleness}, it suffices to prove
\begin{align*}
H^{>0} ( \sQ' ( v, w )_{\bK}, \cO_{\sQ' ( v, w )_{\bK}} ( \la ) ) \cong \{ 0 \} \text{ and }\\
H^{0} ( \sQ' ( v, w )_{\bK}, \cO_{\sQ' ( v, w )_{\bK}} ( \la ) ) \cong H^{0} ( \sQ'_{\tJ} ( v, w )_{\bK}, \cO_{\sQ'_{\tJ} ( v, w )_{\bK}} ( \la ) )
\end{align*}
for each $\la \in P_{\tJ,++}$. The $H^{>0}$-part of the assertion follow from Theorem \ref{coh}. The $H^0$-part of the assertion follow as in the latter part of the proof of Theorem \ref{coh} since the surjectivity of $\Pi_\tJ$ guarantees that every non-zero element of $R ^v _w ( \la )_{\F_p}$ is supported outside of the irrelevant locus by pullback and (graded) normalization of the ring $R ^v _w ( \tJ )_{\F_p}$ is the degree $P_{\tJ, +}$-part of the (graded) normalization of the ring $( R ^v _w )_{\F_p}$ by the degree reasons.
\end{proof}

\begin{cor}\label{prRic}
Let $\la \in P_{+}$ and $w, v \in W_\af$. We set $\tJ := \{i \in \tI \mid \left< \al_i^{\vee}, \la \right> = 0 \}$. Assume that $\Pi_{\tJ} ( \sQ' ( v, w )_{\bK} ) = \sQ'_{\tJ} ( v, w )_{\bK}$. Then, we have
$$H^{0} ( \sQ' ( v, w )_{\bK}, \cO_{\sQ' ( v, w )_{\bK}} ( \la ) )^{\vee} = \bW _{ww_0} ( \la )_{\bK} \cap \theta ( \bW _{v} ( -w_0 \la )_{\bK} ).$$
\end{cor}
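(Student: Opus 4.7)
The plan is to pass to the parabolic quotient $\sQ'_{\tJ}(v,w)_{\bK}$, invoke Theorem~\ref{coh}, and then dualize the presentation of the coordinate ring $R^v_w(\tJ)$ using Lemma~\ref{WX-comm}. By the choice of $\tJ$ we have $\la\in P_{\tJ,++}$, so the line bundle $\cO_{\sQ'(v,w)_{\bK}}(\la)$ only involves coordinates indexed by $\tI\setminus\tJ$ and is the pullback $\Pi_{\tJ}^{*}\cO_{\sQ'_{\tJ}(v,w)_{\bK}}(\la)$. The surjectivity hypothesis lets Corollary~\ref{H-inh} apply, so the projection formula gives
\[
H^{0}(\sQ'(v,w)_{\bK},\cO(\la)) \;\cong\; H^{0}(\sQ'_{\tJ}(v,w)_{\bK},\cO(\la)) \;\cong\; R^v_w(\tJ,\la)_{\bK},
\]
the last isomorphism by Theorem~\ref{coh}.

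Second, I would compute $R^v_w(\tJ,\la)_{\bK}^{\vee}$. By the definition of $R^{+}(\tJ)$, together with the coproduct embedding $\bX(\la)_{\Z}\hookrightarrow\bigotimes_{i}\bX(\varpi_i)_{\Z}^{\otimes\langle\al_i^\vee,\la\rangle}$ underlying the projective-limit construction of $\widetilde R$, the graded dual of $R^{+}(\tJ,\la)_{\bK}$ is canonically identified with $\bX(\la)_{\bK}$. Under this identification, the dual of the quotient $R^{+}(\tJ,\la)\twoheadrightarrow R_w(\tJ,\la)=\bW_{ww_0}(\la)^{\vee}$ is the Demazure inclusion $\bW_{ww_0}(\la)\hookrightarrow\bX(\la)$. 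The dual of the second quotient $R^{+}(\tJ,\la)\twoheadrightarrow\theta^{*}R_{vw_0}(\tJ,\la)$ is $\bW_{vw_0}^{-}(\la)\hookrightarrow\bX(\la)$: this is precisely the content of the $\bI^{-}$-saturation step in the proof of Theorem~\ref{F-uniq}, which identifies that kernel with the annihilator of $U^{-}_{\bK}\bv_{vw_0\la}=\bW_{vw_0}^{-}(\la)$. Since $R^v_w(\tJ,\la)_{\bK}$ is the quotient of $R^{+}(\tJ,\la)_{\bK}$ by the sum of the two kernels, taking perpendiculars yields
\[
R^v_w(\tJ,\la)_{\bK}^{\vee} \;=\; \bW_{ww_0}(\la)_{\bK}\cap\bW_{vw_0}^{-}(\la)_{\bK}\subset\bX(\la)_{\bK}.
\]

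Third, to match with the statement, I would apply Lemma~\ref{WX-comm}(2) and (4) with $w=v$ and the weight replaced by $-w_0\la$: the Chevalley involution identifies $\theta(\bW_v(-w_0\la))$ inside $\bX(\la)_{\bK}$ with $\bW_{vw_0}^{-}(\la)_{\bK}$ (the extremal vector $\bv_{v\cdot(-w_0\la)}$ is sent to $\bv_{vw_0\la}$, and $U^{+}$ is exchanged with $U^{-}$), which combined with the previous step gives the asserted equality. The main technical point is the identification $R^{+}(\tJ,\la)_{\bK}^{\vee}\cong\bX(\la)_{\bK}$ and the matching of the two kernels with Demazure subspaces; both use the finite-dimensionality of weight spaces of $\bX(\la)$ (Theorem~\ref{X-compat}) together with Corollary~\ref{WXcomm-inj}, and the dual incarnation of the construction appearing in the proof of Theorem~\ref{F-uniq}. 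Everything else is a direct unwinding of the definitions.
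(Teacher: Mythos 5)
Your proof is correct and takes essentially the same route as the paper: step 1 is literally the paper's proof (combine Corollary \ref{H-inh} with Theorem \ref{coh} for the parabolic quotient), and steps 2--3 simply spell out the dualization of the defining presentation $R^v_w(\tJ,\la)=R^+(\tJ,\la)/(\ker_1+\ker_2)$ to recover the intersection $\bW_{ww_0}(\la)\cap\theta(\bW_v(-w_0\la))$, which the paper leaves implicit (and records elsewhere, in the proof of Proposition \ref{BSDH-type2}). The identification $R^+(\tJ,\la)^\vee\cong\bX(\la)_{\bK}$ that you flag as the technical point does hinge on the $\Z$-direct-summand property coming from the crystal-lattice comparison in the proof of Theorem \ref{W-unit}, which you could cite explicitly, but this is a presentational remark rather than a gap.
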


\begin{proof}
Combine Theorem \ref{coh} for $\sQ'_{\tJ} ( v, w )_{\bK}$ and Corollary \ref{H-inh}.
\end{proof}

\begin{lem}\label{pre-BSDH}
Let $w,v \in W_\af$ and $\tJ \subset \tI$. For each $i \in \tI_\af$ such that $s_i w >_\si w$ and $s_i v >_\si v$, the variety $\sQ'_{\tJ} ( v, w )_\bK$ is $B_i$-stable. In addition, we have a morphism
$$\pi_i : \SL ( 2, i ) \times^{B_i} \sQ'_{\tJ} ( v, w )_\bK \rightarrow \sQ'_{\tJ} ( v, s_i w )_\bK.$$
The map $\pi_i$ is a $\P^1$-fibration if $\sQ'_{\tJ} ( v, w )_\bK$ is $\SL ( 2, i )$-stable $($this never happens under our assumption when $\tJ = \emptyset)$. In general, the fiber of $\pi_i$ is either a point or $\P^1$ if it is nonempty.
\end{lem}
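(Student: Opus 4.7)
The plan is to deduce the lemma in three stages: $B_i$-stability, construction of the morphism $\pi_i$, and the fiber description. My working picture is the set-theoretic presentation
$$\sQ'_{\tJ}(v, w) \;=\; \bQ_{G, \tJ}(w) \cap \bQ^{op}_{\tJ}(v),$$
where $\bQ^{op}_{\tJ}(v) := \mathrm{Proj}\,\theta^*(R_{vw_0}(\tJ))$ is the ``opposite Schubert variety'' built into the presentation $R^v_w(\tJ) = R^+(\tJ)/(\ker_1 + \ker_2)$ from \S\ref{defQ'}, with $\ker_1 = \ker(R^+(\tJ)\to R_w(\tJ))$ and $\ker_2 = \ker(R^+(\tJ)\to\theta^*(R_{vw_0}(\tJ)))$.

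For $B_i$-stability, $\bQ_{G, \tJ}(w)$ is the closure of an $\bI$-orbit, hence $\bI$-stable and a fortiori $B_i$-stable. The key step is to show that $\bQ^{op}_{\tJ}(v)$, a priori only $\bI^-$-stable via the $\theta$-twist, is in fact $\mathop{SL}(2, i)$-stable under the hypothesis $s_i v >_\si v$. This is the semi-infinite analogue of the classical fact that $\overline{B^- \dot v B/B}$ is $P_i$-stable iff $s_i v \ge v$. I would deduce this either (a) at the level of defining rings, by combining Lemma \ref{contain} with Kashiwara's global basis theory to encode the $\mathop{SL}(2, i)$-equivariance of the Demazure inclusion $\bW_v(\mu) \subset \bW_{s_i v}(\mu)$ under $v \le_\si s_i v$ after $\theta$-twist, or (b) by transferring the finite-dimensional classical fact through the $Q^\vee$-translation automorphisms $\tau_\beta$ of Lemma \ref{WX-comm}. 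The intersection of two $B_i$-stable closed subvarieties then yields the $B_i$-stability of $\sQ'_{\tJ}(v, w)_\bK$.

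For the morphism $\pi_i$, the parallel hypothesis $s_i w >_\si w$ says $w \le_\si s_i w$ and hence that $\bQ_{G, \tJ}(s_i w)$ is $\mathop{SL}(2, i)$-stable (as $s_i(s_i w) = w \le_\si s_i w$) and contains $\bQ_{G, \tJ}(w)$. Combined with the $\mathop{SL}(2, i)$-stability of $\bQ^{op}_{\tJ}(v)$, one obtains
$$\mathop{SL}(2, i) \cdot \sQ'_{\tJ}(v, w)_\bK \;\subset\; \bQ_{G, \tJ}(s_i w) \cap \bQ^{op}_{\tJ}(v) \;=\; \sQ'_{\tJ}(v, s_i w)_\bK,$$
so the multiplication map $\mathop{SL}(2, i) \times \sQ'_{\tJ}(v, w)_\bK \to \sQ'_{\tJ}(v, s_i w)_\bK$ factors through the $B_i$-balanced product to yield $\pi_i$.

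For the fiber analysis, the fiber of $\pi_i$ over $x \in \sQ'_{\tJ}(v, s_i w)_\bK$ is naturally identified with the closed $B_i$-stable subset $\{gB_i \in \mathop{SL}(2, i)/B_i \colon g^{-1} x \in \sQ'_{\tJ}(v, w)_\bK\}$ of $\mathop{SL}(2, i)/B_i \cong \P^1$. The only closed $B_i$-stable subsets of $\P^1$ are $\emptyset$, the $B_i$-fixed point, and $\P^1$ itself, so every non-empty fiber is either a point or $\P^1$. When $\sQ'_{\tJ}(v, w)_\bK$ is itself $\mathop{SL}(2, i)$-stable, the condition $g^{-1} x \in \sQ'_{\tJ}(v, w)_\bK$ holds for every $g$, so each fiber is $\P^1$; this same stability forces $\sQ'_{\tJ}(v, s_i w)_\bK = \mathop{SL}(2, i) \cdot \sQ'_{\tJ}(v, w)_\bK = \sQ'_{\tJ}(v, w)_\bK$, making $\pi_i$ a trivial $\P^1$-fibration. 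The main technical hurdle is the $\mathop{SL}(2, i)$-stability of $\bQ^{op}_{\tJ}(v)$ under $s_i v >_\si v$; the remaining parts are formal consequences once this is established.
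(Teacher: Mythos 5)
The $B_i$-stability argument and the construction of $\pi_i$ match the paper's approach: both reduce to the assertion that under $s_iv >_\si v$ the opposite piece (at the ring level, $\theta^*(\bW_v(-w_0\la)_\Z)$, or equivalently your $\bQ^{op}_\tJ(v)$) is $\mathop{SL}(2,i)$-stable, while $\bQ_{G,\tJ}(w)$ contributes only $B_i$-stability; and the paper is equally terse about proving that module-level fact, merely asserting that $\bW_v(-w_0\la)_\Z$ is $\bI(i)$-stable. Your construction of $\pi_i$ via $\mathop{SL}(2,i)\cdot\sQ'_\tJ(v,w) \subset \bQ_{G,\tJ}(s_iw)\cap\bQ^{op}_\tJ(v)$ is correct and in fact makes explicit something the paper leaves implicit.

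The fiber analysis, however, contains a genuine error. You identify the fiber over $x$ with $F_x = \{gB_i : g^{-1}x \in \sQ'_\tJ(v,w)\}$ — correct — and then assert that $F_x$ is a closed $B_i$-stable subset of $\mathop{SL}(2,i)/B_i$. This is false: for $b \in B_i$ one has $(bg)^{-1}x = g^{-1}b^{-1}x$, and $g^{-1}b^{-1}x \in Y$ does not follow from $g^{-1}x \in Y$ unless $b^{-1}x = x$, i.e.\ unless $x$ itself is $B_i$-fixed. (A toy check: in $\mathop{SL}(2)$ acting on $\P^1$ with $Y = \{[1:0]\}$ the $B$-fixed point and $x = [0:1]$, one finds $F_x = \{w_0B\}$, which is not $B$-stable.) What is true is that $F_x$ is stable under the right action of $\mathsf{Stab}_{\mathop{SL}(2,i)}(x)$ on $B_i\backslash\mathop{SL}(2,i)$, which for a torus-fixed $x$ only rules in $\emptyset$, one point, two $H_i$-fixed points, or $\P^1$ — so the dichotomy "point or $\P^1$" does not follow from stability considerations alone, and an a priori disconnected fiber is not excluded. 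The paper's proof instead appeals to a set-theoretic analysis of the ambient Bott--Samelson-type map $\mathop{SL}(2,i)\times^{B_i}\bQ_{G,\tJ}(w)\to\bQ_{G,\tJ}(s_iw)$, whose fibers are computed $\bI$-orbit by $\bI$-orbit (a point over orbits $\bO(\tJ,u)$ with $s_iu\not\le_\si w$, a $\P^1$ where $s_iu\le_\si w$), and then notes these fibers are unchanged upon intersecting with the $\mathop{SL}(2,i)$-stable locus $\bQ^{op}_\tJ(v)$. To repair your proof you would need to invoke this orbit-by-orbit computation (or equivalent connectedness input from the semi-infinite Bruhat order) rather than the purported $B_i$-stability of $F_x$.

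Finally, in the $\mathop{SL}(2,i)$-stable case your identification $\sQ'_\tJ(v,s_iw)=\mathop{SL}(2,i)\cdot\sQ'_\tJ(v,w)=\sQ'_\tJ(v,w)$ and the trivial $\P^1$-bundle structure are both correct, and agree with the paper (which records the ring-level equality $\bW_{s_iww_0}(\la)=\bW_{ww_0}(\la)$ for $\la\in P_{\tJ,++}$ in that situation).
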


\begin{proof}
For each $\la \in P_{\tJ,++}$, the module $\bW _{v} ( - w_0 \la )_\bK$ is $\bI ( i )$-stable by Corollary \ref{SL2-stable} and (\ref{ord-opp}) and the module $\bW _{ww_0} ( \la )_\bK$ is $\bI$-stable. In particular,
$$\bW _{ww_0} ( \la )_\bK \cap \theta ( \bW _{v} ( -w_0 \la )_\bK ) \equiv \bW _{ww_0} ( \la )_\bK \cap \bW^- _{vw_0} ( \la )_\bK$$
is $B_i$-stable. It follows that the ring $R ^v _w ( \tJ )_\bK$ is $B_i$-stable. Hence the scheme $\sQ'_{\tJ} ( v, w )_\bK$ is also $B_i$-stable.

The map $\pi_i$ is a $\P^1$-fibration if $\sQ'_{\tJ} ( v, w )_{\bK}$ is $\SL ( 2, i )$-stable since we have $\sQ'_{\tJ} ( \beta, w )_{\bK} = \sQ'_{\tJ} ( \beta, s_i w )_{\bK}$ (that in turn holds if and only if $\bW _{s_iw w_0} ( \la )_{\bK} = \bW _{w w_0} ( \la )_{\bK}$ for $\la \in P_{\tJ,++}$ since $\bW^- _{vw_0} ( \la )_\bK$ is $\SL ( 2, i )$-stable; hence it never happens when $\tJ = \emptyset$ by Lemma \ref{contain}) in this case. The fiber of the map $\pi_i$ is either a point or $\P^1$ if $\sQ'_{\tJ} ( v, w )_{\bK}$ is not $\SL ( 2, i )$-stable as the corresponding statement holds true for $\bQ_{G, \tJ} ( w )_{\bK} \subset \bQ_{G, \tJ} ( s_i w )_{\bK}$ by a set-theoretic consideration (and it carries over to any $B_i$-stable locus). Therefore, we conclude the result.
\end{proof}

\begin{prop}\label{BSDH-type2}
Let $w,v \in W_\af$ and $\tJ \subset \tI$. For each $i \in \tI_\af$ such that $s_i w >_\si w$ and $s_i v >_\si v$, we have a surjective map
$$\pi_i : \SL ( 2, i ) \times^{B_i} \sQ'_{\tJ} ( v, w )_\bK \rightarrow \sQ'_{\tJ} ( v, s_i w )_\bK$$
such that $(\pi_i)_* \cO_{\SL ( 2, i ) \times^{B_i} \sQ'_{\tJ} ( v, w )_\bK} \cong \cO_{\sQ'_{\tJ} ( v, s_i w )_\bK}$ and
$$\R^{>0} (\pi_i)_* \cO_{\SL ( 2, i ) \times^{B_i}  \sQ'_{\tJ} ( v, w )_\bK} \cong \{ 0 \}.$$
\end{prop}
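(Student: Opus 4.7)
The plan is to first establish surjectivity of $\pi_i$, then deduce the cohomological identities by combining Frobenius splitting with the $\P^1$-bundle structure, and finally lift everything to arbitrary characteristic $\neq 2$ via $\Z$-flatness.

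For surjectivity, I would combine two facts already used in the proof of Lemma \ref{pre-BSDH}: under $s_i v >_\si v$, the module $\bW_v(-w_0\la)_\Z$ is $\bI(i)$-stable (not just $\bI$-stable), while under $s_i w >_\si w$ the set-theoretic identity $\bQ_G(s_i w)_\bK = \mathop{SL}(2,i) \cdot \bQ_G(w)_\bK$ holds by the semi-infinite Bruhat decomposition (Corollary \ref{pos-fp}). Intersecting the two sides inside $\bQ_{G,\tJ}^{\mathrm{rat}}$ gives $\sQ'_\tJ(v, s_iw)_\bK = \mathop{SL}(2,i) \cdot \sQ'_\tJ(v, w)_\bK$, which is exactly the surjectivity claim for $\pi_i$.

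When $\sQ'_\tJ(v,w)_\bK$ is itself $\mathop{SL}(2,i)$-stable, Lemma \ref{pre-BSDH} tells us that $\pi_i$ is an honest $\P^1$-bundle with $\sQ'_\tJ(v,s_iw)_\bK=\sQ'_\tJ(v,w)_\bK$ and the assertions are classical. In the remaining birational case I would work in characteristic $p>0$ and transport the $\bI$-canonical (hence $B_i$-canonical) Frobenius splitting of $\sQ'_\tJ(v,w)_{\F_p}$ furnished by Lemma \ref{Q-split} to $X:=\mathop{SL}(2,i)\times^{B_i}\sQ'_\tJ(v,w)_{\F_p}$ via the $\mathop{SL}(2,i)$-equivariance (together with the unipotent one-parameter subgroup condition of Proposition \ref{B-can-inv}). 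For $\la\in P_{\tJ,++}$, the line bundle $\cO_Y(\la)$ with $Y:=\sQ'_\tJ(v,s_iw)$ is ample by Lemma \ref{ampleness}, and its pullback to each $\P^1$-fiber of the auxiliary projection $p:X\to\sQ'_\tJ(v,w)_{\F_p}$ has non-negative degree, being positive on those fibers that map to a genuine $\mathop{SL}(2,i)$-orbit $\P^1$ in $Y$. Fiberwise Kempf vanishing on $\P^1$ then yields $R^{>0}p_{*}\pi_i^{*}\cO_Y(\la)=0$, and combining this with the higher cohomology vanishing of Theorem \ref{coh} on $\sQ'_\tJ(v,w)_{\F_p}$ produces $H^{>0}(X,\pi_i^{*}\cO_Y(n\la))=0$ for every $n\ge 1$. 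The projection formula together with Serre vanishing on $Y$ then forces $R^{>0}\pi_{i,*}\cO_X=0$.

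Finally, the identification $\pi_{i,*}\cO_X\cong\cO_Y$ proceeds by observing that surjectivity plus the connectedness of all fibers (point or $\P^1$ by Lemma \ref{pre-BSDH}) make the Stein factorization $\mathrm{Spec}_Y(\pi_{i,*}\cO_X)\to Y$ into a finite morphism that is bijective on $\bK$-points and an isomorphism over the dense open locus of reduced-point fibers; the weak normality of $Y_{\F_p}$ from Lemma \ref{Q-split} together with Corollary \ref{FQ-wn} then promotes this to a genuine isomorphism, and $\Z$-flatness (Lemma \ref{Q-flat}) and semicontinuity lift everything to arbitrary characteristic $\neq 2$. The principal obstacle I foresee lies in the birational case: precisely verifying that the induced Frobenius splitting on $X$ is canonical enough to guarantee $R^{>0}\pi_{i,*}\cO_X=0$ even over closed points whose fiber is an exceptional $\P^1$, and controlling the positivity of $\pi_i^{*}\cO_Y(\la)$ along such exceptional fibers uniformly in $i\in\tI_\af$ (notably for $i=0$, where the naive pairing $\langle\al_0^\vee,\la\rangle$ need not be non-negative, and one must instead invoke the ampleness of $\cO_Y(\la)$ directly on the relevant $\mathop{SL}(2,0)$-orbit $\P^1$ inside $Y$).
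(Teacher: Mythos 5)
Your surjectivity argument is essentially sound (and a bit more direct than the paper's, which realizes $\pi_i$ via the ring inclusion and then argues through generic points), and your use of Stein factorization plus the weak normality of Lemma~\ref{Q-split} to conclude $(\pi_i)_*\cO_X\cong\cO_Y$ matches the paper's step. However, the cohomological vanishing is where the proposal breaks down, for a structural reason rather than just a technical one.

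The ``auxiliary projection $p:X\to\sQ'_\tJ(v,w)_{\F_p}$'' does not exist. From $X=\mathop{SL}(2,i)\times^{B_i}\sQ'_\tJ(v,w)$ the only canonical morphisms are the bundle projection $q:X\to \mathop{SL}(2,i)/B_i\cong\P^1$, whose fibers are copies of $\sQ'_\tJ(v,w)$ (not $\P^1$'s), and the action map $\pi_i:X\to Y$. The assignment $[g,z]\mapsto z$ you implicitly need is ill-defined on the quotient unless $B_i$ acts trivially on $\sQ'_\tJ(v,w)$, i.e.\ precisely in the $\mathop{SL}(2,i)$-stable case that you already declared trivial. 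So the ``fiberwise Kempf vanishing on $\P^1$'' has no map to apply it to, and the whole chain $R^{>0}p_*\pi_i^*\cO_Y(\la)=0 \Rightarrow H^{>0}(X,\pi_i^*\cO_Y(n\la))=0 \Rightarrow R^{>0}\pi_{i,*}\cO_X=0$ has no foundation. (Replacing $p$ with $q$ doesn't rescue the argument either: $R^{>0}q_*=0$ follows from Theorem~\ref{coh}, but then one must compute $H^{>0}(\P^1, q_*\pi_i^*\cO_Y(\la))$, i.e.\ understand the splitting type of the vector bundle $q_*\pi_i^*\cO_Y(\la)$ on $\P^1$, and that is exactly the hard part.) Also, the caveat you raise at the end about $i=0$ and $\langle\al_0^\vee,\la\rangle$ being negative, while a genuine subtlety in many Bott--Samelson computations, is downstream of a projection that is not there.

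The paper's proof circumvents all of this via a crystal-basis Euler characteristic argument. Using Theorem~\ref{b-compat}, \cite[Theorem~4.2.1]{NS16}, and \cite[\S 2.8]{Kas05}, one shows that the positive global basis elements spanning $R^v_w(\tJ,\la)^\vee=\bW_{ww_0}(\la)_\Z\cap\theta^*(\bW_v(-w_0\la)_\Z)$ form a disjoint union of $\mathfrak{sl}(2,i)$-crystals, and that applying the $i$-th Demazure operator to this character yields the character identity
$$\ch\,H^0(\cO_Y(\la))=\ch\,H^0(\cO_X(\la))-\ch\,H^1(\cO_X(\la)),\qquad \la\in P_{\tJ,++},$$
(with $H^{\ge 2}=0$ for dimension reasons). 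One then shows separately, via Stein factorization and weak normality, that $(\pi_i)_*\cO_X\cong\cO_Y$, so the natural inclusion $H^0(\cO_Y(\la))\hookrightarrow H^0(\cO_X(\la))$ is an equality; substituting back into the character identity forces $\ch\,H^1=0$, hence $\R^1(\pi_i)_*\cO_X=0$ after twisting by the ample $\cO_Y(\la)$ of Lemma~\ref{ampleness}. This mechanism --- representation-theoretic control of the Euler characteristic combined with the independent identification of $H^0$ --- is what you would need to import; a purely geometric $\P^1$-fibration argument is not available here.
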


\begin{proof}
For each $\la \in P_{\tJ,++}$, the $\la$-graded component
$$R ^v _w ( \tJ, \la )^{\vee} = \bW _{ww_0} ( \la )_\Z \cap \theta^* ( \bW _{v} ( - w_0 \la )_\Z ) \subset \bX ( \la )_\Z$$
of the coordinate ring of $\sQ'_{\tJ} ( v, w )$ is obtained as the $\Z$-span of a subset of positive global basis of $\bW _{ww_0} ( \la )_\Z$.

We set $\sQ^+ _{\tJ} ( v, w )_\bK:= \SL ( 2, i ) \times^{B_i} \sQ'_{\tJ} ( v, w )_\bK$ for simplicity in the below.

In view of \cite[Theorem 4.2.1]{NS16} (cf. Corollary \ref{SL2-stable}), the basis elements of $\theta^* ( \bW _{v} ( - w_0 \la )_\Z )$ afforded by the negative global basis, regarded as a subset of the $\tg$-crystal $\bB ( \bX ( \la ) )$, decomposes into the disjoint union of connected $\mathfrak{sl} ( 2, i )$-crystals (that is the set of labels of a basis of an irreducible $\mathfrak{sl} ( 2 )$-module equipped with combinatorial operations $\widetilde{e}_i$ and $\widetilde{f}_i$ corresponding to $E^{(\bullet)}_i$ and $F^{(\bullet)}_i$, see e.g. \cite[Definition 2.3.1]{Kas91}; it is the $i$-string in \cite{Kas93,Kas05}). Moreover, the basis elements of $\bW _{s_iww_0} ( \la )_\Z$ afforded by the positive global basis, regarded as a subset of the $\tg$-crystal $\bB ( \bX ( \la ) )$, also decomposes into the disjoint union of $\mathfrak{sl} ( 2, i )$-crystals (\cite{Kas05,Kat18}), and the set $S$ of basis elements corresponding to each connected $\mathfrak{sl} ( 2, i )$-crystal satisfies
\begin{equation}
S \cap \bW _{ww_0} ( \la )_\Z = S \hskip 3mm \text{or} \hskip 3mm \{\bv\},
\end{equation}
where $\bv \in S$ is the highest weight vector as a $\mathfrak{sl} ( 2, i )$-crystal, that has dominant weight as a weight of $\mathfrak{sl} ( 2, i )$ (\cite[Lemma 2.6]{Kas05}). In particular, we have
$$S \cap R ^v _w ( \tJ, \la )^{\vee} = S \hskip 3mm \text{or} \hskip 3mm \{\bv\} \hskip 3mm \text{or} \hskip 3mm \emptyset.$$
Therefore, a reinterpretation of \cite[(0.5)]{Kas93} using Theorem \ref{coh} (cf. \cite{Jos85,Kat18}) reads as
\begin{eqnarray*}
\ch \, H^0 ( \cO_{\sQ'_{\tJ} ( v, s_i w )_\bK} ( \la ) ) & = & \ch \, H^0 ( \cO_{\sQ^+ _{\tJ} ( v, w )_\bK} ( \la ) )\\
0 & = & \ch \, H^1 ( \cO_{\sQ^+ _{\tJ} ( v, w )_\bK} ( \la ) ).\label{char-id}
\end{eqnarray*}
for each $\la \in P_{\tJ,++}$. Since the zero-th part of the Demazure functor is the same as taking the maximal integrable $\SL ( 2,i )$-inflation of a (weight semisimple) $B_i$-module, we conclude that inclusion $R ^v _w ( \tJ, \la )^{\vee}_{\bK} \subset R ^v _{s_i w} ( \tJ, \la )^{\vee}_{\bK}$ induces a natural inclusion
\begin{eqnarray}
H^0 ( \sQ'_{\tJ} ( v, s_i w )_{\bK}, \cO_{\sQ'_{\tJ} ( \beta, s_i w )_{\bK}} ( \la ) ) \hookrightarrow H^0 ( \sQ^+_{\tJ} ( v, w )_{\bK}, \cO_{\sQ^+_{\tJ} ( v, w )_{\bK}} ( \la ) ),\label{ind-eq}
\end{eqnarray}
that is an isomorphism by the character comparison. Thus, Lemma \ref{ampleness} implies that 
$$(\pi_i)_* \cO_{\sQ^+_{\tJ} ( v, w )_\bK} \cong \cO_{\sQ'_{\tJ} ( v, s_i w )_\bK} \hskip 5mm \text{and} \hskip 5mm \R^1 (\pi_i)_* \cO_{\sQ^{+}_{\tJ} ( \beta, w )_\bK} \cong \{ 0 \}.$$
Since we have $\R^{\ge 2} (\pi_i)_* \cO_{\sQ^+_{\tJ} ( v, w )_\bK} \cong \{ 0 \}$ by the dimension reason, we conclude the result.
\end{proof}

A particular case of Proposition \ref{BSDH-type2} is worth noting:

\begin{cor}\label{BSDH-type}
Let $\beta \in Q^{\vee}_+$, $w \in W$, and $\tJ \subset \tI$. For each $i \in \tI$ such that $s_i w < w$, we have a surjective map
$$\pi_i : P_i \times^B \sQ'_{\tJ} ( \beta, w )_\bK \rightarrow \sQ'_{\tJ} ( \beta, s_i w )_\bK$$
such that $\R^{\bullet}(\pi_i)_* \cO_{P_i \times^B \sQ'_{\tJ} ( \beta, w )_\bK} \cong \cO_{\sQ'_{\tJ} ( \beta, s_i w )_\bK}$. \hfill $\Box$
\end{cor}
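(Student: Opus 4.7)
The plan is to recognize Corollary \ref{BSDH-type} as a direct specialization of Proposition \ref{BSDH-type2} under the notational identification $v = w_0 t_\beta$ (so that $\sQ'_\tJ(w_0 t_\beta, w) = \sQ'_\tJ(\beta, w)$) together with the restriction of the simple reflection index from $\tI_\af$ to $\tI$. What must be done is to translate the two hypotheses of Proposition \ref{BSDH-type2} into the current setting, and to reconcile the two presentations of the $\P^1$-bundle.

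First I would verify the semi-infinite order conditions. For the hypothesis $s_i v >_\si v$ with $v = w_0 t_\beta$ and $i \in \tI$: by the paper's remark that $u \le u' \Leftrightarrow u \ge_\si u'$ for $u, u' \in W$, and the fact that $s_i w_0 < w_0$ since $w_0$ is the longest element and $i \in \tI$, we obtain $s_i w_0 >_\si w_0$. This reversal passes to the shifted elements $s_i w_0 t_\beta$ and $w_0 t_\beta$ via (\ref{si-ord}), since the defining inequality is stable under the right translation by $t_\beta$ (one simply absorbs $t_\beta$ into the auxiliary $t_\gamma$ in the definition). For the hypothesis $s_i w >_\si w$ with $w \in W$: the given assumption $s_i w < w$ in the finite Bruhat order gives $s_i w >_\si w$ by the same reversal.

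Next I would reconcile the two $\P^1$-bundle constructions. Since $i \in \tI$, we have $\mathop{SL}(2,i) \subset G$ and $B_i = \mathop{SL}(2,i) \cap B$. The inclusion $\mathop{SL}(2,i) \hookrightarrow P_i$ induces a canonical isomorphism $\mathop{SL}(2,i)/B_i \stackrel{\cong}{\longrightarrow} P_i/B \cong \P^1$, and the $B$-action on $\sQ'_\tJ(\beta, w)_\bK$ (obtained by restriction of the $\bI$-action guaranteed by Lemma \ref{pre-BSDH}) is compatible with the $B_i$-action, yielding
\[
\mathop{SL}(2,i) \times^{B_i} \sQ'_\tJ(\beta, w)_\bK \;\cong\; P_i \times^B \sQ'_\tJ(\beta, w)_\bK.
\]

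With these verifications in place, Proposition \ref{BSDH-type2} applied to $v = w_0 t_\beta$ and to the given $w$ directly produces a surjective morphism $\pi_i$ together with $(\pi_i)_* \cO \cong \cO_{\sQ'_\tJ(\beta, s_i w)_\bK}$ and $\R^{>0}(\pi_i)_* \cO \equiv 0$, which combine to give $\R^\bullet (\pi_i)_* \cO \cong \cO_{\sQ'_\tJ(\beta, s_i w)_\bK}$ as claimed. The substantive work has already been done in Proposition \ref{BSDH-type2} (via the Demazure-type character identity (\ref{char-id}) and the weak normality from Lemma \ref{Q-split}); the corollary is obtained by routine unraveling of the conventions, so there is no genuine obstacle to overcome.
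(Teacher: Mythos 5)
Your proposal is correct and matches the paper's (implicit) argument: the paper introduces Corollary \ref{BSDH-type} as ``a particular case of Proposition \ref{BSDH-type2}'' with no further proof, and your verification — checking $s_i w_0 t_\beta >_\si w_0 t_\beta$ and $s_i w >_\si w$ via the reversal between Bruhat and semi-infinite orders on $W$ together with right-translation invariance of $\le_\si$, and identifying $\mathop{SL}(2,i) \times^{B_i} (\cdot) \cong P_i \times^B (\cdot)$ via $P_i = \mathop{SL}(2,i)\cdot B$ with $\mathop{SL}(2,i) \cap B = B_i$ — is exactly the routine unraveling the paper leaves to the reader.
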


\subsection{Lifting to/from characteristic zero}\label{subsec:tofrom}

\begin{thm}\label{wn-trans}
Let $\gX$ be a Noetherian scheme flat over $\Z$. If $\gX _{\F_p}$ is weakly normal for $p \gg 0$, then $\gX_\C$ is also weakly normal. 
\end{thm}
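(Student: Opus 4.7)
The plan is to deduce weak normality of $\gX_{\C}$ from that of its reductions $\gX_{\F_p}$ for $p \gg 0$ by a standard spreading-out argument. Concretely, it suffices to show that any finite, birational, bijective morphism $\pi : \gY \to \gX_{\C}$ with trivial residue-field extensions, restricting to an isomorphism on a common Zariski open dense subset, is itself an isomorphism (cf.\ Corollary~\ref{FQ-wn}).

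I would first spread out $\pi$ over a finitely generated $\Z$-subalgebra $A \subset \C$ to a morphism $\pi_A : \gY_A \to \gX_A$ with analogous properties. By generic flatness, after inverting finitely many elements of $A$ one may arrange that $\gY_A$ is $A$-flat, that $\pi_A$ is finite and universally bijective with trivial residue-field extensions, and that the open locus $V_A \subset \gX_A$ on which $\pi_A$ is an isomorphism meets every fiber $\gX_s$ $(s \in \Spec A)$ in a dense open. For each closed point $s \in \Spec A$, the residue field $\kappa(s)$ is a finite field of some characteristic $p = p(s)$, and because $\F_p \hookrightarrow \kappa(s)$ is \'etale, weak normality descends from $\gX_{\F_p}$ to $\gX_s = \gX_{\F_p} \otimes_{\F_p} \kappa(s)$ whenever $p \gg 0$. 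For such $s$, the base change $\pi_s : \gY_A \otimes_A \kappa(s) \to \gX_s$ is a finite, birational, bijective morphism fitting the hypothesis of the paper's definition of weak normality, so $\pi_s$ is an isomorphism.

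Next, I form the coherent cokernel sheaf $\mathcal{F} := \pi_{A*} \cO_{\gY_A} / \cO_{\gX_A}$ on $\gX_A$. Since $\pi_A$ is affine and $\gY_A$ is $A$-flat, the formation of $\mathcal{F}$ commutes with base change, so $\mathcal{F} \otimes_A \kappa(s) = 0$ for every closed point $s$ of residue characteristic $\gg 0$. The image of $\Supp \mathcal{F}$ in $\Spec A$ is constructible by Chevalley's theorem and misses all such closed points. Since $A$ is a Jacobson ring (being finitely generated over $\Z$) and $\Z$-flat, the closed points of $\Spec A$ with residue characteristic $> N$ are dense in $\Spec A$ for every $N$; hence this image cannot contain the generic point of $\Spec A$. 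Therefore $\mathcal{F}$ vanishes after base change to $\Frac(A)$, and a fortiori after base change to $\C$, yielding $\pi_* \cO_{\gY} \cong \cO_{\gX_{\C}}$ as desired.

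The main obstacle is the fiberwise analysis in the second paragraph: (i) arranging via generic flatness that the isomorphism locus $V_A$ of $\pi_A$ restricts to a dense open subset in sufficiently many fibers $\gX_s$, so that $\pi_s$ retains the birationality needed to invoke weak normality, and (ii) verifying that weak normality in the paper's sense is preserved under the \'etale base change $\gX_{\F_p} \to \gX_{\F_p} \otimes_{\F_p} \kappa(s)$. Both are essentially standard, but deserve careful attention given the nonstandard formulation of weak normality used in this paper.
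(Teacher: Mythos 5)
Your approach is correct in outline but genuinely different from the paper's. The paper argues purely locally: it takes a local ring $(S,\gm)$ of $\gX_{\overline{\Q}}$, its weak normalization $S^- = S[f_1,\ldots,f_n]$, spreads them out over a Dedekind base $A$, and analyzes the finite local $\C$-algebra $(S_A^-/\gm_A S_A^-)\otimes_A\C$ by hand --- a change of variables makes the generators nilpotent, and specializing to $\overline{\F}_p$ the weak-normality hypothesis forces the fiber ring to contain $\overline{\F}_p$ as a ring direct summand, which together with the nilpotency rules out rank $>1$, so $S^-=S$. You instead take a global sheaf-theoretic route: spread out the weak normalization morphism over a finitely generated $\Z$-subalgebra $A\subset\C$, form the cokernel $\mathcal F$ of $\cO_{\gX_A}\to\pi_{A*}\cO_{\gY_A}$, show $\mathcal F$ dies on fibers over closed points of large residue characteristic, and use constructibility of the image of $\Supp\,\mathcal F$ in $\Spec A$ together with density of such closed points to conclude $\mathcal F$ vanishes over the fraction field of $A$ and hence over $\C$. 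Both routes need (some form of) stability of weak normality under \'etale extension of a perfect base field, which the paper extracts from \cite[Theorem V.2]{Man80}. One small imprecision: you do not actually need $\mathcal F$ to commute with base change (and $A$-flatness of $\gY_A$ alone does not give it, since it is $\mathcal F$ itself whose $\mathrm{Tor}_1$ must vanish); right-exactness of $\otimes_A\kappa(s)$ already identifies $\mathcal F\otimes_A\kappa(s)$ with the cokernel of $\cO_{\gX_s}\to\pi_{s*}\cO_{\gY_s}$, which vanishes as soon as $\pi_s$ is an isomorphism. Finally, when spreading out you should explicitly arrange that bijectivity and triviality of residue-field extensions of $\pi_s$ hold at \emph{all} points, not just closed ones, since the notion of weak normality used here (following \cite[Definition 1.2.3]{BK05}) is phrased in those terms.
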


\begin{proof}
Since the weak normalization commutes with localization \cite[Theorem I\!V.3]{Man80}, we can argue locally. Let $( S,\gm )$ be a local ring of $\gX _{\overline{\Q}}$ and let $S^-$ be the weak normalization of $S$ (\cite[Remark 1]{Yan83}). By the Noetherian hypothesis, we can invert finitely many primes and take a finite algebraic extension of $\Z$ to obtain a ring $A$ such that we have a commutative ring $S_A$ over $A$ and its ideal $\gm_A$ with the following properties:
\begin{itemize}
\item We have $( S_A \otimes_A \overline{\Q}, \gm_{A}  \otimes_A \overline{\mathbb Q}) \cong ( S,\gm )$;
\item The $A$-modules $S_A, \gm_A$, and $S_A / \gm_A$ are torsion-free;
\item The specialization of $A$ to the algebraic closure of a finite field yields a weakly normal (local) ring (\cite[Theorem V.2]{Man80}).
\end{itemize}
As $A$ is a Dedekind domain, we find that $S_A, \gm_A$, and $S_A / \gm_A$ are flat over $A$.

We have $S^- = S [f_1,\ldots,f_n]$, where $f_1,\ldots,f_n$ are integral elements. By multiplying with elements in $\overline{\Q}$, we can assume that $f_1,\ldots,f_n$ are integral over $S_A$. By inverting additional primes in $\Z$ if necessary (to assume that the denominator of $f_i$ in $\mathrm{Frac} ( S )$ does not vanish along specializations and achieve the conditions in the followings), we can further assume that $S_A^- := S_A [f_1,\ldots,f_n]$ is flat over $A$ and it is integral for every specialization of $A$ to a field.

The ring $( S_A^- / \gm_A S_A^- ) \otimes_A \C$ is a finite-dimensional local commutative $\C$-algebra by the weak normality assumption on $S^-$ (see \cite[Remark 1]{Yan83}). In particular, the multiplication action of each element of $( S_A^- / \gm_A S_A^- ) \otimes_A \C$ have a unique eigenvalue. Hence, if we present $( S_A^- / \gm_A S_A^- ) = A [X_1,\ldots,X_m] / \sim$ (where $\sim$ contains the minimal polynomials of the $A$-valued matrix $X_i$), then the minimal polynomial of $X_i$ is of the form $( T - a_i )^{m_i}$ ($a_i \in A$). Therefore, we can assume that each $X_i$ is nilpotent by changing $X_i$ with $X_i + a_i$ if necessary. Hence, we conclude that $(X_1,\ldots,X_m) \subset ( S_A^- / \gm_A S_A^- )$ have eigenvalues zero after specializing to $\overline{\F}_p$.

By assumption, $( S_A ) \otimes_A \overline{\F}_p$ is weakly normal for every possible prime $p$ and every ring homomorphism $A \to \overline{\F}_p$. Hence, the specialization $( S_A^- / \gm_A S_A^- ) \otimes_A \overline{\F}_p$ must contain $\overline{\F}_p$ as its ring direct summand (if it is non-zero). This forces $\mathrm{rank} \, ( S_A^- / \gm_A S_A^- ) = 1$ since we cannot have two linearly independent idempotents that have distinct eigenspaces by the previous paragraph.

Therefore, we deduce that $S_A^- = S_A$, that implies $( S, \gm )$ is itself weakly normal. In view of \cite[Theorem V.2 and Corollary V.3]{Man80}, we conclude the assertion.
\end{proof}

\begin{cor}\label{wn-char0}
For each $\tJ \subset \tI$, $w,v\in W_{\af}$, the scheme $\sQ'_\tJ ( v,w )_\C$ is weakly normal.
\end{cor}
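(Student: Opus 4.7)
The plan is to deduce Corollary \ref{wn-char0} by directly invoking the lifting principle provided by Theorem \ref{wn-trans}. All the needed hypotheses are already essentially at hand in the paper, so the proposal is to assemble them and apply that theorem.

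First, I would recall the model $\sQ'_\tJ ( v, w )$ of the scheme over $\Z$ constructed in \S\ref{defQ'} as $\mathrm{Proj} \, R^v_w ( \tJ )$. By Lemma \ref{Q-flat}, this scheme is flat over $\Z$ (since $R^v_w ( \tJ )$ admits a free $\Z$-basis obtained from a subset of the global-basis dual), and by Lemma \ref{Q'finite} it is of finite type over $\Z$, in particular Noetherian. Hence $\sQ'_\tJ ( v, w )$ meets the global hypotheses of Theorem \ref{wn-trans}.

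Next, for every prime $p \geq 3$ (in particular for all $p \gg 0$), Lemma \ref{Q-split} furnishes an $\bI$-canonical Frobenius splitting of $R^v_w ( \tJ )_{\F_p}$, which by \cite[Proposition 1.2.5]{BK05} implies that $\sQ'_\tJ ( v, w )_{\F_p}$ is weakly normal. Thus the positive-characteristic hypothesis of Theorem \ref{wn-trans} is verified uniformly for all large primes.

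Now Theorem \ref{wn-trans} applies and yields that $\sQ'_\tJ ( v, w )_{\overline{\Q}}$ is weakly normal. Extension of scalars along $\overline{\Q} \hookrightarrow \C$ preserves weak normality for finite-type schemes (using \cite[Theorem V.2 and Corollary V.3]{Man80}, since geometric weak normality is insensitive to separable field extensions within characteristic zero, and $\C$ is a union of such extensions of $\overline{\Q}$), giving weak normality of $\sQ'_\tJ ( v, w )_\C$. The only routine caveat is the extension $\overline{\Q} \to \C$, which is harmless because weak normality for a Noetherian scheme is detected on stalks and passes through filtered colimits of flat extensions; since no serious obstacle arises, I expect the entire proof to be a short one-line bookkeeping statement combining Lemma \ref{Q-flat}, Lemma \ref{Q'finite}, Lemma \ref{Q-split}, and Theorem \ref{wn-trans}.
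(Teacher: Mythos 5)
Your proposal is correct and matches the paper's proof, which is the one-line "Apply Theorem~\ref{wn-trans} to Lemma~\ref{Q-split}." You rightly verify the hypotheses of Theorem~\ref{wn-trans} via Lemma~\ref{Q-flat} (flatness over~$\Z$) and Lemma~\ref{Q'finite} (finite type, hence Noetherian), and you correctly observe that Lemma~\ref{Q-split} supplies weak normality of the $\F_p$-fibers. Note, however, that the final paragraph of your argument is superfluous: Theorem~\ref{wn-trans} as stated already concludes that $\gX_\C$ (not $\gX_{\overline{\Q}}$) is weakly normal, so the discussion of extending scalars from $\overline{\Q}$ to $\C$ is replaying a step that is internal to the proof of Theorem~\ref{wn-trans} rather than something you need to argue again.
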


\begin{proof}
Apply Theorem \ref{wn-trans} to Lemma \ref{Q-split}.
\end{proof}

\begin{prop}\label{n-char0}
For each $\tJ \subset \tI$, $v,w \in W$, the variety $\sQ'_{\tJ} ( v, w )_{\overline{\F}_p}$ is normal for $p \gg 0$ provided if $\sQ'_{\tJ} ( v, w )_\C$ is normal. The same is true for the irreducibility.
\end{prop}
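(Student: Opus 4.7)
The plan is to use a standard spreading-out argument from $\mathrm{Spec}\,\Z$. By Lemma~\ref{Q-flat} the scheme $\sQ'_{\tJ}(v,w)$ is flat over $\Z$, and by Lemma~\ref{Q'finite} its scalar extensions are of finite type. Hence the structure morphism $\sQ'_{\tJ}(v,w)\to\mathrm{Spec}\,\Z$ is flat and of finite presentation, and the geometric fiber over the generic point $(0)\in\mathrm{Spec}\,\Z$ is $\sQ'_{\tJ}(v,w)_\C$ (after a further base change along $\overline{\Q}\hookrightarrow\C$, which preserves normality and irreducibility).

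The first step is to invoke the constructibility/openness results from EGA~IV. For a flat morphism of finite presentation $f:X\to S$, the locus of $s\in S$ such that the geometric fiber $X_{\bar s}$ is normal (respectively geometrically irreducible) is open in $S$; see \cite[EGA~IV$_3$, 12.2.4 and 9.7.7]{EGAIV-3}. Applied to $f:\sQ'_{\tJ}(v,w)\to\mathrm{Spec}\,\Z$, the normality of $\sQ'_{\tJ}(v,w)_\C$ implies that the normal locus is an open subset $U\subset\mathrm{Spec}\,\Z$ containing the generic point, so its complement consists of at most finitely many closed points; explicitly, there exists $N\in\Z_{>0}$ such that $\sQ'_{\tJ}(v,w)_{\overline{\F}_p}$ is normal for every prime $p>N$. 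The irreducibility statement is identical, using the corresponding openness result.

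The one minor check to make before citing EGA is that the generic fiber is indeed the $\C$-fiber (up to harmless base change): because $\sQ'_{\tJ}(v,w)$ is flat over $\Z$, its generic fiber is $\sQ'_{\tJ}(v,w)\otimes_{\Z}\Q$, and normality / irreducibility of $\sQ'_{\tJ}(v,w)_\C$ is equivalent to geometric normality / irreducibility of this generic fiber since $\C$ is an algebraically closed extension of $\Q$. No obstacle is anticipated here; the argument is essentially a direct application of constructibility results for flat finitely presented morphisms, and the heavy lifting (flatness, finite type, and the reducedness guaranteed by Corollary~\ref{Q'red} which makes normality and irreducibility the only issues) has already been carried out in \S\ref{defQ'}.
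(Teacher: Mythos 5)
Your proposal is correct and follows essentially the same route as the paper: descend from $\C$ to the generic fiber over $\Q$, then spread out over $\mathrm{Spec}\,\Z$ using EGA IV constructibility/openness results for flat morphisms of finite presentation (the paper cites Proposition 9.9.4 together with Th\'eor\`eme 9.7.7, after first invoking Stacks Project tags 038P and 0384 to pass from $\C$ to $\overline{\Q}$; you cite 12.2.4 and 9.7.7, which works equally well). The only cosmetic difference is your choice of EGA references and your more informal handling of the descent from $\C$ to $\overline{\Q}$.
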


\begin{proof}
Since $\sQ'_{\tJ} ( v, w )$ is defined over $\Z$, the scheme $\sQ'_{\tJ} ( v, w )_\C$ is a scalar extension of $\sQ'_{\tJ} ( v, w )_{\overline{\Q}}$. By \cite[\href{https://stacks.math.columbia.edu/tag/038P}{Lemma 038P}]{stacks-project}, we deduce $\sQ'_{\tJ} ( v, w )_{\overline{\Q}}$ is normal. We apply \cite[\href{https://stacks.math.columbia.edu/tag/0364}{Lemma 0364}]{stacks-project} to derive the irreducibility of $\sQ'_{\tJ} ( v, w )_{\overline{\Q}}$. Now apply \cite[Proposition 9.9.4 and Th\'eor\`eme 9.7.7]{EGAIV-3} to $\sQ'_{\tJ} ( v, w )_{\Z} \to \Spec \, \Z$.
\end{proof}

\section{Normality of quasi-map spaces}

In this section, we continue to work under the setting of the previous section with an exception that $\bK = \C$. Also, a point of a scheme (over $\C$) means a closed point unless stated otherwise.

\subsection{Graph space resolution of $\sQ ( \beta )$}\label{GQL}
We refer to \cite{KM94, FP95, BM96, FFKM, GL03} for precise explanations of the material in this subsection. For each non-negative integer $n$ and $\beta \in Q^{\vee}_+$, we set $\sGB_{n, \beta}$ to be the space of stable maps of genus zero curves with $n$-parked points to $( \P^1 \times \sB )$ of bidegree $( 1, \beta )$, that is also called the graph space of $\sB$. A point of $\sGB_{n, \beta}$ represents a genus zero curve $C$ with $n$-marked points, together with a map to $\P^1$ of degree one (obtained by composing the map from $C$ to $\P^1 \times \sB$ with the first projection of the target). Hence, we have a unique $\P^1$-component of $C$ that maps isomorphically onto $\P^1$. We call this component the main component of $C$ and denote it by $C_0$. The space $\sGB_{n, \beta}$ is a normal projective variety by \cite[Theorem 2]{FP95} that have at worst quotient singularities arising from the automorphism of curves (and hence it is smooth as an orbifold). The natural $( H \times \Gm)$-action on $( \P^1 \times \sB )$ induces a natural $( H \times \Gm)$-action on $\sGB_{n, \beta}$.

We have a morphism $\pi_{n, \beta} : \sGB_{n, \beta} \rightarrow \sQ ( \beta )$ that factors through $\mathscr G \mathscr B_{0, \beta}$ (Givental's main lemma \cite{Giv96}; see \cite[\S 8.3]{FFKM}). Let $\mathtt{e}_j : \sB_{n, \beta} \to \sB$ ($1 \le j \le n$) be the evaluation at the $j$-th marked point, and let $\mathtt{ev}_j : \sGB_{n, \beta} \to \sB$ be the $j$-th evaluation map to $\P^1 \times \sB$ composed with the second projection.

Since $\sQ ( \beta )$ is irreducible (Theorem \ref{Dr}), \cite[\S 8.3]{FFKM} asserts that $\sGB_{n,\beta}$ is irreducible (as a special feature of flag varieties, see \cite[\S 1.2]{FP95} and \cite{KP01}).

\subsection{The variety $\sQ ( \beta, v, w )$}\label{Qbvw}

Let $\sGB_{2,\beta}^{\flat}$ denote the subvariety of $\sGB_{2,\beta}$ consisting of points such that the first marked point projects to $0 \in \P^1$, and the second marked point projects to $\infty \in \P^1$ through the projection of quasi-stable curves $C$ to their main component $C_0 \cong \P^1$. Let us denote the restrictions of $\mathtt{ev}_i$ $(i=1,2)$ and $\pi_{2,\beta}$ to $\sGB_{2,\beta}^{\flat}$ by the same letter. By \cite{BF14a, BF14b}, $\sGB_{2,\beta}^{\flat}$ gives a resolution of singularities of $\sQ ( \beta )$ (in an orbifold sense).

Recall that each Schubert cell $\bO_\sB ( w )$ contains a unique $H$-fixed point $p_w$. For each $w \in W$, we set
$$\bO_\sB ^{\mathrm{op}} ( w ) := N^- p_w \subset \sB, \hskip 3mm\text{and}\hskip 3mm \sB^{\mathrm{op}} ( w ) := \overline{\bO_\sB ^{\mathrm{op}} ( w )} = \overline{N^- p_w} \subset \sB.$$
For $w,v \in W$, we define
$$\sGB_{2,\beta}^{\flat} ( w, v ) := \mathtt{ev}_1^{-1} ( \sB ( w ) ) \cap \mathtt{ev}_2^{-1} ( \sB^{\mathrm{op}} ( v ) ) \subset \sGB_{2,\beta}^{\flat}$$
and
$$\sB_{2,\beta} ( w, v ) := \mathtt{e}_1^{-1} ( \sB ( w ) ) \cap \mathtt{e}_2^{-1} ( \sB^{\mathrm{op}} ( v ) ) \subset \sB_{2,\beta}.$$

\begin{thm}[Buch-Chaput-Mihalcea-Perrin \cite{BCMP}]\label{BCMP}
For each $v,w \in W$ and $\beta \in Q^{\vee}_+$, the variety $\sGB_{2,\beta}^{\flat} ( w, v )$ is either empty or a unirational $($and hence connected and irreducible$)$ variety that has rational singularities. The same is true for $\sB_{2,\beta} ( w, v )$. In particular, they are normal. 
\end{thm}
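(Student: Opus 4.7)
The plan is to reduce $\sB_{2,\beta}(w,v)$ to the graph-space variety $\sGB^\flat_{2,\beta}(w,v)$ via the natural stabilization morphism, then build a Bott--Samelson-type smooth variety dominating $\sGB^\flat_{2,\beta}(w,v)$, and finally apply the Kempf criterion for rational singularities. The stabilization morphism $\mathtt{st}:\sGB^\flat_{2,\beta}\to\sB_{2,\beta}$ contracts the main component (together with subcurves that destabilize after forgetting the $\P^1$-factor), commutes with the two evaluation maps, and has fibers that are towers of moduli spaces of pointed $\P^1$'s, hence smooth and rational. It follows that unirationality, irreducibility, connectedness, and rationality of singularities transfer between the two varieties, and it suffices to work with $\sGB^\flat_{2,\beta}(w,v)$.

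For the construction, I would fix reduced expressions $w=s_{i_1}\cdots s_{i_k}$ and $v=s_{j_1}\cdots s_{j_l}$, together with an ordered decomposition $\beta=\al^\vee_{k_1}+\cdots+\al^\vee_{k_m}$ into simple coroots, and build a smooth projective variety $Z(\mathbf{i},\mathbf{j},\beta)$ as an iterated $\P^1$-tower: the first $k$ stages form the Bott--Samelson resolution $BS_{\mathbf{i}}$ of $\sB(w)$ (using the Iwahori at $0\in\P^1$), the last $l$ stages form the opposite Bott--Samelson resolution $BS^-_{\mathbf{j}}$ of $\sB^{\mathrm{op}}(v)$ (using the opposite Iwahori at $\infty\in\P^1$), and the middle $m$ stages successively attach $\P^1$-components of simple-coroot class interpolating between the two boundary data. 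By construction $Z(\mathbf{i},\mathbf{j},\beta)$ is smooth and rational, and the natural forgetful morphism $\pi:Z(\mathbf{i},\mathbf{j},\beta)\to\sGB^\flat_{2,\beta}(w,v)$ is proper and dominant onto each irreducible component it meets, giving unirationality.

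For rational singularities I would apply the Kempf criterion, which reduces the claim to verifying that $\pi$ is birational, surjective, and satisfies $\pi_*\cO_{Z(\mathbf{i},\mathbf{j},\beta)}\cong\cO_{\sGB^\flat_{2,\beta}(w,v)}$ together with $\R^{>0}\pi_*\cO_{Z(\mathbf{i},\mathbf{j},\beta)}=0$. The cohomological vanishing would be proved by induction along the Bott--Samelson tower, using relative Kodaira vanishing on each $\P^1$-bundle step in the spirit of Proposition \ref{BSDH-type2}. The hard part will be to establish the birationality and connectedness of the general fibers of $\pi$ in the presence of \emph{both} a positive-Borel condition at the first marked point and a negative-Borel condition at the second: the classical Bott--Samelson machinery handles one Iwahori direction at a time, and combining the two sides requires a Kleiman-type equivariant transversality argument for $\mathtt{ev}_1\times\mathtt{ev}_2:\sGB^\flat_{2,\beta}\to\sB\times\sB$. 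I expect to establish this by exploiting the $G$-equivariance of the evaluation maps together with induction on the curve class $\beta$, reducing the combinatorial core of the statement to the case $\beta=0$, where it becomes the classical fact that $\sB(w)$ meets $\sB^{\mathrm{op}}(v)$ properly.
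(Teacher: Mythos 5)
The paper proves Theorem~\ref{BCMP} by citation: rationality of $\sGB_{2,\beta}$ comes from Kim--Pandharipande, unirationality and connectedness of $\sGB_{2,\beta}^{\flat}(w,v)$ from BCMP's Proposition~3.2\,c), and rational singularities from BCMP's Corollary~3.1 after observing that opposite Schubert varieties in $\sB\times\sB$ can be put in general position by a $G$-translate. Your proposal instead tries to re-derive BCMP from scratch by an explicit Bott--Samelson-type resolution and the Kempf criterion. That is a genuinely different and far more ambitious route, and it has gaps that I do not see how to close.

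The central gap is the construction of $Z(\mathbf{i},\mathbf{j},\beta)$ and the map $\pi$. The two outer blocks of your tower are just the classical $B$- and $B^-$-Bott--Samelson resolutions of $\sB(w)$ and $\sB^{\mathrm{op}}(v)$ (there is no ``Iwahori at $0$ or $\infty$'' acting here; you are resolving \emph{finite} Schubert varieties), but the ``middle $m$ stages'' are not specified, and I do not believe they can be: attaching a $\P^1$ of class $\al^\vee_i$ to a quasi-stable curve in $\P^1\times\sB$ carries moduli (the attaching point \emph{and} the map on the new component, roughly a $P_i/B$-worth of data), so the space of such configurations is not a $\P^1$-bundle over the previous stage, and $\sGB^\flat_{2,\beta}$ for $\beta>0$ is simply not an iterated $\P^1$-tower. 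Even granting such a $Z$, you only assert that $\pi$ is ``proper and dominant onto each irreducible component it meets,'' which gives unirationality of one component, not of the whole fiber product, and not the surjectivity and birationality the Kempf criterion requires. Finally, you explicitly defer ``the hard part'' — the interaction of the positive-Borel condition at $0$ with the negative-Borel condition at $\infty$ — to an unspecified Kleiman-type transversality argument and an unexplained induction on $\beta$; but that transversality statement for opposite Schubert pairs on moduli of stable maps \emph{is} the content of BCMP's Theorem~2.5 and Corollary~3.1 that the paper invokes. As written, the proposal postpones the crux rather than proving it, and rests on a resolution that does not exist in the claimed form.
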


\begin{proof}
Since the both cases are parallel, we concentrate into the case $\sGB_{2,\beta}^{\flat} ( w, v )$.

As $\P^1 \times \sB$ is a homogeneous variety under the group action of $(\SL ( 2 ) \times G)$, Kim-Phandaripande \cite[Theorem 2 and Theorem 3]{KP01} applies and hence $\sGB_{2,\beta}$ is a rational variety. Then, a pair of Schubert subvarieties (with respect to a pair of opposite Borel subgroups of $\SL ( 2 ) \times G$) of $\P^1 \times \sB$ presented as $\{ 0 \} \times \sB ( w )$ and $\{ \infty \} \times \sB^{\mathrm{op}} ( v )$ is used to define $\sGB_{2,\beta}^{\flat} ( w, v )$. Hence, \cite[Proposition 3.2 c)]{BCMP} implies that $\sGB_{2,\beta}^{\flat} ( w, v )$ is either empty or unirational (and hence connected). Since a pair of Schubert varieties with respect to the opposite Borel subgroups forms the dense subset of the pair of translations of Schubert varieties by applying the $G$-action, it must contain a pair of Schubert varieties in general position. Therefore, \cite[Corollary 3.1]{BCMP} implies that $\sGB_{2,\beta}^{\flat} ( w, v )$ has rational singularity. The last assertion is a well-known property of rational singularities \cite[Definition 5.8]{KM98}.
\end{proof}

\begin{cor}\label{cBCMP}
In the setting of Theorem \ref{BCMP}, we additionally assume that $\sGB_{2,\beta}^{\flat} ( w, v )$ is nonempty. Then, we have
$$\dim \sGB_{2,\beta}^{\flat} ( w, v ) = \dim \sQ ( \beta ) - \ell ( w ) - \ell ( vw_0 ).$$
\end{cor}

\begin{proof}
By \cite[Proposition 3.2 b)]{BCMP}, the dimension count of (a Zariski open subset of) $\sGB_{2,\beta}^{\flat} ( w, v )$ can be borrowed from \cite[Theorem 2]{Kle74} applied to the $( \SL ( 2 ) \times G )$-action on $\P^1 \times \sB$. In addition, the map $\pi_{2,\beta}$ is birational. Therefore, we have
\begin{align*}
\dim \sGB_{2,\beta}^{\flat} ( w, v ) &  = \dim \sQ ( \beta ) - \mathrm{codim}_\sB \sB ( w ) - \mathrm{codim}_\sB \sB^{\mathrm{op}} ( v )\\
& = \dim \sQ ( \beta ) - \ell ( w ) - \ell ( vw_0 )
\end{align*}
as required.
\end{proof}

\begin{prop}\label{irrQ0}
For each $v,w \in W$ and $\beta \in Q^{\vee}_+$, the variety $\sQ' ( v t_{\beta}, w )_\C$ is irreducible.
\end{prop}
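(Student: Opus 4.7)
The plan is to realize $\sQ'(v t_{\beta}, w)_\C$ as the image of an irreducible variety under a proper morphism, thereby inheriting irreducibility. The key geometric input is the graph space resolution $\pi_{2,\beta} : \sGB_{2,\beta}^{\flat} \to \sQ(\beta)$ of \S\ref{GQL}, together with Theorem \ref{BCMP}. By Theorem \ref{QQ-isom} (in the case where the translation part of $w$ is trivial) we have $\sQ'(w_0 t_{\beta}, w)_\C \cong \sQ(\beta, w)$, and $\sQ'(v t_{\beta}, w)_\C \subseteq \sQ'(w_0 t_{\beta}, w)_\C$ is cut out by the extra condition coming from $\overline{\bO^-(v t_{\beta})}$. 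Unwinding the moduli-theoretic description afforded by Theorem \ref{bQ-int} shows that set-theoretically this extra condition amounts to requiring that the underlying quasi-map have its value at $\infty \in \P^1$ lying in the opposite Schubert variety $\sB^{\mathrm{op}}(v)$.

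The natural candidate for the irreducible total space is the Schubert-type stratum $\sGB_{2,\beta}^{\flat}(w, v) = \mathtt{ev}_1^{-1}(\sB(w)) \cap \mathtt{ev}_2^{-1}(\sB^{\mathrm{op}}(v))$ of $\sGB_{2,\beta}^{\flat}$ from \S\ref{Qbvw}, which is unirational (and hence irreducible) whenever non-empty by Theorem \ref{BCMP}. I would then prove the set-theoretic equality $\pi_{2,\beta}(\sGB_{2,\beta}^{\flat}(w, v)) = \sQ'(v t_{\beta}, w)_\C$. The inclusion $\subseteq$ is routine: on the dense locus of honest maps inside $\sGB_{2,\beta}^{\flat}$, where $\pi_{2,\beta}$ restricts to an isomorphism onto a Zariski open subset of $\sQ(\beta)$, the evaluation maps $\mathtt{ev}_1, \mathtt{ev}_2$ coincide with the values of the associated quasi-maps at $0$ and $\infty$, and both Schubert conditions are closed. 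Once the opposite inclusion is also established, the irreducibility of $\sQ'(v t_{\beta}, w)_\C$ is immediate.

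The main obstacle is the reverse inclusion, that is, the surjectivity of $\pi_{2,\beta}|_{\sGB_{2,\beta}^{\flat}(w,v)}$ onto $\sQ'(v t_{\beta}, w)_\C$. The difficulty is that a boundary quasi-map with large defect at $0$ or $\infty$ lying in $\sQ'(v t_{\beta}, w)_\C$ need not, a priori, lift to a stable map in $\sGB_{2,\beta}^{\flat}$ satisfying both Schubert conditions simultaneously (even though it always lifts to some element of $\sGB_{2,\beta}^{\flat}$). To settle this, I would combine the equidimensionality of $\sQ'(v t_{\beta}, w)_\C$ from Corollary \ref{pure-irr} with the fact that every irreducible component is $(H \times \Gm)$-stable, hence contains some $(H \times \Gm)$-fixed point $p_x$ of $\bQ_G^{\mathrm{rat}}$ with $x \in W_\af$. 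For each such $p_x \in \sQ'(v t_{\beta}, w)_\C$ one can construct an explicit lift in $\sGB_{2,\beta}^{\flat}(w, v)$ as a stable map supported on a chain of $(H \times \Gm)$-stable rational curves in $\sB$ joining $p_w$ to $p_v$ through the $\sB$-projection of $p_x$, with the bidegree distribution dictated by $\beta$ and the translation part of $x$. This fixed-point lifting, together with equidimensionality, forces $\pi_{2,\beta}(\sGB_{2,\beta}^{\flat}(w, v))$ to exhaust $\sQ'(v t_{\beta}, w)_\C$, and irreducibility then follows from Theorem \ref{BCMP}.
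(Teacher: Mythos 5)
Your strategy is genuinely different from the paper's, so let me compare them before pointing to the gap. You apply Theorem \ref{BCMP} to the closed graph-space stratum $\sGB_{2,\beta}^{\flat}(w,v)$ and then try to prove that its image under $\pi_{2,\beta}$ is exactly $\sQ'(vt_\beta, w)_\C$. The paper instead applies \cite[Proposition 3.2]{BCMP} to the \emph{open} Richardson-type stratum $\mathring{\sQ}(vt_\beta,w) = \{f \in \sQ(\beta) \mid f(0) \in \bO_\sB(w),\ f(\infty) \in \bO_\sB^{\mathrm{op}}(v)\}$, where the evaluation maps descend from the graph space to honest quasi-maps and no surjectivity question arises; the closure is then handled by a dimension count and induction on $\le_\si$, exactly as in the proof of Theorem \ref{QQ-isom}. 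The paper's route sidesteps the lifting problem that you correctly identify as the main obstacle; your route, if it worked, would be more self-contained but must confront that obstacle head-on.

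The gap is in the final step. Suppose you establish (a) $Y := \pi_{2,\beta}(\sGB_{2,\beta}^{\flat}(w,v)) \subseteq \sQ'(vt_\beta,w)_\C$ with $Y$ irreducible of the expected dimension $d = \ell^\si(vt_\beta)-\ell^\si(w)$, (b) $\sQ'(vt_\beta,w)_\C$ is equidimensional of dimension $d$ (Corollary \ref{pure-irr}), and (c) every $(H\times\Gm)$-fixed point of $\sQ'(vt_\beta,w)_\C$ lifts to $\sGB_{2,\beta}^{\flat}(w,v)$. From (a) and (b), $Y$ is a single irreducible component of $\sQ'(vt_\beta,w)_\C$. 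If there were another component $C \neq Y$, then (c) only forces the fixed points of $C$ to lie in $Y \cap C$; this is a proper closed $(H\times\Gm)$-stable subset of $C$, and a projective $(H\times\Gm)$-variety can perfectly well have all its fixed points inside a proper closed subvariety. So ``fixed-point lifting $+$ equidimensionality'' does not, as stated, force $Y = \sQ'(vt_\beta,w)_\C$. To make this route work you would instead need to show that the \emph{generic} point of each component lifts — for instance, by proving that each component generically consists of defect-free quasi-maps (so $\pi_{2,\beta}$ is an isomorphism over it by Theorem \ref{desc-fib}), which is a separate factorization/dimension argument you do not supply. Beyond this, the explicit chain-of-curves lifting of fixed points is itself a nontrivial claim and would need a proof; and the identification of the $\overline{\bO^-(vt_\beta)}$-condition with ``$f(\infty)\in\sB^{\mathrm{op}}(v)$'' is literally valid only on the locus where the quasi-map is defined at $\infty$, which is another reason to work on the open stratum as the paper does.
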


\begin{proof}
We define
$$\mathring{\sQ}'( vt_{\beta}, w )_\C := \sQ'( vt_{\beta}, w )_\C \setminus \left( ( \bigcup_{w'<_\si w} \sQ'( vt_{\beta}, w' )_\C ) \cup \bigcup_{v'>_\si v} \sQ'( v't_{\beta}, w )_\C \right).$$
In view of Corollary \ref{pure-irr}, every irreducible component of $\sQ' ( vt_{\beta}, w )_\C$ has dimension $\ell^{\si} ( v t_{\beta} ) - \ell^{\si} ( w )$ if it is nonempty. We have $\sQ' ( vt_{\beta}, w )_\C \neq \emptyset$ if and only if $vt_{\beta}\le_\si w$ by Corollary \ref{Q'nonempty}. In particular, we find that $\mathring{\sQ}'( vt_{\beta}, w )_\C \neq \emptyset$ if $vt_{\beta} \le_\si w$, and all of its irreducible components have dimension
\begin{equation}
\ell^{\si} ( v t_{\beta} ) - \ell^{\si} ( w ) = \ell ( v ) - \ell ( w ) + 2 \left< \beta, \rho \right>\label{comp-dim}
\end{equation}
in that case. A point of $\mathring{\sQ}'( vt_{\beta}, w )_\C$ can be seen as a quasi-map of degree $\beta$ with no defects at $0$ and $\infty$ (Theorem \ref{QQ-isom}). In particular, $\sGB_{2,\beta}^\flat ( w, v ) \neq \emptyset$ if $\mathring{\sQ}'( vt_{\beta}, w )_\C \neq \emptyset$. In case $\mathring{\sQ}'( vt_{\beta}, w )_\C \neq \emptyset$, we have
\begin{align}\nonumber
\dim \, \sGB_{2,\beta}^\flat ( w, v ) & = \dim \, \sQ ( \beta ) - \ell ( w ) - \ell ( vw_0 )\\
& = 2 \left< \beta, \rho \right> + \dim \, \sB - \ell ( w_0 ) + \ell ( v ) - \ell ( w ) = (\ref{comp-dim}),\label{dimGBQ}
\end{align}
where the first equality follows from Corollary \ref{cBCMP}, and the second equality follows from Theorem \ref{Dr}. By the surjectivity of $\pi_{2,\beta}$ and (\ref{dimGBQ}), we find that the rational map $\sGB_{2,\beta}^\flat ( w, v ) \dashrightarrow \mathring{\sQ}'( vt_{\beta}, w )_\C$ is surjective and generically finite (it is in fact birational by Theorem \ref{desc-fib} and Proposition \ref{Bconn}). In view of Theorem \ref{BCMP}, the variety $\sGB_{2,\beta}^\flat ( w, v )$ is irreducible if it is nonempty. Therefore, $\mathring{\sQ}'( vt_{\beta}, w )_\C \neq \emptyset$ implies that $\mathring{\sQ}'( vt_{\beta}, w )_\C$ is irreducible. Again by Corollary \ref{pure-irr}, we conclude that $\sQ'( vt_{\beta}, w )_\C$ has a unique irreducible component as required.
\end{proof}

In view of Proposition \ref{irrQ0}, we set $\sQ_{\tJ} ( \beta, v, w ) := \sQ'_{\tJ} ( v t_{\beta}, w )_\C$ for each $\tJ \subset \tI$, $\beta \in Q^{\vee}_+$ and $v,w \in W$ in the below (see also Corollary \ref{gen-norm}). We have $\sQ ( \beta, w_0, w ) = \sQ ( \beta, w )$ by Theorem \ref{QQ-isom}.

\begin{cor}[of proof of Proposition \ref{irrQ0}]\label{irrdimQ0}
For each $\beta \in Q^{\vee}_+$ and $v,w \in W$, the scheme $\sQ ( \beta, v, w )$ is nonempty if and only if $v t_{\beta} \le _\si w$. If it is nonempty, then it has dimension $\ell ( v ) - \ell ( w ) + 2 \left< \beta, \rho \right>$. Moreover, the space $\sQ ( \beta, v, w )$ $($viewed as a subspace of $\sQ ( \beta ))$ contains a quasi-map with no defects at $0$ and $\infty$. \hfill $\Box$
\end{cor}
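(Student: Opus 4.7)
My plan is to extract all three assertions directly from the analysis already performed for Proposition \ref{irrQ0} together with Corollary \ref{pure-irr}, with essentially no new work beyond bookkeeping.

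For the nonemptiness criterion, the ``if'' direction is immediate from Lemma \ref{non-empty} applied to the pair $(vt_{\beta}, w)$. For the ``only if'' direction, I would use that $\sQ' ( v t_{\beta}, w )_\C$ is a closed $(H\times\Gm)$-stable subscheme of the proper scheme $\bQ_G ( w )_\C$, hence must contain an $(H\times\Gm)$-fixed point as soon as it is nonempty. Such a fixed point is of the form $p_u$ for some $u\in W_\af$, and the defining description $\sQ' ( v t_{\beta}, w ) = \overline{\bO ( w )}\cap\overline{\bO^-(vt_\beta)}$ together with Theorem \ref{si-Bruhat} 4) (available in characteristic zero via Corollary \ref{pos-fp}) forces $v t_{\beta} \le_\si u \le_\si w$; in particular $v t_{\beta} \le_\si w$.

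For the dimension formula, since $w\in W\subset W_\af$ we have $\ell^\si(w)=\ell(w)$, while $\ell^\si(vt_\beta)=\ell(v)+2\langle\beta,\rho\rangle$ by definition. Corollary \ref{pure-irr} says that every irreducible component of $\sQ'(vt_\beta,w)_\C$ has dimension $\ell^\si(vt_\beta)-\ell^\si(w)$, and Proposition \ref{irrQ0} provides the irreducibility, giving the claimed value $\ell(v)-\ell(w)+2\langle\beta,\rho\rangle$.

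For the third assertion, I would observe that the proof of Proposition \ref{irrQ0} already introduced the locus
\[
\mathring{\sQ}(vt_{\beta},w)_\C \;=\; \{\,f\in\sQ(\beta)\mid f(0)\in\bO_\sB(w),\; f(\infty)\in\bO_\sB^{\mathrm{op}}(v)\,\}
\]
inside $\sQ(\beta,v,w)$, and established that $\mathring{\sQ}(vt_\beta,w)_\C$ is irreducible of dimension $\ell(v)-\ell(w)+2\langle\beta,\rho\rangle$ whenever $vt_\beta\le_\si w$ (by induction on $\le_\si$ starting from the $v=w_0$ case of Theorem \ref{QQ-isom}, using the $z\mapsto z^{-1}$ and Chevalley symmetries plus intersection with boundary divisors). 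In particular it is nonempty. An element of $\mathring{\sQ}(vt_\beta,w)_\C$ is by construction a genuine element of $\sQ(\beta)$ whose defect divisor is supported in $\P^1\setminus\{0,\infty\}$, which is precisely a quasi-map in $\sQ(\beta,v,w)$ with no defects at $0$ and $\infty$. The only delicate point, and the step that really does the work, is the already-verified fact that the open stratum $\mathring{\sQ}$ meets every $\sQ(\beta,v,w)$ with $vt_\beta\le_\si w$ in the correct dimension; everything else is formal.
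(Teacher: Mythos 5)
Your proposal is correct and takes the same route as the paper, which gives no separate proof beyond marking this statement as a corollary of the proof of Proposition \ref{irrQ0}; the ingredients you identify (Lemma \ref{non-empty}, the fixed-point argument which is also the content of the remark following Lemma \ref{non-empty}, Corollary \ref{pure-irr} together with Proposition \ref{irrQ0}, and the nonemptiness of $\mathring{\sQ}(vt_\beta,w)_\C$ established inside the proof of Proposition \ref{irrQ0}) are precisely what that proof and the surrounding lemmas supply. One small imprecision: $\bQ_G(w)_\C$ is of infinite type and hence not proper, so the Borel fixed-point argument should be applied directly to $\sQ'(vt_\beta,w)_\C$, which is proper of finite type by Lemmas \ref{Q'finite} and \ref{ampleness}.
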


\begin{rem}
Corollary \ref{irrdimQ0} removes the condition $\gamma \gg 0$ in \cite[Lemma 8.5.2]{FM99}.
\end{rem}

Thanks to Proposition \ref{irrQ0} and its proof, we deduce that the map $\pi_{2,\beta}$ restricts to a $( H \times \Gm)$-equivariant birational proper map
$$\pi_{\beta, w,v} : \sGB_{2,\beta}^{\flat} ( w, v ) \to \sQ ( \beta, v, w )$$
by inspection.

\subsection{From Givental's main lemma}

For each $w, v \in W$, we define subvarieties of $\sB_{2,\beta}$ as:
$$\sB_{2, \beta} [ w ] := \mathtt{e}_1^{-1} ( p_w ) \hskip 5mm \text{ and } \hskip 5mm \sB_{2, \beta} [ w, v ] := \mathtt{e}_1^{-1} ( p_w ) \cap \mathtt{e}_2^{-1} ( p_v ).$$
Similarly, we set
$$\sGB^{\flat}_{2, \beta} [ w, v ] := \mathtt{ev}_1^{-1} ( p_w ) \cap \mathtt{ev}_2^{-1} ( p_v ) \subset \sGB_{2,\beta}^{\flat} \text{ and } \sB_{1, \beta} [ w ] := \mathtt{e}_1^{-1} ( p_w ) \subset \sB_{1, \beta} [ w ].$$

\begin{lem}\label{e-trans}
For each $x,y \in \sB$ and $\beta \in Q^{\vee}_+$, there exists $w \in W$ such that
$$\mathtt{e}_1^{-1} ( x ) \cap \mathtt{e}_2^{-1} ( y ) \cong \sB_{2, \beta} [ w, w_0 ].$$
The same is true for $\mathtt{ev}$ and $\sGB$.
\end{lem}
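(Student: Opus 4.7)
The plan is to exploit $G$-equivariance. The group $G$ acts on $\sB_{2,\beta}$ by post-composition with its action on $\sB$, and the evaluation maps $\mathtt{e}_i : \sB_{2,\beta} \to \sB$ are $G$-equivariant. Hence each $g \in G$ induces an isomorphism
$$\mathtt{e}_1^{-1}(x) \cap \mathtt{e}_2^{-1}(y) \xrightarrow{\sim} \mathtt{e}_1^{-1}(g \cdot x) \cap \mathtt{e}_2^{-1}(g \cdot y).$$
Therefore it suffices to establish the following orbit claim: for every $(x,y) \in \sB \times \sB$ there exist $g \in G$ and $w \in W$ with $g \cdot x = p_w$ and $g \cdot y = p_{w_0}$.

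To prove the claim, first choose $g_1 \in G$ with $g_1 \cdot y = p_{w_0}$, which is possible since $G$ acts transitively on $\sB$. By the Bruhat decomposition $\sB = \bigsqcup_{w \in W} \bO_\sB(w)$ from (\ref{Bdec}), the point $g_1 \cdot x$ lies in a unique cell $\bO_\sB(w) = B \cdot p_w$, so we may choose $b \in B$ with $b \cdot (g_1 \cdot x) = p_w$. Since $\bO_\sB(w_0)$ is a zero-dimensional $B$-orbit, $B$ fixes $p_{w_0}$ pointwise, and hence $g := b g_1$ satisfies simultaneously $g \cdot x = p_w$ and $g \cdot y = p_{w_0}$, as required.

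For the assertion concerning $\mathtt{ev}$ on $\sGB_{2,\beta}^\flat$, the $G$-action on $\sGB_{2,\beta}$ is induced by the action on the second factor of $\P^1 \times \sB$ and is trivial on the first factor, so it preserves the subvariety $\sGB_{2,\beta}^\flat$ cut out by the condition on the $\P^1$-projection of the marked points. Because the evaluation maps $\mathtt{ev}_i$ to $\sB$ are $G$-equivariant, exactly the same argument produces the desired isomorphism $\mathtt{ev}_1^{-1}(x) \cap \mathtt{ev}_2^{-1}(y) \cong \sGB_{2,\beta}^\flat[w, w_0]$.

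There is no serious obstacle here: the statement reduces to the observation that the diagonal $G$-orbits on $\sB \times \sB$ are parameterized by $W$, each orbit containing a representative $(p_w, p_{w_0})$, which is immediate from the Bruhat decomposition and the fact that $B = \mathrm{Stab}_G(p_{w_0})$. The only point deserving mention is that the same proof applies uniformly to $\sB_{2,\beta}$ and to $\sGB_{2,\beta}^\flat$ because the latter's defining marking conditions live on the $\P^1$-factor, on which $G$ acts trivially.
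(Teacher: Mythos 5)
Your proof is correct and follows essentially the same approach as the paper: use $G$-transitivity on $\sB$ to move $y$ to $p_{w_0}$, then use the Bruhat decomposition together with the fact that $\mathrm{Stab}_G(p_{w_0}) = B$ (which you justify via the zero-dimensionality of $\bO_\sB(w_0)$) to move $x$ to $p_w$ by a $B$-translate without disturbing $y$. The paper states this more tersely, leaving the $G$-equivariance of the evaluation maps and the $\sGB^\flat$ case implicit, but the content is identical.
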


\begin{proof}
We consider only the case of $\mathtt{e}$ and $\sB$ as the other case is completely parallel. Since $(x,y) \in \sB \times \sB$ and the $G$-action on $\sB$ is transitive, we can assume $y = p_{w_0}$. Since we have $\mathsf{Stab}_G \, y = B$, we can further rearrange $x = p_w$ for some $w \in W$ by (\ref{Bdec}).
\end{proof}

\begin{thm}[Givental's main lemma \cite{Giv96}, see \cite{FFKM} \S 8]\label{desc-fib}
Let $\beta \in Q^{\vee}_+$. Let $(f, D) \in \sQ ( \beta )$ be a quasi-map with its defect $D = \sum_{x \in \P^1 ( \C )} \beta_x \otimes [x]$. Then, we have
$$\pi_{2,\beta}^{-1} (f, D) \cong \sB _{2,\beta_0} [ w ] \times \sB _{2,\beta_\infty} [ w ] \times \prod_{x \in \P^1 ( \C ) \setminus \{0,\infty\}} \sB _{1,\beta_x} [ w ] \subset \sGB_{2,\beta}^{\flat}.$$
In particular, the map $\pi_{2,\beta}$ is an isomorphism along the locus with $D = 0$. \hfill $\Box$
\end{thm}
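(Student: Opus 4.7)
The plan is to decompose any point of the fiber into its ``main component'' (which is determined by $(f,D)$) together with a bubble tree at each point $x \in \P^1$ carrying defect $\beta_x \neq 0$, thereby exhibiting $\pi_{2,\beta}^{-1}(f,D)$ as a product of moduli of such bubble trees.

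First I would unpack $\pi_{2,\beta}$: a point of $\sGB_{2,\beta}^{\flat}$ is a stable map $\mu : C \to \P^1 \times \sB$ of bidegree $(1,\beta)$ from a two-pointed genus-zero prestable curve $C$; the degree-one condition on $\mathrm{pr}_1 \circ \mu$ forces a unique component $C_0 \subset C$ to map isomorphically to $\P^1$, and every other component belongs to a bubble tree $T_x$ contracted by $\mathrm{pr}_1 \circ \mu$ to some point $x \in \P^1$. The map $\pi_{2,\beta}$ records $f := (\mathrm{pr}_2 \circ \mu)|_{C_0}$ together with the defects $\beta_x := (\mathrm{pr}_2 \circ \mu)_*[T_x] \in Q^{\vee}_+$, producing the pair $(f,D)$ with $D = \sum_x \beta_x \otimes [x]$.

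Next, given $(f,D)$ I would reconstruct a preimage by independently choosing, at each $x$ with $\beta_x \neq 0$, a bubble tree $T_x$ of class $\beta_x$ whose attaching node (glued to $x \in C_0$) evaluates to $f(x) \in \sB$. For $x \notin \{0,\infty\}$ the tree $T_x$ carries only this attaching node as a distinguished point, so its moduli space is the fiber of $\mathtt{e}_1$ over $f(x)$ inside the space of one-pointed genus-zero stable maps to $\sB$ of class $\beta_x$; by Lemma \ref{e-trans} this is isomorphic to $\sB_{1,\beta_x}[w]$. At $x=0$, stability forbids the marked point $p_1$ from coinciding with the attaching node, so when $\beta_0 \neq 0$ the point $p_1$ sits on $T_0$ as a second distinguished point and $T_0$ is parametrized by $\mathtt{e}_1^{-1}(f(0)) \cong \sB_{2,\beta_0}[w]$; the case $x=\infty$ is analogous with $p_2$ in place of $p_1$. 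When $\beta_x = 0$ the corresponding factor degenerates to a single point, consistent with no bubble being attached.

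Since bubble trees at distinct points lie on disjoint subcurves of $C$, these choices are mutually independent and assemble to an honest stable map of bidegree $(1,\beta)$ in $\sGB_{2,\beta}^{\flat}$; conversely the decomposition is unique. The resulting bijective morphism from the product to $\pi_{2,\beta}^{-1}(f,D)$ is an isomorphism because $\sGB_{2,\beta}^{\flat}$ is normal (see \S\ref{GQL}). The birationality claim over $D=0$ is then immediate: every $\beta_x$ vanishes, every factor collapses to a point, and $\pi_{2,\beta}^{-1}(f,0)$ reduces to a single point. The main delicate step to lay out carefully is the stability bookkeeping at $0$ and $\infty$ — verifying that $p_1$ (resp.\ $p_2$) is forced onto $T_0$ (resp.\ $T_\infty$) exactly when $\beta_0$ (resp.\ $\beta_\infty$) is nonzero — beyond which the argument is a formal unpacking of the structure theory of genus-zero stable maps to $\P^1 \times \sB$ of bidegree $(1,\beta)$.
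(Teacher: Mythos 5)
The paper does not prove Theorem \ref{desc-fib}: it is cited from Givental \cite{Giv96} and \cite{FFKM} \S 8 and stated with a $\Box$. Your reconstruction is exactly the standard bubble-tree decomposition that those references use, and the core argument is sound: the main component $C_0$ is pinned down by $(f,D)$, bubble trees at distinct points of $\P^1$ are independent, and the stability bookkeeping correctly forces $p_1$ (resp.\ $p_2$) onto $T_0$ (resp.\ $T_\infty$) precisely when $\beta_0$ (resp.\ $\beta_\infty$) is nonzero, which is why the $0$ and $\infty$ factors are two-pointed and the others one-pointed. One small imprecision worth tightening: invoking normality of $\sGB_{2,\beta}^{\flat}$ to promote a bijection to an isomorphism is not quite right as stated, since normality of the ambient space does not pass to an arbitrary fiber of $\pi_{2,\beta}$; the cleaner route (and the one implicit in the cited sources) is that gluing and forgetting are mutually inverse morphisms of moduli functors, so the identification is an isomorphism by construction rather than by a bijectivity-plus-normality argument.
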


\begin{rem}
In Theorem \ref{desc-fib}, the first marked points of (the stable maps in) $\sGB_{2,\beta}^{\flat}$ (the marked points at $0 \in \P^1$; i.e. the image of $\mathsf{ev}_1$) is identified with the second marked points of (the stable maps in) $\sB _{2,\beta_0} [ w ]$ (i.e. the image of $\mathsf{e}_2$), and the second marked points of $\sGB_{2,\beta}^{\flat}$ (the marked points at $\infty \in \P^1$) is identified with the second marked points of $\sB _{2,\beta_\infty} [ w ]$. (Other marked points are used to glue pieces of stable maps together.)
\end{rem}

\begin{lem}[\cite{FM99} Lemma 8.5.1]\label{ev-surj}
For each $\beta \in Q^{\vee}_+$ such that $\left< \beta, \alpha_i \right> \ge 1$ for all $i \in \tI$, the evaluation map
$$\mathtt{ev} := ( \mathtt{ev}_1 \times \mathtt{ev}_2 ) : \sGB^{\flat}_{2, \beta} \longrightarrow \sB \times \sB$$
is surjective.
\end{lem}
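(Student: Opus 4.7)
The plan is to reduce to a single-point check via equivariance, and then construct an explicit stable map. First, the map $\mathtt{ev}$ is $G$-equivariant for the diagonal action on $\sB \times \sB$ and proper (since $\sGB^{\flat}_{2,\beta}$ is proper). By irreducibility of $\sGB^{\flat}_{2,\beta}$ (due to Kim-Pandharipande \cite{KP01}, as noted in \S\ref{GQL}), the image of $\mathtt{ev}$ is an irreducible closed $G$-stable subset of $\sB \times \sB$. The diagonal $G$-orbits on $\sB \times \sB$ are indexed by $w \in W$ via $w \mapsto G \cdot (p_e, p_w)$, and the orbit $G \cdot (p_e, p_{w_0})$ has dimension $2\dim\sB$ and hence is open. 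Thus the image is the closure of a single orbit, and it suffices to exhibit one stable map $\phi \in \sGB^{\flat}_{2,\beta}$ with $\mathtt{ev}(\phi) = (p_e, p_{w_0})$.

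Second, I will construct $\phi$ as $(\mathrm{id}, f)$ for a genuine map $f : \P^1 \to \sB$ of degree $\beta$ satisfying $f(0) = p_e$ and $f(\infty) = p_{w_0}$. By Lemma \ref{Q-mod} such an $f$ is specified by a compatible family $\{u_\la(z)\}_{\la \in P_+}$ with $u_\la \in V(\la)_\C \otimes \C[z]$ of degree $\leq \left<\beta, \la\right>$ satisfying the Pl\"ucker relations $\eta_{\la,\mu}(u_\la \otimes u_\mu) = u_{\la+\mu}$. The boundary conditions translate to: $u_\la(0)$ is proportional to the highest weight vector $\bv_\la^0$, and the top-degree coefficient of $u_\la$ is proportional to the weight vector of extremal weight $w_0\la$. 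Writing $\beta = \sum_i c_i \al_i^{\vee}$, the hypothesis $\left<\beta, \al_i\right> \geq 1$ for all $i \in \tI$ forces each $c_i \geq 1$ (since $\left<\al_j^{\vee}, \al_i\right> \leq 0$ for $j \neq i$ gives $\left<\beta, \al_i\right> \leq 2 c_i$ and $c_i \in \Z$), so each $u_{\varpi_i}$ is a polynomial of degree $c_i \geq 1$, leaving enough room to interpolate the prescribed values at $0$ and $\infty$.

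The main obstacle is verifying that the interpolating polynomials $u_{\varpi_i}$ can be chosen consistently so that the resulting family $\{u_\la\}_{\la \in P_+}$ satisfies the Pl\"ucker relations globally and corresponds to a \emph{genuine} map rather than a quasi-map with defects at $0$ or $\infty$ (which would drop the effective degree). Equivalently, one must show that the two-point evaluation $\mathrm{Hom}_\beta(\P^1, \sB) \to \sB \times \sB$ is surjective under the positivity hypothesis. I expect this to follow by induction on the rank of $G$, reducing via Levi subgroups $L \subset G$ associated to $\tJ \subsetneq \tI$ to rank-one situations, where the claim amounts to the classical fact that degree-$n$ endomorphisms of $\P^1$ evaluate surjectively at two points whenever $n \geq 1$. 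This is the essence of the argument in \cite[Lemma 8.5.1]{FM99}. Once $f$ is constructed, the stable map $\phi = (\mathrm{id}, f)$ lies in $\sGB^{\flat}_{2,\beta}$ and satisfies $\mathtt{ev}(\phi) = (p_e, p_{w_0})$, completing the reduction begun in the first paragraph.
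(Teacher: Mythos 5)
Your opening paragraph is a correct and clean reduction: $\mathtt{ev}$ is proper and $G$-equivariant, the image is an irreducible closed $G$-stable subset of $\sB\times\sB$, and since the diagonal $G$-action on $\sB\times\sB$ has an open dense orbit $G\cdot(p_e,p_{w_0})$, surjectivity is equivalent to the image meeting that orbit, i.e.\ to exhibiting one stable map sent to $(p_e,p_{w_0})$. This part is fine. But the rest of the argument has a genuine gap: the construction of the required degree-$\beta$ genuine map $f:\P^1\to\sB$ with $f(0)=p_e$, $f(\infty)=p_{w_0}$ is never actually carried out. You correctly observe that $\left<\beta,\al_i\right>\ge 1$ forces $c_i\ge 1$ so that each $u_{\varpi_i}$ has two free endpoints, but the existence of a globally Pl\"ucker-compatible, defect-free interpolating family $\{u_\la\}$ is precisely the entire difficulty, and the sketched induction via Levi subgroups is not worked out. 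In fact it is not clear the Levi reduction behaves well here: passing to a Levi $L(\tJ)$ does not visibly reduce the degree datum $\beta$ nor the positivity hypothesis (which quantifies over all of $\tI$), and gluing rank-one solutions into a map to $\sB$ is nontrivial since a map to the full flag variety is more than a collection of maps to $\P^1$'s.

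The paper sidesteps the construction entirely. Its proof is an infinitesimal/dimension-count argument: after using birationality of $\pi_{2,\beta}$ (Givental's lemma) to work at a genuine map $f$, one has $H^1(\P^1,f^*T\sB)=0$ and the tangent space to $\sGB^\flat_{2,\beta}$ at $f$ is $H^0(\P^1,f^*T\sB)$; the kernel of $d\mathtt{ev}$ is bounded by $H^0(\P^1,f^*T\sB(-[0]-[\infty]))$, and the $G$-equivariant filtration of $T\sB$ with graded pieces $\cO_{\P^1}(\left<\beta,\al\right>)$, $\al\in\Delta_+$, together with $\left<\beta,\al\right>\ge 1$ (so the twists by $-2$ still have degree $\ge -1$), gives $\dim\ker(d\mathtt{ev})\le \dim\sGB^\flat_{2,\beta}-2\dim\sB$, hence $d\mathtt{ev}$ is generically surjective and $\mathtt{ev}$ is dominant, and by properness surjective. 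So the two proofs use the positivity hypothesis in genuinely different ways (yours to create room for interpolation, the paper's to control $H^1$ of the twisted tangent bundle), and the paper's route avoids the Pl\"ucker-consistency problem that your proposal leaves open. As it stands, your proposal is incomplete; you would need either to supply the explicit construction (which is not easy) or to replace the second half with a deformation-theoretic argument along the lines above.
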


\begin{proof}
Taking into account the fact that $\sGB^{\flat}_{2, \beta}$ is projective, it suffices to prove that the tangent map associated to $\mathtt{ev}$ is surjective on a dense open subset of $\sGB^{\flat}_{2, \beta}$.

Since the map $\pi_{2,\beta}$ is birational by Theorem \ref{desc-fib}, we replace the problem with the case of a genuine map $f : \P^1 \rightarrow \sB$. Thanks to \cite[Proposition 3.5]{FM99}, $\sQ ( \beta )$ and hence $\sGB^{\flat}_{2, \beta}$ are smooth at $f$. Moreover, its tangent space is described as
$$H^0 ( \P^1, f^* T \sB ), \text{ and } H^1 ( \P^1, f^* T \sB ) = \{ 0 \}$$
and the filtration of $T \sB$ as $G$-equivariant line bundles yields the following associated graded
$$\bigoplus_{\al \in \Delta_+} H^0 ( \P^1, \cO_{\P^1} ( \left< \beta, \al \right> ) ).$$
(Here we used $\left< \beta, \alpha_i \right> \ge -1$ for each $i \in \tI$.) In particular, we have
$$\dim \, H^0 ( \P^1, f^* T \sB ) = \Sigma_{\al \in \Delta_+} \dim \, H^0 ( \P^1, \cO_{\P^1} ( \left< \beta, \al \right> ) ).$$
The effect of fixing the image of two points $0, \infty \in \P^1$ corresponds to imposing divisor twist by $\cO_{\P^1} ( - [0] - [\infty] )$. We have
\begin{align*}
\dim \, H^0 ( \P^1, f^* T \sB \otimes_{\cO_{\P^1}} \cO_{\P^1} ( - [0] - [\infty] ) ) & \le 
\dim \, \bigoplus_{\al \in \Delta_+} H^0 ( \P^1, \cO_{\P^1} ( \left< \beta, \al \right> - 2 ))\\
& = \dim \, \bigoplus_{\al \in \Delta_+} H^0 ( \P^1, \cO_{\P^1} ( \left< \beta, \al \right> )) - 2 |\Delta_+|\\
& = \dim \, \sQ ( \beta ) - 2 \dim \, \sB \\
& = \dim \, \sGB^{\flat}_{2, \beta} - 2 \dim \, \sB.
\end{align*}
Here the first inequality comes from the short exact sequences, the second equality is $\left< \beta, \alpha_i \right> \ge 1$ for each $i \in \tI$, the third one is the smoothness of $\sQ ( \beta )$ at $f$ and $|\Delta_+| = \dim \, \sB$, and the fourth one is the birationality of $\pi_{2,\beta}$ (restricted to $\sGB^\flat_{2,\beta} \subset \sGB_{2,\beta}$).

From this, we deduce that $d \pi_{2,\beta}$ is generically surjective as required.
\end{proof}

\begin{prop}\label{Xconn}
For each $\beta \in Q^{\vee}_{+}$ such that $\left< \beta, \alpha_i \right> \ge 1$ for all $i \in \tI$ and $w, v \in W$, the scheme $\sGB^{\flat}_{2, \beta} [ w, v ]$ is connected and nonempty.
\end{prop}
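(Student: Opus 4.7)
The plan is to treat non-emptiness and connectedness separately, deducing both from the global properties of the evaluation map $\mathtt{ev} := (\mathtt{ev}_1, \mathtt{ev}_2) : \sGB^{\flat}_{2,\beta} \to \sB \times \sB$.

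For non-emptiness, Lemma \ref{ev-surj} directly gives that $\mathtt{ev}$ is surjective under the hypothesis $\left<\beta, \alpha_i\right> \ge 1$ for all $i \in \tI$, so every fiber $\sGB^{\flat}_{2,\beta}[w,v] = \mathtt{ev}^{-1}(p_w, p_v)$ is non-empty.

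For connectedness, I plan to show that $\mathtt{ev}_{*} \cO_{\sGB^{\flat}_{2,\beta}} = \cO_{\sB \times \sB}$, whence the Zariski connectedness theorem yields connectedness of every fiber. Consider the Stein factorization $\mathtt{ev} = g \circ f$ where $f : \sGB^{\flat}_{2,\beta} \to Z$ is proper surjective with geometrically connected fibers and $g : Z \to \sB \times \sB$ is finite. The source $\sGB^{\flat}_{2,\beta}$ is irreducible (since it is birational to the irreducible variety $\sQ(\beta)$ through $\pi_{2,\beta}$, cf.\ \S \ref{Qbvw}) and normal (it gives an orbifold resolution of singularities of $\sQ(\beta)$). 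It follows that $Z$ is irreducible (being the image of $f$) and normal: the sheaf $f_{*}\cO_{\sGB^{\flat}_{2,\beta}} = \cO_Z$ is integrally closed in the function field of $\sGB^{\flat}_{2,\beta}$, hence a fortiori in the function field of $Z$.

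The whole Stein factorization is $G$-equivariant under the diagonal action because $\mathtt{ev}$ is $G$-equivariant. The open $G$-orbit in $\sB \times \sB$ is $\mathcal O = G \cdot (p_e, p_{w_0})$, whose stabilizer equals $B \cap B^- = H$, a connected torus, so $\mathcal O \cong G/H$. Since $Z$ is irreducible, the open $G$-stable subset $g^{-1}(\mathcal O)$ is irreducible; by transitivity of $G$ on the base and irreducibility, $g^{-1}(\mathcal O)$ is a single $G$-orbit $G/H'$ with $H' \subset H$ of finite index equal to $\deg g$. But a connected algebraic group has no proper closed subgroup of finite index, so $H' = H$ and $g$ is birational. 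A finite birational morphism to the normal variety $\sB \times \sB$ is an isomorphism by Zariski's Main Theorem, so $g$ is an isomorphism and $f = \mathtt{ev}$ has geometrically connected fibers. In particular $\sGB^{\flat}_{2,\beta}[w,v]$ is connected for every $(w,v)$.

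The main obstacle, modest in this strategy, is verifying that $\sGB^{\flat}_{2,\beta}$ is irreducible and normal and that the Stein factorization descends the $G$-action; the former is immediate from the birationality with $\sQ(\beta)$ and from the orbifold resolution property recalled in \S \ref{Qbvw}, and the latter is functoriality of Stein factorization. The only genuinely geometric input beyond these formalities is the identification of the stabilizer of $(p_e, p_{w_0})$ with the connected group $H$, which is what forces the degree of $g$ to be one.
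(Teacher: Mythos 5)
Your proof is correct and follows essentially the same route as the paper: both arguments run a Stein factorization of the $G$-equivariant proper surjection $\mathtt{ev}$, use irreducibility of $\sGB^\flat_{2,\beta}$ to make the intermediate variety irreducible, exploit the connectedness of the stabilizer $H = B \cap B^-$ of a point in the open $G$-orbit of $\sB \times \sB$ to force the finite part to have degree one, and then invoke normality of $\sB \times \sB$ to conclude it is an isomorphism. The only difference is cosmetic: the paper phrases the degree-one step as $H$ acting trivially on the finite set of components of a generic fiber, while you phrase it via a finite-index closed subgroup of the connected group $H$; these are the same argument.
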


\begin{proof}
The map $\mathtt{ev}$ borrowed from Lemma \ref{ev-surj} is $G$-equivariant, and $( \sB \times \sB )$ admits an open dense $G$-orbit $\mathbf O$ by the Bruhat decomposition. Since $\sGB^{\flat}_{2, \beta}$ is an irreducible variety, so is its Zariski open set $\mathtt{ev}^{-1} ( \mathbf O )$. If we consider $x \in \mathbf O$, then the irreducible components of $\mathtt{ev}^{-1} ( x )$ (that must be finite as we consider varieties, that are finite types) must be permuted by $\mathsf{Stab} _G \, x \cong H$. Since $H$ is connected, we cannot have a non-trivial action. Therefore, the irreducible component of $\mathtt{ev}^{-1} ( x )$ must be unique. Thanks to the Stein factorization theorem \cite[\href{https://stacks.math.columbia.edu/tag/03H0}{Theorem 03H0}]{stacks-project}, the map $\mathtt{ev}$ factors through the normalization $Y$ of $( \sB \times \sB )$ inside (the function field of) $\sGB^{\flat}_{2, \beta}$ and the map $\sGB^{\flat}_{2, \beta} \to Y$ has connected fiber. Since the general fiber of $\mathtt{ev}$ is connected, so is the map $Y \rightarrow ( \sB \times \sB )$. It follows that $Y \rightarrow ( \sB \times \sB )$ is a birational map (that is also finite by the Stein factorization theorem). This implies that $Y$ and $(\sB \times \sB )$ share the same function field. Thus, we conclude that $Y \cong ( \sB \times \sB )$ by the normality of $( \sB \times \sB )$. This particularly means that every fiber of $\mathtt{ev}$ is connected. Therefore, the assertion follows by choosing $(H \times H)$-fixed points in $( \sB \times \sB )$ as particular cases.
\end{proof}

\begin{prop}\label{Bconn}
For each $\beta \in Q^{\vee}_+$ such that $\left< \beta, \alpha_i \right> \ge 1$ for all $i \in \tI$ and $w, v \in W$, the scheme $\sB _{2,\beta} [ w, v ]$ is connected and nonempty. For each $\beta \in Q^{\vee}_+$ and $w \in W$, the schemes $\sB _{1,\beta} [ w ]$ and $\sB _{2,\beta} [ w ]$ are connected and nonempty.
\end{prop}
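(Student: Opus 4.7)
The plan is to run the Stein-factorization argument of the proof of Proposition \ref{Xconn} on the moduli of stable maps $\sB_{n,\beta}$ in place of $\sGB^{\flat}_{2,\beta}$. Three inputs are needed: (i) irreducibility of $\sB_{1,\beta}$ and $\sB_{2,\beta}$ for $\beta \in Q^{\vee}_+$, which holds by Thomsen's theorem for genus zero stable-map moduli of homogeneous varieties (cf.\ \cite{FP95, KP01}, also used implicitly in \S \ref{GQL}); (ii) surjectivity of the relevant evaluation maps; and (iii) the observation that the stabilizers of the diagonal $G$-action on $\sB$ and on $\sB \times \sB$ at generic points (namely $B$ and $H$, respectively) are connected, so that irreducible components of a generic fiber cannot be permuted nontrivially.

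For the two-point assertion, I first verify that $\mathtt{e} = (\mathtt{e}_1, \mathtt{e}_2) : \sB_{2,\beta} \to \sB \times \sB$ is surjective when $\langle \beta, \alpha_i \rangle \ge 1$ for every $i \in \tI$, by the same tangent-space computation as in Lemma \ref{ev-surj}. At a generic point of $\sB_{2,\beta}$ corresponding to a genuine map $f : \P^1 \to \sB$ with the two marked points placed at $0$ and $\infty$, the moduli is smooth (because $\sB$ is convex), and its tangent space sits in an exact sequence involving $H^0(\P^1, f^* T\sB)$ modulo $\mathrm{Lie}\, \mathrm{Aut}(\P^1, 0, \infty) = \mathrm{Lie}\, \Gm$. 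The kernel of the differential of $\mathtt{e}$ is controlled by $H^0(\P^1, f^* T\sB(-[0]-[\infty]))$, with cokernel governed by the corresponding $H^1$. Filtering $T\sB$ by the root line bundles and using that $\langle \beta, \alpha \rangle \ge 1$ for every $\alpha \in \Delta_+$ (which follows from the hypothesis on simple roots), each associated graded $\cO_{\P^1}(\langle \beta, \alpha \rangle - 2)$ has twist at least $-1$, so $H^1$ vanishes and $d\mathtt{e}$ is surjective on a dense open subset. Properness of $\sB_{2,\beta}$ then upgrades dominance to surjectivity. Since the diagonal $G$-action on $\sB \times \sB$ has an open dense orbit $\mathbf O$ with stabilizer $H$ at $(p_e, p_{w_0})$, the generic fiber of $\mathtt{e}$ over $\mathbf O$ is irreducible, its components being permuted by the connected group $H$. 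Stein factorization $\sB_{2,\beta} \to Y \to \sB \times \sB$ then produces a finite birational morphism $Y \to \sB \times \sB$ to a normal variety, which must be an isomorphism; hence every fiber of $\mathtt{e}$ is non-empty and connected, proving the statement for $\sB_{2,\beta}[w,v]$.

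The one-point case requires no hypothesis on $\beta$: the $G$-equivariant map $\mathtt{e}_1 : \sB_{n,\beta} \to \sB$ (for $n \in \{1,2\}$) has non-empty image, which is therefore a non-empty $G$-invariant closed subvariety of the homogeneous space $\sB$, hence all of $\sB$. The stabilizer at $p_e$ is the connected Borel $B$, so the same Stein-factorization reasoning, applied to $\mathtt{e}_1$ with target $\sB$, shows that every fiber of $\mathtt{e}_1$ is connected and non-empty.

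The main technical point is the surjectivity in the two-point case, specifically the deformation-theoretic bookkeeping: one must identify the differential of $\mathtt{e}$ with evaluation at the marked points modulo reparametrizations in $\mathrm{Aut}(\P^1, 0, \infty) = \Gm$. The reparametrization fixes $0$ and $\infty$ as points, so it does not affect the images at the marked points; this confirms that the surjectivity of $d\mathtt{e}$ follows from the $H^1$-vanishing on $f^* T\sB(-[0]-[\infty])$ exactly as in Lemma \ref{ev-surj}. Once this is in place, the remainder is a direct transplantation of the argument of Proposition \ref{Xconn}.
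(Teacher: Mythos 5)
Your proof is correct, and it follows a genuinely different route from the paper's. The paper obtains connectedness of $\sB_{2,\beta}[w,v]$ by transferring the already-established connectedness of $\sGB^{\flat}_{2,\beta}[w,v]$ (Proposition \ref{Xconn}) across the forgetful map $\sGB^{\flat}_{2,\beta}[w,v] \dashrightarrow \sB_{2,\beta}[w,v]$, which it argues extends to a surjective continuous map after contracting the unstable main component. You instead rerun the Stein-factorization argument of Proposition \ref{Xconn} directly on $\sB_{2,\beta}$, which requires a new verification of surjectivity of $\mathtt e = (\mathtt e_1, \mathtt e_2) : \sB_{2,\beta} \to \sB \times \sB$ analogous to Lemma \ref{ev-surj}. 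You correctly identify the one nontrivial bookkeeping point: unlike in $\sGB^{\flat}_{2,\beta}$, the generic stable map $(\P^1, x_1, x_2, f)$ in $\sB_{2,\beta}$ sits in a moduli problem with a residual $\mathrm{Aut}(\P^1,x_1,x_2)\cong\Gm$, so the tangent space is a quotient of $H^0(\P^1, f^*T\sB)\oplus T_{x_1}\P^1\oplus T_{x_2}\P^1$ by $H^0(\P^1,T_{\P^1})$; nevertheless the restriction of $d\mathtt e$ to deformations of $f$ alone already surjects onto $T_{f(x_1)}\sB\oplus T_{f(x_2)}\sB$ once $H^1(\P^1, f^*T\sB(-[x_1]-[x_2]))=0$, which your hypothesis $\langle\beta,\alpha_i\rangle\ge1$ delivers. (One can also sanity-check the numerics: $\dim\sB_{2,\beta} - 2\dim\sB = 2\langle\beta,\rho\rangle - \dim\sB - 1\ge 0$ precisely matches $h^0(f^*T\sB(-[x_1]-[x_2])) - 1$, the kernel of $d\mathtt e$.) Your direct argument sidesteps the somewhat delicate extension of the forgetful rational map in the paper; the paper's approach, conversely, avoids redoing the transversality computation in the presence of the curve automorphisms. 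Both are valid. One small point worth flagging: your proof leans on the irreducibility of $\sB_{n,\beta}$, which you attribute to Thomsen / Kim–Pandharipande; the paper only explicitly invokes irreducibility of $\sGB_{n,\beta}$, so you should make sure the relevant reference (e.g.\ \cite{KP01}) for the non-graph moduli is in place, or deduce it from $\sQ(\beta)$-irreducibility as the paper does for $\sGB_{n,\beta}$.
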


\begin{proof}
We prove the first assertion. We have a rational map $\sGB^{\flat}_{2, \beta} [ w, v ] \rightarrow \sB _{2,\beta} [ w, v ]$ obtained by forgetting the map to $\P^1$. Moreover, the locus this map is not defined consists of stable maps whose main components have degree $0$ and have only two marked points. By modifying the universal family by adjoining such two marked points in such a degree $0$ main components, we conclude that $\sGB^{\flat}_{2, \beta} [ w, v ] \rightarrow \sB _{2,\beta} [ w, v ]$ extends to a map of topological spaces. By examining the condition to be a stable map \cite[\S 1.1]{FP95}, we deduce that this map is surjective onto the image. Therefore, the connectedness and the nonemptiness of $\sGB^{\flat}_{2, \beta} [ w, v ]$ implies these of $\sB _{2,\beta} [ w, v ]$, that is the first assertion.

The second assertion is straight-forward from the irreducibility of $\sB _{n,\beta}$, together with the fact that $\mathtt{e}_1$ is $G$-equivariant fiber bundle over $\sB$ (see the proof of Proposition \ref{Xconn}).
\end{proof}

\begin{lem}\label{desc-fib+}
Let $\beta \in Q^{\vee}_+$. Let $(f, D) \in \sQ ( \beta )$ be a quasi-map with its defect $D$ with the following properties:
\begin{itemize}
\item $D = \sum_{x \in \P^1 ( \C )} \beta_x \otimes [x]$;
\item $\left< \beta_0, \al_i \right> \ge 1$ and $\left< \beta_\infty, \al_i \right> \ge 1$ for each $i \in \tI$.
\end{itemize}
Then, $\pi_{\beta, w,v}^{-1} (f, D)$ is connected for every $w,v \in W$ if it is nonempty.
\end{lem}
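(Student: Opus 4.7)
The plan is to reduce the statement to Proposition \ref{Bconn} via Givental's main lemma. By Theorem \ref{desc-fib}, for the element $w' \in W$ determined by the genuine map locus of $(f,D)$, one has
$$\pi_{2,\beta}^{-1}(f,D) \;\cong\; \sB_{2,\beta_0}[w'] \times \sB_{2,\beta_\infty}[w'] \times \prod_{x \neq 0,\infty} \sB_{1,\beta_x}[w'],$$
and the remark immediately following that theorem identifies the first (resp.\ second) marked point of a stable map in $\sGB^{\flat}_{2,\beta}$, i.e.\ the image of $\mathtt{ev}_1$ (resp.\ $\mathtt{ev}_2$), with the second marked point (image of $\mathtt{e}_2$) of the $\sB_{2,\beta_0}[w']$-factor (resp.\ the $\sB_{2,\beta_\infty}[w']$-factor). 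Intersecting with $\sGB^{\flat}_{2,\beta}(w,v)$ therefore imposes no new condition on the middle factors and yields
$$\pi_{\beta,w,v}^{-1}(f,D) \;\cong\; \bigl(\sB_{2,\beta_0}[w'] \cap \mathtt{e}_2^{-1}(\sB(w))\bigr) \times \bigl(\sB_{2,\beta_\infty}[w'] \cap \mathtt{e}_2^{-1}(\sB^{\mathrm{op}}(v))\bigr) \times \prod_{x \neq 0,\infty} \sB_{1,\beta_x}[w'].$$

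The middle factors $\sB_{1,\beta_x}[w']$ are connected (and nonempty) by the second half of Proposition \ref{Bconn}. For the first factor, I analyze the proper evaluation map
$$\mathtt{e}_2 : \sB_{2,\beta_0}[w'] \cap \mathtt{e}_2^{-1}(\sB(w)) \longrightarrow \sB(w).$$
For each $y \in \sB(w)$ the fiber equals $\mathtt{e}_1^{-1}(p_{w'}) \cap \mathtt{e}_2^{-1}(y)$, which by Lemma \ref{e-trans} is isomorphic to $\sB_{2,\beta_0}[u,w_0]$ for some $u \in W$ (depending on $y$); using the hypothesis $\langle \beta_0, \al_i \rangle \ge 1$ for all $i \in \tI$, the first half of Proposition \ref{Bconn} guarantees that this fiber is connected and nonempty. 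Since the base $\sB(w)$ is an irreducible Schubert variety, the standard fact that a proper surjection with connected (nonempty) fibers onto a connected base has connected total space shows that the first factor is connected. The second factor is treated identically, using $\langle \beta_\infty, \al_i \rangle \ge 1$ and the irreducibility of $\sB^{\mathrm{op}}(v) = \overline{N^- p_v}$. A finite product of connected varieties being connected then concludes the proof.

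The only mild technical point is verifying that the Givental decomposition is compatible with the Schubert constraints cutting out $\sGB^{\flat}_{2,\beta}(w,v)$, so that those constraints split cleanly as conditions on $\mathtt{e}_2$ in the first two factors and no condition on the rest; this is exactly what the remark after Theorem \ref{desc-fib} records, so no extra argument is needed there. Everything else is a direct reduction to Proposition \ref{Bconn}.
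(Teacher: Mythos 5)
Your proof is correct, and the reduction to Proposition \ref{Bconn} is the same as in the paper, but you carry out the final connectedness step differently. After the Givental decomposition (Theorem \ref{desc-fib} and the remark following it) and disposing of the $\sB_{1,\beta_x}[w']$ factors, the remaining issue is connectedness of $\sB_{2,\beta_0}[w'] \cap \mathtt{e}_2^{-1}(\sB(w))$ (and its mirror at $\infty$). The paper argues this by flowing via a one-parameter subgroup of $H$: points are sent to $H$-fixed limits lying in fibers $\sB_{2,\beta_0}[u,w']$ over $H$-fixed points of $\sB(w)$, and the connectedness of those fibers (Proposition \ref{Bconn}) lets one splice paths; one has to see that a single attracting limit fiber (say over $p_{w_0}$) catches everything. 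You instead observe that \emph{every} $\mathtt{e}_2$-fiber $\mathtt{e}_1^{-1}(p_{w'}) \cap \mathtt{e}_2^{-1}(y)$ is isomorphic, via Lemma \ref{e-trans}, to some $\sB_{2,\beta_0}[u,w_0]$ and hence is connected and nonempty by Proposition \ref{Bconn}, and then apply the standard topological fact that a proper surjection with connected, nonempty fibers onto a connected base has connected total space. Both arguments hinge on the same input; yours is somewhat more uniform and avoids the implicit Bialynicki--Birula reasoning, at the cost of invoking Lemma \ref{e-trans} fiberwise. Either way, no gap.
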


\begin{proof}
By Proposition \ref{Bconn} and Theorem \ref{desc-fib}, we can forget about the contribution of $\sB _{1,\beta_x} [ w ]$ when $x \neq 0,\infty$. By our assumption and Proposition \ref{Bconn}, we know that
$$\sB _{2,\beta_0} [u,w']\neq \emptyset \text{ and } \sB _{2,\beta_\infty} [u,v']\neq \emptyset$$
and are nonempty and connected for each $u,w',v' \in W$. Since $\sB _{2,\beta_0}$ and $\sB _{2,\beta_\infty}$ are proper, we always find a limit point with respect to the $H$-action. It follows that $\sB _{2,\beta_0} [ u ] \cap \mathtt{e}_2^{-1} ( \sB ( u' ) )$ (resp. $\sB _{2,\beta_\infty} [ u ] \cap \mathtt{e}_2^{-1} ( \sB^{\mathrm{op}} ( u' ) )$) is connected for each $u,u' \in W$ as we can connect every two points by appropriately sending to/from $H$-limit points that are contained in a connected component of the form $\sB _{2,\beta_0} [u,w']$ ($w' \in W$). Thanks to Theorem \ref{desc-fib}, we conclude the assertion.
\end{proof}

\subsection{Normality of $\sQ ( \beta, v, w )$}

Let $\sQ^+ ( \beta, v, w )$ be the normalization of $\sQ ( \beta, v, w )$ for each $\beta \in Q^{\vee}_+$ and $v, w \in W$. We denote the normalization map by $\eta_{\beta, v, w} : \sQ^+ ( \beta, v, w ) \to \sQ ( \beta, v, w )$.

\begin{prop}\label{conn-to-norm}
For each $\beta \in Q^{\vee}_+$ and $w, v \in W$, the variety $\sQ ( \beta, v, w )$ is normal if and only if every fiber of $\pi_{\beta,v,w}$ is connected.
\end{prop}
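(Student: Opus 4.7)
The plan is to factor $\pi_{\beta,v,w}$ through the normalization map $\eta_{\beta,v,w}$ using the normality of $\sGB_{2,\beta}^{\flat}(w,v)$ provided by Theorem \ref{BCMP}, and then to combine Zariski's connectedness theorem with the weak normality of $\sQ(\beta,v,w)$ from Corollary \ref{wn-char0}.

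More concretely, since $\sGB_{2,\beta}^{\flat}(w,v)$ is irreducible and normal and $\pi_{\beta,v,w}$ is dominant, the universal property of normalization produces a unique factorization $\pi_{\beta,v,w} = \eta_{\beta,v,w} \circ \tilde\pi$ with $\tilde\pi: \sGB_{2,\beta}^{\flat}(w,v) \to \sQ^+(\beta,v,w)$. Since $\pi_{\beta,v,w}$ is proper and $\eta_{\beta,v,w}$ is separated, $\tilde\pi$ is proper; since both $\pi_{\beta,v,w}$ and $\eta_{\beta,v,w}$ are birational, so is $\tilde\pi$. Applying Zariski's connectedness theorem (or Stein factorization) to $\tilde\pi$---a proper birational morphism whose target $\sQ^+(\beta,v,w)$ is normal---yields $\tilde\pi_* \cO_{\sGB_{2,\beta}^{\flat}(w,v)} = \cO_{\sQ^+(\beta,v,w)}$, and in particular every fiber of $\tilde\pi$ is connected.

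Consequently, each fiber of $\pi_{\beta,v,w}$ over a point $x \in \sQ(\beta,v,w)$ is, set-theoretically, the disjoint union of the (connected) fibers of $\tilde\pi$ over the finitely many points of $\eta_{\beta,v,w}^{-1}(x)$. Hence every fiber of $\pi_{\beta,v,w}$ is connected if and only if $\eta_{\beta,v,w}$ is set-theoretically bijective. By Corollary \ref{wn-char0}, the target $\sQ(\beta,v,w) = \sQ'(v t_\beta, w)_{\C}$ is weakly normal, so any finite, bijective, birational morphism onto it from a reduced scheme is an isomorphism; this is exactly the criterion already exploited in Corollary \ref{FQ-wn}. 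Therefore bijectivity of $\eta_{\beta,v,w}$ is equivalent to $\eta_{\beta,v,w}$ being an isomorphism, which in turn is equivalent to the normality of $\sQ(\beta,v,w)$.

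The main obstacle will be to ensure that weak normality is indeed precisely the property needed to upgrade a finite bijective birational morphism to an isomorphism; but this is exactly the content of \cite[Proposition 1.2.5]{BK05} and the argument walked around in Corollary \ref{FQ-wn}, so no further work should be required. All the other ingredients---normality of the graph space resolution from Theorem \ref{BCMP}, properness and birationality of $\pi_{\beta,v,w}$ from its construction in \S \ref{Qbvw}, and the universal property of normalization applied to a dominant morphism from a normal variety---are directly available, so the argument reduces to carefully threading these facts together.
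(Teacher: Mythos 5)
Your proposal is correct and takes essentially the same approach as the paper: factor $\pi_{\beta,v,w}$ through the normalization $\sQ^+(\beta,v,w)$, apply Zariski connectedness to the first factor $\tilde\pi$ (proper birational onto a normal variety) to get connected fibers, conclude that connectedness of the fibers of $\pi_{\beta,v,w}$ is equivalent to bijectivity of $\eta_{\beta,v,w}$, and finally invoke weak normality of $\sQ(\beta,v,w)$ from Corollary \ref{wn-char0} to upgrade bijectivity to an isomorphism. The only presentational difference is that the paper phrases this through a function $\Theta(x) = \dim_\C \cO_{\sQ^+(\beta,v,w)} \otimes_{\cO_{\sQ(\beta,v,w)}} \C_x$ and records its upper-semicontinuity, which is not needed for this Proposition but is set up for reuse in the proofs of Corollary \ref{Qnorm-BF} and Theorem \ref{Qnorm-w0}.
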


\begin{proof}
As $\sGB_{2,\beta}^{\flat} ( v, w )$ is normal and $\pi_{\beta,v,w}$ is proper, we know that
$$( \pi_{\beta, v, w} ) _* \cO_{\sGB_{2,\beta}^{\flat} ( v, w )} \cong \cO_{\sQ^+ ( \beta, v, w )}.$$
The properness of $\pi_{\beta,v,w}$ also implies that $\cO_{\sQ^+ ( \beta, v, w )}$ is a coherent sheaf on $\sQ ( \beta, v, w )$. For each closed point $x$ of $\sQ ( \beta, v, w )$, we set
\begin{eqnarray}
\Theta (x) := \dim _\C \cO_{\sQ^+ ( \beta, v, w )} \otimes _{\cO_{\sQ ( \beta, v, w )}} \C_x .\label{eq-number}
\end{eqnarray}

By the Stein factorization theorem, the map $\eta_{\beta, v, w}$ is finite. By Corollary \ref{wn-char0} and \S \ref{Qbvw}, we know that the variety $\sQ ( \beta, v, w )$ is weakly normal. From this, we deduce that $\Theta (x) = \# \eta_{\beta,v,w}^{-1} ( x )$ (cf. \cite[Remark 1]{Yan83}). Moreover, it counts the number of irreducible components of the fiber of $\eta_{\beta,v,w}$.

The coherence of $\cO_{\sQ^+ ( \beta, v, w )}$ implies that the RHS of (\ref{eq-number}) is an upper-semicontinuous function on $\sQ ( \beta, v, w )$, and hence so is $\Theta$.

If we have $\Theta \equiv 1$ on $\sQ ( \beta, v, w )$, then we have $\sQ^+ ( \beta, v, w ) = \sQ ( \beta, v, w )$ by the weak normality of the latter (cf. \cite[Remark 1]{Yan83}). Therefore, the if part of the assertion follows.

If we have $\Theta \not\equiv 1$ on $\sQ ( \beta, v, w )$, then we have $\sQ^+ ( \beta, v, w ) \neq \sQ ( \beta, v, w )$. Hence, the only if part of the assertion follows.

These complete the proof of Proposition \ref{conn-to-norm}.
\end{proof}

\begin{cor}[Braverman-Finkelberg]\label{Qnorm-BF}
For each $\beta \in Q^{\vee}_+$, the variety $\sQ ( \beta )$ is normal.
\end{cor}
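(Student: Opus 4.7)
The plan is to apply Proposition \ref{conn-to-norm} in the special case $v = w_0$, $w = e$. Observe that $\sB(e) = \sB$ and $\sB^{\mathrm{op}}(w_0) = \overline{N^- p_{w_0}} = \sB$, so the Schubert constraints cutting out $\sGB_{2,\beta}^{\flat}(e,w_0)$ and $\sQ(\beta, w_0, e)$ are trivial. Consequently $\sGB_{2,\beta}^{\flat}(e,w_0) = \sGB_{2,\beta}^{\flat}$, the target $\sQ(\beta, w_0, e) = \sQ'(w_0 t_\beta, e)_\C$ coincides with $\sQ(\beta)$ under Theorem \ref{QQ-isom}, and $\pi_{\beta, e, w_0}$ is identified with the full graph space resolution $\pi_{2,\beta} : \sGB_{2,\beta}^{\flat} \to \sQ(\beta)$. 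The weak normality of $\sQ(\beta)$ required as input for Proposition \ref{conn-to-norm} is furnished by Corollary \ref{wn-char0}.

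Having made this reduction, I would invoke Givental's main lemma (Theorem \ref{desc-fib}) to describe each fiber of $\pi_{2,\beta}$ explicitly. For a quasi-map $(f, D)$ with defect $D = \sum_{x \in \P^1(\C)} \beta_x \otimes [x]$, the lemma gives a product decomposition
\[
\pi_{2,\beta}^{-1}(f, D) \;\cong\; \sB_{2, \beta_0}[w] \times \sB_{2, \beta_\infty}[w] \times \prod_{x \in \P^1(\C) \setminus \{0,\infty\}} \sB_{1, \beta_x}[w],
\]
where the relevant $w \in W$ is determined by the value of the underlying genuine map at each defect point. By the second assertion of Proposition \ref{Bconn} (which places no lower bound on $\langle \beta_x, \al_i\rangle$), every factor $\sB_{n, \beta_x}[w]$ is nonempty and connected. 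A finite product of nonempty connected $\C$-varieties being connected, every fiber of $\pi_{2,\beta}$ is connected, and Proposition \ref{conn-to-norm} then yields the normality of $\sQ(\beta)$.

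The corollary is essentially a direct assembly of three earlier inputs (Proposition \ref{conn-to-norm}, Theorem \ref{desc-fib}, and Proposition \ref{Bconn}), so I do not expect a genuine obstacle. The only point that deserves care is the identification of $(v, w) = (w_0, e)$ with the unconstrained situation: one must verify that the definitions of $\sGB_{2,\beta}^{\flat}(w, v)$ and $\sQ(\beta, v, w)$ indeed collapse to $\sGB_{2,\beta}^{\flat}$ and $\sQ(\beta)$ respectively under this choice, and that Proposition \ref{Bconn} is available in the full range of degrees $\beta_x \in Q^\vee_+$ that can appear as defect contributions, including the small or vanishing ones (where the factor is a point or absent, hence trivially connected).
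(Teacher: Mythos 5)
Your proposal is correct and matches the paper's own proof essentially step for step: both reduce to Proposition \ref{conn-to-norm} for $(v,w)=(w_0,e)$, describe the fibers of the graph-space resolution via Givental's main lemma (Theorem \ref{desc-fib}), and conclude their connectedness from Proposition \ref{Bconn}. Your extra remark flagging the degenerate factors (small or vanishing $\beta_x$) is a sensible sanity check but does not change the argument.
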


\begin{rem}
Our proof of Corollary \ref{Qnorm-BF} is independent of \cite{BF14a} (however based on common former papers \cite{FM99,FFKM}). Hence, we obtain a new proof of the normality of $\sQ ( \beta )$ and $\sZ ( \beta, w_0 )$. Together with Theorem \ref{coh}, Corollary \ref{Qnorm-BF} also makes the contents in \cite{KNS17} logically independent of \cite{BF14a} (cf. Appendix A).
\end{rem}

\begin{proof}[Proof of Corollary \ref{Qnorm-BF}]
Recall that $\sQ ( \beta ) = \sQ ( \beta, w_0, e )$.
We borrow the upper semi-continuous function $\Theta$ that counts the number of connected components of the fiber of $\eta_{\beta, w_0, e}$ from (\ref{eq-number}) in the proof of Proposition \ref{conn-to-norm}. 

By Proposition \ref{conn-to-norm}, it suffices to prove that $\Theta \equiv 1$ on $\sQ ( \beta )$. In other words, it suffices to prove that the fiber $\eta_{\beta, w_0, e}^{-1} ( x )$ is connected for each $x \in \sQ ( \beta )$.

By Theorem \ref{desc-fib}, we deduce that the set of connected components of $\eta_{\beta,w_0,e}^{-1} ( x )$ is in bijection with the set of connected components of $\prod_{y \in \P^1 ( \C )} \sB_{\beta_y} [w]$. By Proposition \ref{Bconn}, this latter space is connected.

Therefore, we conclude the result.
\end{proof}

\begin{thm}\label{Qnorm-w0}
For each $\beta \in Q^{\vee}_+$ and $w \in W$, the varieties $\sQ ( \beta, w_0, w_0 )$ and $\sQ ( \beta, e, w_0 )$ are normal.
\end{thm}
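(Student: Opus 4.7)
The plan is to apply Proposition \ref{conn-to-norm}: normality of $\sQ(\beta, v, w)$ is equivalent to connectedness of every fiber of the proper birational map $\pi_{\beta, w, v} : \sGB_{2,\beta}^{\flat}(w,v) \to \sQ(\beta,v,w)$. By Givental's main lemma (Theorem \ref{desc-fib}), combined with the evaluation constraints cutting out $\sGB_{2,\beta}^{\flat}(w_0,w_0) = \mathtt{ev}_1^{-1}(p_{w_0})$ and $\sGB_{2,\beta}^{\flat}(w_0,e) = \mathtt{ev}_1^{-1}(p_{w_0}) \cap \mathtt{ev}_2^{-1}(p_e)$, the fiber over $(f, D)$ with defect $D = \sum_x \beta_x [x]$ decomposes as a product of factors of the form $\sB_{n,\beta_x}[f(x)]$, with an added constraint $\mathtt{e}_2 = p_{w_0}$ on the factor at $x = 0$ in both cases, and an added constraint $\mathtt{e}_2 = p_e$ on the factor at $x = \infty$ in the $(v,w)=(e,w_0)$ case. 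By Proposition \ref{Bconn}, all factors carrying no added constraint are connected, so the problem reduces to showing that the constrained factors at the endpoints are connected.

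Using Lemma \ref{e-trans} together with $G$-equivariance, each constrained factor is isomorphic to some $\sB_{2,\beta'}[u, w_0] = \mathtt{e}_1^{-1}(p_u) \cap \mathtt{e}_2^{-1}(p_{w_0})$ with $u \in W$ and $\beta' \in Q^{\vee}_+$, so the key claim is that this space is connected whenever nonempty. I would prove this by first invoking the $S_2$-symmetry swapping the two marked points to identify $\sB_{2,\beta'}[u, w_0] \cong \sB_{2,\beta'}[w_0, u]$, then exploiting Theorem \ref{BCMP}: the space $\sB_{2,\beta'}(w_0, u) = \mathtt{e}_1^{-1}(p_{w_0}) \cap \mathtt{e}_2^{-1}(\sB^{\mathrm{op}}(u))$ is unirational, hence irreducible and connected, so its Zariski open subset $\mathfrak{T} := \mathtt{e}_1^{-1}(p_{w_0}) \cap \mathtt{e}_2^{-1}(\bO_\sB^{\mathrm{op}}(u))$ is also connected. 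The projection $\mathtt{e}_2 : \mathfrak{T} \to \bO_\sB^{\mathrm{op}}(u)$ is $B^-$-equivariant, with $B^-$ fixing $p_{w_0}$ and acting transitively on $\bO_\sB^{\mathrm{op}}(u) = N^- p_u$. This presents $\mathfrak{T}$ as a fiber bundle over the connected affine space $\bO_\sB^{\mathrm{op}}(u)$ with fiber $\sB_{2,\beta'}[w_0, u]$ over $p_u$; connectedness of the total space then forces connectedness of the fiber.

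Putting everything together, every fiber of $\pi_{\beta, w_0, w_0}$ and $\pi_{\beta, w_0, e}$ is a product of connected spaces, hence connected, and Proposition \ref{conn-to-norm} yields the normality of $\sQ(\beta, w_0, w_0)$ and $\sQ(\beta, e, w_0)$. The main obstacle will be to argue rigorously that $\mathfrak{T} \to \bO_\sB^{\mathrm{op}}(u)$ is indeed a locally trivial fiber bundle, so that connectedness of the fiber follows from connectedness of the total space and the base; this relies on the connectedness of the stabilizer $B^- \cap \mathrm{Stab}_G(p_u)$, an intersection of two Borel subgroups in $G$, which I expect to be a standard consequence of the Iwahori-type decomposition but is the point at which the argument most directly uses the structure of the algebraic group.
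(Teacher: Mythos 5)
Your high-level strategy matches the paper's: reduce normality to fiber connectedness of $\pi_{\beta,v,w}$ via Proposition \ref{conn-to-norm}, decompose the fibers via Givental's lemma (Theorem \ref{desc-fib}), dispatch the unconstrained factors with Proposition \ref{Bconn}, and reduce to connectedness of $\sB_{2,\beta'}[u,w_0]$. Up to that point the proposal is sound. The gap is in your final fibration argument, and it is not the one you flagged.

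You assert that $B^-$ fixes $p_{w_0}$. In the paper's conventions this is false: $\dim \bO_\sB(w) = \ell(w_0) - \ell(w)$, so $\bO_\sB(w_0) = B p_{w_0}$ is a single point and $p_{w_0} = eB$ is the $B$-fixed point; $N^-$ acts with \emph{dense} orbit through $p_{w_0}$ (indeed $\bO_\sB^{\mathrm{op}}(w_0) = N^-p_{w_0}$ is the big cell). Consequently $B^-$ does not preserve $\mathtt{e}_1^{-1}(p_{w_0})$, and the map $\mathtt{e}_2 : \mathfrak{T} \to \bO_\sB^{\mathrm{op}}(u)$ is not $B^-$-equivariant. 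Worse, the obstruction cannot be repaired by picking another group: any algebraic subgroup $K$ preserving $\mathtt{e}_1^{-1}(p_{w_0})$ satisfies $K \subset \mathrm{Stab}_G(p_{w_0}) = B$, and then $K p_u \subset B p_u \cap N^- p_u = \{p_u\}$ by the transversality of the Bruhat cell and the opposite Bruhat cell at $p_u$, so $K$ cannot act transitively on $\bO_\sB^{\mathrm{op}}(u)$ unless $u = e$. The genuine difficulty is not the connectedness of $B^- \cap \mathrm{Stab}_G(p_u)$; it is that no group gives the fiber-bundle structure you want, so the inference ``total space connected $\Rightarrow$ fiber over $p_u$ connected'' is unsupported.

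The paper avoids proving connectedness of $\sB_{2,\beta'}[u,w_0]$ a priori. Instead it introduces the upper-semicontinuous function $\Theta(x)$ counting connected components of the normalization fiber, observes that $\Theta \equiv 1$ on the locus $\sZ$ whose defect $\beta_0'$ at $0$ satisfies $\langle\alpha_i^\vee,\beta_0'\rangle \ge 1$ (where Proposition \ref{Bconn} does apply), and then degenerates an arbitrary point to $\sZ$ inside $\sGB_{2,\beta}^{\flat}$ by adding extra bubble components away from $0$ and shrinking their attachment points to $0$; semicontinuity then forces $\Theta \equiv 1$ everywhere. The connectedness of $\sB_{2,\beta}[u,w_0]$ is deduced afterwards (Corollary \ref{Bconn+}) as a consequence of the normality, not as an input. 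If you want to pursue a direct proof of connectedness of $\sB_{2,\beta'}[u,w_0]$, you would need a genuinely different mechanism (e.g.\ a Stein-factorization argument over a normal Schubert variety, which would require controlling surjectivity of $\mathtt{e}_2$ and the generic fiber), rather than the proposed homogeneity.
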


\begin{proof}
We set $(v,w) = (w_0,w_0)$ or $(e,w_0)$. We borrow the upper semi-continuous function $\Theta$ that counts the number of connected components of the fiber of $\eta_{\beta, v, w}$ from (\ref{eq-number}) in the proof of Proposition \ref{conn-to-norm}.

By Proposition \ref{conn-to-norm}, it suffices to prove that $\Theta \equiv 1$ on $\sQ ( \beta, v, w )$ by assuming the contrary to deduce contradiction. For each $x \in \sQ ( \beta, v, w )$ such that $\Theta ( x ) \ge 2$, the fiber $\eta_{\beta, w_0, e}^{-1} ( x )$ is disconnected.

By our choice of $(v,w)$ and Theorem \ref{desc-fib} (cf. Proposition \ref{Bconn} and Corollary \ref{Qnorm-BF}), we deduce that the set of connected components of $\eta_{\beta, v, w}^{-1} ( x )$ is in bijection with the set of connected components of $\sB _{2,\beta_0} [ u, w_0 ]$ or $\sB _{2,\beta_0} [ u, w_0 ] \times \sB_{2,\beta_{\infty}} [u',w_0]$ for some $u,u' \in W$. 

To see whether this is the case, we specialize to the case of $(v, w ) = ( w_0, w_0 )$ (to guarantee that the contribution at $\infty \in \P^1$ in Theorem \ref{desc-fib} is the same as the points in $\C^{\times} \subset \P^1$, that is connected by Proposition \ref{Bconn}). We can choose $\beta_0 < \beta_0' \in Q^{\vee}_+$ such that $\left< \al_i^{\vee}, \beta_0' \right> \ge 1$ for every $i \in \tI$. By Proposition \ref{Bconn}, we deduce that the fiber of the locus $\sZ$ of $\sQ ( \beta, w_0 )$ whose points have defect $\beta_0'$ along $0$ is connected (if it is nonempty) for every $\beta \in Q^{\vee}$. Hence we have $\Theta ( y ) = 1$ for each $y \in \sZ$.

For each $\tilde{x} \in \sGB_ {2, \beta}^{\flat}$ such that $x = \pi_{2,\beta} ( \tilde{x} ) \in \sQ ( \beta, w_0 )$ has defect $\beta_0$ at $0$, we can replace $\beta$ with $\beta + \beta_0' - \beta_0$ and add additional irreducible components $C'$ to the main component $\P^1$ of $\tilde{x}$ (as quasi-stable curves) outside of $0 \in \P^1$ (whose images to $\sB$ have their degrees sum up to $(\beta_0' - \beta_0)$). (This does not change the defect of $x$ at $0$, and also does not change $\Theta ( x )$.) Then, we shrink all the inserting points of $C'$ on $\P^1$ to $0$ to obtain $\tilde{x}' \in \sGB_ {2, \beta}^{\flat}$ in the limit, that exists by the valuative criterion of properness as $\sGB_ {2, \beta}^{\flat}$ is projective. By examining the images of this family on $\sQ ( \beta, w_0 )$ via $\pi_{2,\beta}$, we deduce $y = \pi_{2,\beta} ( \tilde{x}' ) \in \sZ$. Therefore, the semi-continuity of $\Theta$ implies that $\Theta ( x ) \le \Theta ( y ) = 1$, that is $\Theta ( x ) = 1$. Hence, $\sB _{2,\beta_0} [ w, w_0 ]$ must be connected. This is a contradiction, and we conclude that $\Theta \equiv 1$ (for general $(v,w) \in \{ (e,w_0),(w_0,w_0)\}$ by the two paragraphs ahead).

Therefore, Proposition \ref{conn-to-norm} implies the result.
\end{proof}

\begin{cor}\label{BSDH-typeQ}
Let $\beta \in Q^{\vee}_+$ and $w \in W$. For each $i \in \tI$ such that $s_i w < w$, we have a surjective map
$$\pi_i : P_i \times^B \sQ ( \beta, w ) \rightarrow \sQ ( \beta, s_i w )$$
such that $(\pi_i)_* \cO_{P_i \times^B \sQ ( \beta, w )} \cong \cO_{\sQ ( \beta, s_i w )}$ and $\R^{>0} (\pi_i)_* \cO_{P_i \times^B \sQ ( \beta, w )} \cong \{ 0 \}$.
\end{cor}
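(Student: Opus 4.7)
The plan is to derive Corollary \ref{BSDH-typeQ} directly from Corollary \ref{BSDH-type} by translating through the identification of Theorem \ref{QQ-isom}. First, I would unravel the notation: by convention $\sQ(\beta,w)$ abbreviates $\sQ(\beta,w_0,w)$, and Theorem \ref{QQ-isom} (with $u=w$, $\beta'=0$) provides an isomorphism $\sQ'(w_0 t_\beta, w)_\C \cong \sQ(\beta,w)$ as closed subvarieties of $(\bQ_G^{\mathrm{rat}})_\C$. Under the shorthand convention that abbreviates $\sQ'_\tJ(w_0 t_\beta,w)$ as $\sQ'_\tJ(\beta,w)$, this reads $\sQ(\beta,w) \cong \sQ'(\beta,w)_\C$. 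This identification is $B$-equivariant (indeed $\bI$-equivariant, since it is cut out inside $\bQ_G(e)$ by $\bI$-canonically split ideals), so it induces an isomorphism
\[
P_i \times^B \sQ(\beta,w) \;\cong\; P_i \times^B \sQ'(\beta,w)_\C
\]
as $P_i$-schemes for every $i \in \tI$.

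Next, I would apply Corollary \ref{BSDH-type} with $\tJ = \emptyset$ and $\bK = \C$. The hypothesis $s_i w < w$ in $W$ matches exactly, and the conclusion produces a surjective $P_i$-equivariant morphism
\[
\pi_i : P_i \times^B \sQ'(\beta,w)_\C \;\twoheadrightarrow\; \sQ'(\beta,s_i w)_\C
\]
with $\mathbb R^\bullet (\pi_i)_* \cO_{P_i \times^B \sQ'(\beta,w)_\C} \cong \cO_{\sQ'(\beta,s_i w)_\C}$. Translating the source and target back through Theorem \ref{QQ-isom} yields the statement of Corollary \ref{BSDH-typeQ}.

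There is essentially no hard step left, because all the substantive work has been absorbed into Corollary \ref{BSDH-type}, whose proof via Proposition \ref{BSDH-type2} relied on two genuinely non-trivial ingredients: (i) a Demazure-type character computation using $\mathfrak{sl}(2,i)$-crystal structure on the positive global basis (via \cite{NS16} and \cite{Kas05}) to equate the characters of $H^0(\sQ'(\beta,s_iw)_\bK,\cO(\la))$ with that of $H^0$ on $P_i \times^B \sQ'(\beta,w)_\bK$; and (ii) the weak normality of $\sQ'(\beta,s_iw)_\C$ (Lemma \ref{Q-split} together with Corollary \ref{wn-char0}) to combine with the Stein factorization theorem and upgrade the equality of characters to $(\pi_i)_* \cO \cong \cO$. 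Note that full normality from Theorem \ref{Qnorm-w0} is \emph{not} needed for the cohomological conclusion; weak normality alone suffices. The only subtlety, therefore, is bookkeeping: checking that the abbreviations $\sQ(\beta,w) = \sQ(\beta,w_0,w)$ and $\sQ'(\beta,w) = \sQ'(w_0 t_\beta,w)$ agree under the isomorphism of Theorem \ref{QQ-isom}, which is a direct consequence of the definitions together with the fact that $w \in W$ carries $\beta' = 0$ in its translation part.
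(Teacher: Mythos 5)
Your proposal is correct and follows exactly the paper's own (one-line) proof: combine Corollary \ref{BSDH-type} with Theorem \ref{QQ-isom}. You fill in the bookkeeping the paper elides, and your remark that Corollary \ref{BSDH-type} rests only on weak normality (Corollary \ref{wn-char0}), not the full normality established later in Corollary \ref{Qnorm-w}, correctly identifies why there is no circularity in the paper's ordering of results.
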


\begin{proof}
Combine Corollary \ref{BSDH-type} with Theorem \ref{QQ-isom} (and take the generic localizations over $\Z$).
\end{proof}

\begin{cor}\label{Qnorm-w}
For each $\beta \in Q^{\vee}_+$ and $w \in W$, the variety $\sQ ( \beta, w )$ is normal.
\end{cor}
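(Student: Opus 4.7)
The plan is to perform a downward induction on $\ell(w)$, starting from the base case $w = w_0$ (which is Theorem \ref{Qnorm-w0}) and descending using the Bott--Samelson--Demazure--Hansen-type surjections of Corollary \ref{BSDH-typeQ}.

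First I would arrange the induction path. Given a target $w \in W$, set $u := w w_0$, so $\ell(u) = \ell(w_0) - \ell(w)$. Pick a reduced expression $u = s_{i_k} s_{i_{k-1}} \cdots s_{i_1}$ and define $v_0 := w_0$, $v_j := s_{i_j} v_{j-1}$ for $1 \le j \le k$. Since prefixes of reduced expressions are reduced, $\ell(v_j) = \ell(w_0) - j$, hence $s_{i_j} v_{j-1} < v_{j-1}$ for every $j$, and $v_k = w$. Thus Corollary \ref{BSDH-typeQ} provides, at each step, a surjective proper map
\[
\pi_{i_j} : P_{i_j} \times^B \sQ(\beta, v_{j-1}) \longrightarrow \sQ(\beta, v_j)
\]
with $(\pi_{i_j})_* \cO = \cO$ and higher direct images zero.

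The inductive step runs as follows. Assume $\sQ(\beta, v_{j-1})$ is normal. The quotient $P_{i_j} \times^B \sQ(\beta, v_{j-1})$ is a Zariski-locally trivial fiber bundle over $P_{i_j}/B \cong \P^1$ with fiber $\sQ(\beta, v_{j-1})$, so it is normal (and irreducible, via Proposition \ref{irrQ0}). The dimension formula in Corollary \ref{irrdimQ0} (or Theorem \ref{QQ-isom}) gives
\[
\dim\bigl( P_{i_j} \times^B \sQ(\beta, v_{j-1}) \bigr) = 1 + \dim \sQ(\beta, v_{j-1}) = \dim \sQ(\beta, v_j),
\]
so $\pi_{i_j}$ is generically finite. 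The equality $(\pi_{i_j})_* \cO = \cO$ combined with the Stein factorization shows $\pi_{i_j}$ has connected fibers; a generically finite map with connected generic fibers is birational. Therefore $\pi_{i_j}$ is a proper birational morphism from a normal variety with $(\pi_{i_j})_* \cO_{P_{i_j} \times^B \sQ(\beta, v_{j-1})} = \cO_{\sQ(\beta, v_j)}$. Since the push-forward under a proper birational map from a normal variety computes the normalization of the target in its function field, this equality forces $\sQ(\beta, v_j)$ to be normal. Iterating from $j=1$ to $j=k$ concludes the induction.

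I do not anticipate a serious obstacle, since all the heavy lifting (weak normality from Lemma \ref{Q-split} and Corollary \ref{wn-char0}, the base case from Theorem \ref{Qnorm-w0}, and the cohomological control of $\pi_i$ from Corollary \ref{BSDH-typeQ}) has been set up in the previous subsections. The only potentially delicate point is the verification that $\pi_{i_j}$ is birational, but as explained above this follows immediately from the dimension count together with $(\pi_{i_j})_* \cO = \cO$.
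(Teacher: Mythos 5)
Your proof is correct and follows essentially the same approach as the paper: downward induction from the base case $w = w_0$ (Theorem \ref{Qnorm-w0}), descending one simple reflection at a time via the surjections $\pi_i$ of Corollary \ref{BSDH-typeQ}, and concluding normality of the target from the push-forward identity $(\pi_i)_* \cO = \cO$. The only difference is that you spell out a birationality argument (dimension count plus connectedness of fibers) before invoking the normalization characterization; the paper compresses this into the remark that $(\pi_i)_* \cO_{P_i \times^B \sQ(\beta, w)}$ is a "normal sheaf of rings" — and indeed the birationality step is not strictly necessary, since a proper morphism $f\colon X \to Y$ with $X$ normal and $f_*\cO_X = \cO_Y$ already forces $Y$ normal regardless of relative dimension (on every affine open $U$, $\Gamma(U,\cO_Y) = \Gamma(f^{-1}(U),\cO_X)$ is integrally closed because $X$ is). Both routes are sound.
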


\begin{proof}
The case $w = w_0$ is in Theorem \ref{Qnorm-w0}. Assume that the assertion holds for $w$. Let $i \in \tI$ such that $s_i w < w$. Then, Corollary \ref{BSDH-typeQ} implies that $\cO_{\sQ ( \beta, s_i w )}$ is isomorphic to the normal sheaf of rings $(\pi_i)_* \cO_{P_i \times^B \sQ ( \beta, w )}$. Hence, the assertion holds for $s_i w < w$. This proceeds the induction and we conclude the result.
\end{proof}

\begin{cor}\label{Bconn+}
For each $\beta \in Q^{\vee}_+$ and $w, v \in W$, the subspace
$$\mathtt{e}_1^{-1} ( p_v ) \cap \mathtt{e}_2^{-1} ( \sB ( w ) ) \subset \sB_{2,\beta}$$
is connected.
\end{cor}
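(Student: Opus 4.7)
The plan is to realize $\mathtt{e}_1^{-1}(p_v)\cap\mathtt{e}_2^{-1}(\sB(w))$ as a fiber of the resolution $\pi_{\beta,w_0,w}:\sGB^{\flat}_{2,\beta}(w,w_0)\to\sQ(\beta,w)$. Its target is normal by Corollary \ref{Qnorm-w}, so by Proposition \ref{conn-to-norm} every fiber of $\pi_{\beta,w_0,w}$ is connected.

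First I would treat the case $p_v\in\sB(w)$. Consider the quasi-map $(f,D)\in\sQ(\beta,w)$ with $f$ the constant map of value $p_v$ and $D=\beta\otimes[0]$, a defect of full class concentrated at $0\in\P^1$. By Theorem \ref{desc-fib} (Givental's main lemma), the Givental decomposition of $\pi_{\beta,w_0,w}^{-1}(f,D)$ collapses to the single factor $\sB_{2,\beta}[v]=\mathtt{e}_1^{-1}(p_v)$, since $\beta_\infty=0$ and $\beta_x=0$ for $x\neq0,\infty$. Under the identification recorded in the remark following Theorem \ref{desc-fib}, the defining condition $\mathtt{ev}_1\in\sB(w)$ of $\sGB^{\flat}(w,w_0)$ becomes precisely the cut $\mathtt{e}_2\in\sB(w)$ on this tail. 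Hence the fiber equals $\mathtt{e}_1^{-1}(p_v)\cap\mathtt{e}_2^{-1}(\sB(w))$, which is therefore connected.

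For the remaining case $p_v\notin\sB(w)$, I would reduce to the previous one by a gluing argument. Pick $u\in W$ with $p_u\in\sB(w)$ and $\gamma\in Q^{\vee}_+$ with $\langle\gamma,\al_i\rangle\ge 1$ for every $i\in\tI$, so that $\sB_{2,\gamma}[u,v]:=\mathtt{e}_1^{-1}(p_u)\cap\mathtt{e}_2^{-1}(p_v)$ is connected and non-empty by Proposition \ref{Bconn}. Gluing a stable map in $\sB_{2,\gamma}[u,v]$ to one in $\mathtt{e}_1^{-1}(p_v)\cap\mathtt{e}_2^{-1}(\sB(w))$ at their common value $p_v$ yields a closed immersion
\[
\Phi:\sB_{2,\gamma}[u,v]\times\bigl(\mathtt{e}_1^{-1}(p_v)\cap\mathtt{e}_2^{-1}(\sB(w))\bigr)\hookrightarrow\mathtt{e}_1^{-1}(p_u)\cap\mathtt{e}_2^{-1}(\sB(w)),
\]
whose target is connected by the first case. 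Since the first factor on the left is connected and non-empty, it suffices to show that the image of $\Phi$---the locus of reducible stable maps carrying a distinguished node that separates a class-$\gamma$ tail at the first marked point from the class-$\beta$ body---is itself connected inside the connected target.

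The main obstacle will be proving connectedness of $\mathrm{Im}\,\Phi$. I expect to handle it either by an explicit $\Gm$-degeneration that retracts the connected target onto this boundary stratum (by scaling near the first marked point so that a class-$\gamma$ piece bubbles off), or by reinterpreting $\mathrm{Im}\,\Phi$ as the fiber over a carefully chosen quasi-map of an enlarged resolution of the form $\pi_{\beta+\gamma,w_0,w}$, whose connectedness then follows again from Corollary \ref{Qnorm-w} combined with Theorem \ref{desc-fib}.
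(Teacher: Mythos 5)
Your Case $1$ is essentially the paper's entire proof: realize $\mathtt{e}_1^{-1}(p_v)\cap\mathtt{e}_2^{-1}(\sB(w))$ as the fiber of the resolution $\sGB^{\flat}_{2,\beta}(w,w_0)\to\sQ(\beta,w_0,w)$ over the constant quasimap to $p_v$ with defect $\beta\otimes[0]$, then apply Corollary \ref{Qnorm-w} and Proposition \ref{conn-to-norm}. However, the case split is unnecessary, and Case $2$ as written is incomplete, so the proposal has a gap even though a correct complete proof is contained inside it.

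The premise driving Case $2$ — that when $p_v\notin\sB(w)$ the constant quasimap $(p_v,\beta\otimes[0])$ might fail to lie in $\sQ(\beta,w_0,w)=\overline{\sZ(\beta,w)}$ — is not actually a problem. The boundary condition at $0$ in $\sZ(\beta,w)$ is absorbed once a defect appears at $0$: the relevant closed condition is on the Pl\"ucker coordinates, not on the value of the underlying honest map. More robustly, you do not need to check membership directly: the map $\pi_{2,\beta}$ sends $\sGB^{\flat}_{2,\beta}(w,w_0)=\mathtt{ev}_1^{-1}(\sB(w))$ onto $\sQ(\beta,w_0,w)$ (it is the birational proper map of \S\ref{Qbvw} between irreducible varieties, hence surjective), and by Theorem \ref{desc-fib} together with the remark following it, a preimage of the constant quasimap lies in $\mathtt{ev}_1^{-1}(\sB(w))$ precisely when the corresponding point of $\sB_{2,\beta}[v]$ lies in $\mathtt{e}_2^{-1}(\sB(w))$. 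So either $\mathtt{e}_1^{-1}(p_v)\cap\mathtt{e}_2^{-1}(\sB(w))$ is empty (and vacuously connected), or it is nonempty, in which case the constant quasimap is in the image, hence in $\sQ(\beta,w_0,w)$, and the Case $1$ argument applies verbatim — with no appeal to $p_v\in\sB(w)$.

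As for Case $2$ itself: after constructing the closed immersion $\Phi$, you reduce to showing $\mathrm{Im}\,\Phi$ is connected, but since $\mathrm{Im}\,\Phi\cong\sB_{2,\gamma}[u,v]\times\bigl(\mathtt{e}_1^{-1}(p_v)\cap\mathtt{e}_2^{-1}(\sB(w))\bigr)$ with the first factor connected and nonempty, this is \emph{equivalent} to the original claim, not a reduction of it. The two fixes you sketch (a $\Gm$-degeneration, or re-identifying $\mathrm{Im}\,\Phi$ as a fiber of $\pi_{\beta+\gamma,w_0,w}$) are plausible but not carried out, and the second one is essentially the Case $1$ argument in disguise. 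The cleanest repair is simply to delete the case split and run Case $1$ for all $v,w\in W$.
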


\begin{proof}
This space appears in the fiber of $\pi_{\beta, w_0, w}$ along the constant quasimap $\P^1 \rightarrow \{ p_v \} \subset \sB$ with its defect concentrated in $0 \in\P^1$. Hence, the assertion follows from Corollary \ref{Qnorm-w} and Proposition \ref{conn-to-norm}.
\end{proof}

\begin{thm}\label{Qnorm}
For each $\beta \in Q^{\vee}_+$ and $w, v \in W$, the scheme $\sQ ( \beta, v, w )$ is normal.
\end{thm}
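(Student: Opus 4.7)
My plan is to apply Proposition \ref{conn-to-norm}: since $\sQ(\beta,v,w)$ is weakly normal by Corollary \ref{wn-char0}, its normality is equivalent to every non-empty fiber of the proper birational map $\pi_{\beta,v,w}: \sGB_{2,\beta}^{\flat}(w,v) \to \sQ(\beta,v,w)$ being connected. Fix $(f,D) \in \sQ(\beta,v,w)$ with defect $D = \sum_y \beta_y \otimes [y]$. By Theorem \ref{desc-fib}, the fiber $\pi_{2,\beta}^{-1}(f,D)$ in $\sGB_{2,\beta}^{\flat}$ is identified with
\[\sB_{2,\beta_0}[u_0] \times \sB_{2,\beta_\infty}[u_\infty] \times \prod_{y \ne 0,\infty} \sB_{1,\beta_y}[u_y],\]
where $u_y$ is the value at $y$ of the underlying honest map on the main component $C_0 \cong \P^1$. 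The remark following Theorem \ref{desc-fib} tells us that $\mathtt{ev}_1$ on $\sGB_{2,\beta}^{\flat}$ corresponds to the free marked point $\mathtt{e}_2$ on the $\sB_{2,\beta_0}[u_0]$-factor (and analogously $\mathtt{ev}_2$ corresponds to $\mathtt{e}_2$ on the $\sB_{2,\beta_\infty}[u_\infty]$-factor), so the additional conditions $\mathtt{ev}_1 \in \sB(w)$ and $\mathtt{ev}_2 \in \sB^{\mathrm{op}}(v)$ cut out the first two factors as $\mathtt{e}_1^{-1}(u_0) \cap \mathtt{e}_2^{-1}(\sB(w))$ and $\mathtt{e}_1^{-1}(u_\infty) \cap \mathtt{e}_2^{-1}(\sB^{\mathrm{op}}(v))$ respectively.

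Each interior factor $\sB_{1,\beta_y}[u_y]$ is connected by Proposition \ref{Bconn}. For the $0$-factor, choose $v_0 \in W$ with $u_0 \in \bO_\sB(v_0)$ and pick $b \in B$ with $u_0 = b \cdot p_{v_0}$; the $B$-invariance of $\sB(w)$ together with the $G$-equivariance of $\mathtt{e}_1, \mathtt{e}_2$ produces an isomorphism with $\mathtt{e}_1^{-1}(p_{v_0}) \cap \mathtt{e}_2^{-1}(\sB(w))$, which is connected by Corollary \ref{Bconn+}. For the $\infty$-factor, I will invoke the identity $\sB^{\mathrm{op}}(v) = \dot{w}_0 \cdot \sB(w_0 v)$ and translate by $\dot{w}_0^{-1}$, then perform the analogous $B$-normalization on $\dot{w}_0^{-1} u_\infty$; the result is again of the form handled by Corollary \ref{Bconn+}. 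Combining, every non-empty fiber of $\pi_{\beta,v,w}$ is a product of connected spaces and hence connected, and Proposition \ref{conn-to-norm} yields the normality of $\sQ(\beta,v,w)$.

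The main technical point I expect to be delicate is the bookkeeping in the $\infty$-factor reduction — one must confirm that after $\dot{w}_0$-translation the configuration has precisely the shape required by Corollary \ref{Bconn+}, tracking carefully the Bruhat cell of $\dot{w}_0^{-1}u_\infty$ — and separately verify the edge cases $\beta_0 = 0$ or $\beta_\infty = 0$, where the corresponding factor collapses and the imposed constraint degenerates to the set-theoretic condition $u_0 \in \sB(w)$ or $u_\infty \in \sB^{\mathrm{op}}(v)$, automatic on the non-empty locus of $\pi_{\beta,v,w}^{-1}(f,D)$.
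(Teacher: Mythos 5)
Your proof is correct and follows the paper's approach exactly: reduce to connectedness of the fibers of $\pi_{\beta,v,w}$ via Proposition \ref{conn-to-norm}, factor each fiber by Givental's lemma (Theorem \ref{desc-fib}), and handle the constrained $0$- and $\infty$-factors by reduction to Corollary \ref{Bconn+} (with Proposition \ref{Bconn} for the interior factors). The paper merely states this in one sentence; the $B$-translation and $\dot w_0$-translation bookkeeping you spell out is precisely what is left implicit there.
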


\begin{proof}
The combination of Theorem \ref{desc-fib} and Corollary \ref{Bconn+} implies that every fiber of $\pi_{\beta, v, w}$ is connected. Thus, Proposition \ref{conn-to-norm} implies the result.
\end{proof}

\begin{cor}\label{gen-norm}
For each $\tJ \subset \tI$, $\beta \in Q^{\vee}_+$ and $w,v \in W$, the variety $\sQ_{\tJ} ( \beta, v, w )$ is irreducible and normal.
\end{cor}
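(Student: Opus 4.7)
The plan is to deduce the parabolic case from the non-parabolic Theorem \ref{Qnorm} via a Stein-factorization style argument for the natural projection $\Pi_\tJ : \bQ_G^{\mathrm{rat}} \to \bQ_{G,\tJ}^{\mathrm{rat}}$ restricted to the relevant Richardson-type subvarieties.

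First, appealing to Remark \ref{cvw}, I choose $\gamma \in \sum_{j \in \tJ} \Z_{\ge 0} \al_j^\vee$ large enough so that the induced morphism
\[
\Pi_\tJ : \sQ'(v t_\beta t_\gamma, w)_\C \twoheadrightarrow \sQ'_\tJ(v t_\beta t_\gamma, w)_\C = \sQ'_\tJ(v t_\beta, w)_\C = \sQ_\tJ(\beta, v, w)
\]
is surjective. After an additional simultaneous translation of both arguments by some $t_\delta$ via Lemma \ref{trans} if necessary, I may arrange that the source takes the form $\sQ(\beta'', v'', w'')_\C$ with $\beta'' \in Q^\vee_+$ and $v'', w'' \in W$. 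Then Theorem \ref{Qnorm} together with Proposition \ref{irrQ0} ensures that the source of $\Pi_\tJ$ is a normal and irreducible variety.

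Next, Corollary \ref{H-inh} applied to the surjection $\Pi_\tJ$ yields
\[
(\Pi_\tJ)_* \cO_{\sQ'(v t_\beta t_\gamma, w)_\C} \cong \cO_{\sQ_\tJ(\beta, v, w)} \quad \text{and} \quad \R^{>0}(\Pi_\tJ)_* \cO \cong 0.
\]
Since $\Pi_\tJ$ is proper surjective with $(\Pi_\tJ)_* \cO = \cO$, Zariski's connectedness theorem forces every fiber of $\Pi_\tJ$ to be connected; combined with the irreducibility of the source, this gives the irreducibility of $\sQ_\tJ(\beta, v, w)$ at once.

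Finally, for normality I invoke the following standard fact: if $f : X \to Y$ is a proper surjective morphism between reduced finite-type $\C$-schemes with $X$ normal and $f_* \cO_X = \cO_Y$, then $Y$ is normal. Indeed, $Y$ is reduced by Corollary \ref{Q'red}, the normalization $\nu : Y^+ \to Y$ is finite birational, and by the universal property of normalization $\Pi_\tJ$ factors uniquely as $X \xrightarrow{g} Y^+ \xrightarrow{\nu} Y$. The inclusions $\cO_Y \hookrightarrow \nu_* \cO_{Y^+} \hookrightarrow \nu_* g_* \cO_X = (\Pi_\tJ)_* \cO_X = \cO_Y$ are then forced to be equalities, and a finite birational morphism $\nu$ with $\nu_* \cO_{Y^+} = \cO_Y$ must be an isomorphism. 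Thus $\sQ_\tJ(\beta, v, w)$ is normal. The main subtlety is the bookkeeping in the first step to align the translations so that Theorem \ref{Qnorm} applies directly; the remaining steps are essentially formal consequences of the machinery already established.
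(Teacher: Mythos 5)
Your proof is correct, and the irreducibility part coincides with the paper's argument (surjectivity from Remark \ref{cvw}, connected fibers from Corollary \ref{H-inh} via Zariski connectedness, irreducibility of the source from Proposition \ref{irrQ0}). For the normality part you take a genuinely different and cleaner route. The paper does not invoke Theorem \ref{Qnorm} in this proof: instead it applies Stein factorization to the composition $\Pi_\tJ \circ \pi_{\beta,v,w}$ with the graph space resolution to conclude that the normalization of $\sQ_\tJ(\beta,v,w)$ is \emph{bijective}, and then combines this with the weak normality of the target (Corollary \ref{wn-char0}) to upgrade bijective normalization to an isomorphism. You instead feed the already-established normality of the source (Theorem \ref{Qnorm}) together with the pushforward identity $(\Pi_\tJ)_*\cO_X \cong \cO_Y$ from Corollary \ref{H-inh} into the universal property of normalization, getting $\cO_Y = \nu_*\cO_{Y^+}$ and hence $\nu$ an isomorphism directly — no detour through the graph space and no need for weak normality at this step. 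Both arguments are sound; yours is the more economical one, leveraging Theorem \ref{Qnorm} directly, while the paper's keeps the graph-space picture in play and relies on the weak normality theme running through the section. Two small notational points: citing Corollary \ref{Q'red} gives reducedness of the $\Z$-scheme, whereas here you need the $\C$-scheme to be reduced, which follows from Corollary \ref{wn-char0} (weak normality implies reduced) or from Proposition \ref{irrQ0} calling it a variety; and the ``additional translation by $t_\delta$'' is unnecessary, since $\sQ'(v t_\beta t_\gamma, w)_\C = \sQ(\beta + \gamma, v, w)$ is already in the form covered by Theorem \ref{Qnorm}.
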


\begin{proof}
By Corollary \ref{H-inh} and Remark \ref{cvw}, we can rearrange the map $\Pi_\tJ$ to be surjective with connected fibers. Hence, Proposition \ref{irrQ0} implies the irreducibility of $\sQ_{\tJ} ( \beta, v, w )$. By the Stain factorization theorem applied to the composition map $\Pi_{\tJ} \circ \pi_{\beta, v, w}$, we deduce that the normalization of $\sQ_{\tJ} ( \beta, v, w )$ is bijective to $\sQ_{\tJ} ( \beta, v, w )$. Therefore, the weak normality of $\sQ_{\tJ} ( \beta, v, w )$ (Corollary \ref{wn-char0}) implies the result.
\end{proof}

We set $W^{\tJ} := W / w_0 W_{\tJ} w_0$, and identify it with the set of their minimal length representatives in $W$. Recall that $2 \rho_{\tJ}$ is the sum of the positive roots that belong to the unipotent radical of $\mathfrak p ( \tJ )$.

\begin{cor}\label{dim-general}
For each $\tJ \subset \tI$, $\beta \in Q^{\vee}_+$ such that $-w_0 \beta \in Q^{\vee}_{\tJ, +}$, $w \in W^{\tJ}$, and $v \in W$ such that $w_0 v \in W^\tJ$, the variety $\sQ_{\tJ} ( \beta, v, w )$ has dimension
\begin{equation}
\dim \, G / P ( \tJ ) - 2 \left< w_0 \beta, \rho_{\tJ} \right> + \ell ( v ) - \ell (w_0)  - \ell ( w )\label{par-dim}
\end{equation}
if it is nonempty. In case $v = w_0$, the variety $\sQ_{\tJ} ( \beta, v, w )$ is always nonempty. In addition, $\sQ_{\tJ} ( \beta, w_0, e )$ is the collection of DP-data as in Definition {\rm\ref{Zas}} with the data and compatibility conditions only for $\la \in P_{\tJ, +}$.
\end{cor}

\begin{rem}
If we replace the labels of $\sQ_{\tJ} ( \beta, v, w ) = \sQ'_{\tJ} ( vt_{\beta}, w )_{\C}$ with the labels of parabolic semi-infinite Bruhat order as in \cite[\S 10.3]{LS10}, then we can write the condition of $\sQ_{\tJ} ( \beta, v, w )$ to be nonempty by the order relation just as in Corollary \ref{Q'nonempty}.
\end{rem}

\begin{proof}[Proof of Corollary \ref{dim-general}]
By \cite{BFGM}, the collection of DP-data in Definition \ref{Zas} with the compatibility condition imposed only for $\la \in P_{\tJ, +}$ (without non-defining points) represents the (closure of the) space of maps $\P^1 \rightarrow G / P ( \tJ )$ such that the image of $[\P^1]$ in $\sB$ is $- w_0 \beta \in H_2 ( \sB, \Z )$. Thus, we conclude the last assertion.

The dimension of $\sQ_{\tJ} ( \beta, w_0, e )$ is given by $\dim \, G / P ( \tJ ) - 2 \left< w_0 \beta, \rho_{\tJ} \right>$ since the first Chern class of the tangent bundle of $G / P ( \tJ )$ is $2 \rho_{\tJ}$ (cf. \cite[Proposition 3.5]{FM99}). In particular, the dimension of $\sQ_{\tJ} ( \beta, w_0, e )$ is (\ref{par-dim}) in this case.

Since imposing a hyperplane section in the image of $G/P(\tJ)$ through the (generically defined) evaluation map at $0 \in \P^1$ (that is surjective as being $G$-equivariant)
$$\sQ_{\tJ} ( \beta, w_0, e ) \dashrightarrow G/P(\tJ)$$
lowers the dimension exactly by one, we deduce that $\sQ_{\tJ} ( \beta, w_0, w )$ has an irreducible component of expected dimension. From this and Corollary \ref{gen-norm}, we conclude that $\sQ_{\tJ} ( \beta, w_0, w )$ is nonempty and its dimension is given by (\ref{par-dim}).

We consider the dimension estimate in the general case.

Since $\sQ_{\tJ} ( \beta, s_i v, w ) \subsetneq \sQ_{\tJ} ( \beta, v, w )$ ($i \in \tI$) implies $s_i v >_{\si} v$ ($\Leftrightarrow s_i v < v$) when $w_0 s_i v \in W^\tJ$, we find that they are the images of $\sQ ( \beta', s_i v, w ) \subsetneq \sQ ( \beta', v, w )$ for some $\beta' \in \beta + \sum_{j \in \tJ} \Z_{\ge 0} \al_j^{\vee}$ (see Remark \ref{cvw}). In particular, $\sQ_{\tJ} ( \beta, v, w )$ has dimension at least (\ref{par-dim}) in general by induction from the case $v = w_0$.

Let $Z$ denote the space of genus zero stable maps to $\P^1 \times G/P ( \tJ )$ of bidegree $(1,\beta)$ with two marked points such that the first marked points and second marked points correspond to $0, \infty \in \P^1$, respectively (parabolic analogue of $\sGB^\flat_{2,\beta}$). Then, we have a resolution $\pi : Z \rightarrow \sQ _\tJ ( \beta, w_0, e )$ (that shares genuine maps $\P^1 \to G/P(\tJ)$ in their common open dense subset). Let $\eta : \sB = G / B \rightarrow G/P(\tJ)$ be the projection. By our choices of $v,w$, we find that
$$\mathrm{codim} _\sB \, \sB ( w ) = \mathrm{codim} _{G/P(\tJ)} \, \eta ( \sB ( w ) ) \hskip 3mm \text{and} \hskip 3mm \mathrm{codim} _\sB \, \sB^{\mathrm{op}} ( v ) = \mathrm{codim} _{G/P(\tJ)} \, \eta ( \sB^{\mathrm{op}} ( v ) ),$$
respectively. We have the evaluation map of two points $\mathtt{z} : Z \rightarrow ( G/P(\tJ))^2$. We set $Z_{v,w} := \mathtt{z}^{-1} ( \eta ( \sB ( w )) \times \eta ( \sB^{\mathrm{op}} ( v )))$. By \cite[Propostion 3.2 b)]{BCMP} (see also the proof of Corollary \ref{cBCMP}), we find that
\begin{align*}
\dim \, Z_{v,w} & = \dim \, Z - \mathrm{codim} _{G/P(\tJ)} \, \eta ( \sB ( w ) ) - \mathrm{codim} _{G/P(\tJ)} \, \eta ( \sB^{\mathrm{op}} ( v ) )\\
& = \dim \, Z - \mathrm{codim} _\sB \, \sB ( w ) - \mathrm{codim} _\sB \, \sB^{\mathrm{op}} ( v ) = (\ref{par-dim})
\end{align*}
and $Z_{v,w}$ is irreducible (if it is non-empty). Since $G / P ( \tJ )$ is homogeneous, we find that $G \cdot \pi ( Z_{w_0,w} ) \subset Z$ is Zariski dense. Hence, $Z_{w_0,w}$ contains an open dense subset whose points are genuine maps $\P^1 \to G/P (\tJ)$. In conjunction with the irreducibility of $Z_{w_0,w}$ and $\sQ _{\tJ} (\beta,w_0,w )$, we find $\pi^{-1} ( \sQ _{\tJ} (\beta,w_0,w ) ) = Z_{w_0,w}$. Similarly, we have $\pi^{-1} ( \sQ _{\tJ} (\beta,v,e ) ) = Z_{v,e}$. Therefore, we conclude that $Z_{v,w}$ surjects onto $\sQ _{\tJ} (\beta,v,w )$ by the restriction of $\pi$. In particular, $\sQ_{\tJ} ( \beta, v, w )$ has dimension at most (\ref{par-dim}) in general.

Therefore, our dimension estimate must be strict in general.
\end{proof}

\begin{cor}\label{ggnormal}
For each $\tJ \subset \tI$ and $w,v \in W_\af$, the scheme $\sQ'_{\tJ} ( v, w )_{\overline{\F}_p}$ is irreducible and normal for $p \gg 0$.
\end{cor}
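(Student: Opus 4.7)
The strategy is essentially a two-step argument: establish the result in characteristic zero and then invoke Proposition \ref{n-char0} to transfer it to $\overline{\F}_p$ for $p \gg 0$. The scheme $\sQ'_{\tJ}(v,w)$ is defined over $\Z$, and the specializations to $\C$ and to $\overline{\F}_p$ are flat (Lemma \ref{Q-flat}), so lifting properties between fibers is within reach.

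First, I would reduce to the situation covered by Corollary \ref{gen-norm}. Writing $w = u_1 t_{\beta_1}$ and $v = u_2 t_{\beta_2}$ with $u_1, u_2 \in W$ and $\beta_1, \beta_2 \in Q^{\vee}$, Lemma \ref{trans} gives an isomorphism $\sQ'_{\tJ}(v,w) \cong \sQ'_{\tJ}(u_2 t_{\beta_2 - \beta_1}, u_1)$, so after this translation we are in the setting $w \in W$ and $v = u_2 t_{\beta}$ with $\beta := \beta_2 - \beta_1$. If $v \not\le_\si w$ the scheme is empty (in all characteristics simultaneously, by Lemma \ref{non-empty} and the dimension/irreducible component analysis of Corollary \ref{pure-irr}), and there is nothing to prove; otherwise $\beta \in Q^{\vee}_+$ essentially (up to the finite part) and we are looking at $\sQ_{\tJ}(\beta, u_2, u_1)_{\C}$ in characteristic zero.

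Next, Corollary \ref{gen-norm} tells us that $\sQ_{\tJ}(\beta, u_2, u_1)_\C$ is irreducible and normal. Hence $\sQ'_{\tJ}(v,w)_\C$ is irreducible and normal. Now I would directly invoke Proposition \ref{n-char0}, which asserts precisely that both irreducibility and normality of $\sQ'_{\tJ}(v,w)_\C$ propagate to $\sQ'_{\tJ}(v,w)_{\overline{\F}_p}$ for all but finitely many primes $p$. This immediately yields the desired conclusion.

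The only real content, then, lies behind the cited results, not in the corollary itself: the normality in characteristic zero rests on the connectedness analysis of the fibers of $\pi_{\beta,v,w}$ via the graph space resolution together with the weak normality that comes from the Frobenius splitting argument (Lemma \ref{Q-split} and Corollary \ref{wn-char0}), and the transfer uses standard limit arguments from \cite{EGAIV-3}. There is no genuine obstacle at the level of this corollary; it is really a packaging statement. The only point deserving a moment of care is the reduction step above, making sure that translating by $t_{-\beta_1}$ reduces the general $W_\af \times W_\af$ case to the $W \times (W \cdot t_{Q^{\vee}})$ case that Corollary \ref{gen-norm} addresses, and that the empty case is handled trivially (where ``irreducible and normal'' is understood vacuously, or one simply notes both sides are empty).
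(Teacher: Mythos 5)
Your proposal matches the paper's proof exactly: the paper's one-line argument is precisely to apply Proposition \ref{n-char0} to Corollary \ref{gen-norm}, invoking Lemma \ref{trans} for the reduction to the $w\in W$, $v\in W\cdot t_{Q^\vee_+}$ case. The additional detail you supply (explicit translation, handling the empty case, noting that the real content lives in the cited lemmas) is a correct unpacking of the same argument.
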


\begin{proof}
Apply Proposition \ref{n-char0} to Corollary \ref{gen-norm} (cf. Lemma \ref{trans}).
\end{proof}

\begin{cor}\label{ggpnormal}
Let $\bK$ be an algebraically closed field of characteristic $0$ or $p \gg 0$. For each $\tJ \subset \tI$ and $w,v \in W_\af$, the scheme $\sQ'_{\tJ} ( v, w )_{\bK}$ is projectively normal with respect to a line bundle $\cO_{\sQ'_{\tJ} ( v, w )_{\bK}} ( \la )$ $(\la \in P_{\tJ, ++})$.
\end{cor}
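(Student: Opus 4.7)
The plan is to reduce projective normality to two already-established ingredients: normality of $\sQ'_\tJ(v,w)_\bK$ and surjectivity of multiplication in its projective coordinate ring. Recall that for a very ample line bundle $\cL$ on a projective variety $X$, projective normality of the corresponding embedding is equivalent to (a) normality of $X$ and (b) surjectivity of the maps $\mathrm{Sym}^n H^0(X, \cL) \to H^0(X, \cL^{\otimes n})$ for every $n \geq 0$. In our setting $X = \sQ'_\tJ(v,w)_\bK$ and $\cL = \cO_{\sQ'_\tJ(v,w)_\bK}(\la)$, with very ampleness guaranteed by Lemma \ref{ampleness} since $\la \in P_{\tJ,++}$ satisfies $\langle \al_i^\vee, \la \rangle > 0$ for every $i \in \tI \setminus \tJ$.

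For (a), I would combine Corollary \ref{gen-norm} (when $\mathsf{char}\,\bK = 0$, after using Lemma \ref{trans} to translate $v$ into the form $w_0 t_{\beta'}$) with Corollary \ref{ggnormal} (when $\mathsf{char}\,\bK = p \gg 0$). For (b), since $n\la \in P_{\tJ,++}$ for every $n \geq 1$, Theorem \ref{coh} identifies $H^0(\sQ'_\tJ(v,w)_\bK, \cO(n\la))$ with $R^v_w(\tJ, n\la)_\bK$. Lemma \ref{Qsurj-mult}, applied iteratively, then yields surjectivity of the multiplication map $R^v_w(\tJ, \la)_\bK^{\otimes n} \twoheadrightarrow R^v_w(\tJ, n\la)_\bK$, which forces its symmetric-power quotient to be surjective as well. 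For $n = 0$, both sides equal $\bK$ by properness and geometric integrality, afforded by Proposition \ref{irrQ0} together with Lemma \ref{trans} in characteristic zero, and by Corollary \ref{ggnormal} in characteristic $p \gg 0$.

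I do not anticipate a genuine obstacle: everything has essentially been set up in earlier sections, and what remains is a straightforward synthesis. The only minor point of care is the bookkeeping translation between the $v \in W_\af$ convention here and the $v = w_0 t_{\beta'}$ form in which normality was originally proved, mediated by Lemma \ref{trans}; conceptually nothing new is required.
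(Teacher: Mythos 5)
Your proposal is correct and follows essentially the same route as the paper: normality comes from Corollary \ref{gen-norm} (characteristic $0$) and Corollary \ref{ggnormal} (characteristic $p \gg 0$), while surjectivity of the graded multiplication follows from Theorem \ref{coh} combined with Lemma \ref{Qsurj-mult}, exactly as in the paper's one-line argument (which additionally invokes Lemma \ref{Q-flat} for the flatness over $\Z$ that underlies the base-change between characteristics). Your extra care with the $n=0$ degree and the translation via Lemma \ref{trans} is bookkeeping already implicit in the cited results, not a different method.
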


\begin{proof}
In view of Lemma \ref{Qsurj-mult}, Lemma \ref{Q-flat}, and Theorem \ref{coh}, the multiplication of the section ring afforded by $\{ \cO_{\sQ'_{\tJ} ( v, w )_{\bK}} ( m \la ) \}_{m \ge 0}$ is surjective. Therefore, Corollary \ref{gen-norm} and Corollary \ref{ggnormal} implies the result.
\end{proof}

\appendix
\renewcommand{\thesection}{\Alph{section}}
\setcounter{section}{1}
\renewcommand{\theequation}{\Alph{section}.\arabic{equation}}
\setcounter{equation}{0}
\setcounter{thm}{0}

{\small
\begin{flushleft}
{\normalsize\textbf{Appendix A \hskip 2mm Some properties of $\bQ_G ( w )$ in positive characteristic}}
\end{flushleft}

We work in the setting of \S \ref{sec:mi}. Let $\bK$ be an algebraically closed field of characteristic $\neq 2$. The aim of this appendix is two fold: One is to show that our scheme $\bQ_{G, \tJ} ( w )_\bK$ is (projectively) normal. The other is to present an analogue of the Kempf vanishing theorem for $\bQ_G ( w )_\bK$. The both results are proved in \cite{KNS17} for the case $\bK = \C$. Our proof of the former is new and does not depend on \cite{KNS17} (when $w = e$), while our proof of the latter depends on the results and arguments in \cite{KNS17} in an essential way.

\begin{prop}\label{R-normal}
For each $w \in W_\af$ and $\tJ \subset \tI$, the ring $R_w ( \tJ )_{\bK}$ is normal.
\end{prop}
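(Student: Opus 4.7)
The plan is to combine the Frobenius splitting of Corollary \ref{FQ-uniq} with the projective normality of the Richardson-type slices $\sQ'_\tJ(v,w)_\bK$ (Corollary \ref{ggpnormal}), together with a stabilization argument in the semi-infinite Bruhat order. First I would note that the ring $R_w(\tJ)_\Z$ is a free $\Z$-module (the positive global basis of $\bigoplus_\la \bW_{ww_0}(\la)_\Z$ specializes to a basis, by Theorem \ref{b-compat}), so $R_w(\tJ)_\bK$ is flat over $\bK$ and commutes with base change. Hence if $R_w(\tJ)_{\overline{\F}_p}$ is normal for a Zariski-dense set of primes, \cite[Proposition 9.9.4]{EGAIV-3} transfers normality to characteristic zero and to all but finitely many remaining primes; the residual small characteristics will be handled separately via Frobenius splitting combined with cohomological vanishing.

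Next I would dispose of integrality. Corollary \ref{Re-int} covers $w = e$. For general $w$, the translation Lemma \ref{trans} and Corollary \ref{transR} reduce the analysis to a single $\bI$-orbit closure, and Corollary \ref{FQ-wn} (reducedness in characteristic $p$, extended to characteristic zero by flatness over $\Z$) together with the irreducibility afforded by Theorem \ref{si-Bruhat} and Corollary \ref{pos-fp} gives that $\bQ_{G,\tJ}(w)_\bK$ is integral.

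For the integral-closure step, I would work in $\mathsf{char}\,\bK = p \gg 0$ and exploit the degree-stabilization already observed in the proof of Theorem \ref{bQ-int}: on each fixed $d$-weight graded component, the surjection
\begin{equation*}
R_w(\tJ,\la)_\bK \twoheadrightarrow R^v_w(\tJ,\la)_\bK
\end{equation*}
is an isomorphism for $v \ll_\si w$. Given an integral relation $\alpha^n + c_1 \alpha^{n-1} + \cdots + c_n = 0$ over $R_w(\tJ)_\bK$, one chooses $v$ small enough that all of the finitely many $d$-weight components involved stabilize, at which point the normality of $\sQ'_\tJ(v,w)_\bK$ (Corollary \ref{ggnormal}) and its projective normality (Corollary \ref{ggpnormal}) force $\alpha$ to lie in the stabilized image, and hence in $R_w(\tJ)_\bK$.

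The main obstacle is making this limit argument rigorous, because $R^v_w(\tJ)_\bK$ is only a quotient of $R_w(\tJ)_\bK$, so fraction fields do not pass naively between the two. A clean route around this difficulty is to prove the Kempf-type vanishing of the rest of the appendix first, identifying
\begin{equation*}
R_w(\tJ,\la)_\bK \;\cong\; H^0\bigl(\bQ_{G,\tJ}(w)_\bK,\,\cO_{\bQ_{G,\tJ}(w)_\bK}(\la)\bigr) \qquad (\la \in P_{\tJ,++}),
\end{equation*}
so that normality of $R_w(\tJ)_\bK$ becomes projective normality of $\bQ_{G,\tJ}(w)_\bK$ with respect to a very ample $\cO(\la)$ (Lemma \ref{ampleness} adapted to the ambient scheme). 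The scheme-theoretic normality of $\bQ_{G,\tJ}(w)_\bK$ is then extracted by a Frobenius-splitting-assisted induction on semi-infinite Bruhat length: the splitting of Corollary \ref{FQ-uniq} is compatible with all Schubert subvarieties, the codimension-one boundary components $\bQ_{G,\tJ}(s_\al w)$ (for covering relations supplied by Theorem \ref{cover}) are normal by the inductive hypothesis, and the open $\bI$-orbit $\bO(\tJ,w)_\bK$ is pro-smooth by Lemma \ref{bQ-dense}; standard Frobenius-splitting criteria (\cite[\S 1.2]{BK05}) combined with the cohomology vanishing then yield normality at the ambient level, and this descends back to normality of $R_w(\tJ)_\bK$ through the projective-normality identification above.
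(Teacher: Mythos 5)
Your proposal has a fatal circular dependency. The main route you propose — ``prove the Kempf-type vanishing of the rest of the appendix first,'' then invoke Corollary \ref{ggpnormal} and Corollary \ref{ggnormal} — cannot work, because those results rely on the very proposition you are trying to prove. Concretely: the proof of Theorem \ref{coh} invokes Corollary \ref{bQnormal} (the scheme-theoretic restatement of Proposition \ref{R-normal}) to construct the Weil divisor $D$; Corollary \ref{H-inh}, Corollary \ref{ggnormal}, and Corollary \ref{ggpnormal} all sit downstream of Theorem \ref{coh}; and the Kempf vanishing in Appendix A explicitly says ``In view of the normality of $R_w(\tJ)_\bK$\dots'' Similarly, Corollary \ref{gsect}, which you would need to identify $R_w(\tJ,\la)_\bK$ with $H^0$, is itself a corollary of Proposition \ref{R-normal}. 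The paper is careful to prove Proposition \ref{R-normal} (in Appendix A) before and independently of Theorem \ref{coh} precisely because of this logical ordering; by pulling in the downstream corollaries you have reversed it.

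Beyond the circularity, your proposed ``stabilization'' step does not close: normality of the finite-dimensional Richardson slices $\sQ'_\tJ(v,w)_\bK$ for all $v \ll_\si w$ does not in itself give normality of the infinite-dimensional ambient scheme $\bQ_{G,\tJ}(w)_\bK$, because the slices are cut out by ideals and there is no fraction-field or integral-closure comparison between the ambient ring and its quotients — you flag this obstacle yourself but the workaround you offer is exactly the circular one above. The paper's actual proof is a self-contained, purely representation-theoretic argument that you have not touched: it shows $R_\bK$ sits inside the polynomial ring $S_\bK = \bigoplus_\la \bK[\bI/(\bI\cap HN(\!(z)\!))]\otimes\bK_\la$, invokes Proposition \ref{proj-cover} (projective cover property of $\bW(\la)_\bK$ in the category of $\dot U^0_\bK$-integrable $\dot U^{\ge 0}_\bK$-modules with bounded weights) to show $R_\bK$ is the \emph{maximal} $\dot U^0_\bK$-integrable $U^+_\bK$-stable subring of $S_\bK$, and then uses functoriality of normalization and $G$-equivariance to conclude that the integral closure must already equal $R_\bK$. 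This avoids both Frobenius splitting and any appeal to the finite-dimensional slices, and it is the key idea missing from your proposal.
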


\begin{proof}
We first prove the case $w = e$ and $\tJ = \emptyset$. Let $\mathring{\bQ}_G$ denote the open $G [\![z]\!]$-orbit of $\bQ_G ( e )_\bK$ obtained by the $G$-translation of Lemma \ref{bQ-dense}. 

We have an inclusion
\begin{equation}
\bW ( \la )^{\vee}_\bK \subset \Gamma ( \mathring{\bQ}_G, \cO_{\bQ_G (e)_\bK} ( \la ))\label{nat-incl}
\end{equation}
since $R _{\bK}$ is integral (Corollary \ref{Re-int}). We also have an inclusion
$$\Gamma ( \mathring{\bQ}_G, \cO_{\bQ_G (e)_\bK} ( \la )) \hookrightarrow \Gamma ( \bO ( e )_\bK, \cO_{\bQ_G (e)_\bK} ( \la )) \hskip 5mm \la \in P_+.$$
Thanks to Lemma \ref{bQ-dense} and its proof, we deduce
$$\Gamma ( \bO ( e )_\bK, \cO_{\bQ_G (e)_\bK} ( \la )) \cong \bK [\bI / \left( \bI \cap H N (\!(z)\!) \right)] \otimes_{\bK} \bK_{-w_0\la},$$
that is cocyclic as a $U^{+}_{\bK}$-module. Since the $G$-action on $\Gamma ( \mathring{\bQ}_G, \cO_{\bQ_G (e)_\bK} ( \la ))$ is algebraic, we deduce that
$$( \bK [\bI / \left( \bI \cap H N (\!(z)\!) \right)] \otimes_{\bK} \bK_{-w_0\la} ) ^{\vee} \longrightarrow \!\!\!\!\! \rightarrow \Gamma ( \mathring{\bQ}_G, \cO_{\bQ_G (e)_\bK} ( \la ))^{\vee}$$
is a $\dot{U}_\bK^0$-integrable quotient. By Proposition \ref{proj-cover}, we conclude a surjection
$$\bW ( \la )_{\bK} \longrightarrow \!\!\!\!\! \rightarrow \Gamma ( \mathring{\bQ}_G, \cO_{\bQ_G (e)_\bK} ( \la ))^{\vee}.$$
Compared with (\ref{nat-incl}), we conclude the isomorphism
$$\Gamma ( \mathring{\bQ}_G, \cO_{\bQ_G (e)_\bK} ( \la )) \stackrel{\cong}{\longrightarrow} \Gamma ( \bQ_G ( e )_\bK, \cO_{\bQ_G (e)_\bK} ( \la )) \hskip 5mm \la \in P_+$$
since the space of sections supported on a dense open Zarsiki subset must be larger than (or equal to) the space of sections supported on the whole space for an integral scheme. In other words, $R _{\bK}$ is the maximal $\dot{U}_\bK^0$-integrable $U^+_{\bK}$-stable subring of
$$S_{\bK} := \bigoplus_{\la \in P_+} \Gamma ( \bO ( e )_\bK, \cO_{\bQ_G (e)_\bK} ( \la )) = \bigoplus_{\la \in P_+} \bK [\bI / \left( \bI \cap H N (\!(z)\!) \right)] \otimes_{\bK} \bK_{-w_0\la}.$$
The ring $S_{\bK}$ is integrally closed as it is a polynomial ring (of countably many variables). Hence, the integral closure $R^+$ of the ring $R _{\bK}$ is also a subring of $S_{\bK}$. In view of the $\dot{U}_\bK^0$-integrability, the ring $R_{\bK}$ admits an algebraic $G$-action. By the canonical nature of the normalization, we deduce that $R^+$ admits an algebraic $G$-action (note that we can approximate $R_{\bK}$ as a union of Noetherian rings with algebraic $G$-actions). However, $R_{\bK}$ is already the maximal $\dot{U}_\bK^0$-integrable $U^+_{\bK}$-stable subring of $S_{\bK}$ from the above. Hence, we have necessarily $R^+ = R_{\bK}$. Thus, the case of $w = e$ and $\tJ = \emptyset$ follows.

The case of arbitrary $w \in W_{\af}$ follows from the case of $w = e$ as in \cite[\S 4]{Kat18}. As the fraction field of $R_w ( \tJ )_{\bK}$ is a subfield of the fraction field of $( R_w )_{\bK}$ and $P_{\tJ, +} \subset P_+$ forms (the set of integral points of) a face, we conclude the general case by restriction.
\end{proof}

\begin{cor}\label{bQnormal}
The scheme $\bQ _{G, \tJ} ( w )_{\bK}$ is normal. \hfill $\Box$
\end{cor}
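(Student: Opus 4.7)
The proof will be short: by construction $(\bQ_{G,\tJ}(w))_{\Z} = \mathrm{Proj}\, R_w(\tJ)$, and Proposition \ref{R-normal} tells us that $R_w(\tJ)_{\bK}$ is normal. The plan is therefore to deduce the normality of the scheme $\bQ_{G,\tJ}(w)_{\bK}$ from the normality of its $P_{\tJ,+}$-graded projective coordinate ring via the general principle that $\mathrm{Proj}$ of a normal graded commutative ring is a normal scheme.

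More concretely, I would recall the presentation (\ref{mproj}), which realizes $\bQ_{G,\tJ}(w)_{\bK}$ as $(\mathrm{Spec}\, R_w(\tJ)_{\bK} \setminus E)/H$, where $E$ is the irrelevant locus on which some $R_w(\tJ, \varpi_i)_{\bK}$ ($i \in \tI \setminus \tJ$) vanishes entirely. The scheme $\bQ_{G,\tJ}(w)_{\bK}$ is thus covered by the affine opens of the form $\mathrm{Spec}\, ( (R_w(\tJ)_{\bK})_f )_0$, where $f$ ranges over (products of) elements of $R_w(\tJ,\varpi_i)_{\bK}$ for $i \in \tI\setminus\tJ$, and the subscript $0$ denotes the degree-zero part with respect to the $P_{\tJ}$-grading induced after inverting $f$. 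Since localization preserves normality, and since the degree-zero part of a normal graded domain is again normal (the inclusion of the degree-zero part into the localization at $f$ is split by the $H$-action on the grading, so it is integrally closed in its fraction field), each affine chart is normal.

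The key input is of course Proposition \ref{R-normal}, which is the content lying behind this corollary; the passage from the normality of the ring to the normality of the scheme is purely formal. I expect no obstacle beyond the bookkeeping of the $P_{\tJ,+}$-grading (as opposed to a simple $\Z_{\geq 0}$-grading), and this is handled by recalling that the group $H$ is a torus, hence reductive, so that invariants commute with localization and preserve normality. This completes the reduction.
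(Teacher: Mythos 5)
Your argument is correct and is precisely what the paper's $\Box$ leaves implicit: $\bQ_{G,\tJ}(w)_{\bK}=\mathrm{Proj}\,R_w(\tJ)_{\bK}$ is covered by affine charts $\mathrm{Spec}\bigl((R_w(\tJ)_{\bK})_f\bigr)_0$, localization preserves the normal domain $R_w(\tJ)_{\bK}$ of Proposition \ref{R-normal}, and passing to $H$-invariants (the degree-zero part) preserves normality since $H$ is a torus. This matches the paper's approach.
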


\begin{cor}\label{gsect}
For each $\la \in P_{\tJ,+}$, we have
$$H^0 ( \bQ_{G, \tJ} ( w )_{\bK}, \cO_{\bQ_{G, \tJ} ( w )_{\bK}} ( \la ) ) = \bW_{ww_0}  ( \la )_{\bK} ^{\vee}.$$
\end{cor}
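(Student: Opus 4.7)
The inclusion $R_w(\tJ, \la)_{\bK} = \bW_{ww_0}(\la)^{\vee}_{\bK} \subset H^0(\bQ_{G,\tJ}(w)_{\bK}, \cO_{\bQ_{G,\tJ}(w)_{\bK}}(\la))$ is tautological from the identification $\bQ_{G,\tJ}(w)_{\bK} = \mathrm{Proj}\, R_w(\tJ)_{\bK}$. My plan is to establish the reverse inclusion by adapting the argument in the proof of Proposition \ref{R-normal}, where the analogous statement for $w = e$, $\tJ = \emptyset$ already appeared implicitly as a key intermediate step.

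First I would restrict sections to the affine open $\bI$-orbit $\bO(\tJ, w)_{\bK} \subset \bQ_{G,\tJ}(w)_{\bK}$ furnished by Lemma \ref{bQ-dense}. Because $\bQ_{G,\tJ}(w)_{\bK}$ is normal (Corollary \ref{bQnormal}) and in particular integral, and because $\bO(\tJ, w)_{\bK}$ is Zariski open dense, the restriction map $H^0(\bQ_{G,\tJ}(w)_{\bK}, \cO(\la)) \hookrightarrow \Gamma(\bO(\tJ, w)_{\bK}, \cO(\la))$ is injective. Using Lemma \ref{bQ-dense} (and the normalization that the line bundle $\cO(\la)$ has fiber $\bK_{-ww_0\la}$ over the unique $(H \times \Gm)_{\bK}$-fixed point), the target is identified with $\bK[\bO(\tJ, w)_{\bK}] \otimes_{\bK} \bK_{-ww_0\la}$, which is naturally $U^+_{\bK}$-cocyclic to a weight vector of $H$-weight $-ww_0\la$ and admits a $\dot{U}^0_{\bK}$-integrable action.

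Dualizing, the space $H^0(\bQ_{G,\tJ}(w)_{\bK}, \cO(\la))^{\vee}$ becomes a $U^+_{\bK}$-cyclic, $\dot{U}^0_{\bK}$-integrable module with cyclic vector of $P$-weight $ww_0\la$. By using Corollary \ref{transR} to replace $w$ by $wt_\beta$ for a suitable $\beta \in Q^\vee$ and then embedding into the ambient $\bX(\la)^{\vee}_{\bK}$ to invoke the weight bound of Kashiwara (\cite{Kas02}, Theorem 5.1), one concludes that the $P$-weights of this module are contained in $\mathrm{Conv}\, W\la \subset P \otimes_{\Z} \R$. A $\bW_{ww_0}$-variant of Proposition \ref{proj-cover} characterizing $\bW_{ww_0}(\la)_{\bK}$ as the maximal cyclic $\dot{U}^0_{\bK}$-integrable $U^+_{\bK}$-module with a cyclic vector of $P$-weight $ww_0\la$ and $P$-weights in $\mathrm{Conv}\, W\la$ then yields a surjection $\bW_{ww_0}(\la)_{\bK} \twoheadrightarrow H^0(\bQ_{G,\tJ}(w)_{\bK}, \cO(\la))^{\vee}$, whose dual is the reverse inclusion sought.

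The main obstacle is establishing the $\bW_{ww_0}$-variant of Proposition \ref{proj-cover} needed in the last step. Proposition \ref{proj-cover} itself applies to $\bW(\la) = \bW_{w_0}(\la)$, which is projective in the relevant category. For general $w \in W_\af$, $\bW_{ww_0}(\la)$ is a proper $U^+$-submodule of $\bW(\la)$ generated by the extremal weight vector $\bv_{ww_0\la}$, and one must verify that the additional constraints imposed by restricting to $\bO(\tJ, w)_{\bK}$ rather than $\bO(\tJ, e)_{\bK}$ precisely carve out $\bW_{ww_0}(\la)$ inside $\bW(\la)$ at the dual level. I would do this by analyzing the defining equations of $\mathrm{Ad}(\dot{w}\dot{w}_0)([P(\tJ), P(\tJ)]((z))) \cap \bI$ inside $\bI$ (as developed in the proof of Lemma \ref{bQ-dense}) and using $U^+$-cyclicity of $\bW_{ww_0}(\la)$ by $\bv_{ww_0\la}$ to match the two descriptions.
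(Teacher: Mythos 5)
Your plan has a genuine gap, concentrated at exactly the point you flag: the ``$\bW_{ww_0}$-variant of Proposition~\ref{proj-cover}'' does not exist in the form you posit. Proposition~\ref{proj-cover} is a projective-cover statement for $\bW(\la)_\Bbbk = \bW_{w_0}(\la)_\Bbbk$; for $w \neq e$ the Demazure module $\bW_{ww_0}(\la)_\bK$ is a \emph{proper $U^+_\bK$-submodule} of $\bW(\la)_\bK$ and is not a projective cover in the relevant category. The projectivity argument produces a surjection $\bW(\la)_\bK \twoheadrightarrow M$, and there is no reason this surjection should factor through the submodule $\bW_{ww_0}(\la)_\bK$; a $U^+_\bK$-cyclic, $\dot U^0_\bK$-integrable module with cyclic vector of $P$-weight $ww_0\la$ and weights in $\mathrm{Conv}\, W\la$ need not be a quotient of $\bW_{ww_0}(\la)_\bK$, so the maximality characterization you invoke is false as stated. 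The paper's passage from $w = e$ to arbitrary $w$ in Proposition~\ref{R-normal} is via the Demazure-functor machinery of \cite[\S 4]{Kat18} (BSDH-type resolutions, exactness of $\mathscr D_w$, and the identification $\bW_{ww_0}(\la) = \mathscr D_w\bW(\la)$), not via a projective-cover argument, precisely because this issue arises. Your closing suggestion to ``analyze the defining equations of $\mathrm{Ad}(\dot w\dot w_0)([P(\tJ),P(\tJ)](\!(z)\!)) \cap \bI$'' is not a substitute for that machinery and is left unsubstantiated. A secondary gap: the claim that one can ``embed into $\bX(\la)^\vee_\bK$ to invoke Kashiwara's weight bound'' presupposes that $H^0(\bQ_{G,\tJ}(w)_\bK,\cO(\la))^\vee$ already sits inside $\bX(\la)_\bK$, which is essentially what is being proved; the tautological inclusion only runs the other way ($\bW_{ww_0}(\la)^\vee_\bK \subset H^0$), and gives no a priori control on the weights of $H^0$.

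There is also an issue of economy that explains why the paper's proof of Corollary~\ref{gsect} is a single sentence. You grant yourself Corollary~\ref{bQnormal}, hence Proposition~\ref{R-normal}, hence that $R_w(\tJ)_\bK$ is a normal graded ring. But then the statement follows from the standard observation that the multi-graded section ring $\bigoplus_\la H^0(\mathrm{Proj}\,S, \cO(\la))$ is the graded normalization (saturation) of $S$, so normality of $S$ forces $H^0(\mathrm{Proj}\,S,\cO(\la)) = S(\la)$. That is the paper's actual argument: once Proposition~\ref{R-normal} is in hand, no new module-theoretic input is needed. Your proposal is therefore both gapped (if you intend to re-derive the identification from scratch along the $w=e$ template) and redundant (given the normality you are already assuming).
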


\begin{proof}
As finding $H^0$ from $R_w  ( \tJ )_{\bK}$ can be seen as finding graded pieces of the normalization, the assertion follows from Proposition \ref{R-normal}.
\end{proof}

\begin{prop}[\cite{KNS17} Proposition 5.1]
Assume that $\mathsf{char} \, \bK \neq 2$. Let $w \in W$ and $\tJ \subset \tI$. Then, an $\bI$-equivariant line bundle on $\bQ _{G, \tJ} ( w )_{\bK}$ is a character twist of $\{\cO_{\bQ _{G, \tJ} ( w )_{\bK}} ( \la )\}_{\la \in P_{\tJ}}$.
\end{prop}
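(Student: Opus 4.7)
The plan has three steps: restriction to the open $\bI$-orbit, character matching with the family $\{\cO_X(\la)\}$, and extension by normality plus a divisor argument.

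First, by Lemma \ref{bQ-dense}, the open $\bI$-orbit $\bO(\tJ, w) \subset X := \bQ_{G,\tJ}(w)_\bK$ is isomorphic as an $\bI$-scheme to $\bI/S_w$, where $S_w := H \cdot \bigl(\mathrm{Ad}(\dot w \dot w_0)([P(\tJ), P(\tJ)](\!(z)\!)) \cap \bI\bigr)$. Since $H$ sits inside both $\bI$ and $S_w$, quotienting $\bI$ by $H \subset S_w$ realizes $\bO(\tJ, w)$ as a quotient of a pro-unipotent group by a pro-unipotent subgroup, hence as a (pro-)affine space with $\mathrm{Pic}(\bO(\tJ, w)) = 0$. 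Consequently, any $\bI$-equivariant line bundle $\mathcal L$ on $X$ restricts to $\bO(\tJ, w)$ as the trivial line bundle, and its $\bI$-equivariant structure is a character of $S_w$.

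Second, I identify the character lattice of $S_w$. The pro-unipotent radical of $S_w$ contributes no nontrivial characters, and the semisimple factor of the Levi of $[P(\tJ), P(\tJ)]$ has maximal torus $\mathrm{Ad}(\dot w\dot w_0)(H_\tJ)$ where $H_\tJ := H \cap [L_\tJ, L_\tJ]$. Thus characters of $S_w$ are characters of $H$ trivial on $\mathrm{Ad}(\dot w \dot w_0)(H_\tJ)$, forming a rank $|\tI \setminus \tJ|$ sublattice of $P$ canonically identified with $ww_0(P_\tJ)$. For each $\la \in P_\tJ$, the restriction of $\cO_X(\la)$ to $\bO(\tJ, w)$ realizes the character $ww_0(\la)$ (through the cyclic vector $\bv_{ww_0\la}^\vee$ under the normalization of \S\ref{subsec:groups}), and varying $\la$ over $P_\tJ$ exhausts this character lattice. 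Combined with the freedom to tensor by any character $\chi_\bI \in P = \mathrm{Hom}(\bI, \Gm)$ (which shifts by $\chi_\bI|_{S_w}$), we can choose $\la \in P_\tJ$ and $\chi_\bI \in P$ so that $\mathcal L' := \mathcal L \otimes \cO_X(-\la) \otimes \chi_\bI^{-1}$ is the trivial $\bI$-equivariant line bundle on $\bO(\tJ, w)$.

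Third, I argue that $\mathcal L'$ is globally trivial. The $\bI$-invariant section trivializing $\mathcal L'$ on $\bO(\tJ, w)$ extends by normality (Corollary \ref{bQnormal}) to a rational $\bI$-invariant section $\tilde s$ of $\mathcal L'$ on $X$, whose divisor $D = \sum n_i D_i$ is supported on the codim-one $\bI$-orbit closures $D_i \subset X \setminus \bO(\tJ, w)$ --- these $D_i$ correspond to covering relations $v_i \lessdot_\si w$ in $\le_\si$ (after projection via $\Pi_\tJ$). We have $\mathcal L' \cong \cO_X(-D)$ as $\bI$-equivariant line bundles. The main obstacle will be to show that this divisor class is principal, or equivalently, that the natural map $P_\tJ \to \mathrm{Pic}(X)$, $\la \mapsto [\cO_X(\la)]$, is surjective onto $\mathrm{Pic}(X)$. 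The strategy is to observe that for each $\mu \in P_{\tJ, ++}$, the extremal section $\bv_{ww_0 \mu}^\vee \in H^0(X, \cO_X(\mu))$ is nonzero at $p_w$ (since $\bv_{ww_0 \mu}$ is an extremal weight vector), so its divisor is a non-negative integer combination of the $D_i$'s; as $\mu$ ranges over $P_\tJ$ we obtain a lattice of boundary divisor classes equal to the image of $P_\tJ$ in $\mathrm{Pic}(X)$. To verify this image is all of $\mathrm{Pic}(X)$, I would compare with the finite flag variety: the embedding $\sB \hookrightarrow \bQ_G(e)$ at $z = 0$ pulls the $\bI$-orbit divisors of $\bQ_G(e)$ back to the Schubert divisors of $\sB$ (whose classes generate $\mathrm{Pic}(\sB) = P$), and a dimension count matching the number of codim-one $\bI$-orbits with the rank of $P_\tJ$ plus the rank of relations yields the result. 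Once surjectivity is in hand, the character adjustment in step two determines the pair $(\la, \chi_\bI)$ uniquely up to the allowed ambiguity, and the classification $\mathcal L \cong \cO_X(\la) \otimes \chi_\bI$ follows.
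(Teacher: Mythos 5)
Your Steps 1 and 2 are reasonable and consistent with the structural inputs the paper relies on (the open $\bI$-orbit description of Lemma \ref{bQ-dense} and the normality in Corollary \ref{bQnormal}). The problem is Step 3: you correctly identify the crux — showing that the divisor class $[\cO_X(-D)]$ cut out by the $\bI$-invariant rational section lies in the image of $P_\tJ$, equivalently that $P_\tJ \to \mathrm{Pic}(X)$ is surjective — but you do not prove it. The paragraph beginning ``To verify this image is all of $\mathrm{Pic}(X)$, I would compare\ldots'' is a plan, not an argument, and as sketched it does not close.

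Concretely, there are three issues. First, the proposed ``dimension count matching the number of codim-one $\bI$-orbits with the rank of $P_\tJ$ plus the rank of relations'' is not substantiated: the codimension-one $\bI$-orbit closures in $\bQ_{G,\tJ}(w)_\bK$ are the $\overline{\bO(\tJ,v)}$ for covers $v \lessdot_\si w$ of semi-infinite length one less, and nothing you write shows that these number $|\tI\setminus\tJ|$; indeed there may be more of them than the rank of $P_\tJ$, in which case one must actually identify the relations rather than gesture at a ``rank of relations.'' Second, the comparison with $\sB \hookrightarrow \bQ_G(e)$ is not set up: you would need to verify that the codimension-one boundary divisors of $\bQ_G(e)$ restrict to nonzero effective $B$-invariant divisors on $\sB$ and that these restrictions generate $\mathrm{Pic}(\sB)$, neither of which is checked (and the intersection of an infinite-codimension subvariety $\sB$ with an $\bI$-invariant divisor in an infinite-type ambient scheme needs care). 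Third, $\bQ_{G,\tJ}(w)_\bK$ is an infinite-type scheme that is only known to be normal, not locally factorial, so the standard finite-type package --- ``$\mathrm{Pic}(X)$ is generated by the boundary divisors because $\mathrm{Pic}(\bO)=0$'' together with $\mathrm{Pic}(X) = \mathrm{Cl}(X)$ --- does not apply verbatim; one has to separately argue which boundary Weil divisor classes are actually Cartier and represented by the $\cO_X(\la)$. Note that the paper itself defers the proof entirely to \cite[Proposition 5.1]{KNS17}, citing only that the $\bI$-orbit decomposition (Corollary \ref{pos-fp}) and normality (Corollary \ref{bQnormal}) now hold in characteristic $\neq 2$; your outline reproduces the expected shape of that argument but is missing its actual closing step.
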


\begin{proof}
Taking Corollary \ref{pos-fp} and Corollary \ref{bQnormal} into account, the proof in \cite[Proposition 5.1]{KNS17} works in this setting.
\end{proof}

\begin{thm}
Let $w \in W$ and $\tJ \subset \tI$. For each $\la \in P_{\tJ}$, we have
$$H^{> 0} ( \bQ_{G, \tJ} ( w )_{\bK}, \cO_{\bQ_{G, \tJ} ( w )_{\bK}} ( \la ) ) = \{ 0 \} .$$
\end{thm}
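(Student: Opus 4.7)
The plan is to reduce this Kempf-type vanishing to the finite-type vanishing Theorem \ref{coh} by presenting $\bQ_{G,\tJ}(w)$ as a colimit along closed embeddings of its Richardson-type subschemes $\sQ'_\tJ(v,w)$, and then to extend from strictly dominant $\la$ to general $\la \in P_\tJ$ via parabolic projection and Frobenius $D$-splitting.

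First, I would establish the colimit presentation: the Schubert variety $\bQ_{G,\tJ}(w) = \mathrm{Proj}\, R_w(\tJ)$ is naturally the (Zariski) union of its finite-type closed subschemes $\sQ'_\tJ(v,w) = \mathrm{Proj}\, R^v_w(\tJ)$ indexed by a cofinal sequence $\ldots <_\si v_{n+1} <_\si v_n <_\si w$. Indeed, the surjection of multigraded rings $R_w(\tJ) \twoheadrightarrow R^v_w(\tJ)$ corresponds to the closed embedding $\sQ'_\tJ(v,w) \hookrightarrow \bQ_{G,\tJ}(w)$, and applying Lemma \ref{contain} to the factor $\theta^*(\bW_v(-w_0\la))$ (via the dualities in Lemma \ref{WX-comm}) shows that as $v$ decreases in $\le_\si$, the intersections $\bW_{ww_0}(\la)\cap\theta^*(\bW_v(-w_0\la))$ exhaust $\bW_{ww_0}(\la)$, so $R_w(\tJ,\la) = \varprojlim_v R^v_w(\tJ,\la)$ in each degree $\la$.

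For $\la \in P_{\tJ,++}$, I would then invoke a Milnor-type exact sequence along the tower $\{\sQ'_\tJ(v_n,w)\}_n$:
\begin{equation*}
0 \to \varprojlim{}^{1}\, H^{i-1}(\sQ'_\tJ(v_n,w), \cO(\la)) \to H^i(\bQ_{G,\tJ}(w), \cO(\la)) \to \varprojlim H^i(\sQ'_\tJ(v_n,w), \cO(\la)) \to 0.
\end{equation*}
Theorem \ref{coh} provides both the piecewise vanishing $H^{>0}(\sQ'_\tJ(v_n,w),\cO(\la)) = 0$ (killing the right-hand term for $i \ge 1$) and the surjectivity of the $H^0$-restriction maps $H^0(\sQ'_\tJ(v_{n+1},w),\cO(\la)) \twoheadrightarrow H^0(\sQ'_\tJ(v_n,w),\cO(\la))$ (so the $H^0$-system satisfies Mittag--Leffler, killing $\varprojlim^1$). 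This gives $H^{>0}(\bQ_{G,\tJ}(w),\cO(\la)) = 0$ for $\la \in P_{\tJ,++}$.

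For $\la \in P_{\tJ,+}$ not strictly dominant, I would set $\tJ' := \tJ \cup \{i \in \tI : \langle\al_i^\vee,\la\rangle = 0\}$ so that $\la \in P_{\tJ',++}$, and use the projection $\Pi : \bQ_{G,\tJ}(w) \to \bQ_{G,\tJ'}(w)$; an analogue of Corollary \ref{H-inh} at the infinite-type level gives $\Pi_*\cO = \cO$ and $\R^{>0}\Pi_*\cO = 0$, so by Leray the question reduces to the strictly dominant case on $\bQ_{G,\tJ'}(w)$. For $\la \in P_\tJ$ outside $P_{\tJ,+}$, in positive characteristic I would use the $\bI$-canonical Frobenius splitting of $\bQ_{G,\tJ}(w)$ from Corollary \ref{FQ-uniq}, which is compatible with any effective divisor $D$ supported on codimension-one Schubert boundaries, of class $\mu \in P_{\tJ,+}$ that can be chosen arbitrarily positive. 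Standard iteration of Frobenius yields an injection
\begin{equation*}
H^i(\bQ_{G,\tJ}(w), \cO(\la)) \hookrightarrow H^i\bigl(\bQ_{G,\tJ}(w), \cO(p^n\la + \tfrac{p^n-1}{p-1}\mu)\bigr),
\end{equation*}
and taking $\mu$ sufficiently positive so that the target lies in $P_{\tJ,++}$ after one iteration concludes the vanishing by the earlier case; the characteristic zero statement then follows by flat descent via Lemma \ref{Q-flat} and \cite[Proposition 1.6.2]{BK05}.

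The main obstacle is to rigorously justify the Milnor exact sequence in the context of an infinite-type scheme: cohomology of quasi-coherent sheaves does not in general commute well with filtered colimits along closed embeddings. A more robust technical alternative would be to reinterpret $H^i(\bQ_{G,\tJ}(w), \cO(\la))$ as the appropriate multigraded piece of the local cohomology of $R_w(\tJ)$ at the irrelevant ideal, then use the surjections $R_w(\tJ) \twoheadrightarrow R^v_w(\tJ)$ together with the piecewise vanishing from Theorem \ref{coh} to control this directly at the level of multigraded modules, avoiding the colimit formalism altogether.
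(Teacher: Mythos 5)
Your proposal is a genuinely different route from the paper's, but it has a structural gap that you partly anticipate and that I think is fatal rather than merely technical.

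The central issue is that $\bQ_{G,\tJ}(w)$ is a single infinite-type projective scheme, not the colimit (or ind-scheme) built from the closed subschemes $\sQ'_\tJ(v,w)$. Corollary \ref{ZdensebQ} only asserts that the union $\bigcup_v \sQ'_\tJ(v,w)$ is Zariski dense in $\bQ_{G,\tJ}(w)$, not that it is all of it, and even the identification of $\bK$-points in Theorem \ref{bQ-int} does not upgrade to an identification of schemes. Consequently your Milnor-type $\varprojlim^1$-sequence, which would compare $H^i$ of the ambient scheme to the tower $H^i(\sQ'_\tJ(v_n,w), \cO(\la))$, has no valid source: the degree-wise stabilization $R_w(\tJ,\la)=\varprojlim_n R^{v_n}_w(\tJ,\la)$ concerns the graded ring, and does not by itself entail any colimit description of the $\mathrm{Proj}$ or of its sheaf cohomology. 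The alternative you offer --- working via multigraded local cohomology of $R_w(\tJ)$ at the irrelevant ideal --- also collapses, because the identification of $\mathrm{Proj}$-cohomology with local cohomology is a Noetherian statement, whereas $R_w(\tJ)$ is of infinite type. Finally, even granting the $P_{\tJ,++}$ case, your treatment of $\la\in P_{\tJ}\setminus P_{\tJ,+}$ via an $\bI$-canonical Frobenius $D$-splitting does not go through: the class of the compatible boundary divisor $D$ is determined by the splitting (essentially a fixed $\rho$-like element, cf.\ the proof of Theorem \ref{coh}), so the iterated twist $p^n\la + \tfrac{p^n-1}{p-1}\mu$ never becomes dominant when $\la$ has a sufficiently negative component; you cannot choose $\mu$ "arbitrarily positive" while remaining in the range of a fixed canonical splitting.

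The paper's own proof (Appendix A) takes a route that is deliberately orthogonal to all of this. It shows, by applying Demazure functors as in Proposition \ref{proj-cover}, that $\bW_w(\la)_\bK$ is free over a polynomial ring exactly when $\bW(\la)_\bK$ is, and then combines the freeness of $\bW(\la)_\bK$ with the normality of $R_w(\tJ)_\bK$ (Proposition \ref{R-normal}) to run the Kempf-type argument of \cite[Theorems 4.28 and 4.29]{KNS17} directly, uniformly in the characteristic and for all $\la\in P_\tJ$ (not only dominant ones). The Remark immediately after the theorem flags exactly this: the reasoning is "totally different from that in the finite case, and uniform across characteristic" --- in particular it does not reduce to the finite-type Richardson pieces, nor does it go through positive characteristic and Frobenius splitting as your plan does. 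If you want to salvage a proof in the spirit of yours, you would first have to establish a comparison theorem between $H^i(\bQ_{G,\tJ}(w),\cO(\la))$ and the tower over $\sQ'_\tJ(v,w)$, and there is currently no such result in the paper.
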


\begin{proof}
Assume that $w = e$. We consider the subring defined by
$$T_{\tJ} := \bigoplus_{\la \in P_{\tJ, +}} T_{\tJ} (\la) \subset \bigoplus_{\la \in P_{\tJ, +}} \bW ( \la )_{\bK}^{\vee} = \bigoplus_{\la \in P_{\tJ, +}} \bW_{w_0} ( \la )_{\bK}^{\vee} = R_{\tJ},$$
where $T_{\tJ} (\la) \subset \bW ( \la )_{\bK}^{\vee}$ is the $P$-weight $( - \la )$-part of $\bW ( \la )_{\bK}^{\vee}$. They are generated by the duals of $\widetilde{P}_{i,m\delta} \bv_{\varpi_i} \in \bW ( \varpi_i )_{\bK}$ ($i \in \tI \setminus \tJ, m \in \Z_{>0}$; see \cite[(3.18)]{BN04} for the definition of $\widetilde{P}_{i,m\delta}$). In view of \cite[Proposition 3.18]{BN04}, the ring $T_{\tJ}$ is a polynomial ring. The actions of $\{\widetilde{P}_{i,m\delta}\}_{i \in ( \tI \setminus \tJ ), m > 0}$ define endomorphisms of the module $\bW ( \la )_{\bK}$ ($\la \in P_{\tJ,+}$), each of which is either an injection or zero (cf. the proof of Proposition \ref{proj-cover}). It follows that the action of $T_{\tJ}$ on $R_{\tJ}$ is torsion-free.

Thanks to \cite[Corollary 4.29]{KNS17}, the character comparison forces the torsion-free action of $T_{\tJ}$ on $R_{\tJ}$ to be free. Therefore, the proof of \cite[Theorem 4.30]{KNS17} works in a verbatim way. This proves the case $w = e$. The general case can be easily deduced from the $w = e$ case as in \cite[Corollary 4.31]{KNS17}.
\end{proof}

\setcounter{section}{2}
\setcounter{thm}{0}

\begin{flushleft}
{\normalsize\textbf{Appendix B \hskip 2mm An application of the Pieri-Chevalley formula}}
\end{flushleft}

We work in the setting of \S \ref{Q'Jvw}. The aim of this appendix is to present a method (Theorem \ref{SMT}) to describe the global sections of nef line bundles on $\sQ' ( v,w )_{\bK}$ for $v,w \in W_\af$ and an algebraically closed field $\bK$ of characteristic $\neq 2$.

For each $\mu \in P_{+}$, we have an extremal weight module $\bX ( \mu )_{\bK}$ and its global base $\bB ( \bX ( \mu ) )$ borrowed from Theorem \ref{b-compat}.

\begin{lem}\label{H-compat}
Let $\tJ \subset \tI$, and let $\mu \in P_{\tJ, +}$. A subset of $\bB (  \bX ( \mu ) )$ spans
\begin{align*}
H^{0} ( \bQ_{G} ( w )_{\bK}, \cO_{\bQ_{G} ( w )_{\bK}} ( \mu ) )^{\vee} = & \, H^{0} ( \bQ_{G, \tJ} ( w )_{\bK}, \cO_{\bQ_{G, \tJ} ( w )_{\bK}} ( \mu ) )^{\vee} \subset \bX ( \mu )_{\bK} \hskip 5mm \text{and}\\
\theta ( H^{0} ( \bQ_{G} ( vw_0 )_{\bK}, \cO_{\bQ_{G} ( vw_0 )_{\bK}} & \, ( -w_0\mu ) )^{\vee} )\\
= \theta ( & H^{0} ( \bQ_{G, \theta ( \tJ )} ( vw_0 )_{\bK}, \cO_{\bQ_{G, \theta ( \tJ )} ( vw_0 )_{\bK}} ( - w_0 \mu ) )^{\vee} ) \subset \bX ( \mu )_{\bK}
\end{align*}
for each $w,v \in W_\af$.
\end{lem}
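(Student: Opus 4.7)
The plan is to identify both sides of each displayed equality with the same Demazure submodule of $\bX(\mu)_{\bK}$ (up to a $\theta$-twist in the second case) and then invoke the compatibility of these modules with the global basis $\bB(\mu) = \bB(\bX(\mu))$.

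First, I would apply Corollary \ref{gsect} to identify
$$H^0(\bQ_{G,\tJ'}(w)_{\bK}, \cO_{\bQ_{G,\tJ'}(w)_{\bK}}(\mu))^{\vee} = \bW_{ww_0}(\mu)_{\bK}$$
for every $\tJ' \subset \tI$ such that $\mu \in P_{\tJ',+}$. Since the right-hand side depends only on $ww_0$ and $\mu$ (not on $\tJ'$), applying this with $\tJ' = \emptyset$ and $\tJ' = \tJ$ establishes the first displayed equality in the lemma. The spanning assertion then reduces to showing that $\bW_{ww_0}(\mu)_{\bK}$ is spanned by a subset of $\bB(\mu)$; this follows from Lemma \ref{WX-comm} part 3), which gives $\bW_{ww_0}(\mu)_{\Z} = \bW_{ww_0}(\mu)_{\C} \cap \bX(\mu)_{\Z}$, combined with Theorem \ref{b-compat}, which furnishes the $\Z$-basis $\bB(\bX(\mu))$ containing the positive/negative global bases constructed from every extremal weight vector. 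Consequently, $\bW_{ww_0}(\mu)_{\Z}$ is a $\Z$-direct summand of $\bX(\mu)_{\Z}$ spanned by a subset of $\bB(\mu)$, and extension of scalars yields the claim over $\bK$.

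For the second displayed equality, I would first use Lemma \ref{WX-comm} part 2), which identifies $\theta^{*}\bX(-w_0\mu)_{\bK} \cong \bX(\mu)_{\bK}$ together with the compatibility of bases $\theta(\bB(\bX(-w_0\mu))) = \bB(\bX(\mu))$, and then apply part 4) to identify $\theta^{*}\bW_{vw_0\cdot w_0}(-w_0\mu)_{\bK} = \theta^{*}\bW_{v}(-w_0\mu)_{\bK}$ as a subspace of $\bX(\mu)_{\bK}$. Applying Corollary \ref{gsect} to $\bQ_G(vw_0)$ and to $\bQ_{G,\theta(\tJ)}(vw_0)$ with weight $-w_0\mu$ then yields both the equality of the two $H^0$'s in the second line and, via the same argument as in the first paragraph, the spanning assertion inside $\bX(\mu)_{\bK}$.

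The only delicate point will be the bookkeeping of involutions together with the sign change $\mu \leftrightarrow -w_0\mu$ under $\theta$, and checking that $\theta(\tJ)$ is indexed so that the $\theta$-twist of the Demazure module matches the parabolic type on the other side. No new representation-theoretic input is required beyond Lemma \ref{WX-comm}, Theorem \ref{b-compat}, and Corollary \ref{gsect}.
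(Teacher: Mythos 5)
Your proof is correct and takes essentially the same route as the paper's, whose entire argument is the terse citation of Corollary \ref{gsect} together with Theorem \ref{b-compat}. You fill in exactly the details that this one-liner encapsulates: the $\tJ$-independence coming from $\bW_{ww_0}(\mu)_{\bK}^{\vee}$ on both sides of Corollary \ref{gsect}, the global-basis compatibility of the Demazure submodules via Lemma \ref{WX-comm}~3) and Theorem \ref{b-compat}, and the Chevalley-involution bookkeeping via Lemma \ref{WX-comm}~2) and 4).
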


\begin{proof}
Taking Corollary \ref{gsect} into account, the assertion follows from Theorem \ref{b-compat}.
\end{proof}

Corollary \ref{gsect}, combined with Theorem \ref{coh}, yields
\begin{equation}
H^{0} ( \sQ' ( v, w )_{\bK}, \cO_{\sQ' ( v, w )_{\bK}} ( \mu ) )^{\vee} \hookrightarrow H^{0} ( \bQ_{G} ( w )_{\bK}, \cO_{\bQ_{G} ( w )_{\bK}} ( \mu ) )^{\vee} \subset \bX ( \mu )_{\bK}\label{filt-piece}
\end{equation}
for each $w,v \in W_\af$.

\begin{lem}\label{init-fin}
Let $\mu \in P_{++}$. For each $b \in \bB (  \bX ( \mu ) )$, there exist unique elements $\kappa ( b ), \imath ( b ) \in W_\af$ with the following properties:
\begin{enumerate}
\item We have $b \in H^{0} ( \bQ_{G} ( w )_{\bK}, \cO_{\bQ_{G} ( w )_{\bK}} ( \mu ) )^{\vee}$ for $w \in W_\af$ if and only if $\kappa ( b ) \le_\si w$;
\item We have $b \in \theta ( H^{0} ( \bQ_{G} ( vw_0 )_{\bK}, \cO_{\bQ_{G} ( vw_0 )_{\bK}} ( -w_0 \mu ) )^{\vee} )$ for $v \in W_\af$ if and only if $\imath ( b ) \ge_\si v$.
\end{enumerate}
\end{lem}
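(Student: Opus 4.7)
The plan is to translate both conditions into Demazure containment statements. Corollary~\ref{gsect} rewrites the space in (1) as $\bW_{ww_0}(\mu)_\bK^\vee$, and Lemma~\ref{WX-comm}(4) identifies the $\theta$-twist in (2) with $\bW^-_{vw_0}(\mu)_\bK$; accordingly, if $\kappa(b)$ and $\imath(b)$ exist they are characterized as the $\leq_\si$-minima
\[
\kappa(b) = \min \bigl\{w \in W_\af : b \in \bW_{ww_0}(\mu)_\bK \bigr\},
\qquad
\imath(b) = \min \bigl\{v \in W_\af : b \in \bW^-_{vw_0}(\mu)_\bK \bigr\}.
\]
Uniqueness is automatic, so the whole content is verifying that these minima exist. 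Non-emptiness of the two sets is easy: since the vectors $\bv_{w\mu}$ generate $\bX(\mu)_\bK$ as a $\dot{U}_\bK$-module and $b \in \bB(\mu)$ has bounded $P^\af$-weight, one can find sufficiently large (in $\leq_\si$) $w$ and $v$ with $b \in \bW_{ww_0}(\mu)_\bK$ and $b \in \bW^-_{vw_0}(\mu)_\bK$ respectively.

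By Lemma~\ref{H-compat}, each of these Demazure submodules is $\bK$-spanned by a subset of $\bB(\mu)$, and by Lemma~\ref{contain} (applied directly for $\mu$, and via the $\theta$-twist of Lemma~\ref{WX-comm}(4) applied to $-w_0\mu \in P_{++}$ for the $\bW^-$ side) these subsets form strictly $\leq_\si$-nested filtrations of $\bB(\mu)$. The existence of the minima then reduces to a purely combinatorial question about $\bB(\mu)$ that I plan to resolve using the semi-infinite Lakshmibai--Seshadri path model of Naito--Sagaki \cite{NS16}. In that model each $b \in \bB(\mu)$ is a semi-infinite LS path whose initial and final directions in $W_\af$ (well-defined modulo the $W_\af$-stabilizer of $\mu$, which reduces by strict dominance, together with recording the $d$-grading of $b$) encode precisely the smallest Demazure submodule containing $b$; this yields $\kappa(b)$ and $\imath(b)$ as the appropriate directions after the built-in $w_0$-twist and the $\theta$-twist are unwound.

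The hard part will be this unwinding of twists. The right multiplication by $w_0$ coming from the identification $H^0(\bQ_G(w),\cO(\mu))^\vee = \bW_{ww_0}(\mu)_\bK$ (Corollary~\ref{gsect}) and the Chevalley involution $\theta$ (used in~(2)) act together on the parameters, so one must carefully verify that after undoing both twists the LS-path directions land as elements of $W_\af$ whose $\leq_\si$-upward closures are exactly the defining sets. A rigorous appeal to~\cite{NS16} will also require translating the crystal results from the $\mathsf q$-deformed framework into the $\mathsf q = 1$ specialization used throughout this paper, analogous to what was carried out in Theorem~\ref{b-compat}.
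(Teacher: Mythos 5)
Your plan matches the paper's proof essentially exactly: the paper also reduces assertion (1), after rewriting via Corollary~\ref{gsect}, to the semi-infinite Demazure/LS-path machinery underlying Lemma~\ref{contain} (the cited \cite[Corollary 5.2.5]{NS16}), and derives (2) from (1) via Lemma~\ref{WX-comm}(4) together with an order identity from Lusztig \cite[(1.5.1)]{Lus80} that handles your ``unwinding of the $w_0$-twist'' concern. The two points you flag as remaining work are indeed the real content, and both are already in place in the paper's infrastructure: the $\mathsf q=1$ transfer of the crystal-theoretic statements is precisely what Theorem~\ref{b-compat} and the proof of Lemma~\ref{contain} (via \cite[\S 2.8]{Kas05}) supply, and the $w_0$- and $\theta$-twist bookkeeping is exactly the role of Lemma~\ref{WX-comm}(4) plus the cited Lusztig identity, so your ``hard parts'' are less new work than a matter of assembling references the paper already invokes. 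One thing you get right that deserves emphasis: the strict dominance $\mu\in P_{++}$ is needed both to trivialize the finite stabilizer of $\mu$ (so the LS-path directions determine cosets cleanly) and to make the Demazure filtration strict via the ``if and only if'' clause of Lemma~\ref{contain}; your parenthetical on recording the $d$-grading to pin down the affine translation part is also the correct way to promote a direction to a genuine element of $W_\af$. In short: correct approach, same as the paper, but you should simply \emph{cite} the results you list rather than flag them as open — that is all the paper does.
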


\begin{proof}
In view of Corollary \ref{gsect}, the first assertion is a rephrasement of Lemma \ref{contain}. The second assertion is obtained from the first assertion in view of Lemma \ref{WX-comm} and (\ref{ord-opp}).
\end{proof}

\begin{cor}
The functions $\kappa$ and $\imath$ play the same role as the same named functions in {\rm\cite[(2.17)]{KNS17}} $($with opposite convention on the order $\le_\si)$.
\end{cor}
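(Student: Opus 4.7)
The plan is to compare Lemma~\ref{init-fin} directly with the defining characterization of $\kappa$ and $\imath$ given in \cite[(2.17)]{KNS17}. The strategy is to verify that, after reversing the convention on $\le_\si$, both characterizations single out the same extremal element of $W_\af$ via membership of $b$ in a Demazure-type subcrystal of $\bB(\mu)$.

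First, I would apply Theorem~\ref{gsect} to identify
$$H^0(\bQ_G(w)_{\bK}, \cO_{\bQ_G(w)_{\bK}}(\mu))^\vee = \bW_{ww_0}(\mu)_{\bK},$$
whose intersection with the global basis $\bB(\mu)$ is a Demazure subcrystal in view of Lemma~\ref{H-compat}. Lemma~\ref{contain} (Naito--Sagaki) then supplies the $\le_\si$-monotonicity of this containment, so that Lemma~\ref{init-fin}(1) is equivalent to saying that $\kappa(b)$ is the unique $\le_\si$-minimum $w \in W_\af$ for which $b \in \bB(\mu) \cap \bW_{ww_0}(\mu)$.

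Second, I would recall that in \cite[(2.17)]{KNS17} the function $\kappa$ is characterized by the same Demazure-crystal condition, but with the convention on $\le_\si$ from \cite[\S 2.2]{KNS17}, which is opposite to \eqref{si-ord} (translations by $\beta \in Q^\vee_+$ decrease the order there while they increase it here). Consequently, the $\le_\si$-minimum in the present convention corresponds to the $\le_\si$-maximum in that of \cite{KNS17}, and the two characterizations of $\kappa$ agree. For $\imath$, I would apply the Chevalley involution $\theta$ together with Lemma~\ref{WX-comm}(2),(4), which translates Lemma~\ref{init-fin}(2) into the opposite Demazure-crystal condition defining $\imath$ in \cite[(2.17)]{KNS17}.

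The main obstacle will be the bookkeeping of the two conventions on $\le_\si$ (and the precise role of $\theta$ in swapping lower/upper Demazure-type crystals), but once these are tracked carefully the identification is immediate from the uniqueness assertion in Lemma~\ref{init-fin}.
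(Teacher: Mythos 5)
Your argument takes essentially the same route as the paper's, whose entire proof reads ``Compare Lemma~\ref{init-fin} with \cite[Theorem 2.8]{KNS17}.'' You are just making that comparison explicit: using Theorem~\ref{gsect} and Lemma~\ref{H-compat} to see $H^0(\bQ_G(w),\cO(\mu))^\vee \cap \bB(\mu)$ as a Demazure-type subcrystal, invoking the monotonicity from Lemma~\ref{contain} to read off $\kappa(b)$ as the $\le_\si$-minimal $w$ with $b\in\bW_{ww_0}(\mu)$, and applying $\theta$ together with Lemma~\ref{WX-comm} for $\imath$ --- exactly the ingredients that also go into the proof of Lemma~\ref{init-fin} itself. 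One bookkeeping slip in your parenthetical: you claim that in the convention of~\eqref{si-ord} translations by $\beta\in Q^\vee_+$ increase the order, but in fact $t_\beta \le_\si e$ for $\beta\in Q^\vee_+$ here (see the proof of Corollary~\ref{af-comp}), so they \emph{decrease} it. This does not affect your conclusion --- the orders are indeed opposite --- but the direction is stated backwards.
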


\begin{proof}
Compare Lemma \ref{init-fin} with \cite[Theorem 2.8]{KNS17}.
\end{proof}

\begin{cor}\label{Qga}
Let $\mu \in P_{++}$ and $v,w \in W_\af$. The space $H^{0} ( \sQ' ( v, w )_{\bK}, \cO_{\sQ' ( v, w )_{\bK}} ( \mu ) )^{\vee}$ is spanned by the subset of $b \in \bB ( \bX ( \mu ) )$ that satisfies $w \ge_\si \kappa ( b ) \ge_\si \imath ( b ) \ge_\si v$.
\end{cor}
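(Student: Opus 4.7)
The plan is to deduce the corollary formally from Corollary \ref{prRic} together with the global basis descriptions of the two Demazure-type submodules. The strict dominance of $\mu$ collapses everything to the ``full'' case $\tJ = \emptyset$, avoiding parabolic subtleties.

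The first step is to apply Corollary \ref{prRic} with $\la = \mu$: because $\mu \in P_{++}$, the associated parabolic $\tJ = \{i \in \tI \mid \left<\al_i^{\vee},\mu\right>=0\}$ is empty, and the projection hypothesis $\Pi_\tJ(\sQ'(v,w)_\bK) = \sQ'_\tJ(v,w)_\bK$ holds trivially. This yields the identification
\[
H^0(\sQ'(v,w)_\bK, \cO_{\sQ'(v,w)_\bK}(\mu))^\vee = \bW_{ww_0}(\mu) \cap \theta(\bW_v(-w_0\mu))
\]
inside $\bX(\mu)_\bK$.

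The second step is to pass to the global basis. By Lemma \ref{H-compat} (which rests on Theorem \ref{b-compat}), each of $\bW_{ww_0}(\mu)$ and $\theta(\bW_v(-w_0\mu))$ is $\bK$-spanned by a subset of the global basis $\bB(\mu) \subset \bX(\mu)_\bK$. Consequently their intersection is spanned by the intersection of the two indicated subsets of $\bB(\mu)$, and it suffices to determine, for each $b \in \bB(\mu)$, the pairs $(v,w)$ for which $b$ lies simultaneously in both subspaces.

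The final step is to invoke Lemma \ref{init-fin}: membership of $b$ in $\bW_{ww_0}(\mu)$ translates into $w \ge_\si \kappa(b)$, while membership in $\theta(\bW_v(-w_0\mu))$ translates into $\imath(b) \ge_\si v$; these are precisely the two outer inequalities of the asserted chain. The remaining middle relation $\kappa(b) \ge_\si \imath(b)$ is automatic: by definition of $\kappa$ and $\imath$, the element $b$ lies in $\bW_{\kappa(b)w_0}(\mu) \cap \theta(\bW_{\imath(b)}(-w_0\mu))$, which by the first step is $H^0(\sQ'(\imath(b),\kappa(b))_\bK, \cO_{\sQ'(\imath(b),\kappa(b))_\bK}(\mu))^\vee$; the presence of the nonzero vector $b$ therein forces $\sQ'(\imath(b),\kappa(b))_\bK \neq \emptyset$, and Lemma \ref{non-empty} then yields $\imath(b) \le_\si \kappa(b)$. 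The argument is thus entirely formal given the preceding results; no genuine obstacle arises, the only mild subtlety being the automatic nature of the intermediate inequality, which is precisely the nonemptiness criterion for $\sQ'$.
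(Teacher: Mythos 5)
Your proof is correct and follows essentially the same route as the paper, which simply declares the corollary a rephrasing of Corollary~\ref{prRic} for $\tJ = \emptyset$; you supply the missing bookkeeping (global basis compatibility and translation into $\kappa,\imath$) and the observation that the middle inequality is automatic. Two remarks on the details. First, the inequality you quote from Lemma~\ref{init-fin}~2), namely that $b\in\theta(\bW_v(-w_0\mu))$ iff $\imath(b)\ge_\si v$, does not match the printed text of that lemma, which reads $\imath(b)\le_\si v$. Tracing the proof of Lemma~\ref{init-fin}~2) from part~1) via the Chevalley involution $\theta$ and the order-reversing right $w_0$-multiplication (Lusztig's (1.5.1)) shows that $\ge_\si$ is the correct direction, and it is the only one compatible with both Corollary~\ref{Qga} and its use in the proof of Theorem~\ref{SMT}; so the printed $\le_\si$ appears to be a sign typo that you have silently corrected — worth flagging explicitly if you use this step. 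Second, for the middle inequality $\kappa(b)\ge_\si\imath(b)$ you invoke Lemma~\ref{non-empty}, but that lemma only gives the implication ``$v\le_\si w \Rightarrow \sQ'(v,w)\neq\emptyset$.'' The converse, which is the direction you actually need (nonemptiness forces $\imath(b)\le_\si\kappa(b)$), is stated in the unnumbered remark immediately following Lemma~\ref{trans} and rests on Theorem~\ref{bQ-int}, so the correct citation there is that remark (or Theorem~\ref{bQ-int}) rather than Lemma~\ref{non-empty}. With these two citation corrections, your argument is a faithful and complete expansion of the paper's terse proof.
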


\begin{proof}
This is a rephrasement of Theorem \ref{coh} for $\tJ = \emptyset$.
\end{proof}

\begin{thm}[\cite{KNS17} Theorem 5.8 and its proof]\label{PCKNS}
Let $w \in W_\af$ and $\la \in P_+$. There exists a unique collection of elements $a^u_w ( \la ) \in \Z [q^{-1}][H]$ $(u \in W_\af)$ such that
$$a^w_w ( \la ) = e^{- w w_0\la}, \hskip 6mm a^u_w ( \la ) = 0  \hskip 3mm\text{if} \hskip 3mm u \not\le_\si w,$$
and
$$\gch \, \Gamma ( \bQ_G ( w )_{\bK}, \cO_{\bQ_G ( w )_{\bK}} ( \la + \mu ) ) = \sum_{u \in W_\af} a^u_w ( \la ) \gch \, \Gamma ( \bQ_G ( u )_{\bK}, \cO_{\bQ_G ( u )_{\bK}} ( \mu ) )$$
for every $\mu \in P_{++}$. \hfill $\Box$
\end{thm}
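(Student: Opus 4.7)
The plan is to proceed by induction on $\la \in P_+$. The base case $\la = 0$ is trivial with $a^u_w(0) = \delta_{u,w}$. For the inductive step, suppose the formula is established for $\la_1, \la_2 \in P_+$; then iterating first for $\la_1$ and then for $\la_2$ and comparing with an independent choice of $a$-coefficients for $\la_1 + \la_2$ yields a composition rule
$$a^v_w(\la_1 + \la_2) = \sum_{u \in W_\af} a^u_w(\la_1)\, a^v_u(\la_2),$$
which reduces the problem to establishing the formula for fundamental weights $\la = \varpi_i$, $i \in \tI$.

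For $\la = \varpi_i$, I would exploit the tensor product embedding
$$\bW_{ww_0}(\varpi_i + \mu) \hookrightarrow \bW_{ww_0}(\varpi_i) \otimes \bW_{ww_0}(\mu)$$
afforded by Theorem \ref{WC-surj} together with its $\Z$-form (Corollary \ref{WXcomm-inj}), applied at the level of global bases. The module $\bW_{ww_0}(\varpi_i)$ is (quasi-)minuscule, so its global basis labeled by $\bB(\varpi_i) \cap \bW_{ww_0}(\varpi_i)$ is organized by extremal weight directions $\bv_{uw_0\varpi_i}$ for $u \le_\si w$. Correspondingly, the global basis of $\bW_{ww_0}(\varpi_i + \mu)$ partitions (via the embedding and the tensor product rule of global bases in \cite{Kas02,NS16}) into strata indexed by the function $\kappa$ of Lemma \ref{init-fin}, whose $u$-th stratum contributes a subspace naturally identified with a grading-twisted copy of $\bW_{uw_0}(\mu)$. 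Summing over strata produces the desired identity with the coefficient $a^u_w(\varpi_i) \in \Z[q^{-1}][H]$ recording the $(H\times\Gm)$-character of the tensor factor in $\bW_{ww_0}(\varpi_i)$ attached to the extremal direction $uw_0\varpi_i$ in the stratum.

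The leading-term statement $a^w_w(\la) = e^{-ww_0\la}$ follows because the unique extremal weight space of $\bW_{ww_0}(\la)$ in weight $ww_0\la$ is one-dimensional and concentrated in $d$-degree $0$, contributing exactly the diagonal term; the vanishing $a^u_w(\la) = 0$ for $u \not\le_\si w$ reflects that $\bW_{ww_0}(\la)$ carries no extremal vector of weight $uw_0\la$ in this case (Lemma \ref{contain}). The main obstacle is to guarantee that all coefficients $a^u_w(\la)$ actually lie in $\Z[q^{-1}][H]$ — that is, that the $d$-gradings arising from the extremal directions $\bv_{uw_0\varpi_i}$ with $u \le_\si w$ are bounded from above — and that the summation over $u$ defines a well-defined equality of graded characters rather than a formal rearrangement. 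The boundedness of $d$-degrees ultimately rests on the (affine-)Demazure description of $\bW_{ww_0}(\varpi_i)$ via $\le_\si$ (cf. the level-zero fundamental module analysis of \cite{BN04,Kas02}); the convergence of the sum over $\mu \in P_{++}$ is controlled because only finitely many $u \le_\si w$ contribute in any fixed $d$-degree.
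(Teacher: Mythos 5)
This theorem is cited from [KNS17, Theorem 5.8] with a $\Box$; the paper provides no proof of its own. So the comparison is really against the quantum Lakshmibai--Seshadri path / standard monomial theory argument of [KNS17, \S 8.1], which (as the paper's Appendix B discussion of Theorem \ref{SMT} indicates) works by counting the global basis $\bB(\la+\mu)$ of $\bW_{ww_0}(\la+\mu)$ directly, in two ways, for arbitrary $\la \in P_+$.

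Your overall plan --- tensor-product embedding, stratify the global basis of $\bW_{ww_0}(\la+\mu)$ by the initial direction $\kappa$, read off coefficients --- is the right shape, and the reduction to fundamental weights via the composition rule $a^v_w(\la_1+\la_2)=\sum_u a^u_w(\la_1)\,a^v_u(\la_2)$ is a sound organizational move (granting uniqueness of the expansion coefficients, which is plausible but which you do not establish, and which is needed for that identity). Where the argument has a genuine gap is exactly at the claimed stratification: the statement that the global basis of $\bW_{ww_0}(\varpi_i+\mu)$ \emph{partitions} into strata indexed by $\kappa$, each of which contributes a grading-twisted copy of $\bW_{uw_0}(\mu)$, is not something the tensor-product rule for global/crystal bases hands you; it is precisely the content of the Demazure branching / standard monomial theorem in the quantum LS path model, which is the hard part of [KNS17]. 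Asserting it ``via the tensor product rule of global bases'' is circular --- you have essentially restated the theorem. Also, the claim that $\bW_{ww_0}(\varpi_i)$ is ``(quasi-)minuscule'' is simply false: it is an infinite-dimensional global Weyl module (free over a polynomial ring by [BN04]), and the fibers of $\kappa$ over a fixed $u\in W_\af$ in $\bB(\varpi_i)\cap\bW_{ww_0}(\varpi_i)$ have unboundedly many elements; the coefficient $a^u_w(\varpi_i)$ collects all of them, weighted by $(H\times\Gm)$-weight. You correctly flag that the membership $a^u_w(\la)\in\Z[q^{-1}][H]$ (boundedness of $d$-degree) and the finiteness of contributions in any fixed degree require justification, but you do not supply it; this is another place where the quantum LS path combinatorics does the real work. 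To turn this sketch into a proof you would need to import or re-prove the branching rule for the crystals $\bB(\varpi_i)\cap\bW_{ww_0}(\varpi_i)$, at which point you would in effect be redoing [KNS17, \S 8.1] after the fundamental-weight reduction.
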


The goal of this appendix is to prove the following:

\begin{thm}\label{SMT}
Let $w,v \in W_\af$ and $\la \in P_+$. Let $\{ a^v_w ( \la ) \}_{v \in W_\af}$ be the collection of elements in Theorem {\rm\ref{PCKNS}}. Then, we have
$$\gch \, \Gamma ( \sQ' ( v, w )_{\bK}, \cO_{\sQ' ( v, w )_{\bK}} ( \la ) ) = \sum_{u \ge_\si v} a^u_w ( \la ).$$
\end{thm}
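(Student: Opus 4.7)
The strategy is to reduce Theorem~\ref{SMT} to Theorem~\ref{PCKNS} by showing that the coefficients $a^u_w(\la)$ admit a global-basis interpretation compatible with the extra ``opposite'' boundary condition $v \le_\si \imath(b)$ that cuts $\sQ'(v,w)$ out of $\bQ_G(w)$. First assume $\la \in P_{++}$. Combining Corollary~\ref{Qga} with Lemmas~\ref{H-compat} and~\ref{init-fin} (read through the duality in \eqref{filt-piece}),
\[
\gch\,\Gamma(\sQ'(v,w),\cO_{\sQ'(v,w)}(\la))^{\vee} \;=\; \sum_{\substack{b \in \bB(\la)\\ v \le_\si \imath(b),\ \kappa(b) \le_\si w}} e^{\mathrm{wt}(b)}.
\]
Partitioning this sum according to $u := \imath(b)$ gives $\sum_{u \ge_\si v} A^u_w(\la)$, where
\[
A^u_w(\la) \;:=\; \sum_{\substack{b \in \bB(\la)\\ \imath(b) = u,\ \kappa(b) \le_\si w}} e^{\mathrm{wt}(b)} .
\]
It therefore suffices to prove $a^u_w(\la) = A^u_w(\la)$ whenever $\la \in P_{++}$.

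The identification is obtained by unwinding the proof of Theorem~\ref{PCKNS} (cf.\ \cite[Theorem 5.8 and its proof]{KNS17}). The coefficients $a^u_w(\la)$ arise from the embedding $\bX(\la+\mu) \hookrightarrow \bX(\la) \otimes \bX(\mu)$ of Corollary~\ref{WXcomm-inj}, which by Theorem~\ref{b-compat} partitions the global basis $\bB(\la+\mu)$ into pieces indexed by $\bB(\la)$. Because the coproduct of $\dot U_\Z$ commutes with the translations $\tau_\beta$ (Lemma~\ref{WX-comm}) and sends extremal weight vectors to tensors of extremal weight vectors, this partition respects both the $\kappa$- and $\imath$-functions in the sense of Lemma~\ref{init-fin}. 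Equating coefficients of the PCKNS identity
\[
\sum_{\substack{b \in \bB(\la+\mu)\\ \kappa(b)\le_\si w}} e^{\mathrm{wt}(b)} \;=\; \sum_{u}\, a^u_w(\la) \!\!\!\sum_{\substack{b' \in \bB(\mu)\\ \kappa(b')\le_\si u}} \!\!\! e^{\mathrm{wt}(b')}
\]
term-by-term in $e^{\mathrm{wt}(b')}$ (for $\mu \in P_{++}$ chosen large enough that the $b'$ decouple) then yields $a^u_w(\la) = A^u_w(\la)$, completing the proof when $\la \in P_{++}$.

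For general $\la \in P_+$, set $\tJ := \{ i \in \tI : \langle \al_i^{\vee}, \la \rangle = 0\}$, so that $\la \in P_{\tJ,++}$. By Remark~\ref{cvw}, replacing $v$ by $v t_{\beta}$ for a suitable $\beta \in \sum_{j \in \tJ}\Z_{\ge 0}\al_j^{\vee}$ makes $\Pi_\tJ : \sQ'(vt_\beta,w) \to \sQ'_\tJ(v,w)$ surjective, and Corollary~\ref{H-inh} then identifies $\Gamma(\sQ'(v,w),\cO(\la)) \cong \Gamma(\sQ'_\tJ(v,w),\cO(\la))$. One runs the parabolic analogue of the argument above on $\sQ'_\tJ$ with $P_{\tJ,++}$ in place of $P_{++}$, using that the relevant PCKNS coefficients are unchanged by passage to the parabolic quotient because the global-basis elements that become zero after descending to $\bQ_{G,\tJ}$ contribute zero to both $a^u_w(\la)$ and $A^u_w(\la)$.

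\textbf{The main obstacle.} The technical heart is the coefficient identification $a^u_w(\la) = A^u_w(\la)$. The pattern is very natural from the semi-infinite Lakshmibai--Seshadri-path formalism underpinning PCKNS, but a rigorous derivation requires re-opening the proof of Theorem~\ref{PCKNS} and carefully checking that the decomposition of $\bB(\la+\mu)$ coming from Corollary~\ref{WXcomm-inj} respects $\imath$ in addition to $\kappa$ (the latter compatibility is what already appears in the proof of PCKNS). A secondary bookkeeping difficulty lies in the parabolic reduction: one must verify that replacing $v$ by $vt_\beta$ does not merge distinct $u$-strata on which $a^u_w(\la)$ is nonzero, which is established by noting that such $u$ must satisfy $\langle \al_j^{\vee},\mathrm{wt}(b)\rangle \geq 0$ for $j \in \tJ$ for every $b \in \bB(\la)$ contributing to $a^u_w(\la)$.
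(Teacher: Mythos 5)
Your opening step — using Corollary~\ref{Qga} to express $\gch\,\Gamma(\sQ'(v,w),\cO(\la))$ for $\la\in P_{++}$ as a global-basis count over $b\in\bB(\la)$ with $w\ge_\si\kappa(b)\ge_\si\imath(b)\ge_\si v$, and then partitioning by $u=\imath(b)$ — is fine, and correctly reduces the $P_{++}$ case to the claim $a^u_w(\la)=A^u_w(\la)$. The trouble is that you never prove this claim; you flag it as ``the main obstacle,'' and the mechanism you sketch for it does not go through as stated. Equating coefficients of $e^{\mathrm{wt}(b')}$ ``term-by-term for $\mu$ chosen large'' is problematic for two reasons. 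First, the passage from $\bB(\la+\mu)$ to the tensor basis of $\bX(\la)\otimes\bX(\mu)$ is governed by an upper-unitriangular (not diagonal) change of basis (see Theorem~\ref{W-unit}), so individual $b\in\bB(\la+\mu)$ do not cleanly factor as $b''\otimes b'$, and weights of distinct $b'$ need not decouple from the $\la$-part even for large $\mu$. Second, even granting the decoupling, the coefficient of a fixed $e^{\mathrm{wt}(b')}$ on the right-hand side is $\sum_{u\ge_\si\kappa(b')}a^u_w(\la)$, a cumulative sum over $u$ rather than an individual $a^u_w(\la)$, so you would still need a M\"obius-type inversion over $\le_\si$ before you could extract $a^u_w(\la)$ and compare with $A^u_w(\la)$. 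The parabolic reduction you append for $\la\in P_+\setminus P_{++}$ inherits these issues and adds more.

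The paper's proof sidesteps the coefficient identification entirely. It also invokes the counting argument behind Theorem~\ref{PCKNS}, but only to transport that identity to the Richardson-type situation by additionally imposing $\imath(\bullet)\ge_\si v$, obtaining a recursion relating $\gch\,\Gamma(\sQ'(v,w),\cO(\la+\mu))$ to the $\{a^u_w(\la)\}$ for $\mu\in P_{++}$. It then observes that $\chi(\sQ'(v,w),\cO(\mu))$ is a rational function of $\mu$, specializes to $\mu=0$, and uses the cohomology vanishing of Theorem~\ref{coh} to replace Euler characteristics with $\gch\,\Gamma(\cdots)$ on $P_+$. At $\mu=0$ every $\Gamma(\sQ'(v,u),\cO)$ with $\sQ'(v,u)\neq\emptyset$, i.e.\ with $u\ge_\si v$, contributes $1$, giving the stated formula for all $\la\in P_+$ at once. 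This avoids both the $a^u_w=A^u_w$ identification and the separate treatment of non-regular $\la$. To repair your proof you would either have to carry out the Möbius inversion and transition-matrix analysis needed for $a^u_w=A^u_w$ (which would in fact give a stronger, global-basis description of the PCKNS coefficients), or adopt the paper's $\mu=0$ specialization, which is what actually closes the argument.
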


\begin{proof}
The proof of the numerical part of Theorem \ref{PCKNS} is \cite[Theorem 3.5]{KNS17} (proved in \cite[\S 8.1]{KNS17}) and it counts the elements of $\bB ( \bX ( \la + \mu ) )$ that contributes $\Gamma ( \bQ_G ( w )_{\bK}, \cO_{\bQ_G ( w )_{\bK}} ( \la + \mu ) )^{\vee} = \bW _{ww_0} ( \la + \mu )_{\bK}$ in two ways. In particular, we can additionally impose the condition $\imath ( \bullet ) \ge_\si v$ for $\bB (  \bX ( \la + \mu ) )$ to deduce that
$$\gch \, \Gamma ( \sQ' ( v, w )_{\bK}, \cO_{\sQ' ( v, w )_{\bK}} ( \la + \mu ) ) = \sum_{u \le_\si v} a^u_w ( \la ) \gch \, \Gamma ( \sQ' ( u, w )_{\bK}, \cO_{\sQ' ( u, w )_{\bK}} ( \mu ) )$$
for every $\mu \in P_{++}$ in view of Corollary \ref{Qga}. The ($H \times \Gm$-equivariant) Euler characteristic of $\cO_{\sQ' ( v, w )_{\bK}} ( \mu )$ ($\mu \in P$) is a rational function on the characters of $H$, and we can specialize to $\mu = 0$. Now we apply Theorem \ref{coh} to deduce
$$\chi ( \sQ' ( v, w )_{\bK}, \cO_{\sQ' ( v, w )_{\bK}} ( \mu ) ) = \gch \, \Gamma ( \sQ' ( v, w )_{\bK}, \cO_{\sQ' ( v, w )_{\bK}} ( \mu ) )$$
for every $\mu \in P_+$. This implies the desired equality.
\end{proof}

\begin{rem}
{\bf 1)} In view of Corollary \ref{H-inh} and Remark \ref{cvw}, Theorem \ref{SMT} describes the space of global sections of $\cO_{\sQ'_{\tJ} ( v, w )_{\bK}} ( \la )$ for every $w,v \in W_\af$, $\tJ \subset \tI$, and $\la \in P_{\tJ,+}$. {\bf 2)} In conjunction with Theorem \ref{coh}, Theorem \ref{SMT} can be seen as an analogue of Lakshmibai-Littelmann \cite[Theorem 34]{LL03} for semi-infinite flag manifolds. {\bf 3)} Thanks to \cite[\S 3]{KNS17}, we have a combinatorial rule to express $a^u_w ( \la )$'s.
\end{rem}

\begin{center}
{\bf Acknowledgments}
\end{center}
The author would like to thank Prakash Belkale, Ievgen Makedonskyi, Leonardo Mihalcea, Katsuyuki Naoi, and Daisuke Sagaki for helpful discussions. This work was supported in part by JSPS KAKENHI Grant Number JP26287004 and JP19H01782.\\

{\footnotesize
\bibliography{kmref}

\begin{thebibliography}{10}

\bibitem{APW91}
H.~H. Andersen, P.~Polo, and K.~Wen.
\newblock Representations of quantum algebras.
\newblock {\em Invent. Math.}, 104:1--59, 1991.

\bibitem{ABBGM}
S.~Arkhipov, R.~Bezrukavnikov, A.~Braverman, D.~Gaitsgory, and I.~Mirkovi\'c.
\newblock Modules over the small quantum group and semi-infinite flag manifold.
\newblock {\em Transform. Groups}, 10(3--4):279--362, 2005.

\bibitem{BL94}
Arnaud Beauville and Yves Laszlo.
\newblock Conformal blocks and generalized theta functions.
\newblock {\em Comm. Math. Phys.}, 164(2):385--419, 1994.

\bibitem{BCP99}
Jonathan Beck, Vyjayanthi Chari, and Andrew Pressley.
\newblock An algebraic characterization of the affine canonical basis.
\newblock {\em Duke Math. J.}, 99(3):455--487, 1999.

\bibitem{BN04}
Jonathan Beck and Hiraku Nakajima.
\newblock Crystal bases and two-sided cells of quantum affine algebras.
\newblock {\em Duke Math. J.}, 123(2):335--402, 2004.

\bibitem{BM96}
Kai~A. Behrend and Yuri~I Manin.
\newblock {Stacks of stable maps and Gromov-Witten invariants}.
\newblock {\em Duke Mathematical Journal}, 85(1):1--60, 1996.

\bibitem{BFGM}
A.~Braverman, M.~Finkelberg, D.~Gaitsgory, and I.~Mirkovi\'c.
\newblock Intersection cohomology of {D}rinfeld's compactifications.
\newblock {\em Selecta Math. (N.S.)}, 8(3):381--418, 2002.

\bibitem{BG}
A.~Braverman and D.~Gaitsgory.
\newblock Geometric {E}isenstein series.
\newblock {\em Invent. Math.}, 150(2):287--384, 2002.

\bibitem{Bra06}
Alexander Braverman.
\newblock Spaces of quasi-maps into the flag varieties and their applications.
\newblock In {\em International {C}ongress of {M}athematicians. {V}ol. {II}},
  pages 1145--1170. Eur. Math. Soc., Z\"urich, 2006.

\bibitem{BFFR}
Alexander Braverman, Boris Feigin, Michael Finkelberg, and Leonid Rybnikov.
\newblock {A finite analog of the {AGT} relation {I}: {F}inite {$W$}-algebras
  and quasimaps' spaces}.
\newblock {\em Comm. Math. Phys.}, 308(2):457--478, 2011.

\bibitem{BF14a}
Alexander Braverman and Michael Finkelberg.
\newblock Semi-infinite {S}chubert varieties and quantum {$K$}-theory of flag
  manifolds.
\newblock {\em J. Amer. Math. Soc.}, 27(4):1147--1168, 2014.

\bibitem{BF14b}
Alexander Braverman and Michael Finkelberg.
\newblock Weyl modules and {$q$}-{W}hittaker functions.
\newblock {\em Math. Ann.}, 359(1-2):45--59, 2014.

\bibitem{BF14c}
Alexander Braverman and Michael Finkelberg.
\newblock Twisted zastava and $q$-{W}hittaker functions.
\newblock {\em J. London Math. Soc.}, 96(2):309--325, 2017, arXiv:1410.2365.

\bibitem{BL03}
M.~Brion and V.~Lakshmibai.
\newblock A geometric approach to standard monomial theory.
\newblock {\em Represent. Theory}, 7:651--680, 2003.

\bibitem{BK05}
Michel Brion and Shrawan Kumar.
\newblock {\em Frobenius splitting methods in geometry and representation
  theory}, volume 231 of {\em Progress in Mathematics}.
\newblock Birkh\"auser Boston, Inc., Boston, MA, 2005.

\bibitem{BCMP}
Anders~S. Buch, Pierre-Emmanuel Chaput, Leonardo~C. Mihalcea, and Nicolas
  Perrin.
\newblock Finiteness of cominuscule quantum {$K$}-theory.
\newblock {\em Ann. Sci. \'Ec. Norm. Sup\'er. (4)}, 46(3):477--494, 2013.

\bibitem{CFK}
Vyjayanthi Chari, Ghislain Fourier, and Tanusree Khandai.
\newblock A categorical approach to weyl modules.
\newblock {\em Transform. Groups}, 15:517--549, 2010.

\bibitem{CI15}
Vyjayanthi Chari and Bogdan Ion.
\newblock B{GG} reciprocity for current algebras.
\newblock {\em Compos. Math.}, 151(7):1265--1287, 2015.

\bibitem{CP01}
Vyjayanthi Chari and Andrew Pressley.
\newblock Weyl modules for classical and quantum affine algebras.
\newblock {\em Represent. Theory}, 5:191--223 (electronic), 2001.

\bibitem{CK18}
Ivan Cherednik and Syu Kato.
\newblock Nonsymmetric {R}ogers-{R}amanujan sums and thick {D}emazure modules.
\newblock {\em Adv. in Math.}, 374:Article number 107335, 2020.

\bibitem{CG97}
Neil Chriss and Victor Ginzburg.
\newblock {\em Representation theory and complex geometry}.
\newblock Modern Birkh\"auser Classics. Birkh\"auser Boston, Inc., Boston, MA,
  2010.
\newblock Reprint of the 1997 edition.

\bibitem{Fal03}
Gerd Faltings.
\newblock Algebraic loop groups and moduli spaces of bundles.
\newblock {\em J. Eur. Math. Soc. (JEMS)}, 5(1):41--68, 2003.

\bibitem{FFKM}
Boris Feigin, Michael Finkelberg, Alexander Kuznetsov, and Ivan Mirkovi{\'c}.
\newblock Semi-infinite flags. {II}. {L}ocal and global intersection cohomology
  of quasimaps' spaces.
\newblock In {\em Differential topology, infinite-dimensional {L}ie algebras,
  and applications}, volume 194 of {\em Amer. Math. Soc. Transl. Ser. 2}, pages
  113--148. Amer. Math. Soc., Providence, RI, 1999.

\bibitem{FF}
Boris Feigin and Edward Frenkel.
\newblock Affine {K}ac-{M}oody algebras and semi-infinite flag manifold.
\newblock {\em Comm. Math. Phys.}, 128:161--189, 1990.

\bibitem{FKM}
Evgeny Feigin, Syu Kato, and Ievgen Makedonskyi.
\newblock Representation theoretic realization of non-symmetric {M}acdonald
  polynomials at infinity.
\newblock {\em J. reine angew. Math.}, 767:181--216, 2020.
\newblock arXiv: 1703.04108.

\bibitem{FM18}
Evgeny Feigin and Ievgen Makedonskyi.
\newblock Vertex algebras and coordinate rings of semi-infinite flags.
\newblock {\em Comm. Math. Phys.}, 369(1):221--244, 2019.

\bibitem{FM17}
Evgeny Feigin and Ievgen Makedonskyi.
\newblock Semi-infinite {P}l\"ucker relations and {W}eyl modules.
\newblock {\em Int. Math. Res. Not. IMRN}, 2020(14):4357--4394, 2020,
  arXiv:1709.05674.

\bibitem{FMO18}
Evgeny Feigin, Ievgen Makedonskyi, and Daniel Orr.
\newblock Generalized {W}eyl modules and nonsymmetric $q$-{W}hittaker
  functions.
\newblock {\em Adv. Math.}, 339:997--1033, 2018.

\bibitem{FM99}
Michael Finkelberg and Ivan Mirkovi{\'c}.
\newblock Semi-infinite flags. {I}. {C}ase of global curve {$\bold P^1$}.
\newblock In {\em Differential topology, infinite-dimensional {L}ie algebras,
  and applications}, volume 194 of {\em Amer. Math. Soc. Transl. Ser. 2}, pages
  81--112. Amer. Math. Soc., Providence, RI, 1999.

\bibitem{FP95}
William Fulton and Rahul Pandharipande.
\newblock Notes on stable maps and quantum cohomology.
\newblock In J\'anos Koll\'ar, Robert Lazarsfeld, and David Morrison, editors,
  {\em Algebraic Geometry: Santa Cruz 1995}. Amer Mathematical Society, 1995.

\bibitem{Gar78}
Howard Garland.
\newblock The arithmetic theory of loop algebras.
\newblock {\em J. Algebra}, 53:480--551, 1978.

\bibitem{Giv94}
Alexander Givental.
\newblock {Homological geometry and mirror symmetry}.
\newblock {\em Proceedings of the International Congress of Mathematicians
  1994}, pages 472--480, 1995.

\bibitem{GL03}
Alexander Givental and Yuan-Pin Lee.
\newblock Quantum {$K$}-theory on flag manifolds, finite-difference {T}oda
  lattices and quantum groups.
\newblock {\em Invent. Math.}, 151(1):193--219, 2003.

\bibitem{Giv96}
Alexander~B. Givental.
\newblock Equivariant {G}romov-{W}itten invariants.
\newblock {\em Internat. Math. Res. Notices}, 13:613--663, 1996.

\bibitem{GL93}
Ian Grojnowski and George Lusztig.
\newblock A comparison of bases of quantized enveloping algebras.
\newblock In {\em Linear algebraic groups and their representations (Los
  Angeles, CA, 1992)}, volume 153 of {\em Contemp. Math.}, pages 11--19. Amer.
  Math. Soc., Providence, RI, 1993.

\bibitem{EGAIV-3}
Alexander Grothendieck.
\newblock \'{E}l\'ements de g\'eom\'etrie alg\'ebrique. {IV}. \'{E}tude locale
  des sch\'emas et des morphismes de sch\'emas. {III}.
\newblock {\em Inst. Hautes \'Etudes Sci. Publ. Math.}, (28):255, 1966.

\bibitem{Har77}
Robin {Hartshorne}.
\newblock {\em {Algebraic geometry.}}, volume~52 of {\em Graduate Text in
  Mathematics}.
\newblock Springer- Verlag, 1977.

\bibitem{IM65}
N.~Iwahori and H.~Matsumoto.
\newblock On some {B}ruhat decomposition and the structure of the {H}ecke rings
  of {${\mathfrak p}$}-adic {C}hevalley groups.
\newblock {\em Inst. Hautes \'Etudes Sci. Publ. Math.}, (25):5--48, 1965.

\bibitem{Jos85}
Anthony Joseph.
\newblock On the {D}emazure character formula.
\newblock {\em Ann. Sci. \'Ecole Norm. Sup. (4)}, 18(3):389--419, 1985.

\bibitem{Kac}
Victor~G. Kac.
\newblock {\em Infinite-dimensional {L}ie algebras}.
\newblock Cambridge University Press, Cambridge, third edition, 1990.

\bibitem{Kas91}
Masaki Kashiwara.
\newblock {On crystal bases of the {\$}q{\$}-analogue of universal enveloping
  algebras}.
\newblock {\em Duke Mathematical Journal}, 63(2):465--516, 1991.

\bibitem{Kas93}
Masaki Kashiwara.
\newblock The crystal base and {L}ittelmann's refined {D}emazure character
  formula.
\newblock {\em Duke Math. J.}, 71(3):839--858, 1993.

\bibitem{Kas94}
Masaki Kashiwara.
\newblock Crystal bases of modified quantized enveloping algebra.
\newblock {\em Duke Math. J.}, 73(2):383--413, 1994.

\bibitem{Kas02}
Masaki Kashiwara.
\newblock On level-zero representations of quantized affine algebras.
\newblock {\em Duke Math. J.}, 112(1):117--175, 2002.

\bibitem{Kas05}
Masaki Kashiwara.
\newblock Level zero fundamental representations over quantized affine algebras
  and {D}emazure modules.
\newblock {\em Publ. Res. Inst. Math. Sci.}, 41(1):223--250, 2005.

\bibitem{Kat18}
Syu Kato.
\newblock Demazure character formula for semi-infinite flag varieties.
\newblock {\em Math. Ann.}, 371(3):1769--1801, 2018, arXiv:1605.0279.

\bibitem{Kat18c}
Syu Kato.
\newblock Loop structure on equivariant {$K$}-theory of semi-infinite flag
  manifolds.
\newblock arXiv:1805.01718v6, 2018.

\bibitem{Kat19a}
Syu Kato.
\newblock On quantum {$K$}-groups of partial flag manifolds.
\newblock arXiv:1906.09343, 2019.

\bibitem{Kat18b}
Syu Kato.
\newblock Frobenius splitting of thick flag manifolds of {K}ac-{M}oody
  algebras.
\newblock {\em Int. Math. Res. Not. IMRN}, 2020(17):5401--5427, 2020.
\newblock arXiv:1707.03773.

\bibitem{KL17}
Syu Kato and Sergey Loktev.
\newblock A {W}eyl module stratification of integrable representations.
\newblock {\em Comm. Math. Phys.}, 368:113--141, 2019.
\newblock arXiv:1712.03508.

\bibitem{KNS17}
Syu Kato, Satoshi Naito, and Daisuke Sagaki.
\newblock Equivariant {$K$}-theory of semi-infinite flag manifolds and the
  {P}ieri-{C}hevalley formula.
\newblock {\em Duke Math. J.}, 169(13):2421--2500, 2020.

\bibitem{Kem76}
George~R. Kempf.
\newblock Linear systems on homogeneous spaces.
\newblock {\em Ann. of Math. (2)}, 103(3):557--591, 1976.

\bibitem{KP01}
B.~Kim and R.~Pandharipande.
\newblock The connectedness of the moduli space of maps to homo- geneous
  spaces.
\newblock In {\em Symplectic geometry and mirror symmetry (Seoul, 2000)}, pages
  187--201. World Sci. Publ., River Edge, NJ, 2001.

\bibitem{Kle74}
Steven~L. Kleiman.
\newblock The transversality of a general translate.
\newblock {\em Compositio Math.}, 28(3):287--297, 1974.

\bibitem{KM98}
J{\'a}nos Koll{\'a}r and Shigefumi Mori.
\newblock {\em Birational geometry of algebraic varieties}, volume 134 of {\em
  Cambridge Tracts in Mathematics}.
\newblock Cambridge University Press, Cambridge, 1998.
\newblock With the collaboration of C. H. Clemens and A. Corti, Translated from
  the 1998 Japanese original.

\bibitem{KM94}
M.~Kontsevich and Yu. Manin.
\newblock Gromov-{W}itten classes, quantum cohomology, and enumerative
  geometry.
\newblock {\em Comm. Math. Phys.}, 164(3):525--562, 1994.

\bibitem{Kum02}
Shrawan Kumar.
\newblock {\em Kac-{M}oody groups, their flag varieties and representation
  theory}, volume 204 of {\em Progress in Mathematics}.
\newblock Birkh\"auser Boston, Inc., Boston, MA, 2002.

\bibitem{KL02}
Shrawan Kumar and Peter Littelmann.
\newblock Algebraization of {F}robenius splitting via quantum groups.
\newblock {\em Ann. of Math. (2)}, 155(2):491--551, 2002.

\bibitem{LL03}
V.~Lakshmibai and P.~Littelmann.
\newblock Richardson varieties and equivariant {$K$}-theory.
\newblock {\em J. Algebra}, 260(1):230--260, 2003.

\bibitem{LLMS17}
Thomas Lam, Changzheng Li, Leonardo~C. Mihalcea, and Mark Shimozono.
\newblock A conjectural {P}eterson isomorphism in {$K$}-theory.
\newblock {\em J. Algebra}, 513:326--343, 2018.

\bibitem{LS10}
Thomas Lam and Mark Shimozono.
\newblock Quantum cohomology of {$G/P$} and homology of affine {G}rassmannian.
\newblock {\em Acta Mathematica}, 204(1):49--90, 2010, arXiv:0705.1386v1.

\bibitem{LNSSS2}
Cristian Lenart, Satoshi Naito, Daisuke Sagaki, Anne Schilling, and Mark
  Shimozono.
\newblock A uniform model for {K}irillov-{R}eshetikhin crystals {II}. {A}lcove
  model, path model, and {P}={X}.
\newblock 2014, arXiv:1402.2203.

\bibitem{Lit98}
Peter Littelmann.
\newblock Contracting modules and standard monomial theory for symmetrizable
  {K}ac-{M}oody algebras.
\newblock {\em J. Amer. Math. Soc.}, 11(3):551--567, 1998.

\bibitem{Lus80}
George Lusztig.
\newblock Hecke algebras and {J}antzen's generic decomposition patterns.
\newblock {\em Adv. in Math.}, 37(2):121--164, 1980.

\bibitem{Lus92}
George Lusztig.
\newblock Canonical bases in tensor products.
\newblock {\em PNAS}, 89(17):8177--8179, 1992.

\bibitem{Lus93}
George Lusztig.
\newblock {\em Introduction to Quantum Groups}, volume 110 of {\em Progress in
  Mathematics. 110}.
\newblock Birkh{\"a}user, 1994.

\bibitem{Man80}
Mirella Manaresi.
\newblock Some properties of weakly normal varieties.
\newblock {\em Nagoya Math. J.}, 77:61--74, 1980.

\bibitem{Mat88}
Olivier Mathieu.
\newblock Formules de caract\`eres pour les alg\`ebres de {K}ac-{M}oody
  g\'en\'erales.
\newblock {\em Ast\'erisque}, 159--160:1--267, 1988.

\bibitem{Mat89c}
Olivier Mathieu.
\newblock Frobenius action on the {$B$}-cohomology.
\newblock In {\em Infinite-dimensional {L}ie algebras and groups
  ({L}uminy-{M}arseille, 1988)}, volume~7 of {\em Adv. Ser. Math. Phys.} World
  Sci. Publ., Teaneck, NJ, 1989.

\bibitem{MV07}
I.~Mirkovi{\'c} and K.~Vilonen.
\newblock Geometric {L}anglands duality and representations of algebraic groups
  over commutative rings.
\newblock {\em Ann. of Math. (2)}, 166(1):95--143, 2007.

\bibitem{Mus01}
Mircea Mustata.
\newblock {Jet schemes of locally complete intersection canonical
  singularities}.
\newblock {\em Inventiones mathematicae}, 424:397--424, 2001.

\bibitem{NS16}
Satoshi Naito and Daisuke Sagaki.
\newblock Demazure submodules of level-zero extremal weight modules and
  specializations of {M}acdonald polynomials.
\newblock {\em Math. Zeit.}, 283:937--978, 2016, arXiv:1404.2436.

\bibitem{Nao17}
Katsuyuki Naoi.
\newblock Tensor products of {K}irillov-{R}eshetikhin modules and fusion
  products.
\newblock {\em Int. Math. Res. Not. IMRN}, 2017(18):5667--5709, 2017.

\bibitem{Pet97}
Dale Peterson.
\newblock Quantum cohomology of {$G/P$}.
\newblock Lecture at MIT, 1997.

\bibitem{Ram87}
Annamalai Ramanathan.
\newblock Equations defining {S}chubert varieties and {F}robenius splitting of
  diagonals.
\newblock {\em Publ. Math. IHES}, 65:61--90, 1987.

\bibitem{Ric92}
R.~W. Richardson.
\newblock Intersections of double cosets in algebraic groups.
\newblock {\em Indag. Math. (N.S.)}, 3:69--77, 1992.

\bibitem{Sm00}
Karen~E. Smith.
\newblock {G}lobally {F}-regular varieties: {A}pplications to vanishing theorem
  for quotients of {F}ano varieties.
\newblock {\em Michigan Math. J.}, 48:533--572, 2000.

\bibitem{stacks-project}
The {Stacks project authors}.
\newblock The stacks project.
\newblock \url{https://stacks.math.columbia.edu}, 2018.

\bibitem{Vie95}
Eckart Viehweg.
\newblock {\em Quasi-projective moduli for polarized manifolds}, volume~30 of
  {\em Ergebnisse der Mathematik und ihrer Grenzgebiete (3)}.
\newblock Springer-Verlag, Berlin, 1995.

\bibitem{Woo05}
Christopher~T. Woodward.
\newblock On {D}. {P}eterson's comparison formula for {G}romov-{W}itten
  invariants of {$G/P$}.
\newblock {\em Proc. Amer. Math. Soc.}, 133(6):1601--1609, 2005.

\bibitem{Yan83}
Hiroshi Yanagihara.
\newblock Some results on weakly normal ring extension.
\newblock {\em J. Math. Soc. Japan}, 35(4):649--661, 1983.

\bibitem{Zhu17}
Xinwen Zhu.
\newblock An introduction to affine {G}rassmannians and the geometric {S}atake
  equivalence.
\newblock In {\em Geometry of moduli spaces and representation theory}. Amer.
  Math. Soc., Providence, RI, 2017.

\end{thebibliography}
\bibliographystyle{hplain}}
\end{document}